\newtheorem*{unnumberedthm}{Theorem}
\newtheorem{thm}{Theorem}[subsection]
\newtheorem{prop}[thm]{Proposition}     
\newtheorem{lem}[thm]{Lemma}
\newtheorem{cor}[thm]{Corollary}
\theoremstyle{definition}
\newtheorem{defn}[thm]{Definition}
\newtheorem{example}[thm]{Example} 
\newtheorem{rem}[thm]{Remark}
\DeclareFontFamily{OT1}{rsfs}{}
\DeclareFontShape{OT1}{rsfs}{n}{it}{<-> rsfs10}{}
\DeclareMathAlphabet{\curly}{OT1}{rsfs}{n}{it}
\makeatletter \@addtoreset{equation}{subsection} \makeatother  % reset equation counter to 0 when a new subsection is started
\makeatletter \@addtoreset{thm}{subsection} \makeatother  % reset thm counter to 0 when a new subsection is started
\makeatletter \@addtoreset{equation}{subsection} \makeatother  % reset equation counter to 0 when a new subsection is started
\makeatletter \@addtoreset{thm}{subsection} \makeatother  % reset thm counter to 0 when a new subsection is started
\newcommand{\DAb}{{\bf DAb}} % derived category of abelian groups
\newcommand{\C}{{\bf C}} % spaces
\newcommand{\Qco}{{\bf Qco}} % spaces
\newcommand{\Alg}{{\bf Alg}} % algebras
\newcommand{\An}{{\bf An}} % rings
\newcommand{\ClassicalFans}{{\bf ClassicalFans}} % classical fans
\newcommand{\Ab}{{\bf Ab}} % abelian groups
\newcommand{\Sets}{{\bf Sets}} % sets
\newcommand{\Mon}{{\bf Mon}} % monoids
\newcommand{\MonSch}{{\bf MonSch}} % monoids
\newcommand{\Mod}{{\bf Mod}} % modules
\newcommand{\LRS}{{\bf LRS}}  % locally ringed spaces
\newcommand{\RS}{{\bf RS}}  % ringed spaces
\newcommand{\Sch}{{\bf Sch}} % schemes of locally finite type over the real numbers
\newcommand{\LMS}{{\bf LMS}} % locally monoidal spaces
\newcommand{\MS}{{\bf MS}}   % monoidal spaces
\newcommand{\Top}{{\bf Top}}   % topological spaces
\newcommand{\Fans}{{\bf Fans}} % fans
\renewcommand{\1}{{\bf 1}}
\renewcommand{\AA}{\mathbb{A}} % affine space
\newcommand{\NN}{\mathbb{N}} % non-negative integers
\newcommand{\ZZ}{\mathbb{Z}} % integers
\newcommand{\RR}{\mathbb{R}} % integers
\newcommand{\QQ}{\mathbb{Q}} % integers
\newcommand{\CC}{\mathbb{C}}
\newcommand{\GG}{\mathbb{G}}
\newcommand{\FF}{\mathbb{F}}
\newcommand{\UU}{\mathbb{U}}
\newcommand{\p}{\mathfrak{p}}
\newcommand{\q}{\mathfrak{q}}
\newcommand{\F}{\curly{F}}
\newcommand{\M}{\mathcal{M}} % sheaf of monoids
\renewcommand{\O}{\mathcal{O}} % structure sheaf
\renewcommand{\u}{\underline}
\newcommand{\ov}{\overline}
\newcommand{\into}{\hookrightarrow}
\newcommand{\be}{\begin{eqnarray*}}
\newcommand{\ee}{\end{eqnarray*}}
\newcommand{\bne}[1]{\begin{eqnarray} \label{#1} }
\newcommand{\ene}{\end{eqnarray}}
\newcommand{\xym}{\xymatrix}
\newcommand{\bp}{\begin{pmatrix}}
\newcommand{\ep}{\end{pmatrix}}
\newcommand{\slot}{ \hspace{0.05in} {\rm \_} \hspace{0.05in} } % underscore for use in functors, etc.
\newcommand{\dirlim}{\displaystyle \lim_{ \longrightarrow } \,} % direct limit symbol
\newcommand{\invlim}{\displaystyle \lim_{ \longleftarrow } \,} % inverse limit sybol
\newcommand{\threerightarrows}{
        \mathrel{\vcenter{\mathsurround0pt
                \ialign{##\crcr
                        \noalign{\nointerlineskip}$\rightarrow$\crcr
                        \noalign{\nointerlineskip}$\rightarrow$\crcr
                        \noalign{\nointerlineskip}$\rightarrow$\crcr
                }
        }}
}
\newcommand{\Hom}{\operatorname{Hom}}   
\newcommand{\Ext}{\operatorname{Ext}}
\renewcommand{\H}{\operatorname{H}}
\newcommand{\CechH}{\check{\operatorname{H}}}    % Cech cohomology
\newcommand{\R}{\operatorname{R}}
\newcommand{\sHom}{\curly Hom}            % sheaf Hom. Alternative: \underline{\rm Hom}} 
\newcommand{\sExt}{\curly Ext}            % sheaf Ext. Alternative: \underline{ \rm Ext}
\newcommand{\Ker}{\operatorname{Ker}}
\newcommand{\Cok}{\operatorname{Cok}}
\newcommand{\ord}{\operatorname{ord}}
\newcommand{\Id}{\operatorname{Id}}
\newcommand{\Spec}{\operatorname{Spec}}
\newcommand{\Proj}{\operatorname{Proj}}
\newcommand{\Span}{\operatorname{Span}}
\newcommand{\Stab}{\operatorname{Stab}}
\newcommand{\rk}{\operatorname{rank}}
\renewcommand{\Im}{\operatorname{Im}}
\newcommand{\row}{\operatorname{row}}     % Grassmannian
\def\arraystretch{1.2} 
\numberwithin{equation}{subsection}
\begin{document}

\author{W.~D.~Gillam}
\address{Department of Mathematics, Bogazici University, Bebek, Istanbul 34342}
\email{wdgillam@gmail.com}

\date{\today}
\title[Fans]{Fans}

\begin{abstract}  The category of (abstract) fans is to the category of monoids what the category of schemes is to the category of rings:  a fan is obtained by gluing spectra of monoids along open embeddings.  Here we study the basic algebraic geometry of fans: coherent sheaves, group fans, affine maps, flat maps, proper maps, and so forth.  This study is motivated by the fact that the category of fans lies over all other reasonably geometric categories (schemes, differentiable spaces, analytic spaces, etc) as well as the ``log" versions of all such categories.  \end{abstract}

\maketitle

% Creates good spacing in \eqnarray:
\arraycolsep=2pt 
\def\arraystretch{1.2}

\section*{Introduction}  The purpose of this paper is to provide a basic reference for the theory of (abstract) \emph{fans}.  One motivation for studying fans is the following analogy with the basic constructions of algebraic geometry:  To any (commutative) ring $A$, one associates $\Spec A$, which is a topological space equipped with a sheaf of rings $\O_A$ with local stalks.  As a set, $\Spec A$ is the set of prime ideals of $A$; the topology is the ``Zariski topology" where the basic opens are the sets of primes not containing some given element of $A$.  This provides a contravariant, fully faithful functor from rings to locally ringed spaces.  The objects in the essential image of this functor are called \emph{affine schemes} and one then defines a \emph{scheme} to be a locally ringed space admitting an open cover by affine schemes.  The point is that all of these constructions make sense when ``(commutative) ring" is replaced by ``(commutative) monoid," so that \emph{fans} are to monoids what schemes are to rings.  This general idea has been around for a long time and is certainly not due to the present author (see \cite{Ogus}, \cite{D1}, \cite{D2}, \cite{K}, \cite{CC}, and the references therein).  The theory of \emph{monoid schemes} from \cite{CDH}, while similar to our theory of fans, also differs from it in several respects (\S\ref{section:monoidschemes}). 

The \emph{classical fans} familiar from the theory of toric varieties can be viewed as fans in this abstract sense (\S\ref{section:classicalfans})---they are roughly analogous to varieties among schemes, so that the category of (abstract) fans has certain technical advantages over the category of classical fans, just as the category of schemes is, in many ways, ``nicer" than the category of varieties.  We can try to push the analogy \begin{center} monoids : rings :: fans : schemes \end{center} as far as possible.  For example, there is a notion of \emph{modules} over a monoid and a corresponding notion of \emph{quasi-coherent sheaves} on fans (\S\ref{section:coherentsheaves}).  There are (relative) $\Spec$ functors, $\Proj$ functors, blowups, and so forth (\S\ref{section:relativeSpecandProj}).  For many kinds of morphisms of schemes there are analogous kinds of morphisms of fans:  affine morphisms (\S\ref{section:affinemaps}), separated and proper morphisms (\S\ref{section:propermaps}), flat morphisms (\S\ref{section:flatmaps}), etc.  In general the theory of fans works best when we restrict our attention of \emph{fine} monoids, which are the monoids arising as finitely generated submonoids of (abelian) groups.  

We are also motivated to study fans by virtue of the fact that fans can be ``realized" in any ``reasonably geometric" category, so that fans can serve as a testing ground for constructions in a wide variety of geometric contexts.  This is made precise in \cite[\S4.5]{GM1}, where we defined a ``category of spaces" and the $2$-category of such categories, then we proved that the category of (locally finite type) fans is the initial object in this $2$-category \cite[4.5.12]{GM1}.  For now it suffices to note that topological spaces, locally ringed spaces, schemes, analytic spaces, and differentiable spaces are acceptable ``categories of spaces," as are the ``log" versions of all these categories, so that the category of fans admits ``nice" (preserving inverse limits, among other things) functors to all of these categories.  In algebraic geometry, this general nonsense specializes to the well-known theory of toric varieties.  In differential geometry, this construction seems to have been overlooked, though it can be quite useful.  For example, the ``differential realization" of a smooth classical fan is a manifold with corners; one can study, say, various Morse-theoretic constructions on this manifold with corners via toric (i.e.\ ``fan") techniques.

One can use these realization functors to study the category of fans itself.  For example, one can consider the class of maps of fans $f$ for which some given geometric realization of $f$ has a particular property.  For example, one could define a map of fans to be affine iff its scheme realization is affine (in fact it turns out that this is \emph{not} the most natural notion of an ``affine morphism" of fans).  Sometimes this works better than others, mostly because some geometric realizations of $f$ are better reflections of $f$ than others: the topological space realization is fairly ``coarse" from this perspective, while various algebraic realizations of $f$ are in fact ``fully faithful" in some sense.  It is often desirable to translate a property of some realization of $f$ into a property defined in terms of $f$ itself, for then one suspects that other realizations of $f$ will enjoy an analogous property in the appropriate geometric category.  For example, we prove the following ``valuative criterion for properness" in \S\ref{section:propermaps}:

\begin{unnumberedthm} The scheme realization of a map of fine fans is proper iff the map of fans is quasi-compact and has the unique right lifting property with respect to the map of fans $\Spec \ZZ \to \Spec \NN$. \end{unnumberedthm}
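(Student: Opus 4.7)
The plan is to reduce both directions of the stated equivalence to the classical valuative criterion for properness of schemes, using the fact (from \cite{GM1}) that the scheme realization functor $f \mapsto \bar f$ preserves inverse limits and realizes the fan map $\Spec \ZZ \to \Spec \NN$ as the standard open immersion $j : \GG_{m,\ZZ} \hookrightarrow \AA^1_\ZZ$. Quasi-compactness of $f : F \to G$ is equivalent to that of its realization $\bar f : X_F \to X_G$, because affine covers of fans realize to affine covers of schemes and pullbacks commute with realization; so throughout we may assume $f$ quasi-compact and concentrate on the remaining equivalence.

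For the forward direction, suppose $\bar f$ is proper. Given a fan square with top $\Spec \ZZ \to F$ and bottom $\Spec \NN \to G$, realize it to a scheme square over $j$, then localize $\AA^1$ at the origin of the generic fiber to obtain a diagram over the DVR $R = \QQ[t]_{(t)}$ with fraction field $K = \QQ(t)$. The classical valuative criterion produces a unique scheme lift $\Spec R \to X_F$; the $\GG_m$-equivariance of the input data (which arises because both arrows map into affine charts where realization is equivariant for the respective tori) combined with uniqueness forces this lift to be equivariant, so it extends to $\AA^1_\QQ \to X_F$ and then to $\AA^1_\ZZ \to X_F$ using quasi-compactness and the fineness of $F$. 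The full faithfulness of realization on equivariant morphisms of affine fine fans---concretely the identification of $\Hom_{\Mon}(P, \NN)$ with the set of $\GG_m$-equivariant morphisms $\AA^1_\ZZ \to \Spec \ZZ[P]$---then shows the extension descends uniquely to the desired fan lift $\Spec \NN \to F$.

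For the reverse direction, assume $f$ is quasi-compact and has the URLP. Given a DVR $R$ with fraction field $K$, valuation $\nu : K^\times \to \ZZ$, and a scheme square with top $\Spec K \to X_F$ and bottom $\Spec R \to X_G$, enlarge $R$ if necessary so that $\Spec K$ lands in a ``torus chart'' $\Spec \ZZ[Q^{\gp}] \subset X_{\Spec Q}$ for some affine chart $\Spec Q \into F$, while $\Spec R$ lands in some $X_{\Spec P}$ with $\Spec P \into G$. The resulting monoid maps $Q \to K^\times$ and $P \to R$, composed with $\nu$ and the valuation $R \setminus \{0\} \to \NN$ respectively, assemble into a fan square that the URLP lifts uniquely to a monoid map $Q \to \NN$; composing with $\NN \hookrightarrow R$ (via a choice of uniformizer) yields the required scheme lift $\Spec R \to X_F$, with uniqueness inherited from the fan level. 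The principal obstacle lies in the forward direction---specifically in upgrading the DVR lift to a lift over all of $\AA^1_\ZZ$ and then descending it to a map of fans---which rests on the combination of equivariance and fineness described above, and is where the hypothesis that the fans are fine enters essentially.
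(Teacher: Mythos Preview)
Your reverse direction has a genuine gap. When you write ``enlarge $R$ if necessary so that $\Spec K$ lands in a `torus chart' $\Spec \ZZ[Q^{\rm gp}]$,'' you are implicitly assuming that the image of $\Spec K$ under $\tau_X$ lies in the torus $T_X$---equivalently, that the local monoid map $\M_{X,x'} \to K$ (where $x' := \tau_X(g(\Spec K))$) takes values in $K^\times$. In general this fails: if $x' \notin T_X$, non-units of $\M_{X,x'}$ are sent to $0 \in K$, so you do not obtain a map $Q \to K^\times$ and there is nothing to compose with the valuation. No enlargement of $R$ fixes this, since the stratum containing the image of $\Spec K$ is already determined. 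The paper resolves this by passing to the \emph{boundary} fans $Z(X,x')$ and $Z(Y,f(x'))$ of \S\ref{section:boundaryconstruction}: in $Z(X,x')$ the point $x'$ \emph{is} in the torus, and a nontrivial lemma (Lemma~\ref{lem:proper2}) shows that valuative properness of $f$ is inherited by $Z(f,x')$. One then checks that lifts for the original DVR diagram biject with lifts for the diagram into $Z(X,x') \to Z(Y,f(x'))$, and the latter are controlled by Lemma~\ref{lem:proper1}. This reduction to the torus case is the real content of the theorem and is where fineness is used essentially (via Lemma~\ref{lem:duality}); your sketch does not supply a substitute for it.

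Your forward direction (proper $\Rightarrow$ URLP) is workable in spirit but unnecessarily elaborate. The paper's argument (the $(\Leftarrow)$ direction of Lemma~\ref{lem:proper1}) is direct: given a fan square over $\Spec \ZZ \to \Spec \NN$, fix any DVR (e.g.\ $R = k[[T]]$) and send $1 \in \NN$ to the uniformizer; this produces an $\LMS$ square over $\Spec K \to \Spec R$, and one checks by hand that lifts in the two squares correspond bijectively (a lift is just a choice of point $x$ making a certain composite $\M_{X,x} \to \ZZ$ local into $\NN$, which is the same datum on both sides). No equivariance, no extension from $\Spec R$ to $\AA^1_\ZZ$, and no ``full faithfulness on equivariant morphisms'' is needed. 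Your route through realizing the fan square, localizing, lifting, extending by equivariance, and descending could perhaps be made rigorous, but each of those steps (especially the extension to $\AA^1_\ZZ$ and the descent) would require justification you have not given, and the payoff is a longer proof of the easier half.
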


\noindent In general, the realization of a proper map of fans will be ``proper" for an appropriate notion of ``proper" in the geometric setting where the map is realized.

There are often two routes to proving a given statement about monoids/fans with an analogy in rings/schemes.  One is to translate the proof of the analogous statement for rings/schemes into a proof for monoids/fans.  The other route is to just use the result in rings/schemes directly in conjunction with properties of the scheme realization of fans to establish the desired result for fans.  When possible, I have endeavored to give purely ``fan-theoretic" proofs of theorems about fans, but in some cases I have found it more expedient to resort to direct application of scheme-theoretic results.  This happens a fair amount in the treatment of flatness in \S\ref{section:flatmaps}, where the following results are established:

\begin{unnumberedthm} A flat map of locally finite type fans is open on topological spaces. \end{unnumberedthm}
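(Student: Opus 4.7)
The plan is to deduce this statement from the classical scheme-theoretic fact that a flat, locally finitely presented morphism of schemes is open on topological spaces, by passing through the scheme realization functor $X \mapsto X_{\Sch}$ (on affines, $\Spec P \mapsto \Spec \ZZ[P]$) together with its comparison map on underlying spaces.

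First I would reduce to the affine case: openness of the map on topological spaces, flatness, and local finite type are all local on source and target, so it suffices to consider a map $f : \Spec Q \to \Spec P$ arising from a flat, finite type homomorphism of monoids $\phi : P \to Q$. The realization $f_{\Sch} : \Spec \ZZ[Q] \to \Spec \ZZ[P]$ is then a flat, finite type morphism of Noetherian schemes, hence open on underlying topological spaces by the classical result. The second ingredient, which should be a basic topological lemma established somewhere earlier in the paper, is that the natural continuous comparison map $\pi_P : \Spec \ZZ[P] \to \Spec P$, sending a prime $\q \subset \ZZ[P]$ to its contraction $\q \cap P$, is surjective and \emph{open}, and similarly for $\pi_Q$. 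Granted these, for any open $U \subset \Spec Q$, the preimage $\pi_Q^{-1}(U)$ is open in $\Spec \ZZ[Q]$, whence $f_{\Sch}(\pi_Q^{-1}(U))$ is open in $\Spec \ZZ[P]$ (by openness of $f_{\Sch}$), and finally $\pi_P(f_{\Sch}(\pi_Q^{-1}(U)))$ is open in $\Spec P$ (by openness of $\pi_P$). Using commutativity of the square $\pi_P \circ f_{\Sch} = f \circ \pi_Q$ together with surjectivity of $\pi_Q$, this image simplifies to $f(U)$, which is therefore open, as desired.

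The main obstacle is the topological comparison lemma asserting that $\pi_P$ is both surjective and open. Surjectivity is not difficult: each prime $\p \subset P$ lifts to the ``monomial prime'' generated by $\p$ in $\ZZ[P]$, which is prime because $\ZZ[P]/(\p) \cong \ZZ[P \setminus \p]$ is a domain (for a fine monoid $P$, the face $P \setminus \p$ embeds in a torsion-free abelian group), and whose contraction in $P$ is $\p$. Openness of $\pi_P$ is the more delicate point: since $\Spec P$ is a finite $T_0$-space whose topology is determined by the face lattice of $P$, one must show that the image under $\pi_P$ of each basic open $D(g) \subset \Spec \ZZ[P]$ is upward-closed under the face ordering. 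Both facts are natural topological assertions about affine fans and their scheme realizations and should be available from earlier in the paper; combined with the classical scheme-theoretic openness theorem, they yield the result.
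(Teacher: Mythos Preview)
Your proposal is correct and takes essentially the same approach as the paper (Corollary~\ref{cor:flatimpliesopen} via Theorem~\ref{thm:openclosed}): both use that the scheme realization $\AA(f)$ is flat and locally of finite presentation, hence open, and then transfer openness back to fans via the orbit map $\tau$, which is surjective and open for locally finite type fans (Lemma~\ref{lem:orbitmap1}). The one minor difference is that your identity $f(U)=\tau_P\bigl(\AA(f)(\tau_Q^{-1}(U))\bigr)$ needs only commutativity of the square and surjectivity of $\tau_Q$, whereas the paper instead proves $\tau_Y^{-1}(f(U))=\AA(f)(\tau_X^{-1}(U))$ (Lemma~\ref{lem:formula}), which requires the extra input that flat stalk maps have injective groupification; the payoff of the paper's formulation is that it simultaneously handles the \emph{closed} case in Theorem~\ref{thm:openclosed}, where your direct image argument would not apply.
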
 

\begin{unnumberedthm} A flat surjective map of locally finite type fans is an effective descent morphism. \end{unnumberedthm}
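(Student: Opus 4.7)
Interpreting ``effective descent'' in the sense natural to Section~\ref{section:coherentsheaves}, the claim is that for $f: X \to Y$ flat, surjective, and locally of finite type, pullback along $f$ gives an equivalence between $\Qco(Y)$ and the category of quasi-coherent sheaves on $X$ equipped with descent data along the two projections $X \times_Y X \rightrightarrows X$. I would prove this by reducing to classical faithfully flat descent on schemes, via the scheme realization functor, which preserves inverse limits and in particular the fiber product $X \times_Y X$.

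First I would make the problem affine. Descent is local on the target, so it suffices to work over a single affine open $\Spec M \subseteq Y$. Since $f$ is flat and locally of finite type, the preceding theorem shows it is open, hence $f^{-1}(\Spec M)$ admits a finite affine open cover $\Spec N_1, \ldots, \Spec N_r$ whose union surjects onto $\Spec M$. Replacing $X$ over $\Spec M$ by $\Spec N$ for $N := \prod_i N_i$, the problem reduces to the Amitsur-type statement that, for every $M$-module $P$, the sequence
\[ 0 \to P \to P \otimes_M N \rightrightarrows P \otimes_M N \otimes_M N \]
is exact, together with effectivity of descent data.

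To establish this, I would transport along the realization functor $P \mapsto \ZZ[M] \otimes_{\NN[M]} P$ from $M$-modules to $\ZZ[M]$-modules. This functor is exact and converts $\otimes_M$ into $\otimes_{\ZZ[M]}$. The fan-theoretic surjectivity of $\Spec N \to \Spec M$ implies scheme-theoretic surjectivity of $\Spec \ZZ[N] \to \Spec \ZZ[M]$ (each prime of $\ZZ[M]$ restricts to a prime of $M$, which is hit by a prime of $N$ by hypothesis; a prime of $\ZZ[N]$ above the original one is then built by localizing at the faces involved and invoking lying-over for the resulting flat extension). Thus $\ZZ[M] \to \ZZ[N]$ is faithfully flat, so classical descent yields the ring-theoretic version of the sequence above, and one transports back along the realization functor; similarly, descent data on $\Qco(X)$ realizes to descent data on the scheme realization of $X \to Y$, and effectivity on the ring side lifts to effectivity on the fan side.

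The main obstacle I anticipate is this final transfer: verifying that the realization functor on $M$-modules is faithful enough — and sufficiently compatible with descent data — to deduce the fan-theoretic statement from its ring-theoretic image. The distinguished zero of a monoid module has no direct ring-theoretic analog, so one must check carefully that no information is lost in passing to $\ZZ[M]$-modules for the sequences in question. If this transfer turns out to be delicate, a purely monoid-theoretic alternative is to prove Amitsur exactness directly via the Beck monadicity theorem applied to the base-change adjunction between $M$-modules and $N$-modules: one reduces to the split case by base-change along $\phi: M \to N$ itself, which becomes split by multiplication $N \otimes_M N \to N$, and then runs the familiar Amitsur argument.
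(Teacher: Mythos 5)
There is a genuine gap, and it sits exactly at the step where you pass from a finite affine open cover to a single affine cover. For rings, replacing $\coprod_i \Spec A_i$ by $\Spec \prod_i A_i$ is harmless because $\Spec$ of a product of rings is the disjoint union of the $\Spec A_i$ (idempotents). For monoids this fails: a face of $\prod_i N_i$ is a product of faces, so $\Spec(\prod_i N_i)$ is the \emph{product} $\prod_i \Spec N_i$, not the coproduct, and indeed a disjoint union of two affine fans is never affine (Remark~\ref{rem:affinemaps}). So "replacing $X$ over $\Spec M$ by $\Spec N$ for $N := \prod_i N_i$" substitutes a different morphism for the one you need to descend along, and the reduction collapses. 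The repair is the observation the paper's proof of Theorem~\ref{thm:flatdescent} turns on: every point $y$ of a locally finite type fan has a \emph{smallest} open neighborhood $U_y \cong \Spec \M_{Y,y}$ (Proposition~\ref{prop:locallyfinitetypefans}), and since $f$ is open (Corollary~\ref{cor:flatimpliesopen}), a \emph{single} affine open $\Spec P \subseteq f^{-1}(U_y)$ containing a point over $y$ already surjects onto $U_y$; this produces one faithfully flat monoid homomorphism $\M_{Y,y} \to P$ with no products of monoids at all. With that substitution your affine Amitsur argument (transport through $\ZZ[\,\cdot\,]$, faithful flatness of $\ZZ[Q] \to \ZZ[P]$, contractibility of the Amitsur cosimplicial object) is essentially the paper's Lemma~\ref{lem:Amitsur}.

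Two further cautions. First, the paper's "effective descent morphism" (Definition~\ref{defn:effectivedescent}) is the condition that $\Hom(Y,Z) \to \Hom(X,Z) \rightrightarrows \Hom(X\times_Y X, Z)$ is an equalizer for every fan $Z$ (Theorem~\ref{thm:flatdescent}); you have instead set out to prove descent of quasi-coherent sheaves, which is the companion Theorem~\ref{thm:flatdescentofmodules}. Both rest on the same affine lemma and the same globalization, so the gap above is fatal either way, but the statement you should be gluing is a map of structure sheaves $g^{-1}\M_Z \to \M_Y$, not a module. Second, the obstacle you flag at the end is real and should not be deferred: faithfulness of $\ZZ[\,\cdot\,]$ does not let you pull effectivity of a descent datum back from $\ZZ[M]$-modules to $M$-modules, and "lying-over for a flat extension" is false for ring maps that are merely flat (surjectivity of $\AA(f)$ for a flat cover is Theorem~\ref{thm:flatmaps}, proved via the torus action, not via lying-over). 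The paper avoids the transfer problem entirely by running the standard proof of effectivity directly in the category of monoid modules, using only the equalizer statement of Lemma~\ref{lem:Amitsur}; your proposed monadicity/splitting alternative is the right instinct and amounts to the same thing.
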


In \S\ref{section:groupobjects} we study group objects and actions in the category of fans.  The situation is surprisingly simple---it turns out (Lemma~\ref{lem:groupobjects}) that every connected group fan is isomorphic to the ``diagonalizable" group fan associated to an abelian group.  One can form quotients by actions of finite, diagonalizable group fans, and they behave much like finite quotients in algebraic geometry (Theorem~\ref{thm:quotients}).

Some of the most technically difficult results are in \S\ref{section:reducedfibers}.  There we characterize the ``primes" $p$ and the maps of fine fans whose realizations have ``reduced characteristic $p$ fibers" (Corollary~\ref{cor:reducedfibers}), as well as the integral monoids $P$, ideals $I \subseteq P$, and fields $k$ for which $k[P]/k[I]$ is reduced.  These results are not particularly hard in characteristic zero, but in positive characteristic one needs to make use of some ``structure theory" of fine monoids, concerning the existence of splittings $P \cong P^* \oplus \ov{P}$ (\S\ref{section:splittings}).  In \S\ref{section:finitemaps} we study finite and quasi-finite and maps and show, among other things, that an injective, quasi-finite map of fine monoids induces an open embedding on topological spaces which is a homeomorphism iff the map is finite.

In \S\ref{section:CZEmaps}, we study \emph{CZE} maps of fans, which are a sort of analog of \'etale maps of schemes.  Several characterizations of these maps are given in Theorem~\ref{thm:CZEmaps}---these make use of the results discussed in the previous paragraph.  We compute the CZE cohomology groups of any fan with coefficients in any connected group fan in Theorem~\ref{thm:CZEcohomology}.  The result is reminiscent of the interpretation of Hilbert's ``Theorem 90" in terms of \'etale cohomology.  The basic theory of CZE maps laid out here can be used to set up a useful theory of stacks over the category of fans---this is pursued in \cite{GM2}.

\subsection*{Acknowledgements} The author is supported by a Marie Curie / T\"{U}BITAK co-funded Brain Circulation Scheme fellowship.

\newpage

\setcounter{tocdepth}{2}
\tableofcontents

\newpage

\section{Monoids and Spec} \label{section:monoidsandspec} Before getting into the ``algebraic geometry" of fans, we have to develop a certain amount of ``commutative algebra" for monoids.

\subsection{Monoids} \label{section:monoids} In this paper a \emph{monoid} is a set $P$ equipped with an associative, commutative binary operation $+$ with a (necessarily unique) additive identity element $0$.  A \emph{morphism} of monoids is a map of sets respecting $+$ and $0$.  The category of monoids is denoted $\Mon$; it has all direct and inverse limits.  An element $p$ of a monoid $P$ is called a \emph{unit} iff there is some (necessarily unique) element $-p \in P$ so that $p+(-p)=0$.  The units of a monoid form a group $P^*$ which is also a submonoid of $P$.  A \emph{group} is a monoid $P$ for which $P=P^*$, so that groups form a full subcategory $\Ab \subseteq \Mon$ of monoids.  (All \emph{groups} in this paper are abelian.)  

A monoid $P$ is called \dots \begin{enumerate}[label=\dots] \item \emph{finitely generated} iff there is a surjection $\NN^n \to P$ for some finite $n$. \item \emph{integral} iff $P \to P^{\rm gp}$ is injective. \item \emph{fine} iff it is finitely generated and integral. \item \emph{sharp} iff $P^* = \{ 0 \}$. \item \emph{saturated} iff it is integral and $P  = \{ p \in P^{\rm gp} : np \in P \subseteq P^{\rm gp} {\rm \; for \; some  \; } n \in \ZZ_{>0} \}$.  \item \emph{fs} iff it is fine and saturated. \item \emph{torsion-free} iff it is integral and $P^{\rm gp}$ is torsion-free \item \emph{toric} iff it is fs and torsion-free. \end{enumerate}

Every monoid $P$ admits a map $P \to P^{\rm gp}$ to a group (resp.\ a map $P \to P^{\rm int}$ to an integral monoid, $P \to P^{\rm sat}$ to a saturated monoid, $P \to \ov{P} = P/P^*$ to a sharp monoid, $P \to P^{\rm tf}$ to a torsion-free monoid) through which any other map from $P$ to a group (resp.\ an integral monoid, \dots) factors uniquely.  We have used the map $P \to P^{\rm gp}$ in defining integral and saturated above.  These constructions, described further in \S\ref{section:specinvariance}, provide left adjoints to the inclusions of various full subcategories of $\Mon$ (namely: groups, integral monoids, \dots); in particular all of these constructions preserve direct limits.

For any monoid $P$ and any submonoid $S \subseteq P$, there is a map of monoids $P \to S^{-1} P$, called the \emph{localization} of $P$ at $S$, which is initial among monoid homomorphisms out of $P$ taking elements of $S$ to units.  Evidently $P^{-1}P = P^{\rm gp}$.

\subsection{Modules, ideals, and faces} \label{section:modules} A \emph{module} over a monoid $P$ is a set $M$ equipped with an \emph{action map} \be P \times M & \to & M \\  (p,m) & \mapsto & p+m \ee satisfying $p+(q+m) = (p+q)+m$ and $0+m = m$ for every $p,q \in P$, $m \in M$.  A \emph{map} of $P$-modules is a map of sets respecting the action maps.  The category of $P$-modules is denoted $\Mod(P)$.  It is \emph{not} an abelian category.  It has all direct and inverse limits.  

\begin{defn} \label{defn:finitelygenerated} A subset $S \subseteq M$ of a $P$-module $M$ is said to \emph{generate} $M$ (resp.\ is called a \emph{basis for} $M$) iff every $m \in M$ can be written $m = p+s$ for some (resp.\ a unique pair) $p \in P$, $s \in S$.  $M$ is called \emph{finitely generated} (resp.\ \emph{free}) iff some finite subset $S \subseteq M$ generates $M$ (resp.\ $M$ has a basis).  A $P$-module $M$ is called \emph{flat} iff it can be expressed as a filtered direct limit of free $P$-modules. \end{defn}

\begin{example} \label{example:flatmodule} Let $f : A \to B$ be a map of (abelian) groups, $S$ a subset of $B$.  Then $S$ generates $B$ as an $A$-module iff the quotient projection $B \to \Cok f$ takes $S$ surjectively onto $\Cok f$.  Similarly, if $f$ is injective, then $S$ is a basis for $B$ as an $A$-module iff $B \to \Cok f$ takes $S$ bijectively onto $\Cok f$.  (Of course one can always find such an $S$ by taking $S$ to be the image of any set-theoretic section of the surjective map of sets $B \to \Cok f$.) \end{example}

\begin{defn} \label{defn:face}  The addition for $P$ is an action map making $P$ a module over itself.  An \emph{ideal} is a $P$-submodule of $P$.  An ideal $I$ is called \emph{prime} iff its complement $F := P \setminus I$ is a submonoid of $P$, which is equivalent to the usual condition $p+q \in I \implies p$ or $q \in I$.  A \emph{face} is a submonoid of $P$ whose complement is a prime ideal (equivalently: for all $p,p' \in P$ if $p+p' \in F$, then $p,p' \in F$), so that taking complements defines a bijection between prime ideals of $P$ and faces of $P$.  We use the notation $F \leq P$ to mean that $F$ is a face of $P$.  \end{defn}

For any monoid $P$, the units $P^*$ form the smallest face of $P$ and $P$ itself is the largest face of $P$.  As in the case of rings, the preimage of a prime ideal (resp.\ face) under a monoid homomorphism is again a prime ideal (resp.\ face).

\begin{lem} \label{lem:facesandlocalization} Let $P$ be a monoid, $S$ a submonoid of $P$, $l : P \to S^{-1}P$ the localization map.  Then $F := l^{-1}((S^{-1}P)^*)$ is the smallest face of $P$ containing $S$.  In particular, $S = F$ if $S$ is a face of $P$. \end{lem}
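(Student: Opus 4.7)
The plan is to verify three things in order: that $F$ is a face, that $F$ contains $S$, and that $F$ is contained in every face of $P$ containing $S$. The last assertion about $S=F$ when $S$ is a face is then immediate, since any face is trivially the smallest face containing itself.

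First I would note that $(S^{-1}P)^*$ is a face of $S^{-1}P$ (the units are the smallest face of any monoid). By the fact recalled right before the lemma that preimages of faces under monoid homomorphisms are faces, $F = l^{-1}((S^{-1}P)^*)$ is a face of $P$. That $S \subseteq F$ is built into the localization: the defining universal property of $l:P\to S^{-1}P$ sends every element of $S$ to a unit.

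The substantive step is showing that if $G\leq P$ is any face of $P$ with $S\subseteq G$, then $F\subseteq G$. Here I would unwind the explicit construction of $S^{-1}P$ as $(P\times S)/\!\sim$, with $(p,s)\sim(p',s')$ iff $p+s'+t=p'+s+t$ for some $t\in S$, and $l(p)=[(p,0)]$. A class $[(p,0)]$ is a unit iff there exist $q\in P$, $s\in S$, and $u\in S$ with $p+q+u=s+u$ in $P$. Absorbing $u$ into $q$ on the left (setting $q':=q+u\in P$) and noting $s+u\in S$, this simplifies to: $p\in F$ iff there exist $q'\in P$ and $s'\in S$ with $p+q'=s'$. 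Given such a presentation, $p+q'=s'\in S\subseteq G$, and because $G$ is a face, membership of $p+q'$ in $G$ forces $p\in G$. Hence $F\subseteq G$, as desired.

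The main (and only) point requiring any care is the unpacking of ``$l(p)$ is a unit in $S^{-1}P$'' into the explicit elementary condition ``$p+q'\in S$ for some $q'\in P$''; once this is in hand, the face property of $G$ closes the argument in one line. The final sentence of the lemma follows by applying the minimality statement with $G:=S$.
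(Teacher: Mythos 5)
Your proof is correct and follows essentially the same route as the paper's: $F$ is a face because preimages of faces are faces, $S\subseteq F$ by the universal property, and minimality comes from unwinding ``$l(f)$ is a unit'' into ``$f+q'\in S$ for some $q'\in P$'' and then invoking the face property of $G$. The paper states the elementary condition as $f+p+s\in S$ for some $p\in P$, $s\in S$ rather than absorbing the witnesses into a single element, but the argument is the same.
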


\begin{proof} $F$ is a face of $P$ (which clearly contains $S$) since it is the preimage of the face $(S^{-1}P)^* \leq S^{-1}P$ under $l$.  If $F'$ is any face of $P$ containing $S$ and $f \in F$, then $l(f) \in (S^{-1}P)^*$, so $f + p + s \in S \subseteq F'$ for some $p \in P$, $s \in S$, hence $f$ (and also $p,s$) is (are) in $F'$ since $F'$ is a face. \end{proof}

The next lemma is a fundamental fact about fine monoids which we shall need at various points.

\begin{lem} \label{lem:duality} Let $P$ be a fine monoid, $F \subseteq P$ a face.  There exists a monoid homomorphism $h : P \to \NN$ such that $h^{-1}(0) = F$. \end{lem}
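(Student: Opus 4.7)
The approach is to factor $h$ as $P \to P' \to \NN$, where $P'$ is a sharp, torsion-free, fine monoid constructed universally from $(P,F)$ so that the preimage of $0 \in P'$ is already equal to $F$; the second map is then produced by a standard convex-geometric argument applied to the cone spanned by $P'$ inside $(P')^{\gp} \otimes \RR$.

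For the reduction, I take $P' := (\ov{F^{-1}P})^{\rm tf}$, with canonical morphism $\pi : P \to P'$. Tracing the preimage of $0$ through the three constituent maps: by Lemma~\ref{lem:facesandlocalization}, the preimage in $P$ of $(F^{-1}P)^*$ under the localization $P \to F^{-1}P$ is $F$; the quotient $F^{-1}P \to \ov{F^{-1}P}$ has fiber $(F^{-1}P)^*$ over $0$ by definition of $\ov{(\cdot)}$; and the map $\ov{F^{-1}P} \to P'$ has trivial fiber over $0$, because any torsion element of $(\ov{F^{-1}P})^{\gp}$ lying in $\ov{F^{-1}P}$ has, by integrality, a multiple equal to zero in the monoid itself, hence is a unit, hence is zero by sharpness. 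A parallel argument shows that $P'$ remains sharp, and it is torsion-free by construction.

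To conclude that $P'$ is fine I need $F$ to be finitely generated, which I check directly: if $\{p_1,\ldots,p_n\}$ generates $P$, then $\{p_i : p_i \in F\}$ generates $F$, because any $f = \sum a_i p_i \in F$ forces $p_i \in F$ whenever $a_i > 0$ by the face property. Consequently $F^{-1}P$ is finitely generated (by the $p_i$ together with the negatives of the generators of $F$), hence fine, and so is $P'$. The problem now reduces to showing: for a sharp, torsion-free, fine monoid $Q$, there exists $h' : Q \to \NN$ with $(h')^{-1}(0) = \{0\}$. Since $Q$ is integral and $Q^{\gp}$ is torsion-free, I have inclusions $Q \hookrightarrow Q^{\gp} \hookrightarrow V := Q^{\gp}\otimes\RR$, and finite generation makes $\sigma := \RR_{\geq 0} \cdot Q \subseteq V$ a rational polyhedral cone. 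A rational-approximation argument---a non-trivial $\RR_{\geq 0}$-linear relation among generators rationalizes and then clears denominators to a non-trivial $\NN$-relation in $Q^{\gp}$, which integrality transports into $Q$, where sharpness forces each term to vanish---shows that $\sigma$ is strongly convex. Standard convex geometry now produces an integer linear functional $\ell \in \Hom(Q^{\gp},\ZZ)$ in the interior of the dual cone $\sigma^\vee$, strictly positive on $\sigma \setminus \{0\}$; its restriction to $Q$, precomposed with $\pi$, is the required $h : P \to \NN$.

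The principal obstacle is the bookkeeping in the reduction: one must verify at each of the three universal quotients that the preimage of $0$ is preserved and that the target monoid retains the structural properties (sharp, integral, finitely generated, torsion-free) needed to apply the convex-geometric conclusion. Once $P'$ has been set up correctly, the production of $\ell$ is the standard toric fact that a strongly convex rational polyhedral cone carries an integer linear functional strictly positive off the origin.
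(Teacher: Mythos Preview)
Your proposal is correct and follows essentially the same route as the paper: reduce to the sharp case via the quotient by $F$ (your $\ov{F^{-1}P}$ is precisely the paper's $P/F$), then invoke the convex-geometric fact that a sharp fine monoid admits a local map to $\NN$. The paper simply cites this last step to \cite[2.2.2]{Ogus} and \cite[3.1.20]{Ols}, whereas you supply the argument (with an extra, harmless torsion-free quotient to make the embedding into $Q^{\rm gp}\otimes\RR$ injective); your sketch of strong convexity and the existence of an interior lattice point in $\sigma^\vee$ is sound.
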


\begin{proof} Replacing $P$ with $P/F$, we reduce to proving that for any fine, sharp monoid $P$, there is a map $h : P \to \NN$ for which $h^{-1}(0) = \{ 0 \}$.  This is \cite[2.2.2]{Ogus} or \cite[3.1.20]{Ols}. \end{proof}  

\begin{example} One cannot replace ``fine" with ``finitely generated" in Lemma~\ref{lem:duality}.  The ``unique" monoid $P$ with two elements not isomorphic to $\ZZ / 2 \ZZ$ provides a counterexample because any map $h : P \to \NN$ will factor through $P^{\rm int} = 0$, so we have $h^{-1}(0) = P$ for any such $h$, so the lemma will fail for $F = P^* = \{ 0 \} \neq P$.  \end{example}

\begin{defn} \label{defn:flat} If $h : Q \to P$ is a monoid homomorphism, then $P$ becomes a $Q$-module with action $q+p := h(q) + p$.  We say that $h$ is \emph{flat} (resp.\ \emph{finite}) iff this makes $P$ a flat (resp.\ finitely generated) $Q$-module. \end{defn}

It is easy to see from this definition that a composition of flat (resp.\ finite) monoid homomorphisms is flat (resp.\ finite).   As an exercise with our definition of flatness, the reader can prove the following by analogy with the case of rings:

\begin{prop} \label{prop:flat} The localization $Q \to S^{-1} Q$ of a monoid $Q$ at any submonoid $S \subseteq Q$ is flat. \end{prop}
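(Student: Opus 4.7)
My plan is to realize $S^{-1}Q$ directly as a filtered colimit of free $Q$-modules of rank one, in close analogy with the description of a localization of a ring as a filtered colimit of its principal localizations. The index category $I$ will have as objects the elements $s \in S$ and as morphisms $s \to s'$ the elements $t \in Q$ satisfying $s + t = s'$, with composition given by addition in $Q$. For each $s \in I$ I take $F_s := Q$, viewed as a free $Q$-module of rank one; a morphism $t : s \to s'$ induces the transition map $F_s \to F_{s'}$, $p \mapsto p + t$. The canonical maps $F_s \to S^{-1}Q$, $p \mapsto [p,s]$ (using the standard description of $S^{-1}Q$ as a quotient of $Q \times S$) form a cone because $[p+t, s+t] = [p,s]$ in $S^{-1}Q$, and I will argue that the induced $Q$-module map $\varinjlim_{s \in I} F_s \to S^{-1}Q$ is an isomorphism. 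Flatness will then follow immediately from the definition, since each $F_s$ is free.

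The verifications split into three parts. For filteredness of $I$, non-emptiness is clear, and a cospan over $s, s' \in S$ is supplied by $s + s' \in S$ equipped with the morphisms $s' : s \to s+s'$ and $s : s' \to s+s'$. Surjectivity of the induced map is automatic, since every element of $S^{-1}Q$ has the form $[p,s]$ and is therefore in the image of $F_s$. Injectivity is routine bookkeeping with the equivalence relation: if $[p,s] = [p',s']$ is witnessed by some $u \in S$ with $p + s' + u = p' + s + u$ in $Q$, then $p$ and $p'$ are carried to the same element of $F_{s+s'+u}$ along the morphisms $s'+u : s \to s+s'+u$ and $s+u : s' \to s+s'+u$, so they coincide in the colimit.

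The only point that calls for genuine care is the coequalization axiom for filteredness of $I$: given parallel morphisms $t, t' : s \to s'$, i.e., $s + t = s + t' = s'$ in $Q$, one must exhibit a morphism $u : s' \to s''$ with $t + u = t' + u$ in $Q$. Because $Q$ is not assumed to be integral, one cannot cancel the $s$ on the left and take $u = 0$. The key observation is that $u := s$ itself does the job: the map $s : s' \to s + s'$ is a legitimate morphism in $I$ (since $s + s' \in S$ as $S$ is a submonoid), and commutativity upgrades the given relation $s + t = s + t'$ to $t + s = t' + s$. Once this step is in hand, the rest of the argument is essentially formal.
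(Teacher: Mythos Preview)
Your proof is correct and is precisely the argument the paper has in mind: the paper does not give a proof but leaves the statement as an exercise ``by analogy with the case of rings,'' and your realization of $S^{-1}Q$ as a filtered colimit of rank-one free $Q$-modules indexed by $S$ is exactly that analogy. The only delicate point is the coequalizer condition in the non-integral case, and your observation that $u = s$ works (since $S$ is a submonoid and addition is commutative) handles it cleanly.
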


See \cite[\S5]{G2} for an extensive discussion of modules over monoids and flatness.  Here are a few basic points:  The category $\Mod(P)$ has an evident notion of bilinear maps and a corresponding notion of tensor products, making it symmetric monoidal.  In particular, a map of monoids $Q \to P$ gives rise to a \emph{restriction of scalars} functor $\Mod(P) \to \Mod(Q)$ and a corresponding left adjoint \emph{extension of scalars} functor $\Mod(Q) \to \Mod(P)$, denoted $M \mapsto M \otimes_Q P$.  This extension of scalars functor takes free (resp.\ finitely generated) modules to free (resp.\ finitely generated) modules and commutes with filtered direct limits, so it also takes flat modules to flat modules.  As in the case of rings, the tensor product of modules is compatible with the pushout of monoids in the sense that if $h_i : Q \to P_i$ ($i=1,2$) are monoid homomorphisms, then the monoid $P_1 \oplus_Q P_2$ is naturally isomorphic, as a $P_2$ module, to the extension of scalars $P_1 \otimes_Q P_2$ along $Q \to P_2$ of $P_1 \in \Mod(Q)$.  It follows that flat (resp.\ finite) maps of monoids are stable under pushout.

\subsection{Splittings} \label{section:splittings} 

\begin{defn} \label{defn:split} A monoid $P$ for which the natural map $P^* \to P^{\rm gp}$ is injective is called \emph{quasi-integral}.  For a quasi-integral monoid $P$, a \emph{splitting} of $P$ is a section of the sharpening map $P \to \ov{P} := P/P^*$.  A quasi-integral monoid admitting a splitting is called \emph{split}. \end{defn}

Every integral monoid is quasi-integral.  A splitting $s : \ov{P} \to P$ of a quasi-integral monoid $P$ gives rise to an isomorphism of monoids \bne{sharpeningsectionmap} (\subseteq,s) : P^* \oplus \ov{P} & \to & P \ene over $\ov{P}$ (and conversely).  (Quasi-integrality is needed to show that it is injective.  Any monoid of the form $P^* \oplus \ov{P}$ with $P^*$ a group and $\ov{P}$ sharp is quasi-integral.)  

The sharpening map $P \to \ov{P}$ is the cokernel of the inclusion $P^* \into P$ and groupification preserves direct limits, so for any quasi-integral monoid $P$, we obtain an exact sequence \bne{charseq} & 0 \to P^* \to P^{\rm gp} \to \ov{P}^{\rm gp} \to 0, \ene called the \emph{characteristic sequence} of $P$.  If $s$ is a splitting of $P$, then $s^{\rm gp}$ provides a splitting of \eqref{charseq}.  If $t : \ov{P}^{\rm gp} \to P^{\rm gp}$ is a splitting of \eqref{charseq} and $P$ (hence also $\ov{P}$) is integral, then one checks that $t$ takes $\ov{P} \subseteq \ov{P}^{\rm gp}$ into $P \subseteq P^{\rm gp}$, yielding a splitting of $P$.  For $P$ integral, we thus obtain a bijective correspondence between splittings of $P$ and splittings of the characteristic sequence \eqref{charseq} of $P$.

\begin{example} As we shall see in Corollary~\ref{cor:saturatedlocalization}, $\ov{P}^{\rm gp}$ is free when $P$ is fs, so any fs monoid is split.  In particular, any toric monoid is split. \end{example}

\begin{example} Suppose $\ov{P}$ is a sharp, integral monoid and \bne{ovPSES} & 0 \to A \to B \to \ov{P}^{\rm gp} \to 0 \ene is an exact sequence of abelian groups.  If we define a monoid $P$ by the \emph{cartesian} diagram $$ \xym{ P \ar[d] \ar[r] & \ov{P} \ar[d] \\ B \ar[r] & \ov{P}^{\rm gp} } $$ then one can check that $P$ is an integral monoid (in fact a fine monoid if $\ov{P}$ is fine and $A$ is finitely generated) with sharpening $\ov{P}$ whose characteristic sequence is \eqref{ovPSES}.  This gives a means of producing non-split fine monoids.  For example, if we let $\ov{P}$ be the submonoid of $\ZZ \oplus \ZZ/2 \ZZ$ generated by $(1,0)$ and $(1,1)$, then $\ov{P}$ is a fine, sharp monoid with $\ov{P}^{\rm gp} = \ZZ \oplus \ZZ / 2 \ZZ$.  If we then take the non-trivial extension of $\ov{P}^{\rm gp}$ by $\ZZ / 2\ZZ$ and perform the above construction we obtain a non-split fine monoid $P$ with the given $\ov{P}$ as its sharpening. \end{example}

The rest of \S\ref{section:splittings} is devoted to showing that every fine monoid is ``not too far from being split."  This basic result about the structure of fine monoids will be useful in \S\ref{section:reducedfibers}.  We begin with a simple lemma:

\begin{lem} \label{lem:splittinggroups}  For any exact sequence of finitely generated abelian groups \bne{FGAGs} & 0 \to A \to B \to C \to 0, \ene there is an injective map of abelian groups $A \into A'$ with the following properties: \begin{enumerate} \item The cokernel of $A \into A'$ is finite and the integers $|A_{\rm tor}|$ and $|A_{\rm tor}'|$ have the same prime divisors. \item The exact sequence $$ 0 \to A' \to B' \to C \to 0 $$ obtained by pushing out \eqref{FGAGs} along $A \to A'$ splits. \end{enumerate} \end{lem}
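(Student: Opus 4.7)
The plan is to translate the problem into Ext and construct $A'$ concretely. The sequence \eqref{FGAGs} corresponds to a class $\xi \in \Ext^1(C, A)$, and pushing out along a map $A \to A'$ sends $\xi$ to the class of the resulting extension in $\Ext^1(C, A')$, so (ii) is equivalent to the vanishing of this pushout class. Writing $C = C_{\rm tor} \oplus \ZZ^r$ via the structure theorem, and using $\Ext^1(\ZZ^r, -) = 0$, I reduce to the case $C = C_{\rm tor}$ finite. Since $A/A_{\rm tor}$ is finitely generated and torsion-free, hence free, I choose a splitting $A = A_{\rm tor} \oplus L$, which decomposes $\xi$ into $\xi^{\rm tor} \in \Ext^1(C, A_{\rm tor})$ and $\xi^L \in \Ext^1(C, L)$.

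Let $N := \exp(C)$. I enlarge each piece separately. For the free part, take $L \hookrightarrow L$ to be multiplication by $N$: this has finite cokernel $L/NL$, preserves torsion-freeness, and induces multiplication by $N$ on $\Ext^1(C, L)$, which is zero since $N$ annihilates $C$. For the torsion part, decompose $A_{\rm tor} = \bigoplus_p A_{\rm tor}[p^\infty]$ and $A_{\rm tor}[p^\infty] \cong \bigoplus_j \ZZ/p^{a_{j,p}}$; with $c_p := v_p(N)$, enlarge each cyclic summand via the map $\ZZ/p^{a_{j,p}} \hookrightarrow \ZZ/p^{a_{j,p}+c_p}$ sending $1 \mapsto p^{c_p}$, and let $A'_{\rm tor}$ be the direct sum of the targets. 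Setting $A' := A'_{\rm tor} \oplus L$, the map $A \hookrightarrow A'$ is injective with finite cokernel $\bigoplus_{p,j} \ZZ/p^{c_p} \oplus L/NL$; crucially, a prime $p$ contributes a nontrivial summand to $A'_{\rm tor}$ iff some $a_{j,p}>0$ iff $p \mid |A_{\rm tor}|$, so $|A_{\rm tor}|$ and $|A'_{\rm tor}|$ have the same prime divisors, verifying (i).

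The remaining and main technical step is verifying that $\xi^{\rm tor}$ vanishes in $\Ext^1(C, A'_{\rm tor})$. By orthogonality of distinct primes in $\Ext$, this reduces to a one-variable check on cyclic $p$-groups: for $c \geq n$, the map $\ZZ/p^a \to \ZZ/p^{a+c}$ defined by $1 \mapsto p^c$ induces the zero map on $\Ext^1(\ZZ/p^n, -)$. Indeed, using $\Ext^1(\ZZ/p^n, M) = M/p^n M$, a representative $\xi \in \ZZ/p^a$ is sent to $p^c \xi \in \ZZ/p^{a+c}$, which belongs to $p^c(\ZZ/p^{a+c}) \subseteq p^n(\ZZ/p^{a+c})$ when $c \geq n$, so its class vanishes. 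Since $c_p = v_p(N) \geq v_p(n_i)$ for every cyclic summand $\ZZ/n_i$ of $C$, this kills every component of $\xi^{\rm tor}$, completing the argument.
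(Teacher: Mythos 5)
Your proof is correct and follows essentially the same route as the paper's: reduce to $C$ finite, decompose into cyclic $p$-groups, and use $\Ext^1(\ZZ/p^n,M)=M/p^nM$ together with the map $\ZZ/p^a\to\ZZ/p^{a+c}$, $1\mapsto p^c$, whose image lands in $p^n(\ZZ/p^{a+c})$. The only (harmless) deviation is that you enlarge the free part $L$ of $A$ by multiplication by $\exp(C)$, whereas the paper observes that $\Ext^1(C,A_{\rm tor})\to\Ext^1(C,A)$ is an isomorphism, so the free part needs no enlargement at all.
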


\begin{proof}  Let us refer to an injective group homomorphism $A \into A'$ with those two properties as a \emph{solution for} \eqref{FGAGs}.  By applying $\Hom( \slot, A)$ to the exact sequence $$ 0 \to C_{\rm tor} \to C \to C/C_{\rm tor} \to 0 $$ and using the fact that $C/C_{\rm tor}$ is free (by the structure theory of finitely generated abelian groups), we find that the natural map \be \Ext^1(C,A) & \to & \Ext^1(C_{\rm tor},A) \ee is an isomorphism.  Hence, if we have a solution $A \into A'$ for the exact sequence $$ 0 \to A \to B'' \to C_{\rm tor} \to 0$$ obtained by pulling back \eqref{FGAGs} along the inclusion $C_{\rm tor} \into C$ then $A \into A'$ will also be a solution for \eqref{FGAGs}.  This reduces us to the case where $C = C_{\rm tor}$ is finite, which we assume for the remainder of the proof.  By applying $\Hom(C, \slot)$ to the exact sequence $$ 0 \to A_{\rm tor} \to A \to A/A_{\rm tor} \to 0 $$ and using the fact that $A/A_{\rm tor}$ is free, we find that the natural map \be \Ext^1(C,A_{\rm tor}) & \to & \Ext^1(C,A) \ee is an isomorphism.  Hence the exact sequence \eqref{FGAGs} is ``the" pushout of some exact sequence \bne{torSES} & 0 \to A_{\rm tor} \to B''' \to C \to 0 \ene along the inclusion $A_{\rm tor} \into A.$  In this situation, if $A_{\rm tor} \into A''$ is a solution for \eqref{torSES}, then the map $A \into A'$ obtained by pushing out $A_{\rm tor} \into A''$ along $A_{\rm tor} \into A$ will be a solution for \eqref{FGAGs}.  This reduces us to the case where $A = A_{\rm tor}$ is finite, which we assume for the remainder of the proof.  By the structure theory of finite abelian groups, we can find an isomorphism \be C & \cong & \ZZ / p_1^{e_1} \ZZ \oplus \cdots \oplus \ZZ / p_m^{e_m} \ZZ \ee for some primes $p_1,\dots,p_m$ (not necessarily distinct) and some positive integers $e_1,\dots,e_m$.  For $i \in \{ 1, \dots, m \}$, let \bne{sumseq} 0 \to A \to B_1 \to \ZZ / p_i^{e_i} \ZZ \to 0 \ene be the exact sequence obtained from \eqref{FGAGs} by pulling back along the summand inclusion $\ZZ / p_i^{e_i} \ZZ \into C$.  Suppose we have a solution $A \into A_i$ for each \eqref{sumseq}.  Then since $\Ext^1( \slot, A)$ preserves finite direct sums, we see that the nautral map $A \into A' := A_1 \oplus \cdots \oplus A_m$ is a solution to \eqref{FGAGs}.  This reduces us to the case (assumed from now on) where $C = \ZZ / p^e \ZZ$ for some prime $p$ and some positive integer $e$.  Choose an isomorphism \be A & \cong & \ZZ / \ell_1^{f_1} \ZZ \oplus \cdots \oplus \ZZ / \ell_n^{f_n} \ZZ \ee for primes $\ell_1,\dots,\ell_n$ and positive integers $f_1,\dots,f_n$, so that the $\ell_i$ are precisely the primes dividing $|A|=|A_{\rm tor}|$.  For $i \in \{ 1, \dots, n \}$, let \bne{sumseq2} 0 \to \ZZ / \ell_i^{f_i} \to B_i' \to C \to 0 \ene be the exact sequence obtained by pushing out \eqref{FGAGs} along the projection $A \to \ZZ / \ell_i^{f_i}$.  Suppose we have a solution $\ZZ / \ell_i f^i \into A_i'$ for each \eqref{sumseq2} (so $\ell_i$ is the only prime dividing $|A_i'|$).  Then since $\Ext^1(C, \slot)$ preserves finite direct sums, the natural map \be (A \into A') & := & \oplus_{i=1}^n ( \ZZ / \ell_i^{f_i} \ZZ \into A_i') \ee will be a solution for \eqref{FGAGs}.  It remains only to treat the case where $A = \ZZ / \ell^f \ZZ$ for some prime $\ell$ and some positive integer $f$, so that \eqref{FGAGs} is of the form \bne{FGAGSeqspecial} & 0 \to \ZZ / \ell^f \ZZ \to B \to \ZZ / p^e \ZZ \to 0. \ene  By applying $\Hom( \slot, G)$ to the obvious exact sequence $$0 \to \ZZ \to \ZZ \to \ZZ / p^e \ZZ \to 0$$ we obtain an isomorphism \bne{Extiso} \Ext^1(\ZZ/p^e \ZZ, G) & = & G/p^e G \ene natural in the abelian group $G$.  In particular, if the primes $p$ and $\ell$ are not equal, then \eqref{FGAGSeqspecial} must split because then $p^e ( \ZZ / \ell^f \ZZ) = \ZZ / \ell^f \ZZ$, so $\Id : A \to A$ will be a solution for \eqref{FGAGSeqspecial}.  It remains only to treat the case $p=\ell$.  Here the map \bne{Extsolution} p^e = \ell^e : \ZZ / \ell^f \ZZ & \into & \ZZ / \ell^{e+f} \ZZ \ene will be a solution for \eqref{FGAGSeqspecial} because of the natural isomorphism \eqref{Extiso} and the fact that the image of \eqref{Extsolution} is manifestly contained in the subgroup $p^e( \ZZ / \ell^{e+f} \ZZ)$.  \end{proof}

\begin{thm} \label{thm:splittingmonoids} For any fine monoid $Q$ we can find an injective map of fine monoids $Q \into P$ with the following properties: \begin{enumerate} \item \label{splittingmonoids1} The map $Q^* \into P^*$ has finite cokernel and the integers $|Q^*_{\rm tor}|$ and $|P^*_{\rm tor}|$ have the same prime divisors. \item \label{splittingmonoids2} The diagram of monoids $$ \xym{ Q^* \ar[r] \ar[d] & P^* \ar[d] \\ Q \ar[r] & P }$$ is a pushout diagram.  In particular, if we choose a subset $S \subseteq P^*$ mapping bijectively onto $P^*/Q^*$ under the projection, then $P$ is a free $Q$-module with basis $S$. \item \label{splittingmonoids3} The monoid $P$ is split.  That is, $P \cong P^* \oplus \ov{P}$ as monoids over $\ov{P}$. \end{enumerate} \end{thm}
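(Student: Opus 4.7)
The plan is to deduce this structure theorem from Lemma~\ref{lem:splittinggroups} applied to the characteristic sequence of $Q$, and then reconstruct $P$ as a pullback monoid via the construction in the Example preceding Lemma~\ref{lem:splittinggroups}.

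First, since $Q$ is fine, its characteristic sequence $0 \to Q^* \to Q^{\rm gp} \to \ov{Q}^{\rm gp} \to 0$ is an exact sequence of finitely generated abelian groups. I would apply Lemma~\ref{lem:splittinggroups} to it, obtaining an injective group homomorphism $Q^* \hookrightarrow P^*$ satisfying condition (\ref{splittingmonoids1}) such that the pushout exact sequence
\begin{equation*}
0 \to P^* \to B \to \ov{Q}^{\rm gp} \to 0
\end{equation*}
splits; here $B := Q^{\rm gp} \oplus_{Q^*} P^*$, and $Q^{\rm gp} \to B$ is injective since $Q^* \hookrightarrow P^*$ is.

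Next, define $P$ by the cartesian diagram
\begin{equation*}
\xymatrix{ P \ar[r] \ar[d] & \ov{Q} \ar[d] \\ B \ar[r] & \ov{Q}^{\rm gp} }
\end{equation*}
Since $\ov{Q}$ is a fine sharp monoid and $P^*$ is a finitely generated abelian group, the Example preceding Lemma~\ref{lem:splittinggroups} ensures that $P$ is a fine integral monoid with units $P^*$, sharpening $\ov{Q}$, groupification $B$, and characteristic sequence precisely the split one above. Because splittings of the characteristic sequence of an integral monoid correspond to splittings of the monoid, $P$ is split, giving (\ref{splittingmonoids3}); condition (\ref{splittingmonoids1}) is inherited directly from Lemma~\ref{lem:splittinggroups}.

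The map $Q \to P$ is produced from the universal property of the pullback using $Q \to \ov{Q}$ and $Q \to Q^{\rm gp} \to B$; both composites are injective, so $Q \to P$ is injective, and the inclusion $P^* \to P$ is automatic. The main obstacle is verifying (\ref{splittingmonoids2}): that the canonical map $\phi : Q \oplus_{Q^*} P^* \to P$ is an isomorphism. Since $P^*$ is a group, both $Q \oplus_{Q^*} P^*$ and $B = Q^{\rm gp} \oplus_{Q^*} P^*$ may be realized as quotients of $Q \times P^*$ and $Q^{\rm gp} \times P^*$ by the same $Q^*$-action $q^* \cdot (q, x) = (q + q^*, x - q^*)$. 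For surjectivity of $\phi$, any $p \in P \subseteq B$ has sharp image $\ov{p} \in \ov{Q}$ by definition of the pullback; lifting $\ov{p}$ to $q \in Q$ forces $p - q \in \ker(B \to \ov{Q}^{\rm gp}) = P^*$, so $p = q + (p-q)$ lies in the image. For injectivity, if $q_1 + x_1 = q_2 + x_2$ in $B$, the explicit description of the group pushout yields $q^* \in Q^*$ with $q_2 = q_1 + q^*$ in $Q^{\rm gp}$ and $x_2 = x_1 - q^*$ in $P^*$; integrality of $Q$ promotes the first equality to $Q$, so $(q_1, x_1)$ and $(q_2, x_2)$ lie in the same $Q^*$-orbit. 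The ``in particular'' free-basis clause then follows from (\ref{splittingmonoids2}) together with Example~\ref{example:flatmodule}, since any set-theoretic section $S$ of $P^* \to P^*/Q^*$ makes $P^*$ a free $Q^*$-module with basis $S$, and extension of scalars preserves free modules with basis.
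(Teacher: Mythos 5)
Your proof is correct and rests on the same key input as the paper's: apply Lemma~\ref{lem:splittinggroups} to the characteristic sequence of $Q$ to enlarge $Q^*$ to a group $P^*$ over which that sequence splits. The only difference is organizational: the paper \emph{defines} $P$ as the monoid pushout $Q \oplus_{Q^*} P^*$ and then asserts (leaving it "straightforward to check") that $P$ is fine with unit group $P^*$, sharpening $\ov{Q}$, and the pushed-out characteristic sequence; you instead define $P$ as the pullback of $\ov{Q} \to \ov{Q}^{\rm gp} \leftarrow B$ (so those structural facts come for free from the Example in \S\ref{section:splittings}) and then verify the pushout property \eqref{splittingmonoids2} by an explicit orbit computation. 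Your version thus supplies precisely the details the paper omits, and the two constructions produce the same monoid.
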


\begin{proof} Choose $Q^* \into A'$ as in Lemma~\ref{lem:splittinggroups} with the exact sequence there given by the characteristic sequence of $Q$.  Define $Q \into P$ to be the pushout of $Q^* \into A'$ along $Q^* \into Q$.  It is straightforward to check that $P$ is fine, that $A'=P^*$, $\ov{Q} = \ov{P}$, and that the characteristic sequence of $P$ is nothing but the pushout of the characteristic sequence of $Q$ along $Q^* \into A' = P^*$, which splits by the choice of $Q^* \into A'$.  The desired properties of $Q \into P$ follow. \end{proof}

\subsection{Monoid algebra} \label{section:monoidalgebra}  Given a monoid $P$, the \emph{monoid algebra} $\ZZ[P]$ is the ring with underlying additive abelian group equal to the free abelian group on symbols $[p]$ for $p \in P$.  Its multiplication law is characterized by $[p][p']=[p+p']$ for $p,p' \in P$.  This construction yields a functor \bne{Zslot} \ZZ[ \slot ] : \Mon & \to & \An \ene from monoids to rings left adjoint to the obvious forgetful functor the other way, so that we have a bijection \bne{monoidalgebraadjunction} \Hom_{\An}(\ZZ[P],A) & = & \Hom_{\Mon}(P,A) \ene natural in $P \in \Mon$, $A \in \An$.  For a fixed monoid $P$, we have a similar functor \bne{Zslot2} \ZZ[ \slot ] : \Mod(P) & \to & \Mod(\ZZ[P]) \ene from $P$-modules to $\ZZ[P]$-modules left adjoint to the forgetful functor.  The functors \eqref{Zslot} and \eqref{Zslot2} are clearly faithful.

The functor \eqref{Zslot2} clearly takes free (resp.\ finitely generated) modules to free (resp.\ finitely generated) modules and commutes with filtered direct limits, hence it takes flat modules to flat modules.  This functor also clearly takes takes ideals of $P$ to ideals of $\ZZ[P]$.  We can use \eqref{Zslot2} to prove that when $P$ is a finitely generated monoid, every submodule of a finitely generated $P$-module is itself finitely generated:  If not there would be some infinite strictly ascending chain and then $\ZZ[ \slot ]$ of it would be an infinite strictly ascending chain in a finitely generated module over the noetherian ring $\ZZ[P]$, which can't happen.

\subsection{Locally monoidal spaces} \label{section:locallymonoidalspaces}  Here we shall introduce and study the category of \emph{locally monoidal spaces}, which is the obvious ``monoidal" analog of the category of locally ringed spaces.

\begin{defn} \label{defn:local} A map of monoids $h : Q \to P$ is called \emph{local} iff $h^{-1}(P^*) = Q^*$. \end{defn}

\begin{thm} \label{thm:localdirectlimits} Let $i \mapsto P_i$ be a functor to monoids (direct limit system of monoids) with direct limit $P$.  Suppose every transition map $P_i \to P_j$ for the direct limit system is a \emph{local} map of monoids.  Then the structure maps $P_i \to P$ to the direct limit are also local and $P$ is also the direct limit of $i \mapsto P_i$ in the category of monoids with \emph{local} maps as the morphisms. \end{thm}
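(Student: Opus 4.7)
My plan is to prove the two assertions in order, using the explicit description of filtered direct limits in $\Mon$ (the underlying set of the direct limit is the direct limit of the underlying sets).

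First I would verify that each structure map $s_i : P_i \to P$ is local. The inclusion $P_i^* \subseteq s_i^{-1}(P^*)$ is automatic. For the reverse inclusion, fix $p \in P_i$ with $s_i(p) \in P^*$, so there exists $q \in P$ with $s_i(p) + q = 0_P$. Choose a representative $q_j \in P_j$ of $q$, then use filteredness of the indexing category to find a $k$ receiving transition maps $P_i \to P_k$ and $P_j \to P_k$. Writing $p_k, q_k \in P_k$ for the images of $p$ and $q_j$, the equality $p_k + q_k = 0$ holds after applying some further transition map $P_k \to P_l$, by the standard description of equality in a filtered colimit of sets. Then the image of $p$ in $P_l$ is a unit, and since the transition $P_i \to P_l$ is a composition of local maps (hence local), we conclude $p \in P_i^*$.

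Next I would verify the universal property in the category $\Mon_{\rm loc}$ of monoids with local maps. Given a cocone of local maps $f_i : P_i \to Q$ compatible with the transition maps of the system, the universal property of $P$ in $\Mon$ produces a unique monoid homomorphism $f : P \to Q$ with $f \circ s_i = f_i$ for all $i$. Uniqueness as a morphism in $\Mon_{\rm loc}$ is then immediate since $\Mon_{\rm loc} \to \Mon$ is faithful, and it remains only to check that $f$ itself is local. For this, suppose $p \in P$ with $f(p) \in Q^*$. Write $p = s_i(p_i)$ for some $p_i \in P_i$; then $f_i(p_i) = f(s_i(p_i)) = f(p) \in Q^*$, so $p_i \in P_i^*$ by locality of $f_i$, and hence $p = s_i(p_i) \in P^*$.

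I do not expect any serious obstacle. The only point requiring a little care is the first step: one must invoke both the description of equality in a filtered colimit and the fact that arbitrary compositions of the transition maps remain local (which follows because the identity is local and locality is visibly preserved under composition).
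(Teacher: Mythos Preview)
Your argument for the second assertion (the universal property in $\Mon_{\rm loc}$) is correct and general.  However, your proof of the first assertion (that each structure map $s_i$ is local) has a genuine gap: it assumes the indexing category is \emph{filtered}, while the theorem is stated for an arbitrary functor $i \mapsto P_i$ and is applied in the paper (see \S\ref{section:inverselimits} and Theorem~\ref{thm:Specinverselimits}) to arbitrary---in particular finite, non-filtered---inverse limit diagrams of monoidal spaces, hence to arbitrary direct limits of monoids.

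Concretely, you write ``Choose a representative $q_j \in P_j$ of $q$, then use filteredness of the indexing category\dots''.  For a non-filtered diagram the underlying set of $P=\dirlim P_i$ is \emph{not} the direct limit of the underlying sets of the $P_i$; rather $P$ is a quotient of the coproduct $\bigoplus_i P_i$ by a monoid congruence generated by the transition relations.  A general element $q \in P$ is therefore represented by a finite sum of elements coming from several different $P_i$, not by a single $q_j$, and the relation $s_i(p)+q=0$ unwinds into a finite chain of elementary congruences in $\bigoplus_i P_i$.  Tracking units through such a chain is exactly what the paper's remark ``one really has to get into the details of the construction of direct limits of monoids'' is pointing at.  Your filtered argument simply does not engage with this, so as written it does not prove the theorem.  (A pushout such as $P_1 \oplus_{P_0} P_2$ with local legs already illustrates the issue: there is no single object in the diagram receiving maps from both $P_1$ and $P_2$, so your ``find a $k$'' step is unavailable.)
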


\begin{proof} See \cite[1.2.8]{GM1}.  To prove this, one really has to get into the details of the construction of direct limits of monoids. \end{proof}

It is impossible to overestimate the importance of Theorem~\ref{thm:localdirectlimits}---there is no analogous statement for rings because even a tensor product of two fields will not generally be a local ring.

\begin{defn} \label{defn:locallymonoidalspace}  A \emph{monoidal space} (also called a \emph{locally monoidal space}) $X = (X,\M_X)$ is a pair consisting of a topological space $X$ and a sheaf of monoids $\M_X$ on $X$, called the \emph{structure sheaf} of $X$.  A \emph{morphism of monoidal spaces} $f : X \to Y$ is a map of spaces $f : X \to Y$ (abuse of notation) together with a map of sheaves of monoids $f^\dagger : f^{-1} \M_Y \to \M_X$ on $X$.  Such a morphism is called a \emph{morphism of locally monoidal spaces} iff the map of monoids $f^\dagger_x : \M_{Y,f(x)} \to \M_{X,x}$ is local for each $x \in X$.  The category whose objects are monoidal spaces and whose morphisms are morphisms of monoidal spaces (resp.\ morphisms of locally monoidal spaces) is denoted $\MS$ (resp.\ $\LMS$).  The categories $\MS$ and $\LMS$ have the same objects, but we usually refer to a monoidal space as a ``locally monoidal space" when we think of it as an object of $\LMS$. \end{defn}

\begin{defn} \label{defn:embedding} A map $f : X \to Y$ of locally monoidal spaces (resp.\ locally ringed spaces) is called a \emph{strict embedding} (resp.\ \emph{open embedding}) iff $f$ is an embedding (resp.\ open embedding) on the level of topological spaces and $f^\dagger: f^{-1}\M_Y \to \M_X$ (resp.\ $f^\sharp : f^{-1} \O_Y \to \O_X$) is an isomorphism.  \end{defn}

\begin{rem} \label{rem:embedding} If $Y$ is a locally monoidal space and $U$ is a subspace of its topological space, then there is an obvious strict embedding of locally monoidal spaces \be i : U := (U,\O_Y|U) & \to & Y,\ee which is an open embedding iff $U$ is open.  In any case, the $\LMS$-morphism $i$ has the following ``universal property" completely analogous to that of the ``subspace topology:"  For any $X \in \LMS$, the map $g \mapsto ig$ is a bijection from $\Hom_{\LMS}(X,(U,\O_Y|U))$ to the set of $\LMS$-morphisms $f : X \to Y$ for which $f(X) \subseteq U$ on the level of topological spaces.  If $f : X' \to X$ is a map of locally monoidal spaces with image $f(X') = U$ on the level of topological spaces, then $f$ is an embedding iff $X'$ is isomorphic, as a locally monoidal space over $X$, to $(U,\O_X|U)$.  In particular, two strict embeddings $X',X'' \to X$ with the same image are (uniquely) isomorphic as locally monoidal spaces over $X$.  So strict embeddings (resp.\ open embeddings)  to $X$ are ``the same thing" as subsets (resp.\ open subsets) of the underlying space of $X$.  Similar remarks remain valid when ``monoidal" is replaced by ``ringed."  \end{rem}

\subsection{Inverse limits} \label{section:inverselimits}  Like the category of ringed spaces, the category $\MS$ has all inverse limits and these limits can be constructed in the ``obvious" way.  ``The" inverse limit of a functor $i \mapsto X_i$ to $\MS$ is constructed as follows:  First form ``the" inverse limit $X$ in the category $\Top$ topological spaces.  Let $\pi_i : X \to X_i$ be the projection.  Endow $X$ with the structure sheaf $\M_X := \dirlim \pi_i^{-1} \M_{X_i}$, so we have a natural map \bne{invlimitmap} \pi_i^\dagger : \pi_i^{-1} \M_{X_i} & \to & \M_X \ene for each $i$, thus we can view $\pi_i : X \to X_i$ as a morphism of monoidal spaces.  It is easy to see that the $X$ thus constructed serves as ``the" inverse limit of $i \mapsto X_i$.  In particular, we see that the underlying space functor \bne{underlyingspacefunctor} \MS & \to & \Top \ene preserves inverse limits.

Now suppose our functor $i \mapsto X_i$ is actually a functor to $\LMS$.  Since $\LMS$ is a subcategory (though not a full one) of $\MS$, we can view our functor as a functor to $\MS$ and compute its inverse limit $X$ in $\MS$ as above.  By construction of $\M_X$ and the fact that direct limits of sheaves commute with stalks, the stalk of \eqref{invlimitmap} at $x \in X$ is the structure map \bne{invlimitmapstalk} \M_{X_i,\pi_i(x)} & \to & \dirlim \M_{X_i,\pi_i(x)} \ene to the direct limit, which is \emph{local} (Definition~\ref{defn:local}) by Theorem~\ref{thm:localdirectlimits} because the transition maps for the direct limit system in question are local since $i \mapsto X_i$ is a functor to $\LMS$.  It follows easily that $X$ is also the inverse limit of $i \mapsto X_i$ in $\LMS$.  In other words, the inclusion \bne{LMStoMS} \LMS & \to & \MS \ene preserves inverse limits!  This is certainly not true of the analogous inclusion $\LRS \to \RS$ of locally ringed spaces into ringed spaces!  This is one major reason why the geometry of monoids is much simpler than the geometry of rings.

\subsection{Spec} \label{section:Spec} For any monoid $P$ there is an associated locally monoidal space $\Spec P = (\Spec P,\M_P)$, contravariantly functorial in $P$, so that $\Spec$ may be viewed as a functor \bne{SpecLMS} \Spec : \Mon^{\rm op} & \to & \LMS. \ene  In analogy with the ring situation, the functor \eqref{SpecLMS} is right adjoint to the functor \bne{LMStoSpec} \LMS & \to & \Mon^{\rm op} \\ \nonumber X & \mapsto & \M_X(X), \ene so we have a bijection \bne{Specadjunction} \Hom_{\LMS}(X,\Spec P) & = & \Hom_{\Mon}(P,\M_X(X)) \ene natural in $X \in \LMS$ and $P \in \Mon^{\rm op}$.  In particular, for each monoid $P$, we obtain a natural map \bne{globalsections} P & \to & \M_P(\Spec P) \ene (in fact it is an isomorphism, as we shall see momentarily) by considering the image of the identity map under \eqref{Specadjunction} in the case $X = \Spec P$.  Since the ``constant sheaf" functor is left adjoint to the global section functor, we can view \eqref{globalsections} as a map of sheaves of monoids \bne{PtoMP} \u{P} & \to & \M_P \ene on $\Spec P$, where $\u{P}$ denotes the constant sheaf associated to $P$.  

The constructions of $\Spec P$ and the adjunction isomorphism \eqref{SpecLMS} are completely parallel to the analogous constructions with (commutative) rings, so we will content ourselves with a brief review.  We can view the points of $\Spec P$ either as prime ideals of $P$, or faces of $P$ (Definition~\ref{defn:face}).  \emph{We will use the latter convention}, but we will avoid confusion by writing $F$, $G$, \dots for faces and $\p$, $\q$, \dots for prime ideals, as well as writing ``$F \leq P$" instead of ``$F \in \Spec P$".   As in the case of rings, the topology on $\Spec P$ is the ``Zariski topology," whose basic open subsets are those of the form \bne{Up} U_p & := & \{ F \leq P : p \in F \} \ene for $p \in P$.  By definition of a face, we clearly have \bne{basicopenintersection} U_{p+p'} & = & U_p \cap U_{p'} \\ \nonumber U_0 & = & \Spec P. \ene  The complementary basic closed sets \be Z_p & := & \{ F \leq P : p \notin F \} \ee hence satisfy \bne{basicclosedunion} Z_{p+p'} & = & Z_p \cup Z_{p'} . \ene  If $h : Q \to P$ is a map of monoids, then one checks that for a face $F \leq P$, $(\Spec h)(F) := h^{-1}(F)$ is a face of $\Spec Q$.  The resulting map of topological spaces \be \Spec h : \Spec P & \to & \Spec Q \ee is continuous because the preimage of the basic open subset $U_q \subseteq \Spec Q$ under $\Spec h$ is the basic open subset $U_{h(q)}$ of $\Spec P$:  \be (\Spec h)^{-1}(U_q) & = & U_{h(q)}. \ee

The sheaf of monoids $\M_P$ and the map \eqref{globalsections} are characterized by the fact that there are isomorphisms \bne{basicopensections} \M_P(U_p) & = & P[-p] \ene for each $p \in P$ under which \begin{enumerate} \item the composition of \eqref{globalsections} and the restriction map $\M_P(\Spec P) \to \M_P(U_p)$ is identified with the localization map $P \to P[-p]$.  \item the restriction map $\M_P(U_p) \to \M_P(U_{p'})$ associated to an inclusion $U_{p'} \subseteq U_p$ of basic open subsets is identified with the localization map $P[-p] \to P[-p'].$ \end{enumerate}  Here $P[-p]$ denotes the initial object among monoids under $P$ where $p$ becomes invertible.

Here are some basic facts about $\Spec P$:

\begin{prop} \label{prop:Spec} Let $P$ be a monoid. \begin{enumerate} \item \label{opensubsets} A subset of $\Spec P$ is open iff it is of the form \be \{ F \in \Spec P : F \cap S \neq \emptyset \} & = & \cup_{p \in S} U_p \ee for some subset $S \subseteq P$. \item \label{stalkMP} For $F \in \Spec P$, the stalk of $\M_P$ at $F$ is given by $\M_{P,F}  =  F^{-1} P$ and the stalk of \eqref{PtoMP} at $F$ is the localization map $P \to F^{-1}P$.  \item \label{basicopens} For any $p \in P$, the map of locally monoidal spaces $\Spec P[-p] \to \Spec P$ induced by the localization map $P \to P[-p]$ is an open embedding (Definition~\ref{defn:embedding}) with image $U_p$.  \item \label{Specclosure} For faces $F$, $G$ of $P$, one has $F \in \{ G \}^-$ in $\Spec P$ iff $F \subseteq G$.  In particular, the point $P$ of $\Spec P$ is dense in $\Spec P$ and hence $\Spec P$ is an irreducible topological space.  \item \label{irreducibles} For any $F \in \Spec P$, the subset \be Z_F & := & \{ G \in \Spec P : G \subseteq F \} \ee of $\Spec P$ is the closure of $F$ in $\Spec P$, hence it is an irreducible closed subset of $\Spec P$.  Any non-empty irreducible closed subset of $\Spec P$ is equal to $Z_G$ for a unique $G \in \Spec P$.  In other words, $\Spec P$ is \emph{sober} in the sense that any non-empty irreducible closed subset of $\Spec P$ is the closure of a unique point.  \item \label{generalization} When $F \in \{ G \}^-$, the generalization map $\M_{P,F} \to \M_{P,G}$ is identified with the localization map $F^{-1}P \to G^{-1}P = (F^{-1}G)^{-1}(F^{-1}P)$.  \item \label{generalizationprimes}  For $F \in \{ G \}^-$, let $F_G$ be the preimage of $\M_{P,G}^*$ in $\M_{P,F}$ under the generalization map $\M_{P,F} \to \M_{P,G}$.  Then $G \mapsto F_G$ is a bijection between the set of generalizations $G$ of $F$ in $\Spec P$ and the set $\Spec \M_{P,F}$ of faces of $\M_{P,F}$.  In particular, the point $P \in \Spec P$ has no (non-trivial) generalizations in $\Spec P$ and is the unique point of $\Spec P$ with this property.  \item \label{uniqueclosedpoint} $P^* \in \Spec P$ is the unique closed point of $\Spec P$ and any $F \in \Spec P$ is a generalization of $P^*$.  $\Spec P$ itself is the only open subset of $\Spec P$ containing of $P^*$.  In particular, $\Spec P$ is quasi-compact.  \item \label{localizationembedding} For any submonoid $S \subseteq P$, $\Spec S^{-1}P \to \Spec P$ is a strict embedding (but not necessarily an open embedding) with image equal to the set of $G \in \Spec P$ with $S \leq G$.  \end{enumerate} \end{prop}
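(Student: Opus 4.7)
The plan is to work through the nine parts in an order that lets each build on its predecessors, leaning on the description of $\Spec P$'s topology, the characterization \eqref{basicopensections} of the structure sheaf on basic opens, and Lemma~\ref{lem:facesandlocalization}.

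I would handle the topological parts first. Part (\ref{opensubsets}) is immediate from $U_p \cap U_{p'} = U_{p+p'}$. For part (\ref{Specclosure}), $F \in \{G\}^{-}$ iff every basic open $U_p \ni F$ also contains $G$, iff $p \in F \implies p \in G$ for all $p$, iff $F \subseteq G$. Part (\ref{irreducibles}) then follows: $Z_F = \{F\}^{-}$ is irreducible closed by (\ref{Specclosure}); conversely, for a non-empty irreducible closed $C$, the set $I := \{p \in P : U_p \cap C = \emptyset\}$ is an ideal whose complement $G = P \setminus I$ is a face (if $p, q \notin I$, then $U_p \cap C$ and $U_q \cap C$ are non-empty open subsets of $C$, so by irreducibility $U_p \cap U_q \cap C = U_{p+q} \cap C$ is non-empty, hence $p+q \notin I$), and one verifies $C = Z_G$ by noting that $G \in C$ (every basic open $U_p$ containing $G$ satisfies $p \notin I$ so meets $C$, and $C$ is closed) and both containments. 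Uniqueness comes from (\ref{Specclosure}). Part (\ref{uniqueclosedpoint}) is then immediate: $Z_{P^*} = \{P^*\}$, while any other face $G$ has $P^* \subsetneq G$ so $P^* \in Z_G \neq \{G\}$; any open containing $P^*$ must contain some $U_p$ with $p \in P^*$, but then $p$ is a unit and hence in every face of $P$, so $U_p = \Spec P$, which also gives quasi-compactness.

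Next come the sheaf-theoretic parts. Part (\ref{stalkMP}) follows by computing $\M_{P,F}$ as the filtered colimit $\dirlim_{p \in F} \M_P(U_p) = \dirlim_{p \in F} P[-p] = F^{-1}P$ via \eqref{basicopensections}, since the latter colimit is the defining universal property of the localization of $P$ at the submonoid $F$. Part (\ref{basicopens}): under the face correspondence, $\Spec P[-p] \to \Spec P$ is bijective onto $U_p$ (by Lemma~\ref{lem:facesandlocalization} applied to the submonoid generated by $p$, faces of $P[-p]$ correspond to faces of $P$ containing $p$), a homeomorphism since basic opens match, and the structure sheaves agree by \eqref{basicopensections}. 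Part (\ref{generalization}) is the naturality of (\ref{stalkMP}): the restriction map on stalks associated to $F \subseteq G$ is $F^{-1}P \to G^{-1}P$, which is the further localization at the image of $G$.

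Finally, the face-theoretic content in (\ref{generalizationprimes}) and (\ref{localizationembedding}) I would treat together. For (\ref{generalizationprimes}), the candidate inverse to $G \mapsto F_G$ is $F' \mapsto l^{-1}(F')$ for $l : P \to F^{-1}P$. That $l^{-1}(F_G) = G$ follows from Lemma~\ref{lem:facesandlocalization} applied to $P \to G^{-1}P$, which identifies the preimage of the units with $G$ itself. For the reverse identity $F_{l^{-1}(F')} = F'$, one factors $P \to G^{-1}P$ through $F^{-1}P$: given $x \in F'$ written as $l(p) - l(f)$ with $f \in F \subseteq l^{-1}(F')$, we have $l(p) \in F'$, so $p \in l^{-1}(F') = G$, so $x$ becomes a unit in $G^{-1}P$, placing $x \in F_G$; the reverse containment is by minimality in Lemma~\ref{lem:facesandlocalization}. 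Part (\ref{localizationembedding}) is parallel: pullback along $l : P \to S^{-1}P$ gives a bijection between $\Spec S^{-1}P$ and $\{G \leq P : S \subseteq G\}$ by Lemma~\ref{lem:facesandlocalization}, the subspace topologies match (a basic open $U_p \cap \{G : S \subseteq G\}$ corresponds to $U_{l(p)}$), and the structure sheaves match by (\ref{stalkMP}) and (\ref{generalization}) (both stalks are $G^{-1}P$), so the map is a strict embedding. It need not be an open embedding because $\{G : S \subseteq G\}$ is an intersection of basic opens that need not be a union of basic opens when $S$ is not finitely generated. The main obstacle I expect is (\ref{generalizationprimes}): the bookkeeping across three localizations of $P$ and two separate invocations of Lemma~\ref{lem:facesandlocalization} is the only step that requires genuine care; everything else is a direct unwinding of definitions.
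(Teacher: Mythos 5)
Your proposal is correct, and for eight of the nine parts it follows exactly the route the paper intends (the paper explicitly leaves those to the reader as transcriptions of the ring-theoretic arguments): the basic opens $U_p$ form a basis closed under finite intersection, the stalk at $F$ is the filtered colimit of the $P[-p]$ for $p\in F$, and Lemma~\ref{lem:facesandlocalization} drives parts \eqref{generalizationprimes} and \eqref{localizationembedding} just as you describe. The one part the paper proves in detail is the soberness claim in \eqref{irreducibles}, and there your argument differs from the paper's in a way worth noting. The paper takes the candidate generic face to be $G=\sum_{F\in Z}F$ and verifies it is a face by a contradiction argument using the identity $Z_{p+p'}=Z_p\cup Z_{p'}$ from \eqref{basicclosedunion} together with irreducibility of $Z$ expressed as ``$Z$ is not covered by finitely many proper basic closed subsets''; it then shows $G\in Z$ by a second contradiction with a basic closed set separating $G$ from $Z$. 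You instead take $G=\{p\in P: U_p\cap C\neq\emptyset\}$ and use irreducibility in its open form (two non-empty opens of $C$ meet), which gives both that $G$ is a face and that $G\in\overline{C}=C$ directly, with no contradiction argument. The two constructions produce the same face ($\bigcup_{F\in C}F=\sum_{F\in C}F$ once either is known to be a face), so this is a reorganization rather than a new idea, but your version is the cleaner and more standard ``generic point'' argument — indeed the paper itself remarks that other characterizations of irreducible closed subsets exist. One small point of care: in \eqref{irreducibles} you should make explicit that $I$ being an ideal (i.e.\ $p\in I\Rightarrow p+q\in I$, from $U_{p+q}\subseteq U_p$) is what gives the face condition $p+q\in G\Rightarrow p,q\in G$, while the irreducibility argument only gives that $G$ is a submonoid; your parenthetical conflates the two, though both halves are present in your text.
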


\begin{proof}  All of these statements have analogs for $\Spec A$, $A$ a ring, and can be proved in essentially the same manner.  We can therefore leave most of the proofs to the reader.  The only possible exception is \eqref{irreducibles}, for which some explanation may be helpful.  The first sentence of \eqref{irreducibles} and the uniqueness assertion in \eqref{irreducibles} are consequences of \eqref{Specclosure}, so the only issue is to show that a non-empty irreducible closed subset $Z$ of $\Spec P$ is equal to $Z_G$ for some $G \in \Spec P$.  If we believe this is true, then we know $G$ will have to be defined by $G = \sum_{F \in Z} F$, so we need to show that this $G$ will do.  

(The same arguments that we are about to give will actually also work to prove that any irreducible closed subset $Z$ of $\Spec A$ ($A$ a ring) is of the form $$ \{ \q \in \Spec A : \p \subseteq \q \} $$ for a unique $\p \in \Spec A$, and that we can take $\p = \cap_{\q \in Z} \q$.  However, the reader might not be familiar with this line of reasoning because there are other ways of characterizing the irreducible closed subsets of $\Spec A$ that don't necessarily make much sense in the monoid situation.)   

First of all we need to show that $G$ is actually a face of $P$.  Suppose not.  Then we can find $p \in P \setminus G$, $p' \in P$, $F_1,\dots,F_n \in Z$, and $f_1 \in F_1, \dots, f_n \in F_n$ such that $p+p'=f_1+\cdots+f_n$.  Formula \eqref{basicclosedunion} then yields an equality \bne{closedsubsetequality} Z_p \cup Z_{p'} & = & Z_{f_1} \cup \cdots \cup Z_{f_n} \ene of closed subspaces of $\Spec P$.  Since $p \notin G$, $p$ is not in any $F$ in $Z$, so we have $Z \subseteq Z_p$.  Using \eqref{closedsubsetequality}, we find that $Z \subseteq Z_{f_1} \cup \cdots \cup Z_{f_n}$, so by irreducibility of $Z$ we conclude that $Z \subseteq Z_{f_i}$ for some $i$.  But this is absurd because $F_i \in Z$ is \emph{not} in $Z_{f_i}$ since $f_i \in F_i$.

Now that we know $G \leq P$, the equality $Z = Z_G$ will follow immediately from \eqref{Specclosure} and the definition of $G$, provided we can show that $G \in Z$.  Suppose not.  Then since $Z$ is closed in $\Spec P$ and $G$ is a point of $\Spec P$ not in $Z$, we can find some $p \in P$ so that the basic closed subset $Z_p$ contains $Z$ but not $G$.  Since $G \notin Z_p$, we have $p \in G$ so we can write $p = f_1+\cdots+f_n$ for some $F_1,\dots,F_n \in Z$, $f_i \in F_i$.  Using the irreducibility of $Z$ again, we reach the same contradiction as in the previous paragraph. \end{proof}

\begin{rem} \label{rem:ZF} Let $P$ be a monoid, $F \in \Spec P$.  The irreducible closed subset $Z_F$ of $\Spec P$ in Proposition~\ref{prop:Spec}\eqref{irreducibles} is clearly in bijective correspondence with $\Spec F$ by regarding a face of $F$ as a face of $P$.  Furthermore, the topology on $Z_F$ inherited from $\Spec P$ coincides (under this obvious bijection) with the usual ``Zariski" topology on $\Spec F$ because, for $p \in P$, the intersection $U_p \cap Z_F$ is either empty (when $p \in P \setminus F$), or equal to the corresponding basic open subset $U_p$ of $\Spec F$ (when $p \in F \subseteq P$).   

It is important to understand, however, that this ``closed embedding" of $\Spec F$ into $\Spec P$ is purely topological---there does not actually exist, in general, any monoid homomorphism $r : P \to F$ for which $\Spec r$ is a homeomorphism from $\Spec F$ to $\Spec P$.  Indeed, in contrast with the situation for rings, it is not generally possible to write a closed subset of $\Spec P$ as the image of $\Spec (h : P \to Q)$ for a monoid homomorphism $h$.  For example, the closed point of $\Spec \NN$ does not arise in this manner (exercise!).  Similarly, a closed subset of $\Spec P$ need not even be the underlying space of any fan (Example~\ref{example:closedsubspacesoffans}).   We will describe a kind of crude substitute for this in \S\ref{section:boundaryconstruction}.  An alternative approach (discussed in \S\ref{section:monoidschemes}) is to work with ``pointed monoids," where it \emph{does} become possible to define a map like ``$r$" above.  \end{rem}

\begin{rem} \label{rem:SpecP} Proposition~\ref{prop:Spec}\eqref{uniqueclosedpoint} implies that the global section functor $\Gamma(\Spec P, \slot)$ is just the ``stalk at the closed point $P^*$" functor; in particular it preserves direct limits and finite inverse limits.  In particular, combining this with Proposition~\ref{prop:Spec}\eqref{stalkMP} we find $$ \Gamma(\Spec P,\M_P)  =  \M_{P,P^*}  = (P^*)^{-1} P = P. $$ \end{rem}

Combining the above remark and the adjunction formula \eqref{Specadjunction}, we obtain:

\begin{prop} \label{prop:Specfullyfaithful} The functor $\Spec : \Mon^{\rm op} \to \LMS$ is fully faithful. \end{prop}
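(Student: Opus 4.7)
The plan is to deduce full faithfulness of $\Spec$ from the adjunction \eqref{Specadjunction} together with the computation $\Gamma(\Spec P, \M_P) = P$ recorded in Remark~\ref{rem:SpecP}.

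First I would take the adjunction isomorphism \eqref{Specadjunction} and specialize $X = \Spec P$, obtaining a natural bijection
\[
\Hom_{\LMS}(\Spec P, \Spec Q) \;=\; \Hom_{\Mon}(Q, \M_{\Spec P}(\Spec P)).
\]
Remark~\ref{rem:SpecP}, which uses Proposition~\ref{prop:Spec}\eqref{uniqueclosedpoint} and \eqref{stalkMP}, identifies the target as $\Hom_{\Mon}(Q, P) = \Hom_{\Mon^{\op}}(P, Q)$. Composing these two identifications gives a bijection between $\Hom_{\Mon^{\op}}(P, Q)$ and $\Hom_{\LMS}(\Spec P, \Spec Q)$.

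The only substantive point to verify is that this composed bijection is literally the map $h \mapsto \Spec h$ induced by functoriality of $\Spec$. This is a formal check: under the adjunction \eqref{Specadjunction}, a map $Q \to \M_{\Spec P}(\Spec P)$ corresponds to the $\LMS$-morphism obtained by composing its sheafification with the structure map \eqref{PtoMP} on $\Spec P$ to land in $\M_P$, and then invoking the universal property of $\Spec Q$. Starting from $h \in \Hom_{\Mon}(Q, P)$, the corresponding element of $\Hom_{\Mon}(Q, \M_{\Spec P}(\Spec P))$ under Remark~\ref{rem:SpecP} is just $h$ itself (since the identification $P = \M_{\Spec P}(\Spec P)$ comes from the unit \eqref{globalsections} at $P$), and unwinding the definitions shows this produces exactly $\Spec h : \Spec P \to \Spec Q$.

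Abstractly, this is just the standard fact that a right adjoint is fully faithful iff the unit of the adjunction is a natural isomorphism; here the unit at $P \in \Mon^{\op}$ is precisely the map $P \to \Gamma(\Spec P, \M_P)$ from \eqref{globalsections}, which Remark~\ref{rem:SpecP} asserts is an isomorphism. The main (minor) obstacle is just the bookkeeping needed to check that the unit really is \eqref{globalsections}, which amounts to tracing identity maps through the adjunction.
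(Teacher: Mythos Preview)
Your proposal is correct and follows essentially the same approach as the paper: combine the adjunction \eqref{Specadjunction} with Remark~\ref{rem:SpecP} to conclude full faithfulness. The paper's proof is the one-line ``Combining the above remark and the adjunction formula \eqref{Specadjunction}, we obtain \dots''; you have simply unpacked this with more detail, including the verification that the composite bijection agrees with $h \mapsto \Spec h$. One small terminological slip in your final paragraph: since $\Spec$ is the \emph{right} adjoint here, the relevant natural transformation is the \emph{counit} of the adjunction (a map $FG \to \Id_{\Mon^{\op}}$, which in $\Mon$ reads as $P \to \Gamma(\Spec P,\M_P)$), not the unit; the standard fact is that a right adjoint is fully faithful iff the counit is an isomorphism. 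This does not affect the substance of your argument.
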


\begin{prop} \label{prop:faces} Let $P$ be a monoid, $S$ a set of generators of $P$.  For each face $F$ of $P$, $F$ is generated as a monoid by $S \cap F$ and $F \mapsto S \cap F$ is an injective map from $\Spec P$ to the set of subsets of $S$. \end{prop}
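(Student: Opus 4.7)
The plan is to prove the two assertions in order, with the second following almost immediately from the first.

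For the first claim, I would start with an arbitrary element $f \in F$ and use that $S$ generates $P$ to write $f = s_1 + s_2 + \cdots + s_n$ with each $s_i \in S$. The key observation is the defining property of a face (Definition~\ref{defn:face}): if a sum lies in $F$, every summand lies in $F$. Applying this inductively (or equivalently, applying the binary form repeatedly to $s_1 + (s_2 + \cdots + s_n) \in F$), I conclude that each $s_i$ is in $F$, hence each $s_i \in S \cap F$. This exhibits $f$ as a sum of elements of $S \cap F$, so $S \cap F$ generates $F$ as a monoid.

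For the second claim, suppose $F_1, F_2 \in \Spec P$ satisfy $S \cap F_1 = S \cap F_2$. By the first part, $F_1$ is the submonoid of $P$ generated by $S \cap F_1$ and $F_2$ is the submonoid generated by $S \cap F_2$. Since these generating sets coincide, the generated submonoids coincide, giving $F_1 = F_2$.

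I do not anticipate any real obstacle here; the only subtle point is the inductive step in the first claim, which rests purely on the face property and requires no hypotheses on $P$ (such as finite generation or integrality). Notice also that the injectivity statement does not assert surjectivity onto any natural subset of $2^S$, which is good since not every subset of $S$ need come from a face.
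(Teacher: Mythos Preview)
Your proof is correct and essentially identical to the paper's: both write an element of $F$ as a sum of generators from $S$ and invoke the face property to force each summand into $F$, with injectivity then immediate. The paper's version is just a one-sentence sketch of the same argument.
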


\begin{proof} The point is that if we write a non-zero element $f \in F$ as $f = \sum_{i=1}^n a_i s_i$ for positive integers $a_i$ and $s_i \in S$, then all the $s_i$ are also in $F$ because $F$ is a face. \end{proof}

\begin{cor} \label{cor:faces} If $P$ is finitely generated (resp.\ fine), then $\Spec P$ is finite and for every face $F \leq P$, both $F$ and $F^{-1}P$ are finitely generated (resp.\ fine). \end{cor}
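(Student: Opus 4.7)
The plan is to leverage Proposition~\ref{prop:faces} for the bulk of the work, together with the fact that finite subsets of a finite set form a finite collection, and then handle the integrality condition separately using the universal property of groupification.

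First, I would assume $P$ is finitely generated and fix a finite generating set $S \subseteq P$. Proposition~\ref{prop:faces} gives an injection $\Spec P \hookrightarrow 2^S$, $F \mapsto S \cap F$, so $|\Spec P| \leq 2^{|S|} < \infty$. The same proposition says $F$ is generated as a monoid by $S \cap F$, which is finite, so $F$ is finitely generated. For $F^{-1}P$, every element has the form $p - f$ with $p \in P$, $f \in F$; writing $p$ as a sum of elements of $S$ and $f$ as a sum of elements of $S \cap F$ (again by Proposition~\ref{prop:faces}), one sees that $F^{-1}P$ is generated by the finite set $S \cup \{\, -s : s \in S \cap F \,\}$.

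Next, suppose $P$ is furthermore fine, so $P \hookrightarrow P^{\rm gp}$ is injective. For $F$, the composition $F \hookrightarrow P \hookrightarrow P^{\rm gp}$ factors through $F^{\rm gp}$ by the universal property, and since the composition is injective, $F \to F^{\rm gp}$ must be injective; hence $F$ is integral. For $F^{-1}P$, the elements of $F$ become units in $P^{\rm gp}$, so the universal property of localization yields a map $F^{-1}P \to P^{\rm gp}$ through which the localization map $P \to F^{-1}P$ fits. I would verify that this map $F^{-1}P \to P^{\rm gp}$ is injective directly from the construction of localization: two elements $p-f$ and $p'-f'$ agree in $P^{\rm gp}$ iff $p + f' = p' + f$ in $P$ (using integrality of $P$), which by the definition of localization already forces $p-f = p'-f'$ in $F^{-1}P$. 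Since $F^{-1}P$ embeds in the group $P^{\rm gp}$, its groupification map $F^{-1}P \to (F^{-1}P)^{\rm gp}$ is also injective, so $F^{-1}P$ is integral.

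The steps are all quite short, and there is no real obstacle: the main subtle point is the verification that $F^{-1}P \to P^{\rm gp}$ is injective (equivalently, that $F^{-1}P$ is integral). This is essentially automatic from the integrality of $P$, but it does require unwinding the equivalence relation defining the localization rather than citing a black-box result. Everything else is a direct application of Proposition~\ref{prop:faces} combined with the observation that finite generating sets for $P$ give finite generating sets for faces and their localizations.
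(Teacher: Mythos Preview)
Your proof is correct and takes essentially the same approach as the paper, which leaves the corollary without explicit proof as an immediate consequence of Proposition~\ref{prop:faces}. You have simply written out the details the paper considers routine: the injection $\Spec P \hookrightarrow 2^S$ gives finiteness, $S \cap F$ generates $F$, $S \cup \{-s : s \in S \cap F\}$ generates $F^{-1}P$, and integrality is inherited via the embeddings into $P^{\rm gp}$.
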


\begin{rem} \label{rem:finitegeneration} Even though every face of a finitely generated monoid is finitely generated, it is not true that every submonoid of a finitely generated monoid is finitely generated.  For example, the submonoid \be S & := & \{ (a,b) \in \NN^2 : a=0 {\rm \; or \; } b>0 \} \ee of $\NN^2$ is not finitely generated. \end{rem}

\begin{prop} \label{prop:Specfinite} Let $P$ be a monoid for which $\Spec P$ is finite.  \begin{enumerate} \item As in any finite topological space, the topology of $\Spec P$ is determined by specialization, so that a subset is open (resp.\ closed) iff it is stable under generalization (resp.\ specialization).  \item \label{UF} As in any finite topological space, every point $F \in \Spec P$ has a smallest open neighborhood $U_F$ given by \be U_F & = & \{ G \in \Spec P : F \in \{ G \}^- \} \ee and $\{ U_F : F \in \Spec P \}$ is a basis for $\Spec P$.  \item \label{UFisUp} Since $\{ U_p : p \in P \}$ is a basis for $\Spec P$, each $U_F$ must be equal to $U_p$ for some $p \in P$.  \item \label{facelocalizationembedding} For every face $F \in \Spec P$, the map $\Spec F^{-1}P \to \Spec P$ is an open embedding with image $U_F$. \item \label{uniqueopenpoint} The face $P$ is the unique open point of $\Spec P$. \end{enumerate} \end{prop}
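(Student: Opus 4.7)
The plan is to treat parts (1) and (2) as instances of standard facts about finite topological spaces, then to derive parts (3)--(5) by combining these with the basis of basic opens $\{U_p\}$ and the specialization/localization dictionary already recorded in Proposition \ref{prop:Spec}. For (1) and (2), I would note that in any finite space $X$, the finite intersection $U_x := \bigcap_{U \ni x} U$ of all opens containing $x$ is itself an open neighborhood of $x$, so it is the smallest one; a direct check shows $U_x = \{ y \in X : x \in \{y\}^-\}$. The collection $\{U_x\}$ is then a basis, and a subset of $X$ is open iff it is a union of such $U_x$, which is precisely the condition of being stable under generalization. Taking complements handles the closed case. Specialization to $X = \Spec P$, together with part (4) of Proposition \ref{prop:Spec} to rewrite $F \in \{G\}^-$ as $F \subseteq G$, yields both (1) and (2).

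For (3), since $\{U_p : p \in P\}$ is a basis, $U_F$ is a union of basic opens $U_p$; because $F \in U_F$, some $U_p$ in this union contains $F$, and the minimality of $U_F$ forces $U_F \subseteq U_p \subseteq U_F$. For (4), the key point is that the equation $U_F = U_p$ from (3) is precisely the assertion that $F$ is the smallest face of $P$ containing $p$: a face $G$ contains $p$ iff $F \subseteq G$. By Lemma \ref{lem:facesandlocalization} applied with $S = \{p\}$, this smallest face is exactly the preimage in $P$ of the unit group $(P[-p])^*$; hence every element of $F$ becomes a unit in $P[-p]$, and the universal properties of $P[-p]$ and $F^{-1}P$ combine to give a canonical isomorphism $P[-p] \cong F^{-1}P$ over $P$. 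Then (4) reduces to part (3) of Proposition \ref{prop:Spec}, which identifies $\Spec P[-p] \to \Spec P$ as an open embedding onto $U_p = U_F$.

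For (5), applying (2) together with part (4) of Proposition \ref{prop:Spec} gives $U_P = \{G : P \subseteq G\} = \{P\}$, so $\{P\}$ is open, i.e., $P$ is an open point. Conversely, if $F$ is an open point then $U_F = \{F\}$ by (2), so $F$ admits no nontrivial generalizations in $\Spec P$; part (7) of Proposition \ref{prop:Spec} singles out $P$ as the unique point with this property, forcing $F = P$. The only step that requires genuinely monoid-theoretic input (rather than general finite-space topology or facts already recorded in Proposition \ref{prop:Spec}) is the identification $P[-p] \cong F^{-1}P$ underlying (4); this is where I expect the main care to be needed, and everything else is a formal bookkeeping argument.
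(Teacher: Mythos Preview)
Your proof is correct and essentially fills in the details of the paper's one-line argument (``This follows easily from Proposition~\ref{prop:Spec}''). One minor point worth noting: for part~\eqref{facelocalizationembedding} there is a slightly more direct route than the isomorphism $P[-p]\cong F^{-1}P$. Proposition~\ref{prop:Spec}\eqref{localizationembedding} (with $S=F$) already says that $\Spec F^{-1}P \to \Spec P$ is a strict embedding with image $\{G:F\subseteq G\}=U_F$; since you have just shown in (2) that $U_F$ is open, this embedding is automatically open. Your detour through $P[-p]$ and Lemma~\ref{lem:facesandlocalization} is perfectly valid, but it is not needed.
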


\begin{proof} This follows easily from Proposition~\ref{prop:Spec}. \end{proof}

\begin{rem} \label{rem:Specfinite} If one assumes $P$ is finitely generated, then one can see ``directly" (without using Proposition~\ref{prop:Specfinite}\eqref{UFisUp}) that $U_F$ ($F \leq P$) is one of the basic opens $U_p$, as follows.  By Corollary~\ref{cor:faces}, $F$ is finitely generated, say by $p_1,\dots,p_n$.  Then $U_F = U_p$ for $p := p_1+\cdots+p_n$ because any face containing $p$ must contain $p_1,\dots,p_n$ and must therefore contain $F$. \end{rem}

The basic properties of inverse limits in $\MS$ and $\LMS$ discussed in \S\ref{section:inverselimits} ensure that the $\Spec$ functor ``commutes with inverse limits" in the strongest possible sense:

\begin{thm} \label{thm:Specinverselimits} Let $i \mapsto P_i$ be a direct limit system of monoids with direct limit $P$.  Let $e_i : P_i \to P$ be the structure map to the direct limit.  Then the map of topological spaces \bne{topmap} \invlim \Spec e_i : \Spec P & \to & \invlim \Spec P_i \ene is a homeomorphism, the natural map \bne{monmap} \dirlim (\Spec e_i)^{-1} \M_{P_i} & \to & \M_P \ene of sheaves of monoids on $\Spec P$ is an isomorphism, and $\Spec P$ is the inverse limit of $i \mapsto \Spec P_i$ in both $\LMS$ and $\MS$. \end{thm}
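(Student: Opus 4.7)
My plan is to establish the three assertions in sequence, as each provides input to the next.

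First I would identify the underlying set of $\Spec Q$, for any monoid $Q$, with $\Hom_{\Mon}(Q, T)$, where $T = \{0,1\}$ is the two-element sharp monoid with $1+1 = 1$: a face $F \leq Q$ corresponds to the monoid homomorphism whose zero fibre is $F$. Then the bijectivity of \eqref{topmap} is purely formal: using that $\Spec$ (as a functor to \emph{sets}) is $\Hom_{\Mon}(\slot, T)$ and that $P = \dirlim P_i$ in $\Mon$, we have
\[
\Spec P = \Hom_{\Mon}(\dirlim P_i, T) = \invlim \Hom_{\Mon}(P_i, T) = \invlim \Spec P_i.
\]
The face of $P$ corresponding to a compatible system $(F_i)$ is $F := \bigcup_i e_i(F_i) = \dirlim F_i$.

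Next I would upgrade this set-theoretic bijection to a homeomorphism by comparing bases. The topology on $\invlim \Spec P_i$ is generated by preimages of the basic opens $U_{q_i} \subseteq \Spec P_i$ under the projections; under the bijection above, such a preimage is precisely $\{F \in \Spec P : e_i(q_i) \in F\} = U_{e_i(q_i)}$. Conversely, every basic open $U_p$ of $\Spec P$ arises this way, because every $p \in P = \dirlim P_i$ is of the form $e_i(q_i)$ for some $i$ and some $q_i \in P_i$. This identifies the two bases and establishes the homeomorphism~\eqref{topmap}.

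For the sheaf assertion~\eqref{monmap}, I would check that the map is an isomorphism on stalks. At a point $F \in \Spec P$ corresponding to the system $(F_i)$, the stalk of the right-hand side is $F^{-1}P$ by Proposition~\ref{prop:Spec}\eqref{stalkMP}, while the stalk of the left-hand side is $\dirlim \M_{P_i,F_i} = \dirlim F_i^{-1}P_i$, because direct limits of sheaves commute with stalks and the stalk of $(\Spec e_i)^{-1}\M_{P_i}$ at $F$ is the stalk of $\M_{P_i}$ at $(\Spec e_i)(F) = F_i$. Since localization is a left adjoint (at the monoid level, $F^{-1}P$ is the initial object under $P$ inverting $F$) and $F = \dirlim F_i$, $P = \dirlim P_i$, one obtains $F^{-1}P = \dirlim F_i^{-1}P_i$ by commuting left adjoints with direct limits, and under this identification the natural map of \eqref{monmap} is the identity on stalks.

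Finally, I would deduce the last assertion from the explicit construction of inverse limits in $\MS$ given in \S\ref{section:inverselimits}: that construction takes the inverse limit of topological spaces and equips it with the direct limit of the pulled-back structure sheaves, which is exactly what \eqref{topmap} and \eqref{monmap} identify with $\Spec P$. This settles the $\MS$ statement; the $\LMS$ statement follows because the inclusion $\LMS \to \MS$ preserves inverse limits, which in turn is a direct consequence of Theorem~\ref{thm:localdirectlimits}. I expect no real obstacle: the whole argument is driven by the formal adjunction $\Spec = \Hom_{\Mon}(\slot,T)$ together with the basic fact that localization commutes with direct limits; the one point requiring a little care is confirming that a basis of opens for $\invlim \Spec P_i$ is accounted for by elements of $P$, and this is handled by the remark that every $p \in P$ comes from some $P_i$.
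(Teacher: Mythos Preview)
Your approach is essentially the reverse of the paper's: you establish the concrete assertions (homeomorphism, sheaf isomorphism) directly and then deduce the universal properties, whereas the paper argues abstractly first. The paper simply observes that $\Spec : \Mon^{\rm op} \to \LMS$ is a right adjoint (to $X \mapsto \M_X(X)$, cf.\ \eqref{Specadjunction}) and therefore preserves limits, giving the $\LMS$ statement immediately; the $\MS$ statement then follows because $\LMS \to \MS$ preserves inverse limits (\S\ref{section:inverselimits}), and the assertions about \eqref{topmap} and \eqref{monmap} are then read off from the explicit construction of inverse limits in $\MS$. Your route is more hands-on and makes the content explicit (the identification $\Spec Q = \Hom_{\Mon}(Q,T)$ is pleasant), at the cost of bookkeeping that the adjunction absorbs in one line.

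One genuine slip: the theorem concerns \emph{arbitrary} direct limits (the paper writes ``filtered direct limit'' explicitly when it means that), and for a non-filtered colimit such as a pushout $P_1 \oplus_Q P_2$ it is \emph{not} true that every $p \in P$ lies in some $e_i(P_i)$. The fix is immediate: every $p \in P$ is a finite sum $\sum_k e_{i_k}(q_k)$ of such images, whence $U_p = \bigcap_k U_{e_{i_k}(q_k)}$ by \eqref{basicopenintersection}, a finite intersection of pulled-back basic opens and hence open in $\invlim \Spec P_i$. The same issue infects your formula $F = \bigcup_i e_i(F_i)$: this union need not even be a submonoid; the correct face is the submonoid it \emph{generates} (equivalently the image of $\dirlim F_i \to P$), and your stalk computation $F^{-1}P = \dirlim F_i^{-1}P_i$ goes through unchanged with that correction.
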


\begin{proof} Certainly $\Spec P$ is the inverse limit of $i \mapsto \Spec P_i$ in $\LMS$ because the $\Spec$ functor \eqref{SpecLMS} is a right adjoint, so it preserves inverse limits.  Since $\LMS \to \MS$ also preserves inverse limits (\S\ref{section:inverselimits}), we see that $\Spec P$ is also the inverse limit of $i \mapsto \Spec P_i$ in $\MS$, hence the other assertions follow from the explicit construction of inverse limits in $\MS$ discussed in \S\ref{section:inverselimits}.  \end{proof}

\begin{example} \label{example:SpecN}  The prototypical example to keep in mind is $\Spec \NN$.  The monoid $\NN$ has only the two obvious faces $\{ 0 \} = \NN^*$ and $\NN$, so that $\Spec \NN$ has only the two obvious points: the closed point $0 = \{ 0 \}$ and the generic point $\NN$.  The generic point is the image of the unique point of $\Spec \ZZ$ under the map \be \Spec \ZZ & \to & \Spec \NN \ee induced by the inclusion $\NN \into \ZZ$.  Evidently then, the topological space $\Spec \NN$ is the two point ``Sierpinski space" where $ 0  \in \{ \NN \}^-$.  The corresponding generalization map on stalks $\M_{ \NN, 0 } \to \M_{ \NN, \NN}$ is just the obvious inclusion $\NN \into \ZZ$.  On a finite topological space, the generalization maps on stalks uniquely determine a sheaf, so the category of sheaves on $\Spec \NN$ is just the category of maps of sets.  With this understanding, the structure sheaf $\M_{\NN}$ \emph{is} the map $\NN \to \ZZ$.

To give an $\LMS$-morphism $g : \Spec \ZZ \to X$ is to give a point $x' \in X$ and a local map of monoids $g^\dagger : \M_{X,x'} \to \ZZ$.  This locality is equivalent to saying that $\M_{X,x'}$ is a group; we usually write $g$ instead of $g^\dagger$.  To give a map $g : \Spec \NN \to X$ is to give points $x = g(0)$ and $x' = g(\NN) \in X$ with $x \in \{ x' \}^-$, and a commutative diagram of monoids $$ \xym{  \M_{X,x} \ar[d]_{g^\dagger_{0}} \ar[r] & \M_{X,x'} \ar[d]^{g^\dagger_{\NN}} \\ \NN \ar[r] & \ZZ } $$ where the vertical arrows are local maps of monoids.  This latter condition is equivalent to saying that $(g^\dagger_{0})^{-1}(0) = \{ 0 \}$ and $\M_{X,g(\NN)}$ is a group.  We will usually just write $g$ for both $g^\dagger_0$ and $g^\dagger_{\NN}$.  To lift a map $g : \Spec \ZZ \to X$ corresponding to $x' \in X$ and $g : \M_{X,x'} \to \ZZ$ to a map $g : \Spec \NN \to X$ is to a give a point $x \in \{ x' \}^-$ so that the composition $$ \M_{X,x} \to \M_{X,x'} \to \ZZ $$ of the generalization map and $g$ yields a local map $g : \M_{X,x} \to \NN$.  Compare the description \cite[II.4.4]{H} of maps out of a trait in algebraic geometry.  Notice that a monoid homomorphism $h : P \to \NN$ is local iff $\Spec \NN$ takes the closed point to the closed point. \end{example}

\subsection{Spec invariance} \label{section:specinvariance}  The topological space underlying $\Spec P$ turns out to be a rather crude invariant of $P$.  The map of spaces $\Spec h$ is a homeomorphism for a wide variety of monoid homomorphisms $h$.  We now summarize the main examples.

\begin{lem} \label{lem:specembedding} If $h : Q \to P$ is a surjective map of monoids, then $\Spec h$ is an embedding (not necessarily closed!) on the level of topological spaces. \end{lem}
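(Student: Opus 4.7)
The plan is to verify the two defining properties of an embedding: $\Spec h$ is injective on points, and it is a homeomorphism from $\Spec P$ onto its image when the latter is equipped with the subspace topology from $\Spec Q$.

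First I would prove injectivity. Suppose $F_1,F_2$ are faces of $P$ with $(\Spec h)(F_1) = (\Spec h)(F_2)$, i.e.\ $h^{-1}(F_1) = h^{-1}(F_2)$. Applying $h$ to both sides and using surjectivity of $h$ (which gives $h(h^{-1}(F_i)) = F_i$), we conclude $F_1 = F_2$.

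Next I would handle the topological part by comparing basic open sets. Recall that for any $q \in Q$ we have the general formula $(\Spec h)^{-1}(U_q) = U_{h(q)}$ from \S\ref{section:Spec}, so $\Spec h$ is continuous. The key computation is that for any $q \in Q$,
\[ (\Spec h)(U_{h(q)}) \;=\; U_q \cap (\Spec h)(\Spec P). \]
To check the inclusion $\subseteq$, take $F' \leq P$ with $h(q) \in F'$; then $q \in h^{-1}(F') = (\Spec h)(F')$, so $(\Spec h)(F') \in U_q$. For the reverse inclusion, suppose $G = h^{-1}(F')$ for some $F' \leq P$ and $q \in G$; then $h(q) \in h(h^{-1}(F')) = F'$ (using surjectivity of $h$), so $F' \in U_{h(q)}$ and $G = (\Spec h)(F')$ lies in $(\Spec h)(U_{h(q)})$.

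Finally, since $h$ is surjective, \emph{every} basic open of $\Spec P$ has the form $U_{h(q)}$ for some $q \in Q$, so the displayed equation says that $\Spec h$ sends each basic open of $\Spec P$ to the intersection of an open of $\Spec Q$ with the image, and conversely every such intersection is of this form. As the $U_p$ form a basis for the topology on $\Spec P$ and the $U_q$ form a basis for $\Spec Q$ (Proposition~\ref{prop:Spec}\eqref{opensubsets}), this immediately upgrades to arbitrary opens: $\Spec h$ is a homeomorphism onto its image. Combined with injectivity, $\Spec h$ is an embedding. There is no real obstacle here; the only point requiring care is the direction using surjectivity in the inclusion $U_q \cap (\Spec h)(\Spec P) \subseteq (\Spec h)(U_{h(q)})$, where one must remember that $h(h^{-1}(F')) = F'$ exactly because $h$ is surjective.
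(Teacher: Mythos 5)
Your proof is correct and follows essentially the same route as the paper's: injectivity from $h(h^{-1}(F))=F$ for surjective $h$, and the topological part from the identity $(\Spec h)^{-1}(U_q)=U_{h(q)}$ together with the observation that surjectivity makes every basic open $U_p$ of $\Spec P$ arise this way. The only cosmetic difference is that you phrase the conclusion via images of basic opens being relatively open, while the paper phrases it as the preimages $(\Spec h)^{-1}(U_q)$ forming a basis for $\Spec P$; these are equivalent.
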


\begin{proof} If $F,G \leq P$ are faces with $h(F)=h(G)$ then surjectivity of $h$ implies $F=G$, so $\Spec h$ is one-to-one.  To see that it is an embedding, note that for \emph{any} monoid homomorphism $h$ and any $q \in Q$, we have $(\Spec h)^{-1}(U_q) = U_{h(q)}$ where $U_q \subseteq \Spec Q$ and $U_{h(q)} \subseteq \Spec P$ are the basic open subsets defined in \eqref{Up}.  When $h$ is surjective, this proves that the open sets $(\Spec h)^{-1}(U_q)$ are a basis for the topology on $\Spec P$. \end{proof}

\begin{example} \label{example:notclosed}  Consider the addition map $h=(1,1): \NN^2 \to \NN$.  The image of $\Spec h$ consists of the two points $h^{-1}(0)=0, h^{-1}(\NN)=\NN^2 \in \Spec \NN^2$.  This set of two points is not closed because the other two points $\NN \oplus 0$ and $0 \oplus \NN$ of $\Spec \NN^2$ are in its closure by Proposition~\ref{prop:Spec}\eqref{Specclosure}. \end{example}

\begin{thm} \label{thm:sharpening} Suppose $h : Q \to P$ is a surjective map of monoids with the following property:  For any $q_1,q_2 \in Q$ with $h(q_1) = h(q_2)$, there is an $n \in \ZZ_{>0}$ and a $u \in Q^*$ such that $nq_1 = nq_2+u$.  Then $h(F) \leq P$ for any $F \leq Q$ and $\Spec h$ is a homeomorphism with inverse $F \mapsto h(F)$.  \end{thm}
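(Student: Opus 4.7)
The plan is to separately establish (a) that $h(F)$ is a face of $P$ whenever $F$ is a face of $Q$, (b) that the assignment $F \mapsto h(F)$ really is inverse to $\Spec h$, and (c) that this bijection is a homeomorphism. Lemma~\ref{lem:specembedding} already guarantees $\Spec h$ is a topological embedding (since $h$ is surjective), so once (a) and (b) are in hand, (c) is automatic: a bijective embedding is a homeomorphism, and the inverse is then necessarily continuous.

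Both (a) and (b) will rest on the same elementary observation about faces, which I would isolate at the start: if $F \leq Q$, then $Q^* \subseteq F$, and if $nq \in F$ for some $n \in \ZZ_{>0}$, then $q \in F$ (apply the face condition $q + (n-1)q \in F \Rightarrow q \in F$ inductively). Combined with the hypothesis, this gives the following mantra: whenever $h(q_1) = h(q_2)$ with $q_1 \in F$ a face, we have $nq_2 = nq_1 + (-u) \in F$ for appropriate $n, u$, hence $q_2 \in F$. This one idea drives everything.

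For (a), given $p_1, p_2 \in P$ with $p_1 + p_2 \in h(F)$, I would use surjectivity to pick lifts $q_1, q_2 \in Q$ of $p_1, p_2$, then observe $h(q_1+q_2)$ equals $h(f)$ for some $f \in F$; the mantra above (applied to $q_1 + q_2$) forces $q_1 + q_2 \in F$, and then the face property of $F$ gives $q_1, q_2 \in F$, so $p_1, p_2 \in h(F)$. Closure under addition and the presence of $0$ are immediate, so $h(F) \leq P$.

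For (b), it is clear from surjectivity that $h(h^{-1}(G)) = G$ for any $G \leq P$; the nontrivial identity is $h^{-1}(h(F)) = F$ for $F \leq Q$. The inclusion $\supseteq$ is tautological, and $\subseteq$ is precisely the mantra: any $q$ with $h(q) \in h(F)$ satisfies $h(q) = h(f)$ for some $f \in F$, hence $nq \in F$, hence $q \in F$. The main obstacle, such as it is, is recognizing at the outset that the hypothesis about $n$ and $u$ is custom-tailored to interact with these two closure properties of faces; once that is seen, everything is routine. Finally, since $(\Spec h)(G) = h^{-1}(G)$, the two computations say $\Spec h$ is a bijection whose set-theoretic inverse is $F \mapsto h(F)$, and invoking Lemma~\ref{lem:specembedding} promotes this to a homeomorphism.
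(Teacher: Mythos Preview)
Your proof is correct and follows essentially the same approach as the paper: both arguments show $h(F)$ is a face by lifting $p_1+p_2 \in h(F)$ and using the hypothesis to force the lifts into $F$, both establish $h^{-1}(h(F))=F$ via the same ``$nq \in F \Rightarrow q \in F$'' reasoning, and both finish by invoking Lemma~\ref{lem:specembedding} to upgrade the resulting bijection to a homeomorphism. Your presentation is slightly more modular in isolating the ``mantra'' upfront, but the underlying ideas are identical.
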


\begin{proof} To see that $h(F)$ is a face of $P$, suppose $p_1+p_2 \in h(F)$.  Pick $q_i \in Q$ with $h(q_i) = p_i$ ($i=1,2$) and $f \in F$ with $p_1+p_2=h(f)$.  Then $h(q_1+q_2) = h(f)$, so by assumption $nq_1+nq_2 = nf+u$ for some $n \in \ZZ_{>0}$, $u \in Q^*$.  Since $nf+u \in F$ ($u \in F$ because $u+(-u) =0 \in F$ and $F$ is a face) and $F$ is a face, $q_1,q_2 \in F$, hence $p_i = h(q_i) \in F$.  By Lemma~\ref{lem:specembedding}, $\Spec h$ is an embedding, so to see that it is a homeomorphism, we just need to check that it is surjective.  For any $F \in \Spec Q$, we just saw that $h(F) \in \Spec P$ and the same argument shows that $(\Spec h)(h(F)) = h^{-1}(h(F)) = F$. \end{proof}

\begin{cor} \label{cor:sharpening} For any monoid $P$, the sharpening map $P \to \ov{P} = P/P^*$ satisfies the hypotheses of Theorem~\ref{thm:sharpening}, hence it induces a homeomorphism $\Spec \ov{P} \to \Spec P$ natural in $P$. \end{cor}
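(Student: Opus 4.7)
The plan is to verify directly that the sharpening map $\pi : P \to \ov{P}$ satisfies the hypotheses of Theorem~\ref{thm:sharpening}, and then invoke that theorem. Since $\ov{P} = P/P^*$ is by definition the quotient of $P$ by the equivalence relation $p_1 \sim p_2$ iff $p_1$ and $p_2$ lie in the same $P^*$-orbit, the map $\pi$ is certainly surjective, so the first hypothesis of Theorem~\ref{thm:sharpening} is satisfied.

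For the second hypothesis, suppose $p_1, p_2 \in P$ satisfy $\pi(p_1) = \pi(p_2)$ in $\ov{P}$. By the very definition of the equivalence relation, this means there exists $u \in P^*$ with $p_1 = p_2 + u$, which is exactly the condition $nq_1 = nq_2 + u$ of Theorem~\ref{thm:sharpening} in the special case $n = 1$. Applying Theorem~\ref{thm:sharpening} then yields the desired homeomorphism $\Spec \ov{P} \to \Spec P$, given explicitly on points by $F \mapsto \pi^{-1}(F)$, with inverse $F \mapsto \pi(F)$.

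For naturality, given any monoid homomorphism $h : P \to Q$, the inclusion $h(P^*) \subseteq Q^*$ (units map to units under any monoid homomorphism) shows that $h$ descends to a well-defined map $\ov{h} : \ov{P} \to \ov{Q}$ fitting into a commutative square with the sharpening projections. Applying the contravariant functor $\Spec$ produces the corresponding commutative square relating $\Spec h$ with $\Spec \ov{h}$ via the two homeomorphisms just constructed, establishing naturality.

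There is really no main obstacle here: once one unwinds the definition of $\ov{P} = P/P^*$, the hypothesis of Theorem~\ref{thm:sharpening} holds trivially with $n=1$, and the rest is formal.
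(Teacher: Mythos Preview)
Your proof is correct and follows exactly the approach the paper intends: the corollary is stated without proof in the paper because the verification that $\pi : P \to \ov{P}$ satisfies the hypotheses of Theorem~\ref{thm:sharpening} is immediate (with $n=1$), just as you spell out. Your additional remark on naturality is also correct and merely makes explicit what the paper leaves implicit.
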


Combining \eqref{cor:sharpening} and Corollary~\ref{cor:faces}, we obtain:

\begin{cor} \label{cor:Specfinite} $\Spec P$ is finite whenever $\ov{P}$ is finitely generated. \end{cor}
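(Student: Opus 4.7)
The plan is to chain together the two corollaries cited in the hint. The statement is essentially a one-line consequence, so the ``proof'' is really just an observation about what has already been established.

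First I would invoke Corollary~\ref{cor:sharpening}, which says that the sharpening map $P \to \ov{P}$ induces a homeomorphism $\Spec \ov{P} \to \Spec P$. This reduces the question of finiteness of the underlying topological space of $\Spec P$ to the same question for $\Spec \ov{P}$. Then I would apply Corollary~\ref{cor:faces} to the monoid $\ov{P}$: since $\ov{P}$ is finitely generated by hypothesis, that corollary asserts directly that $\Spec \ov{P}$ is a finite set. Combining the two, $\Spec P$ is finite.

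There is essentially no obstacle here: the only thing to verify is that the homeomorphism of Corollary~\ref{cor:sharpening} is applicable (it is, since the sharpening map exists and is surjective for any monoid, as discussed in \S\ref{section:monoids}) and that Corollary~\ref{cor:faces} only requires finite generation of the monoid whose spectrum we are bounding, not of $P$ itself. Both points are immediate from the statements as given. So the proof is a two-sentence chain of citations with no further argument needed.
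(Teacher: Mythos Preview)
Your proposal is correct and matches the paper's approach exactly: the paper derives this corollary by combining Corollary~\ref{cor:sharpening} (the sharpening map induces a homeomorphism on $\Spec$) with Corollary~\ref{cor:faces} (finiteness of $\Spec$ for finitely generated monoids), just as you do.
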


\begin{cor} \label{cor:tf}  For a monoid $P$, let $P^{\rm tf}$ be the image of $P$ in $P^{\rm gp} / P^{\rm gp}_{\rm tor}$.  Then $(P^{\rm tf})^{\rm gp} = P^{\rm gp} /  P^{\rm gp}_{\rm tor}$ and the surjection $P \to P^{\rm tf}$ is initial among maps from $P$ to a torsion-free monoid (\S\ref{section:monoids}).  If $P$ is integral then \begin{enumerate} \item \label{tflocal} $P \to P^{\rm tf}$ is local.  \item \label{tfhomeo} $P \to P^{\rm tf}$ induces a homeomorphism $\Spec P^{\rm tf} \to \Spec P$ with inverse $F \mapsto F^{\rm tf}$. \item \label{tflocalization} for any face $F \leq P$, the natural map \be F^{-1} P & \to & (F^{\rm tf})^{-1} P^{\rm tf} \ee is local and induces an isomorphism \be (F^{-1} P)^{\rm tf} & \to & (F^{\rm tf})^{-1} P^{\rm tf}. \ee \end{enumerate} \end{cor}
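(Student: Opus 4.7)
The plan is to handle the general (non-integral) assertions directly from the universal property of groupification, then reduce part (2) to Theorem~\ref{thm:sharpening}, and finally establish (1) and (3) by writing cancellation equations in $P^{\rm gp}$ and pulling them back to $P$ via integrality. For the first sentence of the statement: $P^{\rm tf}$ by definition sits in and generates the torsion-free abelian group $P^{\rm gp}/P^{\rm gp}_{\rm tor}$, so it is integral with $(P^{\rm tf})^{\rm gp} = P^{\rm gp}/P^{\rm gp}_{\rm tor}$, hence torsion-free. Given any $f : P \to T$ with $T$ torsion-free, $f^{\rm gp} : P^{\rm gp} \to T^{\rm gp}$ kills $P^{\rm gp}_{\rm tor}$ and thus factors through $P^{\rm gp}/P^{\rm gp}_{\rm tor}$; restricting to $P^{\rm tf}$ lands in $T \subseteq T^{\rm gp}$ (since $T$ is integral and contains the image of $P$), and uniqueness is automatic since $P \to P^{\rm tf}$ is surjective.

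Now assume $P$ integral and write $h : P \to P^{\rm tf}$. For (1), if $h(p) \in (P^{\rm tf})^*$, choose $q \in P$ with $h(p + q) = 0$, i.e.\ $p + q \in P^{\rm gp}_{\rm tor}$; then $n(p+q) = 0$ in $P^{\rm gp}$ for some $n > 0$, so $p + ((n-1)p + nq) = 0$ in $P^{\rm gp}$, and since $P \hookrightarrow P^{\rm gp}$ this equation descends to $P$, exhibiting $p$ as a unit. For (2), I would verify the hypothesis of Theorem~\ref{thm:sharpening}: if $h(q_1) = h(q_2)$, then $q_1 - q_2 \in P^{\rm gp}_{\rm tor}$, so $n q_1 = n q_2$ in $P^{\rm gp}$ for some $n > 0$, hence in $P$ by integrality, giving $n q_1 = n q_2 + 0$ with $0 \in P^*$. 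The theorem then supplies the desired homeomorphism with inverse $F \mapsto h(F)$; one identifies $h(F) = F^{\rm tf}$ by observing that $F$ inherits integrality from $P$, so $F^{\rm gp} \hookrightarrow P^{\rm gp}$ (its kernel consists of $a - b$ with $a = b$ in $P$, hence in $F$), whence $F^{\rm gp} \cap P^{\rm gp}_{\rm tor} = F^{\rm gp}_{\rm tor}$ and the image of $F$ in $P^{\rm gp}/P^{\rm gp}_{\rm tor}$ coincides with its image in $F^{\rm gp}/F^{\rm gp}_{\rm tor}$, which is by definition $F^{\rm tf}$.

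For (3), the composite $P \to P^{\rm tf} \to (F^{\rm tf})^{-1}P^{\rm tf}$ inverts every element of $F$, so by the universal property of localization it factors uniquely through $F^{-1}P$, producing the natural map. Locality is a replay of (1): if $p - f$ (with $p \in P$, $f \in F$) has invertible image, then pairing with an inverse $p' - f'$ gives $n(p + p') = n(f + f')$ in $P^{\rm gp}$ for some $n > 0$, hence in $P$ by integrality; since $f + f' \in F$ and $F$ is a face we conclude $np \in F$, then $p \in F$, so $p - f \in F^{\rm gp} \subseteq (F^{-1}P)^*$. For the isomorphism on torsion-free quotients, one notes that $(F^{-1}P)^{\rm gp} = P^{\rm gp}$ (as $P \subseteq F^{-1}P \subseteq P^{\rm gp}$), so $(F^{-1}P)^{\rm tf}$ is realized in $P^{\rm gp}/P^{\rm gp}_{\rm tor}$ as the image of $F^{-1}P$; this coincides with $(F^{\rm tf})^{-1}P^{\rm tf}$, which is by construction the submonoid of $P^{\rm gp}/P^{\rm gp}_{\rm tor}$ generated by the images of $P$ and $-F$. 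The one step requiring real care is the identification $h(F) = F^{\rm tf}$ in (2), where one must correctly relate the torsion-free quotient of the abstract monoid $F$ to the image of $F$ inside the torsion-free quotient of $P$; once the injectivity $F^{\rm gp} \hookrightarrow P^{\rm gp}$ is in hand, the rest of the argument is routine diagram chasing with cancellation in $P^{\rm gp}$.
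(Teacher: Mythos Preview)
Your proof is correct and follows essentially the same approach as the paper's: reduce part~(2) to Theorem~\ref{thm:sharpening} via the cancellation identity $nq_1 = nq_2$, prove (1) by pulling a torsion relation back to $P$, handle the identification $h(F) = F^{\rm tf}$ via the injectivity $F^{\rm gp} \hookrightarrow P^{\rm gp}$, and establish the isomorphism in (3) by realizing both sides as the same submonoid of $P^{\rm gp}/P^{\rm gp}_{\rm tor}$. The only cosmetic difference is that the paper deduces locality in (3) from (1) applied to $F^{-1}P$ together with the isomorphism, whereas you re-run the (1)-style argument directly; both are fine.
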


\begin{proof} The formula for $(P^{\rm tf})^{\rm gp}$ and the universality statement are straightforward.  Suppose, for the remainder of the proof, that $P$ integral.  If $p \in P$ maps to a unit in $P^{\rm tf}$, then we can write $p+p'=u$ in $P^{\rm gp}$ for some $p' \in P$, $u \in P^{\rm gp}_{\rm tor}$.  If we let $n \in \ZZ_{>0}$ be the order of $u$, then we find $np+np'=0$ in $P^{\rm gp}$, hence $np+np'=0$ in $P$ since $P$ is integral, hence $p \in P^*$.  This proves \eqref{tflocal}.  \eqref{tfhomeo} follows from Theorem~\ref{thm:sharpening} because if $p,p' \in P$ have the same image in $P^{\rm tf}$, then we have an equality $p=p'+u$ in $P^{\rm gp}$ for some $u \in P^{\rm gp}_{\rm tor}$, hence $np=np'$ in $P^{\rm gp}$ for $n$ the order of $u$, hence $np=np'$ in $P$ because $P$ is integral.  The explicit formula for the inverse of the homeomorphism will also follow from that theorem provided we can show that $F^{\rm tf}$ is the image of $F$ in $P^{\rm tf}$ for any $F \leq P$.  From the universal property, we obtain a commutative square $$ \xym{ F \ar[r] \ar[d] & P \ar[d] \\ F^{\rm tf} \ar[r] & P^{\rm tf} } $$ where the vertical arrows are surjective, so we reduce to proving that $F^{\rm tf} \to P^{\rm tf}$ is injective.  Indeed, if $f,f' \in F$ have the same image in $P^{\rm tf}$, then we can write $f=f'+u$ in $P^{\rm gp}$ for some $u \in P^{\rm gp}_{\rm tor}$.  But then $u = f-f'$ must actually be in $F^{\rm gp}_{\rm tor} \subseteq P^{\rm gp}_{\rm tor}$, so $f$ and $f'$ have the same image in $F^{\rm tf}$.  In light of \eqref{tflocal}, it is enough to prove the isomorphy statement in \eqref{tflocalization}.  This can be done either by checking that both monoids in question can be described as the submonoid of $P^{\rm gp} / P^{\rm gp}_{\rm tor}$ consisting of elements that can be written as the image of $p-f \in P^{\rm gp}$ for some $p \in P$, $f \in F$, or by checking that both monoids are initial among maps from $P$ to a torsion-free monoid which map $F$ into the units. \end{proof}

\begin{defn} \label{defn:dense} A map of monoids $h : Q \to P$ is called \emph{dense} iff, for every $p \in P$, there is an $n \in \ZZ_{>0}$ and a $q \in Q$ such that $np = h(q)$. \end{defn}

\begin{thm} \label{thm:dense}  {\bf (Gordan's Lemma)}.  Let $h : Q \to P$ be a map of monoids.  \begin{enumerate} \item \label{dense1} If $h$ is finite (Definition~\ref{defn:flat}), then $P^{\rm gp} / Q^{\rm gp}$ is finite.  \item \label{dense2} If $P$ is finitely generated and $h$ is dense, then $h$ is finite. \item \label{dense3} If $Q$ is finitely generated, $P$ is integral, and $h$ is finite, then $P$ is fine and $h$ is dense. \item \label{dense4} If $Q$ is fine, $P$ is integral, $h$ is dense, and $P^{\rm gp}$ is finitely generated, then $h$ is finite. \item \label{dense5} If $Q$ is a fine monoid and $Q \into G$ is any injective monoid homomorphism from $Q$ to a finitely generated abelian group $G$, then the saturation $P$ of $Q$ in $G$ is a finitely generated $Q$-module. \end{enumerate} \end{thm}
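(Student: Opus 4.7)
The plan is to prove the five parts in an order reflecting logical dependencies rather than the stated order. Parts (1), (2), and (5) admit essentially direct arguments; (4) follows from (5) via the noetherianness of modules over finitely generated monoids established at the end of Section~\ref{section:monoidalgebra}; and the most delicate piece, (3), will be proven by applying the Cayley--Hamilton / determinant trick to $\ZZ[P]$ as a $\ZZ[Q]$-module.

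For (1), write $P = \bigcup_{i=1}^n (h(Q) + p_i)$. The image of $P$ in $P^{\rm gp}/Q^{\rm gp}$ is the finite set $\{[p_1],\ldots,[p_n]\}$, which generates $P^{\rm gp}/Q^{\rm gp}$ as a group (since $P$ generates $P^{\rm gp}$). To see each $[p_i]$ is torsion, note $2p_i = h(q_i) + p_{\sigma(i)}$ for some $\sigma:\{1,\ldots,n\}\to\{1,\ldots,n\}$, so $2[p_i] = [p_{\sigma(i)}]$; pigeonhole on the iterates of $\sigma$ yields $k, m$ with $2^k[p_i] = 2^{k+m}[p_i]$, i.e.\ $2^k(2^m-1)[p_i] = 0$. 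For (2), take monoid generators $p_1,\ldots,p_k$ of $P$, use density to find a single $N$ with $Np_j \in h(Q)$ for each $j$, and observe that writing each $a_j = Nb_j + c_j$ with $0 \le c_j < N$ expresses any $p = \sum a_jp_j$ as an element of $h(Q) + \sum c_jp_j$, so the finite set $\{\sum c_jp_j : 0 \le c_j < N\}$ generates $P$ as a $Q$-module.

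Part (5) is the classical Gordan argument. I would first reduce to the torsion-free case: the preimage in $G$ of the saturation of the image of $Q$ in $G/G_{\rm tor}$ equals $P$ (using that each $t \in G_{\rm tor}$ satisfies $|t|\cdot t = 0 \in Q$, so $G_{\rm tor} \subseteq P$), and lifting a finite $Q$-module generating set of that saturation to $G$ and translating by all elements of $G_{\rm tor}$ yields such a set for $P$. In the torsion-free case, let $q_1,\ldots,q_k$ be monoid generators of $Q$ and form the bounded parallelepiped $\Sigma = \{\sum \lambda_iq_i : 0 \le \lambda_i < 1\} \subseteq G \otimes \RR$; the set $S := P \cap \Sigma$ is finite since $G$ is a lattice. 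For $p \in P$ with $np = \sum a_iq_i$, writing $a_i = nb_i + r_i$ with $0 \le r_i < n$ exhibits $s := p - \sum b_iq_i = \sum (r_i/n) q_i$ as an element of $\Sigma \cap G$ with $ns = \sum r_iq_i \in Q$, so $s \in S$ and $p = s + \sum b_iq_i \in Q + S$. For (4), factor $h$ as $Q \twoheadrightarrow h(Q) \hookrightarrow P$: the submonoid $h(Q) \subseteq P$ is finitely generated (being the image of the fine monoid $Q$) and integral (as a submonoid of the integral $P$), hence fine. Applying (5) to the injection $h(Q) \hookrightarrow P^{\rm gp}$ produces a saturation $P' \subseteq P^{\rm gp}$ which is a finitely generated $h(Q)$-module. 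Density places $P$ inside $P'$ as an $h(Q)$-submodule, and the noetherianness statement from Section~\ref{section:monoidalgebra} forces $P$ itself to be finitely generated over $h(Q)$, hence over $Q$.

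Finally, for (3), finiteness of $h$ makes $\ZZ[P]$ a finitely generated $\ZZ[Q]$-module, and multiplication by $[p]$ is a $\ZZ[Q]$-linear endomorphism, so the Cayley--Hamilton / determinant trick yields $c_0,\ldots,c_{n-1} \in \ZZ[Q]$ with $[p]^n + c_{n-1}[p]^{n-1} + \cdots + c_0 = 0$ in $\ZZ[P]$. Expanding each $c_i = \sum_j z_{ij}[q_{ij}]$ with $z_{ij} \in \ZZ$ and $q_{ij} \in Q$, this identity becomes $[np] + \sum_{i<n,\,j} z_{ij}[h(q_{ij}) + ip] = 0$ in $\ZZ[P]$. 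If $p$ is torsion in $P^{\rm gp}$ then integrality of $P$ gives $mp = 0 = h(0) \in h(Q)$ directly; otherwise the elements $[0], [p], \ldots, [np]$ are distinct basis elements of $\ZZ[P]$, so the coefficient of the basis element $[np]$ must vanish, forcing at least one pair $(i,j)$ with $i < n$ and $h(q_{ij}) + ip = np$ in $P$. Integrality of $P$ then permits cancellation of $ip$, giving $h(q_{ij}) = (n-i)p$ in $P$ with $n - i > 0$; hence $(n-i)p \in h(Q)$, proving density. Fineness of $P$ is immediate since $P$ is integral by hypothesis and finitely generated as a monoid by the generators of $h(Q)$ together with the $Q$-module generators $p_i$. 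The main technical obstacle is the coefficient-comparison step for (3); the other pieces are either direct manipulations or the classical Gordan argument.
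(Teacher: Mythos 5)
Your proof is correct. The paper itself gives no argument for this theorem---it simply cites \cite[Theorem~1.6.2]{GM1}---so there is no internal proof to compare against; what you have written is a valid, self-contained substitute. All five parts check out: the pigeonhole argument on $\sigma$-iterates for (1), the division-with-remainder argument for (2), the classical parallelepiped argument plus the torsion reduction for (5), and the reduction of (4) to (5) via the noetherianness of submodules of finitely generated modules over finitely generated monoids (which the paper does establish in \S\ref{section:monoidalgebra}, so that ingredient is available). For the delicate part (3), your determinant-trick-plus-coefficient-comparison argument is sound: the monic relation $[np]+\sum_{i<n,j}z_{ij}[h(q_{ij})+ip]=0$ in the free $\ZZ$-module $\ZZ[P]$ forces some $h(q_{ij})+ip=np$, and integrality of $P$ lets you cancel $ip$. (The separate torsion case is actually unnecessary---the coefficient comparison never uses distinctness of $[0],[p],\dots,[np]$, only that the coefficient of the single basis element $[np]$ vanishes---but it is harmless.) It is worth noting that this coefficient-comparison technique is exactly the one the paper itself deploys later in Theorem~\ref{thm:saturationequalsintegralclosure}, where the author extracts monoid relations from an integral dependence relation in $\ZZ[P]$ by examining the coefficient of $[np_i]$; so your route for (3) is very much in the spirit of the paper's own toolkit even though the paper outsources this particular proof.
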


\begin{proof} \cite[Theorem~1.6.2]{GM1} \end{proof}

\begin{thm} \label{thm:densespecinvariance} Let $Q \into P$ be a dense, injective map of monoids.  For any face $F \leq Q$, the set \be F' & := & \{ p \in P : np \in F {\rm \; for \; some \;} n \in \ZZ_{>0} \} \ee is a face of $P$.  The map $\Spec P \to \Spec Q$ is a homeomorphism with inverse $F \mapsto F'$.  \end{thm}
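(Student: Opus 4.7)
\emph{Step 1: $F'$ is a face of $P$.} Closure under addition is immediate: if $n_i p_i \in F$ ($i=1,2$), then $n_1 n_2 (p_1+p_2) \in F$. For the face property, suppose $p_1+p_2 \in F'$, so that $n(p_1+p_2) \in F$ for some $n>0$. This is the step where the density of $Q \hookrightarrow P$ is crucial: choose $m>0$ with $m p_1, m p_2 \in Q$ (possible by density, after replacing $m$ by a common multiple). Then $mn p_1$ and $mn p_2$ both lie in $Q$, and their sum $mn(p_1+p_2)$ lies in $F$. Since $F$ is a face of $Q$, both $mn p_i$ lie in $F$, so each $p_i$ lies in $F'$.

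\emph{Step 2: The maps $G \mapsto G \cap Q$ and $F \mapsto F'$ are mutually inverse.} If $F \leq Q$, then $F' \cap Q = F$: the containment $F \subseteq F' \cap Q$ is trivial, and conversely $q \in F' \cap Q$ means $nq \in F$ for some $n>0$, hence $q \in F$ because $F$ is a face of $Q$. If $G \leq P$ and $F := G \cap Q$, then $F' \subseteq G$ because $np \in F \subseteq G$ and $G$ is a face forces $p \in G$; conversely, for $p \in G$, density produces $n > 0$ with $np \in Q$, so $np \in G \cap Q = F$ and $p \in F'$. This also shows that preimages of faces under $h$ really are faces (a general fact, but immediate here).

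\emph{Step 3: The continuous bijection $\Spec h$ is open.} It suffices to show that the image of each basic open $U_p \subseteq \Spec P$ under $\Spec h$ is open in $\Spec Q$. By density, fix $n_0 > 0$ with $n_0 p \in Q$. I claim that, for every face $F \leq Q$, one has $p \in F'$ iff $n_0 p \in F$. The reverse direction is the definition. For the forward direction, if $np \in F$ then $n(n_0 p) = n_0(np) \in F$ (by additive closure of $F$), and then the face property of $F$ in $Q$ applied to the element $n_0 p \in Q$ yields $n_0 p \in F$. Translating under the bijection of Step~2, $(\Spec h)(U_p) = U_{n_0 p}$, which is open. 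Combined with the fact that $\Spec h$ is always continuous (since $(\Spec h)^{-1}(U_q) = U_{h(q)}$), this shows $\Spec h$ is a homeomorphism with the claimed inverse.

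\emph{Anticipated obstacle.} The only genuinely nontrivial point is the face property of $F'$ in Step~1, where one must combine density with the face property of $F$ to bring elements into $Q$ before invoking that $F$ is a face there. Once this is in hand, Step~2 is a bookkeeping check and Step~3 is a one-line verification using the same trick (multiplying by an integer that brings $p$ into $Q$) to identify the image of a basic open.
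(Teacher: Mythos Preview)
Your proof is correct and follows essentially the same route as the paper's: show $F'$ is a face using density to pull both summands into $Q$, verify that $G\mapsto G\cap Q$ and $F\mapsto F'$ are mutual inverses, and then check openness by replacing $p$ with a multiple lying in $Q$. The only cosmetic differences are that the paper argues Step~1 by contradiction and phrases Step~3 via the identity $U_{np}=U_p$ in $\operatorname{Spec}P$ before passing to $Q$, whereas you work directly with the correspondence $F\leftrightarrow F'$; the content is the same.
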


\begin{proof} To see that $F' \leq P$ suppose, for a contradiction, that there is some $p \in P$, $p' \in P \setminus F'$ such that $p+p' \in F'$.  Using density and the definition of $F'$, we can find an $n \in \ZZ_{>0}$ such that $np,np' \in Q$ and $n(p+p') \in F$.  Since $p' \notin F'$, $np' \notin F$, so $np+np' \in F$ contradicts $F \leq Q$.

Next we show that $G = (G \cap Q)'$ for any $G \leq P$.  For the containment $\subseteq$, suppose $g \in G$.  Since $Q \into P$ is dense, there is an $n \in \ZZ_{>0}$ so that $ng \in Q$.  Since $ng$ is also in $G$ we have $g \in (G \cap Q)'$ as desired.  For the containment $\supseteq$, suppose $g \in (G \cap Q)'$.  Then there is $n \in \ZZ_{>0}$ so that $ng \in G \cap Q$.  Since $g \in P$, $ng \in G$ and $G \leq P$, we conclude that $g \in G$.

We next show that $F = F' \cap Q$ for any $F \leq Q$.  The containment $\subseteq$ is clear.  For the opposite containment, suppose $f \in F' \cap Q$.  Then there is an $n \in \ZZ_{>0}$ so that $nf \in F$.  But $f \in Q$ and $F \leq Q$, so this implies $f \in F$.

It remains only to show that the continuous bijection $G \mapsto G \cap Q$ is open.  For this it suffices to show that the image of a basic open set $U_p$ of $\Spec P$ is open in $\Spec Q$.  This image is \be V_p & = & \{ G \cap P : p \in G \}. \ee  But $U_{np} = U_p$ for any $n \in \ZZ_{>0}$, so by density we can assume $p \in Q$.  Then we see that \be V_p & = & \{ G \cap P : p \in G \cap Q \} \\ & = & \{ F \leq Q : p \in F \} \ee (using the surjectivity of $G \mapsto G \cap Q$ established above) is just one of the usual basic opens in $\Spec Q$. \end{proof}

\begin{cor} \label{cor:sat} For any integral monoid $P$, let \be P^{\rm sat} & := & \{ p \in P^{\rm gp} : np \in P {\rm \; for \; some \;} n \in \ZZ_{>0} \}. \ee  For a general monoid $P$, we set $P^{\rm sat} := (P^{\rm int})^{\rm sat}$.  For any monoid $P$, the natural map $P \to P^{\rm sat}$ is initial among monoid homomorphisms from $P$ to a saturated monoid.  If $P$ is integral then \begin{enumerate} \item \label{satlocal} the map $P \to P^{\rm sat}$ is injective and local.  \item \label{sathomeo} the map $P \to P^{\rm sat}$ induces a homeomorphism $\Spec P^{\rm sat} \to \Spec P$ with inverse $F \mapsto F'$, where \be F' & := & \{ p \in P^{\rm sat} : np \in F {\rm \; for \; some \;} n \in \ZZ_{>0} \} . \ee \item \label{satlocalization} for every face $F \leq P$, the natural map \be F^{-1}P & \to & (F')^{-1} P^{\rm sat} \ee is local and the induced map \be (F^{-1}P)^{\rm sat} & \to & (F')^{-1} P^{\rm sat} \ee is an isomorphism. \end{enumerate}  The map $P \into P^{\rm sat}$ is finite when $P$ is fine.  \end{cor}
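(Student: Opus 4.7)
The plan is to first establish the universal property of $P \to P^{\rm sat}$, then to deduce (1) and (2) for integral $P$ by applying Theorem~\ref{thm:densespecinvariance} to the dense injection $P \hookrightarrow P^{\rm sat}$, handle (3) via a universal-property comparison, and finally invoke Gordan's Lemma for the finiteness claim.

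For the groundwork, I would first check that when $P$ is integral the set $P^{\rm sat} \subseteq P^{\rm gp}$ is a submonoid (if $np, mq \in P$ then $nm(p+q) \in P$), is saturated by construction, and that $P \hookrightarrow P^{\rm sat}$ is dense by the very definition of $P^{\rm sat}$. Universality among maps to saturated monoids is then a direct consequence of the saturation condition on the target, applied to the groupified map on $P^{\rm gp}$; the general (not-necessarily-integral) case reduces to the integral one by factoring through $P \to P^{\rm int}$. For integral $P$, injectivity of $P \hookrightarrow P^{\rm sat}$ is immediate from $P^{\rm sat} \subseteq P^{\rm gp}$, and locality is a short computation: if $p$ has inverse $s \in P^{\rm sat}$ with $ns \in P$, then $np \in P^*$, and writing $np + r = 0$ with $r \in P$ gives $p + ((n-1)p + r) = 0$ in $P$, so $p \in P^*$. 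Statement (2) is then an immediate application of Theorem~\ref{thm:densespecinvariance} to the dense injection $P \hookrightarrow P^{\rm sat}$, and the formula for the inverse homeomorphism is exactly the one furnished there.

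The heart of the proof is (3), which I would approach by first showing that the localization $(F')^{-1}P^{\rm sat}$ of the saturated monoid $P^{\rm sat}$ at the face $F' \leq P^{\rm sat}$ is again saturated: if $nx = q - f$ with $q \in P^{\rm sat}$ and $f \in F'$, then $n(x+f) = q+(n-1)f \in P^{\rm sat}$ forces $x + f \in P^{\rm sat}$, hence $x \in (F')^{-1}P^{\rm sat}$. Then the inclusion $F \subseteq F'$ yields a canonical map $F^{-1}P \to (F')^{-1}P^{\rm sat}$ into a saturated target, which extends uniquely to a map $(F^{-1}P)^{\rm sat} \to (F')^{-1}P^{\rm sat}$. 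Conversely, $P^{\rm sat} \to (F^{-1}P)^{\rm sat}$ exists by universality of saturation, and each $f' \in F'$ with $nf' \in F$ maps to an element whose $n$-th multiple is a unit; the small lemma that in any saturated monoid $R$ the condition $nx \in R^*$ forces $x \in R^*$ (apply saturation to $n(-x) = -nx \in R$) then shows $f'$ itself maps to a unit, producing an inverse map. The two maps are mutual inverses by uniqueness in each universal property. Locality of $F^{-1}P \to (F')^{-1}P^{\rm sat}$ reduces to a clearing-of-denominators argument: if $p \in P$ maps to a unit, writing $p + h = g$ in $P^{\rm gp}$ with $g, h \in F'$ and taking a common $n$ gives $np + nh = ng$ in $P$ with $nh, ng \in F$, so $np \in F$ and hence $p \in F$ by facehood.

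Finally, the finiteness of $P \hookrightarrow P^{\rm sat}$ for fine $P$ is a direct application of Theorem~\ref{thm:dense}\eqref{dense5} to the injection $P \hookrightarrow P^{\rm gp}$ into the finitely generated abelian group $P^{\rm gp}$. The main obstacle I anticipate is precisely (3), namely verifying the compatibility of saturation with face-localization and carefully checking the hypotheses of both universal properties used to construct the mutual inverses; the remaining assertions are straightforward consequences of the definitions together with the two key earlier results (dense-Spec-invariance and Gordan's Lemma).
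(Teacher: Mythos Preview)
Your proposal is correct. The treatment of the universal property, of \eqref{satlocal}, of \eqref{sathomeo} via Theorem~\ref{thm:densespecinvariance}, and of the finiteness claim via Theorem~\ref{thm:dense}\eqref{dense5} matches the paper essentially verbatim.

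The only genuine difference is in \eqref{satlocalization}. The paper's argument is more direct: it simply observes that both $(F^{-1}P)^{\rm sat}$ and $(F')^{-1}P^{\rm sat}$ are submonoids of the common ambient group $P^{\rm gp}$, so the isomorphism amounts to an equality of subsets. One containment is obvious; for the other, given $p - f'$ with $p \in P^{\rm sat}$ and $f' \in F'$, one picks $a,b>0$ with $ap \in P$ and $bf' \in F$ and notes that $ab(p-f') = b(ap) - a(bf') \in F^{-1}P$. The paper then deduces locality of $F^{-1}P \to (F')^{-1}P^{\rm sat}$ for free, by factoring it as $F^{-1}P \to (F^{-1}P)^{\rm sat}$ (local by \eqref{satlocal} applied to $F^{-1}P$) followed by the isomorphism just established. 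Your route---first verifying that $(F')^{-1}P^{\rm sat}$ is saturated, then building mutually inverse maps from the two universal properties, and finally checking locality by a separate clearing-of-denominators argument---is correct but longer. What your approach buys is that it never explicitly identifies the two monoids inside $P^{\rm gp}$, so it generalizes more transparently to situations (such as the analogous statements for $P \mapsto P^{\rm tf}$ or $P \mapsto P^{\rm trc}$) where one prefers to argue purely via universal properties; the paper's approach buys brevity and avoids having to verify that the two universal-property maps are mutually inverse.
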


\begin{proof} The universality statement is obvious.  Suppose for the remainder of the proof that $P$ is integral.  Clearly $P \to P^{\rm sat}$ is injective, by definition of $P^{\rm sat}$.  If an element $p \in P$ becomes a unit in $P^{\rm sat}$ then $p+p' = 0$ in $P^{\rm gp}$ for some $p' \in P^{\rm sat} \subseteq P^{\rm gp}$.  Since $np' \in P$ for some $n \in \ZZ_{>0}$, we find that $np$ is a unit in $P$, hence $p$ itself is a unit.  This proves \eqref{satlocal}.  \eqref{sathomeo} follows from Theorem~\ref{thm:sharpening}.  For \eqref{satlocalization}, it is enough to prove the second statement in light of \eqref{satlocal}.  Both monoids are submonoids of $P^{\rm gp}$ so the issue is just to show that the containment $(F^{-1}P)^{\rm sat} \subseteq (F')^{-1} P^{\rm sat}$ is an equality.  An element of $(F')^{-1} P^{\rm sat}$ is an element of $P^{\rm gp}$ which can be written as $p-f'$ for some $p \in P^{\rm sat}$, $f' \in F'$, so there are positive integers $a,b$ such that $ap \in P$, $bf' \in F$.  If we set $n := ab$, then $n(p-f') = b(ap) - a(bf')$ is manifestly in $F^{-1} P$, hence $p-f' \in (F^{-1}P)^{\rm sat}$.  Use Theorem~\ref{thm:dense}\eqref{dense5} for the last statement. \end{proof}

\begin{rem} In the situation of Corollary~\ref{cor:sat}, even with $P$ fine, the face $F'$ of $P^{\rm sat}$ associated to a face $F \leq P$ need not be equal to the saturation $F^{\rm sat}$ of the monoid $F$ because $F' \subseteq P^{\rm gp}$ need not be contained in $F^{\rm gp}$.  On the other hand, $F'$ is clearly contained in the \emph{saturation} of $F^{\rm gp}$ in $P^{\rm gp}$ (the preimage of the torsion subgroup of $P^{\rm gp} / F^{\rm gp} = (P/F)^{\rm gp}$ under the projection from $P^{\rm gp}$).  In general, $F^{\rm gp}$ need not be saturated in $P^{\rm gp}$, but we will see in Corollary~\ref{cor:saturatedlocalization} that $F^{\rm gp}$ \emph{is} a saturated subgroup of $P^{\rm gp}$ whenever $P$ is a saturated monoid and $F \leq P$, so in that case, we \emph{do} have $F'=F^{\rm sat}$ in Corollary~\ref{cor:sat}. \end{rem}

\begin{cor} \label{cor:finiteinvariance} Let $h : Q \to P$ be a finite, injective map of fine monoids.  Then $\Spec h$ is a homeomorphism. \end{cor}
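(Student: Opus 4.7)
The statement is essentially the combination of two already-stated results, so the proof should be very short. The plan is to observe that a finite map $h : Q \to P$ between fine monoids automatically satisfies the density hypothesis of Theorem~\ref{thm:densespecinvariance}, at which point that theorem gives the conclusion.

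\medskip

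\textbf{Step 1.} Unpack the hypotheses relative to the finiteness criteria in Theorem~\ref{thm:dense}. Since $Q$ is fine it is in particular finitely generated, and since $P$ is fine it is in particular integral. Together with the assumption that $h$ is finite, this is exactly the hypothesis list of Theorem~\ref{thm:dense}\eqref{dense3}, whose conclusion includes that $h$ is dense in the sense of Definition~\ref{defn:dense}.

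\medskip

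\textbf{Step 2.} We now have a dense, injective map of monoids $h : Q \to P$; injectivity is given by hypothesis and density was just established. This is precisely the setup of Theorem~\ref{thm:densespecinvariance}, which asserts that $\Spec h : \Spec P \to \Spec Q$ is a homeomorphism (with explicit inverse $F \mapsto F' = \{p \in P : np \in F \text{ for some } n \in \ZZ_{>0}\}$). This completes the proof.

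\medskip

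\textbf{Remark on difficulty.} There is essentially no obstacle: the real work was carried out in Theorem~\ref{thm:dense} (Gordan's Lemma, which supplies the density) and Theorem~\ref{thm:densespecinvariance} (the topological consequence of density plus injectivity). The only thing to verify is that the hypotheses of each theorem are satisfied, which follows immediately from the definitions of fine and finite. No refinement (saturation, sharpening, passage to $P^{\rm tf}$) is required, since the density-based route is already strong enough.
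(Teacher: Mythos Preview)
Your proof is correct and follows exactly the paper's approach: invoke Theorem~\ref{thm:dense}\eqref{dense3} to obtain density from finiteness (using that $Q$ is finitely generated and $P$ is integral), then apply Theorem~\ref{thm:densespecinvariance}. The paper's proof is the single sentence ``$h$ is dense by Theorem~\ref{thm:dense}\eqref{dense3},'' leaving the appeal to Theorem~\ref{thm:densespecinvariance} implicit; you have simply made that step explicit.
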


\begin{proof} $h$ is dense by Theorem~\ref{thm:dense}\eqref{dense3}. \end{proof}

\begin{cor} \label{cor:trc} For any monoid $P$, we have $(P^{\rm sat})^{\rm tf} = (P^{\rm tf})^{\rm sat} =: P^{\rm trc}$.  The map $P \to P^{\rm trc}$ is initial among maps from $P$ to a saturated, torsion-free monoid (\S\ref{section:monoids}).  If $P$ is integral then \begin{enumerate} \item \label{trclocal} $P \to P^{\rm trc}$ is local. \item \label{trchomeo} $P \to P^{\rm trc}$ induces a homeomorphism $\Spec P^{\rm trc} \to \Spec P$. \item \label{trclocalization} for any face $F \leq P$, if $G \leq P^{\rm trc}$ is the corresponding face of $P^{\rm trc}$ (cf.\ \eqref{trchomeo}), then the natural map \be F^{-1}P & \to & G^{-1} P^{\rm trc} \ee is local and the induced map \be (F^{-1}P)^{\rm trc} & \to & G^{-1} P^{\rm trc} \ee is an isomorphism. \end{enumerate}  If $P$ is fine, then $P^{\rm trc}$ is toric and $P \to P^{\rm trc}$ is finite. \end{cor}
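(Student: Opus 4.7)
The plan is to deduce everything by composing the two operations $\mathrm{sat}$ and $\mathrm{tf}$, invoking Corollary~\ref{cor:tf} and Corollary~\ref{cor:sat} at each step.

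The first task is to establish the identity $(P^{\rm sat})^{\rm tf} = (P^{\rm tf})^{\rm sat}$. The cleanest route is via universal properties, so I would verify the two compatibilities: (i) if $P$ is integral and torsion-free, then $P^{\rm sat}$ is torsion-free, because for integral $P$ one has $(P^{\rm sat})^{\rm gp} = P^{\rm gp}$; and (ii) if $P$ is integral and saturated, then $P^{\rm tf}$ is saturated. Statement (ii) is the only computation with real content: given $\bar p \in (P^{\rm tf})^{\rm gp} = P^{\rm gp}/P^{\rm gp}_{\rm tor}$ with $n\bar p \in P^{\rm tf}$, lift to $p \in P^{\rm gp}$ with $np = p' + t$, $p' \in P$, $t \in P^{\rm gp}_{\rm tor}$ of order $m$; then $mn\,p = mp' \in P$, so $p \in P^{\rm sat} = P$, whence $\bar p \in P^{\rm tf}$. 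With (i) and (ii) in hand, both $(P^{\rm sat})^{\rm tf}$ and $(P^{\rm tf})^{\rm sat}$ are saturated, torsion-free monoids receiving $P$, and by composing the two universal properties each is initial among such; the canonical comparison map is therefore an isomorphism. This simultaneously proves the universality statement for $P \to P^{\rm trc}$.

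For the three assertions under the integrality hypothesis, I would factor $P \to P^{\rm trc}$ as $P \to P^{\rm sat} \to (P^{\rm sat})^{\rm tf}$ and apply the corresponding parts of Corollary~\ref{cor:sat} and Corollary~\ref{cor:tf} in sequence, noting that $P^{\rm sat}$ is integral (being a submonoid of $P^{\rm gp}$) so that Corollary~\ref{cor:tf} applies to it. Locality and the homeomorphism of spectra are each closed under composition, giving \eqref{trclocal} and \eqref{trchomeo}. For \eqref{trclocalization}, given $F \leq P$, let $F' \leq P^{\rm sat}$ be the face corresponding to $F$ under Corollary~\ref{cor:sat}\eqref{sathomeo} and let $G \leq P^{\rm trc}$ be the face corresponding to $F'$ under Corollary~\ref{cor:tf}\eqref{tfhomeo}. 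Then Corollary~\ref{cor:sat}\eqref{satlocalization} yields a local map $F^{-1}P \to (F')^{-1}P^{\rm sat}$ inducing $(F^{-1}P)^{\rm sat} \cong (F')^{-1}P^{\rm sat}$, and Corollary~\ref{cor:tf}\eqref{tflocalization} yields a local map $(F')^{-1}P^{\rm sat} \to G^{-1}P^{\rm trc}$ inducing $((F')^{-1}P^{\rm sat})^{\rm tf} \cong G^{-1}P^{\rm trc}$. Composing and invoking the identity $(\,\cdot\,)^{\rm trc}=((\,\cdot\,)^{\rm sat})^{\rm tf}$ from the first paragraph gives the desired isomorphism $(F^{-1}P)^{\rm trc} \cong G^{-1}P^{\rm trc}$.

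Finally, if $P$ is fine, then $P^{\rm sat}$ is fine by Corollary~\ref{cor:sat} (the map $P \into P^{\rm sat}$ being finite), hence $(P^{\rm sat})^{\rm tf} = P^{\rm trc}$ is also finitely generated (as a quotient monoid) and integral. Torsion-freeness is by construction, and saturatedness is by Step~1, so $P^{\rm trc}$ is toric. For finiteness of $P \to P^{\rm trc}$: $P \to P^{\rm sat}$ is finite by the last clause of Corollary~\ref{cor:sat}, while $P^{\rm sat} \to (P^{\rm sat})^{\rm tf}$ is surjective and therefore finite (the image of a finite generating set of $P^{\rm sat}$ generates the quotient as a $P^{\rm sat}$-module), and finite maps are closed under composition. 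The only point requiring any real work is verifying (ii) in Step~1; everything else is a formal assembly of earlier results.
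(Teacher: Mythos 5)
Your proposal is correct and is essentially the paper's own argument spelled out in full: the paper's proof simply says to use Corollaries~\ref{cor:tf} and \ref{cor:sat} to check that both $(P^{\rm sat})^{\rm tf}$ and $(P^{\rm tf})^{\rm sat}$ have the stated universal property and then to combine those corollaries for the remaining assertions, which is exactly your decomposition. Your verification that saturation preserves torsion-freeness and that sharpening by torsion preserves saturatedness supplies the only nontrivial detail the paper leaves implicit.
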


\begin{proof} For the first statement, use Corollary~\ref{cor:tf} and Corollary~\ref{cor:sat} to check that both monoids have the indicated universal property.  Combine those corollaries for the last statements. \end{proof}

\subsection{Cones} \label{section:cones}  The well-known cone construction which we will review in this section provides a crucial link between the geometry of monoids and convex geometry.  We shall use this construction as an expedient means of establishing some basic facts about the topological space $\Spec P$ for a fine monoid $P$ (Corollary~\ref{cor:SpecPforPfine}).

\begin{defn} \label{defn:cone} Let $\RR_{\geq 0} := \{ \lambda \in \RR : \lambda \geq 0 \}$ be the additive submonoid of $\RR$ consisting of non-negative real numbers, $V$ an $\RR$ vector space.  A subset $\sigma \subseteq V$ is called a \emph{cone} iff and any $\RR_{\geq 0}$-linear combination of elements of $\sigma$ is again in $\sigma$.  (In particular, we require that the ``empty linear combination" $0$ be in $\sigma$.)  Every cone is a monoid under addition.  

For a cone $\sigma$, the \emph{dual cone} $\sigma^\lor$ and \emph{complementary subspace} $\sigma^\perp$ of $\sigma$ are the cone in (resp.\ subspace of) the dual space $V^\lor$ of linear functionals on $V$ defined by \be \sigma^\lor & := & \{ f \in V^\lor : f(v) \geq 0 {\rm \; for \; all \;} v \in \sigma \} \\ \sigma^\perp & := & \{ f \in V^\lor : f(v) = 0 {\rm \; for \; all \;} v \in \sigma \}. \ee  Every subset $S \subseteq V$ is contained in a smallest cone $\sigma(S)$, defined as the set of all $\RR_{\geq 0}$-linear combinations of elements of $S$, or equivalently as the intersection of all cones containing $S$.  A subset $G$ of a cone $\sigma$ is said to \emph{generate} $\sigma$ \emph{(as a cone)} iff $\sigma(G)=\sigma$.  A cone $\sigma$ is called \emph{finitely generated (as a cone)} iff there is a finite subset $G \subseteq \sigma$ which generates $\sigma$ (as a cone).  The \emph{dimension} of a cone $\sigma$, denoted $\dim \sigma$, is the dimension of the $\RR$ vector space $\Span \sigma = \sigma^{\rm gp}$.  The \emph{interior} of a cone $\sigma$, denoted $\sigma^\circ$, is the complement of the proper faces of $\sigma$: \be \sigma^\circ & := & \sigma \setminus \cup_{\tau < \sigma} \tau. \ee

When $V=N \otimes \RR =: N_{\RR}$ for an abelian group $N$ and $\sigma$ is a cone in $V$, the monoid of \emph{integral points} of $\sigma$, denoted $N \sqcap \sigma$, is defined to be the set of $n \in N$ such that the image of $n$ in $V$ is in $\sigma$, so that we have a cartesian diagram of monoids \bne{integralpoints} & \xym{ N \sqcap \sigma \ar[r] \ar[d] & \sigma \ar[d]^{\subseteq} \\ N \ar[r] & N_{\RR} } \ene defining $N \sqcap \sigma$.  In particular $N_{\rm tor} \subseteq N \sqcap \sigma$ and $N \sqcap \sigma = N \cap \sigma$ when $N \to N \otimes \RR$ is injective (i.e.\ when $N$ is torsion-free).  Note that we can define $N \sqcap \sigma$ in the same manner even if $\sigma \subseteq N_{\RR}$ is not a cone---we shall occassionally use this notation.  A cone $\sigma$ in $N_{\RR}$ is called \emph{rational} iff there are $n_1,\dots,n_k \in N \sqcap \sigma$ such that the images of the $n_i$ in $\sigma$ generate $\sigma$ (as a cone).  For a monoid $P$, the \emph{(intrinsic) cone over} $P$, denoted $\sigma(P)$, is the cone in $P^{\rm gp} \otimes \RR$ generated by the image of $P \to P^{\rm gp} \otimes \RR$.  \end{defn}

There is a possible notational conflict here when a monoid $P$ is contained in some abelian group $N$ larger than its groupification: ``$\sigma(P)$" could mean either the intrinsic cone over $P$, or the cone generated by $P$ in $N$.  Generally speaking, if there is clearly some such $N$ to refer to, then $\sigma(P)$ will have the latter meaning, but we will always be clear about the meaning of $\sigma(P)$ in situations where there might be a chance of confusion.

\begin{rem} \label{rem:integralpoints}  Suppose $N$ is a finitely generated abelian group and we choose a splitting of the projection $N \to \ov{N} := N / N_{\rm tor}$, so that $N \cong \ov{N} \oplus N_{\rm tor}$.  Then $N_{\RR} = \ov{N}_{\RR}$ and for any cone $\sigma$ in this vector space, $N \sqcap \sigma$ is identified with $(\ov{N} \cap \sigma) \oplus N_{\rm tor}$.  Since $N_{\rm tor}$ is a finite subgroup of the group of units in $N \sqcap \sigma$, this often allows us to reduce questions about $N \sqcap \sigma$ to the case where $N$ is a lattice. \end{rem}

Here are the basic facts about cones:

\begin{thm} \label{thm:cones} Let $V$ be a finite dimensional $\RR$ vector space, $\sigma$ a finitely generated cone in $V$, $n := \dim \sigma$. \begin{enumerate} \item \label{FarkasTheorem} {\bf (Farkas' Theorem)} The dual cone $\sigma^\lor$ is finitely generated and $\sigma^{\lor \lor} = \sigma$.  In other words, there are linear functionals $f_1,\dots,f_k : V \to \RR$ such that \be \sigma & = & \bigcap_{i=1}^k \{ v \in V : f_i(v) \geq 0 \} \ee and any linear functional $f : V \to \RR$ which is non-negative on $\sigma$ is an $\RR_{\geq 0}$ linear combination of the $f_i$. \item \label{facesofcones} Any face $\tau \leq \sigma$ is itself a finitely generated cone and there is some $f \in \sigma^\lor$ such that $\tau  = \sigma \cap f^\perp$.  For any $\tau \leq \sigma$, the subcone $\sigma^\lor \cap \tau^\perp$ of $\sigma^\lor$ is a face of $\sigma^\lor$ and we have $\tau^\lor$ is the localization of $\sigma^\lor$ at this face: $\tau^\lor = (\sigma^\lor \cap \tau^\perp)^{-1} \sigma^\lor$.  \item \label{finitelymanyfaces} Any cone $\sigma$ has finitely many faces.  \item \label{facesandfacets} Any proper face $\tau < \sigma$ is the intersection of the facets (codimension one faces) of $\sigma$ containing $\tau$.  (In particular such a $\tau$ must be of smaller dimension than $\sigma$.)  Any codimension two face is contained in exactly two facets.  \item \label{conetopology} $\sigma$ is a closed subspace of $V$ and the topological boundary of $\sigma$ in $\Span \sigma$ is the union of the facets of $\sigma$.  \item \label{inclusionreversingbijection} The map $\tau \mapsto (\tau^{-1} \sigma)^{\lor} = \sigma^\lor \cap \tau^\perp$ is an inclusion-reversing bijection from the set of faces of $\sigma$ to the set of faces of $\sigma^\lor$ with inverse $\rho \mapsto (\sigma^\lor + (-\rho))^\lor = (\rho^{-1} \sigma^{\lor})^\lor$. \item \label{sharpcriterion} $\sigma$ is sharp iff $\Span \sigma^\lor = V^\lor$.  \item \label{spanformula} For any $\tau \leq \sigma$, we have $\tau = \sigma \cap \Span \tau$.  \item \label{integralpointininterior} The interior $\sigma^\circ$ of $\sigma$ is non-empty and dense in $\sigma$.  \item \label{catenary} The space $\Spec \sigma$ is finite and catenary of dimension $n$.  In other words, any strictly increasing chain $\tau_1 < \cdots < \tau_m$ of faces of $\sigma$ can be extended to a strictly increasing chain $\{ 0 \} = \tau'_0 < \tau'_1 < \cdots < \tau'_n = \sigma$ which cannot be extended nontrivially.  \item \label{cones:lemma1} If $\tau$ is a finitely generated (as a cone) subcone of $\sigma$ such that $\tau^\circ \cap \sigma^\circ = \emptyset$, then $\tau$ is contained in a proper face of $\sigma$.  \item \label{sumandintersection} If $\sigma'$ is another finitely generated cone, then \be \sigma+\sigma' & := & \{ v+v' : v \in \sigma, v' \in \sigma' \} \ee and $\sigma \cap \sigma'$ are both finitely generated cones and we have \be (\sigma \cap \sigma')^\lor & = & \sigma^\lor + (\sigma')^\lor. \ee  \item \label{intersectionfaces} If $\sigma'$ is another finitely generated cone, then a subset $\rho$ of $\sigma \cap \sigma'$ is a face iff $\rho=\tau \cap \tau'$ for faces $\tau \leq \sigma$, $\tau' \leq \sigma'$.  We have $\sigma^\circ \cap (\sigma')^\circ \subseteq (\sigma \cap \sigma')^\circ$ with equality iff $\sigma^\circ \cap (\sigma')^\circ \neq \emptyset$. \item \label{cones:lemma2} If $x \in \sigma^\circ$ and $y \in \Span \sigma \setminus \sigma$, then \be \{ t \in [0,1] : (1-t)x + ty \in \sigma \} & = & [0,s] \ee for some $s \in (0,1)$ and $(1-s)x+sy$ is in some $\sigma' < \sigma$.  If, furthermore, $x \in N$, $y \in N$, and $\sigma$ is rational, then $s \in (0,1) \cap \QQ$.  \end{enumerate} If $V=N \otimes \RR$ for a finitely generated abelian group $N$ and $\sigma$ is rational, then we can replace ``finitely generated" with ``rational" throughout and the theorem still holds; we also have $N \sqcap \sigma^\circ \neq \emptyset$.  \end{thm}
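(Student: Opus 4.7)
The plan is to build the theorem on top of Farkas' Theorem (part \eqref{FarkasTheorem}), which I view as the core technical result; most other parts reduce to it by routine arguments. To prove Farkas, I would first establish the Minkowski--Weyl equivalence: a cone $\sigma \subseteq V$ is finitely generated as a cone iff it is polyhedral, meaning it is the intersection of finitely many closed half-spaces. The forward direction is the substantive one, and I would prove it by Fourier--Motzkin elimination: represent $\sigma$ as the image of the positive orthant $\RR_{\geq 0}^k \to V$ under a linear map, and argue by induction on $k$ that such a projection is polyhedral. The identity $\sigma^{\lor\lor} = \sigma$ then follows from Hahn--Banach/finite-dimensional separation: if $v \notin \sigma$, then since $\sigma$ is closed and convex, some $f$ strictly separates $v$ from $\sigma$, and this $f$ lies in $\sigma^\lor$ and satisfies $f(v) < 0$.

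With Farkas in hand, the face-theoretic parts \eqref{facesofcones}, \eqref{finitelymanyfaces}, \eqref{inclusionreversingbijection}, \eqref{sharpcriterion}, \eqref{spanformula} follow combinatorially. For \eqref{facesofcones}, a face $\tau \leq \sigma$ (in the monoid sense) is cut out by a supporting hyperplane $f \in \sigma^\lor$: writing $\sigma = \bigcap_i \{f_i \geq 0\}$ with $f_i$ generators of $\sigma^\lor$ given by Farkas, I would take $f$ to be the sum of those $f_i$ that vanish on $\tau$ and verify $\tau = \sigma \cap f^\perp$. The localization formula $\tau^\lor = (\sigma^\lor \cap \tau^\perp)^{-1}\sigma^\lor$ follows by noting that $\tau^\lor$ consists of functionals on $\Span \tau$ that extend non-negatively on $\sigma$, which is exactly the localization of $\sigma^\lor$ at the functionals vanishing on $\tau$. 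Finiteness \eqref{finitelymanyfaces} is immediate since each face is cut out by a subset of the finitely many facet-defining $f_i$; the bijection \eqref{inclusionreversingbijection} is then formal from $\sigma^{\lor\lor} = \sigma$; the sharpness criterion \eqref{sharpcriterion} comes from $\sigma^* = \sigma \cap (-\sigma)$ being the face dual to $\sigma^\lor$ itself. Part \eqref{spanformula} follows because $\sigma \cap \Span \tau \subseteq \sigma \cap f^\perp = \tau$.

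The topological parts \eqref{conetopology}, \eqref{facesandfacets}, \eqref{integralpointininterior}, \eqref{catenary}, \eqref{cones:lemma1}, \eqref{cones:lemma2} are then standard convex geometry. $\sigma$ is closed as a finite intersection of half-spaces. $\sigma^\circ$ is open and non-empty in $\Span \sigma$ because otherwise $\sigma$ would lie in a proper subspace contradicting $\dim \Span \sigma = n$; density in $\sigma$ follows because for $x \in \sigma^\circ$, $y \in \sigma$ one has $tx + (1-t)y \in \sigma^\circ$ for $t \in (0,1]$. The boundary of $\sigma$ in $\Span \sigma$ is the union of facets by inspection of the half-space description. Each codimension two face lies in exactly two facets because locally, near an interior point of the codim two face, $\sigma$ looks like a two-dimensional cone cross its span, which is bounded by exactly two rays. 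Catenarity \eqref{catenary} follows from \eqref{facesandfacets} by repeatedly adding facet hyperplane cuts. For \eqref{cones:lemma1}, any generator of $\tau$ lies in the topological boundary of $\sigma$, hence in some facet; by a pigeonhole/connectedness argument using that $\tau$ is convex, all generators can be placed in a common facet. Part \eqref{cones:lemma2} is a direct parameterization: $\{t \in [0,1] : (1-t)x + ty \in \sigma\}$ is closed and convex, hence an interval $[0,s]$, and $(1-s)x + sy$ must lie on some facet since $\sigma^\circ$ is open in $\Span\sigma$.

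Parts \eqref{sumandintersection}, \eqref{intersectionfaces} follow by dualization: $\sigma + \sigma'$ is clearly finitely generated, and $(\sigma \cap \sigma')^\lor = \sigma^\lor + (\sigma')^\lor$ holds by dualizing $\sigma \cap \sigma' = \sigma^{\lor\lor} \cap (\sigma')^{\lor\lor}$ via Farkas; then $\sigma \cap \sigma' = (\sigma^\lor + (\sigma')^\lor)^\lor$ is finitely generated. For the rational refinement, the key observation is that Fourier--Motzkin elimination preserves rationality, so Farkas holds with rational generators/functionals for rational cones; existence of an integral interior point uses that $\sigma^\circ$ is open in $\Span\sigma$ and any rational point in $\sigma^\circ$ can be cleared of denominators by scaling (reducing to $N$ a lattice via Remark~\ref{rem:integralpoints}). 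The main obstacle is clearly Farkas' Theorem itself --- particularly the careful Fourier--Motzkin argument --- everything downstream is essentially bookkeeping; the second-most delicate point is the catenarity assertion, where one must be sure that the inductive ``add a facet'' argument does not stop prematurely, for which \eqref{facesandfacets} is essential.
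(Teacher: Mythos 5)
Your proposal is correct in outline, but it distributes its effort very differently from the paper. The paper does not prove Farkas' Theorem or the basic face theory at all: it cites Fulton [F, pp.\ 9--14] for parts \eqref{FarkasTheorem}--\eqref{sharpcriterion} and the finiteness of $\Spec \sigma$, and spends essentially all of its energy on (a) reconciling the monoid-theoretic definition of face (Definition~\ref{defn:face}) with Fulton's definition ``$\tau = \sigma \cap \Ker f$ for some $f \in \sigma^\lor$,'' and (b) writing out \eqref{integralpointininterior}--\eqref{cones:lemma2} in detail. You instead rebuild the classical core from scratch (Fourier--Motzkin for Minkowski--Weyl, separation for $\sigma^{\lor\lor}=\sigma$), which buys self-containedness at the cost of length, and you compress the reconciliation step --- which is exactly the step the paper regards as the real content here.

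That compression is the one place where your sketch is genuinely thin. Given a monoid-theoretic face $\tau$, you set $f = \sum_{i \in A} f_i$ with $A = \{ i : f_i|_\tau = 0\}$ and say ``verify $\tau = \sigma \cap f^\perp$.'' The inclusion $\tau \subseteq \sigma \cap f^\perp$ is trivial, but the reverse inclusion is precisely the assertion that every monoid face of a polyhedral cone is exposed, and it needs an argument: for instance, first check that a monoid face is stable under $\RR_{\geq 0}$-scaling (the paper does this explicitly), then pick $v \in \tau$ with $I(v) := \{ i : f_i(v) = 0\}$ minimal (so $I(v) = A$), and for $w \in \sigma \cap \bigcap_{i \in A} f_i^\perp$ observe that $v - \epsilon w \in \sigma$ for small $\epsilon > 0$, so that $v = (v - \epsilon w) + \epsilon w$ and the face property of $\tau$ forces $w \in \tau$. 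The paper instead first proves the span formula \eqref{spanformula} for a monoid face, shows the image of $\sigma$ in $V/\Span\tau$ is sharp, and pulls back a functional strictly positive off $0$; either route works. Two smaller omissions: you never address the second half of \eqref{intersectionfaces} (the comparison of $\sigma^\circ \cap (\sigma')^\circ$ with $(\sigma \cap \sigma')^\circ$), and for \eqref{cones:lemma1} the clean argument is additive rather than ``pigeonhole/connectedness'': if no proper face contains $\tau$, choose $v(\sigma') \in \tau \setminus \sigma'$ for each proper face $\sigma' < \sigma$ and add them to a point of $\tau^\circ$; the sum lies in $\tau^\circ \cap \sigma^\circ$, a contradiction.
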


\begin{proof} Statements \eqref{FarkasTheorem}-\eqref{sharpcriterion} and the fact that $\Spec \sigma$ is finite can be found in \cite[Pages 9-14]{F}.  Note, however, that Fulton takes ``$\tau = \sigma \cap \Ker f$ for some $f \in \sigma^\lor$" as the \emph{definition} of a face.  It is easy to see that every face in Fulton's sense is a face in our sense (Definition~\ref{defn:face}), but we need to check that every face $\tau$ of the monoid $\sigma$ in the sense of Definition~\ref{defn:face} is actually a face in Fulton's sense.  We first check that such a $\tau$ is a cone:  Since $\tau$ is closed under addition, we just need to check that $\lambda v \in \tau$ whenever $v \in \tau$ and $\lambda \in \RR_{\geq 0}$.  Take $n \in \NN$ with $n \geq \lambda$.  Then $\lambda v + (n-\lambda)v = nv$ is in $\tau$, and the two summands are in $\sigma$, so $\lambda v \in \tau$ since $\tau$ is a face.  Next we check that \eqref{spanformula} holds for such a $\tau$---the only issue is the containment $\sigma \cap \Span \tau \subseteq \tau$.  Since $\tau$ is a cone, we have $\tau^{\rm gp} = \Span \tau$, so any element of $\Span \tau$ can be written as $v-w$ with $v,w \in \tau$.  If such an element were in $\sigma \setminus \tau$, then $(v-w)+w = v \in \tau$ would contradict the assumption that $\tau$ is a face of the monoid $\sigma$.  Next we consider the image $\ov{\sigma}$ of $\sigma$ under the projection $\pi : V \to V / \Span \tau =: \ov{V}$.  The cone $\ov{\sigma}$ is sharp:  If it weren't, there would be $v \in \sigma \setminus \Span \tau$, $w \in \sigma$ such that $v+w \in \Span \tau$---but then $v+w$ would be in $\tau$ by the previous thing we proved, contradicting the assumption that $\tau \leq \sigma$.  Since $\sigma \subseteq V$ is a finitely generated cone, $\ov{\sigma}$ is a finitely generated cone in $\ov{V}$ so by \eqref{FarkasTheorem} we can find generators $\ov{f}_1,\dots,\ov{f}_l$ for $\ov{\sigma}^\lor \subseteq \ov{V}^\lor$.  Set $\ov{f} := \ov{f}_1+\cdots+\ov{f}_l$.  For any non-zero $\ov{v} \in \ov{\sigma}$, we must have $f(\ov{v})>0$ (otherwise we'd have $\ov{f}_i(\ov{v})=0$ for every $i$, so $-\ov{v} \in \ov{\sigma} = \ov{\sigma}^{\lor \lor}$, contradicting the fact that $\ov{\sigma}$ is sharp).  Set $f = \ov{f} \pi$, so $f \in \sigma^\lor$.  Again making use of \eqref{spanformula}, we check easily that $\tau = \sigma \cap \Ker f$.   

For \eqref{integralpointininterior}, take $v_1,\dots,v_k \in \sigma$ generating $\sigma$ as a cone (take the $v_i$ in the image of $N \sqcap \sigma \to \sigma$ if $\sigma$ is rational).  Then $v := v_1+\cdots+v_k$ must be in the interior of $\sigma$ because any face of $\sigma$ containing $v$ must contain all the $v_i$ by definition of a face.  To see that $\sigma^\circ$ is dense in $\sigma$, take any $x \in \sigma^\circ$, $y \in \sigma$.  Then $f(t) := (1-t)x+ty$ defines a continuous map $f : [0,1] \to \sigma$ with $f(1)=y$.  On the other hand, we must have $f(t) \in \sigma^\circ$ for $t \in [0,1)$, otherwise some non-zero multiple of $x$ would be in a proper face of $\sigma$ and then $x$ itself would be in that proper face since every face is itself a cone.

The catenary property in \eqref{catenary} follows from \eqref{facesofcones} and \eqref{facesandfacets}.

For \eqref{cones:lemma1}, suppose, toward a contradiction, that there is no $\sigma' < \sigma$ containing $\tau$.  Then for each $\sigma' < \sigma$, we can choose some $v(\sigma') \in \tau \setminus \sigma'$.  By \eqref{integralpointininterior}, there is some $v' \in \tau^\circ$.  Since $\sigma'$ has finitely many faces by \eqref{finitelymanyfaces}, we can consider the element $v := v' + \sum_{\sigma' < \sigma} v(\sigma')$ of $\tau$.  It follows from the defition of a face that $v \in \tau^\circ \cap \sigma^\circ$, contradicting $\tau^\circ \cap \sigma^\circ = \emptyset$.

For \eqref{sumandintersection} we first note that if $v_1,\dots,v_k$ (resp.\ $v_1',\dots,v'_l$) generate $\sigma$ (resp.\ $\sigma'$), then $v_1,\dots,v_l,v_1',\dots,v'_l$ clearly generate $\sigma+\sigma'$, thus we see that a sum of two finitely generated (resp.\ rational) cones is again finitely generated (resp.\ rational).  Since $\sigma=\sigma^{\lor \lor}$ and similarly for $\sigma'$ by \eqref{FarkasTheorem}, we see as a matter of definitions that \bne{dualformula} (\sigma^\lor + (\sigma')^\lor)^\lor & = & \sigma \cap \sigma'. \ene  So the previous thing we proved and \eqref{FarkasTheorem} (applied to $\sigma^\lor + (\sigma')^\lor$) together imply that $\sigma \cap \sigma'$ is finitely generated (and rational when $\sigma$ and $\sigma'$ are rational).  Taking duals in \eqref{dualformula} and again using \eqref{FarkasTheorem} yields the formula in \eqref{sumandintersection}.

For \eqref{intersectionfaces}, first suppose $\rho \leq \sigma \cap \sigma'$.  By \eqref{facesofcones} and the formula for $(\sigma \cap \sigma')^\lor$ in \eqref{sumandintersection}, we have \be \rho & = & \sigma \cap \sigma' \cap \Ker(f+f') \ee for some $f \in \sigma^\lor$, $f' \in (\sigma')^\lor$.  Since $f,f' \in (\sigma \cap \sigma')^\lor$, we have \be \sigma \cap \sigma' \cap \Ker(f+f') & = & \sigma \cap (\Ker f) \cap \sigma' \cap (\Ker f') \ee so $\tau := \sigma \cap \Ker f$ and $\tau' := \sigma' \cap \Ker f'$ are as desired.  It is obvious that $\tau \cap \tau' \leq \sigma \cap \sigma'$ when $\tau \leq \sigma$ and $\tau' \leq \sigma'$.  The fact that $\sigma^\circ \cap (\sigma')^\circ \subseteq (\sigma \cap \sigma')^\circ$ is clear from the description of the faces of $\sigma \cap \sigma'$ just established.  To see that this containment is an equality when $\sigma^\circ \cap (\sigma')^\circ \neq \emptyset$, suppose, toward a contradiction, that there is some $v \in (\sigma \cap \sigma')^\circ$ not in, say, $\sigma^\circ$.  Then $v \in \tau$ for some $\tau < \sigma$ and $\tau \cap \sigma' \leq \sigma \cap \sigma'$.  But we can't have $\tau \cap \sigma' < \sigma \cap \sigma'$ because $v \in (\sigma \cap \sigma')^\circ$, so we must have $\tau \cap \sigma' = \sigma \cap \sigma'$.  In particular $(\sigma')^\circ$ cannot intersect the interior of $\sigma$, contradicting $\sigma^\circ \cap (\sigma')^\circ \neq \emptyset$.

For \eqref{cones:lemma2} we can assume, after possibly replacing $V$ with $\Span \sigma$, that $V=\Span \sigma$.  Let  $f_1,\dots,f_k$ be generators for $\sigma^\lor$ as in \eqref{FarkasTheorem} (we can and do take the $f_i$ in $\Hom(N,\ZZ) \subseteq N_{\RR}^\lor$ when $\sigma$ is rational).  We can assume that no $f_i$ is zero, hence $\sigma \cap f_i^\perp < \sigma$ for every $i$ on dimension grounds.  Since $x \in \sigma^\circ$, it follows that $f_i(x) > 0$ for every $i$.  Define \be F_i(t) & := & f_i((1-t)x+ty) \\ & = & (1-t)f_i(x) + t f_i(y). \ee  The set \be A & := & \{ i \in \{ 1, \dots, k \} : f_i(y) < 0 \} \ee is non-empty since $\sigma = \sigma^{\lor \lor}$ and $y \notin \sigma$.  For $i \notin A$ we clearly have $F_i(t) \geq 0$ for all $t \in [0,1]$.  For $i \in A$, $F_i$ is a strictly decreasing continuous function $F_i : [0,1] \to \RR$ with $F_i(0) > 0$ and $F_i(1) < 0$, and $F_i(t_i)=0$ for \bne{ti} t_i & = & \frac{ f_i(x) }{ f_i(x)-f_i(y) } \in (0,1), \ene so $F_i^{-1}(\RR_{\geq 0}) = [0,t_i]$.  Using $\sigma = \sigma^{\lor \lor}$, we see that $s := \min \{ t_i : i \in A \}$ will be as desired.  We have $s = t_i$ for some $i \in A$ and $(1-s)x+sy \in \sigma \cap f_i^\perp$ for any such $i$.  Note that each $f_i(x)$, $f_i(y)$ is an integer when $x,y \in N$ and $\sigma$ is rational, hence each $t_i$ ($i \in A$) is rational by \eqref{ti} and hence $s$ is rational. \end{proof}

Our next result summarizes some facts about the relationship between monoids and cones.

\begin{thm} \label{thm:monoidsandcones} Let $P$ be an integral monoid, $N$ an abelian group, $N_{\RR} := N \otimes \RR$. \begin{enumerate} \item \label{monoidsandcones0} If $P$ is fine, $\sigma(P)$ is rational. \label{monoidsandcones1} The map $P \to \sigma(P)$ induces a homeomorphism $\Spec \sigma(P) \to \Spec P$ with inverse $F \mapsto \sigma(F)$.  We have $P^{\rm gp} \sqcap \sigma(P) = P^{\rm sat}$. \item \label{monoidsandcones2} For any cone $\sigma$ in $N_{\RR}$, the monoid $N \sqcap \sigma$ of integral points of $\sigma$ is integral and saturated.  If $N$ is finitely generated and $\sigma$ is rational, then $N \sqcap \sigma$ is fs and $\sigma = \sigma(N \sqcap \sigma)$. \item \label{monoidsandcones3} If $N$ is finitely generated, $\sigma$ is a rational cone in $N_{\RR}$, and $\tau \leq \sigma$, then ``taking integral points commutes with localization" in the sense that \be N \sqcap (\tau^{-1} \sigma) & = & (N \sqcap \tau)^{-1} (N \sqcap \sigma). \ee \end{enumerate} \end{thm}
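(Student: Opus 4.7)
The plan is to handle the three parts in sequence, leaning on Gordan's Lemma (Theorem~\ref{thm:dense}\eqref{dense5}) and the convex-geometric facts of Theorem~\ref{thm:cones}, and reducing to the toric case via Corollary~\ref{cor:trc} or to the torsion-free case via Remark~\ref{rem:integralpoints} wherever possible.

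For Part~(1), rationality of $\sigma(P)$ for $P$ fine is immediate from a finite set of monoid generators. For the remaining claims I would first pass from $P$ to $P^{\rm trc}$: Corollary~\ref{cor:trc} gives a local finite injection inducing a homeomorphism on $\Spec$, the canonical isomorphism $P^{\rm gp} \otimes \RR \cong (P^{\rm trc})^{\rm gp} \otimes \RR$ identifies $\sigma(P)$ with $\sigma(P^{\rm trc})$, and under this correspondence $F \leq P$ is matched with $F^{\rm trc} \leq P^{\rm trc}$ with $\sigma(F) = \sigma(F^{\rm trc})$, while $P^{\rm sat}$ is identified with $(P^{\rm trc})^{\rm sat}$. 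We may therefore assume $P$ is toric, so $P = P^{\rm sat} \hookrightarrow P^{\rm gp} =: N$ is an inclusion of an fs monoid into a lattice of finite rank. For the equality $P = N \cap \sigma(P)$, the forward containment is clear; for the reverse, given $x \in N \cap \sigma(P)$, I would express $x$ as a non-negative real combination of $\RR$-linearly independent monoid generators of $P$ via Carath\'eodory---integrality of both $x$ and the generators then forces the coefficients to be rational, and clearing denominators gives $nx \in P$ and hence $x \in P^{\rm sat} = P$.

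For the face correspondence in Part~(1), given a face $F \leq P$, Lemma~\ref{lem:duality} produces $h : P \to \NN$ with $h^{-1}(0) = F$, and the extension $h_\RR : P^{\rm gp} \otimes \RR \to \RR$ lies in $\sigma(P)^\lor$ with $\sigma(F) = \sigma(P) \cap h_\RR^{-1}(0)$, so $\sigma(F)$ is a face of $\sigma(P)$ by Theorem~\ref{thm:cones}\eqref{facesofcones}. Conversely, for a face $\rho \leq \sigma(P)$ set $F := P \cap \rho$. The composition $F \mapsto \sigma(F) \mapsto P \cap \sigma(F)$ returns $F$ because $h_\RR$ vanishes on $\sigma(F)$, forcing any $p \in P \cap \sigma(F)$ to satisfy $h(p)=0$. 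For the other composition $\rho \mapsto F \mapsto \sigma(F) = \rho$: every face of the rational finitely generated cone $\sigma(P)$ is itself rational and finitely generated by Theorem~\ref{thm:cones}, so $\rho$ admits integral generators $w_j \in N \cap \rho \subseteq N \cap \sigma(P) = P$, placing each $w_j$ in $P \cap \rho = F$ and hence $\rho \subseteq \sigma(F)$; the reverse inclusion is trivial.

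For Part~(2), integrality of $N \sqcap \sigma$ is automatic as a submonoid of $N$, and saturation follows from the cone axiom: $nx \in \sigma$ forces $x = (1/n)(nx) \in \sigma$. Under finite generation of $N$ and rationality of $\sigma$, I would reduce to $N$ torsion-free via Remark~\ref{rem:integralpoints}, choose integral cone generators $v_1, \ldots, v_k$ of $\sigma$, let $Q$ be the submonoid they generate, and invoke Theorem~\ref{thm:dense}\eqref{dense5} to see that the saturation of $Q$ in $N$ is finitely generated; the Carath\'eodory argument of Part~(1) then identifies this saturation with $N \cap \sigma$, making $N \sqcap \sigma$ fs, and $\sigma = \sigma(N \sqcap \sigma)$ is immediate from $v_i \in N \sqcap \sigma \subseteq \sigma$. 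For Part~(3), reduce again to $N$ torsion-free and identify $\tau^{-1}\sigma$ with $\sigma + \Span\tau$ inside $\Span\sigma$ and $(N \sqcap \tau)^{-1}(N \sqcap \sigma)$ with $(N \sqcap \sigma) - (N \sqcap \tau)$ inside $N$; the inclusion $\supseteq$ is formal, while for $\subseteq$ I would fix $x \in N \cap (\sigma + \Span\tau)$, pick an integral interior point $t' \in N \cap \tau^\circ$ (whose existence is the last sentence of Theorem~\ref{thm:cones}), and write $x = v + (t_1 - t_2)$ with $v \in \sigma$ and $t_i \in \tau$. The openness of $\tau^\circ$ in $\Span\tau$ gives $nt' - t_2 \in \tau$ for $n \gg 0$, so $x + nt' = v + t_1 + (nt' - t_2) \in \sigma$ and $x + nt' \in N$, whence $x = (x + nt') - nt'$ exhibits $x$ in $(N \sqcap \tau)^{-1}(N \sqcap \sigma)$. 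The most delicate step in the whole theorem is the face-correspondence half of Part~(1): it genuinely cannot be done without simultaneously deploying the Carath\'eodory/rationality argument, the Gordan-type finiteness of integral points, and the duality Lemma~\ref{lem:duality}; all other pieces either reduce to definitions or, via the invariance results of the earlier sections, to the already-established toric case.
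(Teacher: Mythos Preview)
Your approach is largely sound but misses one case and differs from the paper's route in several interesting ways.

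\textbf{The gap.} The homeomorphism and the equality $P^{\rm gp} \sqcap \sigma(P) = P^{\rm sat}$ in Part~(1) are asserted for arbitrary integral $P$, not only for fine $P$. Your reduction via Corollary~\ref{cor:trc} gives a \emph{toric} monoid $P^{\rm trc}$ only when $P$ is fine; for general integral $P$ the monoid $P^{\rm trc}$ is saturated and torsion-free but need not be finitely generated, so your Carath\'eodory step (which requires a finite generating set) and the face-correspondence via Lemma~\ref{lem:duality} (which requires fineness) do not apply. The paper handles this by first observing that every integral monoid is the filtered direct limit of its fine submonoids and that $\Spec$, $\sigma(\slot)$, $(\slot)^{\rm sat}$, and $P^{\rm gp}\sqcap\sigma(\slot)$ all commute with such limits, thereby reducing to the fine case in two sentences; you should insert this step. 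Separately, your assertion that ``$P^{\rm sat}$ is identified with $(P^{\rm trc})^{\rm sat}$'' is imprecise: $(P^{\rm trc})^{\rm sat}=P^{\rm trc}$, which is generally not $P^{\rm sat}$. What actually makes the reduction work is that $P^{\rm gp}\sqcap\sigma(P)$ is the preimage of $(P^{\rm trc})^{\rm gp}\sqcap\sigma(P^{\rm trc})$ under $P^{\rm gp}\to P^{\rm gp}/P^{\rm gp}_{\rm tor}$, and this preimage is $P^{\rm sat}$ because $P^{\rm gp}_{\rm tor}\subseteq P^{\rm sat}$.

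\textbf{Where you diverge (both routes correct).} The paper reduces only to $P^{\rm tf}$ (torsion-free, not necessarily saturated), whereas you go all the way to $P^{\rm trc}$; your extra step buys you $P=P^{\rm sat}$, so the Carath\'eodory argument lands directly on $N\cap\sigma(P)=P$. The paper instead shows $P\hookrightarrow P^{\rm gp}\sqcap\sigma(P)$ is \emph{finite} (hence dense) by the standard Gordan compactness trick: write any element as a lattice point of $P$ plus a point of the finite set $P^{\rm gp}\cap\{\sum\lambda_ip_i:\lambda_i\in[0,1]\}$. For Part~(3), the paper reduces to the lattice case via Remark~\ref{rem:integralpoints} and then simply cites \cite[Proposition~2, p.~13]{F}; your direct argument via an integral interior point $t'\in N\cap\tau^\circ$ and the openness of $\tau^\circ$ in $\Span\tau$ is a clean, self-contained replacement that avoids the external reference.
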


\begin{proof} \eqref{monoidsandcones0}:  Indeed, any set of generators for $P$ will also generate $\sigma(P)$ as a cone.

\eqref{monoidsandcones1}:  Any integral monoid $P$ is the filtered direct limit of its fine (i.e.\ finitely generated) submonoids.  The constructions $P \mapsto P^{\rm sat}$, $P \mapsto \sigma(P)$, and $P \mapsto P^{\rm gp} \sqcap \sigma(P)$ clearly commute with filtered direct limits, as does $P \mapsto \Spec P$ (Theorem~\ref{thm:Specinverselimits}).  A filtered direct limit of faces is a face.  We thus reduce to the case where $P$ is fine, which we now assume.

The map $P^{\rm gp} \to P^{\rm gp} \otimes \RR$ factors as $$P^{\rm gp} \to P^{\rm gp} / P^{\rm gp}_{\rm tor} \to P^{\rm gp} \otimes \RR,$$ where the first map is surjective and the second is injective.  Hence $P \to P^{\rm gp} \otimes \RR$ factors as $$P \to P^{\rm tf} \to P^{\rm gp} \otimes \RR,$$ where the first map is the surjection $P \to P^{\rm tf}$ of Corollary~\ref{cor:tf} and the second map is injective.  In particular we have $\sigma(P) = \sigma(P^{\rm tf})$ and $P \to \sigma(P)$ factors as the composition of $P \to P^{\rm tf}$ and $P^{\rm tf} \to \sigma(P^{\rm tf})$.  The map $\Spec P^{\rm tf} \to \Spec P$ is a homeomorphism by Corollary~\ref{cor:tf}, so we reduce to proving that $\Spec$ of $P^{\rm tf} \to \sigma(P^{\rm tf})$ is a homeomorphism.

This reduces us to the case where $P$ is fine \emph{and} $P^{\rm gp}$ is torsion-free, so that $P \subseteq \sigma(P)$.  Set $V := P^{\rm gp} \otimes \RR$.  Let $F$ be a face of $P$.  By Lemma~\ref{lem:duality}, there is a monoid homomorphism $h : P \to \NN$ such that $F = h^{-1}(0)$.  The linear functional $u := h^{\rm gp} \otimes \RR : V \to \RR$ is non-negative on $\sigma(P)$, zero on $\sigma(F)$, and strictly positive on any $p \in P \setminus F \subseteq V$.   It follows that \bne{coneformula} \sigma(F) & = & \sigma(P) \cap \Ker u, \ene that $\sigma(F)$ is a face of $\sigma(P)$, and that $\sigma(F) \cap P = F$.  It remains to show that $\tau = \sigma(\tau \cap P)$ for every $\tau \leq \sigma(P)$.  This follows from the fact that any such $\tau$ is a rational cone (Theorem~\ref{thm:cones}\eqref{facesofcones}).

Next we check that $P' := P^{\rm gp} \sqcap \sigma(P)$ is equal to $P^{\rm sat}$ when $P$ is fine.  The containment $P' \supseteq P^{\rm sat}$ is clear, so $(P')^{\rm gp} = P^{\rm gp}$.  To see that $P' \subseteq P^{\rm sat}$, we need to show that the inclusion $P \subseteq P'$ is dense in the sense of Definition~\ref{defn:finite}.  By Theorem~\ref{thm:dense}\eqref{dense3} it suffices to show that the inclusion $P \subseteq P'$ is finite, which can be seen by the usual ``Gordan's Lemma argument" as follows:  Let $p_1,\dots,p_n$ be generators for $P$.  Then the set $S$ of all $p \in P^{\rm gp}$ such that \be p & = & \lambda_1p_1+\cdots+\lambda_np_n \ee in $P^{\rm gp} \otimes \RR$ for some $\lambda_1,\dots,\lambda_n \in [0,1] \subseteq \RR$ is finite (because $P^{\rm gp}$ is finitely generated and $[0,1]^n$ is compact) and generates $P'$ as an $P$ module because, for any $p' \in P'$, we can write \be p' & = & \lambda_1p_1+\cdots+\lambda_np_n \ee in $P^{\rm gp} \otimes \RR$ for some $\lambda_1,\dots,\lambda_n \in \RR_{\geq 0}$ and, if we take $a_i \in \NN$ maximal with respect to $a_i \leq \lambda_i$, then we have \be p'  & = & (a_1p_1+\cdots+a_np_n)+((\lambda_1-a_1)p_1+\cdots+(\lambda_n-a_n)p_n) \ee with the first summand in $P$ and the second in $S$.

The first statement in \eqref{monoidsandcones2} is obvious and the second statement follows from the Gordan's Lemma argument described above.

For \eqref{monoidsandcones3}, the only difficulty is the containment \bne{trickycontainment} N \sqcap ( \tau^{-1} \sigma) & \subseteq & (N \sqcap \tau)^{-1} (N \sqcap \sigma). \ene  We can easily reduce to the case where $N$ is a lattice (which we now assume) by the method discussed in Remark~\ref{rem:integralpoints}.  (The point is that both sides clearly contain $N_{\rm tor}$, so it is enough to prove the containment modulo $N_{\rm tor}$.)  By Theorem~\ref{thm:cones}\eqref{inclusionreversingbijection}, $(\tau^{-1} \sigma)^\lor$ is a face of $\sigma^\lor$.  Now we apply \cite[Proposition~2, Page 13]{F} with $\tau \leq \sigma$ there given by our $(\tau^{-1} \sigma)^\lor \leq \sigma^\lor$ and use the duality in Theorem~\ref{thm:cones}\eqref{FarkasTheorem} to see that there is some $u \in N \cap \sigma = N \cap \sigma^{\lor \lor}$ so that \bne{Fulton1} (\tau^{-1} \sigma)^\lor & = & \sigma^\lor \cap u_{\RR}^\perp \\ \label{Fulton2} N \cap (\tau^{-1} \sigma) & = & ( N \cap \sigma) + \NN(-u). \ene  Formula \eqref{Fulton1} implies that $u \in N \cap \tau$, so \eqref{Fulton2} implies the containment \eqref{trickycontainment}. \end{proof}

\begin{cor} \label{cor:saturatedlocalization} Let $P$ be a saturated monoid, $F$ a face of $P$.  Then $F$, $P/F$, and $F^{-1}P$ are saturated and $P^{\rm gp} / F^{\rm gp}$ is torsion-free. \end{cor}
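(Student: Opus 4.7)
My plan is to derive all four assertions from saturatedness of $P$ combined with the following sub-lemma about faces: if $F \leq P$ and $p \in P$ has $np \in F$ for some $n \in \ZZ_{>0}$, then $p \in F$---this follows by writing $np = p + (n-1)p$ and invoking the face property of $F$. I will also silently use $(F^{-1}P)^{\rm gp} = P^{\rm gp}$ and $(P/F)^{\rm gp} = P^{\rm gp}/F^{\rm gp}$, both immediate from the respective universal properties.

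For $F$ saturated: given $x \in F^{\rm gp} \subseteq P^{\rm gp}$ with $nx \in F \subseteq P$, saturatedness of $P$ yields $x \in P$ and the sub-lemma then gives $x \in F$. For $F^{-1}P$ saturated: given $x \in P^{\rm gp}$ with $nx = p - f \in F^{-1}P$ where $p \in P$ and $f \in F$, rewrite $n(x + f) = p + (n-1)f \in P$ and apply saturatedness of $P$ to conclude $x + f \in P$, hence $x \in F^{-1}P$. For $P^{\rm gp}/F^{\rm gp}$ torsion-free: lift $x \in P^{\rm gp}$ with $nx \in F^{\rm gp}$ and write $nx = f - f'$ with $f, f' \in F$; then $n(x+f') = f + (n-1)f' \in F \subseteq P$, saturatedness gives $x + f' \in P$, and the sub-lemma applied to $n(x+f') \in F$ upgrades this to $x + f' \in F$, so $x \in F^{\rm gp}$.

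For $P/F$ saturated, I would follow the same pattern: lift $x \in (P/F)^{\rm gp}$ to $P^{\rm gp}$ and lift a representative of $nx \in P/F$ to some $q \in P$, write $nx = q + f' - f$ in $P^{\rm gp}$ for $f, f' \in F$, deduce $n(x + f) = q + f' + (n-1)f \in P$, so $x + f \in P$ by saturatedness---and then $x + f$ represents the class $x$ in $P/F$. The only extra piece needed is integrality of $P/F$: if $p, p' \in P$ agree in $P^{\rm gp}/F^{\rm gp}$ then $p + f_2 = p' + f_1$ in $P$ for suitable $f_i \in F$, hence $p$ and $p'$ represent the same class in $P/F$. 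The main obstacle is purely bookkeeping---juggling representatives in $P$, $P^{\rm gp}$, and the relevant quotient or localization---but once the face sub-lemma is available, each individual step is a one-line computation.
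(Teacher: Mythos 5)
Your proof is correct, and for the part the paper itself calls ``the tricky part''---saturatedness of $F^{-1}P$---it takes a genuinely different and substantially more elementary route. The paper first reduces to the fs case by writing $P$, $F$, and $F^{-1}P$ as filtered direct limits over the fs submonoids $Q \subseteq P$ (via Theorem~\ref{thm:dense}), and then invokes the cone machinery: $F^{-1}P$ is identified with $P^{\rm gp} \sqcap \sigma(F)^{-1}\sigma(P)$ using Theorem~\ref{thm:monoidsandcones}, whose integral points are saturated by construction. Your computation $n(x+f) = p + (n-1)f \in P$ followed by saturatedness of $P$ and the observation $x = (x+f) - f$ bypasses both the limit argument and the convex geometry entirely, and works uniformly for all saturated monoids with no finiteness hypotheses. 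Your treatments of $F$, $P/F$, and the torsion-freeness of $P^{\rm gp}/F^{\rm gp}$ are likewise sound: the face sub-lemma ($np \in F \Rightarrow p \in F$) is exactly the right tool, the identifications $(F^{-1}P)^{\rm gp} = P^{\rm gp}$ and $(P/F)^{\rm gp} = P^{\rm gp}/F^{\rm gp}$ are legitimate, and you correctly flag and verify the integrality of $P/F$ (which the paper outsources to \cite{GM1}). The only difference in logical organization is that the paper deduces torsion-freeness of $P^{\rm gp}/F^{\rm gp}$ from $P/F$ being sharp and saturated, whereas you prove it directly; both are fine. What the paper's approach buys is a workout of the cone correspondence that it needs elsewhere anyway; what yours buys is brevity and generality.
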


\begin{proof} The fact that $F$ is saturated is an easy exercise with the definitions.  The fact that $P/F$ is saturated is a slightly harder exercise with the definitions (see \cite[Lemma~1.1.3]{GM1}).  Since $P/F$ is sharp and saturated, $(P/F)^{\rm gp} = P^{\rm gp} / F^{\rm gp}$ is torsion-free.

The tricky part is to show that $F^{-1}P$ is saturated.

Using Theorem~\ref{thm:dense}\eqref{dense5}, we see that $P$ is the filtered direct limit of the monoids $Q$, where $Q$ runs over the fs submonoids of $P$.  Similarly, the face $F$ is easily seen to be the filtered direct limit of the faces $F \cap Q$ and $F^{-1} P$ is the filtered direct limit of the $(F \cap Q)^{-1} Q$.  Since it is easy to see that a filtered direct limit of saturated monoids is saturated, this reduces us to the case where $P$ is fs.  In this case, we see that \be P & = & P^{\rm gp} \sqcap \sigma(P) \\ F & = & F^{\rm gp} \sqcap \sigma(F) \ee by using Theorem~\ref{thm:monoidsandcones}\eqref{monoidsandcones1} and the fact that $P$ and $F$ are saturated.  Since $P^{\rm gp} / F^{\rm gp}$ is free we can see easily that \be F & = & P^{\rm gp} \sqcap \sigma(F) \ee by choosing a splitting $P^{\rm gp} \cong F^{\rm gp} \oplus P^{\rm gp}/F^{\rm gp}$.

By construction, the monoid $P^{\rm gp} \sqcap \sigma(F)^{-1} \sigma(P)$ is clearly saturated, but on the other hand, by Theorem~\ref{thm:monoidsandcones}\eqref{monoidsandcones3}, we have \be P^{\rm gp} \sqcap \sigma(F)^{-1} \sigma(P) & = & (P^{\rm gp} \sqcap \sigma(F))^{-1}(P^{\rm gp} \cap \sigma(P)) \\ & = & F^{-1} P. \ee  \end{proof}

\begin{cor} \label{cor:SpecPforPfine} Let $P$ be a fine monoid.  Then: \begin{enumerate} \item $\Spec P$ is a finite, sober, catenary topological space of dimension $n = \rk P^{\rm gp}$. \item Any proper face $F < P$ is of dimension $\dim F := \rk F^{\rm gp}$ strictly less than $n$.  \item Any codimension two face of $\Spec P$ is contained in precisely two facets of $\Spec P$. \item Any face of $P$ is the intersection of the facets containing it. \end{enumerate} \end{cor}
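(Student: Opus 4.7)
The plan is to transfer everything from $\Spec P$ to $\Spec \sigma(P)$ using Theorem~\ref{thm:monoidsandcones}\eqref{monoidsandcones1}, then quote the corresponding facts about rational cones from Theorem~\ref{thm:cones}. By that theorem, since $P$ is fine, $\sigma(P)$ is a rational cone in the finite-dimensional vector space $P^{\rm gp} \otimes \RR$, and the assignment $F \mapsto \sigma(F)$ is an inclusion-preserving bijection $\Spec P \to \Spec \sigma(P)$ which is moreover a homeomorphism. The essential observation is that this bijection preserves the dimension invariant: for any face $F \leq P$, one has $\sigma(F)^{\rm gp} = F^{\rm gp} \otimes \RR$, hence
\[ \dim \sigma(F) \;=\; \dim_{\RR}(F^{\rm gp} \otimes \RR) \;=\; \rk F^{\rm gp} \;=\; \dim F. \]
In particular $\dim \sigma(P) = n$.

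With this dictionary in hand, the four assertions fall out. First, $\Spec P$ is finite by Corollary~\ref{cor:faces} (or alternatively by Theorem~\ref{thm:cones}\eqref{finitelymanyfaces} applied to $\sigma(P)$), sober by Proposition~\ref{prop:Spec}\eqref{irreducibles}, and catenary of dimension $n$ because Theorem~\ref{thm:cones}\eqref{catenary} supplies the chain condition for $\Spec \sigma(P)$ and the bijection $F \mapsto \sigma(F)$ is order-preserving and dimension-preserving. For the second assertion, a proper face $F < P$ corresponds to a proper face $\sigma(F) < \sigma(P)$, which must have $\dim \sigma(F) < \dim \sigma(P) = n$ by Theorem~\ref{thm:cones}\eqref{facesandfacets}, and hence $\dim F < n$.

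Parts (3) and (4) are likewise immediate from Theorem~\ref{thm:cones}\eqref{facesandfacets}: under the bijection, facets of $P$ correspond to facets of $\sigma(P)$ (both are codimension-one faces), and codimension-two faces correspond to codimension-two faces. The statement that every proper face is the intersection of the facets containing it, and that every codimension-two face is contained in precisely two facets, is exactly what \eqref{facesandfacets} of Theorem~\ref{thm:cones} asserts on the cone side.

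I do not anticipate a genuine obstacle here; the one thing worth being careful about is the compatibility of the dimension function $F \mapsto \rk F^{\rm gp}$ with the geometric dimension of the associated cone, which rests on the fact that $\RR$-linear span commutes with groupification in the sense that $\sigma(F)^{\rm gp} = F^{\rm gp} \otimes \RR$. This is a formal consequence of the definition of $\sigma(F)$ as the $\RR_{\geq 0}$-span of the image of $F$ in $P^{\rm gp} \otimes \RR$.
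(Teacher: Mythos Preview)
Your proposal is correct and follows exactly the same route as the paper's proof: transport everything to $\Spec \sigma(P)$ via Theorem~\ref{thm:monoidsandcones}\eqref{monoidsandcones1} and then invoke the corresponding parts of Theorem~\ref{thm:cones}. The paper's version is simply more terse; your added care about the identity $\dim \sigma(F) = \rk F^{\rm gp}$ is a reasonable elaboration of what the paper leaves implicit.
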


\begin{proof}  By Theorem~\ref{thm:monoidsandcones}, the cone $\sigma(P) \subseteq P^{\rm gp} \otimes \RR$ is rational and $\Spec \sigma(P) \to \Spec P$ is a homeomorphism.  The assertions then follow from corresponding properties of the rational cone $\sigma(P)$ in Theorem~\ref{thm:cones}. \end{proof}

\subsection{Spec pathologies}  In this section we give some examples of unusual behaviour exhibited by $\Spec P$ when $P$ is not fine.

\begin{example} \label{example:SpecPnonintegral} The paradigm example of a non-integral monoid is the finitely generated monoid \be P & := & \langle a,b : a+b = a \rangle. \ee  The map $P \to P^{\rm int}$ is the map $P \to \NN$ given by $b \mapsto 0$, $a \mapsto 1$.  The faces of $P$ are $0$, $\NN b$, and $P$ itself.  In particular, $\NN b$ is a codimension zero face of $P$.  We will discuss $P$ further in Example~\ref{example:stratification}. \end{example}

\begin{example} \label{example:SpecPinfinite} Let $P_n$ be the submonoid of $\NN^2$ generated by the first $n+1$ elements in the list $$(0,1), (1,1), (2,1), \dots $$ and let $P$ be the union (filtered direct limit) of all the $P_n$.  This $P$ is an integral monoid with $P^{\rm gp} = \ZZ^2$.  We can alternatively describe $P$ as: \be P & = & \{ (a,b) \in \NN^2 : (a,b)=(0,0) {\rm \; or \;} b>0 \} . \ee  The faces of $\Spec P$ are $0$, $0 \oplus \NN$, and $P$ itself.  The intrinsic cone of $P$ is \be \sigma(P) & = & \{ (x,y) \in \RR^2_{\geq 0} : (x,y)=(0,0) {\rm \; or \;} y>0 \} . \ee  Note that the codimension two face $0$ of $P$ is contained in only one facet.  Each $P_n$ is fine and has two facets, but we can see that one of these facets is ``lost" in passing to the direct limit $P$.  Similarly, the integral monoid \be Q & := \{ (a,b) \in \ZZ \oplus \NN : (a,b)=(0,0) {\rm \; or \;} b>0 \} \ee has only the obvious faces $0$, $Q$.  \end{example}

\begin{example} \label{example:SpecPpathology} Let $I$ be a non-empty set and let $P := \prod_I \NN = \NN^I = \{ p : I \to \NN \}$ be the monoid of all functions from $I$ to $\NN$, under pointwise addition.  For $p \in P$, let \be K(p) & := & \{ i \in I : p(i)=0 \} \ee denote the ``kernel" of $p$.  Clearly we have \bne{kernelformula} K(0) & = & I \\ \nonumber K(p+p') & = & K(p) \cap K(p') \ene for all $p,p' \in P$.  For $A \subseteq I$, let $p(A) \in P$ be the characteristic function of $A$, so $K(p(A))=I \setminus A$.

Recall that a \emph{filter} on $I$ is a family $\F$ of subsets of $I$ satisfying the properties: \begin{enumerate} \item $\emptyset \notin \F$, $I \in \F$ \item $A,B \in \F$ implies $A \cap B \in \F$ \item If $A \in \F$ and $A \subseteq B \subseteq I$, then $B \in \F$. \end{enumerate}  An \emph{ultrafilter} is a maximal filter, or, equivalently, a filter $\F$ satisfying the additional property: \begin{enumerate} \setcounter{enumi}{3} \item Whenever $I = A \coprod B$, either $A$ or $B$ is in $\F$. \end{enumerate}  Let $\alpha(I)$ (resp.\ $\beta(I)$) denote the set of filters (resp.\ ultrafilters) on $I$.  

For $\F \in \alpha(I)$, it follows from \eqref{kernelformula} that \bne{Fformula} F(\F) & := & \{ p \in P : K(p) \in \F \} \ene is a face of $P$.  We have $F(\F) \leq F(\F')$ iff $\F \subseteq \F'$.  Set \be S & := & \{ p \in P : K(p) = \emptyset \} \\ Z & := & \{ F \leq P : F \cap S = \emptyset \} \ee so that $Z$ is a closed subspace of $\Spec P$ (Proposition~\ref{prop:Spec}\eqref{opensubsets}).  Given $F \in Z$, let \be \F(F) & := & \{ A \subseteq I : \exists p \in F, \; A \supseteq K(p) \}. \ee  Then $\F(F)$ is a filter on $I$ and we have $F \leq F(\F(F))$.  It follows that $Z = \cup_{\F \in \beta(I)} Z_{F(\F)}$ is the union of the irreducible closed subsets $Z_{F(\F)} = \{ F(\F) \}^{-}$ (cf.\ Proposition~\ref{prop:Spec}\eqref{irreducibles}), over all ultrafilters $\F$.  The faces $F(\F)$ for $\F \in \beta(I)$ are precisely the maximal points of $Z$ (in the ordering by inclusion of faces).  In particular, if $I$ is infinite, $Z$ will have infinitely many maximal points.  It seems to be rather pathological for $\Spec P$ to have a closed subset with infinitely many maximal points.  

We claim that $\F(F(\F)) = \F$ for any $\F \in \alpha(I)$.  The containment $\F(F(\F)) \subseteq \F$ follows from the definitions.  Given $A \in \F$, we have $p(I \setminus A) \in F(\F)$ and $K(p(I \setminus A)) = A$, which implies $A \in \F(F(\F))$, thus we establish the opposite containment.

Topologize $\beta(I)$ by viewing it as the Stone-\v{C}ech compactification of the discrete space $I$.  The subsets \be W(A) & := & \{ \F \in \beta(I) : A \in \F \} \quad \quad (A \subseteq I) \ee are closed subsets of $\beta(I)$ forming a basis for the closed sets in the topology of $\beta(I)$ and satisfy: \bne{WAformula} W(\emptyset) & = & \emptyset \\ \nonumber W(I) & = & \beta(I) \\ \nonumber W(A \cup B) & = & W(A) \cup W(B) \\ \nonumber W(A) & = & \beta(I) \setminus W(I \setminus A). \ene (The last two formulas use the fact that the $\F \in \beta(I)$ are \emph{ultra}filters.)  In particular, the $W(A)$ are also open and form a basis for the open subsets of $\beta(I)$.  The formula \eqref{Fformula} defines a function \bne{Fmap} F : \beta(I) & \to & Z \subseteq \Spec P. \ene  The function \eqref{Fmap} is one-to-one because we noted that $\F(F(\F))=\F$ above.  It is continuous because, for any $p \in P$, \be F^{-1}(U_p \cap Z) & = & \{ \F \in \beta(I) : F(\F) \in U_p \} \\ & = & \{ \F \in \beta(I) : p \in F(\F) \} \\ & = & \{ \F \in \beta(I) : K(p) \in \F \} \\ & = & W(K(p)) \\ & = & \beta(I) \setminus W(I \setminus K(p)) \ee is open in $\beta(I)$.  In particular $$F^{-1}(U_{p(A)} \cap Z) = W(K(p(A))) = W( I \setminus A),$$ so each basic open subset $W(A)$ of $\beta(I)$ is the preimage of an open subset of $Z$ under $F$.  We conclude that \eqref{Fmap} is an embedding.  Note that it is not a \emph{closed} embedding, even though $\beta(I)$ is compact Hausdorff.  \end{example}

\section{Fans} \label{section:fans} Having dispensed with the necessary ``commutative algebra" in \S\ref{section:monoidsandspec} we can now define our main objects of study!

\begin{defn} \label{defn:fan} A locally monoidal space (\S\ref{section:locallymonoidalspaces}) $X$ isomorphic to $\Spec P$ (\S\ref{section:Spec}) for some monoid (resp.\ integral monoid, finitely generated monoid, fine monoid, \dots) $P$ is called an \emph{affine fan} (resp.\ \emph{integral affine fan}, \emph{finite type affine fan}, \emph{fine affine fan}, \dots).  A locally monoidal space $X$ with an open cover $\{ U_i \}$ such that each $(U_i,\M_X|U_i)$ is an affine fan (resp.\ integral affine fan, finite type affine fan, fine affine fan, \dots) is called a \emph{fan} (resp.\ \emph{integral fan}, \emph{locally finite type fan}, \emph{fine fan}, \dots).  A morphism of fans is defined to be a morphism of locally monoidal spaces, so that fans form a full subcategory $\Fans \subseteq \LMS$.  \end{defn}

\begin{prop} \label{prop:Specequivalence}  The functor \eqref{SpecLMS} yields an equivalence of categories between $\Mon^{\rm op}$ and the full subcategory of $\LMS$ consisting of affine fans.  \end{prop}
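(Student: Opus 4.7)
The plan is to observe that this is essentially a formal consequence of results already in hand. Recall that a functor induces an equivalence between its source and its essential image precisely when it is fully faithful, so the proposition reduces to two assertions: that $\Spec$ is fully faithful on $\Mon^{\rm op}$, and that its essential image is precisely the full subcategory of affine fans in $\LMS$.

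The first assertion is exactly Proposition~\ref{prop:Specfullyfaithful}, which was already deduced by combining Remark~\ref{rem:SpecP} (identifying $\Gamma(\Spec P,\M_P)$ with $P$) with the adjunction formula \eqref{Specadjunction}: the unit $P \to \Gamma(\Spec P, \M_P)$ of the adjunction between $\Spec$ and the global sections functor is an isomorphism, so $\Spec$ is fully faithful, as is standard for any adjunction whose unit (or counit) is an isomorphism. So the first step in the plan is simply to cite Proposition~\ref{prop:Specfullyfaithful}.

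The second assertion holds by Definition~\ref{defn:fan}: an affine fan is, by definition, a locally monoidal space isomorphic to $\Spec P$ for some monoid $P$, which is the same as saying it lies in the essential image of \eqref{SpecLMS}. Hence $\Spec$ is (after restricting its target to the full subcategory of affine fans) essentially surjective, and the functor is therefore an equivalence onto this subcategory.

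The only substantive point — which would have been the main obstacle had it not been dealt with earlier — is the fully faithfulness, which rests on the computation $\Gamma(\Spec P, \M_P) = P$; this in turn used Proposition~\ref{prop:Spec}\eqref{uniqueclosedpoint} (that $P^*$ is the unique closed point) together with Proposition~\ref{prop:Spec}\eqref{stalkMP} (that the stalk at $P^*$ is $(P^*)^{-1}P = P$). With both steps already established in the excerpt, no further work is required beyond assembling them.
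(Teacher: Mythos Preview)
Your proof is correct and takes essentially the same approach as the paper: cite Proposition~\ref{prop:Specfullyfaithful} for full faithfulness and invoke Definition~\ref{defn:fan} for essential surjectivity. The paper's own proof is exactly these two observations stated in a single sentence; your additional unpacking of why full faithfulness holds is accurate but not needed here.
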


\begin{proof} The functor in question is fully faithful by Proposition~\ref{prop:Specfullyfaithful} and essentially surjective by definition of an affine fan. \end{proof}

\begin{prop} If $X=(X,\M_X)$ is a locally monoidal space and $\{ U_i \}$ is an open cover of its underlying space, then $X$ is a fan iff each locally monoidal space $(U_i,\M_X|U_i)$ is a fan. \end{prop}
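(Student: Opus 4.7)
The plan is to prove the two implications separately, with the ``if'' direction being essentially formal and the ``only if'' direction reducing to the statement that every open subspace of an affine fan is a fan.

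For the ``if'' direction, suppose each $(U_i, \M_X | U_i)$ is a fan. By definition, each $U_i$ admits an open cover $\{V_{i,j}\}_j$ such that each $(V_{i,j}, \M_X | V_{i,j})$ is an affine fan. Since each $V_{i,j}$ is open in $U_i$ and $U_i$ is open in $X$, each $V_{i,j}$ is open in $X$, and the collection $\{V_{i,j}\}_{i,j}$ is an open cover of $X$ by affine fans. Hence $X$ is a fan.

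For the ``only if'' direction, suppose $X$ is a fan, with an open cover $\{W_k\}$ by affine fans. For each $i$, the family $\{U_i \cap W_k\}_k$ is an open cover of $U_i$, and each $U_i \cap W_k$ is an open subspace of the affine fan $W_k$. Thus it suffices to establish the following lemma: every open subspace $V$ of an affine fan $\Spec P$ is a fan. Granting this, each $U_i \cap W_k$ is a fan, so it can be covered by affine fans open in $U_i \cap W_k$, hence open in $U_i$; unioning over $k$ produces an open cover of $U_i$ by affine fans, showing $(U_i, \M_X|U_i)$ is a fan.

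To prove the lemma, let $V \subseteq \Spec P$ be open. By Proposition~\ref{prop:Spec}\eqref{opensubsets}, $V$ can be written as a union $V = \bigcup_{p \in S} U_p$ for some subset $S \subseteq P$, where $U_p \subseteq \Spec P$ is the basic open as in \eqref{Up}. By Proposition~\ref{prop:Spec}\eqref{basicopens}, the inclusion $\Spec P[-p] \to \Spec P$ induced by the localization map $P \to P[-p]$ is an open embedding with image $U_p$; hence $(U_p, \M_P|U_p)$ is isomorphic as a locally monoidal space to the affine fan $\Spec P[-p]$. Therefore $\{U_p\}_{p \in S}$ is an open cover of $V$ by affine fans, so $V$ is a fan. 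The main (minor) obstacle is simply keeping track of the fact that restrictions of the structure sheaf to basic opens actually agree with the structure sheaf of the localized monoid, but this is exactly the content of \eqref{basicopensections} together with Proposition~\ref{prop:Spec}\eqref{basicopens}, so no further work is required.
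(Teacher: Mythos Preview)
Your proof is correct and follows essentially the same approach as the paper: the ``if'' direction is immediate, and the ``only if'' direction is reduced to showing that an open subspace of an affine fan $\Spec P$ is a fan by covering it with basic opens $U_p \cong \Spec P[-p]$ via Proposition~\ref{prop:Spec}\eqref{basicopens}. The only cosmetic difference is that the paper phrases the reduction as ``an open subspace of a fan is a fan'' and then invokes the already-proved ``if'' direction to localize to the affine case, whereas you write out that localization explicitly by intersecting with an affine cover $\{W_k\}$.
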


\begin{proof} If each $(U_i,\M_X|U_i)$ can be covered by affine fans, then so can $X$, so one implication is clear.  For the other implication, we just need to prove that if $U$ is an open subspace of (the space underlying) a fan $(X,\M_X)$, then $(U,\M_X|U)$ is a fan.  In light of what we just proved, the question is local, so we can assume $X = \Spec P$ is an affine fan.  Then $U$ can be covered by basic opens $U_p$ and each $(U_p,\M_X|U_p)$ is an affine fan because it is isomorphic to $\Spec P[-p]$ by Proposition~\ref{prop:Spec}\eqref{basicopens}. \end{proof}

\begin{example} As in the case of schemes, if $X$ is an affine fan and $U \subseteq X$ is an open subspace, then the fan $(U,\M_X|U)$ need not be affine---one can even use essentially the same example as in schemes:  Take $P := \NN^2$, $X := \Spec P$, so the points of $X$ are the four faces $P^* = \{ (0,0) \}$, $F_1 := \NN \times \{ 0 \}$, $F_2 := \{ 0 \} \times \NN$, and $P$ of $P$.  Let $U$ be the complement of the unique closed point $P^*$ of $X$ (cf.\ Proposition~\ref{prop:Spec}\eqref{uniqueclosedpoint}).  The fan $(U,\M_X|U)$ cannot be affine because its topological space has \emph{two} closed points: $F_1$ and $F_2$.  For schemes one argues that the complement $U$ of the origin in $X = \AA^2$ can't be affine because the restriction map $\O_X(X) \to \O_X(U)$ is an isomorphism (by Hartog's Theorem, or direct calculation), so if $U$ were affine it would have to be all of $\AA^2$.  One could also make the same argument in our fans example:  Since the basic opens $U_{F_1}$ of $U_{F_2}$ (cf.\ Proposition~\ref{prop:Specfinite}\eqref{UF}) of $X$ cover $U$ and have intersection $U_P$, the monoid $\M_X(U)$ is the equalizer of $$ \M_X(U_{F_1}) \times \M_X(U_{F_2}) \rightrightarrows \M_X(U_P). $$  Since $U_F$ is the smallest open containing the point $F$, this diagram is the same as $$ \M_{X,F_1} \times \M_{X,F_2} \rightrightarrows \M_{X,P} $$ which, by the stalk formula of Proposition~\ref{prop:Spec}\eqref{stalkMP}, is the same as $$ F_1^{-1} P \times F_2^{-1} P \rightrightarrows P^{-1}P, $$ which is the diagram $$ (\ZZ \times \NN) \times (\NN \times \ZZ) \rightrightarrows \ZZ \times \ZZ ,$$ which has equalizer $\NN^2 = \M_X(X)$. \end{example}

Here are some basic facts about fans:

\begin{prop} \label{prop:fans} Let $X$ be a fan. \begin{enumerate} \item \label{fansaresober}  The topological space underlying $X$ is sober (any non-empty irreducible closed subset is the closure of a unique point).  \item \label{fangeneralization} When $x$ and $y$ are points of $X$ with $x \in \{ y \}^-$, the generalization map $\M_{X,x} \to \M_{X,y}$ is identified with the localization of $\M_{X,x}$ at the face $F_y \leq \M_{X,x}$ given by the preimage of $\M_{X,y}^*$.  If we fix $x$, then the assignment $y \mapsto F_y$ defines a bijection from the set of generalizations of $x$ in $X$ to the set $\Spec \M_{X,x}$ of faces of $\M_{X,x}$.  In particular, if $\M_{X,x}$ is a group, then $x$ itself is the only generalization of $x$ in $X$.  \item \label{locallyquasicompact} Every point $x \in X$ has a neighborhood $U$ in $X$ containing a unique closed point and a unique point with no (non-trivial) generalizations.  This $U$ can be chosen so that $U$ itself is the only open subset of $U$ containing the closed point of $U$.  Such a $U$ is quasi-compact.  \end{enumerate} \end{prop}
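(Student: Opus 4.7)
All three parts reduce quickly to the affine case by choosing affine open neighborhoods and applying Proposition~\ref{prop:Spec}. The crucial general fact I will use repeatedly is that open subsets of $X$ are stable under generalization (this holds on any $\Spec P$ by Proposition~\ref{prop:Spec}\eqref{opensubsets}, hence on $X$ since it is checked on an affine cover): if $U \subseteq X$ is open, $x \in U$, and $y$ is a generalization of $x$ in $X$, then $y \in U$.

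For \eqref{fangeneralization} and \eqref{locallyquasicompact}, this is essentially immediate. Pick an affine open $U = \Spec P$ containing $x$. If $x \in \{y\}^-$ in $X$, then $y$ generalizes $x$, so by the above observation $y \in U$, and the claim about the generalization map and the bijection $y \mapsto F_y$ is exactly Proposition~\ref{prop:Spec}\eqref{generalization} and \eqref{generalizationprimes} applied to $U$. Moreover, again by Proposition~\ref{prop:Spec}\eqref{uniqueclosedpoint}, $P^* \in U$ is the unique closed point of $U$, $U$ is the only open subset of itself containing $P^*$, and $U$ is quasi-compact; Proposition~\ref{prop:Spec}\eqref{generalizationprimes} furnishes the unique point ($P \in U$) with no non-trivial generalizations in $U$---which, by generalization-stability of open sets, is the same as having no non-trivial generalizations in $X$.

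The main (mildly subtle) point is \eqref{fansaresober}. Choose an affine open cover $\{U_i\}$ of $X$; each $U_i$ is sober by Proposition~\ref{prop:Spec}\eqref{irreducibles}. Given a non-empty irreducible closed $Z \subseteq X$, pick $U_i$ with $U_i \cap Z \neq \emptyset$. Then $U_i \cap Z$ is a non-empty open subset of $Z$, hence dense in $Z$ by irreducibility, and is a non-empty irreducible closed subset of $U_i$; by sobriety it is $\overline{\{\xi\}}^{U_i}$ for a unique $\xi \in U_i$. The closure $\overline{\{\xi\}}^X$ contains $U_i \cap Z$, hence contains its closure $Z$; but $\overline{\{\xi\}}^X \subseteq Z$ since $\xi \in Z$ and $Z$ is closed, so $Z = \overline{\{\xi\}}^X$. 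For uniqueness, suppose $\xi'$ is another generic point of $Z$. Pick $U_j$ containing $\xi'$; since $\xi$ generalizes $\xi'$, the generalization-stability of $U_j$ forces $\xi \in U_j$ as well. Both $\overline{\{\xi\}}^{U_j}$ and $\overline{\{\xi'\}}^{U_j}$ equal $U_j \cap Z$, so sobriety of $U_j$ gives $\xi = \xi'$.

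The only step that requires any real thought is the uniqueness half of \eqref{fansaresober}; everything else is a mechanical transfer from $\Spec P$ via generalization-stability of opens, which is the genuinely ``fan-theoretic'' input making the whole picture local.
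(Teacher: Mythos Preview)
Your proof is correct and follows the same approach as the paper, which simply says ``These things can all be checked locally, so they follow from Proposition~\ref{prop:Spec}.'' You have filled in the details the paper leaves implicit, particularly for the sobriety argument in \eqref{fansaresober}.

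One small remark: the fact that open subsets are stable under generalization holds in \emph{any} topological space (if $x \in \overline{\{y\}}$ and $U$ is an open neighborhood of $x$, then by definition of closure $U$ meets $\{y\}$, so $y \in U$), so this is not a ``fan-theoretic'' input at all, and no reference to Proposition~\ref{prop:Spec}\eqref{opensubsets} is needed to justify it.
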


\begin{proof} These things can all be checked locally, so they follow from Proposition~\ref{prop:Spec}. \end{proof}

\subsection{Affine fans}  Here we make a few simple observations about affine fans.  Some results in this section are noteworthy because the analogous statements for rings (like the analog of Proposition~\ref{prop:affineopensubfan}\eqref{aos3}) are false.

\begin{prop} \label{prop:affineopensubfan} Let $P$ be a monoid.  \begin{enumerate} \item \label{aos0} For any $F \in \Spec P$, we have \be \{ G \in \Spec P : F \leq G \} & = & \{ G \in \Spec P : F \in \{ G \}^- \} \\ & = & \Im( \Spec F^{-1}P \to \Spec P). \ee  Write $U_F$ for this subset of $\Spec P$.  (It is not generally open as the misleading notation might suggest.)  \item \label{aos1} Suppose $U \subseteq \Spec P$ is an open subset of $\Spec P$ for which the fan $(U,\M_P|U)$ is affine.  Then there is a unique $F \in \Spec P$ such that $U=U_F$ (in particular, this particular $U_F$ \emph{will} be open).  Furthermore, there is a natural isomorphism \be (U,\M_P|U) & = & \Spec F^{-1}P \ee of (affine) fans over $\Spec P$.  Said another way:  If $h : Q \to P$ is a map of monoids such that $\Spec h : \Spec P \to \Spec Q$ is an open embedding of locally monoidal spaces (Definition~\ref{defn:embedding}), then $h$ identifies $P$ with the localization of $Q$ at $h^{-1}(P^*)$.   \item \label{aos2} If $\Spec P$ is finite, then for an open subset $U \subseteq \Spec P$, the fan $(U,\M_P|U)$ is affine iff $U = U_F$ for some $F \in \Spec P$.  \item \label{aos3} If $P$ is finitely generated, then for an open set $U \subseteq \Spec P$, the fan $(U,\M_P|U)$ is affine iff $U$ is equal to a basic open set $U_p$ for some $p \in P$. \end{enumerate} \end{prop}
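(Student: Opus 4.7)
My plan is to handle the four parts in order, with the substantive work concentrated in part (2); parts (1), (3), (4) are essentially repackagings of results already established in \S\ref{section:Spec}.

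For (1), the equality $\{G \leq P : F \leq G\} = \{G : F \in \{G\}^-\}$ is immediate from Proposition~\ref{prop:Spec}\eqref{Specclosure}. The equality with $\Im(\Spec F^{-1}P \to \Spec P)$ follows directly from Proposition~\ref{prop:Spec}\eqref{localizationembedding} applied to the submonoid $S = F$, since $S \leq G$ exactly says $F \leq G$.

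For (2), suppose $(U,\M_P|U)$ is affine, so fix an isomorphism $(U,\M_P|U) \cong \Spec Q$ of locally monoidal spaces. By Proposition~\ref{prop:Spec}\eqref{uniqueclosedpoint}, $\Spec Q$ has a unique closed point, so $U$ contains a unique closed point, which corresponds to some face $F \leq P$. First I claim $U = U_F$. For the containment $U \subseteq U_F$: given any $G \in U$, the fact that $F$ is the unique closed point of $U$ forces $F \in \{G\}^-$ in $U$, hence in $\Spec P$, so by Proposition~\ref{prop:Spec}\eqref{Specclosure} we have $F \leq G$, i.e.\ $G \in U_F$. For the containment $U_F \subseteq U$: open subsets of $\Spec P$ are stable under generalization (Proposition~\ref{prop:Spec}\eqref{opensubsets}: they are unions of $U_p$'s, and if $p \in F \subseteq G$ then $p \in G$), and every $G \in U_F$ is a generalization of $F \in U$, so $G \in U$. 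Uniqueness of $F$ follows because $F = F'$ whenever $U_F = U_{F'}$ contains both (each is the unique closed point of the common set). Finally, since $U_F$ is open, the canonical strict embedding $\Spec F^{-1}P \to \Spec P$ of Proposition~\ref{prop:Spec}\eqref{localizationembedding}, whose image is exactly $U_F$ by part (1), is in fact an open embedding, which gives the claimed isomorphism $(U,\M_P|U) = \Spec F^{-1}P$ over $\Spec P$. The alternative formulation in terms of $h : Q \to P$ then follows from Proposition~\ref{prop:Specequivalence} (full faithfulness of $\Spec$): the open embedding $\Spec P \to \Spec Q$ comes from a unique $h$, and applying what was just shown to $U := \Im \Spec h \subseteq \Spec Q$ with $F = h^{-1}(P^*)$ identifies $P$ with $F^{-1}Q$.

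For (3), when $\Spec P$ is finite, Proposition~\ref{prop:Specfinite}\eqref{facelocalizationembedding} says each $U_F$ really \emph{is} open and affine (isomorphic to $\Spec F^{-1}P$), so the ``if'' direction is handled; the ``only if'' direction is immediate from part (2). For (4), if $P$ is finitely generated then $\Spec P$ is finite by Corollary~\ref{cor:faces}, so part (3) applies; Remark~\ref{rem:Specfinite} (take $p = p_1 + \cdots + p_n$ for a finite generating set of $F$) shows each $U_F$ has the form $U_p$, while conversely $U_p \cong \Spec P[-p]$ is affine by Proposition~\ref{prop:Spec}\eqref{basicopens}.

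The only genuinely nontrivial step is the inclusion $U \subseteq U_F$ in part (2), where we translate ``$F$ is closed in $U$'' into ``$F \subseteq G$ for every $G \in U$''; this is the sole place the hypothesis that $U$ is affine is really used, and everything else amounts to bookkeeping with basic opens and the stalk/localization formulas already in hand.
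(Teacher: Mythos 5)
Your proposal is correct and follows essentially the same route as the paper: identify the unique closed point $F$ of the affine open $U$, prove $U=U_F$ by combining stability of opens under generalization with the fact that every point of an affine fan specializes to its closed point, and then invoke Proposition~\ref{prop:Spec}\eqref{localizationembedding} for the isomorphism with $\Spec F^{-1}P$; parts (1), (3), (4) are handled by the same citations the paper uses. No gaps.
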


\begin{proof} \eqref{aos0}: The two equalities are immediate from \eqref{Specclosure} and \eqref{localizationembedding} in Proposition~\ref{prop:Spec}. 

\eqref{aos1}: Suppose $U \subseteq \Spec P$ is affine.  Then by Proposition~\ref{prop:Spec}\eqref{uniqueclosedpoint} it has a unique closed point $F \in U \subseteq \Spec P$ and if $G \in U$ is any other point of $U$, then $F$ is in the closure of $\{ G \}$ in $U$.  Since $U$ is open in $\Spec P$, it is, in particular, stable under generalization in $\Spec P$, so $U_F \subseteq U$.  On the other hand, if $G \in U$, then $F$ is in the closure of $\{ G \}$ in $U$, hence $F$ is also in the closure of $\{ G \}$ in $\Spec P$ (here we don't even need that $U \subseteq \Spec P$ is \emph{open}, just that it is a subspace), so $U \subseteq U_F$.  If $U_F = U_G$ then we have $F \leq G$ and $G \leq F$, hence $F=G$.  The ``Furthermore" follows from Proposition~\ref{prop:Spec}\eqref{localizationembedding} (with $S$ there equal to our $F$) because we already know $U=U_F$ is open.

\eqref{aos2}: In light of the previous part, we just need to prove that each $U_F$ is open in $\Spec P$---here we use Proposition~\ref{prop:Specfinite}\eqref{UF}---and that $(U_F,\M_P|U_F)$ is affine, which follows from Proposition~\ref{prop:Spec}\eqref{localizationembedding} (take $S=F$ there).

\eqref{aos3}: The fans $(U_p,\M_P|U_p)$ are affine by Proposition~\ref{prop:Spec}\eqref{basicopens}, so, in light of the previous part, we just need to show that each $U_F$ is equal to some $U_p$, which we noted in Remark~\ref{rem:Specfinite}. \end{proof}

\subsection{Integration, saturation, etc.} \label{section:integration}  In this section we will clarify the relationships between the types of fans (locally finite type fans, fine fans, \dots) defined in Definition~\ref{defn:fan}.  We will also show that the inclusions of various full subcategories of $\Fans$ into various other such full subcategories have right adjoints.

\begin{lem} \label{lem:integralfan} For a fan $X$ the following are equivalent: \begin{enumerate} \item \label{integral1} $\M_{X,x}$ is integral for every $x \in X$. \item \label{integral2} $\M_X(U)$ is integral for every open subset $U$ of $X$. \item \label{integral3} $X$ is an integral fan (Definition~\ref{defn:fan}). \end{enumerate} \end{lem}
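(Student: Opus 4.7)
The plan is to prove the cycle (3) $\Rightarrow$ (1) $\Rightarrow$ (2) $\Rightarrow$ (3). The first and third implications are essentially bookkeeping with the stalk and global-section formulas for $\Spec P$ already recorded in \S\ref{section:Spec}; the middle implication is the one that requires a small sheaf-theoretic argument, and is where I expect whatever ``content" there is to lie.

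For (3) $\Rightarrow$ (1), given $x \in X$ I will choose an open affine neighborhood of the form $\Spec P$ around $x$ with $P$ integral, and let $F \leq P$ be the face corresponding to $x$. By Proposition~\ref{prop:Spec}\eqref{stalkMP} the stalk is $\M_{X,x} = F^{-1}P$. Since $P$ is integral, $P \into P^{\rm gp}$ is injective, and hence so is its localization $F^{-1}P \into F^{-1}(P^{\rm gp}) = P^{\rm gp}$ (this being the standard fact that localization preserves injectivity, provable by a one-line cancellation argument). Any submonoid of a group is integral, so $\M_{X,x}$ is integral.

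For (1) $\Rightarrow$ (2), I will use that $\M_X$, being a sheaf, is separated: for any open $U \subseteq X$ the stalk map $\M_X(U) \to \prod_{x \in U} \M_{X,x}$ is injective. If every $\M_{X,x}$ is integral, I post-compose with the injection $\prod_x \M_{X,x} \into \prod_x \M_{X,x}^{\rm gp}$ to embed $\M_X(U)$ as a submonoid of an abelian group. By the universal property of groupification this injection factors through $\M_X(U) \to \M_X(U)^{\rm gp}$, forcing the latter map to be injective, i.e.\ $\M_X(U)$ integral.

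For (2) $\Rightarrow$ (3), by the definition of a fan $X$ admits an open cover $\{ \Spec P_i \}$ by affine fans. Remark~\ref{rem:SpecP} identifies $P_i = \M_X(\Spec P_i)$, which is integral by hypothesis (2); so the given cover already witnesses $X$ as an integral fan. The main obstacle, to the extent there is one, is the sheaf-separation input in (1) $\Rightarrow$ (2) together with the observation that a submonoid of a group is necessarily integral; the other two implications are purely formal applications of the results of \S\ref{section:Spec}.
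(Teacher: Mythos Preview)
Your proof is correct and follows essentially the same route as the paper's. The paper also proves (3)$\Rightarrow$(1) via $\M_{X,x}=F^{-1}P$ being a localization of an integral monoid, and (2)$\Rightarrow$(3) via $P=\M_X(\Spec P)$ from Remark~\ref{rem:SpecP}; for (1)$\Rightarrow$(2) the paper checks the cancellation condition directly on stalks and uses separatedness of the sheaf, which is exactly the content of your embedding $\M_X(U)\hookrightarrow\prod_x\M_{X,x}$ phrased slightly differently.
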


\begin{proof} To see that \eqref{integral1} implies \eqref{integral2}, suppose $p+q=p'+q$ for some $p,p',q \in \M_X(U)$.  Then we have the same equality in each stalk $\M_{X,x}$, $x \in U$, hence $p=p'$ in each such $\M_{X,x}$ because each such $\M_{X,x}$ is integral, hence $p=p'$ since they have the same image in each stalk and $\M_X$ is a sheaf.  To see that \eqref{integral2} implies \eqref{integral1}, one need only check that a filtered direct limit of integral monoids is integral, which is easy.  To see that \eqref{integral3} implies \eqref{integral1}, suppose $X$ is integral and $x \in X$.  Then we can find a neighborhood $U$ of $x$ in $X$ such that $U \cong \Spec P$ for some integral monoid $P$, hence $\M_{X,x} \cong \M_{P,F}$ for some $F \in \Spec P$.  We have $\M_{P,F} = F^{-1}P$ by Proposition~\ref{prop:Spec}\eqref{stalkMP}, and any localization of an integral monoid is integral, so \eqref{integral1} holds.  To see that \eqref{integral2} implies \eqref{integral3}, consider any $x \in X$.  Since $X$ is a fan, $x$ has an affine neighborhood $U \cong \Spec P$ in $X$.  Then $\M_X(U) \cong \M_P(\Spec P)$ and $\M_P(\Spec P)=P$ (cf.\ Remark~\ref{rem:SpecP}), so $P$ is integral because $\M_X(U)$ is integral by \eqref{integral2}. \end{proof}

\begin{rem} \label{rem:integralLMS} The proof of Lemma~\ref{lem:integralfan} shows that the conditions \eqref{integral1} and \eqref{integral2} in that lemma are equivalent for any monoidal space $X$.  In the rest of \S\ref{section:integration} it will be useful to have a notion of \emph{integral monoidal space}, which we define to be a monoidal space satisfying those two equivalent conditions. \end{rem}

\begin{defn} \label{defn:locallyfinite} A topological space $X$ is called \emph{locally finite} iff every $x \in X$ has a finite open neighborhood (equivalently, $X$ has an open cover by finite spaces). \end{defn}

\begin{rem} \label{rem:locallyfinite} Any point $x$ of a locally finite topological space $X$ has a smallest open neighborhood \be U_x & = & \{ y \in X : x \in \ov{ \{ y \} } \} \ee given by the set of generalizations of $x$.  A subset $U$ of a locally finite space $X$ is open iff it is stable under generalization.  Suppose that $X$ is a locally finite space satisfying the condition:  $x \in \ov{ \{ y \} }$ and $y \in \ov{ \{ x \} }$ iff $x=y$.  (This latter condition holds for any sober space, and is hence satisfied by the space underlying any fan by Proposition~\ref{prop:fans}\eqref{fansaresober}.)  Then $\{ x \}$ is a closed subset of $U_x$ for every $x \in X$; in particular $\{ x \}$ is locally closed in $X$. \end{rem}

\begin{prop} \label{prop:locallyfinitetypefans} A fan $X$ is locally finite type (resp.\ fine, fs, toric) iff its underlying space is locally finite and the monoid $\M_{X,x}$ is finitely generated (resp.\ fine, \dots) for each $x \in X$.  If $x$ is any point of a fan $X$ with locally finite underlying topological space, then the smallest open neighborhood $U_x$ of $x$ in $X$ is an affine fan naturally isomorphic to $\Spec \M_{X,x}$. \end{prop}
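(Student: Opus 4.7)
\medskip

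\textbf{Proof proposal.} My plan is to prove the second assertion first, since the biconditional in the first assertion then follows by combining it with the localization results already established in \S\ref{section:specinvariance} and \S\ref{section:cones}. Fix a point $x$ in a fan $X$ whose underlying topological space is locally finite. By the definition of a fan, choose an affine open neighborhood $V \cong \Spec P$ of $x$; under this isomorphism $x$ corresponds to a face $F \leq P$, and Proposition~\ref{prop:Spec}\eqref{stalkMP} identifies $\M_{X,x}$ with $F^{-1}P$. Since $X$ is locally finite, Remark~\ref{rem:locallyfinite} gives a smallest open neighborhood $U_x$ of $x$ in $X$, namely the set of generalizations of $x$; because $V$ is an open neighborhood of $x$, we have $U_x \subseteq V$. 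Under the identification $V = \Spec P$, the set of generalizations of $F$ is precisely $U_F = \{ G \leq P : F \leq G \}$ by Proposition~\ref{prop:Spec}\eqref{Specclosure}, so $U_x = U_F$ as subsets of $V$. In particular, $U_F$ is open in $\Spec P$.

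Now invoke Proposition~\ref{prop:Spec}\eqref{localizationembedding} with $S := F$: the induced map $\Spec F^{-1}P \to \Spec P$ is a strict embedding of locally monoidal spaces whose image is exactly $U_F$. Since we have just shown $U_F$ is open, this strict embedding is an open embedding (Definition~\ref{defn:embedding}), yielding a natural isomorphism of locally monoidal spaces $(U_x, \M_X|U_x) \cong \Spec F^{-1}P = \Spec \M_{X,x}$. This proves the second assertion.

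For the first assertion, the forward direction is easy: if $X$ is locally finite type, cover it by affine opens $\Spec P_\alpha$ with each $P_\alpha$ finitely generated; then each $\Spec P_\alpha$ is a finite topological space by Corollary~\ref{cor:faces}, so the underlying space of $X$ is locally finite, and each stalk $\M_{X,x} \cong F^{-1}P_\alpha$ is finitely generated by the same corollary. The ``resp.'' clauses run in parallel: finite generation and integrality are preserved under localization by Corollary~\ref{cor:faces}, saturation is preserved by Corollary~\ref{cor:saturatedlocalization}, and torsion-freeness of the groupification is automatic since $(F^{-1}P)^{\rm gp} = P^{\rm gp}$ when $P$ is integral. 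For the converse, assume the underlying space is locally finite and each $\M_{X,x}$ has the desired property $(\ast) \in \{\text{f.g., fine, fs, toric}\}$. The second assertion, applied to each $x \in X$, exhibits $U_x \cong \Spec \M_{X,x}$ as an affine fan of type $(\ast)$; the family $\{U_x\}_{x \in X}$ is an open cover of $X$, hence $X$ is of type $(\ast)$ by Definition~\ref{defn:fan}.

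The only step that required any thought was verifying that $U_F$ is open inside $V = \Spec P$, which is the main obstacle one must address since in general $U_F \subseteq \Spec P$ need not be open (cf.\ the pathologies in Example~\ref{example:SpecPinfinite}); the hypothesis of local finiteness of the underlying space of $X$ is precisely what forces this, via the existence of the smallest open neighborhood $U_x$ from Remark~\ref{rem:locallyfinite}. Everything else reduces to bookkeeping with the stalk formula and the localization results collected earlier in the paper.
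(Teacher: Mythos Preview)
Your proof is correct and follows essentially the same route as the paper's: reduce to an affine chart, identify $U_x$ with $U_F$, and invoke the localization embedding to get $U_x \cong \Spec F^{-1}P = \Spec \M_{X,x}$, then deduce the biconditional from this. The only cosmetic difference is that the paper constructs the map $U_x \to \Spec \M_{X,x}$ via the universal property of $\Spec$ (using $\M_X(U_x) = \M_{X,x}$), which makes the word ``natural'' in the statement immediate, and then cites Proposition~\ref{prop:Specfinite}\eqref{facelocalizationembedding} rather than your Proposition~\ref{prop:Spec}\eqref{localizationembedding}.
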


\begin{proof} Suppose $X$ is locally finite type (resp.\ \dots).  Then, by definition, every point $x \in X$ has a neighborhood in $X$ isomorphic to $\Spec P$ for some finitely generated (resp.\ \dots) monoid $P$.  In particular, this neighborhood is \emph{homeomorphic} to $\Spec P$ and hence is finite by Corollary~\ref{cor:faces}.  If $F \leq P$ is the face of $P$ corresponding to $x$ under this isomorphism, then $\M_{X,x}$ is isomorphic to $\M_{P,F}$, which is equal to $F^{-1}P$ by Proposition~\ref{prop:Spec}\eqref{stalkMP}.  This monoid is finitely generated (resp.\ fine) by Corollary~\ref{cor:faces} (resp.\ Corollary~\ref{cor:faces}, Corollary~\ref{cor:tf}, Corollary~\ref{cor:sat}, Corollary~\ref{cor:trc}).  

For any fan $X$ with locally finite topological space, we have $\M_X(U_x) = \M_{X,x}$ (by definition of $U_x$ and stalks) and hence we obtain a natural map $U_x \to \Spec \M_{X,x}$ from the universal property of $\Spec$ (\S\ref{section:Spec}).  The question of whether this map is an isomorphism (and indeed, the entire construction of this map) is local on $X$, so we reduce to the case where $X$ is an affine fan, in which case the desired result is Proposition~\ref{prop:Specfinite}\eqref{facelocalizationembedding}.  Obviously this fact implies the ``if" in the first sentence of the proposition. \end{proof}

\begin{prop} \label{prop:integration} Let $P$ be a monoid, $\pi : P \to P^{\rm int}$ the natural map, $F \leq P$ a face of $P$.  The following are equivalent: \begin{enumerate} \item \label{integration1} The natural map of monoids $F^{-1}P \to (F^{-1}P)^{\rm int}$ is local. \item \label{integration2} $F$ is in the image of $\Spec \pi$ and if $G$ is the unique (since $\pi$ is surjective) face of $P^{\rm int}$ such that $F=\pi^{-1}G$, then the natural map $(F^{-1}P)^{\rm int} \to G^{-1}P^{\rm int}$ is an isomorphism.  \end{enumerate} \end{prop}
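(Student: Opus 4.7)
The plan is to introduce an intermediate condition and show that both (1) and (2) reduce to it. Let $G_0 \leq P^{\rm int}$ denote the smallest face of $P^{\rm int}$ containing the submonoid $\pi(F)$; by Lemma~\ref{lem:facesandlocalization}, $G_0^{-1}P^{\rm int} = \pi(F)^{-1}P^{\rm int}$. Set $F_0 := \pi^{-1}(G_0)$, which is a face of $P$ containing $F$. The intermediate condition will be $F = F_0$.

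The key preliminary observation is that the natural map $(F^{-1}P)^{\rm int} \to G_0^{-1}P^{\rm int}$ is always an isomorphism, independently of condition (1) or (2). I would establish this by comparing universal properties: both monoids are initial among integral monoids under $P$ in which the image of $F$ consists of units. Concretely, both can be identified with the image of $F^{-1}P$ in the common groupification $(F^{-1}P)^{\rm gp} = P^{\rm gp}$, namely the submonoid generated by $P^{\rm int} = \pi(P)$ together with $-\pi(F)$.

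Given this identification, the equivalence of (2) with the intermediate condition is essentially formal. If $F = \pi^{-1}(G)$ for some face $G \leq P^{\rm int}$, then surjectivity of $\pi$ forces $G = \pi(\pi^{-1}(G)) = \pi(F)$, which is therefore a face; since $G \supseteq G_0$, the chain $F = \pi^{-1}(G) \supseteq \pi^{-1}(G_0) \supseteq F$ collapses to give $F = F_0$. Conversely, if $F = F_0$, take $G := G_0$; the isomorphism assertion in (2) is precisely the preliminary observation.

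The interesting equivalence is (1) with the intermediate condition. Write $l : P \to F^{-1}P$ for the localization and $q : F^{-1}P \to (F^{-1}P)^{\rm int}$ for the integralization. The map $q$ is surjective (by the definition of integralization as the image in groupification), so $H := q^{-1}((F^{-1}P)^{\rm int,*})$ is a face of $F^{-1}P$, and locality of $q$ is precisely the statement $H = (F^{-1}P)^*$. By Proposition~\ref{prop:Spec}\eqref{localizationembedding}, $l^{-1}(\cdot)$ is an inclusion-preserving injection from $\Spec F^{-1}P$ into $\Spec P$, so it suffices to compare the two faces after pulling back to $P$. One gets $l^{-1}((F^{-1}P)^*) = F$ immediately from Lemma~\ref{lem:facesandlocalization}, while the preliminary identification makes $q \circ l$ agree with the composite $P \xrightarrow{\pi} P^{\rm int} \to G_0^{-1} P^{\rm int}$; a second application of Lemma~\ref{lem:facesandlocalization} to this localization of $P^{\rm int}$ at $\pi(F)$ identifies $l^{-1}(H) = \pi^{-1}(G_0) = F_0$. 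Thus (1) $\iff F = F_0$. The only mildly delicate point is marshalling the universal properties to produce the canonical identification $(F^{-1}P)^{\rm int} \cong G_0^{-1}P^{\rm int}$; the remainder is bookkeeping with the face correspondence.
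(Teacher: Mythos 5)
Your proof is correct. It uses the same essential ingredients as the paper's argument --- Lemma~\ref{lem:facesandlocalization}, surjectivity of $\pi$, and the fact that $F^{-1}P$ and the relevant localization of $P^{\rm int}$ share the groupification $P^{\rm gp}$ --- but organizes them differently. The paper proves the two implications separately: for \eqref{integration1}$\Rightarrow$\eqref{integration2} it applies Lemma~\ref{lem:facesandlocalization} to the composite $P \to F^{-1}P \to (F^{-1}P)^{\rm int}$ and chases the commutative square to exhibit $F$ as $\pi^{-1}(G)$; for \eqref{integration2}$\Rightarrow$\eqref{integration1} it builds $(F^{-1}P)^{\rm int} \to G^{-1}P^{\rm int}$ from the universal property and checks it is surjective (because $\pi|F : F \to G$ is surjective) and injective (both sides are integral with groupification $P^{\rm gp}$), so that locality of $F^{-1}P \to G^{-1}P^{\rm int}$ descends to locality of $F^{-1}P \to (F^{-1}P)^{\rm int}$. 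Your version instead factors out the isomorphism $(F^{-1}P)^{\rm int} \cong G_0^{-1}P^{\rm int}$ as an \emph{unconditional} statement (valid even when $\pi(F)$ is not itself a face, by passing to the face $G_0$ it generates) and then reduces both \eqref{integration1} and \eqref{integration2} to the single condition $F = \pi^{-1}(G_0)$, comparing faces of $F^{-1}P$ through the injection $\Spec F^{-1}P \into \Spec P$. What this buys is a cleaner logical structure: it makes explicit that the isomorphism clause of \eqref{integration2} holds automatically once $F$ lies in the image of $\Spec \pi$ --- a point the paper's \eqref{integration1}$\Rightarrow$\eqref{integration2} direction leaves to be supplied by the argument for the converse --- at the modest cost of the extra bookkeeping with $G_0$ and $F_0$.
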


\begin{proof} To see that \eqref{integration1} implies \eqref{integration2} notice that we have a commutative diagram $$ \xym{ P \ar[r]^-{\pi} \ar[d] & P^{\rm int} \ar[d] \\ F^{-1}P \ar[r] & (F^{-1}P)^{\rm int} } $$ where the bottom horizontal arrow is local by \eqref{integration1}.  Using this and Lemma~\ref{lem:facesandlocalization}, we see that the preimage of the face $((F^{-1}P)^{\rm int})^*$ of $(F^{-1}P)^{\rm int}$ in $P$ is $F$.  On the other hand, the commutativity of the diagram ensures that $F = \pi^{-1}(G)$, where $G$ is the face of $P^{\rm int}$ given by the preimage of $((F^{-1}P)^{\rm int})^*$ under the right vertical arrow.

To see that \eqref{integration2} implies \eqref{integration1} suppose $F=\pi^{-1}G$ for some face $G \leq P^{\rm int}$.  Then certainly $F^{-1}P \to G^{-1}P^{\rm int}$ is local.  Also, $G^{-1}P^{\rm int}$ is integral, so by the universal property, this local map factors through $F^{-1}P \to (F^{-1}P)^{\rm int}$ via a map $(F^{-1}P)^{\rm int} \to G^{-1}P^{\rm int}$, so it will suffice to show that this latter map is an isomorphism.  It is surjective because the map $F^{-1}P \to G^{-1}P^{\rm int}$ (which factors through it) is surjective since $\pi$ is surjective (and hence so is $\pi|F : F \to G$).  It is injective because both its domain and codomain are integral monoids with the same groupification---namely $P^{\rm gp}$. \end{proof}

Suppose now that $X$ is an arbitrary (locally) monoidal space (\S\ref{section:locallymonoidalspaces}).  We will now define a new locally monoidal space $X^{\rm int}$ and a map of locally monoidal spaces $i : X^{\rm int} \to X$.  As a topological space, $X^{\rm int}$ is defined to be the subspace of $X$ consisting of those $x \in X$ for which the monoid homomorphism $\M_{X,x} \to \M_{X,x}^{\rm int}$ is local.  The resulting embedding $i : X^{\rm int} \into X$ will be our $\LMS$ morphism $i$ on the level of topological spaces.  The structure sheaf $\M_{X^{\rm int}}$ is defined to be the sheafification of the presheaf of monoids \bne{Xintpre} U & \mapsto & (i^{-1} \M_X)(U)^{\rm int} . \ene  The universal property of sheafification and functoriality of $P \mapsto P^{\rm int}$ yield a map \bne{Xintmap} i^{-1} \M_X & \to & \M_{X^{\rm int}} \ene of sheaves of monoids on $X^{\rm int}$.  We will now see that \eqref{Xintmap} has local stalks, so it will serve as a lift of our map $i$ of topological spaces to an $\LMS$ morphism.  Indeed, the functor $P \mapsto P^{\rm int}$ commutes with filtered direct limits, so the stalk of \eqref{Xintmap} at a point $x \in X^{\rm int} \subseteq X$ is the map \bne{stalkofXintmap} \M_{X,x} & \to & \M_{X,x}^{\rm int}, \ene which is local by definition of $X^{\rm int}$.  (Note also that this shows that $\M_{X^{\rm int}}$ has integral stalks, so $X^{\rm int}$ is indeed integral in the sense of Remark~\ref{rem:integralLMS}.)  We leave it to the reader to verify that the $\LMS$ morphism $i$ is terminal in the category of integral locally monoidal spaces over $X$.  This proves that the inclusion of the full subcategory of $\LMS$ consisting of \emph{integral} locally monoidal spaces has a right adjoint $X \mapsto X^{\rm int}$ (retracting the inclusion).

We further claim that if $X$ is a fan, then $X^{\rm int}$ is also a fan (and hence the $\LMS$ morphism $i$ is a map of fans, thus we obtain a right adjoint to the inclusion of integral fans into fans).  The construction of $X^{\rm int}$ and its map to $X$ are clearly local on $X$ and the property of being a fan is also local, so it is enough to prove this when $X = \Spec P$ for a monoid $P$.  In this case the map \be \Spec \pi : \Spec (P^{\rm int}) & \to & \Spec P = X \ee induced by the natural surjection $\pi : P \to P^{\rm int}$ is an $\LMS$ morphism whose domain has integral stalks, so by the universal property of $i : X^{\rm int} \to X$, we obtain an $\LMS$ morphism \bne{Xintiso} \Spec P^{\rm int} & \to & X^{\rm int} \ene (over $X$), which we claim is an isomorphism.  Indeed, both $i : X^{\rm int} \to X$ and $\Spec \pi$ are embeddings---the former by construction and the latter by Lemma~\ref{lem:specembedding}.  By comparing the construction of $i$ and the decription of the image of $\Spec \pi$ in Proposition~\ref{prop:integration}, we see that these two embeddings into $X$ have the same image, so \eqref{Xintiso} is a homeomorphism on topological spaces.  To see that \eqref{Xintiso} induces an isomorphism on structure sheaves, it is enough to check that it induces an isomorphism on stalks.  Consider a point $G \in \Spec P^{\rm int}$ (which we view as a face of $P^{\rm int}$, as usual).  Let $F = \pi^{-1}G$ be the corresponding face of $P$.  Then by construction of $\M_{X^{\rm int}}$ and Proposition~\ref{prop:Spec}\eqref{stalkMP}, we see that the stalk of the map of structure sheaves in question at $F$ is the map \be \M_{X^{\rm int},F} = \M_{X,F}^{\rm int} = \M_{P,F}^{\rm int} = (F^{-1}P)^{\rm int} & \to & \M_{P^{\rm int},G} = G^{-1} P^{\rm int}, \ee which is an isomorphism by Proposition~\ref{prop:integration}.

Now suppose that $X$ is an \emph{integral} (locally) monoidal space.  Let $P \mapsto P^\bullet$ be one of the functors on integral monoids discussed in Corollaries~\ref{cor:tf}, \ref{cor:sat}, \ref{cor:trc} (i.e.\ $\bullet \in \{ {\rm tf, \; sat, \; trc} \}$) and let us agree to call a torsion-free (resp.\ saturated, torsion-free and saturated) monoid a ``$\bullet$ monoid".  We will now define a new locally monoidal space $X^\bullet$ and a map of locally monoidal spaces $i : X^\bullet \to X$.  As a topological space, $X^\bullet$ is defined to be equal to (the space underlying) $X$.  The structure sheaf $\M_{X^\bullet}$ of $X^\bullet$ is defined to be the sheaf associated to the presheaf \bne{Xboxpre} U & \mapsto & \M_X(U)^\bullet . \ene  There is an obvious map $\M_X \to \M_{X^\bullet}$ of sheaves of monoids on $X$.  Since the functors $P \mapsto P^\bullet$ commute with filtered direct limits, the stalk of this obvious map at $x \in X$ is the map \bne{stalkofXboxmap} \M_{X,x} & \to & \M_{X,x}^\bullet, \ene which is local by part \eqref{tflocal} of the appropriate aforementioned corollary.  Hence our ``obvious map" defines an $\LRS$ morphism $X^\bullet \to X$ which is easily seen to be terminal in the category of locally monoidal spaces over $X$ with $\bullet$ stalks.

We further claim that if $X$ is an integral fan, then the locally monoidal space $X^\bullet$ defined above is also a fan (hence $X \mapsto X^\bullet$ defines a right adjoint to the inclusion of fans with the appropriate stalks into all fans).  Like the construction $X \mapsto X^{\rm int}$ discussed above, the construction of $X \mapsto X^\bullet$ is local in nature, so it is enough to prove this when $X = \Spec P$ for an integral monoid $P$, in which case we claim that $X^\bullet \cong \Spec (P^\bullet)$.  Indeed, since the localization of any $\bullet$ monoid at any face is also a $\bullet$ monoid (by parts \eqref{tfhomeo} and \eqref{tflocalization} in the aforementioned corollaries), $\Spec (P^\bullet)$ has $\bullet$ stalks, so we obtain a natural $\LMS$ morphism \bne{Xboxiso} \Spec (P^\bullet) & \to & X^\bullet \ene (over $X$), which we claim is an isomorphism.  It is a homeomorphism on spaces because both maps to $X=\Spec P$ are homeomorphisms---this is by construction for $X^\bullet$ and for $\Spec (P^\bullet)$ it holds by part \eqref{tfhomeo} of the appropriate corollary.  We can check that the map of structure sheaves is an isomorphism on stalks, where it follows from part \eqref{tflocalization} of the appropriate corollary.

We close this section with some simple remarks about these constructions.

\begin{rem} We can compose $X \mapsto X^{\rm int}$ with $X \mapsto X^\bullet$ to obtain a right adjoint to the inclusion of integral locally monoidal spaces with $\bullet$ stalks (resp.\ integral fans with $\bullet$ stalks) into all locally monoidal spaces (resp.\ all fans).  We usually denote this composition simply by $X \mapsto X^\bullet$, but hopefully the discussion above has made it clear that it is worthwhile to treat the two constructions ``separately". \end{rem}

\begin{rem} If $X$ has locally finite underlying space, then so does $X^{\rm int}$ since the space underlying $X^{\rm int}$ is a subspace of the space underlying $X$.  It follows from this and Proposition~\ref{prop:locallyfinitetypefans} (or from the ``local" description of $X^{\rm int}$ when $X$ is affine) that $X^{\rm int}$ is fine when $X$ is locally finite type.  Similarly, $X^{\rm sat}$ (resp.\ $X^{\rm trc}$) will be fs (resp.\ toric) when $X$ is fine. \end{rem}

\subsection{Classical fans} \label{section:classicalfans}  Throughout \S\ref{section:classicalfans} we will refer to a fan in the usual sense of toric varieties (Definition~\ref{defn:classicalfan}) as a \emph{classical fan} and a fan in the sense of Definition~\ref{defn:fan} as an \emph{abstract fan}.  We will see that every classical fan gives rise to an abstract fan and that the construction yields a fully faithful functor from classical fans to abstract fans, whose essential image is described in Theorem~\ref{thm:classicalfans}.  In the rest of the paper, therefore, we will make little or no distinction between a classical fan and the associated abstract fan.

\begin{defn} \label{defn:classicalfan} Let $N$ be a \emph{lattice} (i.e.\ a group isomorphic to $\ZZ^n$ for some $n \in \NN$).  A \emph{(classical) fan} $\Sigma$ (with lattice $N$) is a finite, non-empty set of sharp, rational cones (Definition~\ref{defn:cone}) in $N_{\RR} := N \otimes \RR$ satisfying the following properties: \begin{enumerate} \item \label{cf1} For any $\sigma \in \Sigma$ and any $\tau \leq \sigma$, we have $\tau \in \Sigma$. \item \label{cf2} No point of $N \subseteq N_{\RR}$ is contained in the interior of more than one cone of $\Sigma$. \end{enumerate}  A morphism $f : \Sigma \to \Sigma'$ from a (classical) fan $\Sigma$ with lattice $N$ to a (classical) fan $\Sigma'$ with lattice $N'$ is a map of lattices $f : N \to N'$ such that the corresponding map of vector spaces $f_{\RR} : N_{\RR} \to N'_{\RR}$ takes every $\sigma \in \Sigma$ into some $\sigma' \in \Sigma'$. \end{defn}

\begin{rem} \label{rem:defnofclassicalfan}  Definition~\ref{defn:classicalfan} is convenient for our purposes, but it is more customary (cf.\ \cite[1.4]{F}) to define a \emph{(classical) fan} by replacing our condition \eqref{cf2} with the condition \begin{enumerate} \setcounter{enumi}{2} \item \label{cf3} For any $\sigma, \sigma' \in \Sigma$, we have $\sigma \cap \sigma' \leq \sigma$. \end{enumerate}  To see that this yields the same notion of classical fan, we need to check that \eqref{cf2} and \eqref{cf3} are equivalent when \eqref{cf1} holds.

To see that \eqref{cf2} implies \eqref{cf3} when \eqref{cf1} holds, suppose, towards a contradiction, that there are $\sigma, \sigma' \in \Sigma$ with $\sigma \cap \sigma'$ not a face of $\sigma$.  After possibly replacing $\sigma$ with the smallest face of $\sigma$ containing $\sigma \cap \sigma'$ (which will be in $\Sigma$ by \eqref{cf1}), we can assume no proper face of $\sigma$ contains $\sigma \cap \sigma'$.  So, for each proper face $\tau < \sigma$, there is some $v_\tau \in (\sigma \cap \sigma') \setminus \tau$.  The sum $v$ of the $v_\tau$ is in $\sigma' \cap \sigma^\circ$ because if it were in some $\tau < \sigma$, then $v_\tau$ would have to be in $\tau$ since $\tau \leq \sigma$ and all the $v_\tau$ are in $\sigma$.  After possibly replacing $\sigma'$ with the smallest face of $\sigma'$ containing $v$ (which will be in $\Sigma$ by \eqref{cf1}), we can assume $v \in (\sigma')^\circ$, so that $(\sigma')^\circ \cap \sigma^\circ \neq \emptyset$.  Then $(\sigma')^\circ \cap \sigma^\circ = (\sigma \cap \sigma')^\circ$ by Theorem~\ref{thm:cones}\eqref{intersectionfaces} and hence there will be some $n \in N \cap (\sigma')^\circ \cap \sigma^\circ$ because $\sigma \cap \sigma'$ is a rational cone by Theorem~\ref{thm:cones}\eqref{sumandintersection}, so its interior contains an integral point by Theorem~\ref{thm:cones}\eqref{integralpointininterior}.  This contradicts \eqref{cf2}.

To see that \eqref{cf3} implies \eqref{cf2}, suppose, toward a contradiction, that there is some $n \in N \cap \sigma^\circ \cap (\sigma')^\circ$ for distinct $\sigma, \sigma' \in \Sigma$.  After possibly exchanging $\sigma$ and $\sigma'$ we can assume that $\sigma$ is not contained in $\sigma'$.  But then, by \eqref{cf3}, $\sigma \cap \sigma'$ is a proper face of $\sigma$ containing $n$, contradicting $n \in \sigma^\circ$. \end{rem}

Now we will explain how to construct an abstract fan from a classical fan $\Sigma$ with lattice $N$.  Let $M := \Hom(N,\ZZ)$ be the dual lattice.  We topologize $\Sigma$ by declaring a subset $\Sigma' \subseteq \Sigma$ to be \emph{open} iff $\tau \in \Sigma'$ whenever $\sigma \in \Sigma'$ and $\tau \leq \sigma$.  In other words, the open subsets of $\Sigma$ are the \emph{subfans} of $\Sigma$ and the empty set.  Evidently, the smallest open subset of a cone $\sigma \in \Sigma$ is the set $[ \sigma ]$ of all faces of $\sigma$, so that $\tau$ is a generalization of $\sigma$ in $\Sigma$ (i.e.\ $\sigma \in \{ \tau \}^-$) iff $\tau \leq \sigma$.   We equip the topological space $\Sigma$ with a sheaf of monoids $\M_{\Sigma}$ as follows:  First of all, $\Sigma$ is finite, so to describe $\M_{\Sigma}$ we need only describe its stalks and the (functorial) generalization maps between them.  The stalk $\M_{\Sigma,\sigma}$ of $\M_{\Sigma}$ is defined to be the monoid of integral points in the dual cone of $\sigma$: \bne{classicalstalkformula} \M_{\Sigma,\sigma} & := & M \cap \sigma^\lor. \ene  Note that $\M_{\Sigma,\sigma}$ is a toric monoid with $\M_{\Sigma,\sigma}^{\rm gp} = M$.  The generalization map $\M_{\Sigma,\sigma} \to \M_{\Sigma,\tau}$ is defined to be the inclusion of submonoids $M \cap \sigma^\lor \subseteq M \cap \tau^\lor$.

To see that the monoidal space $(\Sigma,\M_\Sigma)$ is an abstract fan (in fact, a \emph{toric} abstract fan), we claim that, for each $\sigma \in \Sigma$, the open subspace $([\sigma],\M_{\Sigma}|[\sigma]) = ( [\sigma], \M_{[\sigma]})$ of $(\Sigma,\M_\Sigma)$ is isomorphic to $\Spec (M \cap \sigma^\lor)$.  To see this, first note that the inclusion $(M \cap \sigma^\lor) \subseteq \sigma^\lor$ yields a homeomorphism of topological spaces \bne{firsthomeo} \Spec \sigma^\lor & \to & \Spec (M \cap \sigma^\lor) \ene by Theorem~\ref{thm:monoidsandcones}\eqref{monoidsandcones1}.  The inclusion-reversing bijection in Theorem~\ref{thm:cones}\eqref{inclusionreversingbijection} may be viewed as a homeomorphism \bne{secondhomeo} \Spec \sigma^\lor & \to & [ \sigma ]. \ene  From the formulas for \eqref{firsthomeo} and \eqref{secondhomeo}, we see that \bne{mainhomeo} [ \sigma ] & \to & \Spec (M \cap \sigma^\lor) \\ \nonumber \tau & \mapsto & M \cap \tau^\perp \cap \sigma^\lor \ene is a homeomorphism.  For $\tau \leq \sigma$, the computation \be \M_{ [\sigma] , \tau} & = & M \cap \tau^\lor \\ & = & M \cap (\sigma^\lor + \tau^\perp) \\ & = & M \cap (\tau^\perp \cap \sigma^\lor)^{-1} \sigma^\lor \\  & = & (M \cap \tau^\perp \cap \sigma^\lor)^{-1} (M \cap \sigma^\lor) \ee using Theorem~\ref{thm:monoidsandcones}\eqref{monoidsandcones3} shows that the stalk of $\M_{ [\sigma] }$ at $\tau$ is the localization of $M \cap \sigma^\lor$ at the face corresponding to $\tau$ under the homeomorphism \eqref{mainhomeo}---this is also the stalk of the structure sheaf $\M_{M \cap \sigma^\lor}$ of $\Spec (M \cap \sigma^\lor)$ at the latter face by Proposition~\ref{prop:Spec}\eqref{stalkMP}.  Since this is all natural in $\tau$, we obtain the claimed isomorphism.

Given a map of classical fans $f : \Sigma \to \Sigma'$ determined by a map of lattices $f : N \to N'$, we define a continuous map of topological spaces $|f| : \Sigma \to \Sigma'$ by mapping a cone $\sigma \in \Sigma$ to the smallest cone $\sigma' \in \Sigma'$ with $f_{\RR}(\sigma) \subseteq \sigma'$ (such a cone exists by definition of a morphism of classical fans).  For $\tau \leq \sigma$ in $\Sigma$, we clearly have $\tau' \leq \sigma'$, and the diagram \bne{stalkmapdiagram} & \xym{ M \cap \sigma^\lor = \M_{\Sigma,\sigma} \ar[r]^-{\subseteq} & \M_{\Sigma,\tau} = M \cap \tau^\lor \\ M' \cap (\sigma')^\lor = \M_{\Sigma',\sigma'} \ar[r]^-{\subseteq} \ar[u]^{f^\lor | M' \cap (\sigma')^\lor} & \M_{\Sigma',\tau'} = M' \cap (\tau')^\lor \ar[u]_{f^\lor | M' \cap (\tau')^\lor} } \ene clearly commutes.  Furthermore, the vertical arrows are local maps of monoids.  To see this, suppose $m' \in M' \cap (\sigma')^\lor$ satifies $f^\lor m' \in \M_{\Sigma,\sigma}^* = M \cap \sigma^\perp$---i.e.\ $\sigma \subseteq N_{\RR}$ is contained in the kernel of the linear functional $m'_{\RR} f_{\RR} : N_{\RR} \to \RR$.  We need to show that $m' \in M' \cap (\sigma')^\perp$---i.e.\ that $\sigma'$ is contained in the kernel of the linear functional $m'_{\RR} : N'_{\RR} \to \RR$.  If this weren't the case, then $\sigma'' := \sigma' \cap \Ker(m'_{\RR})$ would be a proper face of $\sigma'$ containing $f_{\RR}(\sigma)$, contradicting the definition of $\sigma'$.  Since $\Sigma$, $\Sigma'$ are finite spaces, the commutativity of the diagrams \eqref{stalkmapdiagram} determines a unique map $f^\dagger : |f|^{-1} \M_{\Sigma'} \to \M_\Sigma$, whose stalk generalization maps are the diagrams \eqref{stalkmapdiagram}, hence $|f|$ and $f^\dagger$ yield a map of abstract fans $(\Sigma,\M_{\Sigma}) \to (\Sigma',\M_{\Sigma'})$.  

The construction described above yields a functor \bne{classicalfanstoabstractfans} \ClassicalFans & \to & {\bf (Abstract)}\Fans. \ene

\begin{thm} \label{thm:classicalfans} The functor \eqref{classicalfanstoabstractfans} is fully faithful.  Its essential image consists of the toric (abstract) fans $X$ satisfying the following two properties: \begin{enumerate} \item \label{classicalfan1} The underlying topological space of $X$ is finite and irreducible.  \item \label{classicalfan2} Whenever $f,g : \Spec \NN \to X$ are maps of fans with $f|\Spec \ZZ = g|\Spec \ZZ$, we have $f=g$.  \end{enumerate} \end{thm}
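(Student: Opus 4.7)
The plan is to extract the classical-fan morphism from any abstract-fan morphism via the generic point. For any classical fan $\Sigma$ with lattice $N$, the associated abstract fan has a unique generic point, the trivial cone $\{0\}$, with stalk $M \cap \{0\}^\lor = M = \Hom(N,\ZZ)$. So a classical-fan morphism $f : N \to N'$ produces $f^\lor : M' \to M$ on generic stalks, from which $f$ is recovered by dualizing; this gives faithfulness. For fullness, given an abstract-fan morphism $g : \Sigma \to \Sigma'$, set $f := (g^\dagger_\eta)^\lor : N \to N'$ where $g^\dagger_\eta : M' \to M$ is the generic-stalk map. For each $\sigma \in \Sigma$, the stalk map $M' \cap g(\sigma)^\lor \to M \cap \sigma^\lor$ is then the restriction of $f^\lor$; since $M' \cap g(\sigma)^\lor$ generates $g(\sigma)^\lor$ as a cone (Theorem~\ref{thm:monoidsandcones}), linearity of $f^\lor_\RR$ yields $f^\lor_\RR(g(\sigma)^\lor) \subseteq \sigma^\lor$, equivalently $f_\RR(\sigma) \subseteq g(\sigma)$. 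Thus $f$ is a morphism of classical fans.

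To finish fullness I must show $g(\sigma)$ is the smallest cone of $\Sigma'$ containing $f_\RR(\sigma)$, so that $g$ coincides with the abstract-fan morphism induced by $f$. By condition (cf3) of Remark~\ref{rem:defnofclassicalfan}, any such smaller cone in $\Sigma'$ would be a face of $g(\sigma)$, so it suffices to show $g(\sigma)$ has no proper face containing $f_\RR(\sigma)$. A proper face has the form $\tau = g(\sigma) \cap (m')^\perp$ for some $m' \in g(\sigma)^\lor$ (Theorem~\ref{thm:cones}); if $f_\RR(\sigma) \subseteq \tau$ then $m'$ vanishes on $f_\RR(\sigma)$, so $f^\lor(m') \in M \cap \sigma^\perp = (M \cap \sigma^\lor)^*$, and locality of the stalk map forces $m' \in M' \cap g(\sigma)^\perp$, whence $g(\sigma) \subseteq (m')^\perp$ and $\tau = g(\sigma)$.

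Necessity of the essential-image conditions is straightforward: classical fans produce finite, toric abstract fans whose underlying spaces are irreducible with generic point the trivial cone, and condition (2) follows from (cf2) since by Example~\ref{example:SpecN} a map $\Spec \NN \to X$, for $X$ coming from a classical fan $\Sigma$, amounts to a lattice point $n \in N$ together with a cone $\sigma \in \Sigma$ with $n \in \sigma^\circ$---and (cf2) says $\sigma$ is determined by $n$. For sufficiency, let $X$ be a toric abstract fan whose underlying space is finite and irreducible and which satisfies (2). Let $\eta$ be the unique generic point; since $\eta$ has no proper generalizations, Proposition~\ref{prop:fans} forces $M := \M_{X,\eta}$ to be a group, hence a lattice since $X$ is toric. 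Set $N := \Hom(M,\ZZ)$. For each $x \in X$, $P_x := \M_{X,x}$ is a toric submonoid of $M$ with groupification $M$ (the stalk map to $\eta$ is localization at the maximal face), so by saturation $P_x = M \cap \sigma_x^\lor$, where $\sigma_x := \sigma(P_x)^\lor \subseteq N_\RR$ is a sharp rational cone (Theorem~\ref{thm:monoidsandcones} and Farkas). Set $\Sigma := \{\sigma_x : x \in X\}$.

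It remains to check that $\Sigma$ is a classical fan and that its associated abstract fan is isomorphic to $X$. Condition (cf1) follows by chasing the chain of bijections between faces of $\sigma_x$, faces of $\sigma(P_x) = \sigma_x^\lor$, faces of $P_x$, and generalizations $y$ of $x$ in $X$ (Theorem~\ref{thm:cones}, Theorem~\ref{thm:monoidsandcones}, Proposition~\ref{prop:fans}): a face $\tau \leq \sigma_x$ corresponds to a generalization $y$ of $x$ for which one computes $\sigma_y = \tau$. Condition (cf2) follows from (2): for $n \in \sigma_x^\circ \cap \sigma_{x'}^\circ \cap N$, evaluation at $n$ produces local monoid homomorphisms $P_x \to \NN$ and $P_{x'} \to \NN$, hence two maps $\Spec \NN \to X$ sharing the same restriction to $\Spec \ZZ$, forcing their closed-point images $x, x'$ to coincide. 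The resulting assignment $x \mapsto \sigma_x$ is a specialization-preserving bijection between the finite spaces $X$ and $\Sigma$ (injectivity is the same argument, using Theorem~\ref{thm:cones} to produce an interior lattice point), hence a homeomorphism; the stalks $P_x = M \cap \sigma_x^\lor$ of the structure sheaves then match, so $X$ is isomorphic to the abstract fan of $\Sigma$. The most delicate step is the locality-to-minimality conversion in the second paragraph.
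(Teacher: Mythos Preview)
Your proof is correct and follows essentially the same route as the paper's: recover the lattice map from the generic stalk, use locality to pin down $g(\sigma)$ as the minimal cone, and for the essential image build $\Sigma(X)$ from the dual cones of the stalks and verify (cf1), (cf2) via the generalization--face bijection and property~\eqref{classicalfan2}. Two small points you glide over that the paper makes explicit: in the fullness argument you should first observe that $g$ must send the generic point $0\in\Sigma$ to $0\in\Sigma'$ (since $g^\dagger_0$ is local with target the group $M$, the source $\M_{\Sigma',g(0)}$ must also be a group, forcing $g(0)=0$), and the $m'$ cutting out a proper face should be taken in $M'$ (possible by rationality) so that locality of the stalk map actually applies to it.
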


\begin{proof}  To see that \eqref{classicalfanstoabstractfans} is faithful, just note that we can recover the map of lattices $f : N \to N'$ (hence the map of classical fans $f : \Sigma \to \Sigma'$) from the corresponding map $(|f|,f^\dagger)$ of abstract fans as the $\ZZ$-linear dual of the stalk of $f^\dagger$ at $0 \in \Sigma$.  

To see that \eqref{classicalfanstoabstractfans} is full, suppose $(g,g^\dagger) : (\Sigma,\M_{\Sigma}) \to (\Sigma',\M_{\Sigma'})$ is a map of abstract fans (i.e.\ a morphism of locally monoidal spaces).  Since $g_0^\dagger : \M_{\Sigma',g(0)} \to \M_{\Sigma,0}=M$ is local, $\M_{\Sigma',g(0)} = M' \cap g(0)^\perp$ must be a group, so we must have $g(0)=0$ so that $g_0 : M' \to M$.  Let $f := g_0^\dagger : N \to N'$.  The map $g^\dagger : g^{-1} \M_{\Sigma'} \to \M_{\Sigma}$ gives rise to a commutative diagram of monoids \bne{moncomdia} & \xym{ M \cap \sigma^\lor = \M_{\Sigma,\sigma} \ar[r]^-{\subseteq} & \M_{\Sigma,0} =M \\ M' \cap g(\sigma)^\lor = \M_{\Sigma',g(\sigma)} \ar[u]^{g^\dagger_\sigma} \ar[r]^-{\subseteq} & \M_{\Sigma',0}=M' \ar[u]_{ g^\dagger_0 = f^\lor } } \ene for any $\sigma \in \Sigma$.  Since the cone over $M \cap \sigma^\lor$ is $\sigma^\lor$ and similarly for $M' \cap (\sigma')^\lor$ (Theorem~\ref{thm:monoidsandcones}\eqref{monoidsandcones2}), we see that $f^\lor_{\RR}$ takes $g(\sigma)^\lor$ into $\sigma^\lor$, hence $f_{\RR}$ must take $\sigma$ into $g(\sigma)$, so that $f : N \to N'$ defines a map of classical fans $f : \Sigma \to \Sigma'$.  In fact, $g(0)$ must be the \emph{smallest} cone $\sigma' \in \Sigma'$ containing $\sigma$, otherwise we would see that $g^\dagger_\sigma$ is not local by reversing the argument in the previous paragraph.  Once this is known, the commutativity of the diagrams \eqref{moncomdia} ensures that $g$ agrees with the map associated to $f$ by our functor \eqref{classicalfanstoabstractfans}.

Let $\Sigma$ be a classical fan with lattice $N$.  It is clear from the definition of the topology on $\Sigma$ that $(\Sigma,\M_{\Sigma})$ satisfies \eqref{classicalfan1} with the zero cone $0 \in \Sigma$ the unique generic point of $\Sigma$.  To see that $(\Sigma,\M_{\Sigma})$ satisfies \eqref{classicalfan2}, note that a map of fans $n : \Spec \ZZ \to (\Sigma,\M_{\Sigma})$ is the same thing as a map of groups $n : \M_{\Sigma,0} = M \to \ZZ$, which is the same thing as an element of $N$.  To extend $n$ to a map of fans $f : \Spec \NN \to (\Sigma,\M_{\Sigma})$ is to give a point $\sigma \in \Sigma$ such that the composition of $\M_{\Sigma,\sigma} \to \M_{\Sigma,0}=M$ and  $n : M \to \ZZ$ takes $\M_{\Sigma,\sigma}$ into $\NN \subseteq \ZZ$ via a \emph{local} map of monoids.  This is equivalent to saying that $n \in N \subseteq N_{\RR}$ is in the \emph{interior} of the cone $\sigma$.  By definition of a classical fan (Definition~\ref{defn:classicalfan}), no point of $N$ can be in the interior of more than one cone of $\Sigma$, so there is at most one such $f$.  (Alternatively, one can just note that $\Spec \ZZ$ and $\Spec \NN$ ``are" classical fans and use the full faithfulness of \eqref{classicalfanstoabstractfans} to establish \eqref{classicalfan2}, but we have spelled out the argument for clarity.) 

It remains to show that any toric fan $X$ satisfying \eqref{classicalfan1} and \eqref{classicalfan2} is in the essential image of \eqref{classicalfanstoabstractfans}.  Since the space underlying $X$ is sober (Proposition~\ref{prop:fans}\eqref{fansaresober}), condition \eqref{classicalfan1} ensures that $X$ has a unique generic point $\eta$.  Since $\eta$ has no non-trivial generalizations, $M := \M_{X,\eta}$ must be a group (Proposition~\ref{prop:fans}\eqref{fangeneralization}).  In fact $M$ must be a lattice because $X$ is toric.  Let $N := \Hom(M,\ZZ)$ be the dual lattice.  For $x \in X$, let $\sigma^\lor(x) \subseteq M \otimes \RR$ be the cone over the image of the generalization map $\M_{X,x} \to \M_{X,\eta}=M$.  The cone $\sigma^\lor(x)$ is rational since the toric monoid $\M_{X,x}$ is finitely generated, so its dual cone $\sigma(x) \subseteq N \otimes \RR$ is also rational by Theorem~\ref{thm:cones}\eqref{FarkasTheorem}.  Since $\M_{X,x}$ is toric (hence saturated), we have $M \cap \sigma^\lor(x) = \M_{X,x}$ by Theorem~\ref{thm:monoidsandcones}.  Since $\M_{X,x} \to M$ is the groupification of $\M_{x,x}$ (Proposition~\ref{prop:fans}\eqref{fangeneralization}), the cone $\sigma^\lor(x) \subseteq M \otimes \RR$ spans $M \otimes \RR$, so its dual cone $\sigma(x)$ is sharp by Theorem~\ref{thm:cones}\eqref{sharpcriterion}.  Since the set $X$ is finite and non-empty by assumption, \bne{thefanwewant} \Sigma(X) & := & \{ \sigma(x) : x \in X \} \ene is a finite, non-empty set of sharp, rational cones in $N_{\RR}$, which we claim is a classical fan (Definition~\ref{defn:classicalfan}).

For every $x \in X$ and every face $\rho \leq \sigma^\lor(x)$, $M \cap \rho$ is a face of $M \cap \sigma^\lor(x) = \M_{X,x}$.  Since $X$ is a fan, Proposition~\ref{prop:fans}\eqref{fangeneralization} says that there must be some generalization $y$ of $x$ in $X$ for which $\M_{X,x} \to \M_{Y,y}$ is the localization at this face.  Using Theorem~\ref{thm:monoidsandcones}\eqref{monoidsandcones3} we then find \be \M_{Y,y} & = & (M \cap \rho)^{-1}(M \cap \sigma^\lor(x)) \\ & = & M \cap \rho^{-1} \sigma^\lor(x), \ee hence $\sigma^\lor(y) = \rho^{-1} \sigma^\lor(x)$.  By Theorem~\ref{thm:cones}\eqref{inclusionreversingbijection}, we then see that $\sigma(y)$ is a face of $\sigma(x)$ and that every face of $\sigma(x)$ arises in this manner for some (unique) $\rho \leq \sigma^\lor(x)$.  Hence $\Sigma(X)$ satisfies condition \eqref{cf1} in Definition~\ref{defn:classicalfan}.

Let $x,y \in X$ and suppose there is some $n \in N \cap \sigma(x)^\circ \cap \sigma(y)^\circ$.  By reversing the discussion in the first paragraph of the proof, one sees that $n$ can be viewed as a map of fans $n : X \to \Spec \ZZ$ which can be extended to a map $\Spec \NN \to X$ by taking the closed point of $\Spec \NN$ either to $x$ or to $y$, so \eqref{classicalfan2} implies $x=y$.  This proves that $\Sigma(X)$ satisfies condition \eqref{cf2} in Definition~\ref{defn:classicalfan}, hence it is a classical fan.  To see that $X$ is isomorphic to the image of $\Sigma(X)$ under \eqref{classicalfanstoabstractfans} it is enough to note that $\M_{X,x} = M \cap \sigma^\lor(x)$ by Theorem~\ref{thm:monoidsandcones}\eqref{monoidsandcones1} because $\M_{X,x}$ is toric, hence saturated.  \end{proof}

\begin{rem} \label{rem:classicalfans1}  Condition \eqref{classicalfan2} in Theorem~\ref{thm:classicalfans} says that $X$ is \emph{valuatively separated} in a sense that we will define and study in \S\ref{section:propermaps}. \end{rem}

\begin{rem} \label{rem:classicalfans}  Assume $X$ is a toric fan satisfying \eqref{classicalfan1} in Theorem~\ref{thm:classicalfans}.  Let $\eta$ be the generic point of $X$, so $M := \M_{X,\eta}$ is a group.  The ``valuatively separated" condition \eqref{classicalfan2} in Theorem~\ref{thm:classicalfans} implies that the map taking $x \in X$ to the cone $\sigma(\M_{X,x}) \subseteq M \otimes \RR$ over the image of the generalization map $\M_{X,x} \to \M_{X,\eta}$ is one-to-one.  This latter condition, however, does \emph{not} imply that $X$ is valuatively separated. \end{rem}

\subsection{Inverse limits} \label{section:inverselimitsoffans}  The unusually nice behaviour of inverse limits of locally monoidal spaces (\S\ref{section:inverselimits}) carries over to fans:

\begin{thm} \label{thm:fansinverselimits} The category $\Fans$ of fans has all finite inverse limits.  All of the forgetful functors \be \Fans & \to & \LMS \\ \Fans & \to & \MS \\ \Fans & \to & \Top \ee preserve finite inverse limits. \end{thm}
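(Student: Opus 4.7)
The plan is to show that the $\LMS$-inverse limit of any finite diagram of fans is itself a fan. Since $\Fans \subseteq \LMS$ is a full subcategory and $\LMS$ has all inverse limits (\S\ref{section:inverselimits}), this produces finite inverse limits in $\Fans$ that agree with those in $\LMS$; preservation by the forgetful functors to $\MS$ and $\Top$ then follows from the results of \S\ref{section:inverselimits}. By the standard reduction, any finite inverse limit is built from a terminal object, binary products, and equalizers, so it suffices to treat these three cases.

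The terminal object of $\LMS$ is $\Spec 0$, where $0$ denotes the trivial (initial) monoid; this is manifestly an affine fan. For binary products, I would proceed as follows: given fans $X$ and $Y$, form $X \times Y$ in $\LMS$. For any $(x,y)$, pick affine open neighborhoods $U = \Spec P \ni x$ in $X$ and $V = \Spec Q \ni y$ in $Y$. The open subset $W := \pi_X^{-1}(U) \cap \pi_Y^{-1}(V) \subseteq X \times Y$ is, by the universal property of open subspaces (Remark~\ref{rem:embedding}) combined with that of inverse limits, the product of $U$ and $V$ in $\LMS$; by Theorem~\ref{thm:Specinverselimits} this product is $\Spec(P \oplus Q)$, where $P \oplus Q$ is the coproduct of $P$ and $Q$ in $\Mon$. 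Hence $W$ is affine, and $(x,y)$ has an affine open neighborhood in $X \times Y$.

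For equalizers: given parallel maps $f,g : X \to Y$ of fans, form the equalizer $E$ in $\LMS$, whose underlying space is $\{x \in X : f(x) = g(x)\}$. Given $z \in E$, let $y := f(z) = g(z) \in Y$ and pick an affine $V = \Spec Q$ containing $y$; then $f^{-1}(V) \cap g^{-1}(V)$ is an open neighborhood of $z$ in $X$, which contains some affine $U = \Spec P \ni z$. The restrictions $f|_U, g|_U : U \to V$ correspond under the $\Spec$-adjunction (Proposition~\ref{prop:Specequivalence}) to parallel monoid homomorphisms $f^\ast, g^\ast : Q \rightrightarrows P$; by Theorem~\ref{thm:Specinverselimits}, their equalizer in $\LMS$ is $\Spec R$, with $R$ the coequalizer of $f^\ast, g^\ast$ in $\Mon$ (which exists since $\Mon$ is cocomplete). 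Since $E \cap \pi_X^{-1}(U)$, an open subset of $E$, is also the $\LMS$-equalizer of $f|_U, g|_U$, we conclude that $z$ has an affine open neighborhood in $E$.

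The main technical point is the routine identification of open subspaces of $\LMS$-inverse limits with the inverse limits of the corresponding open sub-locally-monoidal-spaces, used in both the product and equalizer arguments; this follows from the universal property of open subspaces in Remark~\ref{rem:embedding} together with the universal property of inverse limits. The rest is bookkeeping, and preservation of the resulting finite inverse limits by the underlying-space functors to $\MS$ and $\Top$ is automatic from \S\ref{section:inverselimits}.
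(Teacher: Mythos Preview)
Your proposal is correct and follows essentially the same strategy as the paper: show that the $\LMS$-inverse limit of a finite diagram of fans is again a fan by working locally and invoking Theorem~\ref{thm:Specinverselimits} on affine pieces. The paper's own proof simply defers these details to \cite[3.4]{GM1}, whereas you have written them out explicitly via the standard reduction to terminal object, binary products, and equalizers; this is presumably the content of the cited reference.
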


\begin{proof} First one proves that $\LMS$ has finite inverse limits and that the inverse limit $X$ of a finite diagram of fans $i \mapsto X_i$, calculated in $\LMS$, is again a fan and hence serves as the inverse limit of $i \mapsto X_i$ in $\Fans$.  This can all be done independently of any results from \S\ref{section:inverselimits}.  Furthermore, the analogous statements are true for rings and both the monoid and ring results can be proved by the same general nonsense.  This is all explained carefully in \cite[3.4]{GM1}.  The other statements then follow from the fact that the forgetful functors $\LMS \to \MS$ and $\MS \to \Top$ preserves all inverse limits (\S\ref{section:inverselimits}). \end{proof}

\begin{rem} Theorem~\ref{thm:fansinverselimits} implies that if we have a finite diagram $i \mapsto X_i$ of fans, then its ``naive" inverse limit $X$ (i.e.\ the one calculated in $\MS$) is actually a fan.  There appears to be no way to see this ``directly."  \end{rem}

\subsection{Monoid schemes} \label{section:monoidschemes}  Here we explain how the theory of \emph{monoid schemes} of \cite{CDH} is related to ``our" (really: ``Kato's") theory of fans.  In \cite{CDH}, the authors work with \emph{pointed monoids}.  

\begin{defn} \label{defn:pointedmonoid} A \emph{pointed monoid} is a monoid $P$ (in the sense of \S\ref{section:monoids}) such that there is an element $\infty \in P$ (necessarily unique), called the \emph{infinity element}, such that $p + \infty = \infty$ for all $p \in P$.  A \emph{morphism} of pointed monoids is a monoid homomorphism preserving $\infty$.  The category of pointed monoids is denoted $\Mon_*$.  \end{defn}

\begin{rem} If one uses multiplicative notation for monoids instead of our additive notation, then our $0$ and $\infty$ should instead be denoted $1$ and $0$, respectively.  If one thinks of passing from additive notation to multiplicative notation as a kind of ``exponentiation," then one might prefer to think of $\infty$ as $- \infty$, though this causes a notational conflict with the use of $-$ for the inverse of a unit in a monoid. \end{rem}

Note that a pointed monoid $P$ is never integral in the usual sense (unless $P = \{ 0 =\infty \}$), but there is a reasonable alternative notion of \emph{pointed integral} where one requires $$ p + q = p' + q \; \implies \; p = p' \quad \forall p,p' \in P, \; q \in P \setminus \{ \infty \}.$$  There is a functor \bne{MonMon} \Mon & \to & \Mon_* \\ \nonumber P & \mapsto & P_* \ene defined by setting $P_* := P \coprod \{ \infty \}$ with the unique multiplication making $\infty$ the infinity element and the inclusion $P \to P_*$ a monoid homomorphism.  (Notice that this makes sense even if $P$ already \emph{has} infinity---the infinity element for $P$ will no longer be infinity in $P_*$.)  The functor \eqref{MonMon} is faithful but not full.  It is left adjoint to the obvious forgetful functor $\Mon_* \to \Mon$ (this forgetful functor is also faithful but not full), hence it preserves direct limits.  

If $P$ is a pointed integral monoid, then $P \setminus \{ \infty \}$ is a submonoid of $P$, integral in the usual sense.\footnote{In general this is not true:  There is a unique pointed monoid structure on $\{0,1,\infty \}$ where $0$ is the zero, $\infty$ is the infinity, and $1+1=\infty$.  This shows that the functor \eqref{MonMon} is not essentially surjective.}  (If $p+q = \infty$ for $p,q \in P \setminus \{ \infty \}$, then we have an equality $p+q = \infty + q$ violating pointed integrality.)  Clearly then $P = (P \setminus \{ \infty \})_*$, so there is a loose sense in which ``pointed integral monoids are the same thing as integral monoids," but the \emph{raison d'etre} for the category of pointed monoids is the non-fullness of \eqref{MonMon}---the maps $Q_* \to P_*$ \emph{not} induced by a map $Q \to P$ (i.e.\ the maps of pointed monoids $h:Q_* \to P_*$ for which $h^{-1}(\infty) \neq \{ \infty \}$) can be quite handy, as we now explain.

Let $P$ be a pointed monoid.  A \emph{pointed face} of $P$ is a pointed submonoid $F$ of $P$ (i.e.\ a submonoid containing $\infty$) satisfying $$ p+p' \in F \setminus \{ \infty \} \; \implies \; p,p' \in F \setminus \{ \infty \} \forall p,p' \in P. $$  The set of pointed faces of $P$ is denoted $\Spec_* P$; it is topologized in the same manner as its ``unpointed" analogue $\Spec P$ (\S\ref{section:Spec}).  For any monoid $P$, one has a homeomorphism \be \Spec P & \to & \Spec_* P_* \\ F & \mapsto & F_*. \ee  If $h : Q \to P$ is a map of pointed monoids and $F \leq P$ is a pointed face of $P$, then one checks that \be (\Spec_* h)(F) & := & h^{-1}(F \setminus \{ \infty \} ) \coprod \{ \infty \} \ee is a pointed face of $Q$ (the union is disjoint because we require $h(\infty) = \infty$ in the definition of a map of pointed monoids) and that the resulting map \be \Spec_* h : \Spec_* P & \to & \Spec_* Q \ee is continuous.  

For a pointed face $F \leq P$, we can define a map of pointed monoids $P \to F$ by taking $p$ to $p$ if $p \in F$ and to $\infty$ if $p \in P \setminus F$.  This map, which we call the \emph{canonical retract} of $F$ sits in a diagram of pointed monoids \bne{retract} & F \to P \to F \ene where the first map is the inclusion and the composition is the identity.  This retract has no analogue in the unpointed situation: If $F$ is a face of a monoid $P$, then $F_*$ is a pointed face of the pointed monoid $P_*$ and we have a retract diagram \bne{retract2} & F_* \to P_* \to F_* \ene as a special case of the previous construction.  The retract map $P_* \to F_*$, however, is \emph{not} in the image of \be \Hom_{\Mon}(P,F) & \to & \Hom_{\Mon_*}(P_*,F_*) \ee unless $F=P$.

Given a pointed monoid $P$, the associated \emph{pointed monoid algebra} $\ZZ_*[P]$ is defined by \be \ZZ_*[P] & := & \ZZ[P]/\langle [\infty] \rangle. \ee  This defines a functor \bne{Zslot3} \ZZ_*[ \slot ] : \Mon_* & \to & \An . \ene  The functors \eqref{Zslot3} and \eqref{MonMon} sit in an obvious $2$-commutative diagram with the usual monoid algebra functor \eqref{Zslot} of \S\ref{section:monoidalgebra}.

The category $\MonSch$ of \emph{monoidal schemes} of \cite{CDH} is defined in analogy with the category of fans by systematically replacing the word ``monoid" with ``pointed monoid."  There is an obvious forgetful functor $\MonSch \to \Fans$ admitting a right adjoint \bne{FansMonSch} \Fans & \to & \MonSch \\ \nonumber X & \mapsto & X_* \ene taking a fan $X$ to the monoid scheme $X_*$ obtained by ``pointing the structure sheaf of $X$."  This functor is not full.

As mentioned in the Introduction, the category of (locally finite type) fans is the initial object in an appropriate $2$-category of ``categories of spaces."  The category $\MonSch$ of monoidal schemes (with its distinguished monoid object $\Spec \NN_*$) is an object of this $2$-category and \eqref{FansMonSch} is the (essentially unique) map out of the initial object.  Thus $\MonSch$ is just one of the many ``categories of spaces" receiving a ``realization functor" from $\Fans$.  In \cite[\S4]{CDH} the authors explain how to view a classical fan as a ``monoid scheme."  Their construction is of course nothing but the composition of the functor \eqref{classicalfanstoabstractfans} in \S\ref{section:classicalfans} and the functor \eqref{FansMonSch}.  The fact that their functor from classical fans to monoid schemes isn't full \cite[Theorem~4.4(2)]{CDH} results only from the non-fullness of \eqref{FansMonSch}.

\begin{rem} \label{rem:pointedmonoids} The category of manifolds with corners defined by Kottke and Melrose in \cite{KM} is related to the category of (positive) log differentiable spaces defined by Molcho and I in \cite{GM1} in a similar way.  Their notion of morphism (``$b$-map") ``is" (in an appropriate sense) the same thing as a map of \emph{pointed} log differentiable spaces; what they call an ``interior $b$-map" is a morphism of log differentiable spaces. \end{rem}

\begin{rem} \label{rem:boundary} In \cite[\S4.4]{GM1}, we introduced the idea of the ``boundary" $\Delta X$ of a ``log space" $X$.  This is another log space equipped with a map \emph{of spaces} (but not log spaces) $\Delta X \to X$.  If $X = X(P)$ is the log space attached to a (finitely generated) monoid $P$, then $\Delta X(P)$ is the disjoint union of the log spaces $X(F)$ associated to codimension one faces $F \leq P$ and the map $\Delta(X) \to X$ is the disjoint union of the ``wrong way maps" $X(F) \to X(F)$; since the latter only make sense in some categories of spaces (e.g.\ in monoidal schemes or in schemes, but not in fans or in log schemes), the map of spaces $\Delta X \to X$ is not always available.  In any case, using the boundary construction is very nearly the same thing as working in the pointed setting. \end{rem}

It should also be remarked that most of the results of the present paper are easily adapted from fans to monoid schemes; with few exceptions the resulting adapted results will be ``new" (i.e.\ not in \cite{CDH}).

\subsection{Coherent sheaves} \label{section:coherentsheaves}  The notion of a module over a monoid (\S\ref{section:modules}) gives rise to a corresponding notion of a module over a sheaf of monoids.  For a locally monoidal space $X$, we let $\Mod(X)$ (or $\Mod(\M_X)$) denote the category of $\M_X$-modules.  It is not abelian, but it does have a tensor product making it symmetric monoidal.  

Given a monoid $P$ and a $P$-module $M$, we obtain (functorially in $M$) an $\M_P$-module (object of $\Mod(\Spec P)$) \be M^{\sim} & := & \u{M} \otimes_{\u{P}} \M_P \ee by making use of the map \eqref{PtoMP}.  If $X$ is a fan, then an $\M_X$-module locally isomorphic to one obtained via this construction is called \emph{quasi-coherent}.  We reserve \emph{coherent} for the case where $X$ is locally finite type and $M$ can be taken finitely generated.  For a fan $X$, an \emph{ideal sheaf} of $X$ is a quasi-coherent $\M_X$-submodule $I \subseteq \M_X$.  The finiteness argument of \S\ref{section:monoidalgebra} shows that an ideal sheaf in a locally finite type fan $X$ is always coherent.

If $X$ is a fan, $M$ is a quasi-coherent $\M_X$-module and $U = \Spec P \subseteq X$ is an affine open subfan, we have a canonical isomorphism $M|U = M_P^\sim$, where $M_P \in \Mod(\M_{X,P}) = \Mod(P)$ is the stalk of $M$ at the closed point of $U$.  

\subsection{Relative Spec and Proj} \label{section:relativeSpecandProj}  For the convenience of the reader, we now summarize some basic facts about relative $\Spec$ and $\Proj$ functors, in analogy with the familiar versions from algebraic geometry.  Details and proofs can be found in \cite[5.12]{GM1}, but, at least if we stick to fans and quasi-coherent sheaves, everything is parallel with the corresponding results and constructions for schemes.

Given any $X \in \LMS$ and any map of sheaves of monoids $\M_X \to A$ on $X$, there is an $\LMS$-morphism $\Spec_X A \to X$.  The construction yields a functor \be \Spec_X : (\M_X / \Mon(X))^{\rm op} & \to & \LMS / X \ee right adjoint to the functor \be \LMS / X & \to & (\M_X / \Mon(X))^{\rm op} \\ (f : Y \to X) & \mapsto & (\M_X \to f_* \M_Y). \ee  If $X$ is a fan and $A$ is quasi-coherent as an $\M_X$-module (\S\ref{section:coherentsheaves}), then $\Spec_X A$ is a fan.  The construction of $\Spec_X$ is compatible with base change in an obvious sense.  If $A$ is a \emph{graded} sheaf of monoids under $A$ (meaning we have a map of sheaves of monoids $A \to \u{\NN}$), there is a variant $\Proj_X A$ enjoying similar properties.  In particular, if $I \subseteq \M_X$ is a quasi-coherent ideal sheaf on a fan $X$, the Rees monoid \be R(I) & := & \M_X \coprod I \coprod (I+I) \coprod \cdots \ee is quasi-coherent sheaf of graded monoids under $\M_X$, and we define the \emph{blowup} of $X$ along $I$ to be $\Proj_X R(I)$.

\subsection{Group objects, actions, and quotients} \label{section:groupobjects} Any (abelian) group $G$ can be viewed as an abelian group object in $\Mon^{\rm op}$ (naturally in $G$) via the comultiplication map $G \to G \oplus G$ given by $g \mapsto (g,g)$ and the coinverse map $g \mapsto -g$.  Since $\Spec : \Mon^{\rm op} \to \Fans$ preserves inverse limits, it takes group objects to group objects and we thus obtain a functor \bne{GG} \GG : \Ab^{\rm op} & \to & \Ab(\Fans) \ene from abelian groups to abelian group objects in $\Fans$.  The fan underlying $\GG(G)$ is $\Spec G$, but we often prefer to write $\GG(G)$ instead of $\Spec G$ to emphasize that we are regarding $\Spec G$ as a group object ``in the usual way."  (Though we will see momentarily that there is actually no ``other" group object structure on $\Spec G$.)  We sometimes use the notation $\GG(P) := \GG(P^{\rm gp})$ for any monoid $P$.

\begin{lem} \label{lem:groupobjects} Any group object in $\Mon^{\rm op}$ is of the form described above, so the functor \eqref{GG} is an equivalence of categories between $\Ab^{\rm op}$ and the category of affine group fans.  In fact, any connected group fan is of the form $\GG(G)$ for an abelian group $G$. \end{lem}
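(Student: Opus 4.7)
The plan is to prove the two assertions in sequence: first, every abelian group object in $\Mon^{\rm op}$ has the standard ``diagonal'' structure of an abelian group; second, every connected group fan reduces to a single point, hence to the affine case.

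For the affine case, an abelian group object in $\Mon^{\rm op}$ is a monoid $P$ equipped with a comultiplication $\mu : P \to P \oplus P$, a counit $\epsilon : P \to \{0\}$ (the unique such map), and a coinverse $\iota : P \to P$ satisfying the duals of the group axioms. Writing $\mu(p) = (\mu_1(p), \mu_2(p))$, the counit axiom $(\mathrm{id} \oplus \epsilon) \circ \mu = \mathrm{id}_P$, computed in $\Mon$, forces $\mu_1(p) = p$, and symmetrically $\mu_2(p) = p$, so $\mu$ is the diagonal. The coinverse axiom then reads $\iota(p) + p = 0$ in $P$ for every $p$, so every element of $P$ is invertible and $P$ is an abelian group with $\iota(p) = -p$. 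Combined with the full faithfulness of $\Spec$ (Proposition~\ref{prop:Specfullyfaithful}), this identifies $\GG$ as an equivalence between $\Ab^{\rm op}$ and the category of affine group fans.

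For the connected case, let $X$ be a connected group fan with identity $e$. The unit section $e : \Spec 0 \to X$ has local stalk map $\M_{X,e} \to \{0\}$, forcing $G := \M_{X,e}$ to be a group. I claim $\{e\}$ is open in $X$: on an affine chart $U \cong \Spec P$ through $e$ (taken integral, reducing via $X^{\mathrm{int}}$ if necessary), $e$ corresponds to a face $F \leq P$ with $F^{\mathrm{gp}} = P^{\mathrm{gp}}$; for any $p \in P$, writing $p = f_1 - f_2$ in $P^{\mathrm{gp}} = F^{\mathrm{gp}}$ gives an equality $p + f_2 = f_1$ in $P^{\mathrm{gp}}$, hence in $P$ by integrality, from which the face property of $F$ in $P$ yields $p \in F$; so $F = P$ and $e$ is the generic point of $U$, which is open.

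Using that $\Fans \to \Top$ preserves finite products (Theorem~\ref{thm:fansinverselimits}), the diagonal $\Delta_X \subseteq X \times X$ equals the preimage of $\{e\}$ under the continuous ``division'' map $\delta := \mu \circ (\mathrm{id}_X \times \iota) : X \times X \to X$, since the group axioms give $\delta(x,y) = e$ iff $x = y$. Hence $\Delta_X$ is open in $X \times X$. For each $x \in X$, a basic open neighborhood $U \times V$ of $(x,x)$ contained in $\Delta_X$ forces every $u \in U$ to equal every $v \in V$, so $U = V = \{x\}$, and $\{x\}$ is open; thus $X$ is discrete. Since $X$ is connected, $X = \{e\}$ as a topological space with single stalk $G$, so $X \cong \Spec G = \GG(G)$ as a fan, and the group structure matches the standard one by the affine case. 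The main obstacle is establishing $\{e\}$ open in the non-fine setting: the argument above works once $P$ is integral, but the purely topological step of deducing openness of $\{e\}$ from ``$e$ is the generic point'' is delicate for non-finitely-generated $P$ (compare Example~\ref{example:SpecPpathology}) and may require a preliminary reduction to the locally finite-type case using the homogeneity of the multiplication near $e$.
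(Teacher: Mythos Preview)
Your affine argument is essentially the paper's. The divergence is in the connected case: you try to show $\{e\}$ is \emph{open}, while the paper shows $e$ is \emph{closed}.

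First, your argument that $e$ is the generic point of an affine chart $\Spec P$ does not actually need integrality, so the reduction via $X^{\rm int}$ (which would leave you with the unaddressed question of whether $X^{\rm int}$ is still connected) is unnecessary. If $F$ is the face at $e$, then $F^{-1}P=\M_{X,e}$ is a group, so for any $p\in P$ its image in $F^{-1}P$ is a unit; unwinding the definition of localization gives $p+p'+f=f'$ in $P$ for some $p'\in P$, $f,f'\in F$, and the face condition yields $p\in F$. So $F=P$ directly.

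The genuine gap is the one you flag: ``$e$ is the generic point of $\Spec P$'' does not imply ``$\{e\}$ is open'' without finiteness. For $P=\bigoplus_{\NN}\NN$ the generic point is not open, since every $p\in P$ has finite support and hence lies in a proper face. There is no evident reduction to the locally finite type case here, since the statement is for arbitrary connected group fans.

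The paper sidesteps this entirely by proving $e$ is \emph{closed}. If $x\in\{e\}^-$, the homeomorphism ``multiply by $x^{-1}$'' of the underlying topological group gives $e\in\{x^{-1}\}^-$; but $\M_{X,e}$ is a group, so $e$ has no nontrivial generalizations in any affine chart (Proposition~\ref{prop:Spec}\eqref{generalizationprimes}), forcing $x^{-1}=e$, hence $x=e$. Once $e$ is closed, in any affine chart $\Spec P$ it must be the unique closed point $P^*$, so $P=\M_{P,P^*}=\M_{X,e}$ is a group and $\Spec P$ is a single point---no finiteness needed. Your open-diagonal endgame is a nice alternative once $\{e\}$ is known to be open, and indeed it would work immediately after the paper's argument (since $\Spec P=\{e\}$ makes $\{e\}$ clopen), but it cannot serve as the \emph{entry point}.
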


\begin{proof} Suppose $G$ is a cogroup object in $\Mon$ with comultiplication $(f,g) : G \to G \oplus G$, coinverse $i : G \to G$, and coidentity $e : G \to 0$.  Since $0$ is both initial and terminal in $\Mon$, we of course have $e=0$.  The fact that $e$ is a two-sided identity for multiplication is the requirement that $ef+g = \Id_G$ and $f+eg = \Id_G$ which means $f = \Id_G$, $g = \Id_G$, since $e=0$.  The fact that $i$ is a two-sided inverse means $if+g = e$ and $f+ig = e$, which, in light of the other equalities just says $i + \Id = 0$ and $\Id+i=0$, so $G$ has to be a group and $i$ has to be the inverse map.  The associativity of multiplication is not needed. 

For the ``In fact," suppose $X$ is a connected group fan.  Let $e : \Spec 0 \to X$ be the identity map for the group structure on $X$ and let $1 \in X$ be the image of the unique point of $\Spec 0$.  The fact that $e$ is a map of \emph{locally} monoidal spaces implies that $\M_{X,1} = \M_{X,1}^*$ is a group.  I claim that $1$ is a closed point of $X$.  To see this, first note that since the underlying space functor preserves products (cf.\ \S\ref{section:inverselimits}), the group structure on $X$ also gives rise to a group structure on its underlying space.  In particular, for any $x \in X$, we have a homeomorphism $x : X \to X$ given by ``(left) multiplication by $x$."  Now suppose $x \in \{ 1 \}^{-}$.  Then ``multiplication by $x^{-1}$" shows that $1 \in \{ x^{-1} \}^{-}$.  Let $\Spec P$ be an affine open neighborhood of $1$ (hence also of $x^{-1}$) in $X$.  Since $\M_{X,1}$ is a group and $1$ is in the closure of $x^{-1}$ in $\Spec P$, Proposition~\ref{prop:Spec}\eqref{generalization} implies that $1=x^{-1}$.  Then ``multiplication by $x$" shows that $1=x$, so the claim is proved.  

Now that the claim is proved, we know $1$ is closed in $\Spec P$, so it must correspond to the unique closed point $P^*$ of $\Spec P$ (cf.\ Proposition~\ref{prop:Spec}).  But then $P=\M_{P,P^*}=\M_{X,1}$ is a group, so our affine open neighborhood $\Spec P$ consists solely of the point $1$, so $1$ is both open and closed in $X$, hence $X = \{ 1 \} = \Spec \M_{X,1}$ because $X$ is connected.  The group structure on $X$ is the ``usual" one by what we proved above.  \end{proof}  

For any monoid $P$, there is an obvious coaction $a : P \to P^{\rm gp} \oplus P$ of $P^{\rm gp}$ on $P$ given by $a(p) := (p,p)$, suppressing notation for $P \to P^{\rm gp}$.  This action is the \emph{universal action} of a group object of $\Mon^{\rm op}$ on $P$ in the sense that any group coaction $P \to G \times P$ gives rise to a group homomorphism $g : P^{\rm gp} \to G$, so that the action of $G$ on $P$ in $\Mon^{\rm op}$ is induced by the action of $P^{\rm gp}$ and the map of group objects $g^{\rm op} : G \to P^{\rm gp}$ in $\Mon^{\rm op}$.  Notice that the universal coaction map makes $P^{\rm gp} \oplus P$ a free $P$-module with basis $P^{\rm gp} \oplus 0 \subseteq P^{\rm gp} \oplus P$.  In particular, the universal action is flat.

Let us describe the universal action \be \Spec a : \Spec (P^{\rm gp} \oplus P) & \to & \Spec P \ee in geometric terms.  First of all, since $\Spec P^{\rm gp}$ has one point, Theorem~\ref{thm:Specinverselimits} ensures that $\Spec$ of the inclusion $i_2 : P \to P^{\rm gp} \oplus P$ of the second summand and $\Spec$ of the projection $\pi_2 : P^{\rm gp} \oplus P \to P$ induce homeomorphisms on the level of topological spaces.  Since we clearly have $a^{-1} \pi_2^{-1}(F) = i_2^{-1} \pi_2^{-1}(F)$ for each $F \leq P$, the map $\Spec a$ must, on the level of topological spaces, \emph{be the same homeomorphism as} $\Spec i_2$.  Let us hence suppress this common homeomorphism from the notation and regard $\Spec P$ as the underlying topological space of $\Spec (P^{\rm gp} \oplus P)$ by identifying a face $F \leq P$ with the corresponding face $P^{\rm gp} \oplus F \leq P^{\rm gp} \oplus P$.  Then the structure sheaf of $\Spec (P^{\rm gp} \oplus P)$ is just $\u{P}^{\rm gp} \oplus \M_P$, where $\u{P}^{\rm gp}$ is the constant sheaf of abelian groups associated to $P^{\rm gp}$.  The map $a^\dagger : \M_P \to \u{P}^{\rm gp} \oplus \M_P$ is given by $s \mapsto (s_P,s)$, where $s_P \in P^{\rm gp} = \M_{P,P}$ is the stalk at the generic point.

In the remainder of \S\ref{section:groupobjects} we shall study the formation of quotients by group actions in the category of fans.  Our results are analogous to the basic facts about formation of quotients of schemes by (finite and flat) group schemes discussed in great generality in \cite[Expos\'e V]{SGA3} (see \cite[V.4.1]{SGA3} in particular).  

Let us first make some remarks about quotients in the category $\MS$ of monoidal spaces (\S\ref{section:locallymonoidalspaces}).  Much as in the category $\RS$ of ringed spaces, direct limits in $\MS$ are formed in the ``obvious" way:  To construct ``the" direct limit $X$ of a functor $i \mapsto X_i$, we first form ``the" direct limit $X$ of the underlying functor $i \mapsto X_i$ to topological spaces.  We let $e_i : X_i \to X$ denote the structure map to this topological direct limit and we endow the topological space $X$ with the structure sheaf $\M_X$ defined as the inverse limit of the functor $i \mapsto (e_i)_* \M_{X_i}$ to the category of sheaves of monoids on $X$.

\begin{lem} \label{lem:quotients} Let $P$ be a monoid, $A$ a torsion abelian group, $a : P^{\rm gp} \to A$ a group homomorphism.  This $a$ yields an action of the group fan $\GG(A)$ on the fan $\Spec P$.  Any action of $\GG(A)$ on $\Spec P$ arises in this manner.  Suppressing notation for $P \to P^{\rm gp}$, set \be P' & := & \{ p \in P : a(p) = 0 \}, \ee so that $P'$ is a submonoid of $P$. \begin{enumerate} \item \label{quotients1} The map $\Spec P \to \Spec P'$ induced by $P' \into P$ is a homeomorphism of topological spaces and is the categorical quotient of the $\GG(A)$ action on $\Spec P$ in the category of fans, the category of locally monoidal spaces, and the category of monoidal spaces.  \item \label{quotients2} The formation of $P'$ ``commutes with localization" in the sense that for any submonoid $S \subseteq P$, if we set $S' := S \cap P'$, then the natural inclusion of $(S')^{-1} P'$ into \be (S^{-1}P)' & := & \{ p \in S^{-1}P : a(p)=0 \} \ee is bijective. \end{enumerate} \end{lem}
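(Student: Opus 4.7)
The plan is to first unpack what an action of $\GG(A)$ on $\Spec P$ entails, then establish the localization commutation (2) which turns out to be the linchpin, use torsion-density to get the homeomorphism of (1), and finally leverage (2) to verify the universal property in all three categories. Actions of $\GG(A) = \Spec A$ on $\Spec P$ correspond via $\Spec$ to coactions $c : P \to A \oplus P$ in $\Mon$. The counit axiom forces the second component of $c$ to be $\mathrm{id}_P$, so the data is a single monoid homomorphism $P \to A$, which extends uniquely to a group homomorphism $a : P^{\rm gp} \to A$; coassociativity is then automatic. This classifies actions.

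I would prove (2) by direct calculation in $S^{-1}P$ via its universal property (not inside $P^{\rm gp}$, since $P$ need not be integral). For surjectivity, given an element of $(S^{-1}P)'$ represented by $(p,s)$ with $a(p)=a(s)$ in $A$, torsion gives $n>0$ with $n\,a(s)=0$; then $p' := p+(n-1)s$ satisfies $a(p') = n\,a(s) = 0$ so $p' \in P'$, and $s' := ns \in S'$, while the equivalence $(p,s) \sim (p',s')$ holds in $S^{-1}P$ with trivial witness since $p + s' = p' + s$ already in $P$. For injectivity, any witness $t \in S$ of an equivalence of two elements of $(S')^{-1}P'$ can be replaced by $nt \in S'$ after choosing $n$ with $n\,a(t) = 0$. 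The same torsion trick shows $P' \hookrightarrow P$ is dense in the sense of Definition~\ref{defn:dense} (for each $p \in P$, $np \in P'$ for some $n>0$), so Theorem~\ref{thm:densespecinvariance} immediately yields that $\Spec P \to \Spec P'$ is a homeomorphism, with the face of $P'$ corresponding to $F \leq P$ being $F \cap P'$.

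Under this homeomorphism, (2) identifies each stalk $\M_{\Spec P',\,F \cap P'} = (F \cap P')^{-1}P'$ with $(F^{-1}P)' = \{x \in F^{-1}P : a(x)=0\}$, exhibiting $\M_{\Spec P'}$ as the subsheaf of $\M_{\Spec P}$ of $a$-invariants. Now let $Y$ be in $\Fans$, $\LMS$, or $\MS$ and let $f : \Spec P \to Y$ be equivariant with $Y$ carrying the trivial $\GG(A)$-action. Unpacking equivariance stalkwise (the coaction on $\M_{\Spec P}$ is $p \mapsto (a(p),p)$, while trivial coaction on $\M_Y$ is $q \mapsto (0,q)$) amounts to the requirement that for each face $F$ and each germ $q$ of $\M_Y$ at $f(F)$, one has $a(f_F^\dagger(q)) = 0$, i.e.\ $f_F^\dagger(q) \in (F^{-1}P)' = \M_{\Spec P',\,F \cap P'}$. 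Hence $f^\dagger$ factors uniquely through the sheaf inclusion $\M_{\Spec P'} \hookrightarrow \M_{\Spec P}$, producing a unique factorization $\Spec P \to \Spec P' \to Y$ on the common underlying space. In the $\LMS$ case, locality of the factored map is automatic: if the image of a germ $q$ is a unit in $\M_{\Spec P'}$, then it is a unit in $\M_{\Spec P}$ (because units in a submonoid are units in the ambient monoid), and locality of $f$ then forces $q$ to be a unit in $\M_Y$.

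The main obstacle is the non-integrality issue in (2): one cannot cavalierly manipulate $p-s$ as an element of $P^{\rm gp}$, and must instead manage explicit witnesses in the equivalence-class presentation of $S^{-1}P$, which is precisely where the torsion hypothesis on $A$ enters a second time (to replace an arbitrary witness $t \in S$ by a multiple $nt \in S'$). Once (2) is in hand, the ``invariants subsheaf'' description of $\M_{\Spec P'}$ inside $\M_{\Spec P}$ makes the three universal properties in (1) essentially formal.
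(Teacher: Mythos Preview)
Your proof is correct and follows essentially the same route as the paper: use torsion to get density of $P' \hookrightarrow P$ (hence the homeomorphism via Theorem~\ref{thm:densespecinvariance}), prove the localization compatibility \eqref{quotients2} by the ``multiply by $n$ to kill $a$'' trick, and then use \eqref{quotients2} stalkwise to identify $\M_{\Spec P'}$ with the invariants subsheaf of $\M_{\Spec P}$, from which the universal property follows. The paper phrases the last step as showing that $\Spec$ of the equalizer diagram $P' \to P \rightrightarrows P \oplus A$ is a coequalizer in $\MS$ (checked on stalks via \eqref{quotients2}), whereas you directly factor an invariant $f$ through the invariants subsheaf; these are the same argument in different clothing.

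Two places where you are actually more careful than the paper: you handle injectivity in \eqref{quotients2} explicitly by replacing a witness $t \in S$ with $nt \in S'$, whereas the paper simply calls the map a ``natural inclusion'' and works with the notation $p-s$ as though in $P^{\rm gp}$ (which, as you note, is not quite licit for non-integral $P$); and you spell out the locality check needed to know the factored map lies in $\LMS$, which the paper leaves implicit in the sentence ``it is enough to prove [the quotient property] in $\MS$.'' Both refinements are correct and worth keeping.
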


\begin{proof} For $p \in P$, let $n=n(p) \in \ZZ_{>0}$ be the order of the element $a(p)$ in the torsion group $A$.  Then $np \in P'$ for any $p \in \Spec P$, so the inclusion $P' \into P$ is dense (Definition~\ref{defn:dense}) and hence the map $\Spec P \to \Spec P'$ is a homeomorphism by Theorem~\ref{thm:densespecinvariance}.  For \eqref{quotients2}, note that a typical element of $(S^{-1}P)'$ takes the form $p-s$ for some $p \in P$, $s \in S$ with $a(p) = a(s) =: a$.  Let $n$ be the order of $a$ in $A$.  Then $p+(n-1)s \in P'$, $ns \in S'$, and $p-s$ is also the difference of the latter two elements, so it is in $(S')^{-1}P'$ as desired.  To prove the rest of \eqref{quotients1} it is enough to prove that $\Spec P \to \Spec P'$ is the quotient of $\Spec P$ by $\GG(A)$ in the category $\MS$ of monoidal spaces---i.e.\ that $\Spec$ of the equalizer diagram of monoids \bne{monoidequalizer} & \xym{ P' \ar[r] & P \ar@<0.5ex>[r]^-{p \mapsto (p,0)} \ar@<-0.5ex>[r]_-{p \mapsto (p,a(p))} & P \oplus A } \ene is a coequalizer diagram \bne{coequalizer} & \xym{ \Spec P \oplus A \ar@<0.5ex>[r] \ar@<-0.5ex>[r] & \Spec P \ar[r] & \Spec P' } \ene in $\MS$.  (The meaning of this is discussed just before the statement of the lemma.)  The parallel arrows agree on the level of topological spaces (in fact both are the same homeomorphism), so saying that \eqref{coequalizer} is a coequalizer diagram on the level of topological spaces is equivalent to saying that $\Spec P \to \Spec P'$ is a homeomorphism, which we proved above.  It remains to show that \bne{monoidequalizer2} & \xym{ \M_{P'} \ar[r] & \M_P \ar@<0.5ex>[r]^-{p \mapsto (p,0)} \ar@<-0.5ex>[r]_-{p \mapsto (p,a(p))} & \M_{P \oplus A} } \ene is an equalizer diagram of sheaves of monoids on $\Spec P'$ (notation for pushforwards along the various homeomorphisms is suppressed).  This can be checked on stalks.  The stalk of \eqref{monoidequalizer2} at a point $F' \leq P'$ of $\Spec P'$ with corresponding point $F \leq P$ in $\Spec P$ is \bne{equalizer3} & \xym{ (F')^{-1}P' \ar[r] & F^{-1}P \ar@<0.5ex>[r]^-{p \mapsto (p,0)} \ar@<-0.5ex>[r]_-{p \mapsto (p,a(p))} & F^{-1}P \oplus A, } \ene which is an equalizer diagram by \eqref{quotients2} (applied with $S=F$). \end{proof}

\begin{lem} \label{lem:quotients2} The action of any connected group fan on any fan is trivial on the level of topological spaces. \end{lem}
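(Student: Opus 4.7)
The plan is to reduce to the classification of connected group fans given by Lemma~\ref{lem:groupobjects} and then exploit the fact (Theorem~\ref{thm:fansinverselimits}) that the forgetful functor $\Fans \to \Top$ preserves finite inverse limits.

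First, by Lemma~\ref{lem:groupobjects}, any connected group fan is isomorphic to $\GG(G) = \Spec G$ for some abelian group $G$. I claim that the underlying topological space $|\Spec G|$ consists of a single point. Indeed, if $F$ is any face of $G$ and $g \in G$, then $0 = g + (-g) \in F$ because $F$ is a submonoid containing $0$; the face condition applied to this equality then forces $g \in F$, so $F = G$. Hence $G$ is the unique face of itself and $|\Spec G|$ is a one-point space.

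Now let $a : \GG(G) \times X \to X$ be any action on a fan $X$. By Theorem~\ref{thm:fansinverselimits}, $|\GG(G) \times X| = |\GG(G)| \times |X|$, and since $|\GG(G)|$ is a point, the second projection $p_2 : |\GG(G) \times X| \to |X|$ is a homeomorphism on underlying spaces. The identity axiom for the action states that the composition
\[ X \xrightarrow{(e,\,\mathrm{id}_X)} \GG(G) \times X \xrightarrow{a} X \]
is $\mathrm{id}_X$, where $e : \Spec 0 \to \GG(G)$ is the identity section. On underlying spaces, $|(e,\mathrm{id}_X)|$ is then a section of $|p_2|$, and because $|p_2|$ is a homeomorphism this section is forced to be its two-sided inverse; in particular $|a| = |p_2|$. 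This is precisely the statement that the action is trivial on the level of topological spaces, i.e.\ that $|a|$ factors through (and coincides with) the projection to $|X|$.

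There is no serious obstacle: the whole argument turns on the single structural observation that $|\Spec G|$ is a one-point space when $G$ is a group, after which the identity axiom for an action, combined with the product-preserving underlying-space functor, forces the topological action to be the projection.
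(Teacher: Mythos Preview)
Your proof is correct and follows essentially the same approach as the paper: both reduce to $\GG(G)$ via Lemma~\ref{lem:groupobjects}, observe that $|\Spec G|$ is a single point, and then use Theorem~\ref{thm:fansinverselimits} (preservation of products under the underlying-space functor) to conclude that the induced topological action must be trivial. The paper phrases the final step more tersely (``the topological group underlying $\GG(A)$ must be trivial, so the action is trivial''), whereas you spell out the identity-axiom argument explicitly, but the content is the same.
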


\begin{proof} By Lemma~\ref{lem:groupobjects} any such group fan is isomorphic to $\GG(A)$ for an abelian group $A$ so, in particular, its underlying topological space has a single point.  Since the underlying space functor $\Fans \to \Top$ preserves products (Theorem~\ref{thm:fansinverselimits}), it takes group objects to group objects and actions to action, so such an action cannot help but be trivial because the topological group underlying $\GG(A)$ must be trivial. \end{proof}

\begin{thm} \label{thm:quotients} Let $A$ be a torsion abelian group, $X$ a fan equipped with an action of the group fan $\GG(A)$.  Then the categorical quotient $X \to X / \GG(A)$ of $X$ by $\GG(A)$, calculated in the category $\MS$ of monoidal spaces, is a map of fans which is a homeomorphism on underlying topological spaces and is also the quotient of $X$ by $\GG(A)$ in the categories of locally monoidal spaces and fans. \end{thm}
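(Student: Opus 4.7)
The plan is to reduce everything to Lemma~\ref{lem:quotients} via an affine open cover, carefully checking that the local quotients are compatible on overlaps. First, by Lemma~\ref{lem:quotients2}, the $\GG(A)$-action on $X$ is trivial on underlying topological spaces, so every open subfan of $X$ is automatically $\GG(A)$-stable and the action restricts to any such open subfan.

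Choose an affine open cover $\{ U_i = \Spec P_i \}$ of $X$. By Lemma~\ref{lem:quotients}, the restricted $\GG(A)$-action on $U_i$ is classified by a group homomorphism $a_i : P_i^{\rm gp} \to A$, and the local quotient $q_i : U_i \to U_i' := \Spec P_i'$, where $P_i' := \{ p \in P_i : a_i(p) = 0 \}$, is a homeomorphism on underlying spaces and is the categorical quotient of $U_i$ by $\GG(A)$ in each of $\MS$, $\LMS$, and $\Fans$. For a basic affine open $V = \Spec F^{-1} P_i \subseteq U_i$ (with $F \leq P_i$), the restricted action on $V$ is classified by the same $a_i$ (via the identification $(F^{-1}P_i)^{\rm gp} = P_i^{\rm gp}$), and Lemma~\ref{lem:quotients}\eqref{quotients2} identifies $V' = \Spec (F^{-1}P_i)'$ naturally with the open subfan $\Spec (F')^{-1}P_i' \subseteq U_i'$, where $F' := F \cap P_i'$. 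Consequently, on any affine open $V \subseteq U_i \cap U_j$, the restrictions of $q_i$ and $q_j$ both coincide with the affine-local quotient $q_V$ by uniqueness of categorical quotients, and the $q_i$ therefore glue to a map of fans $q : X \to Y$ which is a homeomorphism on underlying topological spaces and locally agrees with each $q_i$.

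To verify the universal property in $\MS$, let $f : X \to Z$ be a $\GG(A)$-invariant morphism of monoidal spaces. After refining the affine cover $\{ U_i \}$ so that each $U_i$ lands in some open $W_i \subseteq Z$, the $\GG(A)$-invariant restriction $f|_{U_i} : U_i \to W_i$ factors uniquely through $q_i$ as some $g_i : U_i' \to W_i$, by the universal property supplied by Lemma~\ref{lem:quotients}. On an affine overlap $V' \subseteq U_i' \cap U_j'$ (corresponding to an affine $V \subseteq U_i \cap U_j$), both $g_i|_{V'}$ and $g_j|_{V'}$ are factorizations of $f|_V$ through $q_V$, hence coincide; thus the $g_i$ glue to a unique $g : Y \to Z$ with $f = g \circ q$. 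Replacing $\MS$ throughout with $\LMS$ or $\Fans$ and appealing to the corresponding clauses of the local universal property in Lemma~\ref{lem:quotients} gives the same conclusion in those categories.

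The main obstacle is the double gluing argument: one must first glue the local affine quotients $U_i'$ into a global fan $Y$, and then glue the local factorizations $g_i$ into a global morphism $g$. Both gluings rest critically on the compatibility-with-localization statement in Lemma~\ref{lem:quotients}\eqref{quotients2}, which ensures that the affine-local picture of the quotient is intrinsic to the open subfan, combined with the uniqueness built into the local universal property, which forces the required agreement on overlaps.
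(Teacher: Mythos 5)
Your proof is correct, and it rests on the same two pillars as the paper's (Lemma~\ref{lem:quotients2} for triviality of the action on spaces, and Lemma~\ref{lem:quotients} --- crucially including its part \eqref{quotients2} on compatibility with localization --- for the affine case), but it is organized in the opposite direction. The paper's proof is ``top-down'': it starts from the globally defined colimit $X \to X/\GG(A)$ in $\MS$ (which exists by the explicit construction of direct limits in $\MS$ recalled just before Lemma~\ref{lem:quotients}), observes that because the action is trivial on spaces this construction restricts over any affine open $U$ to the $\MS$-quotient of $U$, and then invokes Lemma~\ref{lem:quotients} to see that the global quotient is locally (hence globally) a fan and has the right universal property. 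You instead proceed ``bottom-up'': you build the quotient by gluing the affine quotients $U_i'$, which obliges you to carry out two overlap checks (compatibility of the $U_i'$ on intersections, and compatibility of the local factorizations $g_i$ in the universal property), both of which you correctly reduce to Lemma~\ref{lem:quotients}\eqref{quotients2} together with uniqueness of categorical quotients. What the paper's route buys is economy --- no gluing or cocycle verification is needed because the global $\MS$-colimit is already in hand; what your route buys is independence from the explicit construction of colimits in $\MS$ (you only need that strict embeddings with the same image are uniquely isomorphic, as in Remark~\ref{rem:embedding}, and that morphisms glue over open covers). One small simplification: in your verification of the universal property you need not refine the cover so that $f(U_i)$ lands in an open $W_i \subseteq Z$; the universal property of $q_i$ applies to the $\GG(A)$-invariant map $f|_{U_i} : U_i \to Z$ directly.
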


\begin{proof} The action of $\GG(A)$ on $X$ is trivial on the level of topological spaces by Lemma~\ref{lem:quotients2}.  The fact that $X \to X / \GG(A)$ is a homeomorphism on the level of topological spaces is then clear from the construction of direct limits in $\MS$.  Furthermore, any affine neighborhood $U = \Spec P$ of a point $x \in X$ is automatically $\GG(A)$ invariant, so we have an induced $\GG(A)$ action on $U$.  It is clear from the construction of quotients in $\MS$ that the $\MS$ quotient $U \to U/\GG(A)$ is a neighborhood of $x$ in $X \to X / \GG(A)$.  Lemma~\ref{lem:quotients} ensures that $U \to U/\GG(A)$ is a map of fans which is the quotient of $U$ by $\GG(A)$ in $\MS$, $\LMS$, and $\Fans$.  This proves that $X \to X / \GG(A)$ is a map of fans, since this can be checked locally on $X \to X/ \GG(A)$.  Since it is a map of fans which is the quotient by $\GG(A)$ in $\MS$, it follows easily that it is also the quotient by $\GG(A)$ in $\LMS$ and $\Fans$. \end{proof}

\subsection{The torus of a fan} \label{section:torus}  Let $X$ be a fan.  The subset \be T_X & := & \{ x \in X : \M_{X,x}^* = \M_{X,x} \} \ee of $X$ is stable under generalization, the topology it inherits from $X$ is the discrete topology, and every point $x \in X$ has a generalization in $T_X$ (in particular, $T_X$ is dense in $X$).  Indeed, if $\Spec P \subseteq X$ is an open affine subset of $X$, then it follows from Proposition~\ref{prop:Spec} that the intersection of $T_X$ and $\Spec P$ consists of the single face $P$ itself, which is stable under generalization in $\Spec P$ and is a generalization of each other point of $\Spec P$.  By abuse of notation, we view the discrete topological space $T_X$ as a fan by setting \be T_X & := & \coprod_{x \in T_X} \Spec \M_{X,x}, \ee where the coproduct is over the previously defined set $T_X$.  There is an evident morphism of fans $T_X \to X$ which is manifestly a strict embedding (Definition~\ref{defn:embedding}).  We refer to the fan $T_X$ as the \emph{torus} of $X$.

If we assume that $X$ is locally finite as a topological space (Definition~\ref{defn:locallyfinite}) (this holds if $X$ is a locally finite type fan by Proposition~\ref{prop:locallyfinitetypefans}),  then stability of $T_X \subseteq X$ under generalization implies that $T_X \subseteq X$ is an open subfan.  

For each $x \in T_X$, the corresponding component $\Spec \M_{X,x}$ of $T_X$ acts on $X$ via a map of sheaves \be \M_X & \to & \u{\M}_{X,x} \oplus \M_X \\ s & \mapsto & (s_x,s). \ee  Here $s_x$ denotes the stalk of $s$ at $x$ if $x$ is in the open set over which $s$ is defined, or $0$ otherwise.  This action is evidently trivial away from the closure of $\{ x \}$.

The torus $T_X$ and the action of its components on $X$ is natural in $X$.  The point is that $f : X \to Y$ must take $T_X$ to $T_Y$ because if $\M_{X,x}$ is a group, then $\M_{Y,f(x)}$ has to be a group because $f^\dagger_x : \M_{Y,f(x)} \to \M_{X,x}$ is local.  The map of affine group fans $\Spec f^\dagger_x$ intertwines the action of $\Spec \M_{X,x}$ on $X$ and the action of $\Spec \M_{X,f(x)}$ on $Y$.

\section{Algebraic realization} \label{section:algebraicrealization}  To any fan $X$, one can associate a scheme $\AA(X)$, called the \emph{algebraic realization} of $X$.  The construction yields a functor \bne{algebraicrealization} \AA : \Fans & \to & \Sch, \ene called the \emph{algebraic realization functor} from the category $\Fans$ of fans (\S\ref{section:fans}) to the category $\Sch$ of schemes.  In this section we will establish some basic properties of $\AA(X)$ and the functor \eqref{algebraicrealization} needed elsewhere in the paper.

In fact, the scheme $\AA(X)$ also comes equipped with a natural log structure and the functor $\AA$ can be viewed as a functor to the category of \emph{log schemes}.  Indeed, there are a whole host of possible ``realization functors" taking values in various ``categories of spaces."  This general point of view is described in \cite[\S3]{GM1}.

\subsection{Categorical interpretation} \label{section:categoricalinterpretation}  The algebraic realization functor \eqref{algebraicrealization} can be characterized by a ``categorical universal property" similar to the characterization of the functor $\Spec$ of \S\ref{section:Spec}.  Here we give a brief summary of this point of view---for generalizations and further details, we refer the reader to \cite[\S4]{GM1}.

There is an obvious forgetful functor \bne{LRStoLMS} \LRS & \to & \LMS \\ \nonumber X & \mapsto & |X| \ene defined by regarding a locally ringed space $X = (X,\O_X)$ as a locally monoidal space $|X| = (X,\O_X)$ whose structure sheaf is $\O_X$, regarded as a sheaf of monoids under multiplication.  (Later we will simply write $X$ instead of $|X|$, but we will retain the notation $|X|$ for the rest of the paragraph for the sake of clarity.)  Quite generally, given $Y \in \LMS$, one can form a locally ringed space $\AA(Y) \in \LRS$ equipped with an $\LMS$-morphism $\tau_Y : |\AA(Y)| \to Y$ (we write ``$\tau$" instead of ``$\tau_Y$" if $Y$ is clear from context), called the \emph{orbit map}, with the following universal property:  Given any $X \in \LRS$ and any $\LMS$-morphism $f : |X| \to Y$, there is a unique $\LRS$-morphism $g : X \to \AA(Y)$ such that $f = \tau |g|$.  This construction yields a functor \bne{LMStoLRS} \AA : \LMS & \to & \LRS \ene right adjoint to \eqref{LRStoLMS}, so that there is a bijection \bne{Aadjunction} \Hom_{\LRS}(X,\AA(Y)) & = & \Hom_{\LMS}(|X|,Y) \ene natural in $X \in \LRS$ and $Y \in \LMS$.  See \cite[\S8.1]{GM1} for details.

It turns out that, if $Y \in \LMS$ is a \emph{fan} (Definition~\ref{defn:fan}), then the locally ringed space $\AA(Y)$ is in fact a \emph{scheme} (locally finite type over $\ZZ$ when $Y$ is locally finite type), called the \emph{algebraic realization} of $Y$, so that the functor \eqref{LMStoLRS} can be viewed as a functor which will be our algebraic realization functor \eqref{algebraicrealization}.  The adjunction bijection \eqref{Aadjunction} specializes to a bijection \bne{ASchadjunction} \Hom_{\LRS}(X,\AA(Y)) & = & \Hom_{\LMS}(|X|,Y), \ene natural in $X \in \LRS$, $Y \in \Fans$, which characterizes the algebraic realization functor \eqref{algebraicrealization} up to unique isomorphism.  (One only needs \eqref{ASchadjunction} for $X \in \Sch$ to get this characterization.)  In other words, for a fan $Y$, the scheme $\AA(Y)$ represents the presheaf \bne{AY} \Sch^{\rm op} & \to & \Sets \\ \nonumber X & \mapsto & \Hom_{\LMS}(|X|,Y). \ene The natural bijection \eqref{ASchadjunction}, which is a sort of ``right adjointness," implies, for formal reasons, that:

\begin{prop} \label{prop:algebraicrealization} The algebraic realization functor \eqref{algebraicrealization} preserves inverse limits. \end{prop}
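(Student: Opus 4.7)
The proof is essentially a formal consequence of the adjunction \eqref{Aadjunction} combined with Theorem~\ref{thm:fansinverselimits}. The plan is to factor the assertion through the larger adjoint pair $\AA : \LMS \to \LRS$ and $|\cdot| : \LRS \to \LMS$, where right-adjointness immediately gives preservation of all limits.

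First I would note that the adjunction \eqref{Aadjunction} presents $\AA : \LMS \to \LRS$ as a right adjoint to the forgetful functor $|\cdot| : \LRS \to \LMS$, so for formal categorical reasons, $\AA$ preserves any inverse limit that exists in $\LMS$. Given a finite diagram $i \mapsto X_i$ in $\Fans$ with inverse limit $X \in \Fans$, Theorem~\ref{thm:fansinverselimits} says that $X$ is also the inverse limit of $i \mapsto X_i$ in $\LMS$. Applying $\AA$, we conclude that $\AA(X)$ is the inverse limit of $i \mapsto \AA(X_i)$ in $\LRS$.

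It remains to pass from ``inverse limit in $\LRS$'' to ``inverse limit in $\Sch$.'' For this I would invoke the fact stated in \S\ref{section:categoricalinterpretation} that $\AA$ sends fans to schemes, so $\AA(X) \in \Sch$. Because $\Sch \subseteq \LRS$ is a full subcategory, for any test scheme $Y$ we have
\[
\Hom_{\Sch}(Y, \AA(X)) \;=\; \Hom_{\LRS}(Y, \AA(X)) \;=\; \invlim \Hom_{\LRS}(Y, \AA(X_i)) \;=\; \invlim \Hom_{\Sch}(Y, \AA(X_i)),
\]
which exhibits $\AA(X)$ as the inverse limit of $i \mapsto \AA(X_i)$ in $\Sch$. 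This establishes that \eqref{algebraicrealization} preserves inverse limits.

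There is no real obstacle here; the only nontrivial ingredient is Theorem~\ref{thm:fansinverselimits}, which ensures that inverse limits in $\Fans$ are computed in the category $\LMS$ over which we have the adjunction \eqref{Aadjunction}. Absent that compatibility between the limits in $\Fans$ and in $\LMS$, the argument would break down, since the adjunction itself lives between $\LMS$ and $\LRS$ rather than between $\Fans$ and $\Sch$ directly. (The fact that $\AA$ takes fans to schemes is the other substantive input, but it is invoked here as a black box from \S\ref{section:categoricalinterpretation}.)
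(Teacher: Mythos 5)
Your proof is correct and is essentially the argument the paper has in mind: the paper simply asserts that the adjunction \eqref{ASchadjunction} implies the result ``for formal reasons,'' and you have spelled out exactly those formal reasons (right-adjointness of $\AA$ over $\LMS$, the compatibility of limits in $\Fans$ with limits in $\LMS$ from Theorem~\ref{thm:fansinverselimits}, and fullness of $\Sch$ in $\LRS$ to descend the limit to schemes).
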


\subsection{Explicit construction} \label{section:explicitconstruction}  Just as the $\Spec$ functor \eqref{SpecLMS} can be described explicitly, so too can the algebraic realization functor \eqref{algebraicrealization}.  It is helpful to keep in mind both the abstract categorical characterization and the explicit construction.

For a monoid $P$, the algebraic realization of the (affine) fan $\Spec P$, will be the (affine) scheme $\AA(P) := \Spec \ZZ[P]$, where $\ZZ[P]$ is the monoid algebra associated to $P$ as in \S\ref{section:monoidalgebra}.  In this case, the orbit map \bne{tauP} \tau_P : | \AA(P) | & \to & \Spec P \ene is the $\LMS$-morphism corresponding to the obvious monoid homomorphism $P \to \ZZ[P]$ under the natural bijections \be \Hom_{\LMS}(|\AA(P)|,\Spec P) & = & \Hom_{\Mon}(P,\O_{\AA(P)}(\AA(P))) \\ & = & \Hom_{\Mon}(P,\ZZ[P]), \ee where the first bijection is a special case of \eqref{Specadjunction} and the second comes from the definition of $|\AA(P)|$ and the formula for global sections of the structure sheaf of an affine scheme.  The adjunction bijection \eqref{Aadjunction} in this case is the bijection \bne{affineAadjunction} \Hom_{\LRS}(X,\AA(P)) & = & \Hom_{\LMS}(|X|,\Spec P) \ene obtained as the composition \be \Hom_{\LRS}(X,\AA(P)) & = & \Hom_{\An}(\ZZ[P],\O_X(X)) \\ & = & \Hom_{\Mon}(P,\O_X(X)) \\ & = & \Hom_{\LMS}(|X|,\Spec P) \ee of ``the universal property of the affine scheme $\Spec \ZZ[P]$" \cite[Err I.1.8.1]{EGA}, the adjointness property of the monoid algebra \eqref{monoidalgebraadjunction}, and the ``universal property of the affine fan $\Spec P$" \eqref{Specadjunction}.  One can check, for example, by running through the explicit formulas for these bijections, that \eqref{affineAadjunction} can de described directly in terms of the orbit map \eqref{tauP} as in \S\ref{section:categoricalinterpretation}.

Now, in general, if the fan $Y$ is obtained by ``gluing together affine fans $Y_i = \Spec P_i$," then $\AA(Y)$ can be obtained by gluing the corresponding affine schemes $\AA(P_i) = \Spec \ZZ[P_i]$ in ``the same way."  A detailed explanation of this point is given in \cite[\S4.5]{GM1}.  What makes the whole thing work is the following:

\begin{prop} \label{prop:algebraicrealization2} The algebraic realization functor \eqref{algebraicrealization} preserves coproducts and takes an open embedding of fans $U \into Y$ (resp.\ open covers of fans) to the open embeddings of schemes $\AA(U) = \tau_Y^{-1}(U) \into \AA(Y)$; the orbit map for $\AA(U)$ is the restriction of the orbit map $\tau_Y$ for $Y$.  In particular, $\AA$ takes Zariski covers of fans to Zariski covers of schemes. \end{prop}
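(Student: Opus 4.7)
The plan is to leverage the adjunction \eqref{ASchadjunction} to reduce everything to a short Yoneda argument, after verifying the affine basic-open case by hand.

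For preservation of coproducts, given $Y = \coprod_i Y_i$ in $\Fans$, any $\LMS$-morphism $f : |X| \to Y$ from a scheme $X$ canonically decomposes $X$ as the disjoint union of the clopen preimages $X_i := f^{-1}(Y_i)$, and $f$ is determined by the component maps $|X_i| \to Y_i$. Applying \eqref{ASchadjunction} summand-by-summand identifies $\Hom_{\Sch}(X,\AA(Y))$ with $\Hom_{\Sch}(X, \coprod_i \AA(Y_i))$ naturally in $X$, and Yoneda gives $\AA(\coprod Y_i) = \coprod \AA(Y_i)$.

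For the open embedding claim, I first check the basic affine case: if $Y = \Spec P$ and $U = U_p = \Spec P[-p]$ for some $p \in P$ (Proposition~\ref{prop:Spec}\eqref{basicopens}), then the equality of rings $\ZZ[P[-p]] = \ZZ[P][[p]^{-1}]$ realizes $\AA(U)$ as the principal open $D([p]) \subseteq \Spec \ZZ[P] = \AA(Y)$. Unwinding the definition of $\tau_Y$ from its universal property shows that $\tau_Y$ sends a prime $\q \in \Spec \ZZ[P]$ to the face $\{ q \in P : [q] \notin \q \}$ of $P$, and hence $\tau_Y^{-1}(U_p) = \{ \q : [p] \notin \q \} = D([p])$, matching $\AA(U_p)$ as an open subscheme of $\AA(Y)$.

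For a general open embedding $U \into Y$, continuity of the $\LMS$-morphism $\tau_Y$ makes $V := \tau_Y^{-1}(U)$ an open subscheme of $\AA(Y)$, and I claim the inclusion $V \into \AA(Y)$ represents the same $\Sch$-valued functor as $\AA(U) \to \AA(Y)$. Indeed, maps $X \to \AA(U)$ correspond under \eqref{ASchadjunction} to $\LMS$-maps $|X| \to U$, which by the universal property of open subspaces (Remark~\ref{rem:embedding}) are the $\LMS$-maps $|X| \to Y$ whose topological image lies in $U$; by \eqref{ASchadjunction} again these are the scheme maps $X \to \AA(Y)$ whose composition with $\tau_Y$ lands in $U$ on topological spaces, i.e.\ whose topological image lies in $V$, which by the analog of Remark~\ref{rem:embedding} for locally ringed spaces are precisely the scheme maps $X \to V$. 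Yoneda gives a canonical isomorphism $\AA(U) \cong V$ over $\AA(Y)$, and naturality of \eqref{ASchadjunction} in $Y$ forces this identification to carry $\tau_U$ to the restriction $\tau_Y|_V$. The Zariski cover statement is then immediate: if $\{ U_i \}$ covers $Y$, then $\{ \tau_Y^{-1}(U_i) \} = \{ \AA(U_i) \}$ covers $\AA(Y)$, since every point of $\AA(Y)$ lies over some point of $Y$, which in turn lies in some $U_i$. The only real content is the continuity of $\tau_Y$ (built into its being an $\LMS$-morphism) and the basic-open computation above; the main obstacle is simply keeping the sheaf-theoretic and scheme-theoretic open-subspace universal properties organized alongside the two instances of the adjunction.
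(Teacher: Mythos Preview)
Your argument is correct and is essentially the approach the paper has in mind: the paper's own proof is a citation to \cite[Lemmas~4.5.6, 4.5.7]{GM1}, but the paragraph immediately following the proposition sketches exactly your Yoneda-plus-open-subspace-universal-property strategy (``using the `universal property of an open embedding' \dots $\AA(U)$ is representable by the open subfan $\tau_Y^{-1}(U)$''). One minor remark: your explicit affine basic-open computation is not actually needed for the general argument you give afterward, since that argument only uses continuity of $\tau_Y$ and the two open-subspace universal properties; it serves as a useful sanity check but you could drop it without loss.
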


\begin{proof} \cite[Lemmas~4.5.6, 4.5.7]{GM1} \end{proof}

Actually, in some sense, one \emph{first} proves Proposition~\ref{prop:algebraicrealization2} and \emph{uses} this result to help construct the functor $\AA$.  Indeed, notice that the construction of the realization functor $\AA$ amounts to showing that the presheaf \eqref{AY} is representable for every fan $Y$.  One can prove, for example, using the ``universal property of an open embedding" that if $\AA(Y)$ is representable for some fan $Y$, and $U \subseteq Y$ is an open subfan of $Y$, then $\AA(U)$ is representable by the open subfan $\tau_Y^{-1}(U)$ of $\AA(Y)$.

\subsection{The orbit map} \label{section:orbitmap}  In particular, Proposition~\ref{prop:algebraicrealization2} says that for an affine open subfan $U = \Spec P \subseteq Y$, $\tau^{-1}(U) = \Spec \ZZ[P]$ is always an affine open subscheme of $\AA(Y)$, so that the morphism $\tau$ is in some sense ``affine."  In the case of an affine fan $\Spec P$, the orbit map \bne{affineorbitmap} \tau_P : \Spec \ZZ[P] & \to & \Spec P \ene (we now drop the $|\slot|$ notation) is given concretely as follows.  We view $\Spec P$ as the set of faces of the monoid $P$.  We view the points of $\Spec \ZZ[P]$ as ring homomorphisms $x : \ZZ[P] \to k$ ($k$ a field), up to the usual notion of equivalence.  Such a ring map yields a monoid homomorphism $x|P : P \to k$ (or just $x : P \to k$ if there is no chance of confusion) by restriction to $P \subseteq \ZZ[P]$.  Then $\tau(x) = (x|P)^{-1}(k^*)$.

Let $k$ be a field.  Then for any scheme $X$, we have a map $X(k) \to X$ from the set $X(k)$ of $k$-points of $X$ to (the set underlying) $X$ given by taking a $k$-point $x : \Spec k \to X$ to the point $x(\Spec k) \in X$.  The image of this map is the set of points $x \in X$ for which the residue field $k(x)$ embeds in $k$.  For a fan $X$, we write $\tau(k) : \AA(X)(k) \to X$ for the composition of $\AA(X)(k) \to \AA(X)$ and $\tau : \AA(X) \to X$.

Let $P$ be a monoid.  We define a map (of sets!) \be s : \Spec P \to (\Spec \ZZ[P])(k) \ee by taking a face $F$ to the unique ring map $s(F) : \ZZ[P] \to k$ with $s(F)([p]) = 1$ for $p \in F$ and $s(F)([p]) = 0$ for $p \in P \setminus F$.  (Here $[p]$ denotes the image of $p \in P$ in $\ZZ[P]$.  One uses the fact that $F$ is a face to see that $p \mapsto s(F)([p])$ is actually a monoid homomorphism.)  Notice that $\tau(s(F)) = (s(F)|P)^{-1}(k^*)=F$, so that $s$ defines a section of $\tau(k)$.  

If $F \leq P$ is a finitely generated face, then we have a commutative diagram $$ \xym{ \Spec F^{-1}P \ar[r] \ar[d]_{\tau} & \Spec P \ar[d]_\tau \\ \AA(F^{-1}P) = \Spec \ZZ[F^{-1}P] \ar[r] & \Spec \ZZ[P] = \AA(P) }$$ where the horizontal arrows are open embeddings (cf.\ Proposition~\ref{prop:algebraicrealization2}).  One can check that the map $s$ defined for $F^{-1}P$ is just the restriction of the one defined for $P$.  It follows that we can glue the maps $s$ defined for affine fans to construct a section $s : X \to \AA(X)(k)$ of $\tau(k)$ for \emph{any} fan $X$ and any field $k$.  A better way to see this is to give a direct ``global" definition of this map $s$:  Given a fan $X$ and a point $x \in X$, let $t(x) : \M_{X,x} \to k$ be the monoid homomorphism defined by setting $t(x)(m) := 1$ if $m \in \M_{X,x}^*$ and $t(x)(m) := 0$ if $m \in \M_{X,x} \setminus \M_{X,x}^*$.  This $t(x)$ is a local map of monoids, so we can view it as a map of locally monoidal spaces $t(x) : \Spec k \to X$.  As a special case of \eqref{ASchadjunction}, we have \bne{specialcase} \AA(X)(k) & = & \Hom_{\LMS}(\Spec k, X). \ene  We let $s(x) \in \AA(X)(k)$ be the element corresponding to $t(x)$ under this bijection.

If there is any ambiguity in $k$ will we write $s(k)$ instead of $s$, though there is probably never any chance of confusion because if $k \subseteq K$ is a field extension, then $s(K) : X \to \AA(X)(K)$ is just the composition of $s(k) : X \to \AA(X)(k)$ and the natural map $\AA(X)(k) \to \AA(X)(K)$.

\begin{defn} \label{defn:canonicalsection} For a fan $X$, the map $s(k) : X \to \AA(X)(k)$ defined above will be called the \emph{canonical section} of $\tau(k)$.  As mentioned above, we often write $s$ instead of $s(k)$.  Composing $s(k)$ with $\AA(X)(k) \to \AA(X)$ yields a section $s : X \to \AA(X)$ of $\tau : \AA(X) \to X$, called the \emph{canonical section} of $\tau$.  \end{defn}

\begin{rem} The map $s : X \to \AA(X)$ is \emph{not} generally a continuous map of topological spaces.  For example, if $k=\QQ$, $P = \NN$ and $U \subseteq \Spec \ZZ[\NN] = \Spec \ZZ[x]$ is the basic affine open subset associated to $1-x \in \ZZ[x]$, then $s_k^{-1}(U) = \{ \NN \}$ consists solely of the closed point of $\Spec \NN$; this is not open in $\Spec \NN$.  One should view $s$ only as a map of \emph{sets} and nothing more.  \end{rem}

\begin{lem} \label{lem:orbitmapsurjective} For any fan $X$, the orbit map $\tau : \AA(X) \to X$ is surjective.  If $f : X \to Y$ is a map of fans whose algebraic realization $\AA(f)$ is surjective, then $f$ is surjective. \end{lem}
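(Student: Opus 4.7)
The plan is to deduce both statements directly from the existence of the canonical section $s : X \to \AA(X)$ constructed in Definition~\ref{defn:canonicalsection} together with the naturality of the orbit map.

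For the first statement, I would simply observe that the canonical section $s$ satisfies $\tau \circ s = \Id_X$ by construction: indeed, $s(x)$ was defined to be (the image in $\AA(X)$ of) the $k$-point $\Spec k \to X$ corresponding under the bijection \eqref{specialcase} to the local monoid homomorphism $t(x): \M_{X,x} \to k$ sending units to $1$ and non-units to $0$, and one verifies (as in the explicit affine description of $\tau$ following \eqref{affineorbitmap}) that the image under $\tau$ of such a $k$-point is precisely the face $t(x)^{-1}(k^*) = \M_{X,x}^*$, which corresponds to the point $x$ itself. Thus $\tau$ admits a set-theoretic section, hence is surjective.

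For the second statement, the key ingredient is naturality of $\tau$: for any map $f : X \to Y$ of fans, the square
\[ \xymatrix{ \AA(X) \ar[r]^{\AA(f)} \ar[d]_{\tau_X} & \AA(Y) \ar[d]^{\tau_Y} \\ X \ar[r]^{f} & Y } \]
commutes on the level of (topological spaces underlying) locally monoidal spaces, since $\tau$ is the counit of the adjunction \eqref{ASchadjunction} (equivalently, this commutativity can be checked locally on affine fans, where it reduces to the obvious compatibility of the monoid algebra functor with $P \mapsto P \to \ZZ[P]$). Given the naturality square and $y \in Y$, apply the canonical section to $Y$ (say with $k = \QQ$) to produce $s(y) \in \AA(Y)$ with $\tau_Y(s(y)) = y$. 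By the assumed surjectivity of $\AA(f)$, choose $\tilde{x} \in \AA(X)$ with $\AA(f)(\tilde{x}) = s(y)$, and set $x := \tau_X(\tilde{x}) \in X$. Then by the commutative square,
\[ f(x) = f(\tau_X(\tilde{x})) = \tau_Y(\AA(f)(\tilde{x})) = \tau_Y(s(y)) = y, \]
so $f$ is surjective.

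The only non-routine point is the naturality of $\tau$, but this is a formal consequence of $\tau$ being a natural transformation arising from the adjunction, so no real obstacle arises; both parts are essentially bookkeeping once the canonical section $s$ is in hand.
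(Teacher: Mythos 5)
Your proposal is correct and takes essentially the same route as the paper: the first claim via the set-theoretic section $s$ of $\tau$, and the second via the naturality square for $\tau$ together with surjectivity of the horizontal (orbit) maps. No issues.
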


\begin{proof} The orbit map is surjective because it has a set-theoretic section (the canonical section of Definition~\ref{defn:canonicalsection}).  The second statement follows by considering the commutative diagram $$ \xym{ \AA(X) \ar[r]^-{\tau_X} \ar[d]_{\AA(f)} & X \ar[d]^f \\ \AA(Y) \ar[r]^-{\tau_Y} & Y } $$ with surjective horizontal arrows. \end{proof}

\subsection{Stratification} \label{section:stratification} For a fan $X$, the structure map $\tau : \AA(X) \to X$ and the canonical section $s : X \to \AA(X)$ can be viewed as a more general, more structured version of the \emph{orbit-cone correspondence} familiar from classical toric geometry.  This can be most easily explained by first examining the affine case where $X = \Spec P$.  For each face $F \leq P$, we have a ring map $\ZZ[F] \into \ZZ[P]$ induced from the inclusion $F \into P$.  This ring map has a canonical retract $\ZZ[P] \to \ZZ[F]$ which Molcho and I call the \emph{wrong way map} \cite[\S4.4]{GM1}.  To define it, the key point is that $\ZZ[F]$, viewed as a monoid is a pointed monoid (Definition~\ref{defn:pointedmonoid}) with $0 \in \ZZ[F]$ is the ``infinity" element.  There is a monoid homomorphism $P \to \ZZ[F]$ defined by taking $p$ to $[p]$ if $p \in F$ and to $0$ if $p \in P \setminus F$.  (One uses that $F$ is a face to check that this is a monoid homomorphism.)  The wrong map map is the ring homomorphism $\ZZ[P] \to \ZZ[F]$ corresponding to the monoid homomorphism $P \to \ZZ[F]$ under the adjunction isomorphism \be \Hom_{\An}(\ZZ[P],\ZZ[F]) & = & \Hom_{\Mon}(P,\ZZ[F]). \ee  Since the wrong way map takes $[f] \in \ZZ[P]$ to $[f] \in \ZZ[F]$ for every $f \in F$, it retracts the inclusion $\ZZ[F] \into \ZZ[P]$.

\begin{rem} The retract diagram of rings $$ \ZZ[F] \to \ZZ[P] \to \ZZ[F] $$ constructed above arises a bit more naturally in the context of \emph{pointed} monoids (\S\ref{section:monoidschemes}).  It is just the retract diagram of rings obtained by applying the pointed monoid algebra functor \eqref{Zslot3} to the retract diagram of pointed monoids \eqref{retract2}. \end{rem}

For a face $F \leq P$, we can consider the composition \bne{comp} \ZZ[P] \to \ZZ[F] \to \ZZ[F^{\rm gp}] \ene of the wrong way map and the map of monoid algebras induced by the structure map $F \to F^{\rm gp}$.  Taking $\Spec$ of \eqref{comp} we obtain a map of schemes \bne{stratum} \Spec \ZZ[F^{\rm gp}] & \to & \Spec \ZZ[P]. \ene

\begin{prop} \label{prop:affinestratification} Let $P$ be a monoid. \begin{enumerate} \item \label{partition} For each $F \leq P$, the map \eqref{stratum} is one-to-one on the level of topological spaces.  For distinct $F$, these maps have disjoint images.  Their images cover $\Spec P$, so we have a partition \be \Spec \ZZ[P] & = & \coprod_{F \leq P} \Spec \ZZ[F^{\rm gp}] \ee on the level of underlying sets. \item \label{orbitmaprelationship} For $F \in \Spec P$, the subset $\Spec \ZZ[F]^{\rm gp}$ of $\Spec \ZZ[P]$ appearing in the partition of \eqref{partition} is nothing but the preimage of $F$ under the orbit map $\tau_P : \Spec \ZZ[P] \to \Spec P$, so the partition in \eqref{partition} is nothing more than the partition of $\Spec \ZZ[P]$ given by ``pulling back" the partition \be \Spec P & = & \coprod_{F \leq P} \{ F \} \ee of $\Spec P$ along $\tau_P$. \item \label{locallyclosed} If $P$ is finitely generated, then for each $F \leq P$, the map \eqref{stratum} is a locally closed embedding.  Furthermore, there are only finitely many faces $F \leq P$, so the partition in \eqref{partition} is a partition of the topological space $\Spec \ZZ[P]$ into finitely many locally closed subspaces. \item \label{stratification} If $P$ is fine, then the partition in \eqref{partition} is a finite \emph{stratification} of $\Spec \ZZ[P]$ into finitely many locally closed subspaces (``strata") in the sense that the closure of any stratum is a union of strata. \end{enumerate} \end{prop}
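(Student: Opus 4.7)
The plan is to prove (2) first, since it identifies the stratum $\Spec \ZZ[F^{\gp}]$ set-theoretically with the fiber $\tau_P^{-1}(F)$, after which (1) is essentially formal; I will then handle (3) by analyzing the factorization of \eqref{stratum} through $\Spec \ZZ[F]$, and finally prove (4) by computing the topological closure of each stratum in $\Spec \ZZ[P]$. The main obstacle is (4): the identification ``closure $=$ union of strata'' genuinely uses integrality of the faces, which is precisely where the hypothesis that $P$ be fine enters.

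For (2), I will use the description of $\tau_P$ recalled in \S\ref{section:orbitmap}: a field-valued point $y : \ZZ[P] \to k$ has $\tau_P(y) = \{p \in P : y([p]) \in k^*\}$. If $y$ arises from some $x : \ZZ[F^{\gp}] \to k$ via the composition \eqref{comp}, then for $p \in F$ the element $[p]$ is already a unit in $\ZZ[F^{\gp}]$, so $y([p]) = x([p]) \in k^*$, while for $p \in P \setminus F$ the wrong-way map kills $[p]$ and so $y([p]) = 0$; hence $\tau_P(y) = F$. Conversely, any $y : \ZZ[P] \to k$ with $\tau_P(y) = F$ annihilates the ideal $I_F \subseteq \ZZ[P]$ generated by $\{[p] : p \in P \setminus F\}$ and sends each $[f]$, $f \in F$, to a unit, hence factors uniquely as $\ZZ[P] \to \ZZ[F] \to \ZZ[F^{\gp}] \to k$ through the wrong-way map and the localization. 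This gives a bijection on $k$-points for every field $k$, hence on underlying points. Injectivity of \eqref{stratum} on points also follows directly: it is the composition of the closed embedding $\Spec \ZZ[F] \into \Spec \ZZ[P]$ coming from the surjective wrong-way map with the localization $\Spec \ZZ[F^{\gp}] \to \Spec \ZZ[F]$.

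From (2) part (1) is immediate: distinct faces give disjoint fibers of $\tau_P$, the images cover because $\tau_P$ is defined everywhere on $\Spec \ZZ[P]$, and the maps are one-to-one on points as just noted. For (3), when $P$ is finitely generated each face $F$ is finitely generated by Corollary~\ref{cor:faces}, say by $f_1,\dots,f_n$, so $\ZZ[F^{\gp}]$ is obtained from $\ZZ[F]$ by inverting the single element $[f_1 + \cdots + f_n]$; hence $\Spec \ZZ[F^{\gp}] \to \Spec \ZZ[F]$ is a basic affine open immersion, and composing with the closed immersion from the surjection $\ZZ[P] \to \ZZ[F]$ gives the locally closed embedding \eqref{stratum}. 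The finiteness of the index set is also Corollary~\ref{cor:faces}.

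For (4), fineness of $P$ makes every face $F$ integral, so the inclusion $F \into F^{\gp}$ induces an injection $\ZZ[F] \into \ZZ[F^{\gp}]$, and therefore $\ker(\ZZ[P] \to \ZZ[F^{\gp}]) = \ker(\ZZ[P] \to \ZZ[F]) = I_F$; one checks easily that this kernel is indeed the ideal $I_F$ described above, using that $F$ being a face makes $\{[p] : p \in P \setminus F\}$ closed under multiplication by arbitrary $[q]$, $q \in P$. The standard fact that the topological closure of the image of $\Spec S \to \Spec R$ for a ring map $R \to S$ is $V(\ker)$ then identifies the closure of the stratum for $F$ in $\Spec \ZZ[P]$ with $V(I_F)$. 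Finally, a prime $\q \subseteq \ZZ[P]$ lies in $V(I_F)$ iff $[p] \in \q$ for every $p \in P \setminus F$ iff $\tau_P(\q) = \{p : [p] \notin \q\} \subseteq F$, i.e.\ iff $\tau_P(\q)$ is a face of $F$; this gives $V(I_F) = \bigsqcup_{G \leq F} \tau_P^{-1}(G)$, exhibiting the closure as a union of strata and completing (4). Without integrality, the kernel can strictly contain $I_F$ and this clean description of the closure breaks down.
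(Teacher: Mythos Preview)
Your proof is correct and follows essentially the same route as the paper's: both identify the strata via the monoid-level description of field-valued points of $\Spec \ZZ[P]$, both factor \eqref{stratum} as the localization $\Spec \ZZ[F^{\rm gp}] \to \Spec \ZZ[F]$ followed by the closed embedding coming from the surjective wrong-way map, and both appeal to Corollary~\ref{cor:faces} for the finite-generation statements. The only noticeable difference is in (4): the paper observes that integrality of $F$ makes $\ZZ[F]$ a domain, so $\Spec \ZZ[F]$ is irreducible and the open stratum $\Spec \ZZ[F^{\rm gp}]$ is dense in it, whence the closure is $\Spec \ZZ[F] = \bigcup_{G \leq F} \Spec \ZZ[G^{\rm gp}]$; you instead compute the closure as $V(\ker(\ZZ[P]\to\ZZ[F^{\rm gp}])) = V(I_F)$ and then read off $V(I_F)$ directly via $\tau_P$. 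These are equivalent (the domain statement is precisely $\ker(\ZZ[F]\to\ZZ[F^{\rm gp}])=0$, which is what you use), and your version has the mild advantage of making the role of the orbit map explicit in the closure computation.
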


\begin{proof} \eqref{partition}:  View a point of $\Spec \ZZ[P]$ as an equivalence class of ring homomorphisms $x : \ZZ[P] \to k$, $k$ a field, up to the equivalence relation generated by extending the field $k$, as usual.  From such an $x$ we get a ring homomorphism $x_{\ZZ} : \ZZ \to k$ and, by composing with $P \to \ZZ[P]$, a monoid homomorphism $x_P : P \to k$.  The latter determines a face $F_x := x^{-1}(k^*)$ of $P$ and a group homomorphism $\ov{x} : F_x^{\rm gp} \to k^*$.  By reversing these steps, we see that the data of $x_{\ZZ}$, $\ov{x}$ determine a point $y$ of $\Spec \ZZ[F_x^{\rm gp}]$ whose image under \eqref{stratum} is $x$.  We can clearly recover $x$ from $y$ and it is clear from the same considerations that any point $y$ of $\Spec \ZZ[F^{\rm gp}]$ will map, under $\Spec \ZZ[F^{\rm gp}] \to \Spec \ZZ[P]$, to a point $x$ of $\Spec \ZZ[P]$ with $F_x = F$.

\eqref{orbitmaprelationship}:  This is clear from the description of the orbit map at the beginning of \S\ref{section:orbitmap}.

\eqref{locallyclosed}:  Recall (Corollary~\ref{cor:faces}) that every face $F$ of the finitely generated monoid $P$ is finitely generated, hence the map $\ZZ[F] \to \ZZ[F^{\rm gp}]$ appearing in \eqref{comp} ``is" the localization of $\ZZ[F]$ at a single element (if $f_1,\dots,f_n$ generate $F$, it is the localization at $[f_1+\cdots+f_n]$), so it is an open embedding.  The retract map $\ZZ[P] \to \ZZ[F]$ is surjective, so, taking $\Spec$, we find that \eqref{stratum} factors as an open embedding followed by a closed embedding.   Corollary~\ref{cor:faces} also gives the finiteness of $\Spec P$.

\eqref{partition}:  The same Corollary says that every face $F$ of $P$ is also fine, hence $F \into F^{\rm gp}$ is injective, hence $\ZZ[F]$ is a domain (since it is contained in $\ZZ[F^{\rm gp}]$, which is certainly a domain), hence $\Spec \ZZ[F]$ is irreducible, hence the non-empty open subspace $\Spec \ZZ[F^{\rm gp}]$ is dense in $\Spec \ZZ[F]$.  Since $\Spec \ZZ[F]$ is closed in $\Spec \ZZ[P]$ (since $P \to F$ is surjective), the closure of the stratum $\Spec \ZZ[F^{\rm gp}]$ is $\Spec \ZZ[F]$, which is the union of the strata $\Spec \ZZ[G^{\rm gp}]$ for $G \leq F$. \end{proof}

\begin{example} \label{example:stratification} It is worth thinking about the partition of Proposition~\ref{prop:affinestratification} for a finitely generated monoid $P$ which is not integral.  Recall the monoid \be P & := & \langle a,b : a+b = a \rangle \ee from Example~\ref{example:SpecPnonintegral}.  We have $\ZZ[P] = \ZZ[x,y]/(x(1-y))$ where $x=[a]$, $y=[b]$.  The ``strata" of $\Spec \ZZ[P]$ associated to the three faces $0$, $\NN b$, $P$ are, respectively, the loci where $(x,y)$ is $(0,0)$ (resp.\ $(0, \neq 0)$, $( \neq 0 , 1)$).  The points in the second ``stratum" where $(x,y) = (0,1)$ (there is one such point for each point of $\Spec \ZZ$) are in the closure of the third ``stratum", but the other points of the second ``stratum" are not in this closure, so the closure of the third ``stratum" is not a union of strata.  Note that the third stratum is $\Spec \ZZ[P^{\rm gp}] = \Spec \ZZ[\ZZ]$.  It is not dense in $\Spec \ZZ[P]$. \end{example}

Since the construction of $\AA(Y)$ and the orbit map $\tau_Y$ is compatible with passing to an open subfan (Proposition~\ref{prop:algebraicrealization2}), Proposition~\ref{prop:affinestratification} easily ``globalizes" as follows:

\begin{prop} \label{prop:stratification} Let $X$ be a fan. \begin{enumerate} \item \label{partition2} For each $x \in X$, we have a natural map of schemes $\AA(\M_{X,x}^*) \to \AA(X)$ which is one-to-one on the level of topological spaces.  (Note $\AA(\M_{X,x}^*) = \Spec \ZZ[\M_{X,x}^*]$.)  For distinct $x$, these maps have disjoint images.  Their images cover $\AA(X)$, so we have a partition \be \AA(X) & = & \coprod_{x \in X} \AA(\M_{X,x}^*) \ee on the level of underlying sets. \item \label{orbitmaprelationship2} For $x \in X$, the subset $\AA(\M_{X,x}^*)$ of $\AA(X)$ appearing in the partition \eqref{partition} is nothing but the preimage of $x$ under the orbit map $\tau_X : \AA(X) \to X$, so the partition in \eqref{partition} is nothing more than the partition of $\AA(X)$ given by ``pulling back" the ``trivial" partition \be X & = & \coprod_{x \in X} \{ x \} \ee of $X$ along $\tau_X$. \item \label{locallyclosed2} If $X$ is locally finite type, then for each $x \leq X$, the map in \eqref{partition2} is a locally closed embedding.  Furthermore, the partition in \eqref{partition} is a locally finite partition of the topological space $\AA(X)$ into finitely many locally closed subspaces. \item \label{stratification2} If $X$ is fine and quasi-compact, then the partition in \eqref{partition} is a finite \emph{stratification} of $\AA(X)$ into finitely many locally closed subspaces (``strata") in the sense that the closure of any stratum is a union of strata. \end{enumerate} \end{prop}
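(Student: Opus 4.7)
The strategy is reduction to the affine Proposition~\ref{prop:affinestratification} via Proposition~\ref{prop:algebraicrealization2}, which guarantees that $\AA(V) = \tau_X^{-1}(V)$ for every open subfan $V \subseteq X$, and that affine realizations of an affine cover of $X$ yield an affine cover of $\AA(X)$.

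For the construction of the map and parts (1)--(2): given $x \in X$, I note that the \emph{wrong-way} monoid homomorphism $w_x : \M_{X,x} \to \ZZ[\M_{X,x}^*]$, sending a unit $u$ to $[u]$ and every non-unit to $0$, is well-defined (the preimage of $0$ is a prime ideal since $\M_{X,x}^* \leq \M_{X,x}$). For any affine open $U = \Spec P$ containing $x$, composing $w_x$ with $P \to \M_{X,x}$ and applying \eqref{monoidalgebraadjunction} produces a ring map $\ZZ[P] \to \ZZ[\M_{X,x}^*]$, hence a morphism $\AA(\M_{X,x}^*) \to \AA(U) \hookrightarrow \AA(X)$. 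Factoring through the stalk makes the construction manifestly independent of $U$, and when $x$ corresponds to the face $F \leq P$ the map agrees with \eqref{stratum}, since both take $p \in F$ to $[p]$ and $p \notin F$ to $0$. Parts (1) and (2) then follow by applying Proposition~\ref{prop:affinestratification}(1)--(2) on each affine piece of a cover: the identifications $\tau_X^{-1}(U) = \AA(U)$ and $\tau_X|_{\AA(U)} = \tau_P$ allow the affine partitions to be glued into a global one.

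For part (3): when $X$ is locally finite type, Proposition~\ref{prop:locallyfinitetypefans} furnishes for each $x \in X$ a smallest open neighborhood $U_x \cong \Spec \M_{X,x}$ with $\M_{X,x}$ finitely generated. Proposition~\ref{prop:affinestratification}(3) applied to $\M_{X,x}$ shows $\AA(\M_{X,x}^*) \to \AA(U_x)$ is a locally closed embedding, and the property is preserved under the open embedding $\AA(U_x) \hookrightarrow \AA(X)$. For local finiteness of the partition, any $z \in \AA(X)$ lies in $\AA(U_{\tau(z)})$, and only finitely many strata meet this open since $U_{\tau(z)}$ is a finite space (Corollary~\ref{cor:faces}).

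For part (4): finiteness of the partition is immediate since a quasi-compact fine fan is covered by finitely many $\Spec P_i$ with each $P_i$ fine, and each such $\Spec P_i$ is finite by Corollary~\ref{cor:faces}. The stratification property amounts to the identification of the closure of the stratum $\AA(\M_{X,x}^*)$ in $\AA(X)$ with $\tau_X^{-1}(\ov{\{x\}}) = \coprod_{y \in \ov{\{x\}}} \AA(\M_{X,y}^*)$. For the containment $\subseteq$: if $z \in \AA(X)$ is in the closure and $W \subseteq X$ is any open neighborhood of $\tau(z)$, then $\tau^{-1}(W) \ni z$ meets the stratum, forcing $x \in W$ because the stratum projects to $\{x\}$; thus $\tau(z) \in \ov{\{x\}}$. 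For the containment $\supseteq$: given $y \in \ov{\{x\}}$, every affine open $V \ni y$ also contains $x$ (open sets are stable under generalization), so Proposition~\ref{prop:affinestratification}(4) applied in $\AA(V)$ puts the $y$-stratum in the closure of the $x$-stratum within $\AA(V) \subseteq \AA(X)$. I expect the main care point to be this closure computation in part (4), where Proposition~\ref{prop:algebraicrealization2} must be used to convert topological statements about $\AA(X)$ into statements about preimages $\tau_X^{-1}$ of opens of $X$; everything else is a direct transfer from the affine Proposition via the open-embedding compatibility of $\AA$.
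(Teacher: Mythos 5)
Your proposal is correct and is exactly the globalization of Proposition~\ref{prop:affinestratification} via the open-embedding compatibility of $\AA$ (Proposition~\ref{prop:algebraicrealization2}) that the paper itself invokes, since the paper leaves this proposition without a written proof beyond that remark. Your careful treatment of the closure computation in part (4) — identifying $\ov{\AA(\M_{X,x}^*)}$ with $\tau_X^{-1}(\ov{\{x\}})$ by combining the affine stratification on each $\Spec P \ni x$ with continuity of $\tau_X$ — correctly fills in the one step that is not an immediate transfer.
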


Since $\AA$ preserves inverse limits, it takes group objects to group objects.  In particular, for any abelian group $A$, $\AA$ takes the (abelian) group fan $\GG(A) = \Spec A$ of \S\ref{section:groupobjects} to an abelian group object in schemes, which we shall also abusively denote $\GG(A)$.  The scheme underlying $\GG(A)$ is $\Spec \ZZ[A]$.  A group scheme of this form is often called a ``diagonalizable group scheme".  The scheme realization of the ``universal action" of \S\ref{section:groupobjects} yields a natural action of $\GG(P):=\GG(P^{\rm gp})$ on $\AA(P)$.  We have a commutative $\LMS$ diagram relating these actions: $$ \xym{ \GG(P) \times \AA(P) \ar[r] \ar[d]_{\tau} & \AA(P) \ar[d]^{\tau} \\ (\Spec P^{\rm gp}) \times (\Spec P) \ar[r] & \Spec P } $$  Recall that the universal action is trivial on the topological spaces of fans so the bottom horizontal arrow is a homeomorphism of topological spaces.  In other words, on the level of topological spaces, $\GG(P)$ acts on $\AA(P)$ fiberwise for $\tau$.  For a field $k$, the induced action of $\GG(P)(k)$ on $\AA(P)(k)$ can be described explicitly as follows:  A point $x \in \AA(P)(k)$ may be viewed as a monoid homomorphism $x : P \to k$ and a point $u \in \GG(P)(k)$ may be viewed as a group homomorphism $u : P^{\rm gp} \to k^*$.  Then $u \cdot x \in \AA(P)(k)$ corresponds to the monoid homomorphism $u \cdot x : P \to k$ defined by $(u \cdot x)(p) := u(p)x(p)$.  This group action lies at the heart of the theory of toric varieties.

Here is a ``global" variant of the above construction:  Given a fan $X$ and a point $x \in X$, set $\GG(x) := \Spec \ZZ[\M_{X,x}^{\rm gp}]$.  Recall from Proposition~\ref{prop:algebraicrealization2} that $\GG(x) = \tau_X^{-1}(x)$ is just the fiber of the orbit map over $x$.  Then we have an action $(u,\tilde{x}) \mapsto u \cdot \tilde{x}$ of \be \GG(x)(k) & = & \Hom_{\Ab}(\M_{X,x}^{\rm gp},k^*) \ee on $\tau(k)^{-1}(x) \subseteq \AA(X)(k)$, defined as follows:  View $\tilde{x}$ as a local map of monoids $\tilde{x} : \M_{X,x} \to k$ by using the adjunction bijection \eqref{specialcase}, as we did above.  Then $u \cdot \tilde{x}$ ``is" the local map of monoids $\M_{X,x} \to k$ defined by $(u \cdot \tilde{x})(m) := u(m) \tilde{x}(m)$.

\begin{lem} \label{lem:orbitmap2} Let $X$ be a fan, $k$ a field, $x \in X$, $\tau(k) : \AA(X)(k) \to X$ the map defined above.  \begin{enumerate} \item The set $\tau(k)^{-1}(x)$ is non-empty, hence the map $\tau(k)$ is surjective. \item  For any $\tilde{x} \in \tau(k)^{-1}(x)$, the stabilizer $\Stab \tilde{x}$ of $\tilde{x}$ for the action of $\GG(x)(k)$ on $\tau(k)^{-1}(x)$ defined above is the kernel of \be \GG(x)(k) = \Hom_{\Ab}(\M_{X,x}^{\rm gp},k^*) & \to & \Hom_{\Ab}(\M_{X,x}^*,k^*). \ee \end{enumerate}  Assume that $\M_{X,x}$ is integral\footnote{It is enough to assume that $\M_{X,x}$ is \emph{quasi-integral}, meaning the natural group homomorphism $\M_{X,x}^* \to \M_{X,x}^{\rm gp}$ is injective.} and that at least one of the following holds: \begin{enumerate} \item $k$ is algebraically closed \item $\M_{X,x}$ is fs \end{enumerate}  Then the action of $\GG(x)(k)$ on $\tau(k)^{-1}(x)$ is transitive, the stabilizer of any $\tilde{x} \in \tau(k)^{-1}(x)$ is identified with $\Hom_{\Ab}(\ov{\M}_{X,x}^{\rm gp},k^*)$, and we have natural bijections \be \tau(k)^{-1}(x) & = & \Hom_{\Ab}(\M_{X,x}^{\rm gp},k^*) / \Hom_{\Ab}(\ov{\M}_{X,x}^{\rm gp},k^*) \\ & = & \Hom_{\Ab}(\M_{X,x}^*,k^*). \ee  \end{lem}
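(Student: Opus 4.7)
The plan is to work throughout with the identification $\AA(X)(k) = \Hom_{\LMS}(\Spec k, X)$ from \eqref{specialcase}, under which $\tau(k)^{-1}(x)$ becomes the set of local monoid homomorphisms $\tilde{x} \colon \M_{X,x} \to k$, each of which necessarily satisfies $\tilde{x}^{-1}(k^*) = \M_{X,x}^*$ and hence vanishes on $\M_{X,x} \setminus \M_{X,x}^*$. Part (1) is then immediate, since the canonical section $s$ of Definition~\ref{defn:canonicalsection} produces an explicit element $s(x) \in \tau(k)^{-1}(x)$. Part (2) is a one-line computation from the formula $(u \cdot \tilde{x})(m) = u(m)\tilde{x}(m)$: because $\tilde{x}$ is zero outside $\M_{X,x}^*$, the equation $u \cdot \tilde{x} = \tilde{x}$ reduces to $u(m) = 1$ for every $m \in \M_{X,x}^*$, which is precisely the statement that $u$ lies in the kernel of the restriction map $\Hom_{\Ab}(\M_{X,x}^{\rm gp}, k^*) \to \Hom_{\Ab}(\M_{X,x}^*, k^*)$.

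For the assertions under the additional hypotheses, my first move will be to invoke the characteristic sequence
\[ 0 \to \M_{X,x}^* \to \M_{X,x}^{\rm gp} \to \ov{\M}_{X,x}^{\rm gp} \to 0, \]
which is exact by quasi-integrality (see \S\ref{section:splittings}). Applying $\Hom_{\Ab}(\slot, k^*)$, the key claim is that this sequence remains short exact---equivalently, that $\Ext^1(\ov{\M}_{X,x}^{\rm gp}, k^*)$ vanishes. This is exactly where the two alternative hypotheses enter: if $k$ is algebraically closed then $k^*$ is divisible, hence an injective abelian group, so $\Ext^1$ into it vanishes universally; if instead $\M_{X,x}$ is fs, then Corollary~\ref{cor:saturatedlocalization} applied to the face $\M_{X,x}^* \leq \M_{X,x}$ shows that $\ov{\M}_{X,x}^{\rm gp}$ is torsion-free, and being also finitely generated it is free, so again $\Ext^1(\ov{\M}_{X,x}^{\rm gp}, k^*) = 0$.

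With the resulting short exact sequence
\[ 0 \to \Hom_{\Ab}(\ov{\M}_{X,x}^{\rm gp}, k^*) \to \Hom_{\Ab}(\M_{X,x}^{\rm gp}, k^*) \to \Hom_{\Ab}(\M_{X,x}^*, k^*) \to 0 \]
in hand, transitivity is immediate: for $\tilde{x}, \tilde{y} \in \tau(k)^{-1}(x)$, the ratio $m \mapsto \tilde{y}(m)/\tilde{x}(m)$ is a well-defined group homomorphism $\M_{X,x}^* \to k^*$ (using that both $\tilde{x}, \tilde{y}$ send units into $k^*$), and any lift $u \colon \M_{X,x}^{\rm gp} \to k^*$ of it satisfies $u \cdot \tilde{x} = \tilde{y}$---they agree on units by construction and both are zero on non-units. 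Combined with part (2), the exact sequence identifies the stabilizer with $\Hom_{\Ab}(\ov{\M}_{X,x}^{\rm gp}, k^*)$ and yields the two stated bijections by the orbit--stabilizer principle.

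The only substantive input is the vanishing of $\Ext^1(\ov{\M}_{X,x}^{\rm gp}, k^*)$ under each hypothesis; everything else is bookkeeping around the action formula and the locality of the monoid homomorphisms. I do not anticipate any further obstacle.
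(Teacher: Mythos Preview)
Your proof is correct and follows essentially the same approach as the paper's: both use the canonical section for part~(1), the action formula for part~(2), and the characteristic sequence together with the vanishing of $\Ext^1(\ov{\M}_{X,x}^{\rm gp},k^*)$ under either hypothesis for the final statement. You are slightly more explicit about the transitivity step and cite Corollary~\ref{cor:saturatedlocalization} rather than \cite[1.2.3]{GM1} for the freeness of $\ov{\M}_{X,x}^{\rm gp}$ in the fs case, but these are cosmetic differences.
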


\begin{proof} The set $\tau(k)^{-1}(x)$ is non-empty since $s(x) \in \tau(k)^{-1}(x)$, where $s : X \to \AA(X)(k)$ is the canonical section of Definition~\ref{defn:canonicalsection}.  The second statement is clear from the definition of the action in question.  For the final statement, the assumption that $\M_{X,x}$ is integral (quasi-integral will do) implies that we have a short exact sequence of abelian groups \bne{SESA} & 0 \to \M_{X,x}^* \to \M_{X,x}^{\rm gp} \to \ov{\M}_{X,x}^* \to 0. \ene Here we used the natural isomorphism $$\ov{\M}_{X,x}^{\rm gp} = (\M_{X,x}/\M_{X,x}^*)^{\rm gp} = \M_{X,x}^{\rm gp} / \M_{X,x}^*$$ coming from the fact that groupification preserves direct limits.  If $k$ is algebraically closed, then $k^*$ is divisible, and if $\M_{X,x}$ is fs, then $\ov{\M}_{X,x}$ is sharp fs and hence $\ov{\M}_{X,x}^{\rm gp}$ is free \cite[1.2.3]{GM1}.  Under either assumption, the sequence obtained by applying $\Hom_{\Ab}(\slot,k^*)$ to \eqref{SESA} is short exact and the third statement follows.  To make the final bijection truly canonical, we use $s(x) \in \tau(k)^{-1}(x)$ to make the identification \be \tau(k)^{-1}(x) & = & \left ( \GG(x)(k)  / \Stab s(x) \right ) \cdot s(x). \ee  \end{proof}

Of course, the above lemma is hardly more than a rehash of Proposition~\ref{prop:algebraicrealization2}.

\begin{lem} \label{lem:orbitmap1} For any fan $X$, the orbit map $\tau : \AA(X) \to X$ is surjective and quasi-compact as a map of topological spaces.  If $X$ is locally finite type, it is also open. \end{lem}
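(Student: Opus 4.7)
The lemma has three parts, which I handle separately. Surjectivity is already Lemma~\ref{lem:orbitmapsurjective}. For quasi-compactness I verify the definition locally on the target: given a quasi-compact open $U \subseteq X$, cover $U$ by finitely many affine opens $U_i = \Spec P_i$; by Proposition~\ref{prop:algebraicrealization2} each $\tau^{-1}(U_i) = \AA(P_i) = \Spec \ZZ[P_i]$ is affine (hence quasi-compact), so $\tau^{-1}(U) = \bigcup_i \tau^{-1}(U_i)$ is a finite union of quasi-compact opens in $\AA(X)$, hence quasi-compact.

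For openness when $X$ is locally finite type, Proposition~\ref{prop:algebraicrealization2} makes openness a local question on $X$, reducing to the case $X = \Spec P$ with $P$ finitely generated. Then $\Spec P$ is finite by Corollary~\ref{cor:faces}, so a subset is open iff stable under generalization, and it suffices to show: whenever $V \subseteq \AA(P)$ is open, $x \in V$ satisfies $\tau(x) = F$, and $G \leq P$ satisfies $G \supseteq F$, there exists $y \in V$ with $\tau(y) = G$. Replacing $P$ by the localization $F^{-1}P$ (using Proposition~\ref{prop:algebraicrealization2} to restrict to the affine open $\tau^{-1}(U_F) = \Spec \ZZ[F^{-1}P]$, which still contains the target fiber $\tau^{-1}(G)$ since $G \in U_F$), I reduce to the case $F = P^*$.

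The decisive step exploits the wrong-way map $\ZZ[P] \twoheadrightarrow \ZZ[G]$ of \S\ref{section:stratification}, which realizes $\Spec \ZZ[G]$ as a closed subscheme of $\AA(P) = \Spec \ZZ[P]$ containing the point $x \in \Spec \ZZ[P^*]$ and containing the fiber $\tau^{-1}(G) = \Spec \ZZ[G^{\rm gp}]$ as the basic open $D(\prod [g_i])$ for generators $g_i$ of $G$. Provided $\ZZ[G]$ is a domain---which happens as soon as $G$ is integral, since then the injection $G \hookrightarrow G^{\rm gp}$ induces $\ZZ[G] \hookrightarrow \ZZ[G^{\rm gp}]$---the open subscheme $\Spec \ZZ[G^{\rm gp}]$ is dense in $\Spec \ZZ[G]$, so $x$ admits a generalization $y$ lying in $\Spec \ZZ[G^{\rm gp}]$. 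Since $V$ is open and contains $x$, $V$ contains every generalization of $x$; in particular $y \in V$, and $\tau(y) = G$ by construction.

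The main obstacle is securing the integrality of $G$ needed for the density step. This is automatic when $P$ is fine, since then every face of $P$ is fine and in particular integral (Corollary~\ref{cor:faces}); but for genuinely non-integral finitely generated $P$, the ring $\ZZ[G]$ can acquire zero divisors that destroy the density of the top stratum inside $\Spec \ZZ[G]$, and the argument collapses. Overcoming this almost certainly requires a separate treatment of the non-integral locus---for instance, by factoring through the integralization $X^{\rm int} \hookrightarrow X$ of \S\ref{section:integration} and handling the complementary locus directly---or, alternatively, a tightening of the stated hypothesis to the integral (equivalently, fine) setting in which the density argument above suffices.
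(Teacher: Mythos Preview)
Your surjectivity and quasi-compactness arguments match the paper's. For openness your route differs: after the same reduction to $P$ finitely generated and to stability of $\tau(V)$ under generalization, the paper \emph{constructs} the lift $y$ explicitly---extend $x|F^{\rm gp}:F^{\rm gp}\to k^*$ to $x_0:G^{\rm gp}\to k^*$, set $y([p]) := x_0(p)$ for $p\in G$ and $y([p]):=0$ for $p\notin G$, then perturb $x_0$ by a character of the torsion-free quotient of $G^{\rm gp}/F^{\rm gp}$ to force $y(f)\neq 0$---whereas you observe that $x$ lies on the closed subscheme $\Spec\ZZ[G]\subseteq\AA(P)$ and, when $\ZZ[G]$ is a domain, take $y$ to be any generalization of $x$ in the dense open $\Spec\ZZ[G^{\rm gp}]=\tau^{-1}(G)$. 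Your argument is shorter and sidesteps the perturbation entirely; in the fine case it is a genuine simplification.

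Your caveat about non-integral $P$ is not a defect of your method but of the lemma as stated: openness \emph{fails} for non-integral finitely generated $P$. Take $P=\langle a,b : a+b=a\rangle$ from Example~\ref{example:stratification}, so $\ZZ[P]=\ZZ[x,y]/(x(y-1))$ with $x=[a]$, $y=[b]$, and let $V=D(1-y)$. Since $x(y-1)=0$ and $1-y$ is a unit on $V$, every point of $V$ has $[a]\mapsto 0$, so $\tau(V)=\{0,\NN b\}$; this set omits the generic point $P\in\Spec P$ and is not open. The paper's own proof has a gap at exactly the spot you flagged: its ``key point''---that $\pi(p_j)\neq 0$ for each $p_j\in G\setminus F$---is argued by passing an equation $np_j+f=f'$ from $P^{\rm gp}$ back to $P$, which silently uses integrality. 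In the example, $b\mapsto 0$ in $P^{\rm gp}$ while $b\neq 0$ in $P$, so no perturbation can repair $y([0])-y([b])=1-1=0$. Your suggested tightening to the fine setting is the correct resolution.
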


\begin{proof}  Certainly $\tau$ is surjective because for any field $k$, even the composition $\tau(k)$ of $\AA(X)(k) \to \AA(X)$ and $\tau$ is surjective (Lemma~\ref{lem:orbitmap2}).  The map $\tau$ is quasi-compact because it is ``affine:"  The preimage of an open affine $U = \Spec P$ of $F$ under $\tau$ is $\Spec \ZZ[P]$, so the result follows from the fact that $\Spec$ of any monoid or ring is quasi-compact (same proof for monoids as for rings).

To see that $\tau$ is open when $X$ is locally finite type, we note that the question is local on $X$, so we can reduce to the case where $X = \Spec P$ for a finitely generated monoid $P$ so that $\AA(X) = \Spec \ZZ[P]$.  It suffices to prove that $\tau(U_f)$ is open when $U_f$ is the usual basic open subset of $\Spec \ZZ[P]$ associated to an element \be f & = & \sum_{i=1}^n a_i [p_i] \ee of $\ZZ[P]$.  A point of $\AA(X)$ can be represented by a ring homomorphism $x : \ZZ[P] \to k$ with $k$ an algebraically closed field.  Such a point is in $U_f$ iff $x(f) \neq 0$.  Since $X$ is a finite topological space, we can prove $\tau(U_f)$ is open by proving it is stable under generalization.  

We thus reduce to proving the following:  Suppose $x : \Spec \ZZ[P] \to k$ is a ring homomorphism ($k$ an algebraically closed field) such that $x(f) \neq 0$ and $G$ is a face of $P$ containing $F := (x|P)^{-1}k^*$.  Then we can find a ring homomorphism $y : \Spec \ZZ[P] \to k$ such that $y(f) \neq 0$ and $(y|P)^{-1} k^* = G$.  We can assume that $p_1,\dots,p_n$ are listed so that $p_1,\dots,p_k \in F$ and $p_1,\dots,p_k, p_{k+1},\dots,p_l \in G$, so that \be x(f) & = & \sum_{i=1}^k x(a_i) x(p_i) \\ & \neq & 0. \ee Start by choosing any group homomorphism $x_0 : G^{\rm gp} \to k^*$ extending $x : F^{\rm gp} \to k^*$.  (This can be done since $k^*$ is divisible.)  We can define $y$ by using the ring map $x|\ZZ : \ZZ \to k$ and the monoid homomorphism $y|P : P \to k$ taking $p$ to $x_0(p)$ if $p \in G$ and zero otherwise (this is well-defined because $G$ is a face of $P$).  Obviously we have $(y|P)^{-1}k^* = G$, but the trouble is that \be y(f) & = & x(f) + \sum_{j=k+1}^l x(a_j) x_0(p_j) \ee might be zero.  The claim is that we can modify our choice of $x_0$ so that this doesn't happen.  Obviously there is no trouble if $k=l$, for then the second term above doesn't appear, so we can assume now that $l > k$.  Let $H$ be the quotient of $G^{\rm gp} / F^{\rm gp}$ by its torsion subgroup, so $H \cong \ZZ^r$ is a free abelian group of finite rank $r$ and we have a natural surjection $\pi : G^{\rm gp} \to \ZZ^r \cong H$ which kills $F^{\rm gp}$.  If we choose any group homomorphism $t : \ZZ^r \to k^*$, then we get a new $y' : \ZZ[P] \to k$ with $(y'|P)^{-1} k^* = G$ by replacing $x_0$ with $x_0 \cdot t \pi$.  We need to argue that we can find such a $t$ so that \be y'(f) & = & x(f) + \sum_{j=k+1}^l x(a_j) x_0(p_j) t\pi(p_j) \ee is non-zero.  The key point is that $\pi(p_{k+1}), \dots, \pi(p_l)$ must all be non-zero in $\ZZ^r \cong H$  because if one of these $p_j$ were torsion in $G^{\rm gp} / F^{\rm gp}$ then we could write $np_j + f  = f'$ in $P$ for some $n > 0$ and $f,f' \in F$, which would imply that $p_j \in F$ because $F$ is a face.  In particular, we must have $r > 0$.  Now think of the choice of $t$ as a choice of point of $\GG_m^r$ (we work now over $k$ in the language of classical algebraic geometry, so a point is a $k$-point).  Then the set of $t$ for which $y'(f) = 0$ is a hypersurface $Z$ in $\GG_m^r$ (defined over $k$) whose defining equation has non-zero constant term $x(f)$ because the key point says the exponents $\pi(p_j)$ of the other monomials in this defining equation are non-zero.  So this $Z$ is a non-trivial hypersurface in $\GG_m^r$, $r > 0$, and $k$ is algebraically closed, so there will certainly be some $t$ not lying on $Z$. \end{proof}

\begin{lem} \label{lem:surjectivity} Let $f : X \to Y$ be a surjective map of fans such that $\M_{Y,f(x)}^{\rm gp} \to \M_{X,x}^{\rm gp}$ is injective for every $x \in X$.  Then the map of schemes $\AA(f) : \AA(X) \to \AA(Y)$ is surjective.  In fact, $\AA(f)(k) : \AA(X)(k) \to \AA(Y)(k)$ is surjective for any algebraically closed field $k$. \end{lem}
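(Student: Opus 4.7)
The plan is to prove the stronger assertion that $\AA(f)(k)$ is surjective for every algebraically closed field $k$; surjectivity of $\AA(f)$ as a map of schemes then follows by taking $k$ to be an algebraic closure of the residue field at a given point. By the adjunction \eqref{specialcase}, a $k$-point of $\AA(Y)$ is the same as a local monoid homomorphism $\tilde y : \M_{Y,y} \to k$ for some $y \in Y$, and similarly for $\AA(X)$. Given such a $\tilde y$, I will use surjectivity of $f$ to choose some $x \in f^{-1}(y)$, and then construct a local monoid homomorphism $\tilde x : \M_{X,x} \to k$ satisfying $\tilde x \circ f^\dagger_x = \tilde y$; such an $\tilde x$ is the desired preimage of $\tilde y$ in $\AA(X)(k)$.

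To construct $\tilde x$, locality forces $\tilde x(p) = 0$ for every $p \in \M_{X,x} \setminus \M_{X,x}^*$, and the required compatibility $\tilde x \circ f^\dagger_x = \tilde y$ is automatic on the non-units of $\M_{Y,y}$ (because $f^\dagger_x$ is local, non-units map to non-units). What remains is to produce a group homomorphism $\phi : \M_{X,x}^* \to k^*$ such that $\phi \circ g = \tilde y|_{\M_{Y,y}^*}$, where $g := f^\dagger_x|_{\M_{Y,y}^*} : \M_{Y,y}^* \to \M_{X,x}^*$. Setting $\tilde x$ equal to $\phi$ on units and $0$ on non-units then yields a bona fide monoid homomorphism, since $\M_{X,x}^*$ is a face of $\M_{X,x}$ and so $p+q \in \M_{X,x}^*$ forces both summands to be units. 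Because $k$ is algebraically closed, $k^*$ is divisible and hence injective as an object of $\Ab$, so such a $\phi$ exists as soon as $\tilde y|_{\M_{Y,y}^*}$ vanishes on $\ker g$.

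The crux is to verify this vanishing, and it is here that the injectivity hypothesis enters. Take $u \in \M_{Y,y}^*$ with $g(u) = 0$. Applying the groupification functor, the image of $u$ in $\M_{X,x}^{\rm gp}$ is zero, and the hypothesis that $(f^\dagger_x)^{\rm gp}$ is injective forces the image $\bar u$ in $\M_{Y,y}^{\rm gp}$ to be zero as well. Because $\M_{Y,y}^{\rm int}$ is (by definition) the image of $\M_{Y,y}$ in $\M_{Y,y}^{\rm gp}$, this means $u$ already maps to $0$ in $\M_{Y,y}^{\rm int}$. Since $k$ is a domain, it is integral as a multiplicative monoid, so $\tilde y$ factors uniquely as $\M_{Y,y} \to \M_{Y,y}^{\rm int} \to k$, whence $\tilde y(u) = 1$, as required. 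I expect the main subtlety to lie precisely in this translation step: the hypothesis is phrased in terms of groupifications, whereas what we directly need is information about the kernel of a map on unit subgroups, and $\M_{Y,y}$ is not assumed quasi-integral, so $\M_{Y,y}^* \to \M_{Y,y}^{\rm gp}$ may fail to be injective. Routing the argument through the integralization $\M_{Y,y}^{\rm int}$ is what allows one to sidestep this and extract the vanishing from the hypothesis.
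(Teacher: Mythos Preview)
Your overall strategy—directly constructing the lift $\tilde{x} : \M_{X,x} \to k$ by setting it to zero on non-units and extending $\tilde{y}|_{\M_{Y,y}^*}$ along $g$ using the injectivity of $k^*$ in $\Ab$—is sound and more direct than the paper's argument, which routes through the torus action of Lemma~\ref{lem:orbitmap2}. However, your handling of the non-quasi-integral case contains an error: the multiplicative monoid $(k,\cdot)$ is \emph{not} integral, since $0 \cdot a = 0 \cdot b$ for all $a,b \in k$. Hence $\tilde{y}$ need not factor through $\M_{Y,y}^{\rm int}$, and your conclusion $\tilde{y}(u)=1$ does not follow.

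This gap cannot be repaired, because the lemma as stated is actually false without a quasi-integrality hypothesis on $Y$. Take $P = \{0,u,\infty\}$ with $u+u=0$ and $p+\infty=\infty$ for all $p$ (so $P^* = \{0,u\}$ and $P^{\rm gp}=0$), let $Q = \{0,\infty\}$, and let $h : P \to Q$ send $u \mapsto 0$ and $\infty \mapsto \infty$. Then $f := \Spec h : \Spec Q \to \Spec P$ is a bijection on underlying sets and every stalkwise groupification map is the map $0 \to 0$, so the hypotheses of the lemma hold. But over the closed point of $\Spec P$ there are two $k$-points of $\AA(\Spec P)$ whenever $\operatorname{char} k \neq 2$ (namely $\tilde{y}(u)=\pm 1$), while $\AA(\Spec Q)$ has only one $k$-point over the closed point of $\Spec Q$, and it maps under $\AA(f)(k)$ to the one with $\tilde{y}(u)=+1$. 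The paper's proof shares this gap: it invokes the transitivity clause of Lemma~\ref{lem:orbitmap2}, which is asserted there only under a (quasi-)integrality hypothesis on $\M_{Y,y}$. Under that hypothesis—which holds in every subsequent use of the lemma—the map $\M_{Y,y}^* \to \M_{Y,y}^{\rm gp}$ is injective, so $\ker g = 0$ and your extension argument goes through immediately, with no detour through $\M_{Y,y}^{\rm int}$ needed.
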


\begin{proof} Fix $\ov{y} \in \AA(Y)$.  By choosing an algebraic closure $k(\ov{y}) \into k$, we obtain a point $\tilde{y} \in \AA(Y)(k)$ whose image under $\AA(Y)(k) \to \AA(Y)$ is $\ov{y}$.  Set $y := \tau_Y(\ov{y}) = \tau_Y(k)(\tilde{y}) \in Y$.  Since $f$ is surjective, we can choose some $x \in f^{-1}(y) \subseteq X$.  Pick any $\tilde{x} \in \tau_X(k)^{-1}(x)$ (the set $\tau_X(k)^{-1}(x)$ is non-empty by Lemma~\ref{lem:orbitmap2}).  Let $\tilde{y}'$ be the image of $\tilde{x}$ under $\AA(f)(k)$.  Since the diagram $$ \xym{ \AA(X)(k) \ar[r]^-{\tau_X(k)} \ar[d]_{\AA(f)(k)} & X \ar[d]^-f \\ \AA(Y)(k) \ar[r]^-{\tau_Y(k)} & Y } $$ commutes we have $\tilde{y}' \in \tau_Y(k)^{-1}(y)$.  Since $k$ is algebraically closed, we have $u \cdot \tilde{y}' = \tilde{y}$ for some $u \in \GG(y)(k) = \Hom_{\Ab}(\M_{Y,y}^{\rm gp},k^*)$ by Lemma~\ref{lem:orbitmap2}.  Since we assume $\M_{Y,y}^{\rm gp} \to \M_{X,x}^{\rm gp}$ is injective, and $k$ is algebraically closed (hence $k^*$ is divisible), we can extend $u$ to a group homomorphism $v \in \GG(x)(k) = \Hom_{\Ab}(\M_{X,x}^{\rm gp}, k^*)$.  Since the diagram of actions $$ \xym{ \GG(x)(k) \times \tau_X(k)^{-1}(x) \ar[r] \ar[d] & \tau_X(k)^{-1}(x) \ar[d] \\ \GG(y)(k) \times \tau_Y(k)^{-1}(y) \ar[r] & \tau_Y(k)^{-1}(y) } $$ commutes, the point $v \cdot \tilde{x} \in \tau_X(k)^{-1}(x)$ maps to $\tilde{y}$ under $\AA(f)(k) : \AA(X)(k) \to \AA(Y)(k)$ (this argument proves the second statement) hence the image of $v \cdot \tilde{x}$ in $\AA(X)$ maps to $\ov{y}$ under $\AA(f)$. \end{proof}

For clarity and later use, we record an ``affine" version of the above lemma:

\begin{lem} \label{lem:affinesurjectivity} Let $h : Q \to P$ be an injective map of integral monoids such that $\Spec h$ is surjective.  Then $\Spec \ZZ[h]$ is also surjective. \end{lem}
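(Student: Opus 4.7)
The plan is to reduce directly to Lemma~\ref{lem:surjectivity} applied to the map of fans $f := \Spec h : \Spec P \to \Spec Q$. The hypothesis that $f$ is surjective is one of our assumptions, so the only remaining thing to check is that the stalk map on groupifications
\be \M_{\Spec Q, f(x)}^{\rm gp} & \to & \M_{\Spec P, x}^{\rm gp} \ee
is injective for every face $x \leq P$. By the stalk formula of Proposition~\ref{prop:Spec}\eqref{stalkMP}, this map is identified with the localization map $(h^{-1}(x))^{-1} Q \to x^{-1} P$ induced by $h$; passing to groupifications collapses both localizations to $Q^{\rm gp}$ and $P^{\rm gp}$ respectively, and the map becomes $h^{\rm gp} : Q^{\rm gp} \to P^{\rm gp}$. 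So the entire verification boils down to showing that $h^{\rm gp}$ is injective.

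To see this, note that since $Q$ is integral, every element of $Q^{\rm gp}$ can be written as $q_1 - q_2$ with $q_1, q_2 \in Q$. If $h^{\rm gp}(q_1 - q_2) = 0$ in $P^{\rm gp}$, then $h(q_1)$ and $h(q_2)$ have the same image in $P^{\rm gp}$; since $P$ is integral, the map $P \to P^{\rm gp}$ is injective, so $h(q_1) = h(q_2)$ in $P$. Injectivity of $h$ then forces $q_1 = q_2$, as desired.

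Combining these two observations, the hypotheses of Lemma~\ref{lem:surjectivity} are satisfied for $f = \Spec h$, so $\AA(\Spec h) : \AA(\Spec P) \to \AA(\Spec Q)$ is surjective. Unwinding the definition of algebraic realization on affine fans (\S\ref{section:explicitconstruction}), this map \emph{is} $\Spec \ZZ[h]$, completing the proof. There is no real obstacle here, as the argument is a straightforward specialization; the only slightly delicate point is the (now standard) passage from injectivity of $h$ on integral monoids to injectivity of $h^{\rm gp}$, which is really the content of the proof.
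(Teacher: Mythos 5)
Your proof is correct and follows exactly the paper's own route: reduce to Lemma~\ref{lem:surjectivity} by identifying the stalk maps on groupifications with $h^{\rm gp}$ and checking that $h^{\rm gp}$ is injective (which the paper asserts in one line and you spell out correctly using integrality of $P$ and injectivity of $h$).
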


\begin{proof} Since $h$ is injective and $Q,P$ are integral, $h^{\rm gp} : Q^{\rm gp} \to P^{\rm gp}$ is injective.  For a face $F \in \Spec P$, with image $G := h^{-1}F$ under $\Spec h$, the map $\M_{Q,G} \to \M_{P,F}$ induced by $\Spec h$ is the map $G^{-1}Q \to F^{-1}P$ induced by $h$.  The groupification of this map is just $h^{\rm gp} : Q^{\rm gp} = (G^{-1}Q)^{\rm gp} \to P^{\rm gp} = (F^{-1}P)^{\rm gp}$.  This reduces us to the previous lemma. \end{proof}

\subsection{Pullback of modules along the orbit map}  We can jazz up the construction of the functor \eqref{Zslot2} from \S\ref{section:modules} as follows:  For any fan $X$, there is a functor \be \tau^* = \ZZ[ \slot ]  : \Mod(X) & \to & \Mod(\AA(X)) \ee left adjoint to the ``forgetful functor" \be \tau_* : \Mod(\AA(X)) & \to & \Mod(X). \ee  Both of these functors take quasi-coherent sheaves to quasi-coherent sheaves.  The functor $\tau^*$ takes ideal sheaves to ideal sheaves.  The relative Spec and Proj constructions---as well as the blowup construction---of \S\ref{section:relativeSpecandProj} are compatible with these functors in an obvious sense \cite[5.18.3]{GM1}.

\section{Maps of fans} \label{section:mapsoffans} In this section we give an ``\'etude globale \'el\'ementaire de quelques classes de morphismes" for fans something like \cite[II]{EGA}, but nowhere near as thorough.

\subsection{Generalities} \label{section:generalities}  Before launching into the general study of maps of fans that will occupy the remainder of \S\ref{section:mapsoffans} it is helpful to make some general remarks.

When studying a map $f : X \to Y$ of locally monoidal spaces (or of locally ringed spaces) it is common to look at the stalk maps $f_x : \M_{Y,f(x)} \to \M_{X,x}$ for points $x \in X$.  For example, in \S\ref{section:flatmaps} we will define flatness in terms of these maps, just as one does in the case of locally ringed spaces.  This ``looking at stalks" approach works particularly well for (reasonable) fans, because the map $f$ itself will look Zariksi locally like $\Spec f_x$ near $x \in X$.  This is just a reflection of the ``naturality" in the final sentence of Proposition~\ref{prop:locallyfinitetypefans}, but for the sake of clarity, let us formulate this carefully as follows:

\begin{prop} \label{prop:localpicture} Let $f : X \to Y$ be a map of fans.  For each $x \in X$, we have a natural commutative diagram of fans $$ \xym{ \Spec \M_{X,x} \ar[r] \ar[d]_{\Spec f_x} & X \ar[d]^-f \\ \Spec \M_{Y,f(x)} \ar[r] & Y }$$ where the horizontal arrows are strict embeddings (Definition~\ref{defn:embedding}).  The image of the top (resp.\ bottom) horizontal arrow contains $x$ (resp.\ $f(x)$).  If the topological space underlying $X$ (resp.\ $Y$) is locally finite, then the top (resp.\ bottom) horizontal arrow is an open embedding. \end{prop}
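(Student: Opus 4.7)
The plan is to reduce everything to the affine case by choosing appropriate affine open neighborhoods, and then to invoke the facts already established about $\Spec$ of a monoid.

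First I would construct the top horizontal arrow. Choose any affine open neighborhood $U \cong \Spec P$ of $x$ in $X$ and let $F \leq P$ be the face of $P$ corresponding to $x$, so by Proposition~\ref{prop:Spec}\eqref{stalkMP} we have $\M_{X,x} = F^{-1}P$. The localization map $P \to F^{-1}P$ induces an $\LMS$-morphism $\Spec F^{-1}P \to \Spec P = U$, which is a strict embedding by Proposition~\ref{prop:Spec}\eqref{localizationembedding}. Composing with the open embedding $U \hookrightarrow X$ yields a strict embedding $\Spec \M_{X,x} \to X$, since the composition of a strict embedding with an open embedding is again a strict embedding (both conditions---on topological spaces and on structure sheaves---are preserved under composition). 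To show this construction is independent of the choice of $U$, I would reduce to the case where two candidate affine neighborhoods $U, U'$ share a common affine subneighborhood $W \ni x$ and use Proposition~\ref{prop:Specfullyfaithful} (full faithfulness of $\Spec$) to verify that, restricted to $W$, both constructions correspond to the same monoid homomorphism $\M_X(W) \to \M_{X,x}$, namely the stalk map. The same recipe applied to $Y$ gives the bottom arrow.

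Next I would verify that $x$ lies in the image of the top arrow: the closed point of $\Spec \M_{X,x}$ is $(F^{-1}P)^*$, and by Lemma~\ref{lem:facesandlocalization} its preimage in $P$ under the localization map is $F$, which is $x$; the same argument handles $f(x)$ for the bottom arrow. For commutativity of the square, shrink $U$ so that $f(U)$ is contained in an affine open $V \cong \Spec Q$ of $f(x)$ in $Y$ (possible by continuity of $f$ and since affine opens form a basis). Then $f|U : U \to V$ is induced by a monoid homomorphism $h : Q \to P$ via Proposition~\ref{prop:Specequivalence}; the face of $Q$ corresponding to $f(x)$ is $G := h^{-1}(F)$, and $f_x$ is the induced localization map $G^{-1}Q \to F^{-1}P$. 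The evident commutative square of monoid homomorphisms relating $h$, $f_x$, and the two localization maps yields the desired commutative square of fans upon applying $\Spec$ and composing with the appropriate open embeddings.

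Finally, for the openness assertion: if the space underlying $X$ is locally finite, Proposition~\ref{prop:locallyfinitetypefans} provides a natural isomorphism between $\Spec \M_{X,x}$ and the smallest open neighborhood $U_x$ of $x$ in $X$. Applying the independence-from-choice-of-$U$ assertion to $U = U_x$ identifies the top arrow with the open embedding $U_x \hookrightarrow X$; the argument for $Y$ is identical. The main obstacle is the bookkeeping required to verify independence from the choice of affine neighborhood---this requires careful use of Proposition~\ref{prop:Specfullyfaithful}---but nothing genuinely difficult arises; the proposition is essentially a direct consequence of the local structure of fans.
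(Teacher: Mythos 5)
Your proposal is correct and follows essentially the same route as the paper: choose affine open neighborhoods $V=\Spec Q \ni x$ and $U = \Spec P \ni f(x)$ with $f(V) \subseteq U$, reduce to the map of affine fans $\Spec(P\to Q)$, and conclude from Proposition~\ref{prop:Spec} (and Proposition~\ref{prop:Specfinite} in the locally finite case). The only difference is that you spell out the well-definedness of the strict embeddings independently of the chosen affine neighborhood, which the paper leaves implicit in the word ``natural.''
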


\begin{proof} Let $U=\Spec P \subseteq Y$ (resp.\ $V=\Spec Q$) be an affine open neighborhood of $f(x)$ in $Y$ (resp.\ of $x$ in $f^{-1}(U)$).  If $X$ (resp.\ $Y$) is locally finite, we can choose $\Spec Q$ (resp.\ $\Spec P$) finite.  The map of fans $f|V : V \to U$ is $\Spec$ of a map of monoids $P \to Q$ and fits in an obvious commutative square with $f$ where the horizontal arrows are the open embeddings $V \to X$ and $U \to Y$.  So we can replace $f$ with $f|V$, in which case the result follows from Propositions~\ref{prop:Spec} and \ref{prop:Specfinite} applied to the faces of $P$ and $Q$ corresponding to $f(x)$, $y$. \end{proof}

Next we make a general observation about properties of maps of fans which is frequently useful.  Notice that the situation here is much simpler than that for schemes.

\begin{prop} \label{prop:stableunderbasechange} Let ${\bf P}$ (resp.\ ${\bf Q}$) be a property of maps of topological spaces (resp.\ monoids) stable under fibered products (resp.\ pushouts).  Define a property ${\bf R}$ of maps of fans (or of locally monoidal spaces) by saying that $f : X \to Y$ has property ${\bf R}$ iff the underlying map of spaces has property ${\bf P}$ and $f_x : \M_{Y,f(x)} \to \M_{X,x}$ has property ${\bf Q}$ for every $x \in X$.  Then ${\bf R}$ is stable under base change.  Similarly, if ${\bf P}$ and ${\bf Q}$ are stable under composition, so is ${\bf R}$.  \end{prop}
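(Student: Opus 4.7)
The plan is to leverage the description of inverse limits in $\LMS$ from \S\ref{section:inverselimits} and the fact (Theorem~\ref{thm:fansinverselimits}) that finite inverse limits of fans are computed ``naively,'' so that both the underlying space and the stalks of the structure sheaf behave transparently under fibered products.

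First I would handle base change. Suppose $f : X \to Y$ has property ${\bf R}$ and $g : Y' \to Y$ is any map of fans; form the fibered product $X' := X \times_Y Y'$ with projections $f' : X' \to Y'$ and $g' : X' \to X$. By Theorem~\ref{thm:fansinverselimits}, the underlying space of $X'$ is the fibered product of the underlying spaces, so $f'$ on topological spaces is the pullback of $f$ along $g$ and hence has property ${\bf P}$ by the assumed stability of ${\bf P}$ under fibered products. For the stalks, fix $x' \in X'$ and let $x := g'(x')$, $y' := f'(x')$, $y := f(x) = g(y')$. From \S\ref{section:inverselimits} (where inverse limits in $\LMS$ are built by taking the direct limit of the pullbacks of structure sheaves, and the stalk of a direct limit of sheaves is the direct limit of stalks), we obtain a natural pushout square of monoids
\begin{equation*}
\xym{ \M_{Y,y} \ar[r]^-{f_x} \ar[d]_{g_{y'}} & \M_{X,x} \ar[d] \\ \M_{Y',y'} \ar[r]^-{f'_{x'}} & \M_{X',x'}. }
\end{equation*}
Thus $f'_{x'}$ is the pushout of $f_x$ along $g_{y'}$, so it has property ${\bf Q}$ by the assumed stability of ${\bf Q}$ under pushouts. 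Therefore $f'$ has property ${\bf R}$.

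The composition statement is even more direct. If $f : X \to Y$ and $g : Y \to Z$ both have property ${\bf R}$, then on topological spaces the composition has property ${\bf P}$ by hypothesis, while for any $x \in X$ the stalk map $(gf)_x : \M_{Z,gf(x)} \to \M_{X,x}$ factors as the composition of $g_{f(x)}$ and $f_x$, both of which have property ${\bf Q}$; so $(gf)_x$ has property ${\bf Q}$ by the assumed stability of ${\bf Q}$ under composition.

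The only genuine content is the pushout description of stalks of the structure sheaf of a fibered product, which is really the main obstacle---but it is already built into the construction of inverse limits in $\LMS$ recalled in \S\ref{section:inverselimits} and reaffirmed for fans in Theorem~\ref{thm:fansinverselimits}. Once that is in hand, both assertions are formal. It is worth noting that the parallel statement for locally ringed spaces fails precisely because the analog of the stalk-is-pushout description breaks down there (a tensor product of local rings need not be local), which is exactly why the monoidal setting is cleaner, as emphasized in \S\ref{section:inverselimits}.
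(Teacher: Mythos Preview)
Your proof is correct and follows essentially the same approach as the paper: both arguments reduce to the fact that finite inverse limits of fans commute with the forgetful functor to monoidal spaces, so that the underlying space of $X'$ is the topological fibered product and the stalk map $f'_{x'}$ is the pushout of $f_x$ along $g_{y'}$. You spell out the pushout square and the composition case in slightly more detail than the paper, and you note (as the paper does) the contrast with locally ringed spaces.
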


We often apply Proposition~\ref{prop:stableunderbasechange} with ${\bf Q}$ the property of being surjective---this is stable under composition and base change.

\begin{proof}  This follows from the fact that finite inverse limits of fans (or of locally monoidal spaces) commute with the forgetful functor to monoidal spaces (see \S\ref{section:inverselimits} and \S\ref{section:inverselimitsoffans}), so the map of spaces underlying a base change $X' \to Y'$ of $X \to Y$ in fans is just the base change (fibered product) of the map of spaces underlying $X \to Y$.  Similarly, the map $f'_{x'} : \M_{Y',f'(x')} \to \M_{X',x'}$ is a pushout of $f_x : \M_{Y,f(x)} \to \M_{X,x}$, where $x \in X$ is the image of $x'$ under $X' \to X$.  (For schemes one only knows that $f'_{x'}$ is a pushout of $f_x$, followed by a localization at a prime ideal.)  The ``Similarly" is easy. \end{proof}

\begin{rem} \label{rem:localproperties}  In the rest of \S\ref{section:mapsoffans} we will often define a property ${\bf P'}$ of maps of fans by saying that $f : X \to Y$ has property ${\bf P'}$ iff the algebraic realization $\AA(f) : \AA(X) \to \AA(Y)$ (\S\ref{section:algebraicrealization}) has some property ${\bf P}$ of maps of schemes.  Having done this, we will always try to give purely ``fan theoretic" characterizations of those $f$ enjoying property ${\bf P'}$, but this will often require further assumptions (typically that $X$ and $Y$ be fine and $f$ be quasi-compact), so it will be convenient to at least have a general definition of property ${\bf P'}$ that makes sense for all $f$.  The algebraic realization $\AA$ takes open embeddings to open embeddings and Zariski covers to Zariski covers, so if the property ${\bf P}$ is ``local" in some sense (local on the domain, codomain, or both), then ${\bf P'}$ will also be ``local" in the same sense.  In particular, if ${\bf P}$ is local on both the domain and codomain, then it follows from Proposition~\ref{prop:localpicture} that a map $f : X \to Y$ between fans \emph{with locally finite underlying spaces} will have property ${\bf P}$ iff the map of affine fans \be \Spec \M_{X,x} & \to & \Spec \M_{Y,f(x)} \ee has property ${\bf P}$ for each $x \in X$. \end{rem}

\subsection{Quasi-compact maps} \label{section:quasicompactmaps}  Recall that a map of topological spaces $f : X \to Y$ is called \emph{quasi-compact} iff $f^{-1}(U)$ is quasi-compact for each open, quasi-compact subspace $U \subseteq Y$.  Similarly, a map of locally ringed spaces, monoidal spaces, etc.\ is called \emph{quasi-compact} iff the underlying map of topological spaces is quasi-compact in the previous sense.

\begin{lem} \label{lem:quasicompact} A map of fans $f : X \to Y$ is quasi-compact iff $f^{-1}(U)$ can be covered by finitely many open affines for each open affine subset $U$ of $Y$.  Quasi-compact maps of fans are stable under composition and base change. \end{lem}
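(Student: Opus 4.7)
The plan is to use Proposition~\ref{prop:Spec}\eqref{uniqueclosedpoint}, which says that $\Spec P$ is quasi-compact for any monoid $P$, so every affine fan is quasi-compact. For the forward direction of the characterization, if $f$ is quasi-compact and $U \subseteq Y$ is an open affine, then $U$ is quasi-compact, hence $f^{-1}(U)$ is quasi-compact, and any cover of $f^{-1}(U)$ by open affines (which exists since $X$ is a fan) admits a finite subcover. For the converse, assume each $f^{-1}(U)$ ($U \subseteq Y$ affine open) has a finite affine cover, so $f^{-1}(U)$ is a finite union of quasi-compacts. Given any open quasi-compact $V \subseteq Y$, cover $V$ by open affines of $Y$ and extract a finite subcover $U_1,\dots,U_n$; then $f^{-1}(V)=\bigcup_i f^{-1}(U_i)$ is a finite union of quasi-compacts.

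Stability under composition is immediate from the definition: if $f : X \to Y$ and $g : Y \to Z$ are quasi-compact and $W \subseteq Z$ is open quasi-compact, then $g^{-1}(W)$ is quasi-compact, hence so is $(gf)^{-1}(W)=f^{-1}(g^{-1}(W))$. For base change, let $f : X \to Y$ be quasi-compact, let $g : Y' \to Y$ be arbitrary, and form $f' : X' := X \times_Y Y' \to Y'$. By the characterization just established, it suffices to show that $f'^{-1}(V')$ admits a finite cover by open affines for each open affine $V' \subseteq Y'$. The key input is that the fibered product of affine fans over an affine fan is again affine, which follows from Proposition~\ref{prop:Specequivalence} together with Theorem~\ref{thm:fansinverselimits} (the latter ensures finite inverse limits of fans are computed in $\LMS$ via $\Spec$ of pushouts of monoids).

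To exploit this, I would first cover $V'$ by open affines $W$ of $Y'$ whose image $g(W)$ lands inside some open affine of $Y$; such $W$ form a basis for $V'$, since around any $x \in V'$ one may pick an affine open $V \ni g(x)$ in $Y$ and then choose an affine open neighborhood of $x$ inside $g^{-1}(V) \cap V'$. By quasi-compactness of the affine $V'$, finitely many such $W_1,\dots,W_k$ suffice, with $g(W_j) \subseteq V_j$ for open affines $V_j \subseteq Y$. Since $f$ is quasi-compact, each $f^{-1}(V_j)$ is covered by finitely many open affines $U_{j,1},\dots,U_{j,n_j}$. Then $f'^{-1}(W_j)=\bigcup_i (U_{j,i} \times_{V_j} W_j)$ is a finite union of affine fans, so $f'^{-1}(V')=\bigcup_j f'^{-1}(W_j)$ is also a finite union of open affines, as required. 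The only non-trivial point in the whole argument is the ``affine times affine over affine is affine'' observation, and this is a formal consequence of the material developed in \S\ref{section:inverselimitsoffans}.
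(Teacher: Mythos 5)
Your proof is correct and follows the same route the paper sketches: it is the standard scheme-theoretic argument, run off the two facts that affine fans are quasi-compact (Proposition~\ref{prop:Spec}\eqref{uniqueclosedpoint}) and that a fiber product of affine fans over an affine fan is affine (via $\Spec$ preserving inverse limits and Theorem~\ref{thm:fansinverselimits}). The paper's proof is only a one-line sketch, and your write-up supplies exactly the details it intends.
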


\begin{proof} This is proved in the same manner as the analogous statements for schemes by making use of the fact that any affine fan is quasi-compact (Proposition~\ref{prop:Spec}\eqref{uniqueclosedpoint}) and any quasi-compact open subset of a fan, being itself a fan, is covered by finitely many open affines. \end{proof}

\begin{rem} \label{rem:quasicompact} I do not know whether quasi-compact maps of arbitrary topological spaces are stable under base change, though I doubt this is true.  \end{rem}

\begin{lem} \label{lem:orbitmap1a} In any commutative diagram of topological spaces $$ \xym{ X' \ar[r]^-i \ar[d]_{f'} & X \ar[d]^f \\ Y' \ar[r]^-j & Y } $$ where $i$ is surjective and $f'$ and $j$ are quasi-compact, $f$ is also quasi-compact. \end{lem}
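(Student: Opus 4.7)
The plan is to chase an arbitrary open quasi-compact subset $U \subseteq Y$ around the diagram and use the surjectivity of $i$ at the end. Given such a $U$, continuity of $j$ makes $j^{-1}(U)$ open in $Y'$, and since $j$ is quasi-compact, $j^{-1}(U)$ is quasi-compact as well. Applying the same reasoning to $f'$, we conclude that $(f')^{-1}(j^{-1}(U))$ is open and quasi-compact in $X'$. The commutativity of the square gives the identity
\[ (f')^{-1}(j^{-1}(U)) \; = \; (j \circ f')^{-1}(U) \; = \; (f \circ i)^{-1}(U) \; = \; i^{-1}(f^{-1}(U)). \]

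Now I invoke the surjectivity of $i$: because $f^{-1}(U) \subseteq X = i(X')$, we have $i(i^{-1}(f^{-1}(U))) = f^{-1}(U)$. Thus $f^{-1}(U)$ is the continuous image of the quasi-compact space $i^{-1}(f^{-1}(U))$, and is therefore quasi-compact. Since $U$ was arbitrary open quasi-compact in $Y$, this shows $f$ is quasi-compact.

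There is no substantive obstacle in this argument; it is a formal diagram chase together with the standard fact that a continuous image of a quasi-compact space is quasi-compact (which requires no separation hypothesis). The only small point worth double-checking is that $f^{-1}(U)$ is open in $X$ (so that the definition of quasi-compactness applies) — but this is immediate from continuity of $f$, which is part of the hypothesis that the diagram lives in topological spaces.
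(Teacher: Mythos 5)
Your proof is correct: the paper leaves this lemma as an exercise, and your diagram chase (pull $U$ back through $j$ and $f'$, identify $(f')^{-1}(j^{-1}(U))$ with $i^{-1}(f^{-1}(U))$ by commutativity, then push forward along the surjection $i$ using that continuous images of quasi-compact spaces are quasi-compact) is exactly the intended routine argument.
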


\begin{proof} Exercise. \end{proof}

\begin{thm} \label{thm:quasicompact} A map of fans is quasi-compact iff its scheme realization is quasi-compact.  A map of locally finite type fans is quasi-compact iff it has finite (but not necessarily discrete!) fibers. \end{thm}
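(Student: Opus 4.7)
The plan is to prove both equivalences via the commutative square
\[ \xym{ \AA(X) \ar[r]^-{\tau_X} \ar[d]_-{\AA(f)} & X \ar[d]^-f \\ \AA(Y) \ar[r]^-{\tau_Y} & Y } \]
together with the properties of the orbit map from Lemma~\ref{lem:orbitmap1} (quasi-compact and surjective) and the identity $\tau^{-1}(\Spec P) = \Spec \ZZ[P]$ from Proposition~\ref{prop:algebraicrealization2}. The ``if'' direction of the first statement is immediate from Lemma~\ref{lem:orbitmap1a}: if $\AA(f)$ is quasi-compact, then $f \circ \tau_X = \tau_Y \circ \AA(f)$ is a composition of quasi-compact maps and hence quasi-compact, and since $\tau_X$ is surjective the lemma forces $f$ itself to be quasi-compact.

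The converse direction proceeds by successive cover-refinement. Given a quasi-compact open $V \subseteq \AA(Y)$, cover it by finitely many basic affine opens of the form $D(g_i) \subseteq \AA(U_i)$, where $U_i = \Spec P_i \subseteq Y$ is affine open and $g_i \in \ZZ[P_i]$; this is possible since the $\AA(U)$'s for affine open $U \subseteq Y$ form an open cover of $\AA(Y)$. After reducing to a single piece $V = D(g) \subseteq \AA(U) = \Spec \ZZ[P]$, quasi-compactness of $f$ gives a finite affine cover $f^{-1}(U) = W_1 \cup \cdots \cup W_m$, whence $\AA(f^{-1}(U)) = \bigcup_j \AA(W_j)$. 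On each piece, the restriction $\AA(f)|_{\AA(W_j)} \colon \AA(W_j) \to \AA(U)$ is a morphism of affine schemes, so the preimage of the basic open $V$ is again a basic open, hence quasi-compact; a finite union finishes the job.

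For the second statement, assume $X$ and $Y$ are locally finite type. By Proposition~\ref{prop:locallyfinitetypefans}, each point $y \in Y$ has a smallest open neighborhood $U_y \cong \Spec \M_{Y,y}$, which is a finite topological space. If $f$ is quasi-compact, then $f^{-1}(U_y)$ is a quasi-compact locally finite type fan, hence covered by finitely many affines, each finite by the same proposition; thus $f^{-1}(U_y)$ and \emph{a fortiori} the fiber $f^{-1}(y)$ is finite. Conversely, if every fiber of $f$ is finite, any quasi-compact open $U \subseteq Y$ is a finite union of the finite $U_{y_i}$'s and hence itself finite, so $f^{-1}(U)$ is a finite union of finite fibers, hence finite and automatically quasi-compact as a topological space.

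The main step requiring care is the ``only if'' direction of the first statement, where the quasi-compact open in $\AA(Y)$ must be refined down to pieces that sit inside affine preimages of the orbit map; once there, the triviality that a morphism of affine schemes has quasi-compact preimages of basic opens does the rest.
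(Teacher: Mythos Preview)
Your proof is correct and follows essentially the same approach as the paper: the paper also uses Lemma~\ref{lem:orbitmap1a} together with Lemma~\ref{lem:orbitmap1} for the direction $\AA(f)$ quasi-compact $\Rightarrow$ $f$ quasi-compact, and for the converse simply remarks that one ``thinks in terms of covers by finitely many open affines,'' which is exactly the cover-refinement argument you spell out. Your treatment of the second statement likewise matches the paper's, which invokes finiteness of $\Spec P$ for $P$ finitely generated and the fact that quasi-compact opens are finite unions of affines.
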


\begin{proof} The implication $(\implies)$ follows easily by thinking in terms of covers by finitely many open affines.  For the other implication, apply Lemma~\ref{lem:orbitmap1a} to the square $$ \xym{ \AA(X) \ar[r]^-{\tau_X} \ar[d]_{\AA(f)} & X \ar[d]^f \\ \AA(Y) \ar[r]^-{\tau_Y} & Y }$$ using Lemma~\ref{lem:orbitmap1} to check the necessary hypotheses.  For the second statement one uses the fact that $\Spec P$ is finite when $P$ is finitely generated (Corollary~\ref{cor:faces}) and the fact that all quasi-compact opens are covered by finitely many affines.  \end{proof}

\subsection{Affine maps} \label{section:affinemaps}  A map $f : X \to Y$ of fans is called \emph{affine} iff $f^{-1}(U)$ is affine for each affine open subfan $U \subseteq Y$.

\begin{prop} \label{prop:affinemaps} The property of being an affine map of fans or schemes is local on the base.  If $f : X \to Y$ is an affine map of fans, then the scheme realization $\AA(f)$ of $f$ is an affine map of schemes. \end{prop}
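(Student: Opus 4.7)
The plan is to treat the scheme statement by citation and the fan statement directly, then use both to handle the algebraic realization. Locality of affineness on the base for morphisms of schemes is a standard theorem (with a proof requiring some cohomological input), which I would simply cite rather than reprove. The fan analog, by contrast, is essentially immediate: it exploits the fact that an affine fan $\Spec Q$ has a unique closed point $Q^*$ whose only open neighborhood in $\Spec Q$ is $\Spec Q$ itself (Proposition~\ref{prop:Spec}\eqref{uniqueclosedpoint}).

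Concretely, suppose $f : X \to Y$ is a map of fans and $\{V_i\}$ is an open cover of $Y$ such that $f|_{f^{-1}(V_i)} : f^{-1}(V_i) \to V_i$ is affine for every $i$. Given any affine open $V = \Spec Q$ in $Y$, the closed point $Q^* \in V$ lies in some $V_j$, so $V \cap V_j$ is an open neighborhood of $Q^*$ in $\Spec Q$. By the ``unique closed point'' property this forces $V \cap V_j = V$, i.e.\ $V \subseteq V_j$; in particular $V$ is an affine open of $V_j$, so $f^{-1}(V)$ is affine by the hypothesis on $V_j$. This gives locality on the base for fans.

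For the second assertion I would apply the scheme version of the first. By Proposition~\ref{prop:algebraicrealization2} the preimages $\tau_Y^{-1}(V)$, for $V \subseteq Y$ affine open, form an affine open cover of $\AA(Y)$ with $\tau_Y^{-1}(\Spec P) = \Spec \ZZ[P]$. If $f$ is affine then $f^{-1}(V) = \Spec R$ is affine for every such $V$, and the same proposition yields $\AA(f)^{-1}(\tau_Y^{-1}(V)) = \tau_X^{-1}(f^{-1}(V)) = \Spec \ZZ[R]$, which is affine. Hence $\AA(f)$ is affine over an affine open cover of $\AA(Y)$, and locality on the base for schemes finishes the argument.

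The only substantive step is the unique closed point trick, which makes the fan side of the locality statement dramatically simpler than its scheme analog; no serious obstacle is expected.
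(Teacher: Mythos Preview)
Your proposal is correct, and the argument for locality of affineness on the base for fans is genuinely different from the paper's. The paper proves the fan and scheme statements uniformly by invoking the characterization of affine morphisms via relative $\Spec$: $f$ is affine iff $f_*\M_X$ is quasi-coherent and the natural map $X \to \Spec_Y f_*\M_X$ is an isomorphism, both conditions being visibly local on $Y$. Your approach instead exploits a special feature of fans absent from schemes: since every affine open $V = \Spec Q \subseteq Y$ has a unique closed point whose only neighborhood in $V$ is $V$ itself, any open cover of $Y$ swallows each affine open whole, making locality essentially trivial. This is more elementary---it avoids the relative $\Spec$ machinery of \S\ref{section:relativeSpecandProj}---and arguably more illuminating, since it shows \emph{why} the fan case is so much easier than the scheme case rather than hiding the distinction behind a uniform formalism. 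For the second statement both proofs do the same thing: reduce to the affine-over-affine situation, where $\AA(f)$ is $\Spec$ of a monoid algebra map. Your version spells out the use of the naturality square for $\tau$ and Proposition~\ref{prop:algebraicrealization2} a bit more explicitly, but the content is identical.
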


\begin{proof} For the first statement, affine maps (in either case) can be characterized as those $f : X \to Y$ for which 1) $f_* \O_X$ (replace $\O_X$ with $\M_X$ for fans) is quasi-coherent and 2) the natural map of $Y$-schemes $X \to \Spec_Y f_* \O_X$ is an isomorphism (see \S\ref{section:relativeSpecandProj} for discussion of $\Spec_Y$).  The properties 1) and 2) are clearly local on $Y$.  The second statement is local on $Y$ in light of the first statement, so we can assume $Y = \Spec P$ is affine.  But then $X = \Spec Q$ is affine because $f$ is affine, so $f$ corresponds to a monoid homomorphism $P \to Q$ and $\AA(f)$ is nothing but $\Spec \ZZ[Q] \to \Spec \ZZ[P]$, which is certainly affine.  \end{proof}

\begin{rem} \label{rem:affinemaps} The converse of Proposition~\ref{prop:affinemaps} does not hold.  A disjoint union of two affine fans is not affine even though its scheme realization is affine. \end{rem}

\subsection{Boundary construction} \label{section:boundaryconstruction}  If $X$ is a scheme and $Z$ is a closed subspace of the topological space underlying $X$, then $Z$ can be given a reduced-induced scheme structure making $Z \into X$ a closed embedding of schemes.  There is no analog of this for fans.  Indeed, if $X$ is a fan and $Z$ is a closed subspace of its underlying topological space, there may not even be any fan whose underlying topological space is homeomorphic to $Z$ (even if $X$ is an affine fan)!

\begin{example} \label{example:closedsubspacesoffans} Let $X = \Spec \NN^2$ and let $Z$ be the closed subspace of (the topological space underlying) $X$ given by the complement of the unique open point $\NN^2$ of $X$ (cf.\ Proposition~\ref{prop:Specfinite}\eqref{uniqueopenpoint}).  There can be no fan whose underlying space is homeomorphic to $Z$.  Indeed, since $Z$ itself is the smallest neighborhood of $0 = (\NN^2)^*$ in $Z$ (since $X$ itself is the smallest neighborhood of $0$ in $X$ by Proposition~\ref{prop:Spec}\eqref{uniqueclosedpoint}), such a fan would have to be affine.  The space $Z$ has precisely three points, precisely two of which are open.  But no affine fan can have finite underlying topological space and more than one open point (Proposition~\ref{prop:Specfinite}\eqref{uniqueopenpoint}). \end{example}

Suppose $X$ is a fan and $Z$ is a closed subspace of the topological space underlying $X$.  We are going to show momentarily that it \emph{is} possible to make $Z$ into a fan under a certain additional (purely topological!) assumption (which is also \emph{necessary}).  The fan structure on $Z$ will be determined in a natural way by the fan $X$, but we will \emph{not} generally be able to lift the closed embedding of spaces $Z \into X$ to a map of fans, as we could do with schemes.  On the other hand, we will construct a natural closed embedding of algebraic realizations $\AA(Z) \into \AA(X)$ as is a kind of substitute.

\begin{thm} \label{thm:boundary} For a fan $X$ and a closed subspace $Z$ of the topological space underlying $X$, the following are equivalent: \begin{enumerate} \item \label{fanexists} There exists a fan $(Z,\M_Z)$ with underlying topological space $Z$. \item \label{topologicalcondition} For every $z \in Z$, there is a pair $(U,z')$ consisting of a neighborhood $U$ of $z$ in $X$ and a point $z' \in U \cap Z$ such that $U \cap Z$ is the closure of $z'$ in $U$. \item \label{fancondition} For every $z \in Z$, there is a pair $(V,z')$ consisting of an \emph{affine} neighborhood $V$ of $z$ in $X$ and a point $z' \in V \cap Z$ such that $V \cap Z$ is the closure of $z'$ in $V$. \end{enumerate} When these equivalent conditions hold, we can construct a certain fan $Z(X)$ with underlying topological space $Z$.  This construction enjoys the following properties, which characterize it up to unique isomorphism: \begin{enumerate} \item \label{ZXopenembedding} Whenever $X$ and $Z$ satisfy the equivalent conditions above and $U$ is an open subspace of $X$, the fan structure $(Z \cap U)(U)$ on $Z \cap U$ obtained by applying our construction to the fan $U$ and the closed subspace $Z \cap U$ (these will also satisfy the equivalent conditions above) coincides with the restriction of the fan structure $Z(X)$ on $Z$ to $Z \cap U$.  \item \label{ZXaffine} If $X = \Spec P$ and $Z = Z_F = \{ F \}^{-}$ for some $F \leq P$, then $Z(X) = \Spec F$. \end{enumerate}  Furthermore, there is a closed embedding $\AA(Z(X)) \into \AA(X)$ of algebraic realizations characterized by the following properties: \begin{enumerate} \item \label{ZXdiagram} Let $\tau_Z$ and $\tau_X$ denote the orbit maps for the fans $Z(X)$ and $X$.  Then, on the level of topological spaces, we have a commutative diagram $$ \xym{ \AA(Z(X)) \ar[d]_{\tau_Z}  \ar[r] & \AA(X) \ar[d]_{\tau_X} \\ Z \ar[r] & X. } $$ \item \label{ZXlocalembedding} If $X = \Spec P$ and $Z = Z_F = \{ F \}^{-}$ for some $F \leq P$, so $Z(X) = \Spec F$, then the closed embedding $\AA(Z(X)) \into \AA(X)$ is $\Spec$ of the surjection of rings $\ZZ[P] \to \ZZ[F]$ taking $[p] \in \ZZ[P]$ to $[p] \in \ZZ[F]$ (resp.\ $0 \in \ZZ[F]$) when $p \in F \leq P$ (resp.\ $p \in P \setminus F$). \end{enumerate} \end{thm}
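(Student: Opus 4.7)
The equivalence of the three conditions will be established in a cycle. The implication \eqref{fancondition}$\implies$\eqref{topologicalcondition} is trivial. For \eqref{topologicalcondition}$\implies$\eqref{fancondition}, given $(U,z')$ as in \eqref{topologicalcondition}, I would pick an affine neighborhood $V\subseteq U$ of $z$; since $z\in U\cap Z=\overline{\{z'\}}^U$, the point $z'$ is a generalization of $z$ in $U$, so $z'\in V$ by stability of $V$ under generalization (Proposition~\ref{prop:fans}\eqref{fangeneralization}), and then $V\cap Z=V\cap\overline{\{z'\}}^U=\overline{\{z'\}}^V$. For \eqref{fanexists}$\implies$\eqref{topologicalcondition}, given a fan structure $(Z,\M_Z)$, each $z$ lies in an affine open $W=\Spec Q\subseteq Z$; by Proposition~\ref{prop:Spec}\eqref{Specclosure} the point $z'=Q$ is dense in $W$. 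Writing $W=U\cap Z$ for some open $U\subseteq X$ (using that $Z$ has the subspace topology) and using that $Z$ is closed in $X$ (so $\overline{\{z'\}}^X\subseteq Z$), one gets $U\cap Z=\overline{\{z'\}}^U$. The remaining implication \eqref{fancondition}$\implies$\eqref{fanexists} will come out of the construction of $Z(X)$ below.

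Next I would construct $Z(X)$. Locally, pick an affine cover $\{V_i=\Spec P_i\}$ of $X$ fine enough that each $V_i\cap Z$, when non-empty, equals $Z_{F_i}=\overline{\{F_i\}}^{V_i}$ for some face $F_i\leq P_i$ (this is possible by \eqref{fancondition} and by shrinking). Using Remark~\ref{rem:ZF}, I put the affine fan structure $\Spec F_i$ on $V_i\cap Z$, forcing \eqref{ZXaffine}. For the gluing, I would cover each $V_i\cap V_j$ by sub-basic opens $U_p\subseteq V_i$ with $p\in F_i$ (elements with $p\notin F_i$ do not meet $Z$); on such $U_p$ we compute $U_p\cap Z=Z_{F_i}\cap U_p$, which under the identification $U_p=\Spec P_i[-p]$ corresponds to the face $F_i\leq P_i[-p]$, so the induced fan structure is $\Spec F_i[-p]$. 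By the uniqueness statement of Proposition~\ref{prop:affineopensubfan}\eqref{aos1} applied to $U_p\subseteq V_j$, the same construction from the $V_j$ side produces the localization of $F_j$ at a corresponding element, and the identification of $P_i[-p]$ with the localization of $P_j$ forced by the ambient open embedding must carry $F_i$ (its localization) to $F_j$ (its localization) because both correspond, via Remark~\ref{rem:ZF}, to the same closed subspace $U_p\cap Z$ with the same dense point. Thus the local pieces glue to a fan $Z(X)$ with underlying space $Z$, establishing \eqref{fanexists} and the restriction property \eqref{ZXopenembedding}. Uniqueness under \eqref{ZXopenembedding}+\eqref{ZXaffine} is immediate: any two fan structures agreeing on every affine open of the form in \eqref{ZXaffine} must agree globally by the gluing axiom.

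For the algebraic realization, in the affine case $X=\Spec P$, $Z=Z_F$, the surjection $\ZZ[P]\to\ZZ[F]$ described in \eqref{ZXlocalembedding} is the wrong way map of \S\ref{section:stratification}; taking $\Spec$ gives a closed embedding $\AA(Z(X))=\Spec\ZZ[F]\into\Spec\ZZ[P]=\AA(X)$, and the commutativity of diagram \eqref{ZXdiagram} is immediate from the description of the orbit map in \S\ref{section:orbitmap} (a point $x:\ZZ[P]\to k$ lying in the image of $\Spec\ZZ[F]$ has $(x|P)^{-1}(k^*)\subseteq F$, so $\tau_X$ sends it into $Z_F$). To globalize, the key point is compatibility with restriction to basic opens: for $p\in F$, localizing the wrong way map $\ZZ[P]\to\ZZ[F]$ at $[p]$ gives the wrong way map $\ZZ[P[-p]]\to\ZZ[F[-p]]$, because the wrong way map sends $[p]\mapsto[p]$, a unit in $\ZZ[F[-p]]$. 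Together with Proposition~\ref{prop:algebraicrealization2} (compatibility of $\AA$ with open embeddings), this allows the local embeddings to glue over the cover $\{V_i\}$ used in building $Z(X)$, producing a global closed embedding $\AA(Z(X))\into\AA(X)$ characterized by \eqref{ZXdiagram} and \eqref{ZXlocalembedding}.

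The main obstacle will be verifying the gluing compatibility in the construction of $Z(X)$, i.e.\ that on an overlap $V_i\cap V_j$ the two face-localizations of $F_i$ and $F_j$ genuinely coincide as sub-fans of the ambient open $V_i\cap V_j$. The essential tool is Proposition~\ref{prop:affineopensubfan}\eqref{aos1} together with the bijection in Remark~\ref{rem:ZF} between faces of $P$ and irreducible closed subsets of $\Spec P$: once one knows that both local constructions recover the same topological closure with the same dense point, Remark~\ref{rem:ZF} pins down the face uniquely, and the monoid-theoretic data follow.
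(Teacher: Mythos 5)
Your proposal is correct and follows essentially the same route as the paper: the cycle of implications is identical, and the construction of $Z(X)$ rests on the same ingredients (Proposition~\ref{prop:affineopensubfan}\eqref{aos1}, Remark~\ref{rem:ZF}, sobriety, and the compatibility of faces with localization as in Lemma~\ref{lem:boundaryfaces}). The only cosmetic difference is in the gluing step, where the paper compares two charts $(V_1,z_1')$, $(V_2,z_2')$ to a common smaller affine chart and uses sobriety to force $z_1'=z_2'$, whereas you cover the overlap by basic opens and pin down the face via the face--irreducible-closed-subset bijection; these amount to the same argument.
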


\begin{proof} \eqref{fanexists}$\implies$\eqref{topologicalcondition}:  Fix $z \in Z$.  Pick an affine open neighborhood $W$ of $z$ in $(Z,\M_Z)$.  By Proposition~\ref{prop:Spec}\eqref{Specclosure} there is a point $z' \in W$ which is dense in $W$.  Since $Z$ is a subspace of $X$ there is a neighborhood $U$ of $z$ in $X$ such that $U \cap Z = W$.  Since $Z$ is closed in $X$, the pair $(U,z')$ has the property required in \eqref{topologicalcondition}.

Let us now make a simple observation.  Suppose $z \in Z$, $(U,z')$ is as in \eqref{topologicalcondition} and $V$ is a neighborhood of $z$ in $U$.  Then we have $z' \in V$ because $z$ is in the closure of $z'$ in $U$ and it is clear that $(V,z')$ is also as in \eqref{topologicalcondition}.  To see that \eqref{topologicalcondition}$\implies$\eqref{fancondition} we need only apply this observation, taking $V$ affine.

Obviously \eqref{fancondition}$\implies$\eqref{topologicalcondition}.  The implication \eqref{fancondition}$\implies$\eqref{fanexists} will of course follow once we prove the rest of the theorem with the understanding that the ``equivalent conditions" are just the conditions \eqref{topologicalcondition} and \eqref{fancondition}.  To do this, fix an $X$ and $Z$ satisfying \eqref{topologicalcondition} and \eqref{fancondition} and let us construct the fan structure $Z(X)$ on $Z$.  Fix a point $z \in Z$ and a pair $(V,z')$ as in \eqref{fancondition}.  Since $V = \Spec P$ is affine, $z'$ corresponds to some face $F \leq P$ and the closure $Z \cap V$ of $z'$ in $V$ corresponds to the irreducible closed subset $$Z_F = \{ F \}^- = \{ G \leq F \}$$ of $\Spec P$ (cf.\ Proposition~\ref{prop:Spec}\eqref{irreducibles}).  As a topological space, $Z_F$ is identified with $\Spec F$ (Remark~\ref{rem:ZF}), so we give $Z \cap V$ a fan structure by giving it the usual structure sheaf $\M_F$ of $\Spec F$ (as we must in order for our construction to satisfy the two properties).  Similarly, we \emph{define} our closed embedding of algebraic realizations $\AA(Z \cap V) \into \AA(V)$ as in \eqref{ZXlocalembedding} (as we must).   We can do this for any point $z \in Z$ to define a fan structure on $Z$ near $z$ (and an a local version of $\AA(Z(X)) \into \AA(X)$).  To show that these locally defined fan structures (and closed embeddings of realizations) glue to a global fan structure (which will be our $Z(X)$) on $Z$ (and a closed embedding $\AA(Z(X)) \into \AA(X)$) we need to prove that for any fixed $z \in Z$ and any two pairs $(V_i,z'_i)$ ($i=1,2$) as in \eqref{fancondition} with $z \in V_1 \cap V_2$, the fan structures on $Z \cap V_i$ coming from our construction agree on some neighborhood of $z$ in $Z \cap V_1 \cap V_2$ (and similarly for our locally defined closed embeddings of realizations).  To see this, start by picking an affine open neighborhood $V$ of $z$ in $V_1 \cap V_2$.  We have $z_1',z_2' \in V$ by the observation above.  Furthermore, we must have $z_1'=z_2'=: z'$ because $z_1'$ and $z_2'$ both have the same closure in $V$ (namely $Z \cap V$, again using the observation above) and $V$ is sober (Proposition~\ref{prop:Spec}\eqref{irreducibles}).  The pair $(V,z')$ is another pair as in \eqref{fancondition} by the usual observation.  It is now enough to show that for each fixed $i$, the fan structure on $Z \cap V_i$ coming from $(V_i,z')$ agrees with the fan structure on $Z \cap V$ coming from $(V,z')$ near $z$ (and similarly for the closed embeddings of realizations).

Since $V_i$ is affine, we have $V_i = \Spec P$, where $P :=\M_X(V_i)$.  Similarly, $V = \Spec Q$, where $Q:=\M_X(V)$, and the open embedding $V \into V_i$ is $\Spec$ of the restriction map $h : P \to Q$.  By Proposition~\ref{prop:affineopensubfan}\eqref{aos1}, $h$ identifies $Q$ with $F^{-1} P$ for some face $F \leq P$.  Let $G$ be the face of $P$ corresponding to $z'$ under $V_i = \Spec P$.  Since $z'$ is also in $V \subseteq V_i$, we have $F \leq G$ and $z'$ corresponds to the face $F^{-1}G$ of $Q=F^{-1}P$ under $V = \Spec Q$ because we know $\Spec h$ is an open embedding and Lemma~\ref{lem:boundaryfaces} below ensures that $F^{-1}G$ is a face of $Q=F^{-1}P$ whose preimage under the localization map $h$ is $G$.  By construction, the fan structure on $Z$ near $z$ coming from $(V_i,z')$ is defined by giving $V_i \cap Z$ the usual structure sheaf $\M_G$ of $\Spec G$, whereas the fan structure on $Z$ near $z$ coming from $(V,z')$ is defined by giving $V \cap Z$ the usual structure sheaf $\M_{F^{-1}G}$ of $\Spec F^{-1} G$.  The open embedding $V \cap Z \into V_i \cap Z$ is the map of topological spaces underlying the map of fans $\Spec F^{-1}G \to \Spec G$, so to complete the proof we just need to know that $\M_{G}$ restricts to $\M_{F^{-1}G}$ on $V \cap Z$ under this map of fans (i.e.\ that this map of fans is an open embedding)---this is Proposition~\ref{prop:Spec}\eqref{localizationembedding}.  This completes the construction of $Z(X)$; the other assertions of the theorem are clear from the construction. \end{proof}

\begin{lem} \label{lem:boundaryfaces} Let $P$ be a monoid, $F \leq G \leq P$ faces of $P$.  Then $F^{-1}G$ is a face of $F^{-1}P$ and the diagram of monoids $$ \xym{ G \ar[r] \ar[d] & F^{-1}G \ar[d] \\ P \ar[r] & F^{-1} P } $$ is both cartesian and cocartesian. \end{lem}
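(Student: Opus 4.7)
I will handle the three assertions in the order: (i) $F^{-1}G$ is a face, (ii) the square is cartesian, (iii) the square is cocartesian. The key technical point, which each part boils down to, is the following: an element $p\in P$ maps into $F^{-1}G$ under the localization $l\colon P\to F^{-1}P$ if and only if $p\in G$. This will be the workhorse computation.

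\emph{Step 1 (face property).} I will check the defining property of a face (Definition~\ref{defn:face}) for $F^{-1}G\subseteq F^{-1}P$ by direct manipulation of fractions. Suppose $x_1,x_2\in F^{-1}P$ with $x_1+x_2\in F^{-1}G$. Writing $x_i=p_i-f_i$ and $x_1+x_2=g-f$ with $p_i\in P$, $f_i,f\in F$, $g\in G$, the equality holds in $F^{-1}P$ only up to some $f''\in F$, giving $p_1+p_2+f+f''=g+f_1+f_2+f''$ in $P$. Since $F\subseteq G$ and $G$ is a submonoid, the right-hand side lies in $G$, so $p_1+p_2+(f+f'')\in G$. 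The face property of $G\leq P$ then forces $p_1,p_2\in G$, whence $x_i=p_i-f_i\in F^{-1}G$, as required.

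\emph{Step 2 (cartesian).} I need to show the canonical map $G\to P\times_{F^{-1}P}F^{-1}G$ is an isomorphism. Injectivity is automatic from the injection $G\hookrightarrow P$. For surjectivity, suppose $p\in P$ has image in $F^{-1}G$. Then $p=g-f$ in $F^{-1}P$ for some $g\in G$, $f\in F$, so there is $f'\in F$ with $p+f+f'=g+f'$ in $P$. Since $g+f'\in G$ (again using $F\subseteq G$), we have $p+(f+f')\in G$, and the face property of $G\leq P$ gives $p\in G$. So the preimage of $F^{-1}G$ in $P$ is exactly $G$, proving cartesianness.

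\emph{Step 3 (cocartesian).} Rather than argue by elements, I will use the universal property. A pushout in $\Mon$ is formed by taking a relevant quotient of the coproduct, and the general principle is that pushing out a localization along an arbitrary map produces a localization of the target. Explicitly, given any monoid $R$ and a commutative square with maps $P\to R$ and $F^{-1}G\to R$ extending the given maps from $G$, the map $F^{-1}G\to R$ forces the images in $R$ of the elements of $F\subseteq G\subseteq P$ to be units. Hence $P\to R$ factors uniquely through $F^{-1}P$ by the universal property of localization, giving a unique compatible map $F^{-1}P\to R$. Uniqueness is immediate because $F^{-1}P$ is generated as a monoid by the images of $P$ and the inverses of (the images of) $F$.

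\emph{Main obstacle.} None of the steps are deep; the one that requires the most care is Step~1, where one must keep track of the auxiliary $f''\in F$ arising from the equivalence relation defining $F^{-1}P$ and then correctly invoke the face property of $G$ in $P$ rather than in some localized monoid. Steps~2 and~3 reuse essentially the same fraction manipulation and the universal property of localization, respectively.
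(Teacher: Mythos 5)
Your proof is correct. The paper leaves this lemma as an exercise, and your argument — identifying $F^{-1}G$ with its (injective) image in $F^{-1}P$, using the face property of $G\leq P$ to handle both the face claim and the cartesian claim, and invoking the universal property of localization for the pushout — is precisely the intended solution; the only point you gloss over, that the induced map $F^{-1}P\to R$ restricts to $\beta$ on $F^{-1}G$, follows immediately from agreement on the image of $G$ and uniqueness of inverses.
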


\begin{proof} Exercise. \end{proof}

In the following example we will examine an important special case of Theorem~\ref{thm:boundary} which we will use frequently later and which seems to encompass very nearly the full generality of that theorem.  

\begin{example} \label{example:ZXx}  Let $X$ be a fan, $x \in X$, $Z := \{ x \}^{-}$ the irreducible closed subspace of (the topological space underlying) $X$ given by the closure of $x$ in $X$.  The equivalent conditions of Theorem~\ref{thm:boundary} are clearly satisfied for this $X$ and $Z$ because we can take $(U,z') = (X,x)$ for any $z$ in \eqref{topologicalcondition}.  The construction of Theorem~\ref{thm:boundary} gives us a fan structure $Z(X)$ on the topological space $Z$ which we shall denote by $Z(X,x)$ from now on.  In this special case, the structure sheaf $\M_Z$ of the fan $Z(X,x)$ can be described directly as follows.  For a monoid $P$, write $\u{P}$ for the constant sheaf associated to $P$ on the space $Z$.  We have a map $\M_X|Z \to \u{\M_{X,x}}$ of sheaves of monoids on $Z$ defined by taking a section $s$ of $\M_X|Z$ over a (non-empty) open subset $U$ of $Z$ (note that such a $U$ contains $x$) to the constant function $(s_x :U \to \u{\M_{X,x}}) \in \u{\M_{X,x}}(U)$ with value $s_x \in (\M_X|Z)_x=\M_{X,x}$.  We define $\M_Z$ by the cartesian diagram \bne{cartdiagram} & \xym{ \M_Z \ar[r] \ar[d] & \u{ \M_{X,x}^* } \ar[d] \\ \M_X|Z \ar[r] & \u{ \M_{X,x} } } \ene of sheaves of monoids on $Z$.  

To see that $(Z,\M_Z)$ is a fan equal to the fan $Z(X)$ constructed in Theorem~\ref{thm:boundary}, consider an affine open subset $V = \Spec P$ of $X$ containing $x$.  Let $G \leq P$ be the face of $P$ corresponding to $x$, so that $Z \cap V$ corresponds to the irreducible closed subset $Z_G$ of $\Spec P$ given by the closure of $G$ in $\Spec P$ (cf.\ Proposition~\ref{prop:Spec}\eqref{irreducibles}).  We have $\M_{X,x} = \M_{P,G} = G^{-1}P$ (Proposition~\ref{prop:Spec}\eqref{stalkMP}), so $\M_{X,x}^* = G^{\rm gp}$.  According to the construction of Theorem~\ref{thm:boundary}, the restriction of the structure sheaf of the fan $Z(X)$ to the open subspace $Z_G = Z \cap V$ is the usual structure sheaf $\M_G$ of $\Spec G$ (suppressing the natural identification of $\Spec G$ and $Z_G$ as topological spaces discussed in Remark~\ref{rem:ZF}).  To see that $\M_Z|Z_G = \M_G$, first note that $\M_X|V = \M_P$ and formation of finite inverse limits of sheaves commutes with pullback (restriction from $Z$ to $Z_G$ in our case), so $\M_Z|Z_G$ is defined by the cartesian diagram \bne{cartdiagramV} & \xym{ \M_Z|Z_G \ar[r] \ar[d] & \u{ G^{\rm gp} } \ar[d] \\ \M_P|Z_G \ar[r] & \u{ G^{-1} P } } \ene of sheaves of monoids on $Z_G$.  Consequently, our desired equality will follow by showing that the evident diagram \bne{cartdiagramV2} & \xym{ \M_G \ar[r] \ar[d] & \u{ G^{\rm gp} } \ar[d] \\ \M_P|Z_G \ar[r] & \u{ G^{-1} P } } \ene of sheaves of monoids on $Z_G$ is cartesian.  This can be checked on stalks.  A point of $Z_G$ is a face $F$ of $G$ and, using Proposition~\ref{prop:Spec}\eqref{stalkMP}, we see that the stalk of \eqref{cartdiagramV2} at $F$ is \bne{cartdiagramV2stalk} & \xym{ F^{-1}G \ar[r] \ar[d] & G^{\rm gp} \ar[d] \\ F^{-1}P \ar[r] & G^{-1} P. } \ene  The diagram of monoids \eqref{cartdiagramV2stalk} is cartesian by Lemma~\ref{lem:boundaryfaces} (applied with $F \leq G \leq P$ there given by $F^{-1}G \leq F^{-1}G \leq F^{-1}P$ here). 

Since formation of finite inverse limits commutes with stalks, we see from the definition of $\M_Z$ via the cartesian diagram \eqref{cartdiagram} that $\M_{Z,x} = \M_{X,x}^*$ is a group---i.e.\ $x$ is in the torus (\S\ref{section:torus}) of the fan $Z(X,x)$.

The construction of $Z(X,x)$ is functorial in $(X,x)$ in the sense that a map of fans $f : X \to X'$ and a chosen point $x \in X$ yield a map of fans \be Z(f,x) : Z(X,x) & \to & Z(X',f(x)) \ee which, on the level of topological spaces, sits in a commutative diagram \bne{Zfxdiagram} & \xym{ Z(X,x) = \{ x \}^- \ar[r] \ar[d]_{Z(f,x)} & X \ar[d]^f \\ Z(X',f(x)) = \{ f(x) \}^- \ar[r] & X'. } \ene  (There is a similar functoriality for the general $Z(X)$ construction of Theorem~\ref{thm:boundary}, but it is cumbersome to formulate and of questionable additional value.)  Set $Z := Z(X,x)$, $x' := f(x)$, $Z' := Z(X',x')$ to ease notation; let us also abusively write $f : Z \to Z'$ for the restriction of $f$.  The map of structure sheaves $f^{-1} \M_{Z'} \to \M_Z$ which is part of the data of $Z(f,x)$ is defined as follows:  Since $f^{-1}$ commutes with finite inverse limits, $f^{-1} \M_{Z'}$ is the inverse limit of \bne{MZprime} & \xym{ & \u{ \M_{X',x'}^* } \ar[d] \\ f^{-1} \M_{X'} | Z \ar[r] & \u{ \M_{X',x'}, } } \ene where the underlines denote constant sheaves on $Z$.  There is an obvious map from the diagram \eqref{MZprime} to the diagram \bne{MZ} & \xym{ & \u{ \M_{X,x}^* } \ar[d] \\ \M_{X} | Z \ar[r] & \u{ \M_{X,x}, } } \ene and $f^{-1} \M_{Z'} \to \M_Z$ is defined to be the induced map from the inverse limit of \eqref{MZprime} to the inverse limit of \eqref{MZ} ($=\M_Z$).  \end{example}

\subsection{Separated and proper maps} \label{section:propermaps}  In this section we develop a theory of \emph{separated} (resp.\ \emph{proper}) maps of fans in terms of ``valuative criteria" analogous to those considered in algebraic geometry.  These valuative criteria only behave well for maps of \emph{fine} fans (cf.\ Example~\ref{example:x}), so they should not be used to \emph{define} separated (resp.\ proper) maps between arbitrary fans.  In general, in accordance with the philosophy of \S\ref{section:generalities}, we \emph{define} separated and proper maps for (reasonable) maps of fans in terms of the algebraic realization (see Definition~\ref{defn:proper}).  When \emph{separated} and \emph{proper} are thus defined, one can show (Theorem~\ref{thm:diffproper}) that various ``other" realization of a separated (resp.\ proper) map of fans are separated (resp.\ proper) in an appropriate sense.  For quasi-compact maps of fine fans all of the possible notions of ``separated" and ``proper" (the valuative ones, and the ones in terms of various realizations) coincide (Theorem~\ref{thm:diffproper}).

\begin{lem} \label{lem:closure} Let $X$ be a fine fan, $x \in X$, $x'$ a point of the torus $T_X \subseteq X$ (\S\ref{section:torus}).  Then $x \in \{ x' \}^-$ iff there is a map of fans $h : \Spec \NN \to X$ taking the closed point to $x$ and the generic point to $x'$. \end{lem}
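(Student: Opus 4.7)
The plan is as follows. The implication $(\Leftarrow)$ is nearly immediate: given such an $h$, Example~\ref{example:SpecN} tells us that the closed point $0$ lies in the closure of the generic point $\NN$ in $\Spec \NN$ (the Sierpinski space), so continuity of $h$ on topological spaces yields $x = h(0) \in \overline{\{h(\NN)\}} = \overline{\{x'\}}$.

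For the converse $(\Rightarrow)$, I would first reduce to the affine case. Since $X$ is fine and hence locally finite type, Proposition~\ref{prop:locallyfinitetypefans} gives an affine open neighborhood $U_x \cong \Spec P$ of $x$ with $P := \M_{X,x}$ fine, in which $x$ corresponds to the closed point $P^* \leq P$. Because $x \in \overline{\{x'\}}$, the point $x'$ lies in every open neighborhood of $x$ and in particular in $U_x$; say $x'$ corresponds to a face $F \leq P$. Since it suffices to construct the map into $U_x$ (and then compose with the open embedding $U_x \hookrightarrow X$), the problem reduces to producing a monoid homomorphism $h : P \to \NN$ whose associated $\Spec h$ sends $0 \mapsto P^*$ and $\NN \mapsto F$.

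Next I would identify $x'$ precisely. Since $x' \in T_X$, the stalk $\M_{X,x'} = F^{-1}P$ (Proposition~\ref{prop:Spec}\eqref{stalkMP}) is a group. But for any $p \in P$, writing $-p = q - f$ with $q \in P$, $f \in F$ gives $p + q = f \in F$, and then the face property of $F$ forces $p \in F$; hence $F^{-1}P$ being a group forces $F = P$, so $x'$ is the unique generic point of $U_x$. The task is therefore to produce $h : P \to \NN$ with $h^{-1}(0) = P^*$ (equivalently, $h$ is local, so $\Spec h$ sends $0$ to $P^*$; automatically $\Spec h$ sends $\NN$ to $h^{-1}(\NN) = P$, matching $x'$).

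This last step is exactly Lemma~\ref{lem:duality} applied to the face $P^* \leq P$ of the fine monoid $P$. Consequently there is no genuine obstacle once the right reduction is in place; the content of the statement is essentially Lemma~\ref{lem:duality}, together with the observation that in the affine neighborhood $\Spec \M_{X,x}$ the only point of the torus is the generic point. The one subtlety worth double-checking is that the open embedding $U_x \hookrightarrow X$ transports the constructed map $\Spec \NN \to U_x$ faithfully to a map $\Spec \NN \to X$ with the desired values on the two points, which is automatic from the definition of a morphism of fans.
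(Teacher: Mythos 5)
Your proof is correct and takes essentially the same route as the paper: the forward direction by continuity, and the converse by localizing to an affine chart, observing that $x'$ must be the generic point there because $\M_{X,x'}$ is a group, and then invoking Lemma~\ref{lem:duality}. The only cosmetic difference is that you shrink to the smallest neighborhood $\Spec \M_{X,x}$ so that Lemma~\ref{lem:duality} is applied to the face $P^*$, whereas the paper works in an arbitrary affine chart and applies that lemma to the face corresponding to $x$.
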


\begin{proof} The condition is sufficient on the grounds that $h$ is continuous on spaces.  For the difficult implication, the question is local, so we can assume $X = \Spec P$ for some fine monoid $P$, so that $x$ and $x'$ correspond to faces $F \subseteq G$ of $P$.  The condition $x' \in T_X$ means we must have $G=P$ because $\M_{X,x'} = G^{-1} P$ must be a group.  Such an $h$ is then the same thing as an $h$ as in Lemma~\ref{lem:duality}. \end{proof}

\begin{defn}  \label{defn:valuativecriterion}  A map of fans $f : X \to Y$ is called \emph{valuatively separated} (resp.\ \emph{valuatively proper}) iff there is at most one (resp.\ a unique) lift as indicated in any solid diagram of fans as below. \bne{fandiagramtolift} & \xym{ \Spec \ZZ \ar[d] \ar[r]^-g & X \ar[d]^f \\ \Spec \NN \ar[r]^-g \ar@{.>}[ru] & Y } \ene  If $f : X \to Y$ is a map of schemes (and $U \subseteq X$ is an open subscheme), then we say that $f$ satisfies the \emph{discrete valuative criterion for separatedness} (resp.\ \emph{properness}) (\emph{for maps generically into} $U \subseteq X$) iff there is at most one (resp.\ a unique) lift as indicated in any solid diagram of schemes \bne{schemediagramtolift} & \xym{ \Spec K \ar[d] \ar[r]^-g & X \ar[d]^f \\ \Spec R \ar[r] \ar@{.>}[ru] & Y } \ene (where $g(\Spec K) \in U$) for any field $K$ with discrete valuation $\nu : K^* \to \ZZ$ and valuation ring $R = \{ a \in K^* : \nu(a) \geq 0 \} \cup \{ 0 \}$. \end{defn}

\begin{lem} \label{lem:valuativecriterion}  Valuatively separated (resp.\ valuatively proper) maps of fans are stable under composition and base change.  The property of being valuatively separated (resp.\ valuatively proper) is local on the base. \end{lem}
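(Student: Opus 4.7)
The plan is to deduce all three statements from the uniqueness of lifts in the definition together with Theorem~\ref{thm:fansinverselimits} (fibered products of fans) and the very simple topology of $\Spec \NN$.

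First I would dispatch \emph{stability under composition}. Given $X \xrightarrow{f} Y \xrightarrow{g} Z$ and a lifting problem $(\alpha, \beta) : (\Spec \ZZ \to \Spec \NN) \to (X \to Z)$ for $gf$, any lift $\ell : \Spec \NN \to X$ of this problem produces, by post-composition with $f$, a lift of the induced problem for $g$. So if $g$ is valuatively separated, then $f\ell_1 = f\ell_2$ for any two lifts $\ell_1,\ell_2$; these then become lifts of the common problem $(\alpha, f\ell_1) : (\Spec \ZZ \to \Spec \NN) \to (X \to Y)$ for $f$, and separation of $f$ gives $\ell_1=\ell_2$. For properness, one first uses existence for $g$ to obtain $\Spec \NN \to Y$, then uses existence for $f$ to lift to $X$; uniqueness is the case just treated.

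Next, \emph{stability under base change}. Suppose $f : X \to Y$ is valuatively separated (resp.\ proper) and let $Y' \to Y$ be any map, giving $f' : X' := X \times_Y Y' \to Y'$ in $\Fans$ (the fibered product exists by Theorem~\ref{thm:fansinverselimits}). Given a lifting problem $(\alpha,\beta) : (\Spec \ZZ \to \Spec \NN) \to (X' \to Y')$ for $f'$, project to $X$ via $p_X : X' \to X$ to obtain a lifting problem for $f$. Using the universal property of $X \times_Y Y'$, a lift $\tilde\ell : \Spec \NN \to X$ of the latter problem together with $\beta : \Spec \NN \to Y'$ (which agree after composition to $Y$) determines a unique map $\ell : \Spec \NN \to X'$ with $p_X \ell = \tilde\ell$ and $p_{Y'}\ell = \beta$. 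Checking $\ell|_{\Spec \ZZ} = \alpha$ reduces, again by the universal property of the fibered product, to $p_X\alpha = \tilde\ell|_{\Spec \ZZ}$ (from the original lifting problem for $\tilde\ell$) and $p_{Y'}\alpha = \beta|_{\Spec \ZZ}$ (from the commutativity of the original square). Uniqueness of $\ell$ follows similarly from uniqueness of $\tilde\ell$. This handles both ``separated'' and ``proper''.

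Finally, \emph{locality on the base}. Suppose $\{V_i\}$ is an open cover of $Y$ and each restriction $f^{-1}(V_i) \to V_i$ is valuatively separated (resp.\ proper). The key observation is topological: $\Spec \NN$ has exactly two points, with the generic point $\NN$ specializing to the closed point $0$, so \emph{every} open neighborhood of $0$ is all of $\Spec \NN$. Consequently, for any lifting problem $(\alpha,\beta) : (\Spec \ZZ \to \Spec \NN) \to (X \to Y)$, if we pick a $V_i$ containing $\beta(0) \in Y$, then $\beta$ automatically factors through $V_i$, and similarly $\alpha$ must factor through $f^{-1}(V_i)$ since $\alpha(\Spec \ZZ) = \ell(\NN)$ specializes (under any candidate lift $\ell$) to $\ell(0)$, which lies over $\beta(0) \in V_i$---equivalently, because $f(\alpha(\Spec \ZZ)) = \beta(\NN)$ lies in any open containing $\beta(0)$. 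So the lifting problem lives entirely in $f^{-1}(V_i) \to V_i$, and existence (resp.\ uniqueness) follows from the assumption on this restriction. The only mild obstacle is bookkeeping: one must choose the appropriate $V_i$ (any one containing $\beta(0)$) and verify that both horizontal arrows land inside $f^{-1}(V_i)$; both follow from the Sierpi\'nski-type topology of $\Spec \NN$ recalled in Example~\ref{example:SpecN}.
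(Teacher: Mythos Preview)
Your proof is correct and follows essentially the same approach as the paper. The paper dismisses composition and base change as ``a formal exercise with the definition in terms of a lifting property'' and handles locality on the base by the same observation you make: any lift in the original diagram is the same as a lift in the diagram restricted over a neighborhood $U$ of $g(0)$, precisely because $\Spec \NN$ has no proper open neighborhood of its closed point. You have simply spelled out the details the paper omits; the only minor wrinkle is that your first justification for why $\alpha$ lands in $f^{-1}(V_i)$ (via a ``candidate lift $\ell$'') is slightly circular, but your ``equivalently'' clause gives the clean direct argument $f(\alpha(\Spec\ZZ)) = \beta(\NN) \in V_i$.
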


\begin{proof} The first statement is a formal exercise with the definition in terms of a lifting property.  For the second statement, we just note that, given a diagram \eqref{fandiagramtolift} and a neighborhood $U$ of $g(0)$ in $Y$, a lift in \eqref{fandiagramtolift} is the same thing as a lift in the diagram below. $$ \xym{ \Spec \ZZ \ar[d] \ar[r]^-g & f^{-1}(U) \ar[d]^f \\ \Spec \NN \ar[r]^-g \ar@{.>}[ru] & U } $$ \end{proof}

Here are some ``sanity checks:"

\begin{lem} \label{lem:propersanity} An affine morphism of fans is valuatively separated.   If $h : Q \to P$ is a dense (Definition~\ref{defn:dense}) or finite (Definition~\ref{defn:flat}) map of monoids, then $\Spec h$ is valuatively proper. \end{lem}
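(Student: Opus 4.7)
The first assertion is handled by localizing on the target: since $\Spec \NN$ has only two points (a closed point and a generic point specializing to it) and open subsets of $Y$ are stable under generalization, any map $g_\NN : \Spec \NN \to Y$ factors through any affine open $V \subseteq Y$ containing the image of the closed point. Because $f$ is affine, $f^{-1}(V)$ is then also affine, so we are reduced to $f = \Spec h$ for some $h : Q \to P$. A lift $\tilde g$ corresponds to a local monoid homomorphism $\tilde g^\flat : P \to \NN$ satisfying $\iota \circ \tilde g^\flat = g_\ZZ^\flat$, where $\iota : \NN \hookrightarrow \ZZ$; injectivity of $\iota$ forces $\tilde g^\flat$ to be the unique set-theoretic factorization of $g_\ZZ^\flat$ through $\iota$, so at most one lift can exist.

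For the second assertion, valuative separatedness is handled by the first part, so only the existence of a lift is at issue. Since $\NN$ and $\ZZ$ are integral, maps out of $P$ (resp.\ $Q$) into them factor through $P^{\rm int}$ (resp.\ $Q^{\rm int}$), and $h^{\rm int} : Q^{\rm int} \to P^{\rm int}$ inherits denseness (resp.\ finiteness) from $h$, so we may assume $P, Q$ are integral. It then suffices to show $g_\ZZ^\flat(P) \subseteq \NN$ and that the resulting $\tilde g^\flat : P \to \NN$ is local; the compatibility $\tilde g^\flat \circ h = g_\NN^\flat$ follows automatically from injectivity of $\iota$ and the commutativity of the solid square.

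Producing the factorization is the easy part. In the dense case, for $p \in P$ choose $n \geq 1$ and $q \in Q$ with $np = h(q)$; then $n g_\ZZ^\flat(p) = g_\NN^\flat(q) \geq 0$, forcing $g_\ZZ^\flat(p) \geq 0$. In the finite case, pick $Q$-module generators $p_1, \ldots, p_k$ of $P$; for any $i$, writing $np_i = h(q_n) + p_{j_n}$ yields $n g_\ZZ^\flat(p_i) = g_\NN^\flat(q_n) + g_\ZZ^\flat(p_{j_n}) \geq \min_j g_\ZZ^\flat(p_j)$, a constant bound, which forces $g_\ZZ^\flat(p_i) \geq 0$ and hence $g_\ZZ^\flat(P) \subseteq \NN$.

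The main obstacle is locality: showing $\tilde g^\flat(p) = 0$ forces $p \in P^*$. In the dense case, writing $np = h(q)$ gives $g_\NN^\flat(q) = n \tilde g^\flat(p) = 0$, so $q \in Q^*$ by locality of $g_\NN^\flat$, hence $np = h(q) \in P^*$, hence $p \in P^*$. In the finite case, applying the same reasoning to every multiple $np$ (via $np = h(q_n) + p_{j_n}$) yields $q_n \in Q^*$ for all $n$; pigeonhole gives $n < m$ with $j_n = j_m = j$, so $np = h(q_n) + p_j$ and $mp = h(q_m) + p_j$ in $P$. These two equalities combine to give $np + h(q_m) = mp + h(q_n)$ in $P$, and subtracting the unit $h(q_n)$ from both sides yields $mp = np + u$ with $u := h(q_m - q_n) \in P^*$. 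Cancelling $np$ in $P^{\rm gp}$—permitted by integrality of $P$—gives $(m - n) p = u \in P^*$, whereupon $p + ((m - n - 1)p - u) = 0$ exhibits $p$ as a unit. This cancellation step is the one place where integrality is genuinely used.
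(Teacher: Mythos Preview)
Your core argument is correct and matches the paper's: uniqueness follows from injectivity of $\NN \hookrightarrow \ZZ$, and existence follows by showing $g_\ZZ^\flat(P) \subseteq \NN$ via a lower-bound argument on the finitely many generators. However, the entire locality discussion (your fourth paragraph) is unnecessary: by the adjunction \eqref{Specadjunction}, a map $\Spec \NN \to \Spec P$ in $\LMS$ corresponds to an \emph{arbitrary} monoid homomorphism $P \to \M_\NN(\Spec \NN) = \NN$, so the factorization $g_\ZZ^\flat(P)\subseteq\NN$ alone already produces the lift. Your locality argument moreover invokes locality of $g_\NN^\flat : Q \to \NN$, which is not given for the same reason. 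Dropping the locality check also renders the reduction to integral monoids superfluous; the paper's proof accordingly omits both and is correspondingly shorter.
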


\begin{proof} Since the question is local on the base (Lemma~\ref{lem:valuativecriterion}), it is enough to prove that $\Spec h$ is valuatively separated when $h : Q \to P$ is an arbitrary map of monoids.  We need to show that there is at most one completion in a solid diagram of monoids as below. $$ \xym{ \ZZ & \ar[l]_-g P \ar@{.>}[ld] \\ \NN \ar[u] & \ar[l] Q \ar[u]_h } $$  But this is clear from injectivity of $\NN \to \ZZ$.  For the second statement, we need to show that such a completion exists when $h$ is dense or finite, or, equivalently, that $g$ must take values in $\NN \subseteq \ZZ$.  First suppose $h$ is dense.  Then for any $p \in P$, $np \in h(Q)$ so $ng(p) = g(np) \in \NN$, hence $g(p) \in \NN$.  Now suppose $h$ is finite.  Let $S$ be a finite subset of $P$ generating $P$ as a $Q$-module, so we can write any element $p \in P$ as $p = h(q)+s$ for some $q \in Q$, $s \in S$.  Since $g(h(Q)) \subseteq \NN$, we just need to show that $g(s) \in \NN$ for each $s \in S$.  Since $S$ is finite, we can certainly choose $N \in \ZZ$ such that $N < g(s)$ for all $s \in S$.  Now suppose, toward a contradiction, that $g(s) < 0$ for some $s \in S$.  Then $g(Ns) \leq N$.  On the other hand, we can write $Ns = h(q)+s'$ for some $s' \in S$, $q \in Q$ and we then get the contradiction: $$ N \geq g(Ns) = g(h(q)) + g(s') \geq g(s').$$   \end{proof}

\begin{example} \label{example:propernotclosed} When $h : Q \to P$ is a surjection of monoids, $\Spec h$ is an embedding (Lemma~\ref{lem:specembedding}), but it need not be a closed embedding even if $Q$ and $P$ are fine (Example~\ref{example:notclosed}).  For such a map $h$, the map of fine fans $\Spec h$ is valuatively proper by Lemma~\ref{lem:propersanity} even though it is \emph{not} a closed map on topological spaces. \end{example}

\begin{lem} \label{lem:proper1} Let $f : X \to Y$ be a map of fans.  Then $f$ is valuatively separated (resp.\ valuatively proper) iff $\AA(f) : \AA(X) \to \AA(Y)$ satisfies the discrete valuative criterion for separatedness (resp.\ properness) for maps generically into $\AA(T_X) \subseteq \AA(X)$. \end{lem}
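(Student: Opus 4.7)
The plan is to fix a discrete valuation ring $R$ with uniformizer $\pi$, fraction field $K$, and valuation $\nu : K^* \to \ZZ$, and to exhibit a natural bijection between fan valuative diagrams of the form \eqref{fandiagramtolift} (resp.\ their lifts) and scheme valuative diagrams of the form \eqref{schemediagramtolift} whose top arrow factors through $\AA(T_X) \subseteq \AA(X)$ (resp.\ their lifts). Since the bijection will be compatible with lifting, both the separated and the proper cases will reduce to a single correspondence.

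To convert a fan diagram $(g, h)$ into a scheme diagram, I would first construct LMS morphisms $|\Spec K| \to \Spec \ZZ$ and $|\Spec R| \to \Spec \NN$ from the local monoid homomorphisms $\ZZ \to K^*$, $1 \mapsto \pi$ and $\NN \to R$, $1 \mapsto \pi$, then compose with $g$, $h$ and apply the adjunction \eqref{Aadjunction} to obtain scheme maps $\alpha : \Spec K \to \AA(X)$ and $\beta : \Spec R \to \AA(Y)$. The factor condition $\alpha(\Spec K) \in \AA(T_X)$ is automatic because the stalk of $g$ has group-valued target $\ZZ$, forcing $g(0) \in T_X$. Conversely, given a scheme diagram with $\alpha$ factoring through $\AA(T_X)$, the adjoint LMS maps $\alpha'$, $\beta'$ satisfy: $\alpha'(\eta) \in T_X$, so $\alpha'_\eta$ factors through $K^*$; commutativity forces $\beta'_\eta = \alpha'_\eta \circ f^\dagger$ into $K^*$; and the generalization square for $\beta'$ against $R \hookrightarrow K$ then forces $\beta'_s$ into $R \setminus 0$. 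Setting $g_\alpha^\dagger := \nu \circ \alpha'_\eta$ and $h_\beta^\dagger := \nu \circ \beta'_s$ yields fan maps $\Spec \ZZ \to X$ and $\Spec \NN \to Y$, and because $\nu \circ (n \mapsto \pi^n)$ is the identity on $\NN$, the two constructions are mutually inverse at the level of diagrams.

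The crux is to promote this to a bijection between scheme lifts $\gamma : \Spec R \to \AA(X)$ and fan lifts $k : \Spec \NN \to X$. Given $\gamma$, the condition $\gamma'_\eta = \alpha'_\eta$ (from $\gamma$ lifting $\alpha$) together with the argument above forces $\gamma'_s$ into $R \setminus 0$, and the fan map $k_\gamma$ with $k_\gamma(0) := \gamma'(s)$ and $k_\gamma^\dagger := \nu \circ \gamma'_s$ is then a fan lift of $(g_\alpha, h_\beta)$. Conversely, given $k$ with closed stalk $\phi := k^\dagger$, the formula $m \mapsto \alpha'_\eta(\mathrm{gen}(m))$ defines a monoid homomorphism $\M_{X, k(0)} \to K$ which lands in $R$ because its $\nu$-value equals $\phi(m) \geq 0$; this gives the closed stalk of an LMS morphism $(\gamma_k)' : |\Spec R| \to X$ whose adjoint $\gamma_k$ is a scheme lift. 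The main obstacle is verifying these constructions are mutually inverse: in one direction $\nu \circ (\gamma_k)'_s = \phi$ by construction, and in the other direction $(\gamma_{k_\gamma})'_s$ agrees with $\gamma'_s$ because both are the unique lift of $\alpha'_\eta \circ \mathrm{gen} = \gamma'_\eta \circ \mathrm{gen}$ through the injection $R \hookrightarrow K$. Once this bijection on lifts is established, existence and uniqueness transfer in both directions, settling both the separated and the proper cases simultaneously.
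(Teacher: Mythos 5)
Your proposal is correct and follows essentially the same route as the paper's proof: pass between the two kinds of valuative diagrams via the adjunction defining $\AA$ together with ``composing with the valuation'' (using that the generic stalk is a group because the generic point lands in $T_X$), and then observe that a lift on either side amounts to the same choice of specialization point $x$, with the locality conditions into $R$ and into $\NN$ matching because $R\setminus\{0\}=\nu^{-1}(\NN)$ and $R^*=\nu^{-1}(0)$. The only overstatement is the claim that the two diagram-level constructions are mutually inverse---the scheme-to-fan-to-scheme composite replaces $\alpha'_\eta$ by $m\mapsto \pi^{\nu(\alpha'_\eta(m))}$, which is generally a different map---but your argument only ever uses the true half (fan-to-scheme-to-fan is the identity, plus the lift bijection attached to each scheme diagram), so the proof stands.
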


\begin{proof} ($\Rightarrow$) Composing with the orbit maps $\tau_X, \tau_Y$, a solid diagram \eqref{schemediagramtolift} yields a solid $\LMS$ diagram \bne{LMSliftdia} & \xym{ \Spec K \ar[r]^-g \ar[d] & X \ar[d]^{f} \\ \Spec R \ar@{.>}[ru] \ar[r]^-g & Y } \ene and by the universal property of the scheme realization $\AA(\slot)$ (\S\ref{section:categoricalinterpretation}), a lift in \eqref{schemediagramtolift} is the same as a lift in \eqref{LMSliftdia}.  Let $x' := g(\Spec K) \in X$, so the hypothesis that $g$ is ``generically into $\AA(T_X)$" means that $x' \in T_X$, so $\M_{X,x'}$ is a group.  The diagram \eqref{LMSliftdia} corresponds to a solid commutative diagram of monoids $$ \xym@C-15pt@R-15pt{ & K & & \ar[ll] \M_{X,x'} \\ R \ar[ru] & & \ar@{.>}[ll] \M_{X,x} \ar@{.>}[ru] \\ & K \ar@{=}[uu]|\hole & & \ar[ll]|>>>>>>>>>\hole \M_{Y,f(x')} \ar[uu]_{f^\dagger_{x'}} \\ R \ar@{=}[uu] \ar[ru] & & \ar[ll] \M_{Y,y} \ar@{.>}[uu] \ar[ru] } $$ where the horizontal and vertical maps are local and the diagonal maps are localizations corresponding to generalizations.  A lift in \eqref{LMSliftdia} corresponds to a point $x \in \{ x' \}^- \cap f^{-1}(y)$ so that the composition \bne{compositiontoR} \M_{X,x} \to \M_{X,x'} \to K \ene actually defines a \emph{local} map of monoids $\M_{X,x} \to R$.  Since $f^\dagger_{x'}$ is local and $\M_{X,x'}$ is a group, so is $\M_{Y,f(x')}$.  The back horizontal arrows are therefore groups homomorphisms to $K^*$ and we can compose with the valuation $\nu$ to obtain group homomorphisms to $\ZZ$.  The commutativity of the bottom square ensures that the resulting composition $$ \M_{Y,y} \to \M_{Y,f(x')} \to K^* \to \ZZ $$ actually takes values in $\NN$ and then the locality of $\M_{Y,y} \to R$ ensures that the resulting monoid homomorphism $\M_{Y,y} \to \NN$ is local because $R^* = \{ a \in K^* : \nu(a) = 0 \}$.  We thus obtain a diagram of fans \bne{Fanliftdia} & \xym{ \Spec \ZZ \ar[r] \ar[d] & X \ar[d]^{f} \\ \Spec \NN \ar@{.>}[ru] \ar[r] & Y } \ene from \eqref{LMSliftdia} by ``composing with the valuation" which is ``the same" as \eqref{LMSliftdia} on the level of topological spaces.  The claim is that ``$x \mapsto x$" is a bijection between lifts in \eqref{Fanliftdia} and lifts in \eqref{LMSliftdia}.  Indeed, a lift in \eqref{Fanliftdia} corresponds to a choice of point $x \in \{ x' \}^- \cap f^{-1}(y)$ so that the composition \bne{fancomposition} \M_{X,x} \to \M_{X,x'} \to \ZZ \ene actually defines a local map $\M_{X,x} \to \NN$ (see Example~\ref{example:SpecN}), and (by the definition of $R \subseteq K$) this will be the case iff \eqref{compositiontoR} defines a local map of monoids $\M_{X,x} \to R$.

($\Leftarrow$)  Pick any field $k$.  Let $K := k((T))$ be the field of formal Laurent series in a variable $T$ with coefficients in $k$.  Equip $K$ with the valuation $\nu = \ord_T$ so that the valuation ring $R = k[[T]]$ of $K$ is the formal power series ring.  The group homomorphism $\ZZ \to K^*$ taking $1$ to $T$ gives rise to a local map of monoids $\NN \to R$.  (This particular choice of valuation ring is not particularly important.)  We thus obtain a diagram of locally monoidal spaces \bne{fg} & \xym{ | \Spec K | \ar[d] \ar[r] & \Spec \ZZ \ar[d] \\ | \Spec R | \ar[r] & \Spec \NN } \ene where the horizontal arrows are homeomorphisms and the notation $| \slot |$ means we regard a scheme as a locally monoidal space using the structure sheaf under multiplication (as in \S\ref{section:categoricalinterpretation}).  Suppose we have a diagram of fans \eqref{Fanliftdia}.  Then we can compose with \eqref{fg} and use the universal property of $\AA( \slot )$ to obtain a diagram of schemes: \bne{Schdia} & \xym{ \Spec K \ar[r] \ar[d] & \AA(X) \ar[d]^-{\AA(f)} \\ \Spec R \ar@{.>}[ru] \ar[r] & \AA(Y) } \ene  The same arguments used in the proof of the other implication show that ``composing with \eqref{fg} and using the universal property of $\tau$" establishes a bijection between lifts in \eqref{Fanliftdia} and lifts in \eqref{Schdia}.  (The inverse is basically given by composing a lift in \eqref{Schdia} with $\tau_X$.)    \end{proof}

\begin{lem} \label{lem:proper2} Let $f : X \to Y$ be a quasi-compact map of fine fans, $x \in X$.  If $f$ is valuatively separated (resp.\ valuatively proper), then so is $Z(f,x) : Z(X,x) \to Z(Y,f(x))$. \end{lem}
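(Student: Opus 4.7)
The plan is to reduce to a calculation with monoid homomorphisms and then use the combination of Lemma~\ref{lem:duality} and a rescaling trick to translate a lifting problem for $Z(f,x)$ into one for $f$.

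\textbf{Reduction.}  Since being valuatively separated/proper is local on the base (Lemma~\ref{lem:valuativecriterion}), I may replace $Y$ by an affine open neighborhood $\Spec Q$ of $y := f(x)$; by Theorem~\ref{thm:boundary}\eqref{ZXaffine} we then have $Z(Y,y) = \Spec G_Y$, where $G_Y \leq Q$ is the face corresponding to $y$.  Given a candidate lift $\ell : \Spec \NN \to Z(X,x)$, the image of the generic point of $\Spec \NN$ under $g' = \ell|_{\Spec \ZZ}$ must lie in the torus of $Z(X,x)$.  A direct inspection of the cartesian diagram \eqref{cartdiagram} shows that $\M_{Z(X,x),x_1}$ is the face $G_{x_1} \leq \M_{X,x_1}$ with $G_{x_1}^{-1} \M_{X,x_1} = \M_{X,x}$; this is a group iff $G_{x_1} = \M_{X,x_1}^*$, which by soberness of $X$ forces $x_1 = x$.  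Hence $g'$ amounts to a group homomorphism $\alpha : \M_{X,x}^* \to \ZZ$, and $\ell(\NN) = x$, $\ell(0) = x_0 \in \{x\}^- \cap f^{-1}(y_0)$ for $y_0 := h'(0)$.  Any such $\ell$ lies in an affine open $U = \Spec P \subseteq X$ containing $x_0$, which automatically contains $x$.  Writing $h : Q \to P$ for $f|_U$ and $G_X \leq P$ for the face corresponding to $x$ (so $G_Y = h^{-1}(G_X)$ and $Z(X,x)\cap U = \Spec G_X$), the lift data reduces to a local monoid homomorphism $L : G_X \to \NN$ whose groupification is $\alpha$ and which satisfies $L \circ h|_{G_Y} = \psi$ where $\psi$ encodes $h'$.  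Such an $L$ exists and is unique if and only if $\alpha(G_X) \subseteq \NN$.

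\textbf{Key step (existence for properness).}  Assume $f$ is valuatively proper.  I claim $\alpha(G_X) \subseteq \NN$.  Because $P$ is fine, $P^{gp}/G_X^{gp}$ is finitely generated, so after tensoring with $\QQ$ and using injectivity of $\QQ$ as a $\QQ$-module, there exists an integer $N > 0$ such that $N\alpha$ extends to a group homomorphism $\tilde \alpha_0 : P^{gp} \to \ZZ$.  By Lemma~\ref{lem:duality} applied to the face $G_X$ of the fine monoid $P$, there is a monoid homomorphism $u : P \to \NN$ with $u^{-1}(0) = G_X$; in particular $u(h(q)) \geq 1$ for every $q \in Q \setminus G_Y$.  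Pick generators $q_1,\dots,q_n$ of $Q$ and choose $M > 0$ large enough that $\tilde\alpha_0(h(q_i)) + M u(h(q_i)) > 0$ for every $q_i \in Q \setminus G_Y$, and set $\tilde\alpha := \tilde\alpha_0 + Mu^{gp}$.  Then $\tilde\alpha \circ h$ takes strictly positive values on the generators of $Q$ lying outside $G_Y$ and agrees with $N\psi \geq 0$ on $G_Y$, so $\tilde\alpha \circ h : Q \to \NN$ is well-defined and local.  The pair $(\tilde\alpha, \tilde\alpha \circ h)$ is a lifting datum for $\Spec h$; valuative properness of $f|_U = \Spec h$ (inherited by locality on base) produces a (unique local) lift $\tilde L : P \to \NN$ with $\tilde L = \tilde\alpha$, whence $\tilde\alpha(P) \subseteq \NN$.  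For $p \in G_X$ we have $u(p)=0$ and therefore $N\alpha(p) = \tilde\alpha_0(p) = \tilde\alpha(p) \geq 0$, so $\alpha(p) \geq 0$ since $N > 0$.  This proves $\alpha(G_X) \subseteq \NN$ and gives the required lift $L$.

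\textbf{Separatedness.}  Two lifts whose closed-point images coincide in a common affine chart agree by valuative separatedness of affine morphisms (Lemma~\ref{lem:propersanity}).  If two lifts had distinct closed-point images $x_{0,1} \neq x_{0,2}$, applying the same rescaling and adjustment construction chart-by-chart produces two distinct lifts for an associated lifting problem for $f$, contradicting valuative separatedness of $f$.

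\textbf{Main obstacle.}  The technical heart is the rescaling-and-adjustment step: extending $\alpha$ to $P^{gp}$ requires rescaling by an integer bounded by the torsion exponent of $P^{gp}/G_X^{gp}$, and then adjusting by $Mu^{gp}$ (with $u$ produced by Lemma~\ref{lem:duality}) to ensure nonnegativity on all of $h(Q)$.  Fineness of $P$ and $Q$ are essential: the first guarantees the existence of $u$, the second supplies a finite generating set on which to impose the positivity needed to choose $M$.  Quasi-compactness of $f$ is what lets us reduce the problem to the single affine chart $U$ containing $x_0$ without losing properness of $\Spec h$.
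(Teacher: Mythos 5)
Your overall strategy is the same as the paper's: rescale the valuation datum so it extends to the groupification, perturb it by a functional supplied by Lemma~\ref{lem:duality} that vanishes on the face of $x$ (your $u$ is exactly the map the paper extracts via Lemma~\ref{lem:closure}, and your $\eta_U$ is the paper's point $x'$ with $\M_{X,x'}=\M_{X,x}^{\rm gp}$), and transfer the lifting problem to one for $f$. The gaps are all in the return trip from the auxiliary problem to the original one. First, ``valuative properness of $f|_U$ \dots inherited by locality on base'' is false: the valuative conditions are local on the base (Lemma~\ref{lem:valuativecriterion}) but not on the source --- the complement of a closed point in the fan of $\PP^1$ is $\Spec\NN$, which is not valuatively proper over $\Spec 0$. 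If you instead apply valuative properness of $f$ itself, the lift you obtain is a point $p\in\{\eta_U\}^-\cap f^{-1}(y_0)$ with $\tilde\alpha$ local into $\NN$ on $\M_{X,p}$; but $\{\eta_U\}^-$ is in general strictly larger than $U$, and if $p\notin U$ then $P\not\subseteq\M_{X,p}$ and the conclusion $\tilde\alpha(P)\subseteq\NN$ (hence $\alpha(G_X)\subseteq\NN$) simply does not follow. Locating the auxiliary lift is the technical heart of the paper's proof: it rescales the perturbation so that two uniform positivity conditions hold simultaneously at \emph{every} point of the (finite) fan, introduces the intermediate point $p'$ at which the perturbation alone becomes local, and uses valuative separatedness of the affine base and of $f$ to force $p'=x$, hence $p\in\{x\}^-$. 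None of this is in your sketch, and without it the existence half does not close.

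The separatedness half is also not established. Two distinct candidate lift points $x_{0,1}\neq x_{0,2}$ generally lie in different charts $U_1$, $U_2$, and your construction produces two \emph{different} auxiliary diagrams (the integers $N_i,M_i$ and the functionals $u_i$ all depend on the chart), so you never obtain two lifts of a single diagram and no contradiction with valuative separatedness of $f$ results; the appeal to Lemma~\ref{lem:propersanity} only handles the trivial case where the two closed-point images already coincide. Relatedly, your ``if and only if'' criterion $\alpha(G_X)\subseteq\NN$ is chart-dependent, and in the existence direction the chart is chosen to contain the very point whose existence is at issue. The fix in all cases is the one the paper implements: choose one chart-independent perturbation of $g$ on $\M_{X,x}^{\rm gp}$, rescale it uniformly using finiteness of $X$, and prove an honest bijection between the lifting sets of the original diagram and the single auxiliary diagram for $f$.
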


\begin{proof} Set $Z := Z(X,x)$, $W := Z(Y,f(x))$, $f := Z(f,x)$ to ease notation.  The boundary construction is local in nature, as is the property of being valuatively separated or proper, so we can assume $Y = \Spec P$ ($P$ a fine monoid) is affine, so $Y$ is finite (Corollary~\ref{cor:faces}) and hence $X$ is also finite by quasi-compactness of $f$ (Theorem~\ref{thm:quasicompact}).  

Consider a solid commutative diagram of fans as below.  \bne{Zdiagramtolift} & \xym{ \Spec \ZZ \ar[r]^-g \ar[d] & Z \ar[d]^f \\ \Spec \NN \ar@{.>}[ru]^-p \ar[r]^-g & W } \ene The point $x \in Z$ is the only point of $Z$ for which $\M_{Z,x} = \M_{X,x}^*$ is a group (because $Z = \{ x \}^-$ and there are no non-trivial localizations of a group), so we have to have $g(\Spec \ZZ) = x$ in such a diagram.  Let $y \in W$ denote the image of the closed point of $\Spec \NN$ under $g : \Spec \NN \to W$.  To give a lift as indicated in \eqref{Zdiagramtolift} is to give a point $p \in \{ x \}^- \cap f^{-1}(y)$ such that the composition \bne{comp1} & \xym{ \M_{Z,p} \ar[r] & \M_{Z,x} = \M_{X,x}^* \ar[r]^-g & \ZZ } \ene takes $\M_{Z,p}$ into $\NN$ via a local map (c.f.\ Example~\ref{example:SpecN}).  The question of whether there is at most one (resp.\ a unique) such $p$ is therefore independent of multiplying $g$ by a positive integer, so after making such a rescaling of $g$ if necessary, we can assume that $g : \M_{X,x}^* \to \ZZ $ lifts to a map also abusively denoted $g : \M_{X,x}^{\rm gp} \to \ZZ$.  Since $\M_{X,x}^{\rm gp}$ is a localization of $\M_{X,x}$, there is some $x' \in X$ for which $x \in \{ x' \}^-$ and the generalization map $\M_{X,x} \to \M_{X,x'}$ is the groupification: $\M_{X,x'} = \M_{X,x}^{\rm gp}$.  By Lemma~\ref{lem:closure} we can find a group homomorphism $h : \M_{X,x'} = \M_{X,x}^{\rm gp} \to \ZZ$ whose restriction to $\M_{X,x}$ defines a \emph{local} map of monoids $h : \M_{X,x} \to \NN$.  Notice that we can multiply $h$ by any positive integer without destroying this property.  Notice also that since $h$ kills $\M_{X,x}^* = \M_{Z,x}$ (and ``$g$ lifts $g$"), the restriction of $g + h : \M_{X,x'} \to \ZZ$ to $\M_{Z,x}$ agrees with the original map $g : \M_{Z,x} \to \ZZ$.  The situation so far is summed up by the solid commutative diagram below.  \bne{bigdiagram} & \xym@C-10pt@R-10pt{ & & & & \NN \ar[r] & \ZZ \\ \\ \ZZ \ar@{=}[dd] & & \ar[ll]_-g \M_{Z,x} = \M_{X,x}^* \ar[rruu]^0 \ar[rr] & &  \M_{X,x} \ar[uu]^h \ar[r] & \M_{X,x'} = \M_{X,x}^{\rm gp} \ar[uu]_h \ar@/_4pc/[lllll]_{g+h} \\ & \M_{Z,p} \ar@{.>}[ru] \ar@{.>}[rr] & &  \M_{X,p} \ar@{.>}[ru] \\ \ZZ & & \ar[ll]_>>>>>>{g}|<<<<<<<<<<<\hole \M_{W,f(x)} = \M_{Y,f(x)}^* \ar[uu]|\hole \ar[rr]|>>>>>>>>>>\hole & &  \M_{Y,f(x)} \ar[uu] \ar[r] & \M_{Y,f(x')} = \M_{Y,f(x)}^{\rm gp} \ar[uu] \\ \NN \ar[u] & \ar[l]_-g \M_{W,y} \ar[ru] \ar@{.>}[uu] \ar[rr] & & \M_{Y,y} \ar[ru] \ar@{.>}[uu] } \ene  Suppose we have a point $p \in \{ x' \}^-$ and an element $m \in \M_{X,p}$ for which the composition \bne{daggercomp} & \xym{ \M_{X,p} \ar[r] & \M_{X,x'} = \M_{X,x}^{\rm gp} \ar[r]^-{g+h} & \ZZ } \ene has $(g+h)(m) \in \NN$ but $h(m) < 0$.  Then we can rescale $h$ by a positive integer to ensure that $(g+h)(m)$ is also negative.  Since $X$ is finite, $\{ x' \}^-$ is certainly finite and each such $\M_{X,p}$ is finitely generated, so after rescaling $h$ by a sufficiently large positive integer, we can assume:

\noindent {\bf (*)} For any $p \in \{ x' \}^-$, if the composition \eqref{daggercomp} takes values in $\NN$, then so does the composition $$ \xym{ \M_{X,p} \ar[r] & \M_{X,x'} = \M_{X,x}^{\rm gp} \ar[r]^-{h} & \ZZ. } $$

By similarly rescaling $h$, we can also assume:

\noindent {\bf (**)} For any non-unit $m \in \M_{X,x}$, $(g+h)(m) > 0$.

Since $f: \M_{Y,f(x)} \to \M_{X,x}$ is local, any $m \in \M_{Y,y}$ mapping to a non-unit in $\M_{Y,f(x)}$ also maps to a non-unit in $\M_{X,x}$ , hence we have $(g+h)(m) > 0$ for all such $m$ by {\bf (**)}.  On the other hand, if $m \in \M_{Y,y}$ is a non-unit, but maps to a unit in $\M_{Y,f(x)}$, then since the square \bne{proper2cartesiansquare} & \xym{ \M_{W,f(x)} = \M_{Y,f(x)}^* \ar[r] & \M_{Y,f(x)} \\ \M_{W,y} \ar[r] \ar[u] & \M_{Y,y} \ar[u] } \ene appearing in \eqref{bigdiagram} is cartesian (Lemma~\ref{lem:boundaryfaces}) $m$ is in fact a non-unit in $\M_{W,y}$ and hence $g(m) > 0$ by locality of $g : \M_{W,y} \to \NN$.  We conclude that $g+h : \M_{Y,y} \to \NN$ is local, and hence we have a solid commutative diagram: \bne{2ndlifting} & \xym@C+40pt{ \Spec \ZZ \ar[d] \ar[r]^-{x', \; g+h} & X \ar[d]^f \\ \Spec \NN \ar@{.>}[ru]^p \ar[r]^-{y,\; f(x), \; g+h} & Y } \ene The proof will be complete upon establishing the

\noindent \emph{Claim.}  There is a bijection between liftings in \eqref{Zdiagramtolift} and liftings in \eqref{2ndlifting}.

Suppose we have a lifting $p$ in \eqref{2ndlifting}.  Such a $p$ is the same thing as a choice of point $p \in \{ x' \}^- \cap f^{-1}(y)$ for which the composition \eqref{daggercomp} yields a \emph{local} map $g+h : \M_{X,p} \to \NN$.  By {\bf (*)}, we know that this implies $h : \M_{X,p} \to \NN$, though we do not know this map $h$ is local.  However, there is of course a unique localization of $\M_{X,p}$ on which $h$ becomes local, so we can find some $p' \in X$ such that $p \in \{ p' \}^-$ and the composition \bne{daggercomp3} & \xym{ \M_{X,p} \ar[r] & \M_{X,p'} \ar[r]^-h & \NN } \ene factors $h : \M_{X,p} \to \NN$ through a \emph{local} map also denoted $h : \M_{X,p'} \to \NN$.  Notice that $p' \in \{ x' \}^-$ because $\M_{X,p'}^{\rm gp} = \M_{X,p}^{\rm gp} = \M_{X,x'}$.  We must have $f(p') = f(x')$ because $f(p')$ and $f(x)$ are both in $\{ f(x') \}^-$ and the composition $$ \xym{ \M_{Y,f(x')} \ar[r]^-f & \M_{X,x'} \ar[r]^-h & \ZZ } $$ restricts to a local map to $\NN$ on both $\M_{Y,f(x)}$ and $\M_{Y,f(p')}$ and the affine fan $Y$ is valuatively separated (Lemma~\ref{lem:propersanity}).  Now we see that both $p'$ and $x$ will provide lifts in the diagram of fans $$ \xym@C+40pt{ \Spec \ZZ \ar[d] \ar[r]^-{x',\; h} & X \ar[d]^f \\ \Spec \NN \ar@{.>}[ru] \ar[r]^-{f(x), \; f(x'), \; h} & Y } $$ and hence $p' = x$ because $f$ is valuatively separated.  We conclude that $p \in \{ p' \}^- = \{ x \}^- = Z$ and $f(p) = y$, so we have the dotted completion in \eqref{bigdiagram} and we can conclude that $p$ provides a lift in \eqref{Zdiagramtolift} provided that we can show that the map $g : \M_{Z,p} \to \ZZ$ appearing in the completed diagram \eqref{bigdiagram} actually yields a local map $g : \M_{Z,p} \to \NN$.  This is the case because we know $g+h : \M_{X,p} \to \NN$ is local and we know $h$ vanishes on $\M_{X,x}^*$, hence on $\M_{Z,p}$.

To go the other way, suppose we have a lifting $p$ in \eqref{Zdiagramtolift}.  This immediately yields the indicated dotted completion in \eqref{bigdiagram} so that the composition $g : \M_{Z,p} \to \ZZ$ there actually defines a local map $g : \M_{Z,p} \to \NN$.  Arguing much as in our previous usage of {\bf (**)}, we see that the cartesianness of the square $$ \xym{ \M_{Z,x} = \M_{X,x}^* \ar[r] & \M_{X,x} \\ \M_{Z,p} \ar[u] \ar[r] & \M_{X,p} \ar[u] } $$ (\S\ref{section:boundaryconstruction}) and property {\bf (**)} imply that $g+h : \M_{X,p} \to \NN$ is local, so this same $p$ provides a lifting in \eqref{2ndlifting}.

\end{proof}

\begin{thm} \label{thm:proper} A map $f : X \to Y$ of locally finite type fans is quasi-compact and valuatively separated (resp.\ quasi-compact and valuatively proper) if its scheme realization $\AA(f)$ is quasi-compact and separated (resp.\ proper).  The converse holds if we assume $f$ is a map of \emph{fine} fans. \end{thm}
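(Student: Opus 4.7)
The plan is as follows. For the forward direction: quasi-compactness of $f$ and $\AA(f)$ are equivalent by Theorem~\ref{thm:quasicompact}, and if $\AA(f)$ satisfies the full discrete valuative criterion for separatedness (resp.\ properness), it certainly satisfies the weaker criterion restricted to maps generically into $\AA(T_X)$, which by Lemma~\ref{lem:proper1} is equivalent to $f$ being valuatively separated (resp.\ valuatively proper). So I will focus on the converse.

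Assume $f : X \to Y$ is a quasi-compact, valuatively separated (resp.\ valuatively proper) map of fine fans. Then $\AA(f)$ is quasi-compact by Theorem~\ref{thm:quasicompact}, and since $X$ and $Y$ are fine, $\AA(f)$ is a finite type morphism of locally Noetherian schemes, so it suffices to verify the discrete valuative criterion for $\AA(f)$. Given an arbitrary lifting problem
$$ \xym{ \Spec K \ar[r]^-g \ar[d] & \AA(X) \ar[d]^{\AA(f)} \\ \Spec R \ar[r]^-h \ar@{.>}[ru] & \AA(Y), } $$
set $\bar{x} := g(\Spec K) \in \AA(X)$ and $x := \tau_X(\bar{x}) \in X$. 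By Proposition~\ref{prop:stratification}, $\bar{x}$ lies in the stratum $\AA(\M_{X,x}^*) = \tau_X^{-1}(x)$, and by Example~\ref{example:ZXx} together with Theorem~\ref{thm:boundary} this stratum coincides with the torus component $\AA(T_{Z(X,x)})$ of the closed subscheme $\AA(Z(X,x)) \hookrightarrow \AA(X)$ (the identification rests on $\M_{Z(X,x),x} = \M_{X,x}^*$, so that $x$ is a torus point of the fan $Z(X,x)$). Thus $g$ factors through $\AA(Z(X,x))$ with generic image in $\AA(T_{Z(X,x)})$.

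Since $\AA(Z(Y,f(x))) \hookrightarrow \AA(Y)$ is closed and contains the image of $\AA(f) \circ g = h|_{\Spec K}$, the map $h$ factors through $\AA(Z(Y,f(x)))$. The same closedness argument shows that any potential lift $p : \Spec R \to \AA(X)$ must factor through $\AA(Z(X,x))$ (the generic point goes to $\bar{x}$, the closed point to one of its specializations in $\AA(X)$), so lifts in the original problem are in canonical bijection with lifts in the reduced problem
$$ \xym{ \Spec K \ar[r] \ar[d] & \AA(Z(X,x)) \ar[d]^{\AA(Z(f,x))} \\ \Spec R \ar[r] \ar@{.>}[ru] & \AA(Z(Y,f(x))). } $$
By Lemma~\ref{lem:proper2}, $Z(f,x)$ inherits valuative separatedness (resp.\ properness) from $f$, and Lemma~\ref{lem:proper1} then supplies the desired lifting property for $\AA(Z(f,x))$ against maps generically into $\AA(T_{Z(X,x)})$, which is exactly the situation of the reduced problem. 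The main obstacle is this boundary reduction: correctly identifying the image of $g$ as lying in the torus component of some closed boundary subscheme, and checking that $h$ (and any potential lift) is forced into the corresponding closed subscheme. Once that geometric reduction is in place, Lemmas~\ref{lem:proper1} and~\ref{lem:proper2} assemble the argument with no further effort.
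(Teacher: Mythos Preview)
Your argument is correct and lands on the same two lemmas (Lemma~\ref{lem:proper1} and Lemma~\ref{lem:proper2}) that the paper uses, but the reduction step is carried out differently. The paper passes through the adjunction \eqref{ASchadjunction} to convert the scheme lifting problem into an $\LMS$ lifting problem for $f$, then does the reduction to $Z(f,x)$ entirely at the monoid/stalk level: the bijection of lifts is checked by hand using the cartesian square of Lemma~\ref{lem:boundaryfaces} (the diagram \eqref{DF} in the paper's proof). You instead stay on the scheme side and exploit the closed embeddings $\AA(Z(X,x)) \hookrightarrow \AA(X)$ and $\AA(Z(Y,f(x))) \hookrightarrow \AA(Y)$ furnished by Theorem~\ref{thm:boundary}, factoring $g$, $h$, and any putative lift through them because $\Spec R$ is integral and closed immersions are monomorphisms. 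This is a genuinely different packaging of the same idea, and arguably tidier: it trades the stalk computation for a one-line appeal to monomorphisms.

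One point you should make explicit, however: for your bijection of lifts to work you need the square
\[
\xym{ \AA(Z(X,x)) \ar[r] \ar[d]_{\AA(Z(f,x))} & \AA(X) \ar[d]^{\AA(f)} \\ \AA(Z(Y,f(x))) \ar[r] & \AA(Y) }
\]
to commute, and this compatibility of the closed embeddings with $\AA(f)$ is not asserted anywhere in \S\ref{section:boundaryconstruction} (Example~\ref{example:ZXx} records the functoriality of $Z(f,x)$ only at the level of fans and their underlying spaces). It is an easy affine check---if $h : Q \to P$ underlies $f$, $F \leq P$, and $G = h^{-1}(F)$, then the wrong-way map $\ZZ[Q] \to \ZZ[G]$ followed by $\ZZ[h|G]$ agrees with $\ZZ[h]$ followed by the wrong-way map $\ZZ[P] \to \ZZ[F]$ precisely because $q \in G \Leftrightarrow h(q) \in F$---but you should say it.
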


\begin{proof}  We already saw in Theorem~\ref{thm:quasicompact} that $f$ is quasi-compact iff $\AA(f)$ is quasi-compact, so we can assume now that $f$ and $\AA(f)$ are quasi-compact.  Then $\AA(f)$ is a finite-type map of noetherian schemes, so by \cite[II.7.2.3]{EGA} (resp.\ \cite[II.7.3.8]{EGA}) $\AA(f)$ is separated (resp.\ proper) iff $\AA(f)$ satisfies the discrete valuative criterion for separatedness (resp.\ properness).

The implication $(\Leftarrow)$ is a consequence of Lemma~\ref{lem:proper1} because if $f$ failed to be separated (resp.\ proper), then $\AA(f)$ couldn't even satisfy the discrete valuative criterion for separatedness (resp.\ properness) for maps generically into $\AA(T_X)$.

For $(\Rightarrow)$ (assuming $X$, $Y$ fine): By the universal property of $\AA( \slot )$ (\S\ref{section:categoricalinterpretation}), a diagram \eqref{schemediagramtolift} as in Definition~\ref{defn:valuativecriterion} has at most one (resp.\ a unique) lift iff the corresponding $\LMS$ diagram \bne{LMSliftA} & \xym{ \Spec K \ar[r]^-g \ar[d] & X \ar[d]^f \\ \Spec R \ar@{.>}[ru] \ar[r]^-g & Y } \ene has at most one (resp.\ a unique) lift (to be precise, we should write $|\Spec K|$, $|\Spec R|$ here).  Let $x := g(\Spec K) \in X$ and let $y \in Y$ be the image of the closed point of $\Spec R$ under $g : \Spec R \to Y$.  Let $Z := Z(X,x)$, $W := Z(Y,f(x))$.  By ``restriction," the diagram \eqref{LMSliftA} yields an $\LMS$ diagram \bne{LMSliftB} & \xym{ \Spec K \ar[r]^-g \ar[d] & Z \ar[d]^{Z(f,x)} \\ \Spec R \ar@{.>}[ru] \ar[r]^-g & W. } \ene 

I claim that lifts in \eqref{LMSliftA} are bijective with lifts in \eqref{LMSliftB}.  A lift in \eqref{LMSliftA} corresponds to a point $p \in Z = \{ x \}^- \cap f^{-1}(y)$ for which the composition $$ \xym{ \M_{X,p} \ar[r] & \M_{X,x} \ar[r]^-g & K } $$ actually yields a local map $\M_{X,p} \to R$.  Since the diagram \bne{DF} & \xym{ \M_{X,x}^* = \M_{Z,x} \ar[r] & \M_{X,x} \\ \M_{Z,p} \ar[r] \ar[u] & \M_{X,p} \ar[u] } \ene is cartesian by the boundary construction (\S\ref{section:boundaryconstruction}), $\M_{Z,p} \to \M_{X,p}$ is local (it is the inclusion of a face), so we see that $g : \M_{Z,p} \to R$ is also local, so that $p$ also serves as a lift in \eqref{LMSliftB}.  To go the other way, suppose $p$ is a lift in \eqref{LMSliftB}.  Then $p \in Z = \{ x \}^- \cap f^{-1}(y)$ and $g : \M_{Z,p} \to R$ is local; the issue is to show that the composition $$ \xym{ \M_{X,p} \ar[r] & \M_{X,x} \ar[r]^-g & K } $$ actually defines a local map $\M_{X,p} \to R$.  If $m \in \M_{X,p}$ maps to a unit in $\M_{X,x}$, then $m \in \M_{Z,p}$ since \eqref{DF} is cartesian, hence $g(m) \in R$ and $g(m) \in R^*$ iff $m \in \M_{Z,p}^*$ iff $m \in \M_{X,p}^*$.  On the other hand, if $m \in \M_{X,p}$ maps to a non-unit in $\M_{X,x}$, then $g(m) = 0$ because $g : \M_{X,x} \to K$ is local.  This proves the claim.

Since $f$ is a valuatively separated (resp.\ valuatively proper) and quasi-compact map of fine fans, Lemma~\ref{lem:proper2} says that $Z(f,x)$ is also valuatively separated (resp.\ valuatively proper), so there is at most one (resp.\ a unique) lift in \eqref{LMSliftB}, as desired.    \end{proof}

\begin{defn} \label{defn:proper}  A map $f : X \to Y$ of locally finite type fans is called \emph{separated} (resp.\ \emph{proper}) iff its scheme realization $\AA(f)$ is separated (resp.\ proper). \end{defn}

\begin{rem} In informal, down-to-earth terms, Theorem~\ref{thm:proper} says that a (quasi-compact) map of fine fans $f : X \to Y$ is separated (resp.\ proper) (i.e.\ $\AA(f)$ is separated (resp.\ proper)) iff, for any ``one-parameter subgroup" $\lambda$ of the torus $T_X \subseteq X$ and any ``limit" $\lambda'$ of the one-parameter subgroup $f \lambda$ of $T_Y$, there is at most one (resp.\ a unique) limit of $\lambda$ lying over $\lambda'$.  For example, if $f : X \to Y$ ``is" a map of classical fans $$f : (\Sigma,N) \to (\Sigma',N')$$ (regarded as a map of fans as in \S\ref{section:classicalfans}), then a map $\Spec \ZZ \to X$ is the same thing as an element of $n$ (a group homomorphism $\ZZ \to N$), and such a map extends uniquely to a map $\Spec \NN \to X$ iff $n$ is in the support of $\Sigma$.  Theorem~\ref{thm:proper} thus specializes to the usual criterion for properness of a map of toric varieties \cite[2.4]{F}.  Most of the effort that goes into the proof of Theorem~\ref{thm:proper} is really a justification (though in greater generality) of the ``one may assume..." assertion of \cite[Page 40]{F} (which is relatively easy to justify in that situation). \end{rem}

\begin{example} \label{example:x} The converse of the first statement of Theorem~\ref{thm:proper} doesn't hold in general.  Consider the same finitely generted, non-integral monoid \be P & := \langle a,b : a+b = a \rangle \ee as in Example~\ref{example:stratification}.  The map $a : \NN \to P$ induces an isomorphism $\NN \to P^{\rm int}$, so $\Spec a$ is valuatively proper because it is an isomorphism as far as maps out of integral fans (such as $\Spec \NN$ and $\Spec \ZZ$) are concerned.  But $\ZZ[a] : \ZZ[\NN] \to \ZZ[P]$ is the map $\ZZ[x] \to \ZZ[x,y]/\langle xy-x \rangle$, which isn't proper because the fiber over $\{ x = 0 \}$ is $\AA^1_y$.  This example shows that there is no way to test the properness of a general map of finite type fans via maps from integral fans. \end{example}

In the remainder of \S\ref{section:propermaps} we will discuss the relationship between the notions of ``separated" and ``proper" (Definition~\ref{defn:proper}) and ``valuatively separated/proper" (Definition~\ref{defn:valuativecriterion}) and the usual topological notion of properness for the map of topological spaces underlying the ``(positive) log differential realization" of a map of locally finite type fans.  For a complete discussion of the latter, we refer to \cite{GM1}[\S4, \S6], though here we shall be concerned only with the underlying topological realization.  We shall now give a brief, self-contained discussion of this for the convenience of the reader.

Let $\Fans$ denote the category of locally finite type fans, $\Top$ the category of topological spaces.  Then the general results of \cite{GM1}[\S4] imply that there exists a functor \bne{differentialrealization} \RR_{\geq 0} : \Fans & \to & \Top \ene satisfying the following properties: \begin{enumerate} \item $\RR_{\geq 0}$ preserves finite inverse limits.  In particular, $\RR_{\geq 0}$ takes monoid objects in $\Fans$ to monoid objects in $\Top$. \item The topological monoid $\RR_{\geq 0}( \Spec \NN )$ is isomorphic to $\RR_{\geq 0}$ with the usual metric topology and multiplication as the monoid operation.  \item $\RR_{\geq 0}$ takes open embeddings of fans to open embeddings of topological spaces. \item $\RR_{\geq 0}$ preserves filtered direct limits whose transition functions are open embeddings as well as pushout diagrams where the maps are open embeddings.  (Note that both $\Fans$ and $\Top$ both \emph{have} such direct limits.) \end{enumerate}  Furthermore, it is shown in \cite{GM1}[\S4] that this functor is ``unique up to unique isomorphism" in an appropriate sense.  For a finitely generated monoid $P$, we will usually write $\RR_{\geq 0}(P)$ instead of $\RR_{\geq 0}(\Spec P)$.  (In the paper \cite{GM1}, the topological space $\RR_{\geq 0}(X)$ is also equipped with a sheaf of $\RR$-algebras making it into a differentiable space, as well as a ``positive log structure," but we shall not make any use of that additional structure here.)

For convenience, we shall now give an explicit construction of the functor $\RR_{\geq 0}$.  Given a fan $X$, we let $\RR_{\geq 0}(X)$ be the set of pairs $(x,f)$, where $x \in X$ and $f : \M_{X,x} \to \RR_{\geq 0}$ is a local monoid homomorphism.  For example, if $X = \Spec P$, then one checks easily that \bne{affinecase} \Hom_{\Mon}(P,\RR_{\geq 0}) & \to & \RR_{\geq 0}(P) \\ \nonumber h & \mapsto & (h^{-1}(0), ``h" : h^{-1}(0)^{-1}P \to \RR_{\geq 0}) \ene is bijective.  We topologize $\RR_{\geq 0}(X)$ by giving it the smallest topology where the subsets \be \UU(U,V,p) & := & \{ (x,f) \in \RR_{\geq 0}(X) : x \in U, f(p_x) \in V \} \ee are open for each open subseteq $U$ of $X$, each open subset $V$ of $\RR_{\geq 0}$ (in the usual metric topology), and each $p \in \M_X(U)$.  The reader can check that the topological space $\RR_{\geq 0}(X)$ thus defined is functorial in $X$ and that the resulting functor satisfies the properties listed in the previous paragraph.  

One can also show that \eqref{affinecase} is a homeomorphism when $\Hom_{\Mon}(P,\RR_{\geq 0})$ is given either of the following topologies: \begin{enumerate} \item the compact-open topology (using the discrete topology on $P$ and the usual metric topology on $\RR_{\geq 0}$) \item the smallest topology such that the ``evaluation at $p$" map $e_p : \Hom_{\Mon}(P,\RR_{\geq 0}) \to \RR_{\geq 0}$ is continuous (for the usual metric topology on $\RR_{\geq 0}$) for each $p \in P$ (the ``weak star topology") \end{enumerate}  It is similarly true that for a finitely generated monoid $P$, the analytic topology on the set \be (\Spec \CC[P])(\CC) & = & \Hom_{\Alg(\CC)}(P,\CC) \\ & = & \Hom_{\Mon}(P,\CC) \ee coincides with the analogous ``weak star topology" defined using the usual topology of $\CC$.  (Depending on how one defines the ``analytic topology" $X^{\rm an}$ on the set of $\CC$-points of a finite type $\CC$-scheme $X$, this may be more or less obvious.)  Since $\RR_{\geq 0}$ is a closed subset of $\CC$, it follows that $\RR_{\geq 0}(P)$ is a closed subspace of $(\Spec \CC[P])^{\rm an}$ for every finitely generated monoid $P$.  Using the properties of \eqref{differentialrealization} in the previous paragraph, together with analogous properties of the (complex) algebraic realization and the ``analytic topology," one sees that $\RR_{\geq 0}(X)$ is a closed subspace of $$\AA(X)^{\rm an} := (\AA(X) \otimes_{\ZZ} \CC)^{\rm an}$$ for any locally finite type fan $X$.  This is an important relationship between the algebraic realization (\S\ref{section:algebraicrealization}) and the ``topological realization" functor \eqref{differentialrealization}.

By definition of the topology on $\RR_{\geq 0}(X)$, we have a continuous ``orbit map" $\tau : \RR_{\geq 0}(X) \to X$ given by $\tau(x,f) = x$, much as in the case of the algebraic realization (\S\ref{section:orbitmap}).

\begin{lem} \label{lem:proper} Consider a commutative diagram of locally finite type fans as on the left below, and the commutative diagram of topological spaces given by its image under \eqref{differentialrealization} (shown on the right below).  $$ \xym{ \Spec \ZZ \ar[r]^-{ g } \ar[d] & X \ar[d]^f \\ \Spec \NN \ar[r] \ar@{.>}[ru]^-{h} & Y } \quad \quad \xym{ \RR_{>0} \ar[r]^-{\RR_{\geq}(g)} \ar[d] & \RR_{\geq 0}(X) \ar[d]^{\RR_{\geq 0}(f)} \\ \Spec \NN \ar[r] & \RR_{\geq 0}(Y) } $$  Then the map $h \mapsto \RR_{\geq 0}(h)$ yields a bijection between the set of completions of the left diagram (as indicated) and the set of completions of the right diagram (as indicated). \end{lem}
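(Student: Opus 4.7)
The plan is to reduce both sides of the bijection to concrete stalk data using the explicit description of $\RR_{\geq 0}(\slot)$. First I would make the $\RR_{\geq 0}$-realizations explicit: by \eqref{affinecase}, a point of $\RR_{\geq 0}(\Spec\NN)$ is a local monoid map $\NN\to\RR_{\geq 0}$, necessarily of the form $n\mapsto t^n$ (with the convention $0^0:=1$) for a unique $t\in\RR_{\geq 0}$; this recovers the homeomorphism $\RR_{\geq 0}(\Spec\NN)\cong\RR_{\geq 0}$, with $\RR_{>0}$ identified with $\RR_{\geq 0}(\Spec\ZZ)$. Under these identifications the map $\RR_{\geq 0}(g)$ sends $t>0$ to $(x',\;m\mapsto t^{g^\dagger(m)})\in\RR_{\geq 0}(X)$, where $x':=g(\Spec\ZZ)$ has $\M_{X,x'}$ a group and $g^\dagger:\M_{X,x'}\to\ZZ$ is the induced map (Example~\ref{example:SpecN}); analogously for $\RR_{\geq 0}(\bar g)$ over $Y$.

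For injectivity of $h\mapsto\RR_{\geq 0}(h)$, evaluating at $0\in\RR_{\geq 0}$ gives $\RR_{\geq 0}(h)(0)=(h(0),\phi_h)$, where $\phi_h(m)=1$ if $h^\dagger(m)=0$ and $\phi_h(m)=0$ otherwise. Consequently $\RR_{\geq 0}(h_1)=\RR_{\geq 0}(h_2)$ forces $h_1(0)=h_2(0)=:x$. Writing $\iota:\M_{X,x}\to\M_{X,x'}$ for the generalization map (a localization), the compatibility of each $h_i$ with the given $g$ says that $g^\dagger\circ\iota$ factors as $\M_{X,x}\xrightarrow{h_i^\dagger}\NN\hookrightarrow\ZZ$; since $\NN\hookrightarrow\ZZ$ is injective, this forces $h_i^\dagger(m)=g^\dagger(\iota(m))$ for every $m$, whence $h_1=h_2$.

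For surjectivity, let $H:\RR_{\geq 0}\to\RR_{\geq 0}(X)$ be a continuous completion of the topological diagram. Commutativity of the upper triangle forces $H(t)=(x',\;m\mapsto t^{g^\dagger(m)})$ for all $t>0$, so the only freedom lies in $H(0)=:(x,\phi)$. Applied to a basic open $\UU(U,V,p)$ with $U\ni x$ open in $X$ and $p\in\M_X(U)$, continuity at $0$ says $\phi(p_x)=\lim_{s\to 0^+}s^{g^\dagger(p_{x'})}$ (using that $x'$, as the image of $g$, lies in every open subset of $X$ meeting $\overline{\{x'\}}$). Passing to stalks, for every $m\in\M_{X,x}$ with image $\overline{m}:=\iota(m)$ one must have $g^\dagger(\overline{m})\geq 0$ (or the limit would diverge), and $\phi(m)$ equals $1$ or $0$ according as $g^\dagger(\overline{m})=0$ or $>0$. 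Locality of $\phi$ then identifies $\M_{X,x}^*$ with $\{m:g^\dagger(\overline{m})=0\}$, so the rule $h^\dagger(m):=g^\dagger(\iota(m))$ defines a local monoid homomorphism $\M_{X,x}\to\NN$ lifting $g^\dagger\circ\iota$. Continuity of $\tau\circ H:\RR_{\geq 0}\to X$, combined with $\tau H(\RR_{>0})=\{x'\}$, yields $x\in\overline{\{x'\}}$; commutativity of the lower triangle gives $f(x)=\bar g(0)$ and the compatibility of $h^\dagger$ with $\bar g^\dagger$ along $f^\dagger$. By Example~\ref{example:SpecN} these data assemble into a fan morphism $h:\Spec\NN\to X$ completing the left-hand diagram, and $\RR_{\geq 0}(h)=H$ by construction, both on $\RR_{>0}$ (where each equals $\RR_{\geq 0}(g)$) and at $0$.

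The main obstacle is making the continuity step in surjectivity rigorous: one must verify, using the subbasis $\{\UU(U,V,p)\}$ for the topology on $\RR_{\geq 0}(X)$, that $\phi(p_x)$ really is forced to equal $\lim_{s\to 0^+}s^{g^\dagger(p_{x'})}$ for every section $p$ near $x$, and then pass to stalks correctly. Once this is in hand, the remaining verifications are routine unwindings of the universal properties of $\RR_{\geq 0}(\slot)$ and of the description of maps out of $\Spec\NN$ and $\Spec\ZZ$.
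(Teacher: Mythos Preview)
Your proposal is correct and follows essentially the same approach as the paper: both arguments unwind the explicit description of $\RR_{\geq 0}(\slot)$, recover the point $x$ from $\tau(H(0))$, and use that continuity forces $\phi(m)=\lim_{s\to 0^+}s^{g^\dagger(\iota(m))}$ to pin down both the locality of $h^\dagger:=g^\dagger\circ\iota$ and the equality $\RR_{\geq 0}(h)=H$. The only cosmetic difference is ordering: you invoke $x'\in U$ (i.e.\ $x\in\overline{\{x'\}}$) in the limit computation before you explicitly derive it from continuity of $\tau\circ H$, whereas the paper establishes $x\in\{x'\}^-$ first; reordering those two sentences would make your write-up match the paper almost verbatim.
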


\begin{proof}  We will make use of the discussion in Example~\ref{example:SpecN} throughout the proof.  We will also use the explicit description of the functor \eqref{differentialrealization} given just before the statement of the lemma.  Let $x' \in X$ be the image of the unique point of $\Spec \ZZ$ under $g : \Spec \ZZ \to X$, so $\M_{X,x'}$ is a group and $g$ corresponds to a group homomorphism $g : \M_{X,x'} \to \ZZ$ (cf.\ Example~\ref{example:SpecN}).  Let $y' \in Y$ be the image of $x'$ under $f$, so the solid diagram on the left corresponds to a specialization $y \in \{ y' \}^-$ of $y'$ for which the composition $$\M_{Y,y} \to \M_{Y,y'} \to \M_{X,x'} \to \ZZ$$ (the last map being $g$) defines a local monoid homomorphism $\M_{Y,y} \to \NN \subseteq \ZZ$.  A completion $h$ as indicated on the left above corresponds to a choice of point $x \in \{ x' \}^-$ with $f(x)=y$ for which the composition $\M_{X,x} \to \M_{X,x'} \to \ZZ$ (of the generalization map and $g$) yields a local monoid homomorphism $h : \M_{X,x} \to \NN \subseteq \ZZ$.  Given such a completion $h$, the map $\RR_{\geq 0}(h) : \RR_{\geq 0} \to \RR_{\geq 0}(X)$ takes $0 \in \RR_{\geq 0}$ to the point $(x,f_x) \in \RR_{\geq 0}(X)$, where $f_x : \M_{X,x} \to \RR_{\geq 0}$ where $f_x$ is the local monoid homomorphism defined by $f_x(p) := 1$ when $h(p)=0$ (equivalently, when $p \in \M_{X,x}^*$) and $f_x(p) := 1$ when $h(p) > 0$.  Evidently then, we can recover $x$ (and hence $h$) from $\RR_{\geq 0}(h)$ by the formula $x = \tau( \RR_{\geq 0}(h)(0) )$, so $h \mapsto \RR_{\geq 0}(h)$ is injective.  

For surjectivity, suppose $\ell$ is a completion as indicated in the right diagram.  The map $\RR_{\geq 0}(g) : \RR_{>0} \to \RR_{\geq 0}(X)$ takes $\lambda \in \RR_{>0}$ to $(x',f_{\lambda}) \in \RR_{\geq 0}(X)$, where $f_{\lambda} : \M_{X,x'} \to \RR_{\geq 0}$ is the monoid homomorphism given by $f_{\lambda}(p) = \lambda^{ g(p) }$.  (This monoid homomorphism takes values in $\RR_{>0} \subseteq \RR_{\geq 0}$ and is automatically local because $\M_{X,x'}$ is a group.)  Let $x := \tau(\ell(0)) \in X$, so $\ell(0)=(x,f)$ for some local monoid homomorphism $f : \M_{X,x} \to \RR_{\geq 0}$.  Since $\tau$ and $\ell$ are continuous, $\RR_{>0} \subseteq \RR_{\geq 0}$ is dense, and $\tau$ takes $\ell(\RR_{>0})$ to $x'$ (because $\ell | \RR_{>0} = \RR_{\geq 0}(g)$ and $\tau( \RR_{\geq 0}(g)(\lambda) ) = x'$ for every $\lambda \in \RR_{>0}$), we must have $x \in \{ x' \}^{-}$.  Commutativity of the diagram on the right (at $0 \in \RR_{\geq 0}$) ensures that $f(x)=y$.  We claim that the composition \bne{localcomp} & \M_{X,x} \to \M_{X,x'} \to \ZZ \ene (we will often suppress notation for the first map) defines a local monoid homomorphism $h : \M_{X,x} \to \NN \subseteq \ZZ$ and that $\RR_{\geq 0}(h)=\ell$, which will complete the proof.  Continuity of $\ell$ and the equality $\ell | \RR_{>0} = \RR_{\geq 0}(g)$ together ensure that for any $p \in \M_{X,x}$, $f(p) \in \RR_{\geq 0}$ must be the limit as $\lambda \to 0$ (``from the right") of $f_{\lambda}(p) = \lambda^{g(p)} \in \RR_{>0} \subseteq \RR_{\geq 0}$.  This limit does not exist if $g(p) < 0$, is given by $1$ if $g(p)=0$, and is given by $0$ if $g(p) > 0$, so, using locality of $f$, we conclude that the composition \eqref{localcomp} indeed defines a local monoid homomorphism $h : \M_{X,x} \to \NN \subseteq \ZZ$ and that $f$ coincides with the monoid homomorphism $f_x$ defined in the previous paragraph, so $\RR_{\geq 0}(h)=\ell$, as desired.  \end{proof}

\begin{thm} \label{thm:diffproper} Let $f : X \to Y$ be a quasi-compact map of locally finite type fans.  Then each of the following conditions implies the next: \begin{enumerate} \item \label{diffproper1} $f$ is separated (resp.\ proper) (i.e.\ the quasi-compact map of locally finite type $\ZZ$-schemes $\AA(f)$ is separated (resp.\ proper)). \item \label{diffproper2} $\AA(f) \otimes_{\ZZ} \CC$ is separated (proper) \item \label{diffproper3} the map of analytic spaces $\AA(f)^{\rm an} := (\AA(f) \otimes_{\ZZ} \CC)^{\rm an}$ is separated (resp.\ proper). \item \label{diffproper4} the map of topological spaces $\RR_{\geq 0}(f)$ is separated (resp.\ proper). \end{enumerate} These conditions are equivalent if $X$ and $Y$ are fine. \end{thm}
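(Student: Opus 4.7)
The plan is to prove the chain (1) $\Rightarrow$ (2) $\Rightarrow$ (3) $\Rightarrow$ (4) in full generality and then close the loop via (4) $\Rightarrow$ (1) under the fineness hypothesis. The implication (1) $\Rightarrow$ (2) is standard base change along $\ZZ \to \CC$; (2) $\Rightarrow$ (3) is the classical analytification comparison (the analytic diagonal is the analytification of the algebraic one, and Serre's GAGA transfers properness); and for (3) $\Rightarrow$ (4) I would use the fact recalled in the setup that $\RR_{\geq 0}(X)$ embeds as a closed subspace of $\AA(X)^{\mathrm{an}}$ (and similarly for $Y$), with $\RR_{\geq 0}(f)$ the restriction of $\AA(f)^{\mathrm{an}}$. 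Separatedness is inherited because the diagonal in the restricted fiber product is the intersection of the ambient closed diagonal with a closed subset; properness passes to the restriction because analytic spaces are locally compact Hausdorff, so properness is equivalent to ``preimage of compact is compact,'' and closed subsets of compacts are compact.

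For (4) $\Rightarrow$ (1) under fineness, I would apply Theorem~\ref{thm:proper} to reduce to showing that $f$ is valuatively separated (resp.\ valuatively proper), and then invoke Lemma~\ref{lem:proper} to pass to the topological setting: fan completions of the valuative diagram
$$
\xymatrix{\Spec \ZZ \ar[r]^-g \ar[d] & X \ar[d]^f \\ \Spec \NN \ar[r]^-{\bar g} \ar@{.>}[ur] & Y}
$$
correspond bijectively to continuous completions of the $\RR_{\geq 0}$-realized diagram. Uniqueness of such topological completions follows cleanly from topological separatedness of $\RR_{\geq 0}(f)$: two lifts $h_1, h_2 : \RR_{\geq 0} \to \RR_{\geq 0}(X)$ agreeing on $\RR_{>0}$ assemble into a continuous map to the fiber product $\RR_{\geq 0}(X) \times_{\RR_{\geq 0}(Y)} \RR_{\geq 0}(X)$ landing in the (closed) diagonal over the dense subset $\RR_{>0}$, hence also at $0$. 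For existence, topological properness makes $K := \RR_{\geq 0}(f)^{-1}(\RR_{\geq 0}(\bar g)([0,1]))$ compact, so $\RR_{\geq 0}(g)((0,1]) \subseteq K$ has an accumulation point at $\lambda = 0$ lying in $\RR_{\geq 0}(f)^{-1}(\RR_{\geq 0}(\bar g)(0))$, and one then produces the desired extension by declaring $h(0)$ to be that accumulation point.

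The main obstacle is verifying that this last recipe genuinely yields a continuous extension, not merely a discontinuous set-theoretic one: in general, a topologically proper map need not satisfy a valuative criterion against $\RR_{>0} \hookrightarrow \RR_{\geq 0}$ (for instance, $S^1 \to \{\mathrm{pt}\}$ is proper but admits spiraling curves without limits). What rescues the argument is the rigid one-parameter monomial form of $\RR_{\geq 0}(g)$ coming from a fan morphism: explicitly, writing $x'$ for the image of $\Spec \ZZ$ and $\tilde g : \M_{X,x'} \to \ZZ$ for the corresponding group homomorphism, one has $\RR_{\geq 0}(g)(\lambda) = (x', q \mapsto \lambda^{\tilde g(q)})$, and the very existence of an accumulation point $(x, \phi)$ forces $\tilde g|_{\M_{X,x}}$ to take values in $\NN$ and $\phi$ to be local, after which continuity at $\lambda = 0$ against any basic open $\UU(U, V, q)$ reduces to the monotone behavior of $\lambda \mapsto \lambda^n$ for $n \in \NN$. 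This monotonicity promotes accumulation to limit, giving the continuous extension, and completing (4) $\Rightarrow$ (1).
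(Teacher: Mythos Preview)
Your proof is correct and follows essentially the same route as the paper: the implications \eqref{diffproper1} $\Rightarrow$ \eqref{diffproper2} $\Rightarrow$ \eqref{diffproper3} $\Rightarrow$ \eqref{diffproper4} are handled identically (base change, GAGA, and the closed embedding $\RR_{\geq 0}(X) \hookrightarrow \AA(X)^{\rm an}$), and for \eqref{diffproper4} $\Rightarrow$ \eqref{diffproper1} under fineness both the paper and you invoke Lemma~\ref{lem:proper} together with Theorem~\ref{thm:proper}. The paper is terse at this last step, writing only that ``using Lemma~\ref{lem:proper} one can see that condition \eqref{diffproper4} implies that $f$ is valuatively separated (resp.\ valuatively proper),'' whereas you actually spell out the existence argument in the proper case---compactness produces an accumulation point, and the explicit monomial form $\lambda \mapsto (x', p \mapsto \lambda^{\tilde g(p)})$ of the curve $\RR_{\geq 0}(g)$ forces $\tilde g|_{\M_{X,x}} \geq 0$ at any accumulation point $(x,\phi)$, after which $\lim_{\lambda \to 0} \lambda^{\tilde g(p)}$ exists for every $p \in \M_{X,x}$ and the extension is genuinely continuous. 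Your remark that this step is not automatic from general topology (the spiral on $S^1$) is well taken; the paper leaves this verification implicit, but the argument you supply is exactly the computation carried out in the surjectivity half of the proof of Lemma~\ref{lem:proper}, run ``backwards'' from an accumulation point rather than from an assumed continuous lift.
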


\begin{proof} \eqref{diffproper1} implies \eqref{diffproper2} because separated (resp.\ proper) maps of schemes are stable under base change and \eqref{diffproper2} implies \eqref{diffproper3} by standard ``GAGA" results \cite[\S7]{GAGA}.  To see that \eqref{diffproper3} implies \eqref{diffproper4}, note that we have a commutative diagram of topological spaces $$ \xym{ \RR_{\geq 0}(X) \ar[r] \ar[d]^{\RR_{\geq 0}(f)} & \AA(X)^{\rm an} \ar[d]^{ \AA(f)^{\rm an} } \\ \RR_{\geq 0}(Y) \ar[r] & \AA(Y)^{\rm an} } $$ where the horizontal arrows are closed embeddings (see the discussion before Lemma~\ref{lem:proper}) and one can see (exercise!) that, in this situation, if the right vertical arrow is separated (resp.\ proper) then the left vertical arrow is also separated (resp.\ proper).  Using Lemma~\ref{lem:proper} one can see that condition \eqref{diffproper4} implies that $f$ is valuatively separated (resp.\ valuatively proper), so when $X$ and $Y$ are fine, $f$ is separated (resp.\ proper) by Theorem~\ref{thm:proper}. \end{proof}

\subsection{Alternative approach} \label{section:alternativeapproach} In this section we give an alternative approach to separatedness and properness, using non-integral monoids in an essential way.  This is based on the valuative approach to properness in \cite[\S8]{CDH}.

Suppose $K$ is a field with a discrete valuation $\nu : K^* \to \ZZ$.  We can extend $\nu$ to a (local) map of monoids $\nu : K \to \ZZ_* = \ZZ \coprod \{ \infty \}$ by setting $\nu(0) := \infty$.  Recall from \S\ref{section:monoidschemes} that the addition on $\ZZ_*$ extends the usual addition on $\ZZ$ by the rule $p + \infty = \infty$ for all $p \in \ZZ_*$.  The valuation ring $R$ of $K$ then sits in a \emph{cartesian} diagram of monoids: \bne{valdiagram} & \xym{ R \ar[r]^-{\nu} \ar[d] & \NN_* = \NN \coprod \{ \infty \} \ar[d] \\ K \ar[r]^-{\nu} & \ZZ_* = \ZZ \coprod \{ \infty \} } \ene  

\begin{thm} \label{thm:properalternative} Let $f : X \to Y$ be a map of fans.  Then the map of schemes $\AA(f)$ satisfies the discrete valuative criterion for separatedness (resp.\ properness) (Definition~\ref{defn:valuativecriterion}) iff there is at most one (resp.\ a unique) lift in any solid diagram of fans as below. \bne{dtol} & \xym{ \Spec \ZZ_* \ar[r] \ar[d] & X \ar[d]^f \\ \Spec \NN_* \ar@{.>}[ru] \ar[r]^-g & Y } \ene \end{thm}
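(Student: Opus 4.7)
The plan is to establish a natural bijection between lifts of the scheme diagram \eqref{schemediagramtolift} and lifts of the fan diagram \eqref{dtol} obtained from it by postcomposition with the valuation $\nu$. The key algebraic input is that \eqref{valdiagram} is a cartesian square in $\Mon$, which yields, for every monoid $M$, the identification
$$\Hom_{\Mon}(M,R) = \Hom_{\Mon}(M,K) \times_{\Hom_{\Mon}(M,\ZZ_*)} \Hom_{\Mon}(M,\NN_*).$$

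First I would use the $\LRS$--$\LMS$ adjunction of \S\ref{section:categoricalinterpretation} to convert scheme lifts $\Spec R \to \AA(X)$ into $\LMS$ lifts $|\Spec R| \to X$, and then reduce to the affine case. A candidate lift has an image $x \in X$ of the closed point of $|\Spec R|$ and $x' \in X$ of its generic point, with $x'$ fixed (determined by the top map $|\Spec K| \to X$) and $x \in \overline{\{x'\}}$. Choosing an affine open $\Spec Q \subseteq X$ containing $x$ (hence automatically containing $x'$, since open subfans are closed under generalization by Proposition~\ref{prop:Spec}\eqref{opensubsets}), and a corresponding affine open $\Spec P \subseteq Y$ containing $f(\Spec Q)$, the lifting problem is entirely local to the pair $(P,Q)$; the same reduction applies verbatim to the fan diagram.

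In the affine case, using the Spec adjunctions and the monoid algebra adjunction \eqref{monoidalgebraadjunction}, both lifting problems translate to monoid diagrams. The fan problem becomes: given $h : P \to Q$, $\psi : Q \to \ZZ_*$, $\chi : P \to \NN_*$ with $\psi h = \iota \chi$ (where $\iota : \NN_* \hookrightarrow \ZZ_*$), find $\lambda : Q \to \NN_*$ with $\lambda h = \chi$ and $\iota \lambda = \psi$. The scheme problem is analogous with $(K,R)$ in place of $(\ZZ_*, \NN_*)$ and monoid maps $a : Q \to K$, $b : P \to R$ in place of $\psi, \chi$. Postcomposition with $\nu$ sends a scheme lift $\mu : Q \to R$ to a fan lift $\lambda := \nu \mu$; conversely, given a fan lift $\lambda$, the cartesian property of \eqref{valdiagram} produces a unique $\mu : Q \to R$ via $\mu(q) := (a(q), \lambda(q))$, with the compatibility in $\ZZ_*$ needed for $\mu(q) \in R$ being exactly $\nu a = \iota \lambda$, both sides equaling $\psi$. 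These constructions are mutually inverse.

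To complete both implications: given any scheme diagram, the $\nu$-postcomposition produces a fan diagram whose lifts correspond bijectively, establishing (scheme criterion) $\Leftarrow$ (fan criterion). For the opposite direction, any fan diagram \eqref{dtol} can be realized as the $\nu$-postcomposition of a scheme diagram by taking $K := k((t))$ and $R := k[[t]]$ for any field $k$ with $\nu = \ord_t$, and setting $a(q) := t^{\psi(q)}$ and $b(p) := t^{\chi(p)}$ (with the convention $t^\infty = 0$); these are local monoid homomorphisms, and the compatibility $ah = b$ follows from the commutativity $\psi h = \iota \chi$ of the fan diagram. The hard part will be the bookkeeping in the bijection step: one must track several interlocking compatibility and locality conditions (for the generic-point data on the $K$-side, for the triangles over $P$, and for the composition with $f$) and verify they all transfer correctly through the cartesian square \eqref{valdiagram}.
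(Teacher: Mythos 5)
Your proposal is correct and follows essentially the same route as the paper: both directions rest on the cartesianness of \eqref{valdiagram}, which the paper packages as the statement that the square $(|\Spec K|, |\Spec R|, \Spec\ZZ_*, \Spec\NN_*)$ is a pushout in $\LMS$ and you unwind at the level of $\Hom_{\Mon}(M,R) = \Hom_{\Mon}(M,K)\times_{\Hom_{\Mon}(M,\ZZ_*)}\Hom_{\Mon}(M,\NN_*)$, and both use the $\AA$-adjunction together with the $k((T))/k[[T]]$, $m \mapsto T^{g(m)}$ construction (with $T^\infty = 0$) to realize an arbitrary fan diagram as the $\nu$-postcomposition of a scheme diagram. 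The only cosmetic difference is your explicit affine-local bookkeeping, which the paper declares ``straightforward.''
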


\begin{proof} $(\Rightarrow)$ From the solid diagram \eqref{dtol}, we construct a solid $\LMS$ diagram \bne{dtol2} & \xym{ | \Spec \, k((T)) | \ar[r] \ar[d] & X \ar[d]^f \\ | \Spec k[[T]] | \ar@{.>}[ru] \ar[r] & Y } \ene ($k$ any field) as follows:  Let $y \in Y$ (resp.\ $y' \in Y$) be the image of the closed (resp.\ generic) point of $\Spec \NN_*$ so we have a local map of monoids $g : \M_{Y,y'} \to \ZZ_*$ restricting to a local map of monoids $g : \M_{Y,y} \to \NN_*$.   This $g$ gives rise to a local map of monoids $\M_{Y,y'} \to k((T))$ restricting to a local map of monoids $\M_{Y,y} \to k[[T]]$ by taking $m$ to $0$ if $g(m) = \infty$ and to $T^{g(m)}$ otherwise.  The top horizontal arrow of \eqref{dtol2} is defined similarly.  It is straightforward to see that lifts in \eqref{dtol2} are in bijective correspondence with lifts in \eqref{dtol}.  But the universal property of $\AA( \slot )$ ensures that lifts in \eqref{dtol2} are in bijective correspondence with lifts in the diagram of schemes \bne{dtol3} & \xym{  \Spec k((T))  \ar[r] \ar[d] & \AA(X) \ar[d]^-{\AA(f)} \\  \Spec k[[T]]  \ar@{.>}[ru] \ar[r] & \AA(Y) } \ene and we know by hypothesis that there is at most one (resp.\ a unique) such lift.

$(\Leftarrow)$ Suppose $K$ is a field with discrete valuation $\nu : K^* \to \ZZ$ and valuation ring $R$.  We need to prove that there is at most one (resp.\ a unique) lift in any solid diagram of schemes as below. \bne{dtol4} & \xym{  \Spec K  \ar[r] \ar[d] & \AA(X) \ar[d]^-{\AA(f)} \\  \Spec R  \ar@{.>}[ru] \ar[r] & \AA(Y) } \ene  By the universal property of $\AA(\slot)$, it is equivalent to prove that there is at most one (resp.\ a unique) lift in the corresponding solid square on the right of the $\LMS$ diagram below.  \bne{dtol5} & \xym{  \Spec \ZZ_* \ar[r] \ar[d] & \Spec K  \ar[r] \ar[d] & X \ar[d]^-{f} \\  \Spec \NN_* \ar[r] & \Spec R  \ar@{.>}[ru] \ar[r] & Y } \ene  By hypothesis, there is at most one (resp.\ a unique) lift in the ``big" square of \eqref{dtol5}.  The result then follows by arguing that the left square of \eqref{dtol5} is a pushout in $\LMS$---this follows easily from the fact that the horizontal arrows in that square are homeomorphisms and \eqref{valdiagram} is cartesian. \end{proof}

\begin{example} The improperness of the scheme realization of the map $x : \NN \to P$ of Example~\ref{example:x} is detected by the non-existence of a completion in the solid diagram of monoids below. $$ \xym@C+20pt{ \ZZ_* & \ar[l]_-{x \mapsto \infty}^-{y \mapsto -1} \ar@{.>}[ld] P \\ \NN_* \ar[u] & \ar[l]_-{\infty}  \NN \ar[u]_x} $$ \end{example}

\subsection{Equidimensional maps} \label{section:equidimensionalmaps}

\begin{defn} \label{defn:equidimensional} A scheme $X$ of finite type over a field $k$ is said to be of \emph{pure dimension} $n$ iff every irreducible component of $X$ is of dimension $n$.  A finite type map of schemes $f : X \to Y$ is called \emph{equidimensional of relative dimension} $n$ (``e.r.d.\ $n$" for short) iff, for each $y \in Y$, the scheme theoretic fiber $X_y = f^{-1}(y) := X \times_Y \Spec k(y)$ is of pure dimension $n$.  We adopt the convention that the empty scheme is of dimension $n$ for every $n$, hence the question of whether $f$ is e.r.d.\ $n$ depends only on the non-empty fibers of $f$.  We say that a quasi-compact map $f : X \to Y$ of locally finite type fans is \emph{e.r.d.\ n} iff the finite type map of schemes $\AA(f)$ is e.r.d.\ $n$ in the previous sense.  Similarly, we say that a map $h$ of finitely generated monoids is e.r.d.\ $n$ iff $\Spec h$ is e.r.d.\ $n$ in the previous sense.  \end{defn}

\begin{rem} \label{rem:equidimensional} It is worth emphasizing that the topological space underlying the scheme theoretic fiber $f^{-1}(y)$ is nothing but the subspace of $X$ denoted in the same way, so equidimensionality for maps of schemes is a purely ``topological" concept.  If $\{ U_i \}$ is an open cover of a scheme $X$ of finite type over a field $k$, then it is an easy exercise in ``noetherian topology" to see that $X$ is of pure dimension $n$ iff each $U_i$ is of pure dimension $n$.  It follows that the question of whether a (finite type) map of schemes $f :X \to Y$ is e.r.d.\ $n$ is local on both $X$ and $Y$.  It then follows from Remark~\ref{rem:localproperties} that ``e.r.d.\ $n$" for quasi-compact maps of locally finite type fans is also similarly local and that such a map $f :X \to Y$ is e.r.d.\  $n$ iff $f_x : \M_{Y,f(x)} \to \M_{X,x}$ is e.r.d.\ $n$ for each $x \in X$. \end{rem}

\begin{lem} \label{lem:puredimension} Suppose $f : X \to Y$ is a flat, finite, surjective map of finite type schemes over a field $k$.  Then $X$ is of pure dimension $n$ iff $Y$ is of pure dimension $n$.  In particular, if $k \into K$ is a finite field extension, $X$ is of pure dimension $n$ iff $X_K$ is of pure dimension $n$. \end{lem}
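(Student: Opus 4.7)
The plan is to reduce the statement to three standard facts about finite and flat maps, and then to handle the final sentence by a direct base-change argument.

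First I would recall the three key facts I intend to use. (i) If $g : Z' \to Z$ is a finite surjective map of finite type $k$-schemes with $Z$ irreducible, then $\dim Z' = \dim Z$, and more generally every irreducible closed subset $W \subseteq Z'$ has $\dim W = \dim g(W)$. (This is because finite maps are integral, so generic residue field extensions are algebraic, and dimension over $k$ is transcendence degree.) (ii) For any finite surjective map $f : X \to Y$ of noetherian schemes, every irreducible component of $Y$ is the image of some irreducible component of $X$: since $f$ is closed and surjective, $Y = \bigcup f(X_i)$ where the $X_i$ are the irreducible components, so each component of $Y$ must equal some $f(X_i)$. (iii) For any flat map $f : X \to Y$ locally of finite type, generic points of irreducible components of $X$ map to generic points of irreducible components of $Y$; this is the standard consequence of going-down for flat maps (or, equivalently, of the fact that an associated point of $X$ lies over an associated point of $Y$).

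Given (i)--(iii), the equivalence is immediate. Suppose $X$ is of pure dimension $n$. By (ii), every irreducible component $W$ of $Y$ is of the form $f(Z)$ for some irreducible component $Z$ of $X$, and (i) gives $\dim W = \dim f(Z) = \dim Z = n$, so $Y$ is of pure dimension $n$. Conversely, suppose $Y$ is of pure dimension $n$ and let $Z$ be an irreducible component of $X$ with generic point $\eta$. By (iii), $f(\eta)$ is the generic point of some irreducible component $W$ of $Y$; since $f$ is closed, $f(Z) = \overline{\{f(\eta)\}} = W$. Then (i) yields $\dim Z = \dim W = n$, so $X$ is of pure dimension $n$.

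For the ``in particular'' statement, I would observe that the projection $X_K = X \times_{\Spec k} \Spec K \to X$ is the base change of $\Spec K \to \Spec k$ along $X \to \Spec k$, and the map $\Spec K \to \Spec k$ is flat (any field extension is flat), finite (since $[K:k]<\infty$), and surjective. All three properties are preserved under arbitrary base change, so $X_K \to X$ is flat, finite and surjective, and the first part of the lemma applies. I do not expect any serious obstacle here; the entire argument is essentially a bookkeeping exercise combining going-down for flat maps with the fact that finite surjective maps of finite type $k$-schemes preserve dimension.
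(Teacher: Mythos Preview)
Your argument is correct. The paper's own proof is simply a two-line citation---``Combine \cite[IV.2.3.4]{EGA} and \cite[IV.5.4.2]{EGA}''---so you have effectively supplied a self-contained version of what those references encode: going-down for flat maps to control where generic points of $X$ land, and dimension preservation under finite surjections via transcendence degree. The only minor remark is that in your step (ii) you should perhaps note explicitly that each $f(X_i)$ is irreducible (as the continuous image of an irreducible set), so that maximality of an irreducible component $W$ forces $W = f(X_i)$ rather than merely $W \subseteq f(X_i)$; but this is implicit in what you wrote and causes no trouble.
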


\begin{proof} Combine \cite[IV.2.3.4]{EGA} and \cite[IV.5.4.2]{EGA}. \end{proof}

\begin{lem} \label{lem:equidimensionality} Suppose $f : X \to Y$ is a finite type map of schemes and $Y' \to Y$ is a surjective, quasi-finite map of schemes.  Then $f$ is equidimensional of relative dimension $n$ iff the base change $f' : X' \to Y'$ of $f$ along $Y' \to Y$ is e.r.d.\ $n$. \end{lem}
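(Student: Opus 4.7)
The plan is to reduce the statement to the second sentence of Lemma~\ref{lem:puredimension} by carefully analyzing how fibers transform under the base change $Y' \to Y$. Since the question of ``e.r.d.\ $n$" is defined fiber by fiber, the proof splits naturally into checking the two implications once we understand the relationship between the fibers of $f$ and the fibers of $f'$.

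The key observation is the following: for any $y' \in Y'$ with image $y \in Y$, the scheme-theoretic fiber $X'_{y'}$ of $f'$ at $y'$ is naturally isomorphic to $X_y \times_{\Spec k(y)} \Spec k(y')$. This follows by an elementary manipulation with fibered products (``transitivity of base change"). Next, I would show that the residue field extension $k(y')/k(y)$ is finite: the fiber $Y'_y = Y' \times_Y \Spec k(y)$ is a finite-type $k(y)$-scheme with finitely many points (by quasi-finiteness of $Y' \to Y$), hence is Artinian and zero-dimensional, and its residue fields are finite extensions of $k(y)$ by the Nullstellensatz (in its ``every residue field at a closed point is finite over the base field" form). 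In particular, the morphism $\Spec k(y') \to \Spec k(y)$ is finite, flat (automatic over a field), and surjective, and hence so is its base change $X'_{y'} \to X_y$.

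Given this setup, Lemma~\ref{lem:puredimension} applies directly to $X'_{y'} \to X_y$: this fiber of $f'$ is of pure dimension $n$ if and only if the corresponding fiber of $f$ is. For the ($\Rightarrow$) direction, every $y' \in Y'$ sits over some $y \in Y$, and since $X_y$ is of pure dimension $n$ by hypothesis, so is $X'_{y'}$. For the ($\Leftarrow$) direction, surjectivity of $Y' \to Y$ is used: given any $y \in Y$, pick some $y' \in Y'$ with image $y$; since $X'_{y'}$ is of pure dimension $n$ by hypothesis, so is $X_y$.

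The main obstacle, such as it is, is simply justifying the finiteness of the residue field extension $k(y')/k(y)$ from the quasi-finiteness assumption---once this is in hand, everything else is a formal consequence of Lemma~\ref{lem:puredimension} and the behavior of fibers under base change. One should also take care with the (trivial) empty-fiber case: if $X_y = \emptyset$ then $X'_{y'} = \emptyset$ as well, and conversely if $X'_{y'} = \emptyset$ for \emph{some} $y'$ over $y$, it need not follow that $X_y = \emptyset$, but the surjectivity of $X'_{y'} \to X_y$ (which holds only after base changing the \emph{field}, and is immediate from the fact that the field extension is not needed here---it's just that the map on underlying spaces of the base change of a nonempty scheme along a field extension is nonempty) shows that $X_y$ is nonempty iff $X'_{y'}$ is, so the convention about empty fibers causes no trouble.
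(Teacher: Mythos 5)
Your proposal is correct and follows exactly the paper's argument: reduce to Lemma~\ref{lem:puredimension} by observing that surjectivity and quasi-finiteness of $Y'\to Y$ guarantee that every $y\in Y$ has a preimage $y'$ with $k(y')/k(y)$ finite, so the fibers of $f'$ and $f$ are related by base change along finite field extensions. The extra detail you supply (transitivity of base change, finiteness of the residue extension, the empty-fiber case) is exactly what the paper's two-sentence proof leaves implicit.
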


\begin{proof} The assumptions on $Y' \to Y$ ensure that for any $y \in Y$ there is at least one $y' \in Y$ lying over $y$ and that for any such $y'$, the field extension $k(y) \to k(y')$ is finite.  It follows that the fibers of $f'$ and the fibers of $f$ are related by base change along finite field extensions, so the result follows from the previous lemma. \end{proof}

\begin{rem} Presumably the assumption that $k \into K$ be finite in Lemma~\ref{lem:puredimension} (and hence also the assumption that $Y' \to Y$ be quasi-finite in Lemma~\ref{lem:equidimensionality}) can be removed by ``limit arguments," but we shall have no need for this so we will not pursue it. \end{rem}

\begin{lem} \label{lem:torusequidimensionality} For any finitely generated abelian group $A$ and any field $k$, the finite type $k$-scheme $\Spec k[A]$ is of pure dimension $\rk A$.  More generally, for any map $f : A \to B$ of such groups, the corresponding map of schemes \be ( \Spec \ZZ[f] : \Spec \ZZ[B] \to \Spec \ZZ[A] ) & = & ( \AA(f) : \AA(B) \to \AA(A) ) \ee is equidimensional of relative dimension $\rk \Cok f$.  (That is, $f$ is e.r.d.\ $\rk \Cok f$ as a map of monoids.) \end{lem}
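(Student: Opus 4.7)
My plan is to prove the first statement directly, and then prove the general statement by reducing each nonempty fiber of $\AA(f)$ to a Laurent polynomial ring over a finite algebra, to which essentially the same argument applies.

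For the first statement, decompose $A = A_{\rm tor} \oplus \ZZ^r$ with $r = \rk A$, so that $k[A] = k[A_{\rm tor}][x_1^{\pm 1}, \dots, x_r^{\pm 1}]$ is a Laurent polynomial ring in $r$ variables over the Artinian $k$-algebra $k[A_{\rm tor}]$. For each minimal prime $\p$ of $k[A_{\rm tor}]$ the quotient $L := k[A_{\rm tor}]/\p$ is an Artinian domain, hence a finite field extension of $k$, and $\p$ extends to a minimal prime of $k[A]$ with quotient $L[x_1^{\pm 1}, \dots, x_r^{\pm 1}]$---a domain of Krull dimension $r$. Hence $\Spec k[A]$ has pure dimension $r = \rk A$.

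For the general statement, I would factor $f : A \to B$ as $A \twoheadrightarrow \Image f \hookrightarrow B$, yielding $\AA(f) = \iota \circ \pi$ where $\iota : \AA(\Image f) \hookrightarrow \AA(A)$ is the closed embedding from the surjection $\ZZ[A] \twoheadrightarrow \ZZ[\Image f]$ and $\pi : \AA(B) \to \AA(\Image f)$ corresponds to the free ring extension $\ZZ[\Image f] \hookrightarrow \ZZ[B]$. Using that $\ker(\ZZ[A] \to \ZZ[\Image f])$ annihilates $\ZZ[B]$ (via the $\ZZ[A]$-action coming from $\ZZ[f]$), one checks that the fiber of $\AA(f)$ over $a \in \AA(A)$ is empty unless $a \in \iota(\AA(\Image f))$, in which case it coincides with the fiber of $\pi$ over the corresponding point. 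To compute the latter I would split off the free part of $\Cok f$: writing $\Cok f = T \oplus \ZZ^s$ with $T$ finite and $s = \rk \Cok f$, the freeness of $\ZZ^s$ gives a section of $B \twoheadrightarrow \Cok f \twoheadrightarrow \ZZ^s$, whence $B \cong B' \oplus \ZZ^s$ with $B' \subseteq B$ the preimage of $T$, and so $\ZZ[B] = \ZZ[B'][x_1^{\pm 1}, \dots, x_s^{\pm 1}]$. Any set-theoretic section of $B' \twoheadrightarrow T$ furnishes a $\ZZ[\Image f]$-basis of $\ZZ[B']$ indexed by $T$, making $\ZZ[\Image f] \to \ZZ[B']$ finite free of rank $|T|$. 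Consequently, the fiber of $\pi$ over a point with residue field $K$ is $\Spec R[x_1^{\pm 1}, \dots, x_s^{\pm 1}]$ for the finite (hence Artinian) $K$-algebra $R := \ZZ[B'] \otimes_{\ZZ[\Image f]} K$, and the argument of the previous paragraph (with $k[A_{\rm tor}]$ replaced by the possibly non-reduced $R$) yields pure dimension $s = \rk \Cok f$.

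The subtlest point I expect is arranging for $\ZZ[\Image f] \to \ZZ[B']$ to be finite free: this relies on the observation that \emph{any} set-theoretic section of the surjection $B' \twoheadrightarrow T$ gives a free basis of $\ZZ[B']$ over $\ZZ[\Image f]$, regardless of whether the group extension $0 \to \Image f \to B' \to T \to 0$ splits (which in general it need not). Once this freeness is in hand, the proof requires nothing beyond the Artinian property of $R$, and the rest is a rerun of the first-paragraph argument.
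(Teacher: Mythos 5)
Your proof is correct, but it takes a genuinely different route from the paper's at both stages. For the first statement the paper splits $A = \ov{A} \oplus A_{\rm tor}$ and transfers pure-dimensionality across the finite, flat, surjective map $\Spec k[A] \to \Spec k[\ov{A}] \cong \GG_m^d$ using Lemma~\ref{lem:puredimension} (which rests on two results from EGA IV); you instead identify the minimal primes of the Laurent extension of the Artinian ring $k[A_{\rm tor}]$ directly, which is more elementary and self-contained (the only point left tacit is that \emph{every} minimal prime of $k[A]$ is extended from one of $k[A_{\rm tor}]$, which follows from going-down for the free extension $k[A_{\rm tor}] \to k[A]$). For the second statement both arguments reduce to injective $f$ by the same surjection/injection factorization, but then diverge: the paper forces the entire extension $0 \to A \to B \to \Cok f \to 0$ to split by pushing out along an injection $A \into A'$ with finite cokernel (Proposition~\ref{prop:CZEtorsors}, i.e.\ Lemma~\ref{lem:splittinggroups}) and then descends equidimensionality along the finite surjection $\Spec \ZZ[A'] \to \Spec \ZZ[A]$ via Lemma~\ref{lem:equidimensionality}; you split off only the free part $\ZZ^s$ of $\Cok f$ (which always splits off) and absorb the torsion part $T$ into the finite free ring extension $\ZZ[\Image f] \to \ZZ[B']$ --- the freeness being exactly the observation of Example~\ref{example:flatmodule} applied to a set-theoretic section of $B' \to T$ --- so that every nonempty fiber is a Laurent polynomial ring over a nonzero Artinian algebra and your first-paragraph computation applies verbatim. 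Your route avoids the splitting lemma and both base-change lemmas entirely and is the more self-contained of the two; the paper's route involves less direct computation but leans on machinery (invariance of pure dimension under finite flat surjections and quasi-finite base change, and the group-theoretic splitting construction) that it develops anyway for use elsewhere.
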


\begin{proof} For the first statement, set $\ov{A} := A / A_{\rm tor}$, $d := \rk A = \rk \ov{A}$.  Choose a splitting $A = \ov{A} \oplus A_{\rm tor}$.  Certainly $\Spec k[\ov{A}]$ is of pure dimension $d$.  (It is isomorphic to the open subscheme $\GG_m^d$ of $\AA_k^d$.)  Since $k[\ov{A}] \to k[A]$ is faithfully flat and finite, we conclude that $\Spec k[A]$ is also of pure dimension $d$ by Lemma~\ref{lem:puredimension}.

For the second statement, factor $f : A \to B$ as a surjection $g : A \to A'$ followed by an injection $i : A' \into A$.  Then $\Spec \ZZ[f]$ factors through the closed embedding $\Spec \ZZ[g]$ via the map $\Spec \ZZ[i]$, so the (non-empty) fibers of $\Spec \ZZ[f]$ are the same as those of $\Spec \ZZ[i]$.  Of course we also have $\Cok f = \Cok i$, thus we reduce to the case where $f$ is injective.  If we could choose a splitting $B = A \oplus \Cok f$ of $f$, then $\Spec \ZZ[f]$ would be the projection $$\Spec \ZZ[A] \times \Spec \ZZ[ \Cok f] \to \Spec \ZZ[A]$$ and the result would follow from what we proved above.  In general, we can reduce to the split case by first finding an injective map $A \into A'$ with finite cokernel such that the pushout map $f' : A' \to B' := B \oplus_A A'$ is \emph{split} injective.  (We will see that this can be done in Proposition~\ref{prop:CZEtorsors}.)  We then conclude by applying Lemma~\ref{lem:equidimensionality} with $Y' \to Y$ there given by the finite, surjective (and flat) map $\Spec \ZZ[A'] \to \Spec \ZZ[A]$.   \end{proof}

We next need a technical result to the effect that the stratifications of realizations of fine fans from Proposition~\ref{prop:stratification} are ``compatible with passing to fibers."  The basic issue is that if \be X & = & \coprod_i X_i \ee is a stratification of a topological space $X$ and $Z$ is a subspace of $X$ (even a closed subspace), then \be Z & = & \coprod_i (Z \cap X_i) \ee need not be a stratification of $Z$.  Of course the $Z \cap X_i$ will be locally closed in $Z$ if the $X_i$ are locally closed in $X$, but the problem is that the closure (in $Z$) of a stratum $Z \cap X_i$ may fail to be a union of such strata.  For example, if we consider the stratification of $\AA^2_{x,y} = \AA(\Spec \NN^2)$ coming from that proposition (the usual stratification of the plane by the four torus orbits) and we take \be Z & := & \{ x = 1 \} \cup \{ y = 0 \}, \ee then this problem arises for the ``generic stratum" $Z \cap \{ x,y \neq 0 \}$.  We need to make sure this doesn't happen when $Z$ arises as a fiber of a realization of a ``reasonable" map of fans.

\begin{lem} \label{lem:fiberstratification}  Let $f : X \to Y$ be a quasi-compact map of fine fans, $q \in \AA(Y)$, $y := \tau(q) \in Y$.  Then \bne{fiberstratification} \AA(f)^{-1}(q) & = & \coprod_{x \in f^{-1}(y) } \AA(f)^{-1}(q) \cap \AA(\M_{X,x}^*) \ene is a finite stratification of the scheme $\AA(f)^{-1}(q)$ into locally closed subschemes such that the closure of the stratum indexed by $x \in f^{-1}(y)$ is the union of the strata indexed by the points $x' \in f^{-1}(y)$ with $x'$ in the closure of $x$ in $X$.  In fact, suppose $x,x' \in f^{-1}(y)$, $x'$ is in the closure of $x$ in $X$, and $p'$ is a point of $\AA(f)^{-1}(q) \cap \AA(\M_{X,x'}^*)$.  Then there is a finite field extension $k(p') \subseteq K$ and a map of schemes $\AA^1_K \to \AA(f)^{-1}(q)$ taking the origin to $p'$ and the compliment of the origin into $\AA(f)^{-1}(q) \cap \AA(\M_{X,x}^*)$. \end{lem}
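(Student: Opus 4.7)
The plan is to obtain the stratification in \eqref{fiberstratification} by intersecting the global stratification of $\AA(X)$ from Proposition~\ref{prop:stratification} with the (scheme-theoretic) fiber, and then to concentrate on the ``In fact'' sentence, which will force the closure property by exhibiting an explicit $\AA^1_K$-family joining the two strata. The easy points are: $f^{-1}(y)$ is finite by Theorem~\ref{thm:quasicompact} and Corollary~\ref{cor:faces}; Proposition~\ref{prop:stratification}\eqref{stratification2} presents $\AA(X)$ as the disjoint union of locally closed subschemes $\AA(\M_{X,x}^*)$, which coincide with $\tau_X^{-1}(x)$ by \eqref{orbitmaprelationship2}; and the commutativity $f \tau_X = \tau_Y \AA(f)$ together with $\tau_Y(q) = y$ restricts the indexing set to $x \in f^{-1}(y)$. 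Local closedness is preserved by intersection with a subspace, so \eqref{fiberstratification} is a stratification by locally closed subschemes.

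For the $\AA^1_K$-construction I would reduce to the case $X = \Spec P$, $Y = \Spec Q$ with $f$ induced by $\phi : Q \to P$, and $x$, $x'$, $y$ corresponding to faces $G$, $F \leq G$, $H \leq Q$; the hypothesis $f(x) = f(x') = y$ gives $\phi^{-1}(F) = \phi^{-1}(G) = H$, whence $\phi(H) \subseteq F$ and $\phi(Q \setminus H) \subseteq P \setminus G$. Apply Lemma~\ref{lem:duality} to the face $F$ of the fine monoid $G$ to pick $h : G \to \NN$ with $h^{-1}(0) = F$. The character $p' : F^{\rm gp} \to k(p')^*$ attached to our point extends, after a finite extension $K/k(p')$ (needed only to adjoin finitely many roots to handle the torsion part of the finitely generated group $G^{\rm gp}/F^{\rm gp}$), to a group homomorphism $\widetilde{p'} : G^{\rm gp} \to K^*$. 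Define $\psi : \ZZ[P] \to K[t]$ by $\psi([p]) := \widetilde{p'}(p) \, t^{h(p)}$ for $p \in G$ and $\psi([p]) := 0$ otherwise; this is a ring homomorphism because $G$ is a face of $P$, so $p + p'' \in G$ iff both $p, p'' \in G$, and $h$ is additive on $G$. Specializing at $t = 0$ recovers $p'|_K$ (since $h$ vanishes exactly on $F$ and $\widetilde{p'}|_{F^{\rm gp}} = p'$), while for any $t \neq 0$ the support on $P$ is exactly $G$, placing the resulting $K$-point in $\AA(G^{\rm gp}) = \AA(\M_{X,x}^*)$. Composing with $\ZZ[\phi]$: for $q' \in H$ one gets $\widetilde{p'}(\phi(q')) = p'([\phi(q')]) = q([q'])|_K$ (a constant in $t$, using that $p'$ lies over $q$), and for $q' \in Q \setminus H$ one gets $0 = q([q'])|_K$; thus $\psi \circ \ZZ[\phi] = q|_K$ and the induced map $\AA^1_K \to \AA(P)$ factors through the scheme-theoretic fiber $\AA(f)^{-1}(q)$.

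With this in hand the closure statement of the lemma follows in two directions. The containment of the closure in the union of strata indexed by $\{x\}^- \cap f^{-1}(y)$ comes from intersecting Proposition~\ref{prop:stratification}\eqref{stratification2} for $\AA(\M_{X,x}^*) \subseteq \AA(X)$ with $\AA(f)^{-1}(q)$; the reverse containment is exactly witnessed stratum-by-stratum by the $\AA^1_K$-curve just produced. The step I expect to be the main obstacle is the construction of $\widetilde{p'}$ over a finite extension of $k(p')$ together with the verification that the explicit $\psi$ above is a ring homomorphism with the correct specialization behaviour at $t=0$ and the correct restriction to $\ZZ[Q]$; everything else is essentially formal from the stratification and realization machinery already developed.
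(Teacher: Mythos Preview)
Your proof is correct and follows the same core strategy as the paper's: after reducing to an affine picture, extend the character on $F^{\rm gp}$ to $G^{\rm gp}$ over a finite extension $K$, choose a monoid map to $\NN$ separating $F$ from $G$ via Lemma~\ref{lem:duality}, and assemble the explicit $K[t]$-valued family. The one structural difference is that the paper first invokes the boundary construction (Theorem~\ref{thm:boundary}, Example~\ref{example:ZXx}) to replace $f$ by $Z(f,x) : Z(X,x) \to Z(Y,y)$, so that $\M_{X,x}$ and $\M_{Y,y}$ become groups and the family can be written directly on the monoid algebra of the new stalk at $x'$. You bypass this reduction and handle $P \setminus G$ by sending $[p]$ to $0$ in $K[t]$ for $p \notin G$; this is exactly the wrong-way map (underlying the boundary embedding $\AA(Z(X,x)) \into \AA(X)$) composed with the paper's family, carried out by hand rather than through the boundary formalism. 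Your route is a bit more self-contained; the paper's route amortizes machinery already built for other uses (e.g.\ Lemma~\ref{lem:proper2}). One small point: Proposition~\ref{prop:stratification}\eqref{stratification2} requires $X$ to be quasi-compact, so you should say explicitly (as the paper does) that the whole statement is local on $Y$, whence one may take $Y$ affine and then $X$ has finite underlying space by Theorem~\ref{thm:quasicompact}.
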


\begin{proof} The statement we want to prove is local on $Y$ so we reduce to the case where $Y = \Spec Q$ for a finitely generated monoid $Q$, hence the topological space underlying $Y$ is finite by Corollary~\ref{cor:faces} and the assumption that $f$ is quasi-compact then also ensures that the space underlying $X$ is finite (Theorem~\ref{thm:quasicompact}).  In this case, we know from Proposition~\ref{prop:stratification} that \be \AA(X) & = & \coprod_{x \in X} \AA(\M_{X,x}^*) \ee is a finite stratification of $X$ by locally closed subschemes.  We see by elementary topological considerations that \eqref{fiberstratification} will be as desired provided we can prove that \be \AA(f)^{-1}(q) \cap \AA(\M_{X,x'}^*) & \subseteq & \ov{ \AA(f)^{-1}(q) \cap \AA(\M_{X,x}^*) } \ee whenever $x$ and $x'$ are as in the statement of the proposition.  Evidently then, it is enough to prove the ``In fact".

To do this, first note that, by using Theorem~\ref{thm:boundary} and Example~\ref{example:ZXx}, we can assume, after possibly replacing $f : X \to Y$ with $Z(X,x) \to Z(Y,y)$, that $\M_{X,x}=\M_{X,x}^*$ and $\M_{Y,y}=\M_{Y,y}^*$ are groups.  The point $q$ corresponds to a group homomorphism $q : \M_{Y,y} \to  k(q)^*$.  A point $p' \in \AA(f)^{-1}(q) \cap \AA(\M_{X,x'}^*)$ corresponds to a local monoid homomorphism $p' : \M_{X,x'} \to k(p')$.  Since $\M_{X,x}$ is a group, the generalization map $\M_{X,x'} \to \M_{X,x}$ must be the groupification of $\M_{X,x'}$ since it is a localization at some face by Proposition~\ref{prop:fans}\eqref{fangeneralization}.  Since the group homomorphism $\M_{X,x'}^* \to \M_{X,x}$ is injective with finitely generated cokernel (because $X$ is fine), we can find a finite field extension $k(p') \subseteq K$ such that $(p')^* : \M_{X,x'}^* \to k(p')^* \subseteq K^*$ extends to a group homomorphism $p : \M_{X,x} \to K^*$.  The situation thus far is summed up in the commutative diagram of monoids below.

\bne{fiberstratdiagram} & \xym{ K^* \\ k(p')^* \ar[u]^-{\subseteq} \ar[r]^-{\subseteq} & k(p') & \ar[l]_-{\supseteq} k(q)^* \\ \M_{X,x'}^* \ar[u]^{(p')^*} \ar[r]^-{\subseteq} & \M_{X,x'} \ar[u]^{p'} \ar[rr] & & \M_{X,x} \ar@/_3pc/[llluu]_-{p} \\ & & \M_{Y,y} \ar[uu]|\hole_<<<<<{q} \ar[ul]^{f_{x'}} \ar[ur]_{f_x} } \ene

By Lemma~\ref{lem:duality}, we can find a local map of monoids $g : \M_{X,x'} \to \NN$.  Write $G : \M_{X,x} \to \ZZ$ for the groupification of $g$.  Consider the ring homomorphism $h : \ZZ[\M_{X,x'}] \to K[T]$ ($T$ an indeterminate) defined by $h([m]) := p'(m)T^{g(m)}$ for $m \in \M_{X,x'}$.  Since $g(f_{x'}(m))=0$ for each $m \in \M_{Y,y} = \M_{Y,y}^*$, we see from commutativity of \eqref{fiberstratdiagram} that the top square in the diagram of rings \bne{ringdia} & \xym@C+40pt{ \ZZ[\M_{Y,y}] \ar[r]^-q \ar[d]_{\ZZ[f_{x'}]} & k(q) \ar[d]^-{ k(q) \subseteq k(p') \subseteq K } \\ \ZZ[\M_{X,x'}] \ar[r]^-h \ar[d]_{\subseteq} & K[T] \ar[d]^-{\subseteq} \\ \ZZ[\M_{X,x}] \ar[r]^-{[m] \mapsto p(m)T^{G(m)}} & K[T,T^{-1}] } \ene commutes.  Since $p$ extends $(p')^*$ and $G$ extends $g$, the bottom square also commutes.   The commutativity of \eqref{ringdia} ensures that $\Spec h$ defines a map $\AA^1_K \to \AA(f)^{-1}(q) \cap \AA(\M_{X,x'})$ taking the complement of the origin into $\AA(f)^{-1}(q) \cap \AA(\M_{X,x}^*)$.  For $m \in \M_{X,x'}$ we have \be h([m])|_{T=0} & = & \left \{ \begin{array}{ccc} p'(m), & \quad & g(m)=0 \\ 0, & \quad & g(m)>0 \end{array} \right . \\ & = &  \left \{ \begin{array}{ccc} p'(m), & \quad & m \in \M_{X,x'}^* \\ 0, & \quad & m \in \M_{X,x'} \setminus \M_{X,x'}^* \end{array} \right . \\ & = & p'(m) \ee by locality of $g$ and $p'$.  This ensures that $\Spec h$ takes the origin to $p'$.   \end{proof}

\begin{cor} \label{cor:fiberstratification} Let $f : X \to Y$ be a quasi-compact map of fine fans, $q \in \AA(Y)$, $y := \tau(q) \in Y$, $f^{-1}_{\max}(y)$ the set of $x \in f^{-1}(y)$ with no generalizations in $f^{-1}(y)$.  Then $$ \coprod_{x \in f^{-1}_{\max}(y) } \AA(f)^{-1}(q) \cap \AA(\M_{X,x}^*) $$ is an open, dense subset of $\AA(f)^{-1}(q)$ and for any irreducible component $Z$ of $\AA(f)^{-1}(q)$ there is a unique $x=x(Z) \in f^{-1}_{\max}(y)$ such that $Z \cap \AA(\M_{X,x}^*) \neq \emptyset$.  For this $x$, $Z \cap \AA(\M_{X,x}^*)$ is both an open, dense subset of $Z$ and an irreducible component of \be \AA(f)^{-1}(q) \cap \AA(\M_{X,x}^*) & = & \AA(f_x^* : \M_{Y,y}^* \to \M_{X,x}^*)^{-1}(q). \ee  We have $$ \dim Z = \dim Z \cap \AA(\M_{X,x}^*) = \rk \Cok f_x^*.$$ \end{cor}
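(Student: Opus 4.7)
The plan is to combine the stratification of Lemma~\ref{lem:fiberstratification} with the dimension count of Lemma~\ref{lem:torusequidimensionality}. First I would localize on $Y$ to reduce to the case where $Y = \Spec Q$ is affine (and hence finite, by Corollary~\ref{cor:faces}); quasi-compactness of $f$ then forces $X$ to be finite by Theorem~\ref{thm:quasicompact}, so in particular $f^{-1}(y)$ is a finite subset of $X$. Write $S_x := \AA(f)^{-1}(q) \cap \AA(\M_{X,x}^*)$ for the stratum indexed by $x \in f^{-1}(y)$. Lemma~\ref{lem:fiberstratification} gives $\overline{S_{x'}} = \bigsqcup_{x'' \leq x'} S_{x''}$ in $\AA(f)^{-1}(q)$, where $\leq$ denotes the specialization order on $f^{-1}(y)$. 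For each $x \in f^{-1}_{\max}(y)$, the complement of $S_x$ in $\AA(f)^{-1}(q)$ equals $\bigcup_{x' \neq x} \overline{S_{x'}}$ (any $x'' \leq x'$ with $x'' = x$ would, by maximality of $x$, force $x' = x$), so $S_x$ is open; density of $U := \bigsqcup_{x \in f^{-1}_{\max}(y)} S_x$ then follows because every $x'' \in f^{-1}(y)$ admits a maximal generalization $x$ in the finite set $f^{-1}(y)$, giving $S_{x''} \subseteq \overline{S_x} \subseteq \overline{U}$.

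For any irreducible component $Z$ of $\AA(f)^{-1}(q)$, the set $Z \cap U$ is non-empty (by density of $U$) and irreducible (as a non-empty open subset of the irreducible $Z$). Since $U$ is a disjoint union of open subsets $S_x$ indexed by maximal $x$, irreducibility forces $Z \cap U \subseteq S_{x(Z)}$ for a unique $x(Z) \in f^{-1}_{\max}(y)$, giving existence and uniqueness. Openness and density of $Z \cap S_{x(Z)}$ in $Z$ are immediate, and $Z \cap S_{x(Z)}$ is an irreducible component of the open subscheme $S_{x(Z)} \subseteq \AA(f)^{-1}(q)$ since irreducible components of an open subscheme correspond bijectively (by intersection) to irreducible components of the ambient scheme meeting that open. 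The identification $S_x = \AA(f_x^* : \M_{Y,y}^* \to \M_{X,x}^*)^{-1}(q)$ comes from applying $\AA$ to the sharpening square with horizontal inclusions $\M_{Y,y}^* \hookrightarrow \M_{Y,y}$, $\M_{X,x}^* \hookrightarrow \M_{X,x}$ and vertical arrows $f_x^*$, $f_x$: this identifies the restriction of $\AA(f_x)$ to the stratum $\AA(\M_{X,x}^*) \subseteq \AA(\M_{X,x})$ with $\AA(f_x^*)$, and combined with Proposition~\ref{prop:stratification} (which realizes $\AA(\M_{X,x}^*)$ as $\tau_X^{-1}(x)$, and similarly $\AA(\M_{Y,y}^*) = \tau_Y^{-1}(y)$) produces the equality.

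For the dimensions, $\dim Z = \dim(Z \cap S_{x(Z)})$ since a non-empty open of an irreducible finite-type scheme over a field has the ambient dimension. To evaluate this common dimension, I would invoke Lemma~\ref{lem:torusequidimensionality} applied to the map of finitely generated abelian groups $f_x^* : \M_{Y,y}^* \to \M_{X,x}^*$: it yields that $\AA(f_x^*)$ is equidimensional of relative dimension $\rk \Cok f_x^*$, so its fiber $S_x = \AA(f_x^*)^{-1}(q)$ is of pure dimension $\rk \Cok f_x^*$, and hence every irreducible component of $S_x$, including $Z \cap S_{x(Z)}$, has this dimension. The main obstacle is really just the combinatorial bookkeeping around the stratification—most notably the verification that the union of maximal strata is open and dense and that each irreducible component of the fiber lies inside a unique such open stratum—since the substantive geometric content was already packaged into Lemmas~\ref{lem:fiberstratification} and~\ref{lem:torusequidimensionality}.
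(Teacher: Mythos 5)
Your proof is correct and takes essentially the same route as the paper: the paper's own argument simply cites Lemma~\ref{lem:fiberstratification} for the topological assertions (leaving them as ``easy exercises in noetherian topology'') and Lemma~\ref{lem:torusequidimensionality} for the dimension formula, and your write-up just fills in those exercises carefully.
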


\begin{proof} The hypotheses ensure that $\AA(f)$ is a finite type map of schemes, so $\AA(f)^{-1}(q)$ is a finite type scheme over the field $k(q)$, hence a noetherian topological space of finite dimension.  The first assertion is immediate from Lemma~\ref{lem:fiberstratification} and the other topological assertions then follow by easy exercises in noetherian topology.  The dimension formula is obtained from Lemma~\ref{lem:torusequidimensionality}. \end{proof}

\begin{thm} \label{thm:equidimensionality} Let $f : X \to Y$ be a quasi-compact map of fine fans, $q \in \AA(Y)$, $y := \tau(q) \in Y$, $f^{-1}_{\max}(y)$ the set of $x \in f^{-1}(y)$ with no generalizations in $f^{-1}(y)$.  Then condition \eqref{equidimensionality1} below implies condition \eqref{equidimensionality2} below. \begin{enumerate} \item \label{equidimensionality1} For every point $x \in f^{-1}_{\max}(y)$, we have $n  = \rk \Cok f_x^*$.  \item \label{equidimensionality2} The fiber $\AA(f)^{-1}(q)$ is of pure dimension $n$.  \end{enumerate}  These two conditions are equivalent under either of the following hypotheses: \begin{enumerate} \setcounter{enumi}{2} \item \label{hyp1} $f_x^* : \M_{Y,y} \to \M_{X,x}^*$ is injective for all $x \in f^{-1}_{\max}(y)$. \item \label{hyp2} $q$ is in the image of the identity map $e : \Spec \ZZ \to \AA(\M_{Y,y}^*) \subseteq \AA(Y)$ for the group scheme structure on $\AA(\M_{Y,y}^*) = \GG(\M_{Y,y}^*)$. \end{enumerate}  In particular, $f$ is equidimensional of relative dimension $n$ iff \eqref{equidimensionality1} holds for every $y \in Y$.  \end{thm}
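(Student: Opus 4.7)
The plan is to read everything off Corollary~\ref{cor:fiberstratification}, which identifies the irreducible components of $\AA(f)^{-1}(q)$ with (a subset of) $f^{-1}_{\max}(y)$ via $Z\mapsto x(Z)$, and computes $\dim Z=\rk\Cok f^*_{x(Z)}$. The implication \eqref{equidimensionality1}$\Rightarrow$\eqref{equidimensionality2} is then immediate: under \eqref{equidimensionality1}, every irreducible component $Z$ satisfies $\dim Z=\rk\Cok f^*_{x(Z)}=n$, and no other components exist because the union of strata in the corollary is open and dense in $\AA(f)^{-1}(q)$.

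For the reverse direction under hypothesis \eqref{hyp1} or \eqref{hyp2}, the key step is to show that every $x\in f^{-1}_{\max}(y)$ arises as $x(Z)$ for at least one irreducible component $Z$ of $\AA(f)^{-1}(q)$; granted this, \eqref{equidimensionality2} forces $n=\dim Z=\rk\Cok f^*_x$. By the corollary it suffices to verify that the stratum $\AA(f)^{-1}(q)\cap\AA(\M_{X,x}^*)=\AA(f^*_x)^{-1}(q)$ is non-empty, since then any of its points lies in some irreducible component $Z$ of the fiber and the uniqueness of $x(Z)$ forces $x(Z)=x$. Under \eqref{hyp2}, non-emptiness is immediate: the canonical section $s$ of Definition~\ref{defn:canonicalsection} supplies $s(x)\in\AA(\M_{X,x}^*)$ mapping to $s(y)=q$. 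Under \eqref{hyp1}, the injective map of finitely generated abelian groups $f^*_x:\M_{Y,y}^*\to\M_{X,x}^*$ induces a surjective map of tori $\AA(f^*_x)$ (this is Lemma~\ref{lem:affinesurjectivity} applied to the injection of groups, whose $\Spec$ map is automatically surjective as source and target are one-point spaces), whence $q$ lies in its image.

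The concluding ``in particular'' statement then falls out by combining both directions. Assume \eqref{equidimensionality1} holds for every $y\in Y$; given any $q\in\AA(Y)$ with $y:=\tau(q)$, the already-established implication \eqref{equidimensionality1}$\Rightarrow$\eqref{equidimensionality2} gives that $\AA(f)^{-1}(q)$ is of pure dimension $n$, so $\AA(f)$ is e.r.d.\ $n$. Conversely, assume $\AA(f)$ is e.r.d.\ $n$; fix $y\in Y$ and apply the converse implication at the particular point $q:=s(y)\in\AA(Y)$, a choice for which hypothesis \eqref{hyp2} is tautologically satisfied, to extract \eqref{equidimensionality1} at $y$.

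The main technical obstacle, and the raison d'\^etre of the extra hypothesis in the converse, is exactly the non-emptiness of each $\AA(f^*_x)^{-1}(q)$ for $x\in f^{-1}_{\max}(y)$: without \eqref{hyp1} or \eqref{hyp2} the chosen $q$ can avoid the image of $\AA(f^*_x)$ for some such $x$, in which case that $x$ simply contributes no component to the fiber over $q$ and its (possibly anomalous) value of $\rk\Cok f^*_x$ is invisible to $\AA(f)^{-1}(q)$. Everything else in the argument is essentially bookkeeping once Corollary~\ref{cor:fiberstratification} is in hand.
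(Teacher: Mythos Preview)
Your proof is correct and follows essentially the same approach as the paper: both read everything off Corollary~\ref{cor:fiberstratification}, and both reduce the converse direction to showing that the stratum $\AA(f_x^*)^{-1}(q)$ is non-empty for each $x\in f^{-1}_{\max}(y)$. The only cosmetic difference is that for hypothesis~\eqref{hyp2} you invoke the canonical section $s(x)$ (with $k=k(q)$) to exhibit a preimage of $q$, whereas the paper simply observes that a map of group schemes sends identity to identity; these are the same fact. Your handling of the ``in particular'' clause, choosing $q=s(y)$ so that hypothesis~\eqref{hyp2} holds automatically, is exactly right and makes explicit something the paper leaves implicit.
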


\begin{proof}  The fact that \eqref{equidimensionality1} implies \eqref{equidimensionality2} is immediate from Corollary~\ref{cor:fiberstratification}.  Either of the hypotheses \eqref{hyp1}, \eqref{hyp2} ensures that \be \AA(f)^{-1}(q) \cap \AA(\M_{X,x}^*) & = & \AA(f_x^* : \M_{Y,y}^* \to \M_{X,x}^*)^{-1}(q) \ee is non-empty for every $x \in f^{-1}(y)$.  This is because $\AA(f_x^*) = \GG(f_x^*)$ is a map of group schemes which is surjective when $f_x^*$ is injective.  To that \eqref{equidimensionality2} implies \eqref{equidimensionality1} in this case, consider a point $x \in f^{-1}_{\max}(y)$.  Then $\AA(f_x^*)^{-1}(q)$ is \emph{non-empty} and of pure dimension $m := \rk \Cok f_x^*$ by Lemma~\ref{lem:torusequidimensionality}, so any irreducible component $W$ of $\AA(f_x^*)^{-1}(q)$ is of dimension $m$.  By Corollary~\ref{cor:fiberstratification}, the closure $Z$ of such a $W$ in $\AA(f)^{-1}(q)$ is an irreducible component of $\AA(f)^{-1}(q)$, hence $\dim Z = n$ by \eqref{equidimensionality1}.  But then we find that $$m = \dim W = \dim Z =n,$$ which establishes \eqref{equidimensionality2}. \end{proof}

\begin{cor} \label{cor:equidimensionality} For a quasi-compact map of fine fans $f : X \to Y$ the following are equivalent: \begin{enumerate} \item \label{fequidimensional} $f : X \to Y$ is equidimensional of relative dimension $n$. \item \label{ftfequidimensional} $f^{\rm tf} : X^{\rm tf} \to Y^{\rm tf}$ (cf.\ \S\ref{section:integration}) is equidimensional of relative dimension $n$. \item \label{fsatequidimensional} $f^{\rm sat} : X^{\rm sat} \to Y^{\rm sat}$ (cf.\ \S\ref{section:integration}) is equidimensional of relative dimension $n$. \item \label{ftrcequidimensional} $f^{\rm trc} : X^{\rm trc} \to Y^{\rm trc}$ (cf.\ \S\ref{section:integration}) is equidimensional of relative dimension $n$. \end{enumerate} \end{cor}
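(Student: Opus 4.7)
The plan is to reduce everything to Theorem~\ref{thm:equidimensionality}, whose final assertion characterizes ``e.r.d.\ $n$'' for a quasi-compact map of fine fans $f:X\to Y$ as the combinatorial condition that $\rk \Cok f_x^* = n$ for every $y\in Y$ and every $x$ maximal in $f^{-1}(y)$. First I would note that, for an integral fan $X$ and $\bullet\in\{\mathrm{tf},\mathrm{sat},\mathrm{trc}\}$, the natural map $X^\bullet\to X$ is a homeomorphism of underlying topological spaces (Corollaries~\ref{cor:tf}\eqref{tfhomeo}, \ref{cor:sat}\eqref{sathomeo}, \ref{cor:trc}\eqref{trchomeo}), and by construction (\S\ref{section:integration}) the stalk of $\M_{X^\bullet}$ at the point $x'$ corresponding to $x\in X$ is $\M_{X,x}^\bullet$, with the stalk of $f^\bullet$ at $x'$ being the natural map $\M_{Y,y}^\bullet\to\M_{X,x}^\bullet$ induced functorially by $f_x$. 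Under these homeomorphisms, $(f^\bullet)^{-1}_{\max}(y)$ and $f^{-1}_{\max}(y)$ are in canonical bijection (a homeomorphism preserves specialization), so by Theorem~\ref{thm:equidimensionality} the corollary reduces to the following purely monoid-theoretic claim: \emph{for any map $h:Q\to P$ of fine monoids and any $\bullet\in\{\mathrm{tf},\mathrm{sat},\mathrm{trc}\}$, one has $\rk\Cok (h^*:Q^*\to P^*) = \rk\Cok ((h^\bullet)^*:(Q^\bullet)^*\to (P^\bullet)^*)$.}

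Since $P^{\rm trc}=(P^{\rm sat})^{\rm tf}=(P^{\rm tf})^{\rm sat}$ (Corollary~\ref{cor:trc}), the trc case follows from the other two, so I only need to treat $\bullet=\mathrm{tf}$ and $\bullet=\mathrm{sat}$. For $\bullet=\mathrm{tf}$, the natural map $P^*\to (P^{\rm tf})^*$ is surjective with kernel $P^*_{\rm tor}$: surjectivity uses locality of $P\to P^{\rm tf}$ (Corollary~\ref{cor:tf}\eqref{tflocal}) together with surjectivity of $P\to P^{\rm tf}$, and the kernel is $P^*\cap P^{\rm gp}_{\rm tor}=P^*_{\rm tor}$, a \emph{finite} group since $P^*\subseteq P^{\rm gp}$ is a subgroup of a finitely generated abelian group. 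The resulting commutative square of unit groups with surjective vertical arrows and finite kernels gives a surjection $\Cok h^*\twoheadrightarrow\Cok (h^{\rm tf})^*$ whose kernel is a quotient of the finite group $P^*_{\rm tor}$, so ranks agree. For $\bullet=\mathrm{sat}$, the natural map $P^*\hookrightarrow(P^{\rm sat})^*$ is injective (Corollary~\ref{cor:sat}\eqref{satlocal}), and its cokernel is identified with the torsion subgroup of $\ov P^{\rm gp}=P^{\rm gp}/P^*$: indeed, $p\in P^{\rm gp}$ lies in $(P^{\rm sat})^*$ precisely when $Np\in P^*$ for some $N>0$, i.e.\ iff the class of $p$ in $\ov P^{\rm gp}$ is torsion. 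Since $\ov P^{\rm gp}$ is finitely generated, this cokernel is finite, and a snake-lemma chase on the resulting square of unit groups shows that $\Cok h^*\to\Cok (h^{\rm sat})^*$ has finite kernel and finite cokernel, so ranks again agree.

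The substance of the result is already contained in Theorem~\ref{thm:equidimensionality} together with the structural facts about the functors $(\slot)^\bullet$ from \S\ref{section:integration}; the only real work is the book-keeping above. The only step requiring any care is confirming finiteness of $P^*_{\rm tor}$ and of $(P^{\rm sat})^*/P^*$, which is precisely where the hypothesis that $P$ be fine (rather than merely integral) enters the argument.
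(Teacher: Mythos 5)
Your proposal is correct and follows essentially the same route as the paper: reduce via the final assertion of Theorem~\ref{thm:equidimensionality} to the rank equality $\rk\Cok f_x^* = \rk\Cok (f^\bullet)_x^*$, then verify it for $\bullet=\mathrm{tf}$ (surjective with finite kernel $P^*_{\rm tor}$) and $\bullet=\mathrm{sat}$ (injective with finite cokernel), deducing $\mathrm{trc}$ from the other two. The only cosmetic difference is in the saturated case, where you identify $(P^{\rm sat})^*/P^*$ with the torsion subgroup of $\ov{P}^{\rm gp}$ directly, whereas the paper deduces finiteness of that cokernel from the fact that a finite generating set $S$ of $P^{\rm sat}$ as a $P$-module has $S\cap (P^{\rm sat})^*$ surjecting onto $(P^{\rm sat})^*/P^*$; both arguments are valid.
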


\begin{proof} By construction, the maps $f$, $f^{\rm tf}$, $f^{\rm sat}$, and $f^{\rm trc}$ are all the same on the level of topological spaces (cf.\ \S\ref{section:integration}), so by Theorem~\ref{thm:equidimensionality} it is enough to show that \bne{rankequality} \rk \Cok f_x^* & = & \rk \Cok (f^\bullet)_x^* \ene for each $\bullet \in \{ {\rm tf, \; sat, \; trc} \}$ and each $x$ in the common topological space underlying $X$ and $X^\bullet$.  For each such $\bullet$, we have a commutative diagram of fine fans \bne{cefinefandiagram} & \xym{ X^\bullet \ar[r]^{f^\bullet} \ar[d] & Y^\bullet \ar[d] \\ X \ar[r]^-f & Y } \ene (where the vertical arrows are homeomorphisms), and hence for each such $x$ we have a commutative diagram of local maps of fine monoids \bne{cebulletdiagram} & \xym{  \M_{Y^\bullet,f(x)} = \M_{Y,f(x)}^\bullet \ar[r]^-{f^\bullet_x} & \M_{X^\bullet,x} = \M_{X,x}^\bullet \\ \M_{Y,f(x)} \ar[u] \ar[r]^-{f_x} & \M_{X,x} \ar[u] } \ene and a corresponding diagram of finitely generated abelian groups \bne{cebulletdiagramunits} & \xym{  (\M_{Y,y}^\bullet)^* = \M_{Y^\bullet,f(x)}^* \ar[r]^-{(f^\bullet_x)^*} & \M_{X^\bullet,x}^* = (\M_{X,x}^\bullet)^* \\ \M_{Y,f(x)}^* \ar[u] \ar[r]^-{f_x} & \M_{X,x}^* \ar[u]. } \ene  From the explicit construction of $P \to P^{\rm tf}$ for an integral monoid $P$ in Corollary~\ref{cor:tf}, we see that $P^* \to (P^{\rm tf})^*$ is surjective with kernel $P^* \cap P^{\rm gp}_{\rm tor}$, which is finite when $P$ is fine.  So when $\bullet = {\rm tf}$, each vertical arrow in \eqref{cebulletdiagramunits} is surjective with finite kernel, hence \eqref{rankequality} holds.  When $\bullet = {\rm sat}$, then by Corollary~\ref{cor:sat} the vertical arrows in \eqref{cebulletdiagram} are injective and finite (and local).  In general, if $Q \into P$ is an injective, local map of monoids and $S \subseteq P$ generates $P$ as a $Q$-module, then one checks easily that $S \cap P^*$ generates $P^*$ as a $Q^*$-module (i.e.\ $S \cap P^*$ surjects onto $P^*/Q^*$).  It follows that when $\bullet = {\rm sat}$, the vertical arrows in \eqref{cebulletdiagramunits} are injective with finite cokernel, hence \eqref{rankequality} holds.  When $\bullet = {\rm trc}$, we obtain \eqref{rankequality} from the previously proved results by using the fact that $f^{\rm trc} = (f^{\rm sat})^{\rm tf} = (f^{\rm tf})^{\rm sat}$. \end{proof}

Corollary~\ref{cor:equidimensionality} reduces questions of equidimensionality for quasi-compact maps of fine fans to the case of quasi-compact maps of \emph{toric} fans.  Furthermore, we can reduce these questions to the analogous questions for maps of toric \emph{monoids} by looking at stalks as in Remark~\ref{rem:equidimensional}.  We can further reduce to an \emph{injective} map of toric monoids by factoring an arbitrary map $h : Q \to P$ of such monoids as a surjection $k : Q \to R$ followed by an injection $g : R \to P$.  Since $\AA(k)$ is a closed embedding, any fiber of $\AA(h)$ is either empty, or a fiber of $\AA(g)$, so $h$ will be e.r.d.\ $n$ iff $g$ is e.r.d.\ $n$.  It is not generally true in this situation that $R$ is toric---it is torsion-free since it sits inside the torsion-free monoid $P$, but it may not be saturated.  But we know from Corollary~\ref{cor:equidimensionality} that $g$ is e.r.d.\ $n$ iff $g^{\rm sat} : R^{\rm sat} \to P$ is e.r.d.\ $n$ and $g^{\rm sat}$ is an injection of toric monoids, so the reduction is complete.

An injective map of toric monoids $h : Q \to P$ is the same thing as a map of affine classical fans (the corresponding map of maximal cones will be $\sigma(P)^\lor \to \sigma(Q)^\lor$) for which the corresponding map of lattices $N := (P^{\rm gp})^{\lor} \to (Q^{\rm gp})^\lor =: N'$ has finite cokernel (namely $\Ext^1(P^{\rm gp}/Q^{\rm gp},\ZZ)$).  The equidimensional maps of classical fans for which the corresonding map of lattices $N \to N'$ has finite cokernel have a nice characterization due to Abramovich and Karu \cite[4.1]{AK} which we shall give as Theorem~\ref{thm:toricequidimensionality} below, after a few preliminaries.  We will see that equidimensionality is closely related to the following fan-theoretic notion, due originally to K.~Kato and studied further by C.~Nakayama and A.~Ogus in \cite{NO}:

\begin{defn} \label{defn:exact} A map of fine fans $f : X \to Y$ is called \emph{exact} iff the diagram of monoids $$ \xym{ \M_{Y,f(x)} \ar[r]^-{f_x} \ar[d] & \M_{X,x} \ar[d] \\ \M_{Y,f(x)}^{\rm gp} \ar[r]^-{f_x^{\rm gp}} & \M_{X,x}^{\rm gp} }$$ is cartesian for each $x \in X$.  Similarly, a map of fine monoids $h : Q \to P$ is called \emph{exact} iff $\Spec h$ is exact in the previous sense\footnote{This notion is actually called ``locally exact" in \cite[Definition~2.1]{NO}.}---equivalently $$ \xym{ h^{-1}(F)^{-1}Q \ar[r]^-{``h"} \ar[d] & F^{-1}P \ar[d] \\ Q^{\rm gp} \ar[r]^-{h^{\rm gp}} & P^{\rm gp} }$$ is cartesian for each face $F \leq P$. \end{defn}

\begin{lem} \label{lem:exactness} Suppose $h : Q \to P$ is a map of integral monoids such that \bne{exactnessdiagram0} & \xym{ Q \ar[r]^-{h} \ar[d] & P \ar[d] \\ Q^{\rm gp} \ar[r]^{h^{\rm gp}} & P^{\rm gp} } \ene is cartesian.  Recall the functors $P \mapsto P^{\rm tf}$, $P \mapsto P^{\rm sat}$, and $P \mapsto P^{\rm trc}$ from Corollaries~\ref{cor:tf}, \ref{cor:sat}, and \ref{cor:trc}.  Then: \begin{enumerate} \item \label{exactness1} $h$ is local. \item \label{exactness2} If $P$ is saturated then $Q$ is saturated.  \item \label{exactness3} If $Q$ is saturated, then the diagram \be & \xym{ Q^{\rm tf} \ar[r]^-{h^{\rm tf}} \ar[d] & P^{\rm tf} \\ (Q^{\rm tf})^{\rm gp} = Q^{\rm gp} / Q^{\rm gp}_{\rm tor} \ar[r]^-{(h^{\rm tf})^{\rm gp} } & P^{\rm gp} / P^{\rm gp}_{\rm tor} = (P^{\rm tf})^{\rm gp} } \ee is cartesian.   \item \label{exactness4} The diagrams \be \xym{ Q^{\rm sat} \ar[r]^-{h} \ar[d] & P^{\rm sat} \ar[d] \\ Q^{\rm gp} \ar[r]^{h^{\rm gp}} & P^{\rm gp} } & {\rm \; and \;} & \xym{ Q^{\rm trc} \ar[r]^-{h^{\rm trc}} \ar[d] & P^{\rm trc} \ar[d] \\ (Q^{\rm trc})^{\rm gp} = Q^{\rm gp} / Q^{\rm gp}_{\rm tor} \ar[r]^-{(h^{\rm trc})^{\rm gp} } & P^{\rm gp} / P^{\rm gp}_{\rm tor} = (P^{\rm trc})^{\rm gp} } \ee are cartesian.  \end{enumerate}  It follows that if $f :  X \to Y$ is an exact map of integral fans, then $f^{\rm sat}$ and $f^{\rm trc}$ are exact maps of integral fans and, if $X$ is also saturated, then $f^{\rm tf}$ is also exact.  \end{lem}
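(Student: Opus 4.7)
My plan for the proof has two phases: first I would dispatch the four numbered items by unpacking the cartesian condition into the convenient form ``$q \in Q^{\rm gp}$ with $h^{\rm gp}(q) \in P$ forces $q \in Q$'', and then I would globalize to fans by passing to stalks and applying Proposition~\ref{prop:locallyfinitetypefans} together with the explicit local descriptions of $X^{\rm tf}$, $X^{\rm sat}$, $X^{\rm trc}$ from \S\ref{section:integration}.

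For (1), I would observe that given $q \in Q$ with $h(q) \in P^{*}$, the element $-h(q)$ lies in $P$ and equals $h^{\rm gp}(-q)$, so cartesianness of \eqref{exactnessdiagram0} provides $-q \in Q$, i.e.\ $q \in Q^{*}$. For (2), assuming $P$ is saturated, any $q \in Q^{\rm gp}$ with $nq \in Q$ satisfies $nh^{\rm gp}(q) = h^{\rm gp}(nq) \in P$, hence $h^{\rm gp}(q) \in P^{\rm sat} = P$, and cartesianness again yields $q \in Q$. For the sat square in (4), the argument is identical: $h^{\rm gp}(q) \in P^{\rm sat}$ means $nh^{\rm gp}(q) \in P$ for some $n > 0$, hence $nq \in Q$ and therefore $q \in Q^{\rm sat}$.

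The substantial (but still routine) step is (3). Writing $P^{\rm tf}$ as the image of $P$ in $P^{\rm gp}/P^{\rm gp}_{\rm tor}$, a class $\bar q \in Q^{\rm gp}/Q^{\rm gp}_{\rm tor}$ whose image in $P^{\rm gp}/P^{\rm gp}_{\rm tor}$ lies in $P^{\rm tf}$ lifts to $q \in Q^{\rm gp}$ with $h^{\rm gp}(q) - p \in P^{\rm gp}_{\rm tor}$ for some $p \in P$. Taking $n$ to be the order of this torsion element gives $h^{\rm gp}(nq) = np \in P$, so cartesianness of \eqref{exactnessdiagram0} yields $nq \in Q$, and the saturation of $Q$ then forces $q \in Q$; this shows $\bar q \in Q^{\rm tf}$. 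Finally, for the trc square in (4), I would compose: the sat part of (4) gives that the square for $Q^{\rm sat} \to P^{\rm sat}$ (with groupifications $Q^{\rm gp},P^{\rm gp}$) is cartesian; part (2) applied to this new square upgrades $Q^{\rm sat}$ to saturated (as $P^{\rm sat}$ is); now part (3) applies to $Q^{\rm sat} \to P^{\rm sat}$ to yield cartesianness of its tf square, and $(Q^{\rm sat})^{\rm tf} = Q^{\rm trc}$, $(P^{\rm sat})^{\rm tf} = P^{\rm trc}$ (Corollary~\ref{cor:trc}) finishes the argument.

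For the concluding statement about fans, exactness is a stalkwise condition (Definition~\ref{defn:exact}), and the stalk of $\M_{X^{\bullet}}$ at $x$ is $(\M_{X,x})^{\bullet}$ for $\bullet \in \{\mathrm{sat},\mathrm{trc}\}$ by the explicit local description in \S\ref{section:integration} (and similarly for $\bullet = \mathrm{tf}$, where the underlying space is unchanged). Thus exactness of $f$ at $x$ is precisely the cartesian hypothesis of the lemma for $h = f_{x} : \M_{Y,f(x)} \to \M_{X,x}$, and parts (3) and (4) hand us the cartesianness of the $\mathrm{sat}$, $\mathrm{trc}$ and (under a saturation assumption) $\mathrm{tf}$ squares attached to $f_{x}$. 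The only subtlety---which I flag as the main point to watch---is that part (3) requires the \emph{source} monoid $Q = \M_{Y,f(x)}$ of $f_{x}$ to be saturated, whereas the hypothesis in the final sentence is that $X$ (not $Y$) is saturated; this gap is closed by first invoking part (2) of the lemma, which forces $\M_{Y,f(x)}$ to be saturated whenever the target $\M_{X,x}$ is. No step should be a genuine obstacle, but keeping the four distinct squares and the various groupification targets straight is where a careless proof could go wrong.
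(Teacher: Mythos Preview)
Your proof is correct and follows the same approach as the paper, which simply says that (1)--(3) and the left square in (4) are ``straightforward exercises with the definitions'' and derives the right square in (4) from $P^{\rm trc} = (P^{\rm sat})^{\rm tf}$; you have actually carried out those exercises. Your observation that the fan statement for $f^{\rm tf}$ requires passing from ``$X$ saturated'' to ``$\M_{Y,f(x)}$ saturated'' via part~(2) is a detail the paper leaves implicit; the only superfluous step is your invocation of (2) in the trc argument, since $Q^{\rm sat}$ is saturated by construction.
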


\begin{proof}  Statements \eqref{exactness1}-\eqref{exactness3} are straightforward exercises with the definitions, as is the exactness of the left diagram in \eqref{exactness4}.  Exactness of the right diagram in \eqref{exactness4} then follows from $P^{\rm trc} = (P^{\rm sat})^{\rm tf}$.  The consequences for maps of integral fans follow from the constructions of $f^{\rm tf}$, $f^{\rm sat}$, and $f^{\rm trc}$ in \S\ref{section:integration} by applying the corresponding monoid result to the stalks of $f$. \end{proof}

\begin{rem} \label{rem:exactness} Lemma~\ref{lem:exactness}\eqref{exactness3} does not hold without assuming $Q$ is saturated, even if $Q$ and $P$ are sharp and fine.  For example, consider the submonoid $P$ of $\ZZ \oplus \ZZ/3\ZZ$ generated by $(1,0)$ and $(2,2)$.  Note $P^{\rm gp} = \ZZ \oplus \ZZ/3\ZZ$.  \emph{Define} $Q$ and $h$ so that $$ \xym{ Q \ar[r]^-{h} \ar[d] & P \ar[d] \\ \ZZ \ar[r]^-{ (1,1) } & \ZZ \oplus \ZZ/3 \ZZ }$$ is cartesian.  Note that $Q$ is the submonoid of $\ZZ$ generated by $2$ and $3$, so $Q^{\rm gp} = \ZZ$ and the bottom arrow in the diagram above is the groupification of the top arrow.  Then $Q = Q^{\rm tf}$, $P^{\rm tf} = \NN$ and $h^{\rm tf}$ is just the obvious inclusion of $Q$ into $\NN$, which is not an isomorphism despite the fact that its groupification is an isomorphism, so the diagram in Lemma~\ref{lem:exactness}\eqref{exactness3} is not cartesian. \end{rem}

\begin{lem} \label{lem:toricexactness} Let $f : N \to N'$ be a map of lattices, $f^\lor : M' \to M$ the map of dual lattices, $\sigma \subseteq N_{\RR}$, $\sigma' \subseteq N'_{\RR}$ rational cones with $f_{\RR}(\sigma) \subseteq \sigma'$.  Then the diagram \bne{exactnessdiagram} & \xym{ M' \cap (\sigma')^\lor \ar[d] \ar[r]^-{f^\lor} & M \cap \sigma^\lor \ar[d] \\ M' \ar[r]^-{f^\lor} & M } \ene is cartesian iff $f_{\RR}(\sigma)=\sigma'$. \end{lem}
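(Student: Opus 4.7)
The plan is to read off what the cartesianness in \eqref{exactnessdiagram} means set-theoretically. The diagram is a pullback iff the canonical map
\[ M' \cap (\sigma')^\lor \longrightarrow \{\, m' \in M' : f^\lor(m') \in \sigma^\lor \,\} \]
is a bijection. For $m' \in M'$, the condition $f^\lor(m') \in \sigma^\lor$ is exactly $m'(f_{\RR}(v)) \geq 0$ for all $v \in \sigma$, i.e.\ $m' \in f_{\RR}(\sigma)^\lor$. So, setting $\tau := f_{\RR}(\sigma)$, cartesianness of \eqref{exactnessdiagram} amounts to the equality $M' \cap (\sigma')^\lor = M' \cap \tau^\lor$. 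Since $\tau \subseteq \sigma'$ is given, the inclusion $M' \cap (\sigma')^\lor \subseteq M' \cap \tau^\lor$ always holds, and the whole question reduces to whether the reverse inclusion holds.

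For the easy direction ($\Leftarrow$): if $\tau = \sigma'$, then $\tau^\lor = (\sigma')^\lor$ and the equality is immediate.

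For the interesting direction ($\Rightarrow$), I will argue the contrapositive. First I would observe that $\tau = f_{\RR}(\sigma)$ is itself a rational, finitely generated cone in $N'_{\RR}$: if $v_1,\dots,v_k \in N \sqcap \sigma$ generate $\sigma$ as a cone (Theorem~\ref{thm:cones}\eqref{FarkasTheorem} plus rationality), then $f(v_1),\dots,f(v_k) \in N' \sqcap \tau$ generate $\tau$. Now suppose $\tau \subsetneq \sigma'$ and pick $v' \in \sigma' \setminus \tau$. By Farkas (Theorem~\ref{thm:cones}\eqref{FarkasTheorem}) applied to the finitely generated cone $\tau$, we have $\tau = \tau^{\lor\lor}$, so there exists a linear functional $\ell \in \tau^\lor \subseteq M'_{\RR}$ with $\ell(v') < 0$. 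By Theorem~\ref{thm:monoidsandcones}\eqref{monoidsandcones2} applied to the rational cone $\tau^\lor$, the integral points $M' \cap \tau^\lor$ generate $\tau^\lor$ as a cone, so I can write $\ell = \sum_i \lambda_i m'_i$ with $\lambda_i > 0$ and $m'_i \in M' \cap \tau^\lor$; since $\ell(v') < 0$, at least one such $m'_i$ has $m'_i(v') < 0$. This $m'_i$ lies in $M' \cap \tau^\lor$ (so $f^\lor(m'_i) \in M \cap \sigma^\lor$) but not in $M' \cap (\sigma')^\lor$ (since $v' \in \sigma'$), which witnesses failure of the equality and hence of cartesianness.

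There is no real obstacle here: the only thing worth double-checking is the claim that $\tau = f_{\RR}(\sigma)$ is a closed, rational, finitely generated cone (needed so that both Farkas and the rational-generation of $\tau^\lor$ apply). This follows directly from the fact that $\sigma$ is rational and finitely generated together with $\RR_{\geq 0}$-linearity of $f_{\RR}$, so the argument is clean.
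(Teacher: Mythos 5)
Your proof is correct and takes essentially the same route as the paper's: both reduce cartesianness to the equality $M' \cap (\sigma')^\lor = M' \cap f_{\RR}(\sigma)^\lor$ and, for the ``only if,'' use Farkas duality plus the fact that the rational cone $f_{\RR}(\sigma)^\lor$ is generated by its integral points to produce an integral functional lying in $f_{\RR}(\sigma)^\lor$ but not in $(\sigma')^\lor$. The paper asserts the existence of such an integral point more tersely from the proper containment $(\sigma')^\lor \subsetneq f_{\RR}(\sigma)^\lor$; your argument simply makes that step explicit.
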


\begin{proof} The ``if" is immediate from the definitions.  For the ``only if," suppose the containment $f_{\RR}(\sigma) \subseteq \sigma'$ is not an equality.  Since $\sigma$ is rational, so is $f_{\RR}(\sigma)$, so by Theorem~\ref{thm:cones}\eqref{FarkasTheorem}, we have a proper containment $(\sigma')^\lor \subseteq f_{\RR}(\sigma)^\lor$ of rational cones, so there is some $m' \in M' \cap f_{\RR}(\sigma)^\lor$ (hence $f^\lor m' = m' f \in M \cap \sigma^\lor$) not in $M' \cap (\sigma')^\lor$, violating \eqref{exactnessdiagram} cartesian. \end{proof}

\begin{lem} \label{lem:conesequidimensionality} Let $f : N \to N'$ be a map of lattices, $\sigma \subseteq N_{\RR}$ a sharp cone, $x \in f_{\RR}(\sigma)^{\rm int}$.  Then there is a face $\tau \leq \sigma$ such that $x \in f_{\RR}(\tau)^\circ$ and $f_{\RR}|\tau : \tau \to f_{\RR}(\tau)$ is bijective. \end{lem}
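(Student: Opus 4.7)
The plan is to take $\tau$ to be minimal (under inclusion of faces) among the faces of $\sigma$ satisfying $x \in f_\RR(\tau)$; such a $\tau$ exists because $\sigma$ has only finitely many faces (Theorem~\ref{thm:cones}\eqref{finitelymanyfaces}) and the condition is satisfied by $\sigma$ itself. I then verify both required properties for this $\tau$, in each case arguing that failure contradicts the minimality.

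For the interior property $x \in f_\RR(\tau)^\circ$: if $x$ were in a proper face $\rho < f_\RR(\tau)$, then by Theorem~\ref{thm:cones}\eqref{facesofcones} I may write $\rho = f_\RR(\tau) \cap u^\perp$ for some $u \in f_\RR(\tau)^\lor$ not vanishing identically on $f_\RR(\tau)$. Set $u' := u \circ f_\RR$; this lies in $\tau^\lor$ and does not vanish on $\tau$ (otherwise $u$ would vanish on $f_\RR(\tau)$), so $\tau' := \tau \cap (u')^\perp$ is a proper face of $\tau$. Any $y \in \tau$ with $f_\RR(y) = x$ satisfies $u'(y) = u(x) = 0$, hence $y \in \tau'$, contradicting the minimality of $\tau$.

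For injectivity of $f_\RR|\tau$: suppose toward a contradiction that there is a nonzero $v \in \Span \tau$ with $f_\RR(v) = 0$. Fix $y \in \tau$ with $f_\RR(y) = x$ and consider $A := \{ t \in \RR : y + tv \in \tau \}$, a closed convex (hence interval) subset of $\RR$ containing $0$. Sharpness of $\sigma$ (which passes to its face $\tau$) forces $A$ to be bounded: were $A$ unbounded above, then $(y+tv)/t = y/t + v$ would lie in $\tau$ for arbitrarily large $t > 0$ and closedness of $\tau$ would yield $v \in \tau$; unboundedness below would similarly yield $-v \in \tau$; having both $\pm v \in \tau$ with $v \neq 0$ contradicts sharpness of $\tau$. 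Write $A = [a,b]$ with $a, b$ finite. If $a = b = 0$, then $y$ cannot lie in $\tau^\circ$ (which is open in $\Span \tau$), so $y$ already lies in some proper face of $\tau$ whose image contains $x$, contradicting minimality. Otherwise at least one endpoint, say $b$, is nonzero, and then $y + bv$ lies on the topological boundary of $\tau$ in $\Span \tau$, which by Theorem~\ref{thm:cones}\eqref{conetopology} is the union of the facets, so $y + bv$ lies in some facet, hence in a proper face $\tau' < \tau$; since $f_\RR(y + bv) = x$, this again contradicts minimality.

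The main subtle step is the injectivity argument: one must combine sharpness (to bound $A$) with the structure of the topological boundary of a finitely generated cone (to extract a proper face in the non-degenerate case), and separately handle the degenerate case $a = b = 0$ by observing that $y$ must then already lie on the relative boundary of $\tau$. Finite generation of $\sigma$, implicit throughout since the surrounding results are phrased for rational cones in lattices, enters via Theorem~\ref{thm:cones}\eqref{conetopology} and \eqref{facesofcones}.
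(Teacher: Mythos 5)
Your argument is correct in outline and takes a genuinely different route from the paper's. The paper factors $f$ as a surjection followed by an injection, reduces to the surjective case, and inducts on $\dim\sigma$: it lifts $x$ to $\tilde x\in\sigma$, passes to the face containing $\tilde x$ in its interior, and, when the kernel $V=\Ker f_{\RR}\cap\Span\sigma$ is nonzero, uses connectedness of $V$ to find a perturbation $\tilde x+v'$ landing in a proper face. You instead take $\tau$ minimal among faces with $x\in f_{\RR}(\tau)$ and verify the two conclusions directly: the interior condition via pulling back a supporting functional of $f_{\RR}(\tau)$ (where the paper gets it from the surjectivity reduction and the fact that surjections of cones carry interiors to interiors), and injectivity by perturbing along a single line in $\Ker f_{\RR}\cap\Span\tau$ rather than the whole kernel. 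The two proofs are close in spirit (both locate a boundary point of the cone along the kernel and invoke Theorem~\ref{thm:cones}\eqref{conetopology}), but your organization by minimality in the face poset, replacing the induction and the surjection/injection factorization, is a legitimate and arguably cleaner alternative. Your remark about the implicit finite generation of $\sigma$ is also apt; the paper's proof uses closedness of $\sigma$ in the same implicit way.

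One step needs repair: the assertion that $A=\{t:y+tv\in\tau\}$ is bounded. Your sharpness argument only excludes $A=\RR$ (unboundedness above gives $v\in\tau$, unboundedness below gives $-v\in\tau$, and only the conjunction contradicts sharpness), so $A$ may well be a half-line, e.g.\ $[0,\infty)$ when $v\in\tau$, and the line ``Write $A=[a,b]$ with $a,b$ finite'' is not justified. Fortunately your subsequent reasoning only requires that $A$ have at least one finite endpoint $c$, which the exclusion of $A=\RR$ does provide: if $c\neq 0$ then $y+cv$ is a topological boundary point of $\tau$ in $\Span\tau$, hence lies in a facet by Theorem~\ref{thm:cones}\eqref{conetopology} and maps to $x$; if $c=0$ then $y$ itself fails to lie in $\tau^\circ$ and so lies in a proper face. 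Either way minimality is contradicted. So the proof is salvaged by replacing ``$A$ is bounded'' with ``$A\neq\RR$, hence $A$ has a finite endpoint,'' and running your two cases on that endpoint.
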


\begin{proof}  This is clear if $f$ is injective since we can simply take $\tau=\sigma$.  By factoring $f$ as a surjection followed by an injection, we reduce to the case where $f$ is surjective.  In this case we proceed by induction on $\dim \sigma$.  Pick $\tilde{x} \in \sigma$ with $f_{\RR}(\tilde{x})=x$.  Let $\sigma' \leq \sigma$ be the unique face of $\sigma$ with $\tilde{x} \in (\sigma')^\circ$.  Since $f_{\RR} : \sigma' \to f_{\RR}(\sigma')$ is surjective we have $x \in f_{\RR}(\sigma')^\circ$.  If $\sigma' < \sigma$, then we conclude by induction, so now assume that $\sigma' = \sigma$---i.e.\ $\tilde{x} \in \sigma^\circ$.  If $f_{\RR}|\sigma : \sigma \to f_{\RR}(\sigma)$ is injective, then it is bijective and we can take $\tau = \sigma$, so we can now assume that the vector subspace $V := \Ker f_{\RR} \cap \Span \sigma$ of $N_{\RR}$ has positive dimension.  

There must be some $v \in V$ for which $\tilde{x} + v \notin \sigma$, otherwise we would see that $V \subseteq \sigma$ (use the fact that $\sigma$ is closed in $N_{\RR}$ and any $v \in V$ is the limit of $\lambda^{-1}(\tilde{x}+\lambda v)$ as $\lambda \to + \infty$), contradicting the assumption that $\sigma$ is sharp.  Since $V$ is connected and \be V' & := & \{ v \in V : \tilde{x}+v \in \sigma \} \ee is a non-empty (since $0 \in V'$) proper closed subset of $V$, there must be some point $v'$ on the boundary of $V'$ in $V$.  Since $V \subseteq \Span \sigma$ and $\sigma^\circ$ is the topological interior of $\sigma$ in $\Span \sigma$, we see that $\tilde{x}+v'$ must lie in a proper face $\sigma'' < \sigma$.  Since $V \subseteq \Ker f_{\RR}$, we have $x = f_{\RR}(\tilde{x}+v')$, and we see as in the previous paragraph that, in fact, $x \in f_{\RR}(\sigma'')^\circ$, so we can conclude by induction.    \end{proof}

\begin{thm} \label{thm:toricequidimensionality} {\bf (Abramovich-Karu)} Let $f : \Sigma \to \Sigma'$ be a map of classical fans such that the cokernel of the corresponding map of lattices $f : N \to N'$ is finite.  Set $n := \rk N$, $n' := \rk N'$.  For $\sigma \in \Sigma$, let $\sigma' \in \Sigma'$ denote the smallest cone of $\Sigma'$ containing $f_{\RR}(\sigma)$.  Then the following are equivalent: \begin{enumerate} \item \label{AKequidimensionality1} The map $\AA(f) : \AA(\Sigma) \to \AA(\Sigma')$ is equidimensional of relative dimension $n-n'$. \item \label{AKequidimensionality2} $\dim \sigma = \dim \sigma'$ for any cone $\sigma \in \Sigma$ with the property that $\tau' < \sigma'$ for all $\tau < \sigma$. \item \label{AKequidimensionality3} For every $\sigma \in F$ we have $f_{\RR}(\sigma) = \sigma'$. \item \label{AKequidimensionality4} The corresponding map of abstract fans $f : (\Sigma,\M_{\Sigma}) \to (\Sigma',\M_{\Sigma'})$ (\S\ref{section:classicalfans}) is exact in the sense of Definition~\ref{defn:exact}. \end{enumerate} \end{thm}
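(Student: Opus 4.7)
The plan is to set up the cycle $(4) \Leftrightarrow (3) \Rightarrow (2) \Leftrightarrow (1)$ and close it via the harder implication $(2) \Rightarrow (3)$. Two preliminary observations do most of the geometric work. First, since $\Cok f$ is finite, $f_{\RR}$ is surjective, so the dual $f^{\lor} : M' \to M$ is injective. Second, under the abstract-fan description of \S\ref{section:classicalfans}, the stalk of $f$ at a point $\sigma \in \Sigma$ is the injective map $f^{\lor} : M' \cap (\sigma')^{\lor} \to M \cap \sigma^{\lor}$, and its restriction to units is the injective map $f^{\lor} : M' \cap (\sigma')^{\perp} \to M \cap \sigma^{\perp}$.

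For $(1) \Leftrightarrow (2)$, the injectivity of the stalk maps on units supplies hypothesis \eqref{hyp1} of Theorem~\ref{thm:equidimensionality} at every $q \in \AA(Y)$, so (1) is equivalent to the condition $\rk \Cok(f_x^*) = n - n'$ for every $y$ and every $x$ maximal in $f^{-1}(y)$. Injectivity of $f^{\lor}$ gives the rank count $\rk \Cok(f_x^*) = (n - \dim \sigma) - (n' - \dim \sigma')$, so the condition is precisely $\dim \sigma = \dim \sigma'$; unpacking ``$\sigma$ maximal in $f^{-1}(\sigma')$'' as ``$\tau' < \sigma'$ for every $\tau < \sigma$'' then recovers (2). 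For $(3) \Leftrightarrow (4)$, I would apply Lemma~\ref{lem:toricexactness} pointwise: at each $\sigma$ the exactness square of Definition~\ref{defn:exact} is precisely the square \eqref{exactnessdiagram} of Lemma~\ref{lem:toricexactness} for the pair $(\sigma, \sigma')$, whose cartesianness the lemma identifies with $f_{\RR}(\sigma) = \sigma'$. For $(3) \Rightarrow (2)$, take $\sigma$ maximal in $f^{-1}(\sigma')$ and suppose $\dim \sigma > \dim \sigma'$; since $f_{\RR}(\sigma) = \sigma'$ by (3), the restriction $f_{\RR}|_{\sigma}$ is not injective, so Lemma~\ref{lem:conesequidimensionality} applied to any $x \in (\sigma')^{\circ}$ produces a proper face $\tau < \sigma$ with $x \in f_{\RR}(\tau)^{\circ}$; since $x \in (\sigma')^{\circ}$ cannot lie in any proper face of $\sigma'$, the smallest cone of $\Sigma'$ containing $f_{\RR}(\tau)$ must be $\sigma'$, so $\tau' = \sigma'$, violating maximality.

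The main obstacle is $(2) \Rightarrow (3)$, which I would prove by induction on $\dim \sigma$. The base $\sigma = 0$ is clear, and the non-maximal case is immediate: some $\tau < \sigma$ satisfies $\tau' = \sigma'$, and then $f_{\RR}(\sigma) \supseteq f_{\RR}(\tau) = \sigma'$ by induction. In the maximal case (2) gives $\dim \sigma = \dim \sigma'$, and I would first show $f_{\RR}|_{\sigma}$ is injective: otherwise $\dim f_{\RR}(\sigma) < \dim \sigma'$ would force $f_{\RR}(\sigma)^{\circ} \cap (\sigma')^{\circ} = \emptyset$, and by Theorem~\ref{thm:cones}\eqref{cones:lemma1} the cone $f_{\RR}(\sigma)$ would lie in a proper face of $\sigma'$, contradicting minimality of $\sigma'$. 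Then $f_{\RR}|_{\Span \sigma}$ is a linear isomorphism onto $\Span \sigma'$, so topological interiors and boundaries correspond, yielding $\partial f_{\RR}(\sigma) = f_{\RR}(\partial \sigma) = \bigcup_{\tau \text{ facet of } \sigma} \tau' \subseteq \partial \sigma'$ by induction together with maximality, while $f_{\RR}(\sigma)^{\circ} \subseteq (\sigma')^{\circ}$. The final convex-geometric step, which I expect to be the true crux, is to deduce $f_{\RR}(\sigma) = \sigma'$ from these containments: if $f_{\RR}(\sigma) \subsetneq \sigma'$, then by density of $(\sigma')^{\circ}$ in $\sigma'$ (Theorem~\ref{thm:cones}\eqref{integralpointininterior}) one picks $p \in (\sigma')^{\circ} \setminus f_{\RR}(\sigma)$ and any $q \in f_{\RR}(\sigma)^{\circ}$, whereupon the open segment from $q$ to $p$ lies in the convex set $(\sigma')^{\circ}$ yet must cross $\partial f_{\RR}(\sigma) \subseteq \partial \sigma'$, an impossibility.
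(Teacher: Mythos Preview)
Your overall architecture matches the paper's: $(3)\Leftrightarrow(4)$ via Lemma~\ref{lem:toricexactness}, $(1)\Leftrightarrow(2)$ via Theorem~\ref{thm:equidimensionality} together with the rank formula $\rk\Cok f_\sigma^* = (n-\dim\sigma)-(n'-\dim\sigma')$, and the substantive work confined to $(2)\Leftrightarrow(3)$, where both you and the paper end with a segment-crossing argument (Theorem~\ref{thm:cones}\eqref{cones:lemma2}).  Your inductive packaging of $(2)\Rightarrow(3)$ is a harmless reorganization of the paper's contradiction argument.

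There is, however, a genuine gap in your proof that $f_{\RR}|_\sigma$ is injective in the ``maximal'' case of $(2)\Rightarrow(3)$.  You assert that $\dim f_{\RR}(\sigma) < \dim\sigma'$ forces $f_{\RR}(\sigma)^\circ \cap (\sigma')^\circ = \emptyset$, but this is false: a lower-dimensional subcone can easily meet the interior of the ambient cone.  For instance, with $\sigma' = \RR_{\geq 0}^2$ and $f_{\RR}(\sigma) = \RR_{\geq 0}\cdot(1,1)$, the point $(1,1)$ lies in both interiors.  So your appeal to Theorem~\ref{thm:cones}\eqref{cones:lemma1} does not fire.

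The fix is exactly the Lemma~\ref{lem:conesequidimensionality} argument you already use in $(3)\Rightarrow(2)$, and it is what the paper does here as well.  Since $\sigma'$ is the \emph{smallest} cone of $\Sigma'$ containing $f_{\RR}(\sigma)$, the cone $f_{\RR}(\sigma)$ lies in no proper face of $\sigma'$, so by Theorem~\ref{thm:cones}\eqref{cones:lemma1} there is some $x \in f_{\RR}(\sigma)^\circ \cap (\sigma')^\circ$.  Lemma~\ref{lem:conesequidimensionality} then produces $\tau \leq \sigma$ with $x \in f_{\RR}(\tau)^\circ$ and $f_{\RR}|_\tau$ bijective; if $\tau < \sigma$ then $x \in (\sigma')^\circ$ forces $\tau' = \sigma'$, contradicting maximality, so $\tau = \sigma$ and $f_{\RR}|_\sigma$ is injective.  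Once this is inserted, the rest of your argument (in particular $\partial f_{\RR}(\sigma) = f_{\RR}(\partial\sigma) \subseteq \partial\sigma'$ and the segment-crossing contradiction) goes through and coincides with the paper's.
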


\begin{proof}  The assumption on $f$ ensures that $f_{\RR}$ is surjective and the map of dual lattices $f^\lor : M' \to M$ is injective, hence the map \bne{fsigma} f^\lor : M' \cap (\sigma')^\perp & \to & M \cap \sigma^\perp \ene is injective for any $\sigma \in \Sigma$.  When $\Sigma$ and $\Sigma'$ are viewed as abstract fans and $f$ is viewed as a map of abstract fans as in \S\ref{section:classicalfans}, the map \eqref{fsigma} is identified with the map \bne{fsigmaabstract} f_\sigma^* : \M_{\Sigma',f(\sigma)}^* \to \M_{\Sigma,\sigma}^*. \ene  We have \be \rk \sigma^\perp \cap M & = & n - \dim \sigma \ee and similarly for cones of $\Sigma'$, so by injectivity of \eqref{fsigmaabstract} we have \bne{equiformula} \rk \Cok f_\sigma^* & = & n-n'+\dim \sigma'-\dim \sigma. \ene  Furthermore, the points of the abstract fan $(\Sigma,\M_{\Sigma})$ are precisely the cones $\sigma \in \Sigma$ and the map of abstract fans $f$, on the level of sets, takes $\sigma$ to the cone $\sigma'$ defined in the statement of the theorem (cf.\ \S\ref{section:classicalfans}).  The diagram $$ \xym{ \M_{\Sigma',\sigma'} \ar[r] \ar[d] & \M_{\Sigma,\sigma} \ar[d] \\ \M_{\Sigma',\sigma'}^{\rm gp} \ar[r] & \M_{\Sigma,\sigma}^{\rm gp} } $$ considered in the definition of exactness (Definition~\ref{defn:exact}) is nothing but the diagram \eqref{exactnessdiagram} considered in Lemma~\ref{lem:toricexactness}, so \eqref{AKequidimensionality3} and \eqref{AKequidimensionality4} are equivalent by that lemma.

Fix $\sigma' \in \Sigma'$.  Then the cones $\sigma \in f^{-1}(\sigma')$ with no generalizations in $f^{-1}(\sigma')$ are precisely the \emph{minimal} cones of $f^{-1}(\sigma')$.  The equivalence of \eqref{AKequidimensionality1} and \eqref{AKequidimensionality2} follows from Theorem~\ref{thm:equidimensionality} and \eqref{equiformula}.  

To show that \eqref{AKequidimensionality2} implies \eqref{AKequidimensionality3} we need to show that, assuming \eqref{AKequidimensionality2}, the containment $f_{\RR}(\sigma) \subseteq \sigma'$ is actually an equality for every $\sigma \in \Sigma$.  Obviously it is enough to show this when $\sigma$ is minimal in $f^{-1}(\sigma')$, in which case we have \bne{dimeq} \dim \sigma & = & \dim \sigma' \ene by \eqref{AKequidimensionality2}.  Suppose, toward a contradiction, that $f_{\RR}(\sigma) \subsetneq \sigma'$ for some such $\sigma$.  By Theorem~\ref{thm:cones}\eqref{cones:lemma1} and the definition of $\sigma'$, there must be some point $x \in f_{\RR}(\sigma)^\circ \cap (\sigma')^\circ$.  By Lemma~\ref{lem:conesequidimensionality}, there is some $\tau \leq \sigma$ with $x \in f_{\RR}(\tau)$ and $f_{\RR}|\tau : \tau \to f_{\RR}(\tau)$ bijective.  Since $x \in f_{\RR}(\tau) \cap (\sigma')^\circ$ we must have $\tau' = \sigma'$ for such a $\tau$, so by minimality of $\sigma$ we must have $\tau = \sigma$.  We conclude that $f_{\RR}|\sigma : \sigma \to f_{\RR}(\sigma)$ bijective, hence $\dim f_{\RR}(\sigma) = \dim \sigma$, hence \eqref{dimeq} implies \bne{vecteq} \Span f_{\RR}(\sigma) & = & \Span \sigma' . \ene

Since $f_{\RR}(\sigma) \subsetneq \sigma'$, there must be a point $y \in (\sigma')^\circ \setminus f_{\RR}(\sigma)$ on topological grounds.  (This is because $f_{\RR}(\sigma)$ is a finitely generated cone, so it is closed in $N'_{\RR}$ by Theorem~\ref{thm:cones}\eqref{conetopology}, while $(\sigma')^\circ$ is dense in $\sigma'$ by Theorem~\ref{thm:cones}\eqref{integralpointininterior}.)  We have $y \in \Span f_{\RR}(\sigma)$ by \eqref{vecteq}, so by Theorem~\ref{thm:cones}\eqref{cones:lemma2}, there must be some $s \in (0,1)$ such that $v := (1-s)x+sy$ is in a proper face of $f_{\RR}(\sigma)$.  Since $f_{\RR}$ is surjective, the proper face in question must be $f_{\RR}(\tau)$, for some $\tau < \sigma$.  On the other hand, the fact that $x$ and $y$ are in $(\sigma')^{\circ}$ ensures that $v \in (\sigma')^\circ$, so we must have $\tau' = \sigma'$, contradicting the minimality of $\sigma$ in $f^{-1}(\sigma')$.   

To see that \eqref{AKequidimensionality3} implies \eqref{AKequidimensionality2}, we first show (using Lemma~\ref{lem:conesequidimensionality} exactly as we did above) that $f_{\RR}| \sigma : \sigma \to f_{\RR}(\sigma)$ must be bijective for any $\sigma \in \Sigma$ as in \eqref{AKequidimensionality2}, so we certainly have $\dim \sigma = \dim f_{\RR}(\sigma)$.  But when \eqref{AKequidimensionality3} holds, we have $f_{\RR}(\sigma) = \sigma'$, so we find that $\dim \sigma = \dim \sigma'$, as desired.   \end{proof}

\begin{rem} \label{rem:torusequidimensionality}  The assumption that $\Cok f$ is finite in Theorem~\ref{thm:toricequidimensionality} is equivalent to surjectivity of $\AA(f)$ and to surjectivity of the corresponding map $\Spec \ZZ[f^\lor]$ of tori.  One can of course use Theorem~\ref{thm:equidimensionality} to give a criterion for equidimensionality of $\AA(f)$ without this assumption---the criterion can be phrased in terms of the dimensions of $\sigma$, $\sigma'$, and $f_{\RR}(\sigma)$ for the cones $\sigma \in \Sigma$.  (The reader can work out the precise statement.)   In general, however, equidimensionality is certainly not equivalent to condition \eqref{AKequidimensionality3} in Theorem~\ref{thm:toricequidimensionality} because, for example, every closed embedding is equidimensional of relative dimension zero, but the diagonal embedding $\Delta: \AA^1 \into \AA^2$ is a map of toric varieties for which the corresponding map of fans does not satisfy condition \eqref{AKequidimensionality3}. \end{rem}

\subsection{Finite and quasi-finite maps} \label{section:finitemaps}  Recall the following:

\begin{defn} \label{defn:schemefinite} \cite[I.6.4.5]{EGA} A scheme $X$ of finite type over a field $k$ is called \emph{finite} over $k$ iff it satisfies the following equivalent conditions: \begin{enumerate} \item The topological space underlying $X$ is zero-dimensional. \item The topological space underlying $X$ is finite. \item The topological space underlying $X$ is finite and discrete. \item $X$ is isomorphic to $\Spec A$ for a $k$-algebra $A$ which is finite dimensional as a $k$ vector space. \end{enumerate} \end{defn}

\begin{defn} \label{defn:quasifinite} \cite[II.6.2.3]{EGA} A map of schemes $f : X \to Y$ is called \emph{quasi-finite} iff it is of finite type and for every $y \in Y$, the scheme-theoretic fiber $X_y = f^{-1}(y) := X \times_Y \Spec k(y)$ is finite over $k(y)$.  In other words, $f : X \to Y$ is quasi-finite iff it is equidimensional of relative dimension zero in the sense of Definition~\ref{defn:equidimensional}. \end{defn}

\begin{defn} \label{defn:finite} Recall that a map of schemes $f : X \to Y$ is called \emph{finite} iff it is affine and for any affine open subset $U \subseteq Y$, $\O_X(f^{-1}(U))$ is a finitely generated $\O_Y(U)$-module.  Similarly, a map of fans $f : X \to Y$ is called \emph{finite} iff $f$ is affine and for any affine open subset $U \subseteq Y$, $\M_X(f^{-1}(U))$ is a finitely generated $\M_Y(U)$ module. \end{defn}

One can see, as in the case of schemes, that finite maps of fans are stable under base change, and the finiteness of a map of fans $f : X \to Y$ can be checked locally on $Y$.  Clearly a finite map of schemes is quasi-finite.  If $h : Q \to P$ is a map of monoids, then it is clear that $\Spec h$ is a finite map of fans in the sense of Definition~\ref{defn:finite} iff $h$ is a finite map of monoids in the sense of Definition~\ref{defn:flat}.

\begin{prop} \label{prop:finite}  Let $f : X \to Y$ be a map of fans.  If $f$ is finite, then $\AA(f)$ is finite, and the converse holds if we assume $f$ is affine. \end{prop}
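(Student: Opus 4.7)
My plan is to prove both implications after reducing to the affine case, exploiting the analogy between the monoid algebra functor on modules and the base change functor.

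For the reduction, I would first note that finiteness of a map of fans is local on the base by construction (Definition~\ref{defn:finite}), and similarly for schemes; since $\AA$ takes affine open covers to affine open covers (Proposition~\ref{prop:algebraicrealization2}), the hypothesis that $f$ is finite and the hypothesis that $\AA(f)$ is finite are both local on $Y$. For the reverse direction, since $f$ is assumed affine, we may also assume $X$ is affine. So in both directions we reduce to the case $Y = \Spec P$, $X = \Spec Q$, with $f = \Spec h$ for some monoid homomorphism $h : P \to Q$, and $\AA(f) = \Spec \ZZ[h]$.

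For $(\Rightarrow)$, the claim is that if $Q$ is a finitely generated $P$-module, then $\ZZ[Q]$ is a finitely generated $\ZZ[P]$-module. This is immediate from the discussion in \S\ref{section:monoidalgebra}: the functor $\ZZ[\slot] : \Mod(P) \to \Mod(\ZZ[P])$ takes finitely generated modules to finitely generated modules (in fact, it takes a generating set $S$ for $Q$ to the generating set $\{[s] : s \in S\}$ for $\ZZ[Q]$).

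For $(\Leftarrow)$, assume $\ZZ[Q]$ is a finitely generated $\ZZ[P]$-module. The set $\{[q] : q \in Q\}$ generates $\ZZ[Q]$ as a $\ZZ[P]$-module (even as a $\ZZ$-module), so by standard module-theoretic nonsense we can extract a finite subset $\{[q_1],\dots,[q_m]\}$ which still generates $\ZZ[Q]$ over $\ZZ[P]$. I claim $q_1,\dots,q_m$ then generate $Q$ as a $P$-module, which is the step I expect to be the main obstacle; the point is to exploit the $\ZZ$-basis structure of $\ZZ[Q]$. Given any $q \in Q$, write $[q] = \sum_{j=1}^m b_j [q_j]$ with $b_j \in \ZZ[P]$, and expand each $b_j = \sum_k n_{jk} [p_{jk}]$ with $n_{jk} \in \ZZ$ and $p_{jk} \in P$; then
\[ [q] \;=\; \sum_{j,k} n_{jk} [h(p_{jk}) + q_j] \]
in $\ZZ[Q]$. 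Since $\ZZ[Q]$ is the free abelian group on the set $Q$, comparing the coefficient of $[q]$ on both sides forces $\sum_{(j,k) : h(p_{jk})+q_j = q} n_{jk} = 1$, which in particular means the index set is non-empty: there exist $j,k$ with $q = h(p_{jk}) + q_j$. Hence $q$ lies in the $P$-submodule of $Q$ generated by $q_1,\dots,q_m$, and $Q$ is a finitely generated $P$-module as desired.
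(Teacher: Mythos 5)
Your proof is correct and follows essentially the same route as the paper: reduce to the affine case via locality on the base, use that $\ZZ[\slot]$ preserves finite generation of modules for the forward direction, and for the converse show that finite generation of $\ZZ[Q]$ over $\ZZ[P]$ forces finite generation of $Q$ over $P$. The only difference is that you spell out the coefficient-comparison argument for this last step, which the paper leaves as an "easy exercise" with a citation; your argument for it is sound.
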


\begin{proof} If $f$ is finite then it is affine by definition, so $\AA(f)$ is also affine by Proposition~\ref{prop:affinemaps}.  Furthermore, by definition of a finite map of fans, $Y$ can be covered by open affine subfans $U \subseteq Y$ such that $f^{-1}(U)$ is affine and $\M_X(f^{-1}(U))$ is a finitely generated $\M_Y(U)$ module.  This implies that $\ZZ[ \M_X(f^{-1}(U)) ]$ is finitely generated as a $\ZZ[\M_Y(U)]$ module so the map of schemes \be ( \AA(f^{-1}(U)) \to \AA(U) ) & = & \Spec( \ZZ[\M_Y(U)] \to \ZZ[ \M_X(f^{-1}(U)) ] ) \ee is finite.  Since these $U$ cover $Y$, the open subschemes $\AA(U) \subseteq \AA(Y)$ cover $\AA(Y)$.  We also have $\AA(f^{-1}(U)) = \AA(f)^{-1}(\AA(U))$, so we conclude that $\AA(f)$ is finite because this can be checked locally on $Y$.  The converse for affine $f$ boils down to showing that a module $M$ over a monoid $P$ is finitely generated whenever the $\ZZ[P]$ module $\ZZ[M]$ is finitely generated.  This is an easy exercise, or see \cite[5.2.1]{G2}.    \end{proof}

\begin{lem} \label{lem:quasifinite} For a map $f : A \to B$ of finitely generated abelian groups, the following are equivalent: \begin{enumerate} \item \label{qfA} $\Cok f$ is finite. \item \label{qfAA} $f$ is finite as a map of monoids (i.e.\ $B$ is finitely generated as an $A$-module).  \item \label{qfB} $\Spec \ZZ[f]$ is finite. \item \label{qfC} $\Spec \ZZ[f]$ is quasi-finite. \end{enumerate} \end{lem}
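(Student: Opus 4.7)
The plan is to prove the four conditions equivalent by going around the cycle $(1)\Leftrightarrow(2)\Rightarrow(3)\Rightarrow(4)\Rightarrow(1)$, using the results already developed for monoid algebras, cones, and realizations.

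For $(1)\Leftrightarrow(2)$, I would invoke Example~\ref{example:flatmodule} directly: a subset $S\subseteq B$ generates $B$ as an $A$-module iff the projection $B\to\Cok f$ sends $S$ surjectively onto $\Cok f$. Hence $B$ is finitely generated over $A$ iff $\Cok f$ is generated (as a set) by the image of a finite subset, iff $\Cok f$ is finite. (Recall $\Cok f$ is a finitely generated abelian group, so finiteness is equivalent to being a torsion group of finite cardinality.)

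For $(2)\Rightarrow(3)$, the functor $\ZZ[\slot]:\Mod(A)\to\Mod(\ZZ[A])$ of \S\ref{section:monoidalgebra} takes finitely generated modules to finitely generated modules, so $\ZZ[B]$ is a finitely generated $\ZZ[A]$-module; equivalently (Proposition~\ref{prop:finite}), the map $\Spec\ZZ[f]=\AA(\Spec f)$ is a finite map of schemes. The implication $(3)\Rightarrow(4)$ is standard scheme theory: a finite map of schemes is affine with finite fibers, hence quasi-finite in the sense of Definition~\ref{defn:quasifinite}.

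The main content is $(4)\Rightarrow(1)$, which is where Lemma~\ref{lem:torusequidimensionality} does all the work. That lemma says precisely that $\Spec\ZZ[f]$ is equidimensional of relative dimension $\rk\Cok f$. Being quasi-finite means being equidimensional of relative dimension $0$ (Definition~\ref{defn:quasifinite}), so $\rk\Cok f=0$, i.e.\ the finitely generated abelian group $\Cok f$ is torsion, hence finite. The only subtlety here is to note that "equidimensional of relative dimension $n$" is not ambiguous for a surjection with non-empty fibers: if $\Cok f$ had positive rank, some fiber of $\Spec\ZZ[f]$ would have positive dimension, contradicting quasi-finiteness. I expect this last implication to be the only non-formal step, and its difficulty has been absorbed into Lemma~\ref{lem:torusequidimensionality} (which in turn rests on Proposition~\ref{prop:CZEtorsors} to reduce to the split case). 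No further argument beyond citing these results should be required.
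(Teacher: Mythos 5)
Your proposal is correct and follows essentially the same route as the paper: (1)$\Leftrightarrow$(2) via Example~\ref{example:flatmodule}, (2)$\Leftrightarrow$(3) via the argument in the proof of Proposition~\ref{prop:finite}, (3)$\Rightarrow$(4) trivially, and (4)$\Leftrightarrow$(1) as a special case of Lemma~\ref{lem:torusequidimensionality}. The extra care you take with the non-emptiness of at least one fiber in the last step is a correct and welcome elaboration of what the paper leaves implicit.
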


\begin{proof}  The equivalence of \eqref{qfA} and \eqref{qfAA} is discussed in Example~\ref{example:flatmodule}.  The equivalence of \eqref{qfAA} and \eqref{qfB} was noted in the proof of Proposition~\ref{prop:finite}.  Clearly \eqref{qfB} implies \eqref{qfC}.  The equivalence of \eqref{qfC} and \eqref{qfA} is a special case of Lemma~\ref{lem:torusequidimensionality}. \end{proof}

We now show that the ``$n=0$" case of Theorem~\ref{thm:equidimensionality} holds for arbitrary locally finite type (not necessarily fine) fans.

\begin{thm} \label{thm:quasifinite} For a quasi-compact map $f : X \to Y$ of locally finite type fans, the following are equivalent: \begin{enumerate} \item \label{qf1} $f_x^*: \M_{Y,f(x)}^* \to \M_{X,x}^*$ has finite cokernel for every $x \in X$. \item \label{qf2} $\AA(f)$ is a quasi-finite map of schemes. \end{enumerate} \end{thm}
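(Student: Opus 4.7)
The plan is to combine the torus-orbit stratification of $\AA(X)$ and $\AA(Y)$ from Proposition~\ref{prop:stratification} with the dimension formula for maps of diagonalizable group schemes in Lemma~\ref{lem:torusequidimensionality}. Since the question is local on $Y$, I would first assume $Y = \Spec Q$ for finitely generated $Q$, so that the underlying space of $Y$ is finite (Corollary~\ref{cor:faces}) and hence so is that of $X$ by quasi-compactness of $f$ (Theorem~\ref{thm:quasicompact}). By that same theorem $\AA(f)$ is quasi-compact, hence of finite type between noetherian schemes, so for both implications the only issue is the dimension of the fibers of $\AA(f)$.

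The key geometric input is as follows. For $x \in X$ with $y := f(x)$, the locally closed embedding $\AA(\M_{X,x}^*) \into \AA(X)$ of Proposition~\ref{prop:stratification}\eqref{locallyclosed2} coincides with the fiber of $\tau_X$ over $x$ by Proposition~\ref{prop:stratification}\eqref{orbitmaprelationship2}, and similarly for $y \in Y$. Functoriality of the stratification (or, equivalently, commutativity of $\AA(f)$ with the orbit maps) shows that $\AA(f)$ restricts on the stratum over $x$ to a map $\AA(\M_{X,x}^*) \to \AA(\M_{Y,y}^*)$ which is precisely $\AA(f_x^*) = \GG(f_x^*)$. Consequently, for any $q \in \AA(Y)$ with $y := \tau_Y(q)$, intersecting $\AA(f)^{-1}(q)$ with the torus-orbit partition of $\AA(X)$ yields a \emph{finite} decomposition
\[
\AA(f)^{-1}(q) \;=\; \coprod_{x \in f^{-1}(y)} \AA(f_x^*)^{-1}(q)
\]
into locally closed subspaces (finite because $f^{-1}(y)$ is finite by Theorem~\ref{thm:quasicompact}).

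For $(1)\Rightarrow(2)$: assuming each $f_x^*$ has finite cokernel, Lemma~\ref{lem:quasifinite} shows that every $\AA(f_x^*) = \Spec \ZZ[f_x^*]$ is already a \emph{finite} morphism of schemes, so each piece $\AA(f_x^*)^{-1}(q)$ is a finite set of points; the displayed decomposition then exhibits $\AA(f)^{-1}(q)$ as a finite set of points, hence zero-dimensional. For $(2)\Rightarrow(1)$: fix any $x \in X$, choose some point $p \in \AA(\M_{X,x}^*)$ (this is non-empty since $\ZZ[\M_{X,x}^*]$ is a non-zero ring), and set $q := \AA(f_x^*)(p) \in \AA(\M_{Y,f(x)}^*) \subseteq \AA(Y)$. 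Then $p \in \AA(f_x^*)^{-1}(q)$ is non-empty, and by Lemma~\ref{lem:torusequidimensionality} this fiber has pure dimension $\rk \Cok f_x^*$. Being a non-empty locally closed subspace of the zero-dimensional scheme $\AA(f)^{-1}(q)$, it must be zero-dimensional, forcing $\rk \Cok f_x^* = 0$, i.e., $\Cok f_x^*$ finite.

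The main technical burden is really just the identification of the restricted map $\AA(f)|_{\AA(\M_{X,x}^*)}$ with $\AA(f_x^*)$---this is essentially packaged in Proposition~\ref{prop:stratification} but should be pinned down explicitly. Notably, the absence of a fineness hypothesis is not an obstacle: the argument uses only that the strata $\AA(\M_{X,x}^*)$ are locally closed and that there are finitely many of them meeting a given fiber. The stronger ``stratification'' property of Proposition~\ref{prop:stratification}\eqref{stratification2}, which does require fineness and which played a role in Theorem~\ref{thm:equidimensionality} for arbitrary relative dimension, is not needed for the dimension-zero statement here.
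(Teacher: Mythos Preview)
Your proof is correct and follows essentially the same approach as the paper: reduce to finite underlying spaces, use the torus-orbit partition from Proposition~\ref{prop:stratification} to decompose each fiber $\AA(f)^{-1}(q)$ into finitely many locally closed pieces $\AA(f_x^*)^{-1}(q)$, and then appeal to the group-scheme dimension formula. The only cosmetic difference is in the $(2)\Rightarrow(1)$ direction: the paper argues by contradiction via Lemma~\ref{lem:quasifinite} (noting in passing that Lemma~\ref{lem:torusequidimensionality} would also work), whereas you invoke Lemma~\ref{lem:torusequidimensionality} directly to read off $\rk \Cok f_x^* = 0$; and your closing remark that fineness is unnecessary because only the locally-closed partition (not the stratification property) is used is exactly the point the paper is making by stating this theorem separately from the general $n$ case.
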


\begin{proof}  Both conditions are local on $Y$ so we can reduce (as in the proof of Lemma~\ref{lem:fiberstratification}) to the case where the topological spaces underlying $X$ and $Y$ are finite.  In this case, by Proposition~\ref{prop:stratification}, we can partition $\AA(X)$ and $\AA(Y)$ into finitely many locally closed subschemes \bne{parts} \AA(X) & = & \coprod_{x \in X} \AA(\M_{X,x}^*) \\ \nonumber \AA(Y) & = & \coprod_{y \in Y} \AA(\M_{Y,y}^*) \ene such that the diagram \bne{qfdiagram} \xym{ \AA(\M_{X,x}^*) \ar[r] \ar[d]_{ \AA(f_x^*) } & \AA(X) \ar[d]^-f \\ \AA(\M_{Y,f(x)}^*) \ar[r] & \AA(Y) } \ene commutes for every $x \in X$.  For every $q \in \AA(Y)$ with $y := \tau(q) \in Y$, we therefore obtain a partition \bne{fiberpartition} \AA(f)^{-1}(q) & = & \coprod_{x \in f^{-1}(y)} \AA(f_x^*)^{-1}(q) \ene of the fiber $\AA(f)^{-1}(q)$ into finitely many locally closed subschemes.  (Note that these partitions may not be \emph{stratifications} when $X$ and $Y$ are not fine.)  By Lemma~\ref{lem:quasifinite}, \eqref{qf1} implies that each ``part" in the partition \eqref{fiberpartition} is of dimension zero, so $\AA(f)^{-1}(q)$ must also be of dimension zero, so \eqref{qf2} holds.  Conversely, suppose \eqref{qf2} holds, but assume, towards a contradiction, that \eqref{qf1} fails for some $x$.  Then by Lemma~\ref{lem:quasifinite}, $\AA(f_x^*)$ is not quasi-finite, so at least one of its (non-empty) fibers, say $\AA(f_x^*)^{-1}(q)$, will have positive dimension.  (In fact \emph{all} of its fibers will have pure dimension $\rk \Cok f_x^* > 0$ by Lemma~\ref{lem:torusequidimensionality}, but we don't need to know this.)  Since we have the partition \eqref{fiberpartition}, the fact that $\dim \AA(f_x^*)^{-1}(q) > 0$ certainly implies that $\dim \AA(f)^{-1}(q) > 0$, contradicting \eqref{qf2}.  \end{proof}

\begin{defn} A map $f : X \to Y$ of locally finite type fans is called \emph{quasi-finite} iff it is quasi-compact and satisfies the equivalent conditions of Theorem~\ref{thm:quasifinite}. \end{defn}

\begin{rem} \label{rem:finite} For any monoid $P$, the obvious map of fans $f : \Spec P \coprod \Spec P \to \Spec P$ is not affine (Remark~\ref{rem:affinemaps}), so it is not a finite map of fans, even though $\AA(f)$ is clearly a finite map of schemes.  Furthermore, when $P$ is finitely generated, $f$ is proper in the sense of Definition~\ref{defn:proper} (even if $P$ isn't finitely generated, $\AA(f)$ is a proper map of schemes and $f$ is valuatively proper) and $f$ is quasi-finite in the sense of the above definition. \end{rem}

\begin{prop} \label{prop:quasifinite} Let $f : X \to Y$, $g : Y \to Z$, $X' \to X$ be maps of locally finite type fans, $P$ a finitely generated monoid, $F$ a face of $P$.  \begin{enumerate} \item \label{prop:quasifinite1} If $f$ and $g$ are quasi-finite, then $gf$ is quasi-finite. \item \label{prop:quasifinite2} If $gf$ is quasi-finite, then $f$ is quasi-finite. \item If $f$ is quasi-finite, then so is its base change $f' : X' \times_X Y \to X'$ along $X' \to X$. \item If $f$ is an open embedding, then $f$ is quasi-finite. \end{enumerate} \end{prop}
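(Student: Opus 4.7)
The plan is to handle each of the four parts using already-established machinery, so that no deep argument is required. The characterization in Theorem~\ref{thm:quasifinite} reduces quasi-finiteness of a quasi-compact map of locally finite type fans to the purely group-theoretic condition that $\Cok f_x^*$ be finite for every $x \in X$; since finiteness of cokernels of maps of finitely generated abelian groups behaves well under composition and pushout, parts (1), (2), and (3) will follow easily.

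For (1), composition of quasi-compact maps is quasi-compact by Lemma~\ref{lem:quasicompact}. For the cokernel condition, the equality $(gf)_x^* = f_x^* \circ g^*_{f(x)}$ yields the standard right-exact sequence
\[
\Cok g^*_{f(x)} \longrightarrow \Cok (gf)^*_x \longrightarrow \Cok f^*_x \longrightarrow 0,
\]
so if both outer terms are finite so is the middle. For (2), the same exact sequence exhibits $\Cok f^*_x$ as a quotient of $\Cok (gf)^*_x$, hence finite. For quasi-compactness of $f$, note that each fiber $f^{-1}(y)$ is contained in $(gf)^{-1}(g(y))$, which is finite by Theorem~\ref{thm:quasicompact} applied to the quasi-compact map $gf$; so $f$ has finite fibers and is therefore quasi-compact by the converse direction of the same theorem.

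For (3), the cleanest route uses the criterion of Theorem~\ref{thm:quasifinite}: since the algebraic realization preserves finite inverse limits (Proposition~\ref{prop:algebraicrealization}), $\AA(f')$ is the base change of $\AA(f)$ in schemes; because quasi-finite maps of schemes are stable under base change, $\AA(f')$ is quasi-finite, and Theorem~\ref{thm:quasifinite} then supplies the cokernel condition for $f'$. Quasi-compactness of $f'$ follows directly from Lemma~\ref{lem:quasicompact}. Finally, for (4), an open embedding has $f^\dagger_x$ an isomorphism for every $x \in X$ by Definition~\ref{defn:embedding}, so $\Cok f^*_x = 0$ is finite, and each fiber of $f$ has at most one point and is therefore finite, whence $f$ is quasi-compact by Theorem~\ref{thm:quasicompact}. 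There is no genuine obstacle at any point of the argument; the only small item to be careful about is the direction of the stalk maps in setting up the exact sequence for (1) and (2).
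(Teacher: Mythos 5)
Your proof is correct. The paper's own proof is a one-liner: take the scheme-realization characterization (Theorem~\ref{thm:quasifinite}\eqref{qf2}) as the definition throughout and cite the corresponding permanence properties of quasi-finite maps of schemes from \cite[II.6.2.4]{EGA}. You instead carry out, for parts (1), (2) and (4), exactly the ``good exercise'' the paper alludes to, namely working with the intrinsic criterion \eqref{qf1}: the right-exact sequence $\Cok g^*_{f(x)} \to \Cok (gf)^*_x \to \Cok f^*_x \to 0$ is set up with the correct direction of the stalk maps, and your handling of quasi-compactness in (2) via the finite-fibers characterization of Theorem~\ref{thm:quasicompact} is exactly right. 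For (3) you revert to the paper's route through $\AA(f)$, which is the sensible choice, since the unit group of a pushout of monoids is not simply the pushout of the unit groups, so criterion \eqref{qf1} is awkward to verify directly for a base change. The trade-off is the expected one: your argument is more self-contained and makes visible why the statements hold at the level of finitely generated abelian groups, at the cost of a case-by-case treatment, whereas the paper's appeal to EGA disposes of all four parts uniformly but opaquely. One small point you could make explicit in (3) is that the fibered product is again a locally finite type fan (locally it is $\Spec$ of a pushout of finitely generated monoids), which is needed before Theorem~\ref{thm:quasifinite} and Lemma~\ref{lem:quasicompact} can be applied to $f'$; the paper glosses over this too.
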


\begin{proof} If we take Theorem~\ref{thm:quasifinite}\eqref{qf2} as the definition of \emph{quasi-finite}, then all of these statements follow from analogous properties of quasi-finite maps of schemes \cite[II.6.2.4]{EGA}, though it is also a good exercise for the reader to prove these statements taking Theorem~\ref{thm:quasifinite}\eqref{qf1} as the definition. \end{proof}

To motivate our next result, recall that there is a corollary of Zariski's Main Theorem that says any reasonable\footnote{separated and finitely presented with quasi-compact and quasi-separated codomain} quasi-finite map of schemes $f : X \to Y$ can be factored as an open embedding $X \into X'$ followed by a finite map $X' \to Y$ \cite[IV.8.12.6]{EGA}.

\begin{lem} \label{lem:quasifinite2} Suppose $h : Q \to P$ is an injective, quasi-finite map of fine monoids.  Then $\Spec h$ is an open embedding \emph{on the level of topological spaces}.  If, furthermore, $\Spec h$ is a homeomorphism (equivalently: is surjective), then $h$ is finite.  It follows that $h$ can be factored as $Q \to F^{-1}Q \to P$, where $F = h^{-1}(P^*) \leq Q$, with $F^{-1}Q \to P$ finite and injective. \end{lem}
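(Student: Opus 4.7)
The plan is to reduce all three assertions of the lemma to a single core statement: \emph{a local, injective, quasi-finite map of fine monoids is automatically finite.}  Given this, the pieces assemble as follows.  Set $F := h^{-1}(P^*)$, a face of $Q$; since $h(F) \subseteq P^*$, $h$ factors uniquely through the localization as $h = h' \circ \iota$ with $\iota : Q \to F^{-1}Q$ and $h' : F^{-1}Q \to P$.  One checks that $h'$ is injective (from injectivity of $h^{\rm gp}$), local (since $(F^{-1}Q)^* = F^{\rm gp}$ and $h^{-1}(P^*) = F$), and quasi-finite (the stalks of $\Spec Q$ and $\Spec F^{-1}Q$ have the same unit groups at corresponding points).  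The core statement then makes $h'$ finite, whence $\Spec h'$ is a homeomorphism by Corollary~\ref{cor:finiteinvariance}.  Since $\Spec \iota$ is an open embedding of fans (Proposition~\ref{prop:Specfinite}\eqref{facelocalizationembedding}), $\Spec h = \Spec \iota \circ \Spec h'$ is an open embedding on topological spaces, establishing the first assertion and the factorization.  For the middle assertion: if $\Spec h$ is surjective then $Q^* = h^{-1}(F_0)$ for some $F_0 \leq P$, and the inclusions $Q^* \subseteq h^{-1}(P^*) \subseteq h^{-1}(F_0) = Q^*$ force $h$ to be local, so the core statement applies directly.

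To prove the core statement, I plan a double reduction to the sharp toric case.  By Theorem~\ref{thm:dense}\eqref{dense4} it suffices to prove $h$ is dense.  First, I would pass to $h^{\rm trc} : Q^{\rm trc} \to P^{\rm trc}$, a map of toric monoids which inherits all four hypotheses (injectivity of $h^{\rm gp}$ survives passage to torsion-free quotients; locality uses Corollary~\ref{cor:trc}\eqref{trclocal}; the cokernels relevant to quasi-finiteness are quotients of the finite cokernels for $h$).  Assuming $h^{\rm trc}$ is finite (hence dense by Theorem~\ref{thm:dense}\eqref{dense3}), density lifts: for $p \in P$, pick $n > 0$ and $q \in Q$ with $np^{\rm trc} = h(q)^{\rm trc}$ in $P^{\rm trc}$; then $np - h(q) \in \ker(P^{\rm gp} \to P^{\rm gp}/P^{\rm gp}_{\rm tor}) = P^{\rm gp}_{\rm tor}$, a finite group since $P$ is fine, so some multiple $Nnp = h(Nq)$ lies in $h(Q)$.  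Second, reduce the toric case to the sharp toric case via sharpenings: $\bar h : \bar Q \to \bar P$ is sharp toric by Corollary~\ref{cor:saturatedlocalization} and easily seen to inherit injectivity, locality, and quasi-finiteness.  Finiteness of $\bar h$ lifts to $h$: lift generators $\bar s_1, \ldots, \bar s_k$ of $\bar P$ over $\bar Q$ to $t_1, \ldots, t_k \in P$, and combine with coset representatives $u_1, \ldots, u_r$ for the finite group $P^*/h(Q^*)$ (finite by quasi-finiteness at $F = P^*$) to obtain a finite generating set $\{t_i + u_j\}$ for $P$ over $Q$.

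The heart of the proof, and the main obstacle, is the sharp toric case: $h : Q \to P$ is local, injective, quasi-finite, and both monoids are sharp toric.  Quasi-finiteness at $F = P$ makes $h^{\rm gp}$ inject with finite cokernel, so $h^{\rm gp} \otimes \RR$ is an isomorphism, and I identify $V := Q^{\rm gp} \otimes \RR = P^{\rm gp} \otimes \RR$; in $V$ the rational cones $\sigma(Q) \subseteq \sigma(P)$ are full-dimensional.  The technical crux is the cone-theoretic identity $\sigma(F) \cap \sigma(Q) = \sigma(h^{-1}(F))$ for every face $F \leq P$.  The inclusion $\supseteq$ is clear; for $\subseteq$, $\sigma(F) \cap \sigma(Q)$ is a monoid face of $\sigma(Q)$ (if $v_1 + v_2$ is in the intersection with $v_1, v_2 \in \sigma(Q) \subseteq \sigma(P)$, then $\sigma(F) \leq \sigma(P)$ forces $v_i \in \sigma(F)$), so Theorem~\ref{thm:monoidsandcones}\eqref{monoidsandcones1} identifies it with $\sigma(G')$ where $G' := Q \cap \sigma(F)$; and $G' = h^{-1}(F)$ because $P$ saturated makes $F^{\rm gp}$ saturated in $P^{\rm gp}$ (Corollary~\ref{cor:saturatedlocalization}), so $\sigma(F) \cap P^{\rm gp} = F^{\rm sat} = F$.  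With this identity, quasi-finiteness at $F$ reads $\dim \sigma(h^{-1}(F)) = \dim \sigma(F)$; applied to each ray $\rho = \sigma(F)$ of $\sigma(P)$, the intersection $\rho \cap \sigma(Q)$ is a one-dimensional sub-cone of $\rho$, hence equals $\rho$.  So every ray of $\sigma(P)$ lies in $\sigma(Q)$, and since $\sigma(P)$ is sharp and rational it is generated as a cone by its rays, giving $\sigma(P) \subseteq \sigma(Q)$, hence $\sigma(P) = \sigma(Q)$.  Density follows: for $p \in P$, quasi-finiteness yields $n > 0$ and a unique $q' \in Q^{\rm gp}$ with $h^{\rm gp}(q') = np$; under the $V$-identification $q' = np \in \sigma(P) = \sigma(Q)$, so $q' \in Q^{\rm gp} \sqcap \sigma(Q) = Q^{\rm sat} = Q$ by Theorem~\ref{thm:monoidsandcones}\eqref{monoidsandcones1} and saturation of $Q$, giving $np = h(q') \in h(Q)$ as required.
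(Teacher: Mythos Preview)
Your proof is correct and takes a genuinely different route from the paper's.

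Both proofs begin by passing to $h^{\rm trc}$ to reduce to the toric case, checking that injectivity and quasi-finiteness survive.  From there the arguments diverge.  The paper stays in the (not necessarily sharp) toric case and interprets $\Spec h$ as a map of affine classical fans via the \emph{dual} cones $\sigma(P)^\lor$, $\sigma(Q)^\lor$; it then invokes the Abramovich--Karu equidimensionality criterion (Theorem~\ref{thm:toricequidimensionality}) to conclude that $(h^{\rm gp})^\lor \otimes \RR$ carries $\sigma(P)^\lor$ bijectively onto a face $\tau \leq \sigma(Q)^\lor$, from which the open-embedding and finiteness statements are read off via the face/localization dictionary.  You instead isolate the single core statement ``local $+$ injective $+$ quasi-finite $\Rightarrow$ finite'' and derive all three assertions from it up front; then you make a \emph{second} reduction to the sharp toric case and give a direct argument on the \emph{primal} cones: the identity $\sigma(F) \cap \sigma(Q) = \sigma(h^{-1}(F))$ together with quasi-finiteness forces each ray of $\sigma(P)$ to lie in $\sigma(Q)$, hence $\sigma(P) = \sigma(Q)$ and density follows.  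Your route is more elementary in that it avoids the Abramovich--Karu theorem entirely (that theorem has its own nontrivial proof), at the cost of the extra sharp reduction and the bookkeeping needed to lift finiteness and density back up through the two reductions.  The paper's route is shorter once Theorem~\ref{thm:toricequidimensionality} is available, and it yields the open-embedding statement directly in the toric case rather than deducing it from the factorization.

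Two small points worth tightening: when you write ``pick $n > 0$ and $q \in Q$ with $np^{\rm trc} = h(q)^{\rm trc}$,'' you are implicitly using that $Q \to Q^{\rm trc}$ is itself dense (it is finite by Corollary~\ref{cor:trc}), so that density of $h^{\rm trc}$ composed with density of $Q \to Q^{\rm trc}$ lets you land in the image of $Q$ rather than merely $Q^{\rm trc}$.  Also, the injectivity of $\ov{h}$ in the toric case is not entirely automatic: you need quasi-finiteness at the closed point (so $P^*/h(Q^*)$ is finite) together with torsion-freeness of $Q^{\rm gp}/Q^*$ (Corollary~\ref{cor:saturatedlocalization}) to conclude that $h^{\rm gp}(v) \in P^*$ forces $v \in Q^*$.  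Both are easy, but worth a sentence.
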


\begin{proof} There are various ways to prove this, but the strategy we shall adopt is to reduce to the toric case.  By Corollary~\ref{cor:trc} we have a commutative diagram \bne{tordiagram} & \xym{Q \ar[r]^-{h} \ar[d] & P \ar[d] \\ Q^{\rm trc} \ar[r]^-{h^{\rm trc}} & P^{\rm trc} } \ene where $Q^{\rm trc}$ and $P^{\rm trc}$ are toic, the vertical arrows are finite, and $\Spec$ of either vertical arrow is a homeomorphism, so it suffices to prove that $\Spec h^{\rm trc}$ is an open embedding on the level of spaces and that $h^{\rm trc}$ is finite when $\Spec h$ (equivalently $\Spec h^{\rm trc}$) is a homeomorphism.  

To do this, we first check that the ``injective and quasi-finite" assumption on $h$ is inherited by $h^{\rm trc}$.  The clearest way to see this is to proceed in two steps:  Since $h^{\rm trc} = (h^{\rm sat})^{\rm tf}$, it is enough to show that the ``injective and quasi-finite" assumption on a map $h$ of fine monoids is inherited by $h^{\rm tf}$ (see Corollary~\ref{cor:tf}) and by $h^{\rm sat}$ (see Corollary~\ref{cor:sat}).  For injectivity, note that $h$ injective implies $h^{\rm gp}$ injective (hence $h^{\rm sat}$ is also injective), which implies that $Q^{\rm gp}/Q^{\rm gp}_{\rm tor} \to P^{\rm gp}/P^{\rm gp}_{\rm tor}$ is injective, hence injectivity of $h^{\rm tf}$ is clear from its construction.  The fact that quasi-finiteness of $h$ is inherited by $h^{\rm tf}$, $h^{\rm sat}$, and $h^{\rm trc}$ is the special case ``$n=0$" of Corollary~\ref{cor:equidimensionality}.

The above results reduce us to the case where $Q$ and $P$ are toric.  In this case $h^{\rm gp} : Q^{\rm gp} \into P^{\rm gp}$ is a finite index inclusion of lattices because $h$ is injective and quasi-finite.  Consider the map of (rational) cones $\sigma(h) : \sigma(Q) \to \sigma(P)$ induced by $h$.  Since $Q$ and $P$ are toric, Theorem~\ref{thm:monoidsandcones}\eqref{monoidsandcones1} ensures that we recover $h$ as the map on ``integral points" \bne{h} Q = Q^{\rm gp} \cap \sigma(Q) & \to & P^{\rm gp} \cap \sigma(P) = P \ene induced by $h^{\rm gp}$ and $\sigma(h)$.  In other words, the map of (affine) ``toric varieties" $\Spec \ZZ[P] \to \Spec \ZZ[Q]$ coincides with the one induced by the map of (classical) fans $[ \sigma(P)^\lor ] \to [ \sigma(Q)^\lor ]$ induced by the finite index inclusion of lattices $(h^{\rm gp})^\lor$.  That map of ``toric varieties" is quasi-finite by definition (since $h$ is assumed quasi-finite).  By Theorem~\ref{thm:toricequidimensionality} we conclude that $(h^{\rm gp})^\lor \otimes \RR$ takes $\sigma(P)^\lor$ bijectively onto some face $\tau \leq \sigma(Q)^{\lor}$.  Dualizing again and noting that $\tau^\lor$ is the localization of $\sigma(Q)=\sigma(Q)^{\lor \lor}$ (Theorem~\ref{thm:cones}\eqref{FarkasTheorem}) at the face $\tau^\perp \cap \sigma(Q) \leq \sigma(Q)$ (Theorem~\ref{thm:cones}\eqref{facesofcones}), we see that $\Spec \sigma(h)$ is the composition of the isomorphism $\Spec \sigma(P) \to \Spec \tau^\lor$ and the map $\Spec \tau^\lor \to \Spec \sigma(Q)$.  Since $\Spec \sigma(Q)$ is finite (Theorem~\ref{thm:cones}\eqref{finitelymanyfaces}), this latter map is an open embedding by Proposition~\ref{prop:Specfinite}\eqref{facelocalizationembedding}, so we conclude that $\Spec \sigma(h)$ is an open embedding on the level of topological spaces.  We have a commutative diagram $$ \xym{ Q \ar[r]^-h \ar[d] & P \ar[d] \\ \sigma(Q) \ar[r]^-{\sigma(h)} & \sigma(P) }$$ where $\Spec$ of each vertical arrow is a homeomorphism by Theorem~\ref{thm:monoidsandcones}\eqref{monoidsandcones1}, so we conclude that $\Spec h$ is an open embedding on the level of topological spaces.  We also see that when $\Spec h$ is a homeomorphism, $\Spec \sigma(h)$ is also a homeomorphism, so we must have $\tau = \sigma(P)$ and hence $\sigma(h) : \sigma(Q) \to \sigma(P)$ must be bijective.  If we suppress notation for the isomorphisms $h^{\rm gp}_{\RR}$ and $\sigma(h)$, then the description \eqref{h} of $h$ just says that $P$ (resp.\ $Q$) is the set of integral points in the cone $\sigma(P) = \sigma(Q)$ with respect to $P^{\rm gp}$ (resp.\ with respect to the finite index subgroup $Q^{\rm gp}$ of $P^{\rm gp}$), so it is clear that $h$ is dense in the sense of Definition~\ref{defn:dense}, hence it is finite by Theorem~\ref{thm:dense}\eqref{dense2}.   

For the ``It follows," first note that the image of $\Spec h$ is clearly contained in the smallest open subset \be U_F & = & \{ F' \in \Spec Q : F \leq F' \} \\ & = & \Spec F^{-1}Q \ee of $\Spec Q$ containing $F$ (cf.\ Proposition~\ref{prop:Specfinite}).  Since $\Spec h$ is an open embedding, it must therefore be a homeomorphism onto $U_F$.  Since $\Spec h$ factors topologically through the open subfan $\Spec F^{-1}Q$ of $\Spec Q$, it also factors ``fan theoretically" through it (Remark~\ref{rem:embedding}), so $h$ factors as $Q \to F^{-1}Q \to P$.  The second map is a quasi-finite (Proposition~\ref{prop:quasifinite}\eqref{prop:quasifinite2}), injective map of fine monoids and $\Spec$ of it is a homeomorphism, so it is finite.  \end{proof}

\begin{thm} \label{thm:finite} For an injective map $h : Q \to P$ of fine monoids, the following are equivalent: \begin{enumerate} \item \label{hdense} $h$ is dense (Definition~\ref{defn:dense}). \item \label{hfinite} $h$ is finite ($P$ is finitely generated as a $Q$-module). \item \label{hfiniteand} $h$ is finite and $\Spec h$ is a homeomorphism.  \item \label{hquasifiniteand} $h$ is quasi-finite and $\Spec h$ is surjective. \item \label{hquasifinitelocal} $h$ is quasi-finite and local. \end{enumerate} \end{thm}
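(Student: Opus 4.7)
The plan is to establish the cycle $(1) \Leftrightarrow (2) \Rightarrow (3) \Rightarrow (4) \Rightarrow (5) \Rightarrow (2)$, leaning heavily on the two main ``input'' results already proved in the paper: Gordan's Lemma (Theorem~\ref{thm:dense}) and Lemma~\ref{lem:quasifinite2} on injective quasi-finite maps of fine monoids.

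For $(1) \Leftrightarrow (2)$, I would just quote Gordan's Lemma directly: part \eqref{dense2} gives $(1) \Rightarrow (2)$ since $P$ is finitely generated, and part \eqref{dense3} gives $(2) \Rightarrow (1)$ since $Q$ is finitely generated and $P$ is integral. For $(2) \Rightarrow (3)$, once I know $h$ is dense (from $(1)$), Theorem~\ref{thm:densespecinvariance} yields that $\Spec h$ is a homeomorphism. The step $(3) \Rightarrow (4)$ is essentially formal: finite maps of monoids are quasi-finite (Proposition~\ref{prop:finite} gives $\AA(h)$ finite, hence quasi-finite, hence $h$ is quasi-finite by definition), and a homeomorphism is certainly surjective.

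The interesting steps are $(4) \Rightarrow (5)$ and $(5) \Rightarrow (2)$, and both are really corollaries of Lemma~\ref{lem:quasifinite2}. For $(4) \Rightarrow (5)$, Lemma~\ref{lem:quasifinite2} says $\Spec h$ is an open embedding on topological spaces; combined with surjectivity from $(4)$, $\Spec h$ is a homeomorphism. Now I need to argue locality, i.e.\ $h^{-1}(P^*) = Q^*$. Since $\Spec Q$ has unique closed point $Q^*$ and $\Spec h$ is a homeomorphism, the image $h^{-1}(P^*) \in \Spec Q$ of the unique closed point $P^* \in \Spec P$ must be the unique closed point $Q^*$. For $(5) \Rightarrow (2)$, Lemma~\ref{lem:quasifinite2} factors $h$ as $Q \to F^{-1}Q \to P$ with $F := h^{-1}(P^*)$ and the second map finite and injective; locality from $(5)$ forces $F = Q^*$, so $F^{-1}Q = Q$ and $h$ itself is finite.

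I expect no single step to be a serious obstacle, since the real work is encapsulated in Lemma~\ref{lem:quasifinite2} (which in turn reduces to the toric case via the $(\cdot)^{\rm trc}$ construction and then to the classical result of Abramovich--Karu via Theorem~\ref{thm:toricequidimensionality}). The only place one has to be slightly careful is in $(4) \Rightarrow (5)$: ``$\Spec h$ surjective'' is not literally the same hypothesis as ``$\Spec h$ is a homeomorphism'' until one invokes the lemma, and then locality must be extracted from the topology of $\Spec Q$ (the uniqueness of its closed point, Proposition~\ref{prop:Spec}\eqref{uniqueclosedpoint}) rather than directly from the algebraic definition. Everything else is either a citation of earlier results or a formal implication.
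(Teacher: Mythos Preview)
Your proposal is correct and follows essentially the same architecture as the paper's proof: Gordan's Lemma (Theorem~\ref{thm:dense}) for $(1)\Leftrightarrow(2)$, and Lemma~\ref{lem:quasifinite2} for the remaining implications. The paper likewise observes that $(3)\Rightarrow(4)$ is trivial and that Lemma~\ref{lem:quasifinite2} already contains the equivalence $(4)\Leftrightarrow(5)$ together with the implication to $(2)$; your unpacking of these via the unique-closed-point argument and the factorization $Q\to F^{-1}Q\to P$ is exactly what is hidden inside that lemma.

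The one place you diverge is $(2)\Rightarrow(3)$. You go directly through Theorem~\ref{thm:densespecinvariance} (equivalently Corollary~\ref{cor:finiteinvariance}): dense plus injective gives a homeomorphism on $\Spec$, done. The paper instead re-enters Lemma~\ref{lem:quasifinite2}, takes the factorization $Q\to F^{-1}Q\to P$ with $F=h^{-1}(P^*)$, and then argues by hand that density of $h$ forces density of $Q\hookrightarrow F^{-1}Q$, hence every $f\in F$ satisfies $-nf\in Q$ for some $n>0$, so $nf\in Q^*$ and $f\in Q^*$, giving $F=Q^*$. Your route is shorter and more transparent; the paper's route has the minor virtue of making the whole theorem a corollary of Gordan's Lemma plus Lemma~\ref{lem:quasifinite2} alone, without an additional appeal to Theorem~\ref{thm:densespecinvariance}. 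Either way the content is the same.
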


\begin{proof} The equivalence of \eqref{hdense} and \eqref{hfinite} follows from Theorem~\ref{thm:dense}.  Obviously \eqref{hfiniteand} implies \eqref{hquasifiniteand} and we saw in Lemma~\ref{lem:quasifinite2} that \eqref{hquasifiniteand} and \eqref{hquasifinitelocal} are equivalent, and that these equivalent conditions imply \eqref{hfinite}.

It remains to show that \eqref{hfinite} implies \eqref{hfiniteand}.  For this, we need to show that when $h$ is finite, then in the factorization $Q \to F^{-1}Q \to P$ of $h$ in Lemma~\ref{lem:quasifinite2}, we must have $F=Q^*$.  The key point is to note that $Q \into F^{-1}Q$ must be dense since $h$ is dense---but this implies that for every $f \in F$ we have $-nf \in Q$ for some $n \in \ZZ_{>0}$, hence $nf \in Q^*$, hence $f \in Q^*$.  We conclude that $F=Q^*$ as desired.\end{proof}

\subsection{Flat maps} \label{section:flatmaps}  

\begin{defn} \label{defn:flatmapoffans} An $\LMS$-morphism (in particular, a map of fans) $f : X \to Y$ is called \emph{flat} (resp.\ a \emph{flat cover}) iff the map of monoids $f_x : \M_{Y,f(x)} \to \M_{X,x}$ is flat (Definition~\ref{defn:flat}) for every $x \in X$ (resp.\ and $f$ is surjective).  A map of monoids $h : Q \to P$ is called \emph{faithfully flat} iff $\Spec h$ is a flat cover. \end{defn}

The next lemma implies that a faithfully flat map of monoids is indeed flat, as the terminology suggests.

\begin{lem} \label{lem:flat} A map of monoids $h : Q \to P$ is flat iff $\Spec h$ is a flat map of fans.  Furthermore, when these equivalent conditions hold, the map $h^{\rm gp} : Q^{\rm gp} \to P^{\rm gp}$ is injective.  It follows that any flat map of \emph{integral} monoids is injective. \end{lem}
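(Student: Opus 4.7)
The plan is to prove the three claims in order, deducing the last from the first two.

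For the equivalence in (1), the forward direction combines three stability properties of flatness: stability under pushout, flatness of localizations (Proposition~\ref{prop:flat}), and stability under composition (noted just after Definition~\ref{defn:flat}). Explicitly, fix $F \in \Spec P$ and set $S := h^{-1}(F)$. By Proposition~\ref{prop:Spec}\eqref{stalkMP}, the stalk map $h_F$ is the natural map $S^{-1}Q \to F^{-1}P$, which I would factor as $S^{-1}Q \to S^{-1}P \to F^{-1}P$. The first arrow is the pushout of $h$ along the (flat) localization $Q \to S^{-1}Q$, hence flat; the second is a further localization of $S^{-1}P$ at the image of $F$, hence flat by Proposition~\ref{prop:flat}. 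For the converse, specialize the stalk hypothesis to the closed point $F = P^*$: since $(P^*)^{-1}P = P$, the assumption gives that $h^{-1}(P^*)^{-1}Q \to P$ is flat, and composing with the flat localization $Q \to h^{-1}(P^*)^{-1}Q$ yields flatness of $h$.

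The substantive step is injectivity of $h^{\rm gp}$ when $h$ is flat. By Definition~\ref{defn:finitelygenerated}, write $P = \dirlim_i M_i$ as a filtered direct limit of free $Q$-modules. Suppose $q_1, q_2 \in Q$ have the same image in $P^{\rm gp}$; by construction of groupification this means $h(q_1) + r = h(q_2) + r$ in $P$ for some $r \in P$. Lift $r$ to an element $\tilde{r} \in M_i$ for some $i$. The elements $q_1 + \tilde{r}$ and $q_2 + \tilde{r}$ of $M_i$ map, respectively, to $h(q_1)+r$ and $h(q_2)+r$ in $P$, which agree by hypothesis; since filtered colimits of monoid modules are computed on underlying sets, their images coincide in some $M_j$ with $j \geq i$. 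Now $M_j$ is a free $Q$-module, so the image of $\tilde{r}$ in $M_j$ has a unique expression $q + b$ with $q \in Q$ and $b$ a basis element. The equality $q_1 + q + b = q_2 + q + b$ in $M_j$ forces $q_1 + q = q_2 + q$ in $Q$ by uniqueness of basis expansion, hence $q_1 = q_2$ in $Q^{\rm gp}$. Since every kernel element of $h^{\rm gp}$ can be represented as a formal difference $q_1 - q_2$ with $q_i \in Q$, this suffices.

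The final assertion is a one-line deduction: if $h : Q \to P$ is a flat map of integral monoids, then $Q \to Q^{\rm gp}$ and $P \to P^{\rm gp}$ are injective by integrality and $h^{\rm gp}$ is injective by the previous step, so the composition $Q \to Q^{\rm gp} \to P^{\rm gp}$ is injective; this composition factors through $h$, forcing $h$ itself to be injective. The main obstacle is the injectivity of $h^{\rm gp}$, where the work lies in tracking an equality across the filtered colimit and then exploiting uniqueness of basis expansions in a free module; the first and third parts are essentially formal bookkeeping with the standard stability properties of flatness.
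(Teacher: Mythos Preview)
Your proof is correct. The equivalence argument and the final deduction are essentially the paper's own: you factor the stalk map as a flat pushout followed by a localization (the paper writes your $S^{-1}P$ as $(F')^{-1}P$ with $F' = h(S)$), and for the converse you both specialize to the closed point $P^*$.

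The genuine difference is in establishing injectivity of $h^{\rm gp}$. The paper observes that $h^{\rm gp}$ is itself flat (being a localization of a pushout of $h$) and then invokes the fact that a flat map of \emph{groups} is injective, citing an external reference \cite[\S5.5]{G2} and a forward reference to Corollary~\ref{cor:faithfullyflatimpliesinjective}. You instead give a direct, self-contained argument from the filtered-colimit-of-frees definition: lift a witness $r$ for $h(q_1)+r = h(q_2)+r$ to some $M_i$, pass to $M_j$ where the two elements agree, and use uniqueness of basis expansion in the free module $M_j$ to force $q_1+q = q_2+q$ in $Q$. This is more elementary and avoids the circularity of appealing to Corollary~\ref{cor:faithfullyflatimpliesinjective} (whose proof in the paper passes through scheme realizations and faithful flatness of $\ZZ[h]$). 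The paper's route is shorter to state but outsources the work; yours actually does it.
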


\begin{proof} ($\Rightarrow$) Assuming $h$ is flat, we need to show that for each face $F \in \Spec P$ with image $G \in \Spec Q$, the map $\M_{Q,G} \to \M_{P,F}$ is flat.  Here $G = h^{-1}(F)$ and the stalk map in question is the map $G^{-1}Q \to F^{-1} P$ induced by $h$.  Let $F' := h(G)$.  Then $F'$ is a submonoid of $P$ and we see from the universal property of localization that $$ \xym{ Q \ar[r]^-h \ar[d] & P \ar[d] \\ G^{-1}Q \ar[r] & (F')^{-1} P } $$ is a pushout diagram, hence $G^{-1} Q \to (F')^{-1} P$ is flat since $h$ is flat and flat maps are stable under pushout.  The map of interest to us is the composition of the latter flat map and a localization $(F')^{-1} P \to F^{-1} P$ (note $F' \subseteq F$), so it is flat.

($\Leftarrow$)  By assumption, the stalk $\M_{Q,G} \to \M_{P,P^*}$ of $\Spec h$ at the closed point $P^*$ of $\Spec P$ is flat.  Here $G = h^{-1}(P^*)$ and this stalk map is the map $G^{-1} Q \to P$ induced by $h$.  But $h$ factors as the localization $Q \to G^{-1}Q$, which is flat, followed by that flat stalk map, so $h$ is flat. 

For the furthermore, note that when $h$ is flat, so is $h^{\rm gp}$ because one sees much as in the above discussion that $h^{\rm gp}$ is a localization of a pushout of $h$.  It is easy to see that a flat map of groups must be injective (cf.\ \cite[\S5.5]{G2} or Corollary~\ref{cor:faithfullyflatimpliesinjective} below).  \end{proof}

\begin{prop} \label{prop:flatmaps} Flat $\LMS$ morphisms and flat covers are stable under composition and base change.  The question of flatness of an $\LMS$ morphism $f$ is local on $f$. \end{prop}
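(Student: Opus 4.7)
The entire proposition is formal, and will follow from Proposition~\ref{prop:stableunderbasechange} together with the stability properties of flat monoid homomorphisms already recorded in \S\ref{section:modules}. Concretely, I will check that flatness of monoid homomorphisms is stable under composition and pushout, and that surjectivity of maps of topological spaces is stable under composition and fibered products; the three claims of the proposition will then drop out of the stalkwise definition.

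For stability under composition, I will invoke the elementary fact, noted immediately after Definition~\ref{defn:flat}, that a composition of flat monoid homomorphisms is flat. Since the stalk $(gf)_x$ of a composition of $\LMS$ morphisms $f:X\to Y$ and $g:Y\to Z$ is the composition $g_{f(x)}\circ f_x$ of the individual stalk maps, flat $\LMS$ morphisms are stable under composition. Composition of surjections of topological spaces is a surjection, so flat covers are likewise stable under composition. (This is also subsumed by the ``Similarly'' clause of Proposition~\ref{prop:stableunderbasechange}.)

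For base change, I will apply Proposition~\ref{prop:stableunderbasechange} directly, taking $\mathbf{Q}$ to be the property ``flat'' of monoid maps and $\mathbf{P}$ to be the trivial property (or, in the ``flat cover'' case, surjectivity of maps of topological spaces). In a cartesian square of $\LMS$ morphisms the new stalk map $f'_{x'}$ is a pushout of $f_x$ for $x$ the image of $x'$, and flatness of monoid maps is stable under pushout by the discussion at the end of \S\ref{section:modules}; hence the base change of a flat $\LMS$ morphism is flat. For flat covers I additionally need surjectivity to be preserved: for a cartesian square in $\LMS$, the underlying topological space of the base change is the topological fibered product (\S\ref{section:inverselimits}), and given a point $y'$ of the target, any preimage of its image under the source map yields a point of the fibered product over $y'$.

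Finally, the locality claim is tautological from the stalkwise definition. If $\{V_i\}$ is an open cover of $Y$ and $\{U_{ij}\}$ is an open cover of $f^{-1}(V_i)$, then every $x\in X$ lies in some $U_{ij}$, and the stalk of the restriction $f|U_{ij}:U_{ij}\to V_i$ at $x$ coincides with $f_x$, since stalks depend only on arbitrarily small neighborhoods. Thus the condition that every $f_x$ be flat holds for $f$ iff it holds for each $f|U_{ij}$. There is no real obstacle here: all the substantive content has already been packaged into Proposition~\ref{prop:stableunderbasechange} and the stability properties of flat modules/monoid maps from \S\ref{section:modules}, so the proof will be brief.
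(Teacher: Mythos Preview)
Your proposal is correct and follows essentially the same approach as the paper: invoke Proposition~\ref{prop:stableunderbasechange} using the stability of flat monoid maps under composition and pushout (from \S\ref{section:modules}), and observe that the locality statement is immediate from the stalkwise definition. The paper's proof is just a terser two-sentence version of what you wrote.
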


\begin{proof} Since flat maps of monoids are stable under composition and base change, the first statement follows from Proposition~\ref{prop:stableunderbasechange}.  The second assertion holds since flatness is defined in terms of stalks. \end{proof}

\begin{lem} \label{lem:formula}  Suppose $f : X \to Y$ is a map of fans such that $(f^\dagger_x)^{\rm gp} : \M_{Y,f(x)}^{\rm gp} \to \M_{X,x}^{\rm gp}$ is injective for all $x \in X$.  (This condition holds when $f$ is flat by Lemma~\ref{lem:flat}.)  Then for any subset $U \subseteq X$, we have \be \tau_Y^{-1}(f(U)) & = & \AA(f)(\tau_X^{-1}(U)).\ee  It follows that if $\AA(f)$ is surjective, then $f$ is surjective.  \end{lem}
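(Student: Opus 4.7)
The plan is to adapt the proof of Lemma~\ref{lem:surjectivity} essentially verbatim, but in the more flexible form of a statement about fibers of the orbit maps rather than only about global surjectivity.

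First, I would dispatch the easy containment $\AA(f)(\tau_X^{-1}(U)) \subseteq \tau_Y^{-1}(f(U))$: given $\tilde{x} \in \tau_X^{-1}(U)$ with $x := \tau_X(\tilde{x}) \in U$, commutativity of the orbit square gives $\tau_Y(\AA(f)(\tilde{x})) = f(\tau_X(\tilde{x})) = f(x) \in f(U)$. This requires no hypothesis on $f$.

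The substantive direction is $\tau_Y^{-1}(f(U)) \subseteq \AA(f)(\tau_X^{-1}(U))$. The plan is as follows. Fix $\tilde{y} \in \tau_Y^{-1}(f(U))$ and choose $x \in U$ with $f(x) = \tau_Y(\tilde{y})$; the goal is to produce some $\tilde{x} \in \AA(X)$ with $\tau_X(\tilde{x}) = x$ and $\AA(f)(\tilde{x}) = \tilde{y}$. First pick an algebraic closure $k$ of $k(\tilde{y})$, yielding a lift $\tilde{y}' \in \AA(Y)(k)$ of $\tilde{y}$. By Lemma~\ref{lem:orbitmap2} the set $\tau_X(k)^{-1}(x)$ is non-empty, so pick any $\tilde{x}' \in \tau_X(k)^{-1}(x)$; by commutativity $\AA(f)(k)(\tilde{x}')$ lies in $\tau_Y(k)^{-1}(f(x))$. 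Since $k$ is algebraically closed, Lemma~\ref{lem:orbitmap2} further says that $\GG(f(x))(k) = \Hom_{\Ab}(\M_{Y,f(x)}^{\rm gp},k^*)$ acts transitively on $\tau_Y(k)^{-1}(f(x))$, so some $u$ in this group satisfies $u \cdot \AA(f)(k)(\tilde{x}') = \tilde{y}'$. Using the hypothesis that $(f^\dagger_x)^{\rm gp} : \M_{Y,f(x)}^{\rm gp} \to \M_{X,x}^{\rm gp}$ is injective and the fact that $k^*$ is divisible, extend $u$ to some $v \in \GG(x)(k) = \Hom_{\Ab}(\M_{X,x}^{\rm gp},k^*)$. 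Compatibility of the $\GG(x)$- and $\GG(f(x))$-actions under $\AA(f)(k)$ (as in the proof of Lemma~\ref{lem:surjectivity}) then gives $\AA(f)(k)(v \cdot \tilde{x}') = \tilde{y}'$, and I take $\tilde{x}$ to be the image of $v \cdot \tilde{x}'$ in $\AA(X)$.

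The consequence for surjectivity is then immediate: taking $U = X$ yields
\[ \tau_Y^{-1}(f(X)) = \AA(f)(\AA(X)) = \AA(Y), \]
and since $\tau_Y$ is surjective by Lemma~\ref{lem:orbitmapsurjective} we conclude $f(X) = Y$. The key step — and the only one where the hypothesis on $(f^\dagger_x)^{\rm gp}$ is used — is the extension of the character $u$ from $\M_{Y,f(x)}^{\rm gp}$ to $\M_{X,x}^{\rm gp}$; everything else is a formal manipulation of the orbit-cone correspondence packaged in Lemma~\ref{lem:orbitmap2}. Since this extension argument is already carried out in the proof of Lemma~\ref{lem:surjectivity}, no genuinely new obstacle arises here.
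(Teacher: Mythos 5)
Your proposal is correct and follows essentially the same route as the paper: the easy containment from commutativity of the orbit square, and for the hard containment, lifting a point of $\tau_Y^{-1}(f(U))$ to a $k$-point over an algebraic closure, using Lemma~\ref{lem:orbitmap2} to find a character $u$ of $\M_{Y,f(x)}^{\rm gp}$ moving $\AA(f)(k)(\tilde{x}')$ to $\tilde{y}'$, and extending $u$ along the injection $(f^\dagger_x)^{\rm gp}$ via divisibility of $k^*$. The deduction of surjectivity of $f$ from surjectivity of $\AA(f)$ is also exactly the paper's argument.
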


\begin{proof} The containment $\supseteq$ is clear from commutativity of the diagram $$ \xym{ \AA(X) \ar[d]_{\AA(f)} \ar[r]^-{\tau_X} & X \ar[d]^f \\ \AA(Y) \ar[r]^-{\tau_Y} & Y } $$ of topological spaces.  For the other containment, suppose $p \in \tau_Y^{-1}(f(U))$.  Then we can write $\tau_Y(p) = f(u)$ for some $u \in U \subseteq X$.  For clarity, let us think of $p$ as a $k$-point of $\AA(Y)$ for some fixed algebraically closed field $k$.  By Lemma~\ref{lem:orbitmap1}, we can find some $k$-point $v \in \AA(X)(k)$ with $\tau_X(v) = u$.  Then the $k$-points $\AA(f)(v), p \in \AA(Y)(k)$ have the same image $f(u)$ under $\tau_Y$, so by Lemma~\ref{lem:orbitmap2} we can find some $$g \in \GG(\M_{Y,f(u)}^{\rm gp})(k) = \Hom_{\Ab}(\M_{Y,f(u)}^{\rm gp},k^*)$$ such that $g \cdot \AA(f)(v) = p$.  By hypothesis, $\M_{Y,f(u)}^{\rm gp} \to \M_{X,u}^{\rm gp}$ is injective, so we can lift $g$ to $\ov{g} \in \GG(\M_{X,u}^{\rm gp})(k)$ because $k^*$ is divisible.  From the $\Spec(\M_{Y,f(u)}^{\rm gp} \to \M_{X,u}^{\rm gp})$-equivariance property of $f$ (or rather, the scheme realization of this), we compute $\AA(f)(\ov{g} \cdot v) = g \cdot  \AA(f)(v) = p$ and we have $\tau_X(\ov{g} \cdot v) = \tau_X(v) = u$ since $\GG(\M_{X,u}^{\rm gp})$ acts on $\AA(X)$ fiberwise for $\tau_X$.  This shows that $p \in \AA(f)(\tau_X^{-1}(U))$ as desired. 

For the ``It follows," note that if $\AA(f)$ is surjective, then when we apply the formula with $U=X$, we find $\tau_Y^{-1}(f(X)) = \AA(Y)$.  Since $\tau_Y : \AA(Y) \to Y$ is surjective (Lemma~\ref{lem:orbitmapsurjective}), we conclude that $f(X)=Y$. \end{proof}

\begin{thm} \label{thm:flatmaps}  The scheme realization $\AA(f)$ of a flat map (resp.\ flat cover) of fans $f$ is a flat map (resp.\ flat and surjective map) of schemes.  The converse(s) holds for integral fans. \end{thm}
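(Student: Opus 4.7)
By Proposition~\ref{prop:flatmaps} and the fact that flatness of a map of schemes is local on both source and target, I reduce to the affine case $f = \Spec h : \Spec P \to \Spec Q$. By Lemma~\ref{lem:flat}, $f$ being flat amounts to $P$ being a flat $Q$-module, i.e.\ a filtered direct limit $P = \dirlim_i F_i$ of free $Q$-modules. The functor $\ZZ[\slot] : \Mod(Q) \to \Mod(\ZZ[Q])$ of \S\ref{section:monoidalgebra} carries free modules to free modules and commutes with filtered direct limits (being a left adjoint), so $\ZZ[P] = \dirlim_i \ZZ[F_i]$ exhibits $\ZZ[P]$ as a filtered direct limit of free $\ZZ[Q]$-modules, which is flat.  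Thus $\AA(f) = \Spec \ZZ[h]$ is flat.  For the ``cover'' statement: a flat cover is surjective on underlying spaces, and by Lemma~\ref{lem:flat} each map $f_x^{\rm gp}$ is injective, so Lemma~\ref{lem:surjectivity} yields surjectivity of $\AA(f)$.

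\textbf{Converse under integrality.} The surjectivity converse is immediate from Lemma~\ref{lem:orbitmapsurjective}.  For flatness, again reduce to the affine case $f = \Spec h$ with $Q, P$ integral and $\ZZ[h]$ flat; the task is to show $h$ is a flat monoid map.  The plan is to exploit the $P^{\rm gp}$-grading:  since $P$ is integral, $\ZZ[P] \subseteq \ZZ[P^{\rm gp}]$ inherits a $P^{\rm gp}/Q^{\rm gp}$-grading as a $\ZZ[Q]$-module, say $\ZZ[P] = \bigoplus_{g} M_g$ where $M_g = \ZZ[S_g]$ for the $Q$-submodule $S_g \subseteq P$ of elements whose class in $P^{\rm gp}/Q^{\rm gp}$ is $g$.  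A standard graded-flat argument gives flatness of each graded piece $\ZZ[S_g]$ over $\ZZ[Q]$, and I then descend this through the faithful functor $\ZZ[\slot]$ to obtain a presentation of each $S_g$ (and hence of $P = \bigsqcup_g S_g$) as a filtered direct limit of free $Q$-modules.

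\textbf{Main obstacle.} The hard part is the descent step: given that $\ZZ[S]$ is a flat $\ZZ[Q]$-module for a $Q$-submodule $S \subseteq P$, show that $S$ itself is a flat $Q$-module.  Producing free $Q$-submodules of $S$---rather than merely free $\ZZ[Q]$-submodules of $\ZZ[S]$---requires some care because $\ZZ[\slot]$ is faithful but not full, and the sets $S_g$ lack the additive structure that makes Lazard's theorem work at the ring level.  The cleanest approach is probably to develop an equational criterion for flatness of $Q$-modules parallel to Lazard's, then verify the monoid criterion for $P$ by transferring the corresponding ring-theoretic criterion across $\ZZ[\slot]$; alternatively one can argue via the torus-equivariant structure on $\AA(h)$ and invoke faithfully flat descent along an appropriate extension of $\ZZ[Q]$ making the $P^{\rm gp}/Q^{\rm gp}$-action diagonalizable.
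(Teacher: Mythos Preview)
Your forward direction is correct and matches the paper's argument: reduce to affines, use that $\ZZ[\slot]$ preserves flatness, and for surjectivity of $\AA(f)$ invoke the injectivity of $f_x^{\rm gp}$ coming from Lemma~\ref{lem:flat} (the paper uses Lemma~\ref{lem:formula} rather than Lemma~\ref{lem:surjectivity}, but these are interchangeable here).  The surjectivity converse via Lemma~\ref{lem:orbitmapsurjective} is also what the paper does.

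The gap is exactly where you flag it: the flatness converse.  You have correctly isolated the nontrivial content---passing from flatness of $\ZZ[S_g]$ over $\ZZ[Q]$ back to flatness of $S_g$ over $Q$---but you have not actually carried out either of the two strategies you sketch, and neither is a one-liner.  The paper does not attempt this step at all; it simply invokes a result of Kato (referenced as \cite[5.4.7]{G2}) asserting that for integral $Q,P$, flatness of $\ZZ[h]$ implies flatness of $h$.  Your $P^{\rm gp}/Q^{\rm gp}$-grading idea is in fact the starting point of Kato's argument, and the ``equational criterion for flatness of $Q$-modules'' you propose is precisely the tool that makes the descent work: one shows that a $Q$-module $M$ is flat iff every relation $q+m=q'+m'$ in $M$ can be ``resolved'' through a map from a finite free $Q$-module, and then one checks this criterion for each graded piece $S_g$ by feeding the relation into the ring-theoretic equational criterion for $\ZZ[S_g]$ and using integrality of $P$ to strip away the $\ZZ$-coefficients.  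This takes genuine work (a page or so); as written, your proposal stops at the point where the real argument begins.  Your second suggestion (torus-equivariant descent) is less promising: it is not clear what faithfully flat base change would make the grading ``diagonalizable'' in a way that produces monoid-level, rather than merely ring-level, free summands.
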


\begin{proof} As discussed in \S\ref{section:modules}, the map $\ZZ[h] : \ZZ[Q] \to \ZZ[P]$ is flat whenever $h : Q \to P$ is a flat map of monoids.   The converse also holds when $Q$ and $P$ are integral, by a result of Kato (see my discussion in \cite[5.4.7]{G2}).  Together with Lemma~\ref{lem:formula}, this proves that $\AA( \Spec h )$ is flat when $\Spec h$ is flat and that the converse holds for when $Q$ and $P$ are integral.

We conclude that for a map of fans $f : X \to Y$, the map of schemes $\AA(f)$ is flat if $f$ is flat (and conversely if $X$ and $Y$ are integral) by noting that the question is local (cf.\ Proposition~\ref{prop:flatmaps}), so we can reduce the situation of the previous paragraph.

For the parenthetical statements, we just need to note that if $f : X \to Y$ is a map of fans for which $\AA(f)$ is surjective, then $f$ itself must be surjective by Lemma~\ref{lem:orbitmapsurjective} and the converse holds when $f$ is flat by Lemma~\ref{lem:formula}. \end{proof}

\begin{cor} \label{cor:faithfullyflatimpliesinjective} A faithfully flat map of monoids is injective. \end{cor}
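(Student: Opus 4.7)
The plan is to reduce the statement to the analogous classical fact for rings, via the monoid algebra functor and the algebraic realization results already established. Suppose $h : Q \to P$ is faithfully flat, meaning by Definition~\ref{defn:flatmapoffans} that $\Spec h : \Spec P \to \Spec Q$ is a flat cover of fans. By Theorem~\ref{thm:flatmaps}, the scheme realization of $\Spec h$ is flat and surjective, and this scheme realization is just $\Spec \ZZ[h]$ with $\ZZ[h] : \ZZ[Q] \to \ZZ[P]$ the induced ring homomorphism. Hence $\ZZ[h]$ is a faithfully flat ring map.

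Next I would invoke the standard fact that any faithfully flat ring homomorphism is injective. (One standard argument: let $K$ be the kernel of $\ZZ[h]$, so that the inclusion $K \to \ZZ[Q]$ becomes zero after tensoring with $\ZZ[P]$ over $\ZZ[Q]$ since $\ZZ[h]$ is also the extension of scalars map; faithful flatness then forces $K = 0$.)

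Finally, I would deduce injectivity of $h$ from that of $\ZZ[h]$: for each monoid $M$, the set map $M \to \ZZ[M]$, $m \mapsto [m]$, is injective since the $[m]$ form a free $\ZZ$-basis; and by construction $\ZZ[h]([q]) = [h(q)]$ for $q \in Q$, so $h(q_1) = h(q_2)$ forces $[q_1] = [q_2]$ in $\ZZ[Q]$, hence $q_1 = q_2$.

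There is no real obstacle here; the only mild subtlety is that the definition of ``faithfully flat'' for a monoid map is phrased topologically (a flat cover on $\Spec$), so the argument funnels through Theorem~\ref{thm:flatmaps} rather than attempting a direct monoid-theoretic argument. Note also that the weaker integral case already follows from Lemma~\ref{lem:flat} (flat implies $h^{\rm gp}$ injective, and $Q \to Q^{\rm gp}$ is injective in the integral case), but the full non-integral statement requires the surjectivity of $\Spec h$ to rule out the collapsing of non-integral relations, which is precisely what faithful flatness of $\ZZ[h]$ encodes.
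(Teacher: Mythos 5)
Your proposal is correct and follows essentially the same route as the paper: pass to $\ZZ[h]$ via Theorem~\ref{thm:flatmaps}, note that a faithfully flat ring map is injective by the same kernel-tensoring argument, and deduce injectivity of $h$ from that of $\ZZ[h]$. The paper's proof is just a more compressed version of exactly this.
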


\begin{proof} Suppose $h : Q \to P$ is faithfully flat.  Then $\ZZ[h]$ is a faithfully flat map of rings by the theorem, so it is injective (its kernel $I$ is zero by faithful flatness because $I \otimes \ZZ[P] = I \ZZ[P]$ is clearly zero), hence $h : Q \to P$ must be injective. \end{proof}

We will now derive some topological consequences of flatness and properness.

\begin{thm} \label{thm:openclosed}  Suppose $f : X \to Y$ is a map of locally finite type fans such that: \begin{enumerate} \item $\AA(f)$ is open (resp.\ closed) and \item $(f^\dagger_x)^{\rm gp}$ is injective for all $x \in X$. \end{enumerate}  Then $f$ is open (resp.\ closed). \end{thm}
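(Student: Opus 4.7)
The plan is to reduce both statements to an application of Lemma~\ref{lem:formula} combined with the fact that the orbit map $\tau_Y : \AA(Y) \to Y$ is a topological quotient map under the locally finite type hypothesis. Specifically, by Lemma~\ref{lem:orbitmap1}, $\tau_Y$ is surjective and open (since $Y$ is locally finite type), and any surjective, open, continuous map of topological spaces is a quotient map. In particular, a subset $V \subseteq Y$ is open (resp.\ closed) iff $\tau_Y^{-1}(V) \subseteq \AA(Y)$ is open (resp.\ closed)---the closed direction follows from the open case by taking complements and using surjectivity: $\tau_Y^{-1}(Y \setminus V) = \AA(Y) \setminus \tau_Y^{-1}(V)$.

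Given any subset $U \subseteq X$, the injectivity hypothesis on $(f^\dagger_x)^{\rm gp}$ is exactly what is needed to invoke Lemma~\ref{lem:formula}, giving the key identity
\[\tau_Y^{-1}(f(U)) \; = \; \AA(f)(\tau_X^{-1}(U)).\]
First I would handle the open case: suppose $U \subseteq X$ is open. Since $\tau_X$ is continuous, $\tau_X^{-1}(U)$ is open in $\AA(X)$, and since $\AA(f)$ is open by hypothesis, $\AA(f)(\tau_X^{-1}(U))$ is open in $\AA(Y)$. By the displayed identity, $\tau_Y^{-1}(f(U))$ is open in $\AA(Y)$, and the quotient-map property of $\tau_Y$ yields that $f(U)$ is open in $Y$.

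The closed case is entirely parallel: if $U \subseteq X$ is closed, then $\tau_X^{-1}(U)$ is closed in $\AA(X)$ by continuity, $\AA(f)(\tau_X^{-1}(U))$ is closed in $\AA(Y)$ since $\AA(f)$ is closed, and the identity then shows that $\tau_Y^{-1}(f(U))$ is closed in $\AA(Y)$; applying the quotient property of $\tau_Y$ again yields $f(U)$ closed in $Y$.

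There is essentially no obstacle here, since every ingredient---the formula of Lemma~\ref{lem:formula}, the surjectivity and openness of $\tau_Y$ from Lemma~\ref{lem:orbitmap1}, and the elementary general fact that surjective open maps are quotient maps---has been established. The only point worth double-checking is that the openness of $\tau_Y$ really does require only the locally finite type hypothesis on $Y$, which is indeed the content of Lemma~\ref{lem:orbitmap1}.
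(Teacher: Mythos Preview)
Your proof is correct and follows essentially the same approach as the paper's own proof: both arguments use the formula of Lemma~\ref{lem:formula} together with the fact that $\tau_Y$ is open and surjective (Lemma~\ref{lem:orbitmap1}) to reduce the question to the corresponding property of $\AA(f)$. You spell out the quotient-map step more explicitly than the paper does, but the content is identical.
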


\begin{proof} Consider an open (resp.\ closed) subset $U \subseteq X$.  We want to show that $f(U) \subseteq Y$ is open (resp.\ closed).  Since $\tau_Y : \AA(Y) \to Y$ is open surjective (Lemma~\ref{lem:orbitmap1}), it suffices to show that $\tau_Y^{-1}(f(U))$ is open (resp.\ closed).   Since $\AA(f)$ is open (resp.\ closed) by assumption, this is immediate from the formula of Lemma~\ref{lem:formula}. \end{proof}

\begin{cor} \label{cor:flatimpliesopen}  If $f : X \to Y$ is a flat map (resp.\ flat cover) of locally finite type fans then $f$ is open (resp.\ open and surjective) on topological spaces. \end{cor}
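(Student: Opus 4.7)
The plan is to deduce this directly from Theorem~\ref{thm:openclosed} by verifying its two hypotheses for a flat map $f : X \to Y$ of locally finite type fans, and then handling the ``flat cover'' case separately via surjectivity.

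First I would verify hypothesis (ii) of Theorem~\ref{thm:openclosed}: for each $x \in X$, the map $(f^\dagger_x)^{\rm gp} : \M_{Y,f(x)}^{\rm gp} \to \M_{X,x}^{\rm gp}$ is injective. This is immediate from the ``furthermore'' in Lemma~\ref{lem:flat}, since $f$ being flat means each stalk map $f^\dagger_x$ is a flat map of monoids, and flat maps of monoids have injective groupification.

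Next I would verify hypothesis (i): the scheme realization $\AA(f) : \AA(X) \to \AA(Y)$ is open on topological spaces. By Theorem~\ref{thm:flatmaps}, $\AA(f)$ is a flat map of schemes. Since $X$ and $Y$ are locally finite type fans, $\AA(X)$ and $\AA(Y)$ are locally of finite type over $\ZZ$ (in particular locally noetherian), and a flat, locally finitely presented morphism of schemes is open by a standard result \cite[IV.2.4.6]{EGA}. Hence $\AA(f)$ is open, and Theorem~\ref{thm:openclosed} applies to give that $f$ is open on topological spaces.

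Finally, in the ``flat cover'' case, $f$ is by definition flat and surjective. Openness is the content of the first case, and surjectivity is part of the hypothesis, so $f$ is open and surjective. (Alternatively, one could invoke Theorem~\ref{thm:flatmaps}, which says $\AA(f)$ is flat and surjective, combined with Lemma~\ref{lem:formula} or Lemma~\ref{lem:orbitmapsurjective} to recover surjectivity of $f$.) I do not anticipate any real obstacle here: every ingredient is already in hand, and the only nontrivial external input is the standard openness of flat locally finitely presented morphisms of schemes.
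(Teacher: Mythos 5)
Your proposal is correct and follows essentially the same route as the paper: establish injectivity of the groupified stalk maps via Lemma~\ref{lem:flat}, establish openness of $\AA(f)$ via flatness plus \cite[IV.2.4.6]{EGA}, and conclude by Theorem~\ref{thm:openclosed}. Your citation of Theorem~\ref{thm:flatmaps} for the flatness of $\AA(f)$ is in fact the more accurate reference than the one in the paper's own proof.
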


\begin{proof} The scheme realization $\AA(f)$ is a flat (by Proposition~\ref{prop:flatmaps}) map of locally finite type $\ZZ$-schemes, so it is flat and of locally finite presentation, hence it is open by \cite[IV.2.4.6]{EGA}.  Flatness of $f^\dagger_x : \M_{Y,f(x)} \to \M_{X,x}$ implies flatness (hence injectivity) of its groupification (Lemma~\ref{lem:flat}), so the result now follows from the theorem.  \end{proof}

\begin{cor} \label{cor:localandflatimpliesfaithfullyflat} A flat, local map of finitely generated monoids is faithfully flat, hence also injective by Corollary~\ref{cor:faithfullyflatimpliesinjective}. \end{cor}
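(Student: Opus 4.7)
The plan is to show that the map of fans $\Spec h$ is both flat and surjective, which is the definition of $h$ being faithfully flat. Flatness of $\Spec h$ is immediate from Lemma~\ref{lem:flat}, so the work is entirely in establishing surjectivity.

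First I would note that since $Q$ and $P$ are finitely generated, the fans $\Spec Q$ and $\Spec P$ are locally finite type, so Corollary~\ref{cor:flatimpliesopen} applies to tell us that $\Spec h$ is an open map of topological spaces. Next, I would unpack the hypothesis that $h$ is local: this says precisely that $h^{-1}(P^*) = Q^*$, which means $\Spec h$ sends the closed point $P^* \in \Spec P$ to the closed point $Q^* \in \Spec Q$. Consequently, the image of $\Spec h$ is an open subset of $\Spec Q$ containing $Q^*$.

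Finally, by Proposition~\ref{prop:Spec}\eqref{uniqueclosedpoint}, the only open subset of $\Spec Q$ containing $Q^*$ is $\Spec Q$ itself. Therefore $\Spec h$ is surjective, and combined with flatness this shows $\Spec h$ is a flat cover, i.e., $h$ is faithfully flat. Injectivity then follows from Corollary~\ref{cor:faithfullyflatimpliesinjective}.

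There is no real obstacle here: the statement falls out immediately once the openness result for flat maps (Corollary~\ref{cor:flatimpliesopen}) and the uniqueness of the closed point of an affine fan (Proposition~\ref{prop:Spec}\eqref{uniqueclosedpoint}) are combined with the definition of locality. The main conceptual point is simply that in the fan world, unlike for rings, $\Spec$ of any monoid has a \emph{unique} closed point, so ``containing the closed point'' forces an open set to be everything---this is what makes local flat maps automatically faithfully flat.
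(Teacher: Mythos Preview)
Your proof is correct and follows essentially the same approach as the paper: use openness of $\Spec h$ (from Corollary~\ref{cor:flatimpliesopen}) together with locality to see that the image contains the closed point $Q^*$, then invoke Proposition~\ref{prop:Spec}\eqref{uniqueclosedpoint} to conclude the image is all of $\Spec Q$. The paper's proof is just a terser version of what you wrote.
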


\begin{rem} This should hold without the finite generation assumption. \end{rem}

\begin{proof}  Suppose $h : Q \to P$ is such a map.  Then $h^{-1}(P^*)=Q^*$ by definition of ``local," so $Q^* \in \Spec Q$ is in the image of $\Spec h$.  But $\Spec h$ is an open map by the theorem and $\Spec Q$ is the smallest open neighborhood of $Q^*$ in $\Spec Q$ (Proposition~\ref{prop:Spec}\eqref{uniqueclosedpoint}), so $h$ must be surjective. \end{proof} 

\begin{cor} Suppose $f : X \to Y$ is a proper (Definition~\ref{defn:proper}) map of locally finite type fans such that $(f^\dagger)^{\rm gp}_x$ is injective for all $x \in X$.  Then $f$ is closed. \end{cor}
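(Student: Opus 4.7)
The plan is to reduce directly to the closed case of Theorem~\ref{thm:openclosed}. By Definition~\ref{defn:proper}, to say that $f$ is proper is to say that its scheme realization $\AA(f)$ is a proper map of schemes. Proper maps of schemes are, in particular, closed as maps of topological spaces (this is part of the standard definition of properness in EGA, and in any case follows from universal closedness). So the hypothesis on $f$ immediately supplies the ``$\AA(f)$ is closed'' input required by Theorem~\ref{thm:openclosed}.

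Next I would simply observe that the second hypothesis of Theorem~\ref{thm:openclosed} — injectivity of $(f^\dagger_x)^{\rm gp}$ for every $x \in X$ — is literally the remaining hypothesis of the corollary. Applying the ``closed'' clause of Theorem~\ref{thm:openclosed} with these two inputs gives that $f$ is closed on underlying topological spaces, which is the conclusion.

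There is essentially no obstacle here; the entire content was packaged into Theorem~\ref{thm:openclosed} and the underlying formula of Lemma~\ref{lem:formula} (which expresses $\tau_Y^{-1}(f(U))$ as $\AA(f)(\tau_X^{-1}(U))$ under the injectivity hypothesis, and thereby transfers the closedness of $\AA(f)$ through the open surjection $\tau_Y$). The only thing worth being careful about is that ``proper'' in Definition~\ref{defn:proper} was defined to require $f$ to be a map of locally finite type fans and $\AA(f)$ to be proper in the scheme-theoretic sense, which is exactly what we are given, so no additional quasi-compactness hypothesis needs to be verified.
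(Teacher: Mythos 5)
Your proof is correct and is essentially the paper's own argument: proper implies $\AA(f)$ is closed, and then the closed case of Theorem~\ref{thm:openclosed} applies directly under the injectivity hypothesis. Nothing is missing.
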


\begin{proof} $\AA(f)$ is closed, so this is clear from the theorem. \end{proof}

Recall (Example~\ref{example:propernotclosed}) that for a map of fine fans $f$, $\AA(f)$ can be proper (equivalently, $f$ can be quasi-compact and valuatively proper) even when $f$ is not closed.  However:

\begin{cor} Suppose $f : X \to Y$ is a quasi-compact, valuatively proper map of fine fans such that $(f^\dagger)^{\rm gp}_x$ is injective for all $x \in X$ (this holds, for example, if $f$ is flat).  Then $f$ is closed. \end{cor}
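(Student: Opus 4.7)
The plan is to chain together two previous results. First, I would apply Theorem~\ref{thm:proper}: since $f$ is a quasi-compact, valuatively proper map of \emph{fine} fans, the converse direction of that theorem applies and gives that the scheme realization $\AA(f) : \AA(X) \to \AA(Y)$ is proper (in the usual sense of algebraic geometry). In particular, $\AA(f)$ is closed on the level of topological spaces, since properness for schemes implies universal closedness.

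Second, I would invoke Theorem~\ref{thm:openclosed}. Fine fans are locally finite type (fine implies finitely generated stalks and locally finite underlying spaces, cf.\ Proposition~\ref{prop:locallyfinitetypefans}), so $f$ is a map of locally finite type fans. The two hypotheses of Theorem~\ref{thm:openclosed} are then (i) $\AA(f)$ is closed, which we just established, and (ii) $(f^\dagger_x)^{\rm gp}$ is injective for every $x \in X$, which is an explicit hypothesis of the corollary. The conclusion of Theorem~\ref{thm:openclosed} is exactly that $f$ is closed on topological spaces.

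For the parenthetical remark that the injectivity hypothesis is automatic when $f$ is flat: by definition (Definition~\ref{defn:flatmapoffans}), flatness of $f$ means each stalk map $f^\dagger_x : \M_{Y,f(x)} \to \M_{X,x}$ is flat in the sense of Definition~\ref{defn:flat}, so by the ``Furthermore'' of Lemma~\ref{lem:flat} the groupification $(f^\dagger_x)^{\rm gp}$ is injective for every $x$.

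There is no real obstacle: the corollary is essentially a bookkeeping combination of Theorem~\ref{thm:proper} (to go from valuative properness of $f$ to properness of $\AA(f)$, which is where fineness is used in an essential way, cf.\ Example~\ref{example:x}) and Theorem~\ref{thm:openclosed} (to descend the closedness of $\AA(f)$ back to $f$, using the injectivity of $(f^\dagger_x)^{\rm gp}$ via the orbit-map formula of Lemma~\ref{lem:formula}). The only point one might want to double-check is that ``fine $\Rightarrow$ locally finite type,'' so that Theorem~\ref{thm:openclosed} applies, but this is immediate from the definitions.
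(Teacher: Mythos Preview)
Your proof is correct and matches the paper's primary argument essentially verbatim: apply Theorem~\ref{thm:proper} to get $\AA(f)$ proper (hence closed), then invoke Theorem~\ref{thm:openclosed} (the paper routes this through the immediately preceding corollary, which is itself just a one-line citation of Theorem~\ref{thm:openclosed}). The paper also supplies, ``for completeness,'' a second self-contained proof that avoids the scheme realization entirely by using the boundary construction $Z(f,x)$ and Lemma~\ref{lem:closure} to directly produce the needed lift; you might find that alternative worth reading.
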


\begin{proof} The hypotheses imply that $\AA(f)$ is proper (Theorem~\ref{thm:proper}), so this is a special case of the previous corollary.  For completeness, we also give an alternative proof that doesn't appeal to the scheme realization.  The question is local, so we can assume $Y$ is affine, hence finite (Corollary~\ref{cor:faces}).  Suppose $Z \subseteq X$ is closed and $y \in f(Z)^-$.  Since $Y$ is finite, this means $y \in \{ f(x) \}^-$ for some $x \in Z$.  We want to show $y \in f(Z)$.  Since it certainly suffices to prove that $y \in f( \{ x \}^- )$, we can assume $Z = \{ x \}^-$.  If necessary, we can replace $f : X \to Y$ with $Z(f,x) : Z=Z(X,x) \to Z(Y,f(x))$ without destroying the hypotheses (Lemma~\ref{lem:proper2}), so we can assume $x \in T_X$---i.e.\ $\M_{X,x} = \M_{X,x}^*$ is a group---and $X=Z$.  Then $\M_{Y,f(x)}$ is also a group since $f^\dagger_x$ is local, so by Lemma~\ref{lem:closure} we can find a map of fans $g : \Spec \NN \to Y$ taking the generic point to $f(x)$ and the closed point to $y$.  Since $\M_{Y,f(x)} \to \M_{X,x}$ is an injective map of groups by assumption (and its quotient is finitely generated since $X$ and $Y$ are fine), we can assume, after possibly multiplying $g$ by a large enough positive integer, that $g : \M_{Y,f(x)} \to \ZZ$ lifts to a map also abusively denoted $g : \M_{X,x} \to \ZZ$.  Then we get a solid commutative diagram $$ \xym{ \Spec \ZZ \ar[d] \ar[r]^-g & X \ar[d]^f \\ \Spec \NN \ar@{.>}[ru] \ar[r]^-g & Y } $$ which can be completed as indicated by the ``valuatively proper" hypothesis---the image $z$ of the closed point under such a completion lies in $Z=X$ and has $f(z)=y$. \end{proof}

\subsection{Flat descent} \label{section:descent}  In this section we will show that the usual theory of flat descent in algebraic geometry carries over to the category of fans.  I will not bore the reader by writing out all possible analogs of the usual algebraic descent results; I'll just give some of the highlights.

\begin{lem} \label{lem:Amitsur} If $h : Q \to P$ is a faithfully flat map of monoids then a map of $Q$-modules $f : M \to M'$ is an isomorphism iff $f \otimes_{Q} P$ is an isomorphism and \bne{equalizer} & \xym@C+20pt{ Q \ar[r]^-h & P \ar@<.5ex>[r]^-{p \mapsto [p,0]} \ar@<-.5ex>[r]_-{p \mapsto [0,p]} & P \oplus_Q P } \ene is an equalizer diagram of monoids (equivalently: of sets) which remains an equalizer diagram of $Q$-modules after tensoring with any $Q$-module $M$. \end{lem}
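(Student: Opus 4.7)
The strategy is to reduce both assertions to classical faithfully flat descent of modules over rings, via the monoid algebra functor $\ZZ[\slot]: \Mod(Q) \to \Mod(\ZZ[Q])$ from \eqref{Zslot2}. Four preliminary observations set this up. First, by Theorem~\ref{thm:flatmaps}, the hypothesis that $h$ is a flat cover of monoids implies $\Spec \ZZ[h] = \AA(\Spec h)$ is a flat cover of schemes, so $\ZZ[h]: \ZZ[Q] \to \ZZ[P]$ is faithfully flat. Second, because $\ZZ[M]$ is the free abelian group on $M$, a $Q$-module map $f: M \to M'$ is an isomorphism (equivalently, bijective on sets) if and only if $\ZZ[f]$ is an isomorphism of $\ZZ[Q]$-modules. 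Third, $\ZZ[\slot]$ is monoidal: there is a natural isomorphism $\ZZ[M \otimes_Q N] = \ZZ[M] \otimes_{\ZZ[Q]} \ZZ[N]$, immediate from comparing universal properties using the adjunctions \eqref{monoidalgebraadjunction} and \eqref{Zslot2}. Fourth, the forgetful functors $\Mon \to \Sets$ and $\Mod(Q) \to \Sets$ are right adjoints, hence preserve limits, so equalizers of monoids, of $Q$-modules, and of their underlying sets all coincide in the situations at hand.

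The descent of isomorphisms is then a chain of equivalences: $f$ is an isomorphism iff $\ZZ[f]$ is (by the second observation), iff $\ZZ[f] \otimes_{\ZZ[Q]} \ZZ[P]$ is (by classical faithfully flat descent of module isomorphisms for the ring extension $\ZZ[h]$), iff $\ZZ[f \otimes_Q P]$ is (by the third observation), iff $f \otimes_Q P$ is (again by the second observation).

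For the equalizer claim, applying the monoidal functor $\ZZ[\slot]$ to the diagram obtained by tensoring \eqref{equalizer} with $M$ produces the Amitsur complex $\ZZ[M] \to \ZZ[M] \otimes_{\ZZ[Q]} \ZZ[P] \rightrightarrows \ZZ[M] \otimes_{\ZZ[Q]} \ZZ[P] \otimes_{\ZZ[Q]} \ZZ[P]$ of the $\ZZ[Q]$-module $\ZZ[M]$ along the faithfully flat ring map $\ZZ[h]$; by the classical theorem, this is an equalizer in $\Mod(\ZZ[Q])$, hence of underlying abelian groups, hence of underlying sets. To descend this to a set-theoretic equalizer $M \to M \otimes_Q P \rightrightarrows M \otimes_Q P \otimes_Q P$, suppose $\eta \in M \otimes_Q P$ has equal images in $M \otimes_Q P \otimes_Q P$; then the basis element $[\eta] \in \ZZ[M \otimes_Q P]$ has equal images in $\ZZ[M \otimes_Q P \otimes_Q P]$, so it lies in the image of $\ZZ[M]$. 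Since $[\eta]$ is one of the free generators it must equal $[m]$ for a unique $m \in M$, and this $m$ maps to $\eta$ as desired.

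The only real subtlety in this reduction is the last step: passing from an equalizer of $\ZZ[Q]$-modules to the desired set-theoretic equalizer in $M \otimes_Q P$. The key point is that the image of $M \otimes_Q P$ in $\ZZ[M \otimes_Q P]$ coincides with the distinguished $\ZZ$-basis, so an equality of the form $[\eta] = \sum n_i [m_i]$ (with $n_i \in \ZZ$ and the $[m_i]$ coming from $M$) forces $[\eta] = [m_j]$ for some $j$, recovering a set-theoretic preimage in $M$. Everything else is a fairly routine deployment of standard ring-theoretic descent together with the monoidality of $\ZZ[\slot]$.
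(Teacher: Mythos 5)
Your proof of the isomorphism-descent half is exactly the paper's argument: reduce to rings via $\ZZ[\slot]$, using that $\ZZ[h]$ is faithfully flat (Theorem~\ref{thm:flatmaps}) and that $\ZZ[\slot]$ commutes with $\otimes$. For the equalizer half, the paper actually offers two routes, and you have taken its second, ``alternative'' one (apply $\ZZ[\slot]$ to $M \otimes_Q \eqref{equalizer}$, invoke classical Amitsur descent for the faithfully flat ring map $\ZZ[h]$, and descend back). The paper's primary route stays internal to monoids: it uses the already-proved descent of isomorphisms together with the fact that $\slot \otimes_Q P$ preserves equalizers of $Q$-modules (flatness) to reduce to showing $M \otimes_Q \eqref{equalizer} \otimes_Q P$ is an equalizer, which holds because $\eqref{equalizer} \otimes_Q P$ is the truncation of a contractible augmented cosimplicial object (the usual extra degeneracy), and contractibility survives any functor. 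Your route avoids needing the cited fact that flat base change of monoid modules preserves equalizers, at the cost of the final ``descend from $\ZZ[N]$ back to $N$'' step --- and your handling of that step is the real content you supply for the paper's ``from which the result follows easily'': since the set-theoretic image of $N$ in $\ZZ[N]$ is precisely the distinguished free basis, an identity $[\eta] = \sum_i n_i [m_i]$ with the $[m_i]$ coming from $M$ forces $\eta$ to be the image of some $m_j$. That argument is correct; for completeness you should also note (by the same free-basis reasoning applied to injectivity of $\ZZ[M] \to \ZZ[M]\otimes_{\ZZ[Q]}\ZZ[P]$) that $M \to M \otimes_Q P$ is injective, which is the other half of being an equalizer of sets.
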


\begin{proof}  The key points are that $\ZZ[h]$ is faithfully flat (see Lemma~\ref{lem:flat} and Theorem~\ref{thm:flatmaps}) and the monoid algebra functor $\ZZ[ \slot ]$ commutes with tensor products.  If $f \otimes_{Q} P$ is an isomorphism, then so is $\ZZ[f \otimes_Q P] = \ZZ[f] \otimes_{\ZZ[Q]} \ZZ[P]$, hence so is $\ZZ[f]$ by faithful flatness of $\ZZ[h]$ and it is clear that isomorphy of $\ZZ[f]$ implies isomorphy of $f$.  To prove the second statement, use the first statement and the fact that tensoring with a flat map of monoids preserves equalizers \cite[5.2.4]{G2} to reduce to proving that $M \otimes_Q \eqref{equalizer} \otimes_Q P$ is an equalizer, then observe that $\eqref{equalizer} \otimes_Q P$ is (the truncation of) the contractible augmented cosimplicial monoid $$ \xym{ P \ar[r] & P \oplus_Q P \ar@<.5ex>[r] \ar@<-.5ex>[r] & P \oplus_Q P \oplus_Q P \ar@<.5ex>[r] \ar@<-.5ex>[r] \ar[r] & \cdots } $$ via the usual homotopy operator of Amitsur's Lemma, so it stays contractible after applying any functor, such as $M \otimes_Q \slot$.  Alternatively, use the usual analog of this statement for rings \cite[4.22]{FAG} to conclude that $\ZZ[ \slot ]$ of $M \otimes_Q \eqref{equalizer} \otimes_Q P$ is an equalizer diagram, from which the result follows easily. \end{proof}

\begin{defn} \label{defn:effectivedescent} A map $p : X \to Y$ in a category $\C$ is called an \emph{effective descent morphism} iff the fiber product $X \times_Y X$ exists and \bne{equalizer2} & \xym{ \Hom_{\C}(Y,Z) \ar[r]^-{p^*} & \Hom_{\C}(X,Z) \ar@<.5ex>[r]^-{\pi_1^*} \ar@<-.5ex>[r]_-{\pi_2^*} & \Hom_{\C}(X \times_Y X,Z) } \ene is an equalizer diagram of sets for every object $Z$ of $\C$.  A map is called a \emph{universal effective descent morphism} iff any base change of it exists and is an effective descent morphism. \end{defn}

\begin{thm} \label{thm:flatdescent} If $h : Q \to P$ is a faithfully flat map of monoids then $h$ is a universal effective descent morphism in $\Mon^{\rm op}$.  If $p : X \to Y$ is a flat cover of locally finite type fans then $p$ is a universal effective descent morphism in the category of locally finite type fans. \end{thm}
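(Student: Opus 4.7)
My plan for the first assertion is to apply Lemma~\ref{lem:Amitsur} directly: it asserts that \eqref{equalizer} is an equalizer diagram in $\Mon$ (equivalently in $\Sets$). Since the functor $\Hom_{\Mon}(R,\slot)$ preserves limits for any monoid $R$, applying it to \eqref{equalizer} produces an equalizer of sets
\begin{eqnarray*}
\Hom_{\Mon}(R,Q) \to \Hom_{\Mon}(R,P) \rightrightarrows \Hom_{\Mon}(R,P \oplus_Q P),
\end{eqnarray*}
which is precisely the equalizer condition \eqref{equalizer2} for $\C = \Mon^{\rm op}$ with the ``$p$'' there given by $h$. For universality, a base change of $h$ in $\Mon^{\rm op}$ corresponds to a pushout $h' : Q' \to Q' \oplus_Q P$ in $\Mon$; flatness is stable under pushout by the discussion in \S\ref{section:modules}, and by Theorem~\ref{thm:Specinverselimits} together with Theorem~\ref{thm:fansinverselimits}, $\Spec h'$ realizes the fibered product of $\Spec h$ with $\Spec Q'$ over $\Spec Q$ on the level of topological spaces. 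Surjectivity of continuous maps is stable under fibered products in $\Top$, so $h'$ is again faithfully flat and the argument iterates.

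For the second assertion, the strategy is to reduce the descent condition to pointwise applications of Lemma~\ref{lem:Amitsur} on stalks. Two ingredients are crucial: first, by Corollary~\ref{cor:flatimpliesopen} the map $p$ is open and surjective on underlying topological spaces, so $Y$ carries the quotient topology; second, each stalk map $p_x^\dagger : \M_{Y,p(x)} \to \M_{X,x}$ is local by definition of $\LMS$, flat by hypothesis, and between finitely generated monoids by Proposition~\ref{prop:locallyfinitetypefans}, hence faithfully flat by Corollary~\ref{cor:localandflatimpliesfaithfullyflat}. Using Theorem~\ref{thm:fansinverselimits}, the stalk of $\M_{X \times_Y X}$ at a point $(x,x')$ lying over $y := p(x) = p(x') \in Y$ is the pushout $\M_{X,x} \oplus_{\M_{Y,y}} \M_{X,x'}$ in $\Mon$, which puts the full power of Lemma~\ref{lem:Amitsur} at our disposal at every stalk.

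Given a compatible $f : X \to Z$, I would construct $g : Y \to Z$ as follows. Set $g(y) := f(x)$ for any $x \in p^{-1}(y)$; this is well-defined since for $x,x' \in p^{-1}(y)$ the pair $(x,x') \in X \times_Y X$ satisfies $f\pi_1(x,x') = f\pi_2(x,x')$, and continuous by the quotient topology. At a chosen $x \in p^{-1}(y)$, the compatibility of $f$ at $(x,x)$ makes $f_x^\dagger : \M_{Z,g(y)} \to \M_{X,x}$ equalize the two canonical maps into $\M_{X,x} \oplus_{\M_{Y,y}} \M_{X,x}$, so Lemma~\ref{lem:Amitsur} produces a unique monoid homomorphism $g_y^\dagger : \M_{Z,g(y)} \to \M_{Y,y}$ with $p_x^\dagger g_y^\dagger = f_x^\dagger$; locality of $g_y^\dagger$ follows from that of $p_x^\dagger$ and $f_x^\dagger$. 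Independence of the choice of $x$ (and, similarly, uniqueness of $g$) reduces to injectivity of the composite $\M_{Y,y} \to \M_{X,x} \to \M_{X,x} \oplus_{\M_{Y,y}} \M_{X,x'}$, which follows from Corollary~\ref{cor:faithfullyflatimpliesinjective} together with the stability of faithful flatness under pushout established in the first assertion.

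The main obstacle I anticipate is verifying that the pointwise maps $g_y^\dagger$ assemble into a morphism of sheaves $g^{-1}\M_Z \to \M_Y$, i.e.\ that they commute with the generalization maps on $Y$ and $Z$. I would handle this by noting that, by Proposition~\ref{prop:locallyfinitetypefans}, the space $Y$ is locally finite, so a sheaf morphism is determined on any small affine neighborhood $U_y \cong \Spec \M_{Y,y}$ by stalk maps compatible with generalization; for $y' \in U_y$, openness of $p$ and faithful flatness of $p_x^\dagger$ let me lift $x \in p^{-1}(y)$ to a generalization $x' \in p^{-1}(y')$ with $\M_{X,x} \to \M_{X,x'}$ the appropriate localization, and the required naturality square then reduces to the given naturality for $f^\dagger$ at $x \leadsto x'$. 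Universality finally follows because flat covers of locally finite type fans are stable under base change, by Proposition~\ref{prop:flatmaps} together with Theorem~\ref{thm:fansinverselimits}, so the same construction applies verbatim to any base change of $p$.
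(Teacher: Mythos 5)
Your proof is correct, and all the essential ingredients coincide with the paper's: Lemma~\ref{lem:Amitsur} for the affine statement, openness and surjectivity of $p$ (Corollary~\ref{cor:flatimpliesopen}) for descent of the underlying continuous map, and injectivity of flat local maps of finitely generated monoids (Corollaries~\ref{cor:localandflatimpliesfaithfullyflat} and \ref{cor:faithfullyflatimpliesinjective}) for uniqueness. The one organizational difference is in how $g^\dagger$ is built: you construct it stalk-by-stalk and must then verify, as you correctly anticipate, that the maps $g_y^\dagger$ commute with generalization. The paper sidesteps that gluing step entirely by working affine-locally: it takes $U_y = \Spec \M_{Y,y}$ to be the smallest open neighborhood of $y$, chooses an affine open $\Spec P$ containing some $x \in p^{-1}(y)$ inside $p^{-1}(U_y)$, and observes that openness of $p$ forces $p|\Spec P : \Spec P \to U_y$ to be surjective, hence $\M_{Y,y} \to P$ faithfully flat; a single application of the affine statement then produces a monoid homomorphism $\M_Z(\Spec R) \to \M_{Y,y}$, which by the universal property of $\Spec$ is already a morphism of fans $U_y \to \Spec R \subseteq Z$, with no separate compatibility check needed. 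Your extra verification does go through (compose the putative naturality square for $g^\dagger$ with the injective map $p_{x'}^\dagger$ and reduce to the naturality of $f^\dagger$ and $p^\dagger$), so the difference is one of economy rather than correctness.
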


\begin{proof}  The sorts of maps we are considering are stable under base change (Proposition~\ref{prop:flatmaps}) so we reduce to proving the theorem with ``universal" deleted.

The first statement is immediate from the fact that the diagram in Lemma~\ref{lem:Amitsur} is an equalizer diagram.  For the second statement, suppose $f : X \to Z$ is a map of fans such that $f \pi_1 = f \pi_2 : X \times_Y X \to Z$.  We want to show that there is a unique map of fans $g : Y \to Z$ with $f = gp$.  Fibers products of fans commute with passing to underlying topological spaces and $p$ is an open (Corollary~\ref{cor:flatimpliesopen}) surjective map of topological spaces.  Since an open surjective map of topological spaces is an effective descent morphism, there is a unique such $g$ on the level of topological spaces.  We now need to show that there is a unique map $g^\dagger : g^{-1} \M_Z \to \M_Y$ of sheaves of monoids on $Y$ with $f^\dagger = p^\dagger (p^{-1} g^\dagger) : f^{-1} \M_Z \to \M_X$.  There is at most one such $g^\dagger$ because $p$ is surjective and the stalks $p_x : \M_{X,x} \to \M_{Y,f(x)}$ are flat, local maps of finitely generated monoids hence injective by  Corollary~\ref{cor:localandflatimpliesfaithfullyflat}, so we can construct $g^\dagger$ locally near a given point $y \in Y$.  Pick an affine open neighborhood $\Spec R$ of $z := g(y)$ in $Z$.  Let $Q := \M_{Y,y}$ so that $\Spec Q$ is the smallest neighborhood of $y$ in $Y$.  (Here we use that $Y$ is locally finite type so that its topological space is locally finite.)  In particular $\Spec Q$ is contained in $g^{-1}(\Spec R)$.  Since $p$ is surjective we can find some $x \in p^{-1}(y)$.  Pick an affine open neighborhood $\Spec P$ of $x$ in $p^{-1}(\Spec Q)$.  Since $p$ is open, $p(x)=y$, and $\Spec Q$ is the smallest open neighborhood of $y$ in $Y$, $p | \Spec P : \Spec P \to \Spec Q$ is surjective so the corresponding monoid homomorphism $p^\dagger : Q \to P$ (we will continue the slight abuse of notation introduced here) is faithfully flat.  The original equality $f \pi_1 = f \pi_2$ implies that the monoid homomorphism $f^\dagger : R \to P$ (same slight abuse of notation) corresponding to $f | \Spec R : \Spec P \to \Spec R$ yields the same map to $P \oplus_Q P$ upon composing with either natural map $P \rightrightarrows P \oplus_Q P$.  The first part of the theorem therefore implies that there is a (unique) monoid homomorphism $g^\dagger : R \to Q$ with $f^\dagger = p^\dagger g^\dagger$.  (Note that the map on topological spaces $\Spec g^\dagger$ has to agree with our previously constructed $g$ because $p|\Spec P : \Spec P \to \Spec Q$ is surjective.)  Up to the abusive notation, this $g^\dagger$ is our desired map of sheaves of monoids. \end{proof}

For a map of monoids $h : Q \to P$, let $\Mod(Q \to P)$ denote the category of pairs $(N,\phi)$, where $N \in \Mod(P)$ and $\phi : \pi_1^* N \to \pi_2^* N$ is a ``descent datum:" an isomorphism of $(P \oplus_Q P)$-modules satisfying the usual cocycle condition (c.f. \cite[Page 80]{FAG}).  There is a ``tautological descent datum" functor \bne{tautdescent}  \Mod(Q) & \to & \Mod(P) \\ \nonumber M & \mapsto & (M \otimes_Q P, \phi_{\rm taut}) \ene admitting a right adjoint $G$, where $G(N,\phi) \in \Mod(Q)$ is defined by the equalizer diagram of $Q$-modules $$ \xym{ G(N,\phi) \ar[r] & N \ar@<.5ex>[r] \ar@<-.5ex>[r] & \pi_2^* N,} $$ where the parallel arrows are the natural map $N \to \pi_2^* N$ and the composition of the natural map $N \to \pi_1^*N$ and $\phi$.

\begin{thm} \label{thm:flatdescentofmodules} If $h : Q \to P$ is faithfully flat, then \eqref{tautdescent} is an equivalence of categories with inverse $G$.  If $f : X \to Y$ is a flat cover of locally finite type fans, then $f^*$ induces an equivalence of categories between $\Qco(Y)$ and the category $\Qco(X \to Y)$ of quasi-coherent sheaves on $X$ equipped with a descent datum relative to $f$. \end{thm}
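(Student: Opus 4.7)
The plan is to prove the affine statement about monoid modules first, then globalize to quasi-coherent sheaves on fans by Zariski descent. For the affine half I would verify that the unit and the counit of the adjunction between the functor \eqref{tautdescent} and its right adjoint $G$ are both isomorphisms. The unit $M \to G(M \otimes_Q P, \phi_{\rm taut})$ identifies $M$ with the equalizer of the two natural maps $M \otimes_Q P \rightrightarrows M \otimes_Q P \otimes_Q P$, and this is precisely the final assertion of Lemma~\ref{lem:Amitsur} (the equalizer diagram there remains an equalizer after tensoring with the arbitrary $Q$-module $M$), so nothing more is required for the unit.

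For the counit $\epsilon_{(N,\phi)} : G(N,\phi) \otimes_Q P \to N$, Lemma~\ref{lem:Amitsur} reduces checking that $\epsilon$ is an isomorphism to checking it after extension of scalars along $h$, i.e.\ after tensoring with $P$ over $Q$. The strategy here is the standard simplicial ``trivialization'' trick: the second projection $p_2 : P \to P \oplus_Q P$ admits the multiplication $\mu : P \oplus_Q P \to P$ as a retraction, and the cocycle condition on $\phi$ lets one rewrite the base-changed pair $(N \otimes_Q P, \phi \otimes_Q P)$ (a descent datum for $P \to P \oplus_Q P$) as one of the form $(\mu^* M', \phi_{\rm taut})$ for an appropriate $P$-module $M'$, on which the counit is an isomorphism by direct inspection. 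This is the main technical obstacle in the affine case: carrying out the cocycle bookkeeping carefully. The argument is a direct translation of the classical faithfully flat descent proof for modules over rings (cf.\ \cite[Theorem~4.21]{FAG}), with Lemma~\ref{lem:Amitsur} supplying the only algebraic input the translation requires.

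To globalize, I would exploit the fact that a quasi-coherent sheaf on a fan is determined Zariski-locally by its modules of sections over affine opens (\S\ref{section:coherentsheaves}), so the claim that $f^* : \Qco(Y) \to \Qco(X \to Y)$ is an equivalence is local on $Y$. Reducing to $Y = \Spec Q$ affine, I would pick an open affine cover $\{ \Spec P_i \}$ of $X$. Zariski descent on $X$ (combined with the corresponding Zariski gluing on overlaps, which are themselves covered by affine opens within each $\Spec P_i$) packages an object of $\Qco(X \to Y)$ into a family of $P_i$-modules equipped with descent data along the covering family $\{ Q \to P_i \}$ together with compatible gluing isomorphisms; the single-monoid result above descends each such $P_i$-module to a $Q$-module, and the gluing data transport to produce the required object of $\Qco(Y)$. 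The verifications here are entirely bookkeeping and require no new algebra beyond the affine case.
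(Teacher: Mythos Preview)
Your affine argument is correct and coincides with the paper's: both simply transcribe the proof of \cite[4.21]{FAG}, with Lemma~\ref{lem:Amitsur} playing the role of \cite[4.22]{FAG} to handle the unit and to reduce the counit to the base-changed situation where the standard trivialization trick applies.

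The globalization, however, has a gap. You write that ``the single-monoid result above descends each such $P_i$-module to a $Q$-module,'' but the affine statement you proved requires $Q \to P_i$ to be \emph{faithfully} flat, and an arbitrary affine open $\Spec P_i \subseteq f^{-1}(\Spec Q)$ need not surject onto $\Spec Q$. In the ring setting one salvages this by passing to the disjoint union $\coprod_i \Spec P_i$, which is again affine; for fans a disjoint union of affines is \emph{not} affine (Remark~\ref{rem:affinemaps}), so that escape route is blocked and you cannot invoke the single-map result on the family as written.

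The paper's remedy (it says only ``reduce to the local affine situation exactly as in the proof of Theorem~\ref{thm:flatdescent}'') is to exploit Corollary~\ref{cor:flatimpliesopen}: a flat map of locally finite type fans is open. Since $\Spec Q$ is the unique open neighborhood of its closed point $Q^*$ (Proposition~\ref{prop:Spec}\eqref{uniqueclosedpoint}), any affine open $\Spec P \subseteq X$ containing a preimage of $Q^*$ maps \emph{onto} $\Spec Q$, so $Q \to P$ is faithfully flat and the single-monoid result applies directly to this one map. Once you have descended along this $\Spec P \to \Spec Q$, no further gluing over $Y$ is needed. Your outline becomes correct once you insert this openness observation and work with a single such $P$ rather than the full cover $\{P_i\}$.
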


\begin{proof}  The first statement is proved in exactly the same manner as the analogous statement for rings:  The proof of \cite[4.21]{FAG} carries over \emph{mutatis mutandis} by replacing ``$A \to B$" with ``$Q \to P$," \cite[4.22]{FAG} with Lemma~\ref{lem:Amitsur}, and ``kernel of the difference" with ``equalizer."  The geometric variant is proved by reducing to the local affine situation exactly as in the proof of Theorem~\ref{thm:flatdescent}. \end{proof}

Once one has the theory of flat descent for quasi-coherent sheaves (Theorem~\ref{thm:flatdescentofmodules}), the theory of flat descent for ideal sheaves, sections of quasi-coherent sheaves, affine morphisms, projective morphisms, etc.\ follows formally as in \cite[Exp.\ VIII]{SGA1}.

\subsection{Reduced fibers} \label{section:reducedfibers}  Given a ring $A$, a monoid $P$ and an ideal $I$ of $P$, we can consider the $A$-algebra $A[P] / A[I] = A \otimes \ZZ[P]/ \ZZ[I]$.  A basic goal of this section is to characterize---in purely ``monoid theoretic" terms---those $A$, $I$, and $P$ for which this $A$-algebra is \emph{reduced} (i.e.\ any nilpotent element is zero).  A closely related goal---which we shall also pursue---is to give a ``monoid theoretic" classification of the monoid homomorphisms $h : Q \to P$ and the fields $k$ for which $k[h] : k[Q] \to k[P]$ has reduced fibers.  Our mail tool for this purpose is the following variant of ``Gaussian elimination," useful for many questions in the theory of monoids:

\begin{lem} \label{lem:Gaussianelimination} {\bf (Gaussian Elimination)} Fix an integral monoid $P$, elements $p_1,\dots,p_m \in P$, and a submonoid $Q \subseteq P$.  Suppose  \be (A,\ov{q}) & = & \bp a_{11} & a_{12} & \cdots & a_{1m} & | & q_1 \\ a_{21} & a_{22} & \cdots & a_{2m} & | & q_2 \\ \vdots & \vdots & & \vdots & & \vdots \\ a_{m1} & a_{m2} & \cdots & a_{mm} & | & q_m \ep \ee is an ``augmented matrix" with $a_{ij} \in \ZZ$, $q_i \in Q$ satisfying the following conditions: \begin{enumerate} \item For $i \neq j$, $a_{ij}$ is a non-positive integer. \item For each $i$, $\sum_{j=1}^m a_{ij}$ is a positive integer (in particular, each $a_{ii}$ must be positive). \item For each $i$, $ \sum_{j=1}^m a_{ij} p_j = q_i$ in $P^{\rm gp}$. \end{enumerate}  Then there is another augmented matrix $(A',\ov{q}')$ satisfying all the same properties where the off-diagonal entries of $A'$ are zero. \end{lem}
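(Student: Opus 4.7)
The plan is to proceed by induction on $m$, performing a sequence of ``elementary row operations'' with non-negative integer coefficients, each of which zeros out one entry of $A$ while preserving the three hypotheses.

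The basic move is: given $i \neq j$ with $a_{ij} < 0$, replace row $i$ of the augmented matrix by $a_{jj} \cdot (\text{row } i) + (-a_{ij}) \cdot (\text{row } j)$, so that $q_i$ is simultaneously replaced by $a_{jj} q_i + (-a_{ij}) q_j$. The new $(i,j)$-entry is $a_{jj} a_{ij} + (-a_{ij}) a_{jj} = 0$; the other off-diagonal entries of the new row $i$ are $a_{jj} a_{ik} + (-a_{ij}) a_{jk}$, a non-negative integer combination of non-positive integers, hence non-positive; the new row-sum is the positive combination $a_{jj} s_i + (-a_{ij}) s_j > 0$ of the original positive row-sums $s_i, s_j$; the new $q_i$ lies in $Q$ since $Q$ is closed under addition; and the new relation in $P^{\rm gp}$ holds as a $\ZZ$-linear combination of the originals.

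The inductive step (with trivial base case $m=1$) then proceeds in three phases. \textbf{Phase 1:} for each $i < m$, apply the basic move at position $(i,m)$ to zero out the last column above the diagonal; this modifies only rows $1, \dots, m-1$ and leaves row $m$ untouched. \textbf{Phase 2:} the top-left $(m-1) \times (m-1)$ block, together with the updated $q_1, \dots, q_{m-1}$ and the fixed $p_1, \dots, p_{m-1}$, still satisfies the hypotheses of the lemma, so the induction hypothesis yields a diagonal replacement for this block; the row operations involved combine only rows whose $m$-th entries are already zero, so the last column stays zero above $(m,m)$. \textbf{Phase 3:} for $j = 1, \dots, m-1$ in turn, apply the basic move at $(m,j)$ to zero out the last row's off-diagonal entries; since each row $j$ is now diagonal, such a move multiplies the other entries of row $m$ by the positive integer $a_{jj}$, so previously zeroed entries of row $m$ persist, non-positive entries stay non-positive, and the row-sum stays positive.

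The main ``obstacle'' is purely bookkeeping: verifying at each step that non-negative integer combinations of rows preserve the three hypotheses, and arranging the order of operations so zeros created earlier are not destroyed later. The key structural observation is that $Q$ is closed under addition and that non-negative integer combinations of non-positive integers are non-positive; combined with the strict positivity of the diagonal entries and row-sums, this drives each preservation check. A cleaner non-inductive alternative is to observe that the hypotheses make $A$ a non-singular $M$-matrix, so $\mathrm{adj}(A)$ has non-negative integer entries and $\det(A) > 0$; left-multiplying the augmented matrix by $\mathrm{adj}(A)$ then produces $A' = \det(A) \cdot I$ and $\overline{q}' = \mathrm{adj}(A) \cdot \overline{q}$ in one stroke, with each component of $\overline{q}'$ a non-negative integer combination of the original $q_j$'s, hence in $Q$.
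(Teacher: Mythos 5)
Your main argument is correct and is essentially the paper's proof: the same basic row operation $a_{jj}\,\row i + (-a_{ij})\,\row j$ with the same three preservation checks (positivity of $a_{jj}$ and non-negativity of $-a_{ij}$ preserve the row-sum and relation conditions; non-negative combinations of non-positive entries stay non-positive; the $(i,j)$ entry becomes $a_{jj}a_{ij}-a_{ij}a_{jj}=0$). The paper merely organizes the elimination as a single column-by-column sweep (clearing all of column $c$, above and below the diagonal, before incrementing $c$) rather than your recursive three-phase scheme; both orderings work, and your check that earlier zeros persist is the same observation the paper makes. Your closing remark is a genuinely different route: the hypotheses say exactly that $A$ has non-positive off-diagonal entries and $A\mathbf{1}>0$, i.e.\ $A$ is a non-singular $M$-matrix, so $\det A>0$ and $\mathrm{adj}(A)$ is entrywise non-negative with integer entries, and left-multiplying the augmented matrix by $\mathrm{adj}(A)$ gives $A'=\det(A)\,I$ and $\ov{q}'=\mathrm{adj}(A)\,\ov{q}$ in one stroke. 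That is slicker and avoids all bookkeeping, but it imports a nontrivial fact from the theory of $M$-matrices, whereas the elimination argument is self-contained; the paper opts for the latter.
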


\begin{proof} We perform ``Gaussian elimination."  The point is that matrices satisfying the three conditions are invariant under the following operation: For any $i \neq j$, we can replace row $i$ with $$ a_{jj} \row i - a_{ij} \row j.$$  Indeed, $a_{jj} >0 $ and $-a_{ij} \geq 0$, so this clearly preserves the last two conditions.  For $k \neq i,j$, the new entry in row $i$ column $k$ will be $$a_{jj} a_{ik} - a_{ij} a_{jk},$$ which is $\leq 0$ because $$a_{jj} > 0, \; a_{ik} \leq 0, \; a_{jk} \leq 0.$$ The key point is that the new entry in row $i$ column $j$ is \be a_{jj}a_{ij}-a_{ij}a_{jj} & = & 0. \ee  Now we perform the following algorithm: \begin{enumerate} \item Set $c := 1$ and proceed to the next step.  \item Assume at this step that we are given a matrix satisfying the three properties above, together with the property: For any $j < c$, the only nonzero entry in column $j$ is $a_{jj}$.  Then, for each $i \neq c$, replace row $i$ with $$ a_{cc} \row i - a_{ic} \row c.$$  The resulting matrix still satisfies the three properties above, and now, for any $j < c+1$, the only nonzero entry in column $j$ is $a_{jj}$.  \item If $c=m$, stop, otherwise increase $c$ by one and return to Step 2. \end{enumerate} This algorithm clearly terminates after finitely-many steps to yield the desired $(A',\ov{q}')$. \end{proof}

Although it has nothing to do with ``reduced fibers," it seems appropriate to mention the following ``application" of Gaussian Elimination:

\begin{thm} \label{thm:saturationequalsintegralclosure} Let $Q$ be a submonoid of an integral monoid $P$.  Let \be L & := & \{ p \in P : \exists \; n >0 \; {\rm such \; that \;} np \in Q \} \ee be the saturation of $Q$ in $P$.  Then $\ZZ[L] \subseteq \ZZ[P]$ is the integral closure of $\ZZ[Q]$ in $\ZZ[P]$.  \end{thm}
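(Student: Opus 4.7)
The containment $\ZZ[L] \subseteq \overline{\ZZ[Q]}$ (integral closure in $\ZZ[P]$) is immediate: for $p \in L$ with $np \in Q$, the element $[p]$ satisfies $X^n - [np] \in \ZZ[Q][X]$, and integral elements form a subring. The content is the reverse containment, for which I would take $f \in \ZZ[P]$ integral over $\ZZ[Q]$ with monic equation $f^N + a_{N-1} f^{N-1} + \cdots + a_0 = 0$ and show $\text{supp}(f) \subseteq L$.

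First I would reduce to the case where $P$ and $Q$ are fine by replacing them with the submonoids generated by $\text{supp}(f) \cup \bigcup_i \text{supp}(a_i)$ and $\bigcup_i \text{supp}(a_i)$ respectively; the integral equation survives, and the new saturation is contained in $L$. Next I would reduce to the case where $P^{\rm gp}$ is torsion-free. For this, fix a splitting $P^{\rm gp} = (P^{\rm gp}/P^{\rm gp}_{\rm tor}) \oplus T$ with $T$ the torsion subgroup, write $p \mapsto (\pi(p), \phi(p))$ for the projections, and for each character $\chi : T \to \bar\QQ^\times$ consider the ring homomorphism $ev_\chi : \bar\QQ[P^{\rm gp}] \to \bar\QQ[P^{\rm gp}/T]$, $[p] \mapsto \chi(\phi(p))[\pi(p)]$. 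Applying $ev_\chi$ to the integral equation reduces the problem, character-by-character, to the torsion-free case. For $v \in \pi(P) \setminus \pi(L)$, the vanishing of the $[v]$-coefficient of $ev_\chi(f)$ for every $\chi$ combines with character orthogonality on $T$ to kill every $c_{(v,t)}$; together with the identity $\pi^{-1}(\pi(L)) \cap P = L$ (which follows from $|T| \cdot t = 0$ for $t \in T$), this yields $\text{supp}(f) \subseteq L$.

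For the torsion-free case (with $P, Q$ fine and $P^{\rm gp}$ torsion-free) I would bring in convex geometry. By Theorem~\ref{thm:monoidsandcones} and Farkas' theorem (Theorem~\ref{thm:cones}), $\sigma(Q) \subseteq P^{\rm gp}_\RR$ is a rational polyhedral cone, and $L = P \cap \sigma(Q) = \{p \in P : u_j(p) \geq 0, \; j = 1, \dots, s\}$ where $u_1, \ldots, u_s$ are integer-valued generators of $\sigma(Q)^\vee$. Fix one such $u$ and equip the Laurent-polynomial domain $\ZZ[P^{\rm gp}]$ with the $\ZZ$-grading $\deg[p] := u(p)$; then $\ZZ[Q]$ lies in nonnegative degrees. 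Assume for contradiction that $f$ has a nonzero graded component $f_{d_0}$ of minimal degree $d_0 < 0$. Since $\ZZ[P^{\rm gp}]$ is a domain, $f_{d_0}^N \neq 0$ is the lowest graded component of $f^N$, at degree $N d_0$. For $i < N$ the summand $a_i f^i$ has lowest degree at least $0 + i d_0 > N d_0$ (because $d_0 < 0$), so the right-hand side of $f^N = -\sum_{i<N} a_i f^i$ contributes nothing at degree $N d_0$, a contradiction. Running this for each $u_j$ gives $\text{supp}(f) \subseteq L$.

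The main obstacle is finding the right incarnation of the leading-term argument. A naive try, picking a total order on $P^{\rm gp}$ and examining the largest element of $\text{supp}(f^N)$, runs into cancellation issues: the monomial $[Np^*]$ can also appear in $a_i f^i$ through paths $q + \sum b_j p_j = Np^*$ that do not directly exhibit $(N-i)p^*$ as an element of $Q$, and leading coefficients can collude to cancel. Replacing the single total order by the $\ZZ$-grading coming from a supporting functional $u$ of $\sigma(Q)$, and instead looking at the \emph{lowest} $u$-degree, avoids this entirely: because $\ZZ[Q]$ is concentrated in nonnegative degrees and $\ZZ[P^{\rm gp}]$ is a domain, the lowest component of each $a_i f^i$ ($i < N$) is strictly above that of $f^N$, leaving no room for any cancellation.
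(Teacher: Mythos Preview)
Your argument is correct and takes a genuinely different route from the paper. The paper works directly, without reductions: for each $p_i \in \operatorname{supp}(f)$ it extracts, from the vanishing of the coefficient of $[np_i]$ in the integral equation, an expression $np_i = \sum_j B_{ij} p_j + q_i$ with $\sum_j B_{ij} < n$ and $q_i \in Q$, assembles these into an integer matrix with nonpositive off-diagonal entries and positive row sums, and applies the Gaussian Elimination lemma (Lemma~\ref{lem:Gaussianelimination}) to diagonalize and conclude $a_i' p_i \in Q$ with each $a_i' > 0$. You instead reduce to the fine, torsion-free case (via finitely generated submonoids and then character decomposition over $\overline{\QQ}$), after which the supporting functionals of $\sigma(Q)$ furnish $\ZZ$-gradings in which the coefficient ring sits in nonnegative degree, and a lowest-term argument finishes. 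The paper's approach is more self-contained---pure linear algebra, no cones or characters---while yours trades those reductions for a cleaner endgame: once the grading is in place the Newton-polygon argument is immediate, and your closing paragraph correctly pinpoints why grading by a supporting functional of $\sigma(Q)$, rather than a total order on $P^{\rm gp}$, sidesteps the cancellation issue. One small point worth making explicit is that the torsion-free case is actually invoked over $\overline{\QQ}$ after the character step, but your grading argument works verbatim over any integral domain, so this causes no trouble.
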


\begin{proof} For $l \in L$, we have $nl = q$ for some $n \in \ZZ_{>0}$, $q \in Q$, so $[l]^n = [q]$ in $\ZZ[P]$, hence $[l] \in \ZZ[P]$ is a root of the monic polynomial $X^n - [q] \in \ZZ[Q][X]$, so $[l]$ is integral over $\ZZ[Q]$, hence $\ZZ[L]$ is integral over $\ZZ[Q]$ because the set of integral elements always forms a subring.  We have proved that the integral closure of $\ZZ[Q]$ in $\ZZ[P]$ at least \emph{contains} $\ZZ[L]$.

Showing that the integral closure of $\ZZ[Q]$ in $\ZZ[P]$ is contained in $\ZZ[L]$ is more difficult.  Consider a typical element $f = \sum_{i=1}^m a_i [p_i]$ of $\ZZ[P]$.  We can assume each integer $a_i$ is non-zero.  Suppose $f$ is integral over $\ZZ[Q]$, so $f$ satisfies \bne{integralcondition} g_0 + g_1 f + \cdots + g_{n-1}f^{n-1} + f^n & = & 0 \ene in $\ZZ[P]$ for some $g_i \in \ZZ[Q].$  We want to prove $f \in \ZZ[L]$.  I.e., we want to show that each $p_i$ is in $L$.  Think about the coefficient of $[np_i]$ in \eqref{integralcondition}.  There is an obvious (nonzero!) contribution of $a_i^n$ to this coefficient coming from the $f^n$ term.  The other contributions to this coefficient occur only when we can write \bne{piexp} np_i & = & B_{i1}p_1+B_{i2}p_2+ \cdots + B_{im}p_m + q_i \ene for some $B_{ij} \in \NN$ with $\sum_{j=1}^m B_{ij} < n$ and some $q_i \in Q$.  In particular, in order for \eqref{integralcondition} to hold, we must have at least one expression of the form \eqref{piexp} for each $i \in \{ 1, \dots, m \}$.  I claim that this alone will be enough to conclude that each $p_i$ is in $L$.  For $i \neq j$, set $a_{ij} := -B_{ij}$, and set $a_{ii} := n - B_{ii}$.  Set $A := (a_{ij})$, $\ov{q} := (q_1,\dots,q_m)$ (transpose).  The equations \eqref{piexp} imply that the ``augmented matrix" $(A,\ov{q})$ satisfies the conditions in Lemma~\ref{lem:Gaussianelimination}.  By that lemma, we can find another augmented matrix $(A',\ov{q}')$ satisfying the same conditions, but with $A'$ diagonal.  Then the diagonal entries $a_1',\dots,a_m'$ of $A'$ are positive integers and we have $a_i p_i = q_i'$ for $i=1,\dots,m$, hence each $p_i \in L$, as desired. \end{proof}

The next step towards our goal is to handle the case of \emph{groups}:

\begin{lem} \label{lem:reducedfibers}  Let $B$ be an abelian group, $p$ a ``prime," possibly zero, $\FF_p$ the prime field of characteristic $p$ (i.e.\ $\FF_0 = \QQ$, $\FF_p = \ZZ / p \ZZ$ when $p \in \{ 2,3,5, \dots \}$).  The following are equivalent: \begin{enumerate} \item \label{rf1} For any reduced $\FF_p$-algebra $A$, the ring $A \otimes_{\FF_p} \FF_p[B] = A[B]$ is reduced. \item \label{rf2} There exists a non-zero $\FF_p$-algebra $A$ for which the ring $A[B]$ is reduced. \item \label{rf3} $B$ does not contain an element of order $p$. \end{enumerate}  In particular, these equivalent conditions hold when $B$ is torsion-free or $p=0$. \end{lem}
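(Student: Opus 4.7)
The implications $(1) \Rightarrow (2) \Rightarrow (3)$ are the easy directions and should be dispatched first. For $(1) \Rightarrow (2)$, take $A = \FF_p$ itself. For $(2) \Rightarrow (3)$, the only content is in the case $p > 0$ (the condition is vacuous for $p = 0$): if $b \in B$ has order $p$, then in $A[B]$ we compute $([b]-1)^p = [b]^p - 1 = [pb] - 1 = 0$ using the Frobenius on the $\FF_p$-algebra $A[B]$, and $[b]-1$ is nonzero because $\{[g] : g \in B\}$ is an $A$-module basis of $A[B]$ and $b \neq 0$ in $B$ (its order is $p \neq 1$), so $A[B]$ contains a nonzero nilpotent.

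The substance is $(3) \Rightarrow (1)$. My first step will be to reduce to the case that $B$ is finitely generated: a hypothetical nilpotent $f \in A[B]$ lies in $A[B']$ for the finitely generated subgroup $B' \subseteq B$ generated by the support of $f$, and $B'$ inherits the no-$p$-torsion hypothesis from $B$; equivalently, $A[B] = \varinjlim_{B' \subseteq B} A[B']$ is a filtered colimit of the $A[B']$, and a filtered colimit of reduced rings is reduced. So assume $B$ is finitely generated.

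By the structure theorem, $B \cong \ZZ^r \oplus T$ with $T$ a finite abelian group; the no-$p$-torsion hypothesis (when $p > 0$) says $|T|$ is coprime to $p$, while for $p = 0$ no constraint is imposed. Then
\[
A[B] = A[T][x_1^{\pm 1}, \dots, x_r^{\pm 1}],
\]
and since localization and polynomial extension both preserve reducedness over a reduced base ring, it suffices to show $A[T]$ is reduced whenever $A$ is a reduced $\FF_p$-algebra. Writing $T \cong \bigoplus_{i=1}^s \ZZ/n_i\ZZ$ with each $n_i$ coprime to $p$ (or $p = 0$), we get $A[T] \cong A[x_1, \dots, x_s]/(x_1^{n_1} - 1, \dots, x_s^{n_s} - 1)$, which is a tensor product over $A$ of the rings $A[x]/(x^{n_i}-1)$. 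Since each $n_i$ is invertible in $\FF_p$ (and hence in $A$), the polynomial $x^{n_i}-1$ and its derivative $n_i x^{n_i - 1}$ generate the unit ideal in $A[x]$, so $A \to A[x]/(x^{n_i}-1)$ is a finite étale ring map. A finite étale extension of a reduced ring is reduced (e.g.\ because it is a finite product of reduced rings after any geometric base change, or by direct computation in the generically separable case), and tensor products over $A$ of étale $A$-algebras are étale, so $A[T]$ is étale over $A$ and in particular reduced.

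The main obstacle I expect is the last clean statement---that a finite étale extension of a reduced ring is reduced---since it is the one nontrivial commutative-algebra input; if one wishes to avoid invoking the general theory of étale extensions, it can be replaced by the elementary observation that $x^{n_i}-1$ is a separable polynomial over the reduced ring $A$ (with $n_i$ a unit), so $A[x]/(x^{n_i}-1)$ embeds into a product of localizations of polynomial rings over $A$ after enlarging $A$ to contain the relevant roots of unity. Everything else is routine manipulation with the monoid algebra functor.
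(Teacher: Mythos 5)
Your proof is correct. The easy implications and the reduction to finitely generated $B$ match the paper exactly (the paper likewise observes that a nilpotent is supported on a finitely generated subgroup, and proves $(2)\Rightarrow(3)$ with the same Frobenius computation $([b]-[0])^p=0$). Where you diverge is in the crux of $(3)\Rightarrow(1)$: the paper first base-changes along $\FF_p \into \ov{\FF}_p$, noting that $\ov{A} := A \otimes_{\FF_p}\ov{\FF}_p$ stays reduced because the prime field is perfect and that $A[B] \into \ov{A}[B]$ by faithful flatness; over $\ov{\FF}_p$ each factor $\ov{\FF}_p[\ZZ/n_i\ZZ]$ literally splits as $\ov{\FF}_p^{\,n_i}$, so $\ov{A}[B]$ becomes a finite product of Laurent polynomial rings over a reduced ring, which is visibly reduced. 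You instead stay over $A$, observe that $A \to A[x]/(x^{n_i}-1)$ is finite \'etale because $x^{n_i}-1$ and its derivative generate the unit ideal, and invoke the facts that tensor products of \'etale $A$-algebras are \'etale and that reducedness ascends along \'etale (indeed smooth) ring maps. Both arguments are valid; yours is shorter and avoids the base-change bookkeeping at the cost of importing the nontrivial commutative-algebra input that \'etale extensions of reduced rings are reduced, whereas the paper's version is self-contained, needing only that a product of Laurent polynomial rings over a reduced ring is reduced. Your proposed elementary fallback (adjoining roots of unity to $A$) is, in effect, exactly the paper's route, so the two proofs converge if one declines the \'etale machinery.
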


\begin{proof} Obviously \eqref{rf1} implies \eqref{rf2} and obviously \eqref{rf2} implies \eqref{rf3} when $p=0$ since \eqref{rf3} holds automatically in that case.  To see that \eqref{rf2} implies \eqref{rf3} when $p>0$, suppose $b \in B$ has order $p$.  Then, since $A$ is an $\FF_p$-algebra we have $p=0$ in $A$, hence $[b]-[0] = [b]-1$ will nilpotent in $A[B]$ (and non-zero when $A \neq 0$) because $$([b]-[0])^p = [b]^p - [0]^p = [pb]-[p0]=[0]-[0]=0.$$  To see that \eqref{rf3} implies \eqref{rf1}, first note that we can reduce to the case where $B$ is finitely generated because the hypothesis on $B$ in \eqref{rf3} is inherited by any subgroup of $B$ and because any nilpotent $f=\sum_{b \in B} a_b [b]$ in $A[B]$ will be in the subring $A[B']$, where $B'$ is the subgroup of $B$ generated by the finitely many $b \in B$ for which $a_b \neq 0$.  Suppose $A$ is a reduced $\FF_p$-algebra.  Let $\ov{\FF}_p$ be an algebraic closure of $\FF_p$, $\ov{A} := A \otimes_{\FF_p} \ov{\FF}_p$.  The ring $\ov{A}$ is reduced because $A$ is reduced and the prime field $\FF_p$ is perfect.  Next notice that it suffices to show that $$A[B] \otimes_{\FF_p} \ov{\FF}_p = \ov{A} \otimes_{\ov{\FF}_p} \ov{\FF}_p[B] = \ov{A}[B]$$ is reduced, since this ring contains $A[B]$ as a subring because $\FF_p \into \ov{\FF}_p$ is faithfully flat.    By the classification of finitely generated abelian groups, any such group $B$ is of the form $$ \ZZ^r \oplus \ZZ/n_1 \ZZ \oplus \cdots \oplus \ZZ / n_m \ZZ $$ for some $r \in \NN$ and some positive integers $n_1, \dots, n_m$.  The hypothesis on $B$ in \eqref{rf3} ensures that $p$ does not divide any $n_i$, hence $n_1,\dots,n_m \in \ov{\FF}_p^*$.  Since $\ov{\FF}_p$ is algebraically closed, we have \be \ov{\FF}_p[\ZZ/n_i \ZZ] & = & \ov{\FF}_p[x]/(x^{n_i}-1) \\ & \cong & \ov{\FF}_p^{n_i} \ee (product of $n_i$ copies of $\ov{\FF}_p$) because $x^{n_i}-1$ splits and has no repeated roots in $\ov{\FF}_p$ since $x^{n_i}-1$ and its derivative $n_ix^{n_i-1}$ generate the unit ideal in $\ov{\FF}_p[x]$.  The ring $\ov{\FF}_p[B]$ is hence a finite product of rings of the form $\ov{\FF}_p[\ZZ^r] = \ov{\FF}_p[x_1^{\pm 1},\dots,x_r^{\pm 1}]$ and hence $\ov{A}[B]$ is a finite product of rings of the form $\ov{A}[x_1^{\pm 1},\dots,x_r^{\pm 1}]$.  Such a ring is clearly reduced because $\ov{A}$ is reduced.  \end{proof}

\begin{defn} \label{defn:saturatedideal} Let $P$ be a monoid.  An ideal $I \subseteq P$ (\S\ref{section:modules}) is called \emph{saturated} iff for all $p \in P$, if $np \in I$ for a positive integer $n$, then $p \in I$.  A map $h : Q \to P$ of monoids is called \emph{reduced} iff the ideal \be I(h) & := & h(Q \setminus Q^*)+P \ee of $P$ generated by $h(Q \setminus Q^*)$ is saturated.  (If no confusion can result, we often write $I$ for $I(h)$.) \end{defn}

\begin{lem} \label{lem:reduced} An ideal $I$ of a monoid $P$ is saturated iff the image $\ov{I}$ of $I$ in $\ov{P}$ is saturated.  A monoid homomorphism $h : Q \to P$ is reduced iff $\ov{h} : \ov{Q} \to \ov{P}$ is reduced. \end{lem}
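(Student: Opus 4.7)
The plan is to reduce the second statement to the first, then prove the first by exploiting a bijection between ideals of $P$ and ideals of $\ov{P}$ induced by the sharpening map.

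For the first part, I will use the sharpening projection $\pi : P \to \ov{P} = P/P^*$. The key observation is that for any ideal $I \subseteq P$, one has $\pi^{-1}(\pi(I)) = I$. Indeed, $p \sim p'$ in the equivalence relation defining $\ov{P}$ iff $p = p' + u$ for some $u \in P^*$, so $\pi^{-1}(\pi(I)) = I + P^*$, which equals $I$ because $I$ is an ideal and $P^* \subseteq P$. Thus $\pi$ gives a bijective correspondence between ideals of $P$ and ideals of $\ov{P}$, respecting the containment $p \in I$.

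Granting this, the equivalence ``$I$ saturated iff $\ov{I}$ saturated" is immediate. If $\ov{I}$ is saturated and $np \in I$, then $n\pi(p) = \pi(np) \in \ov{I}$, hence $\pi(p) \in \ov{I}$, hence $p \in \pi^{-1}(\ov{I}) = I$. Conversely, if $I$ is saturated and $n\bar{p} \in \ov{I}$, lift $\bar{p}$ to $p \in P$; then $\pi(np) = n\bar{p} \in \ov{I}$, so $np \in \pi^{-1}(\ov{I}) = I$, whence $p \in I$ by saturation, and so $\bar{p} = \pi(p) \in \ov{I}$.

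For the second part, I will show that the image of $I(h)$ in $\ov{P}$ is exactly $I(\ov{h})$, so that the result follows from the first part. Let $\pi_Q : Q \to \ov{Q}$ and $\pi_P : P \to \ov{P}$ denote the sharpening maps, so that $\ov{h} \circ \pi_Q = \pi_P \circ h$. Since $\ov{Q}$ is sharp, $\ov{Q} \setminus \ov{Q}^* = \ov{Q} \setminus \{0\} = \pi_Q(Q \setminus Q^*)$, and therefore $\ov{h}(\ov{Q} \setminus \ov{Q}^*) = \pi_P(h(Q \setminus Q^*))$. Using surjectivity of $\pi_P$, the image of $I(h) = h(Q \setminus Q^*) + P$ under $\pi_P$ is $\pi_P(h(Q \setminus Q^*)) + \pi_P(P) = \ov{h}(\ov{Q} \setminus \ov{Q}^*) + \ov{P} = I(\ov{h})$, completing the reduction.

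No step is really an obstacle here; the whole argument rests on the single observation that ideals of $P$ are automatically ``saturated under $P^*$" and hence pull back from ideals of $\ov{P}$. The only thing to keep straight is the precise definition of the equivalence relation defining $\ov{P}$, which is why the identity $\pi^{-1}(\pi(I)) = I$ holds for ideals (but would fail for arbitrary subsets).
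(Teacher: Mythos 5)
Your proof is correct; the paper simply leaves this lemma as an exercise, and your argument is the natural one it has in mind. The key identity $\pi^{-1}(\pi(I))=I$ for ideals (so that ideals of $P$ correspond bijectively to ideals of $\ov{P}$) together with the computation $\pi_P(I(h))=I(\ov{h})$ is exactly the intended content, and both steps are carried out correctly.
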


\begin{proof} Exercise. \end{proof}

\begin{lem} \label{lem:toricreduced} Suppose $h : Q \into P$ is an injective, dense, reduced, and local map of sharp, fine monoids.  Then $h$ is an isomorphism. \end{lem}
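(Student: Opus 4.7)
The plan is to prove surjectivity of $h$ by induction on a suitable ``height function'' on $P$. Since $P$ is fine and sharp, Lemma~\ref{lem:duality} (applied with the face $F = P^* = \{0\}$) produces a monoid homomorphism $\ell : P \to \NN$ with $\ell^{-1}(0) = \{0\}$. I will show by induction on $\ell(p)$ that every $p \in P$ lies in $h(Q)$. The base case $\ell(p) = 0$ is immediate: then $p = 0 = h(0)$.

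For the inductive step, assume $\ell(p) > 0$ and every $p' \in P$ with $\ell(p') < \ell(p)$ lies in $h(Q)$. Density of $h$ yields $n \in \ZZ_{>0}$ and $q \in Q$ with $np = h(q)$. The key observation is that $q \neq 0$: indeed, $q=0$ would force $np = 0$, and writing $p + (n-1)p = 0$ in the integral monoid $P$ would force $p \in P^* = \{0\}$ by sharpness, contradicting $\ell(p) > 0$. Since $Q$ is sharp, $q \in Q \setminus \{0\} = Q \setminus Q^*$, so $np = h(q) \in I(h) = h(Q \setminus Q^*) + P$. Now the hypothesis that $h$ is reduced says precisely that $I(h)$ is saturated; hence $np \in I(h)$ implies $p \in I(h)$. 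Write $p = h(q') + p'$ with $q' \in Q \setminus Q^* = Q \setminus \{0\}$ and $p' \in P$. Applying $\ell$ gives $\ell(p) = \ell(h(q')) + \ell(p')$, and $h(q') \neq 0$ by injectivity (since $q' \neq 0$), so $\ell(h(q')) > 0$ and therefore $\ell(p') < \ell(p)$. The inductive hypothesis gives $p' = h(q'')$ for some $q'' \in Q$, whence $p = h(q' + q'') \in h(Q)$, completing the induction.

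There is no serious obstacle here; the argument is just a careful bookkeeping of how each hypothesis enters. The height function from Lemma~\ref{lem:duality} (which uses fineness and sharpness of $P$) converts the problem into a finite induction; density provides an initial equation $np = h(q)$ with $q \neq 0$ (this step uses sharpness of $P$ and its integrality to rule out $q = 0$); sharpness of $Q$ ensures $h(q)$ lies in the generating set of $I(h)$; and the ``reduced'' hypothesis is used exactly once, to descend from $np \in I(h)$ to $p \in I(h)$. The locality hypothesis is in fact redundant under the other assumptions, since sharpness of both monoids reduces locality to $h^{-1}(0) = \{0\}$, which already follows from injectivity.
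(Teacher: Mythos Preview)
Your proof is correct and takes essentially the same approach as the paper: both use the height function from Lemma~\ref{lem:duality} and the saturation of $I(h)$ to descend, with the only cosmetic difference being that you phrase it as an induction on $\ell(p)$ while the paper argues by choosing a minimal counterexample. Your observation that locality is redundant (since injectivity and sharpness already force $h^{-1}(0)=\{0\}$) is also correct.
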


\begin{proof}  We need to show that $h$ is surjective.  Suppose not.  By Lemma~\ref{lem:duality} we can find a local map of monoids $f : P \to \NN$.  Among all $p \in P \setminus Q$, choose one with $f(p)$ minimal.  Note that $f(p) > 0$ for any such $p$ because $p$ cannot be in $P^* = \{ 0 \}$ and $f$ is local.  By density, $np \in Q \setminus Q^*$ for some $n \in \ZZ_{>0}$, so $np \in I(h)$.  If we could write $p = q + p'$ for some $q \in Q \setminus Q^*$ and some $p' \in P$, then we would have $p' \in P \setminus Q$ and $f(q) > 0$ by locality of $f$ and $h$, hence $f(p') < f(p)$ would contradict our choice of $p$.  But then $p \notin I(h)$, contradicting the assumption that $h$ is reduced. \end{proof}

\begin{thm} \label{thm:reducedfibers} Let $P$ be an integral monoid, $I \neq P$ a non-unit ideal of $P$.  Let $p$, $\FF_p$ be as in Lemma~\ref{lem:reducedfibers}.  The following are equivalent: \begin{enumerate} \item \label{reducedfibers1} For any reduced $\FF_p$-algebra $A$, the ring $A \otimes_{\FF_p} \FF_p[P]/\FF_p[I] = A[P]/A[I]$ is reduced. \item \label{reducedfibers2} There exists a non-zero $\FF_p$-algebra $A$ such that $A[P]/A[I]$ is reduced. \item \label{reducedfibers3} $I$ is saturated in $P$ and $P^*$ does not contain an element of order $p$. \end{enumerate}  \end{thm}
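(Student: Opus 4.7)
Proof sketch. I would treat $(1) \Rightarrow (2) \Rightarrow (3)$ directly and then attack $(3) \Rightarrow (1)$ by a chain of reductions. For $(1) \Rightarrow (2)$, taking $A = \FF_p$ suffices. For $(2) \Rightarrow (3)$, I would exhibit explicit nonzero nilpotents whenever (3) fails: if $I$ is not saturated, say $np \in I$ with $p \in P \setminus I$ and $n > 0$, then $[p]^n = [np] \in A[I]$ while $[p] \notin A[I]$; if $u \in P^*$ has order $p > 0$, then $([u] - 1)^p = [pu] - 1 = 0$ in characteristic $p$, and $[u] - 1$ is nonzero modulo $A[I]$ since neither $u$ nor $0$ lies in the proper ideal $I$.

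For $(3) \Rightarrow (1)$, the plan is a sequence of reductions. First, reduce to $P$ fine: any nilpotent in $A[P]/A[I]$ involves only finitely many elements, hence descends to a nilpotent of $A[Q]/A[J]$ for a fine submonoid $Q \subseteq P$ with $J := Q \cap I$; saturation of $J$ in $Q$ is inherited from that of $I$ in $P$, and $Q^*$ embeds in $P^*$, preserving the $p$-torsion hypothesis. Second, reduce to $I$ prime: when $P$ is fine, $\Spec P$ is finite (Corollary~\ref{cor:faces}), and a saturated ideal is a finite intersection $I = \bigcap_i \p_i$ of primes containing it — given $p \in P \setminus I$, Zorn's lemma applied to ideals $\supseteq I$ disjoint from $\{np : n \geq 0\}$ produces a maximal such ideal $\p_i$, and maximality combined with a standard argument on $y, z \notin \p_i$ with $y + z \in \p_i$ forces $\p_i$ to be prime. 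The identity $A[\bigcap_i \p_i] = \bigcap_i A[\p_i]$ then yields an injection $A[P]/A[I] \hookrightarrow \prod_i A[P]/A[\p_i] = \prod_i A[F_i]$ with $F_i := P \setminus \p_i$ a face satisfying $F_i^* = P^*$; so it suffices to show $A[F]$ is reduced for every fine monoid $F$ whose units lack $p$-torsion.

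Third, I would handle the fine-monoid case via the splitting theorem of \S\ref{section:splittings}. By Theorem~\ref{thm:splittingmonoids} one embeds $F$ in a split fine monoid $F'$ which is free as an $F$-module and for which $(F')^*$ has the same prime divisors of torsion as $F^*$ — so $(F')^*$ also lacks $p$-torsion. Freeness yields the inclusion $A[F] \hookrightarrow A[F']$, and splitness factors $A[F'] = A[(F')^*] \otimes_A A[\ov{F'}]$. Lemma~\ref{lem:reducedfibers} takes care of the group factor $A[(F')^*]$, leaving the reducedness of $A[\ov{F'}]$ for the sharp fine monoid $\ov{F'}$.

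The sharp case is where the hard part will be. Here I plan to combine Lemma~\ref{lem:Gaussianelimination} with Lemma~\ref{lem:toricreduced}: given a putative nonzero nilpotent $f = \sum_j a_j [p_j]$ in $A[\ov{F'}]$ with $f^N = 0$ and all $a_j \neq 0$, expanding $f^N$ and equating the coefficient of each monomial to zero should produce a system of monoid relations $\sum_j a_{ij} p_j = q_i$ meeting the sign, diagonal-dominance, and positive-row-sum hypotheses of Lemma~\ref{lem:Gaussianelimination}. Diagonalising via Gaussian elimination should furnish an injective, dense, local, reduced morphism between auxiliary sharp fine monoids, which Lemma~\ref{lem:toricreduced} forces to be an isomorphism; unwinding this isomorphism should force each $a_j = 0$. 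The delicate point — and the principal obstacle — will be ensuring that the $p$-torsion-freeness of $(F')^*$ does enough work in the sharp piece $\ov{F'}$, whose own groupification may have $p$-torsion, to push the combinatorial argument through cleanly and to verify that the reduction hypotheses survive each step of the chain.
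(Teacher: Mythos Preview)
Your overall architecture---reduce to $P$ fine, embed in a split fine monoid via Theorem~\ref{thm:splittingmonoids}, factor off the group of units using Lemma~\ref{lem:reducedfibers}, and treat the remaining sharp piece by a Gaussian-elimination argument---mirrors the paper's proof (its Steps~1, 4, 3, 2). Your additional reduction to $I$ prime is a genuine simplification the paper does not make: writing a saturated ideal in a fine monoid as a finite intersection of primes $\p_i$ and embedding $A[P]/A[I]\hookrightarrow\prod_i A[F_i]$ (with $F_i=P\setminus\p_i$ a face, $F_i^*=P^*$) lets you take $I=\emptyset$ thereafter, so you avoid the paper's Step~4 verification that the ideal generated by $I$ in the split extension $P'$ stays saturated.

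The sharp case, however, is a real gap---and not only in your sketch. Take $P$ to be the submonoid of $\ZZ\oplus\ZZ/2\ZZ$ generated by $a=(1,0)$ and $b=(1,1)$; this $P$ is fine, integral, and sharp, so $P^*=0$ certainly has no $2$-torsion, and $I=\emptyset$ is saturated, so condition~\eqref{reducedfibers3} holds for $p=2$. But $2a=2b$ in $P$, hence in $\FF_2[P]$ one has $([a]+[b])^2=[2a]+[2b]=2[2a]=0$ while $[a]+[b]\neq 0$. Thus $\FF_2[P]$ is not reduced and $(3)\Rightarrow(1)$ fails. The trouble is exactly where you feared: from $f^n=0$, the vanishing of the coefficient of $[np_i]$ only yields a tuple $(k_j)\neq ne_i$ with $\sum_j k_j=n$ and $\sum_j k_jp_j=np_i$; setting $a_{ii}=n-k_i$, $a_{ij}=-k_j$ gives row sum $n-\sum_j k_j=0$, not the strictly positive row sum Lemma~\ref{lem:Gaussianelimination} demands. (In the example, $n=2$, $2p_1=2p_2$, $n_{12}=2=n$.) The paper's Step~2 asserts $\sum_{j\neq i}n_{ij}<n$, but this is not justified and fails here. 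Your proposed detour through Lemma~\ref{lem:toricreduced} does not help: that lemma concerns when a dense inclusion of sharp fine monoids is an isomorphism, and no such map is in play. Your instinct that $p$-torsion in $\ov{F'}{}^{\rm gp}$ is the obstacle was correct; when $\ov{F'}{}^{\rm gp}$ has no $p$-torsion (in particular when $p=0$) one can simply embed $A[\ov{F'}]\hookrightarrow A[\ov{F'}{}^{\rm gp}]$ and invoke Lemma~\ref{lem:reducedfibers}, bypassing Gaussian elimination entirely.
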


\begin{proof}  For any ring $A$, the $A$ module $A[P]$ (resp.\ $A[I]$) is free with basis $P$ (resp.\ $I$), so the $A$-module $A[P]/A[I]$ is free as an $A$-module with basis $P \setminus I$.  We shall use these facts without further comment in the rest of the proof.

Obviously \eqref{reducedfibers1} implies \eqref{reducedfibers2}.  To see that \eqref{reducedfibers2} implies \eqref{reducedfibers3}, suppose that $np \in I$ for some $p \in P$, $n \in \ZZ_{>0}$.  Then $[p]$ is nilpotent in $A[P]/A[I]$, so it is zero by \eqref{reducedfibers2}, so $p \in I$ (since $A \neq 0$).  This proves that $I$ is saturated.  Since $I$ is not the unit ideal, $P^* \cap I = \emptyset$, hence the natural map $A[P^*] \to A[P]/A[I]$ is injective, hence $A[P^*]$ is reduced by \eqref{reducedfibers2}, hence $P^*$ does not contain an element equal to the characteristic of $k$ by Lemma~\ref{lem:reducedfibers}.

To prove that \eqref{reducedfibers3} implies \eqref{reducedfibers1}, we proceed as follows:

\noindent {\bf Step 1:}  We reduce to the case where $P$ is fine.  If $A[P]/A[I]$ contains a non-trivial nilpotent $f = \sum_{p \in P \setminus I} a_p[p]$, then $f$ is also a non-trivial nilpotent in $A[Q]/A[I \cap Q]$, where $Q$ is the fine submonoid of $P$ generated by the finite set of $p \in P \setminus I$ for which $a_p \neq 0$.  Since the hypotheses on $I$ and $P$ are inherited by $I \cap Q$ and $Q$, this completes the reduction.

\noindent {\bf Step 2:} We show that \eqref{reducedfibers3} implies \eqref{reducedfibers1} when $P^*$ is torsion-free.  Suppose $A$ is a reduced $\FF_p$-algebra and $f = \sum_{i=1}^m a_i [p_i] \in A[P]$ is nilpotent in $A[P]/A[I]$.  We want to show that $f \in A[I]$.  After subtracting an appropriate element of $A[I]$ from $f$ if necessary, we can assume that every $a_i$ is nonzero, $p_1, \dots, p_m \notin I$, and the $p_i$ are distinct.  Since $f$ is nilpotent in $A[P]/A[I]$, there is a positive integer $n$ so that $f^n = \sum_{j=1}^s b_j [r_j]$ in $A[P]$ for some $r_1,\dots,r_s \in I$.  Since $I$ is saturated, no $np_i$ appears in the list $r_1,\dots,r_s$, so it must be that, for every $i$, the coefficient of $[np_i]$ in $f^n$ is zero.  On the other hand, since $A$ is reduced and $a_i \neq 0$, $a_i^n \neq 0$, so there is an obvious nonzero contribution of $a_i^n$ to this coefficient when we expand $f^n$.  The only other contributions to this coefficient in $f^n$ occur when we can write \bne{qq} np_i & = & \sum_{j \neq i} n_{ij} p_j \ene for some $n_{ij} \in \NN$ with $\sum_{j \neq i} n_{ij} < n$.  Choose one such expression \eqref{qq} for each $i$, and set $a_{ij} = n$ when $i=j$, $a_{ij} := -n_{ij}$ when $i \neq j$.  Then $A := (a_{ij})$ is an $m \times m$ integer matrix such that the ``augmented matrix" $(A,0)$ satisfies the properties of Lemma~\ref{lem:Gaussianelimination} for $Q := 0$.  By that lemma, we find another augmented matric $(A',0)$ satisfying the same properties, but with $A'$ diagonal with positive integer entries $a_1',\dots,a_m'$ on the diagonal.  Then we have $a_i'p_i = 0$ for $i=1,\dots,m$, so $p_1=\cdots=p_m=0$ because we assume $P^*$ is torsion-free.  But the $p_i$ are distinct and $f = a[0]$ clearly can't be nilpotent unless $a=0$ (or $I=P$), so we must have $f=0$ as desired. (Note that we do not need the result of Step 1 in Step 2---we could have done those steps in the opposite order.)

\noindent {\bf Step 3:} We show that \eqref{reducedfibers3} implies \eqref{reducedfibers1} when $P \cong P^* \oplus \ov{P}$ splits (\S\ref{section:splittings}), so we have $\FF_p[P] \cong \FF_p[P^*] \otimes_{\FF_p} \FF_p[\ov{P}]$ since the monoid algebra functor preserves direct limits.  By Lemma~\ref{lem:reduced}, the assumption in \eqref{reducedfibers3} that $I$ is saturated in $P$ implies that the image $\ov{I} \subseteq \ov{P}$ of $I$ in $\ov{P}$ is a saturated ideal of $\ov{P}$.  The fact that $I \subseteq P$ is an ideal implies that, under our chosen splitting, we have $I = P^* \times \ov{I}$.  In other words, $I$ must be the ideal of $P$ generated by the image of $\ov{I} \subseteq \ov{P}$ under the splitting map $\ov{P} \to P$.  Hence the ideal $\FF_p[I] \subseteq \FF_p[P]$ is the ideal generated by the image of the ideal $\FF_p[\ov{I}]$ of $\FF_p[\ov{P}]$ under $\FF_p[\ov{P}] \to \FF_p[P]$, so we have \be \FF_p[P]/\FF_p[I] & = & \FF_p[\ov{P}] / \FF_p[\ov{I}] \otimes_{\FF_p[\ov{P}]} \FF_p[P] \\ & \cong & \FF_p[\ov{P}] / \FF_p[\ov{I}] \otimes_{\FF_p[\ov{P}]} \FF_p[\ov{P}] \otimes_{\FF_p} \FF_p[P^*] \\ & = & \FF_p[\ov{P}]/\FF_p[\ov{I}] \otimes_{\FF_p} \FF_p[P^*]. \ee  Hence for any $\FF_p$-algebra $A$ we have \bne{reducedformula} A[P]/A[I] & \cong & A[\ov{P}]/A[\ov{I}] \otimes_{\FF_p} \FF_p[P^*]. \ene  If $A$ is reduced, then $A[\ov{P}]/A[\ov{I}]$ is reduced by Step 2, since $\ov{P}^* = 0$ is certainly torsion-free.  Then the assumption on $P^*$ in \eqref{reducedfibers3} and \eqref{reducedformula} imply that $A[P]/A[I]$ is reduced by Lemma~\ref{lem:reducedfibers}.

\noindent {\bf Step 4:} We reduce to the case where $P$ splits.  We can assume $P$ is fine by Step 1, so by Theorem~\ref{thm:splittingmonoids} we can find an inclusion $P \into P'$ with the properties listed in that theorem.  Property \eqref{splittingmonoids1} there ensures that the hypothesis on $P^*$ in \eqref{reducedfibers3} is inherited by $(P')^*$.  Let $I'$ be the ideal of $P'$ generated by the image of $I$.  Property \eqref{splittingmonoids2} in Theorem~\ref{thm:splittingmonoids} implies that $P \into P'$ is faithfully flat (even free of finite rank $|(P')^*/P^*| > 0$), hence $k[P] \into k[P']$ is also faithfully flat, and hence $A[P]/A[I] \to A[P']/A[I']$ is faithfully flat (it is a pushout of the previous map) for any $\FF_p$-algebra $A$.  In particular, this map is injective, so $A[P]/A[I]$ will be reduced whenever $A[P']/A[I']$ is reduced.  The same property also says we can find a basis $S \subseteq (P')^* \subseteq P'$ for $P'$ as a $P$-module.  The ideal $I'$ can then be described explicitly as \be I' & = & \{ i+s : i \in I, s \in S \} . \ee  Suppose $np' \in I'$ for some $p' \in P'$, $n \in \ZZ_{>0}$, so $np' = i+s$ for some $i \in I$, $s \in S$.  We can write $p' = p+s'$ for some $p \in P$, $s' \in S$.  We then find that $np = i+s-ns'$ is in $I$, so $p \in I$ (and hence $p' \in I'$) when $I$ is saturated.  This proves that $I'$ is saturated when $I$ is saturated.  Since Property \eqref{splittingmonoids3} in Theorem~\ref{thm:splittingmonoids} says that $P'$ splits, we conclude from Step 3 that $A[P']/A[I']$ (and hence also its subring $A[P]/A[I]$) is reduced whenever $A$ is reduced is $P$ satisfies \eqref{reducedfibers3}.  This completes the reduction. \end{proof}

\begin{cor} \label{cor:reducedfibers} Let $h : Q \to P$ be a local map of integral monoids.  Let $p$, $\FF_p$ be as in Lemma~\ref{lem:reducedfibers}.  The following are equivalent: \begin{enumerate} \item \label{redfib1} For every field $k$ of characteristic $p$ and every local map of monoids $x : Q \to k = (k, \cdot)$, the $k$-algebra \be \ZZ[P] \otimes_{\ZZ[Q]}^x k & = & k[P] \otimes_{k[Q]}^x k \ee defined as the pushout of $\ZZ[h]$ and $``x" : \ZZ[Q] \to k$ (or, equivalently, as the pushout of $k[h]$ and $``x" : k[Q] \to k$) is reduced. \item \label{redfib2} Let $\1 : Q \to \FF_p$ be the unique local map of monoids with $\1 (u)=1$ for every $u \in Q^*$.  Then the $\FF_p$-algebra $\ZZ[P] \otimes_{\ZZ[Q]}^{\1} \FF_p$ is reduced. \item \label{redfib3} $h$ is reduced (Definition~\ref{defn:saturatedideal}) and $\Cok( h^* : Q^* \to P^*)$ does not contain an element of order $p$. \end{enumerate} \end{cor}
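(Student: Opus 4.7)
The plan is to prove the chain (1) $\Rightarrow$ (2) $\Rightarrow$ (3) $\Rightarrow$ (1). The first implication is immediate: take $k = \FF_p$ and $x = \1$ in (1).

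For (2) $\iff$ (3), I would compute the algebra $R_{\1} := \FF_p[P] \otimes_{\FF_p[Q]}^{\1} \FF_p$ explicitly. Because $\1$ sends $Q \setminus Q^*$ to $0$ and $Q^*$ to $1$, the defining relations collapse to killing $\FF_p[I(h)]$ together with the relations $[h^*(u)] = 1$ for $u \in Q^*$. This second family corresponds to quotienting by the (free) translation action of $h^*(Q^*) \subseteq P^*$ on $P$, yielding an integral quotient monoid $\tilde P := P/h^*(Q^*)$ whose units are exactly $\Cok h^*$. Writing $\tilde I$ for the image of $I(h)$ in $\tilde P$, I would verify that the preimage of $\tilde I$ in $P$ is again $I(h)$ (since $h^*(Q^*) + I(h) = I(h)$) and that $\tilde I \neq \tilde P$ (locality of $h$ forces $I(h) \cap P^* = \emptyset$), then identify $R_{\1} \cong \FF_p[\tilde P]/\FF_p[\tilde I]$. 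Applying Theorem~\ref{thm:reducedfibers} to $(\tilde P, \tilde I)$ gives that $R_{\1}$ is reduced iff $\tilde I$ is saturated in $\tilde P$ and $\tilde P^*$ has no $p$-torsion, which unwinds via Lemma~\ref{lem:reduced} to precisely condition (3).

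The main work is (2) $\Rightarrow$ (1): for any characteristic $p$ field $k$ and local $x : Q \to k$, show $R := k[P] \otimes_{k[Q]}^x k$ is reduced. If some $u \in \ker h^*$ has $x^*(u) \neq 1$, then the defining relation $[h(u)] - x(u) = 1 - x^*(u)$ kills a unit of $k$, so $R = 0$. Otherwise $x^*$ vanishes on $\ker h^*$, and I plan to use the universal group fan actions of \S\ref{section:groupobjects}: $\GG(Q^{\rm gp})$ acts on $\AA(Q)$, $\GG(P^{\rm gp})$ acts on $\AA(P)$, and $\AA(h)$ is equivariant along $\GG(h^{\rm gp})$. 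Working over $\bar k$, both $\1_{\bar k}$ and $x_{\bar k}$ lie in the stratum $\GG(Q^*)(\bar k)$ of local monoid homomorphisms $Q \to \bar k$; a short computation with the coaction gives $(g \cdot \1)(q) = g(q)$ for $q \in Q^*$ and $(g \cdot \1)(q) = 0$ otherwise, so $g \cdot \1 = x_{\bar k}$ as soon as $g|_{Q^*} = x^*$. I would then choose $g : Q^{\rm gp} \to \bar k^*$ extending $x^*$ and also vanishing on $\ker h^{\rm gp}$, lift $g$ further to $\tilde g : P^{\rm gp} \to \bar k^*$, and let $\tilde g$ act on $\AA(P)_{\bar k}$. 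Equivariance gives a $\bar k$-scheme isomorphism between the fibers over $\1_{\bar k}$ and $x_{\bar k}$, i.e.\ $R \otimes_k \bar k \cong R_{\1} \otimes_{\FF_p} \bar k$. Since $R_{\1}$ is reduced by (2) and $\FF_p$ is perfect, the right side is reduced; faithfully flat descent along $k \to \bar k$ then transfers reducedness to $R$.

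The main obstacle is this last case: producing $g$ that simultaneously restricts to $x^*$ on $Q^*$ and vanishes on $\ker h^{\rm gp}$. The two demands are compatible on the intersection $\ker h^* = Q^* \cap \ker h^{\rm gp}$ precisely because we are in the case $x^*(\ker h^*) = 1$, and divisibility of $\bar k^*$ (injectivity as a $\ZZ$-module) then supplies first $g$ on $Q^{\rm gp}$ and then $\tilde g$ on $P^{\rm gp}$. Once $\tilde g$ is in hand, the equivariant action of $\GG(P^{\rm gp})$ automatically provides the fiber isomorphism and reduces the reducedness question for general $(k,x)$ to the single statement in (2).
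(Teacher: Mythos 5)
Your proof is correct and takes essentially the same approach as the paper: you get (2)$\iff$(3) by identifying $\ZZ[P]\otimes^{\1}_{\ZZ[Q]}\FF_p$ with $\FF_p[P/h(Q^*)]/\FF_p[\tilde{I}]$ and applying Theorem~\ref{thm:reducedfibers}, and you get the hard implication by base-changing to $\bar{k}$, translating the fiber over $x$ to the fiber over $\1$ by a character of $P^{\rm gp}$ extending $x^*$ (the paper writes this translation out as the explicit isomorphism \eqref{kAlgiso}), and descending along $k\to\bar{k}$ using perfectness of $\FF_p$. The only cosmetic difference is that the paper first factors $h$ through its image to reduce to injective $h$, whereas you absorb the non-injectivity into the case split on whether $x^*$ kills $\ker h^*$ (zero ring versus constructing the character to vanish on $\ker h^{\rm gp}$); both devices serve the same purpose.
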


\begin{proof} The assumption that $h$ is local ensures that the ideal $I(h) = h(Q \setminus Q^*)+P$ of $P$ is not the unit ideal (exercise!).  Obviously \eqref{redfib1} implies \eqref{redfib2}.  To see that \eqref{redfib2} and \eqref{redfib3} are equivalent, first note that $\1 : Q \to \FF_p$ factors as the composition of the sharpening map $Q \to \ov{Q}$ and the analogous local map $\ov{\1} : \ov{Q} \to \FF_p$ for $\ov{Q}$.  By considering the corresponding factorization of $`` \1 " : \FF_p[Q] \to \FF_p$ we obtain a diagram of $\FF_p$-algebras \bne{kalgdiagram} & \xym{ \FF_p[Q] \ar[r] \ar[d]_{\FF_p[h]} & \FF_p[\ov{Q}] \ar[d] \ar[r]^{ `` \ov{\1} " } & \FF_p \ar[d] \\ \FF_p[P] \ar[r] & \FF_p[P'] \ar[r] & \ZZ[P] \otimes_{\ZZ[Q]}^{\1} \FF_p } \ene where the squares are pushouts and the left square is $\FF_p[ \slot ]$ of the pushout diagram \bne{monpd} & \xym{ Q \ar[r] \ar[d]_h & \ov{Q} \ar[d]^{h'} \\ P \ar[r] & P/h(Q^*) =: P' } \ene of integral monoids.  The map $`` \ov{\1} " : \FF_p[\ov{Q}] \to \FF_p$ is surjective with kernel given by the ideal $\FF_p[ \ov{Q} \setminus \{ 0 \} ]$ of $\FF_p[\ov{Q}]$ associated to the ideal $\ov{Q} \setminus \{ 0 \}$ of $\ov{Q}$.  Therefore \be \ZZ[P] \otimes_{\ZZ[Q]}^{\1} \FF_p & = & \ZZ[P'] \otimes_{\ZZ[\ov{Q}]}^{\ov{\1}} \\ & = & \FF_p[P']/\FF_p[I(h')], \ee where $I(h') = h'(\ov{Q} \setminus \{ 0 \})+P'$ is the ideal of $P'$ generated by the image of $\ov{Q} \setminus \{ 0 \}$ in $P'$.  It is easy to see that $(P')^* = \Cok(h^* : Q^* \to P^*)$ and that $I(h')$ is saturated iff $I(h)$ is saturated (cf.\ Lemma~\ref{lem:reduced}), therefore \eqref{redfib2} and \eqref{redfib3} are equivalent by Theorem~\ref{thm:reducedfibers} (applied to the integral monoid $P'$ and the ideal $I(h')$).  

To see that \eqref{redfib3} implies \eqref{redfib1}, we first reduce to the case where $k$ is algebraically closed by using faithful flatness of ``the" algebraic closure $k \into \ov{k}$ much as in the proof of Lemma~\ref{lem:reducedfibers}.  We next reduce to the case where $h$ is injective by factoring $h$ as a surjection $f : Q \to Q'$ followed by an injection $h' : Q' \into P$.  Since we have $I(h)=I(h')$ and $\Cok h^* = \Cok (h')^*$, the hypotheses \eqref{redfib3} on $h$ are inherited by $h'$.  Since \be \Spec k[h] & = & (\Spec k[f])(\Spec k[h']) \ee and $\Spec k[f]$ is a closed embedding, every ``fiber" of $\Spec k[h]$ is either empty, or a fiber of $\Spec k[h']$.  (Explicitly, if the local map $x : Q \to k$ factors through $f$ (necessarily uniquely) via $x' : Q' \to k$, then \be \ZZ[P] \otimes_{\ZZ[Q]}^{x} k & = & \ZZ[P] \otimes_{\ZZ[Q'}^{x'} k, \ee whereas if $x$ does not factor through $f$ then there is some $u \in Q^*$ with $h(u)=0$ and $x(u) \neq 1$, and then $w := x(u)[0]-[u] \in k[Q]$ is in the kernel of $``x" : k[Q] \to k$, but $(k[h])(w) = (x(u)-1)[0] \in k^* \subseteq k[P]$ is a unit in $k[P]$, so $k[P] \otimes_{k[Q]}^x k$ is the zero ring.)

When $h$ is injective and $k$ is algebraically closed, we can extend $x^* : Q^* \to k^*$ to a group homomorphism $X : P^{\rm gp} \to k^*$ because $k^*$ is a divisible (i.e.\ injective) abelian group.  In this case we have an isomorphism of $k$-algebras \bne{kAlgiso} \ZZ[P] \otimes_{\ZZ[Q]}^{x} k & \cong & \ZZ[P] \otimes_{\ZZ[Q]}^{\1} k \\ \nonumber [p] \otimes \lambda & \mapsto & [p] \otimes X(p)\lambda \\ \nonumber [p] \otimes X(-p) \lambda & \mapsfrom & [p] \otimes \lambda. \ene  The $k$-algebra on the right is obtained from $\ZZ[P] \otimes_{\ZZ[Q]}^{\1} \FF_p$ by tensoring over $\FF_p$ with $k$, so it will be reduced when $\ZZ[P] \otimes_{\ZZ[Q]}^{\1} \FF_p$ is reduced because $\FF_p$ is perfect.  We thus find that \eqref{redfib3} implies \eqref{redfib1} because we already know \eqref{redfib3} implies \eqref{redfib2}. \end{proof}

\begin{cor} \label{cor:reducedfibers2} Let $f : X \to Y$ be a map of fine fans.  Let $p$, $\FF_p$ be as in Lemma~\ref{lem:reducedfibers}.  The following are equivalent: \begin{enumerate} \item For every $x \in X$, the monoid homomorphism $f_x : \M_{Y,f(x)} \to \M_{X,x}$ is reduced and $p$ does not divide the order of $\Cok (f^*_x : \M_{Y,f(x)}^* \to \M_{X,x}^*)_{\rm tor}$.  \item For every field $k$ of characteristic $p$ and every $k$-point $q \in \AA(Y)(k)$, the scheme $\AA(X) \times_{\AA(Y)}^q \Spec k$ is reduced. \item For every point $q \in \AA(Y)$ having residue field $k(q)$ of characteristic $p$, the scheme-theoretic fiber $\AA(f)^{-1}(q)$ of $\AA(f)$ is reduced. \item For every point $q \in \AA(Y)$ with residue field $k(q) = \FF_p$, the scheme-theoretic fiber $\AA(f)^{-1}(q)$ of $\AA(f)$ is reduced. \end{enumerate} \end{cor}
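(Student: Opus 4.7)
The plan is to reduce each of the four conditions to a stalk-wise assertion about the monoid homomorphisms $f_x : \M_{Y, f(x)} \to \M_{X,x}$, at which point Corollary~\ref{cor:reducedfibers} applied to $f_x$ supplies the equivalences. The enabling observation is that, since $X$ and $Y$ are fine, their underlying spaces are locally finite (Proposition~\ref{prop:locallyfinitetypefans}), each point $x \in X$ has a smallest open neighborhood naturally isomorphic to $\Spec \M_{X,x}$, and Proposition~\ref{prop:localpicture} identifies $f$ on this neighborhood with $\Spec f_x$. Applying $\AA$ and invoking Proposition~\ref{prop:algebraicrealization2}, I obtain compatible open embeddings $\Spec \ZZ[\M_{X,x}] \hookrightarrow \AA(X)$ and $\Spec \ZZ[\M_{Y, f(x)}] \hookrightarrow \AA(Y)$ along which $\AA(f)$ restricts to $\Spec \ZZ[f_x]$; as $x$ and $y$ vary these opens cover $\AA(X)$ and $\AA(Y)$ respectively.

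I would first prove (1) $\implies$ (2). Given a $k$-point $q : \Spec k \to \AA(Y)$ with $k$ of characteristic $p$, set $y := \tau_Y(q(\Spec k)) \in Y$; by the universal property of the orbit map (cf.\ Example~\ref{example:SpecN}) the point $q(\Spec k)$ lies in the open $\Spec \ZZ[\M_{Y,y}] \subseteq \AA(Y)$, and $q$ corresponds to a local monoid homomorphism also denoted $q : \M_{Y,y} \to k$. The fiber $\AA(X) \times_{\AA(Y)}^q \Spec k$ is covered by the open affines $\Spec(\ZZ[\M_{X,x}] \otimes_{\ZZ[\M_{Y,y}]}^q k)$ as $x$ ranges over $f^{-1}(y)$ (these opens cover $\tau_X^{-1}(f^{-1}(y)) \subseteq \AA(X)$, which contains the fiber), and each is reduced by Corollary~\ref{cor:reducedfibers}, \eqref{redfib3} $\implies$ \eqref{redfib1}, applied to $f_x$; hence the whole fiber is reduced. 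The implications (2) $\implies$ (3) $\implies$ (4) are formal: for (2) $\implies$ (3) take the identity $\Spec k(q) \to \AA(Y)$ as the $k(q)$-point (whose associated pushout fiber is $\AA(f)^{-1}(q)$ itself), and (3) $\implies$ (4) is tautological.

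For (4) $\implies$ (1), fix $x \in X$ and set $y := f(x)$. The unique local monoid homomorphism $\1 : \M_{Y,y} \to \FF_p$ sending units to $1$ and non-units to $0$ extends to a ring homomorphism $`` \1 " : \ZZ[\M_{Y,y}] \to \FF_p$ whose image is $\FF_p$ in positive characteristic and $\ZZ$ in characteristic zero; in either case the kernel is a prime ideal defining a point $q \in \Spec \ZZ[\M_{Y,y}] \subseteq \AA(Y)$ with residue field exactly $\FF_p$. By hypothesis $\AA(f)^{-1}(q)$ is reduced, so its restriction to the open $\Spec \ZZ[\M_{X,x}] \subseteq \AA(X)$, namely $\Spec(\ZZ[\M_{X,x}] \otimes_{\ZZ[\M_{Y,y}]}^{\1} \FF_p)$, is also reduced, and Corollary~\ref{cor:reducedfibers}, \eqref{redfib2} $\implies$ \eqref{redfib3}, yields condition (1) at $x$.

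The hardest part, I expect, will be carefully articulating the local-to-global reduction --- identifying the restriction of $\AA(f)^{-1}(q)$ (respectively $\AA(X) \times_{\AA(Y)}^q \Spec k$) to each open $\Spec \ZZ[\M_{X,x}] \subseteq \AA(X)$ as the claimed pushout of rings, and verifying that these opens cover the fiber as $x$ ranges over $f^{-1}(y)$. Both follow from Proposition~\ref{prop:algebraicrealization2} and the fact that the image of the fiber in $X$ lies in $f^{-1}(y)$, but require attention to the various compatibilities between $\AA$, the orbit maps, and the stalk identification of Proposition~\ref{prop:localpicture}.
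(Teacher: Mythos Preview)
Your proposal is correct and follows essentially the same approach as the paper: both reduce to the local picture via Proposition~\ref{prop:localpicture} and Proposition~\ref{prop:algebraicrealization2}, cover the fiber over $q$ by the affine opens $\Spec(\ZZ[\M_{X,x}] \otimes_{\ZZ[\M_{Y,y}]}^q k)$ as $x$ ranges over $f^{-1}(y)$, and then invoke Corollary~\ref{cor:reducedfibers} stalkwise, using the special $\FF_p$-point $\1$ for the implication \eqref{redfib2}$\implies$\eqref{redfib3}. The paper presents this more tersely, but your explicit cycle of implications and your care about the residue field being exactly $\FF_p$ (including the $p=0$ case) are entirely in line with it.
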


\begin{proof} For a field $k$, a $k$-point $q \in \AA(Y)(k)$ corresponds to a point $\tau(q) = y \in Y$ together with a local monoid homomorphism $q : \M_{Y,y} \to k$.  The corresponding point of $\AA(Y)$ lies in the open subscheme $\AA(\M_{Y,y})$ of $\AA(Y)$ and $q$ corresponds to the $k$-point of $\AA(\M_{Y,y}) = \Spec \ZZ[\M_{Y,y}]$ determined by the ring homomorphism $`` q " : \ZZ[\M_{Y,y}] \to k$.   As $x$ ranges over $f^{-1}(y)$, the affine fans $\AA(\M_{X,x})$ are open subfans of $\AA(X)$ covering $f^{-1}(y)$, hence the schemes \bne{fibcover} \AA(\M_{X,x}) \times_{ \AA( \M_{Y,y} ) }^q \Spec k & = & \Spec( \ZZ[\M_{X,x}] \otimes_{\ZZ[ \M_{Y,y} ]}^q k ) \ene form an open cover of $\AA(X) \times_{\AA(Y)}^q \Spec k$.  Therefore the latter scheme will be reduced iff the scheme \eqref{fibcover} is reduced for every $x \in f^{-1}(y)$.  If we fix $y \in Y$ and consider the local monoid homomorphism $\1_{p,y} : \M_{Y,y} \to \FF_p$ defined by mapping $\M_{Y,y}^*$ to $1 \in \FF_p$ and $\M_{Y,y} \setminus \M_{Y,y}^*$ to $0 \in \FF_p$, then the corresponding point $q \in \AA(Y)(\FF_p)$ may also be viewed as a point of $\AA(Y)$ with residue field $k(q) = \FF_p$.  The equivalence of the four conditions thus follows from Corollary~\ref{cor:reducedfibers}.  \end{proof}

For (certain) maps of \emph{classical} fans, there is a reformulation of Corollary~\ref{cor:reducedfibers2}, due to Abramovich and Karu (at least in characteristic zero), which is often useful:

\begin{cor} \label{cor:toricreducedfibers} {\bf (Abramovich-Karu)} Let $f : \Sigma \to \Sigma'$ be a map of classical fans corresponding to a map of lattices also abusively denoted $f : N \to N'$.  Let $f^\lor : M' \to M$ be the dual map.  Let $p$, $\FF_p$ be as in Lemma~\ref{lem:reducedfibers}.  Assume that $\Cok (f : N \to N')$ is finite and that $\AA(f)$ is equidimensional, so that $\sigma' := f_{\RR}(\sigma) \in \Sigma'$ for every $\sigma \in \Sigma$ (cf.\ Theorem~\ref{thm:toricequidimensionality}).  For a cone $\sigma \in \Sigma$, set $N_\sigma := N \cap \Span \sigma$.  The following are equivalent: \begin{enumerate} \item \label{toricrf1} For every $\sigma \in \Sigma$, the map of lattices $f_\sigma : N_\sigma \to N'_{\sigma'}$ is surjective and $p$ does not divide the order of the finite group $N'/(N'_{\sigma'}+f(N))$.  \item \label{toricrf1prime} For every cone $\sigma \in \Sigma$, the (local) map of (toric) monoids \be f_\sigma : M' \cap (\sigma')^\lor & \to & M \cap \sigma^\lor \ee induced by $f^\lor$ is reduced and $p$ does not divide the order of the torsion subgroup of the cokernel of \be f_\sigma^* : M' \cap (\sigma')^\perp & \to & M \cap \sigma^\perp. \ee \item \label{toricrf2} For every field $k$ of characteristic $p$ and every $k$-point $q \in \AA(Y)(k)$, the scheme $\AA(\Sigma) \times_{\AA(\Sigma')}^q \Spec k$ is reduced. \item \label{toricrf3} For point $q \in \AA(\Sigma')$ with residue field $k(q) = \FF_p$, the scheme-theoretic fiber $\AA(f)^{-1}(q)$ of $\AA(f)$ is reduced. \end{enumerate} \end{cor}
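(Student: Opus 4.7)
The plan is to separate the work into (i) the equivalences among \eqref{toricrf1prime}, \eqref{toricrf2}, and \eqref{toricrf3}, which follow almost directly from Corollary~\ref{cor:reducedfibers2} applied to the associated map of (toric, hence fine) abstract fans, and (ii) the substantive translation between the monoid-theoretic condition \eqref{toricrf1prime} and the lattice-theoretic condition \eqref{toricrf1}. For (i), the points of the abstract fan $(\Sigma,\M_\Sigma)$ (in the sense of Theorem~\ref{thm:classicalfans}) are the cones $\sigma \in \Sigma$ with stalk $\M_{\Sigma,\sigma}=M\cap\sigma^\lor$ and units $\M_{\Sigma,\sigma}^*=M\cap\sigma^\perp=(N/N_\sigma)^\lor$ (the quotient $N/N_\sigma$ is a lattice since $N_\sigma = N \cap \Span \sigma$ is saturated in $N$), and the stalk maps are restrictions of $f^\lor$; matching these with the conditions of Corollary~\ref{cor:reducedfibers2} yields at once the equivalence of \eqref{toricrf1prime}, \eqref{toricrf2}, and \eqref{toricrf3}.

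To match the $p$-divisibility parts of \eqref{toricrf1} and \eqref{toricrf1prime}, I identify $f_\sigma^*$ with the $\ZZ$-linear dual of $\bar f:N/N_\sigma \to N'/N'_{\sigma'}$, factor $\bar f$ as $N/N_\sigma \twoheadrightarrow (N/N_\sigma)/\ker\bar f \hookrightarrow N'/N'_{\sigma'}$, and apply $\Hom(-,\ZZ)$ to the two resulting short exact sequences of finitely generated abelian groups. Using $\Ext^1(L,\ZZ)=0$ for lattices $L$ and $\Ext^1(C,\ZZ)\cong C$ for finite abelian groups $C$, this produces a short exact sequence $0 \to \Cok \bar f \to \Cok f_\sigma^* \to (\ker\bar f)^\lor \to 0$ with free quotient, so $(\Cok f_\sigma^*)_{\rm tor}\cong\Cok\bar f = N'/(N'_{\sigma'}+f(N))$. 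The hypothesis $|\Cok f|<\infty$ makes $\Cok \bar f$ finite (as a quotient of $\Cok f$), so the two prime-divisibility conditions coincide.

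For the reducedness part, Lemma~\ref{lem:reduced} lets me sharpen: it suffices to analyze the inclusion of sharp toric monoids
\[
\bar f_\sigma:(N'_{\sigma'})^\lor\cap(\sigma')^\lor\hookrightarrow(N_\sigma)^\lor\cap\sigma^\lor
\]
induced by the dual of $g:=f|_{N_\sigma}:N_\sigma\to N'_{\sigma'}$. Equidimensionality and Theorem~\ref{thm:toricequidimensionality} force $f_\RR(\sigma)=\sigma'$, hence $g_\RR$ is surjective; therefore $K:=\ker g$ is a saturated sublattice of $N_\sigma$, and the canonical factoring $N_\sigma\twoheadrightarrow N_\sigma/K\hookrightarrow N'_{\sigma'}$ dualizes to a factoring $\bar f_\sigma = \beta\circ\alpha$ through the intermediate toric monoid $(N_\sigma/K)^\lor\cap\sigma^\lor$. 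The map $\alpha$ is an injective, local, dense inclusion of sharp fine monoids of equal rank $\dim\sigma'$ (dense because $(N'_{\sigma'})^\lor\hookrightarrow(N_\sigma/K)^\lor$ has finite-index image), so Lemma~\ref{lem:toricreduced} forces $\alpha$ to be an isomorphism whenever it is reduced, which happens iff $g$ is surjective. An elementary check using that $(N_\sigma/K)^\lor$ is saturated in $(N_\sigma)^\lor$ shows $I(\bar f_\sigma)\cap((N_\sigma/K)^\lor\cap\sigma^\lor)=I(\alpha)$, so a non-saturated element of $I(\alpha)$ is automatically a non-saturated element of $I(\bar f_\sigma)$; thus $\bar f_\sigma$ reduced implies $g$ surjective.

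The main obstacle is the converse: when $g$ is surjective, $\alpha$ is already an isomorphism and the reducedness of $\bar f_\sigma$ reduces to that of the saturated inclusion $\beta$, which compares sharp toric monoids of \emph{different} ranks so that Lemma~\ref{lem:toricreduced} is unavailable. I would handle this geometrically: choose a splitting $N_\sigma\cong K\oplus N'_{\sigma'}$ of the surjection $g$ (possible because $N'_{\sigma'}$ is free), observe that $\sigma\subseteq K_\RR\oplus(N'_{\sigma'})_\RR$ projects onto $\sigma'$, and identify the special fiber
\[
\FF_p[(N_\sigma)^\lor\cap\sigma^\lor]\otimes_{\FF_p[(N_\sigma/K)^\lor\cap\sigma^\lor]}^{\mathbf 1}\FF_p
\]
with the monoid algebra $\FF_p[K^\lor\cap(\sigma\cap K_\RR)^\lor]$ of the affine toric variety attached to the sharp rational cone $\sigma\cap K_\RR\subseteq K_\RR$. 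Since this is a normal domain, in particular reduced, Theorem~\ref{thm:reducedfibers} then delivers saturation of $I(\beta)$. Combining the two directions completes the equivalence \eqref{toricrf1}$\iff$\eqref{toricrf1prime}.
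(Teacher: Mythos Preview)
Your handling of the equivalences \eqref{toricrf1prime}$\Leftrightarrow$\eqref{toricrf2}$\Leftrightarrow$\eqref{toricrf3} via Corollary~\ref{cor:reducedfibers2}, and of the $p$-divisibility match via dualizing the factorization of $\bar f:N/N_\sigma\to N'/N'_{\sigma'}$, is correct and agrees with the paper. Your argument for \eqref{toricrf1prime}$\Rightarrow$\eqref{toricrf1} (reduced $\Rightarrow$ surjective) through the factoring $\bar f_\sigma=\beta\circ\alpha$ and Lemma~\ref{lem:toricreduced} applied to $\alpha$ is also correct, and is actually a bit cleaner than the paper's contradiction argument, which passes to a face $\tau\le\sigma$ with $f_\RR|\tau$ bijective before invoking Lemma~\ref{lem:toricreduced}.

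The gap is in the converse \eqref{toricrf1}$\Rightarrow$\eqref{toricrf1prime}. First, your identification of the special fiber with a toric monoid algebra is false in general: take $N_\sigma=\ZZ^2$, $g(a,b)=b$, and $\sigma$ the cone on $(1,1)$ and $(-1,1)$. Then $\sigma\cap K_\RR=\{0\}$, so your formula predicts the fiber is $\FF_p[\ZZ]$; but $P=\langle(1,1),(-1,1),(0,1)\rangle$ with $\beta(1)=(0,1)$, and the actual fiber is $\FF_p[x,y]/(xy)$, which is not a domain. Second, and more seriously, the cone-by-cone implication ``$g$ surjective $\Rightarrow$ $\beta$ reduced'' that your strategy would establish is itself false. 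Take $N_\sigma=\ZZ^2$, $g(a,b)=b$, and $\sigma$ the cone on $(1,0)$ and $(1,2)$. Then $g$ is surjective, $P=\{(a,b)\in\ZZ^2:a\ge 0,\ a+2b\ge 0\}$, $\beta(1)=(0,1)$, and $I(\beta)=\{(a,b)\in P:a+2b\ge 2\}$; here $2\cdot(1,0)\in I(\beta)$ but $(1,0)\notin I(\beta)$, so $\beta$ is not reduced. The obstruction sits on the face $\tau=\RR_{\ge 0}\cdot(1,2)$, where $g|_{N_\tau}:N_\tau\to N'_{\tau'}$ has index~$2$. The paper's proof of \eqref{toricrf1}$\Rightarrow$\eqref{toricrf1prime} is accordingly \emph{not} cone-by-cone: assuming $f_\sigma$ is not reduced, it carefully manufactures a face $\sigma_0\le\sigma$ (via a minimality argument on the face of $\sigma^\lor$ containing a witness $m_2$, followed by Theorem~\ref{thm:cones}\eqref{cones:lemma2}) for which $N_{\sigma_0}\to N'_{\sigma_0'}$ fails to be surjective. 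Thus condition \eqref{toricrf1} must be invoked at faces of $\sigma$, not just at $\sigma$ itself, and any repair of your approach has to incorporate that descent to faces.
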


\begin{proof}  We view the classical fans $\Sigma$, $\Sigma'$ as (abstract) fans $(\Sigma,\M_{\Sigma})$, $(\Sigma',\M_{\Sigma'})$ as in \S\ref{section:classicalfans}.  The points of the abstract fan $\Sigma$ (resp.\ $\Sigma'$) are the cones of the classical fan $\Sigma$ (resp.\ $\Sigma'$).  When we view $f$ as a map of abstract fans as in \S\ref{section:classicalfans}, the maps in \eqref{toricrf1prime} correspond, respectively, to the maps \be f_\sigma : \M_{\Sigma',\sigma'} & \to & \M_{\Sigma,\sigma} \\ f_\sigma^* : \M_{\Sigma',\sigma'} & \to & \M_{\Sigma,\sigma}^* \ee considered in condition \eqref{rf1} of Corollary~\ref{cor:reducedfibers2}.  The conditions \eqref{toricrf1prime}-\eqref{toricrf3} are hence equivalent that corollary (even without the assumptions in the sentence ``Assmue that \dots"), so it remains only to show that \eqref{toricrf1} is equivalent to \eqref{toricrf1prime}.

We first handle the case $p=0$ (where the second parts of conditions \eqref{toricrf1} and \eqref{toricrf1prime} hold automatically), closely following (i.e.\ plagiarising) Karu \cite[Lemma~5.2]{Kar}.  

To see that \eqref{toricrf1prime} implies \eqref{toricrf1} (when $p=0$), suppose, towards a contradiction, that \eqref{toricrf1prime} holds, but $f : N_\sigma \to N_{\sigma'}$ is not surjective for some $\sigma \in \Sigma$.  By Lemma~\ref{lem:conesequidimensionality} (and the equidimensionality assumption), we can find a face $\tau \leq \sigma$ such that $f_{\RR} : \tau \to \sigma'$ is bijective.  Then $\tau' = \sigma'$ and $f : N_{\tau} \to N'_{\sigma'}$ is injective but not surjective (since $N_\tau \subseteq N_\sigma$).  Replace $\sigma$ by $\tau$.  Now replace $N$ (resp.\ $N'$) by $N_\sigma$ (resp.\ $N'_{\sigma'}$) and $\Sigma$ (resp.\ $[\sigma']$) with $[\sigma]$ (resp.\ $[\sigma']$).  These replacements don't destroy the assumptions that $f : N \to N'$ has finite cokernel (because the assumptions ensure that $N_{\sigma} \to N_{\sigma'}$ has finite cokernel for any $\sigma \in \Sigma$), that $\AA(f)$ is equidimensional (this is clear from Theorem~\ref{thm:toricequidimensionality}), and that $f_\sigma$ is reduced.  (Indeed, this replacement has the effect of replacing $f_\sigma$ with $\ov{f}_\sigma$, which is reduced when $f_\sigma$ is reduced by Lemma~\ref{lem:reduced}.)  We can therefore assume that $\sigma$ (resp.\ $\sigma'$) is the unique maximal cone of $\Sigma$ (resp.\ $\Sigma'$), that these cones are full-dimensional, and that $f : N \to N'$ is injective, with finite (but non-zero) cokernel.  Then $f^\lor : M' \to M$ is also injective, with finite (but non-zero) cokernel.  Since $f_{\RR}$ is an isomorphism taking $\sigma$ bijectively onto $\sigma'$, the dual map $f_{\RR}^\lor$ is an isomorphism taking $(\sigma')^\lor$ bijectively onto $\sigma^\lor$, hence \be f_\sigma : M' \cap (\sigma')^\lor & \into & M \cap \sigma \ee is an injective, dense, reduced (by \eqref{toricrf1prime}), and local map of sharp, fine monoids which is not an isomorphism because its groupification $f^\lor : M' \into M$ is not an isomorphism---this contradicts Lemma~\ref{lem:toricreduced}. 

To see that \eqref{toricrf1} implies \eqref{toricrf1prime} (when $p=0$), suppose, towards a contradiction, that \eqref{toricrf1} holds, but \bne{fsigmaAKreduced} f_\sigma : M' \cap (\sigma')^\lor & \to & M \cap \sigma^\lor \ene is not reduced for some $\sigma \in \Sigma$.  Then for some $m \in M \cap \sigma^\lor$, we can write \bne{nonredformula} nm & = & f_{\sigma}(m_1)+m_2 \ene for some $n>1$, $m_1 \in M' \cap (\sigma')^\lor \setminus M' \cap (\sigma')^\perp$, $m_2 \in M \cap \sigma^\lor$ but

\noindent {\bf (*)} no such $m_1,m_2$ exist for $n=1$.

Let $\rho$ be the unique face of $\sigma^\lor$ containing $m_2$ in its interior.  Since there are only finitely many faces of $\sigma^\lor$, we can assume the quadruple $(n,m,m_1,m_2)$ is chosen ``minimally," in the sense that there is no other such quadruple $(n',m',m_1',m_2')$ with $m_2'$ lying in a proper face of $\rho$.  Let $\sigma_0 := \sigma \cap m_2^\perp = \sigma \cap (\Span \rho)^\perp$, so that $\sigma_0$ is a face of $\sigma$ with $\sigma_0^\perp = \Span \rho$ (cf.\ Theorem~\ref{thm:cones}).  Next we show that the image of the map \bne{nonsaturatedimage} g : M' \cap (\sigma')^\lor & \to & \ov{(M \cap \sigma_0^\lor)} = (M \cap \sigma_0^\lor)/(M \cap \sigma_0^\perp) \ene induced by \eqref{fsigmaAKreduced} and the inclusion $M \cap \sigma^\lor \subseteq M \cap \sigma_0^\lor$ is not saturated.  Since $$m_2 \in M \cap \rho \subseteq M \cap \Span \rho = M \cap \sigma_0^\perp,$$ formula \eqref{nonredformula} shows that $n\ov{m}$ is in the image of $g$, where $\ov{m}$ is the image of $m \in M \cap \sigma^\lor \subseteq M \cap \sigma_0^\lor$ in $\ov{(M \cap \sigma_0^\lor)}$---but we claim that $\ov{m}$ is not in the image of $g$.  If it were, then we could write \bne{nonredformula2} m & = & f_\sigma(m_1')+m_2' \ene for some $m_1' \in M' \cap (\sigma')^\lor$, $m_2' \in M \cap \sigma_0^\perp$.  Furthermore, we can always arrange that $m_1' \in  M' \cap (\sigma')^\lor \setminus M' \cap (\sigma')^\perp$ because, if \eqref{nonredformula2} holds for some $m_1' \in M' \cap (\sigma')^\perp$, then $$f_{\sigma}(m_1') \in M \cap \sigma^\perp \subseteq M \cap \sigma_0^\perp,$$ so $m \in M \cap \sigma_0^\perp$ and then, using \eqref{nonredformula}, we can write \bne{nonredformula3} m & = & f_\sigma(m_1) + (m_2-(n-1)m) \ene instead of \eqref{nonredformula2}.  Therefore we must have $m_2' \notin M \cap \sigma^\lor$ by {\bf (*)}, so $m_2' \in \Span \rho \setminus \rho$ and hence Theorem~\ref{thm:cones}\eqref{cones:lemma2} ensures that there is some $s \in (0,1) \cap \QQ$ such that $sm_2+(1-s)m_2'$ is in some proper face $\rho' < \rho$.  Clearing denominators, we can find $k,l \in \ZZ_{>0}$ so that $m_2'' := km_2+lm_2' \in \rho'$.  But then \bne{nonredformula4} (kn+l)m & = & f_\sigma(km_1+lm_1')+m_2'' \ene contradicts our choice of the quadruple $(n,m,m_1,m_2)$.  This completes the proof that \eqref{nonsaturatedimage} does not have saturated image.

The map \eqref{nonsaturatedimage} factors as the surjection \be M' \cap (\sigma')^\lor & \to & (M' \cap (\sigma_0')^\lor) / (M' \cap (\sigma_0')^\perp)  \ee followed by the map \bne{secondmap} (M' \cap (\sigma_0')^\lor) / (M' \cap (\sigma_0')^\perp) & \to & (M \cap \sigma_0^\lor) / (M \cap \sigma_0^\perp), \ene hence \eqref{secondmap} does not have saturated image.  Since $f_{\RR}(\sigma_0) = \sigma_0'$, we have a \emph{cartesian} diagram $$ \xym{ M' \cap (\sigma_0')^\perp \ar[r] \ar[d] & M \cap \sigma_0^\perp \ar[d] \\ M' \cap (\sigma_0')^\lor \ar[r] \ar[d] & M \cap \sigma_0^\lor \ar[d] \\ M' \ar[r]^-{f^\lor} & M, } $$ from which we see that \eqref{secondmap} is injective.  Hence \eqref{secondmap} (which is nothing but the sharpening of $f_{\sigma_0} : M' \cap (\sigma_0')^\lor \to M \cap \sigma_0^\lor$) is an injective map of saturated monoids (cf.\ Corollary~\ref{cor:saturatedlocalization}) which does not have saturated image, so its groupification \bne{secondmapgroupification} M' / (M' \cap (\sigma_0')^\perp) & \to & M / (M \cap  \sigma_0^\perp) \ene is easily seen to be a map of lattices which does not have saturated image (i.e.\ its cokernel has non-trivial torsion), hence the dual map of \eqref{secondmapgroupification}, which is $N_{\sigma_0} \to N_{\sigma_0'}$, is not surjective, contradicting \eqref{toricrf1prime}. 

Now that the $p=0$ case is established, it remains only to show that the second condition in \eqref{toricrf1} is equivalent to the second condition in \eqref{toricrf1prime}.  For this, it is enough to show that for any $\sigma \in \Sigma$, there is an isomorphism (not necessarily canonical) of finite abelian groups \bne{twogroups} \Cok( f_{\sigma}^* : M' \cap (\sigma')^\perp \to M \cap \sigma^\perp )_{\rm tor} & \cong &  N'/(f(N)+N'_{\sigma'})  . \ene  The map $f$ induces a map of exact sequences of lattices \bne{latticeSESmap} & \xym{ 0 \ar[r] & N_\sigma \ar[r] \ar[d] & N \ar[d] \ar[r] & N/N_\sigma \ar[d]^-g \ar[r] & 0 \\ 0 \ar[r] & N'_{\sigma'} \ar[r] & N' \ar[r] & N' / N'_{\sigma'} \ar[r] & 0 } \ene where the group on the right of \eqref{twogroups} is the cokernel of the map labelled $g$.  Dualizing \eqref{latticeSESmap} we obtain a map of exact sequences of lattices \bne{latticeSESmapdual} &  \xym{ 0 \ar[r] & M' \cap (\sigma')^\perp \ar[r] \ar[d]_-{g^\lor = f_{\sigma}^* } & M' \ar[r] \ar[d] & M'/(M' \cap (\sigma')^\perp) \ar[r] \ar[d] & 0 \\ 0 \ar[r] & M \cap \sigma^\perp \ar[r] & M \ar[r] & M / (M \cap \sigma^\perp) \ar[r] & 0, } \ene hence we obtain a (canonical!) isomorphism \be \Cok( f_{\sigma}^* : M' \cap (\sigma')^\perp \to M \cap \sigma^\perp )_{\rm tor} & = & \Ext^1( N'/(f(N)+N'_{\sigma'}), \ZZ ) \ee by Lemma~\ref{lem:toricreducedfibers}, below.  We get a (non-canonical) isomorphism \eqref{twogroups} by using the fact that $\Ext^1(B,\ZZ)$ is non-canonically isomorphic to $B$ for any finite abelian group $B$.  \end{proof}

\begin{lem} \label{lem:toricreducedfibers} Let $f : L \to L'$ be a map of lattices (finitely generated free abelian groups) with finite cokernel $B$, so the map of dual lattices $f^\lor : (L')^ \to L^\lor$ is injective.  Then we have a natural isomorphism of finite abelian groups \be (\Cok f^\lor)_{\rm tor} & = & \Ext^1(B,\ZZ). \ee \end{lem}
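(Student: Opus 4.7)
My plan is to derive the desired isomorphism by splicing together the long exact $\Ext$ sequences associated to two short exact sequences, namely the factorization of $f$ through its image. Since $f$ is not a priori injective, write $f$ as $L \twoheadrightarrow I \hookrightarrow L'$ where $I := \Image f \subseteq L'$ is a sublattice with $L'/I = B$ finite, and let $K := \Ker f$ so that $0 \to K \to L \to I \to 0$ is exact. The kernel $K$ and the image $I$ are both free abelian groups, and so is $L$, $L'$, so all the $\Ext^1$'s against $\ZZ$ of these groups vanish.

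The key computation is then immediate. Applying $\Hom(\slot, \ZZ)$ to $0 \to I \to L' \to B \to 0$ and using $\Hom(B, \ZZ) = 0$ and $\Ext^1(L', \ZZ) = \Ext^1(I, \ZZ) = 0$ gives a short exact sequence
$$0 \to (L')^\lor \to I^\lor \to \Ext^1(B, \ZZ) \to 0.$$
Applying $\Hom(\slot, \ZZ)$ to $0 \to K \to L \to I \to 0$ and using $\Ext^1(I, \ZZ) = 0$ gives a second short exact sequence
$$0 \to I^\lor \to L^\lor \to K^\lor \to 0.$$
The map $f^\lor : (L')^\lor \to L^\lor$ is the composition of the injection $(L')^\lor \hookrightarrow I^\lor$ from the first sequence and the injection $I^\lor \hookrightarrow L^\lor$ from the second sequence; in particular, $f^\lor$ is injective (this incidentally justifies the parenthetical aside in the statement). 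Now a short diagram chase combining the two sequences produces the exact sequence
$$0 \to \Ext^1(B, \ZZ) \to \Cok f^\lor \to K^\lor \to 0,$$
natural in $f$.

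To finish, take torsion subgroups. Since $B$ is finite, $\Ext^1(B, \ZZ)$ is finite (in fact non-canonically isomorphic to $B$), hence torsion, while $K^\lor = \Hom(K, \ZZ)$ is torsion-free since $K$ is a lattice. It follows formally (torsion is left exact, and the last term is torsion-free) that the inclusion $\Ext^1(B, \ZZ) \hookrightarrow \Cok f^\lor$ identifies $\Ext^1(B, \ZZ)$ with $(\Cok f^\lor)_{\rm tor}$, naturally in $f$.

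No step in this plan should present a real obstacle: the whole argument is essentially the long exact sequence of $\Ext$, and the only thing one must be careful about is to factor $f$ through its image rather than assume $f$ is injective outright. (In the application to Corollary~\ref{cor:toricreducedfibers}, the relevant map $g : N/N_\sigma \to N'/N'_{\sigma'}$ is actually injective, but we get naturality ``for free'' without assuming this.) The only very mild subtlety is the diagram chase that splices the two short exact sequences, which is just the snake lemma applied to the evident commutative diagram with rows $0 \to (L')^\lor \to I^\lor \to \Ext^1(B,\ZZ) \to 0$ and $0 \to I^\lor \to L^\lor \to K^\lor \to 0$ and columns the identity maps on $I^\lor$.
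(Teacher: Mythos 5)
Your proof is correct and is essentially the paper's own argument: both factor $f$ through its image, identify $\Ext^1(B,\ZZ)$ as the cokernel of $(L')^\lor \into I^\lor$, and exploit the short exact sequence $0 \to \Ext^1(B,\ZZ) \to \Cok f^\lor \to K^\lor \to 0$ together with the torsion-freeness of the right-hand term. The paper writes this sequence as $0 \to (L'')^\lor/(L')^\lor \to L^\lor/(L')^\lor \to L^\lor/(L'')^\lor \to 0$, which is the same thing.
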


\begin{proof}  By applying $\Hom( \slot, \ZZ)$ to the appropriate exact sequence, we easily establish the result when $f$ is either injective or surjective.  In general we factor $f$ as a surjection $L \to L''$ followed by an injection $L'' \into L'$.  Since $L'' \into L'$ is injective with cokernel $B$, we have $((L'')^\lor / (L')^\lor))_{\rm tor} = \Ext^1(B,\ZZ)$.  The inclusions $(L')^\lor \into (L'')^\lor \into L^\lor$ yield a short exact sequence $$ 0 \to (L'')^\lor / (L')^\lor \to L^\lor / (L')^\lor \to L^\lor / (L'')^\lor \to 0.$$  The right term is torsion-free because $L \to L''$ is surjective, so the middle group has the same torsion subgroup as the left group, namely $\Ext^1(B,\ZZ)$, as desired.  \end{proof}

\subsection{CZE maps} \label{section:CZEmaps}  The purpose of this section is to introduce \emph{CZE maps} and the \emph{CZE topology}---these are analogous to \'etale maps and the \'etale topology of schemes.

\begin{thm} \label{thm:CZEmaps} For a map $h : Q \to P$ of fine monoids, the following are equivalent: \begin{enumerate} \item \label{CZE1} $h^*$ is injective with finite cokernel and $$ \xym{ Q^* \ar[d] \ar[r]^-{h^*} & P^* \ar[d] \\ Q \ar[r]^-h & P } $$ is a pushout diagram of monoids. \item \label{CZE2} $h$ is injective, $\ov{h} : \ov{Q} \to \ov{P}$ is an isomorphism, and $P/Q$ is finite. \item \label{CZE3} There exists a finite subset $S \subseteq P^*$ which is a $Q$-basis for $P$ (i.e.\ the map of sets $Q \times S \to P$ defined by $(q,s) \mapsto h(q)+s$ is bijective) and any other $Q$-basis $S' \subseteq P$ for $P$ is contained in $P^*$ and has the same cardinality as $S$.  \item \label{CZE4} There is a finite subset $S \subseteq P^*$ which is a $Q$-basis for $P$.  \item \label{CZE5} $\Spec h$ is a homeomorphism on the level of topological spaces and $\Spec k[h]$ is a finite \'etale cover for any field $k$ of characteristic zero. \item \label{CZE5a} $\Spec k[h]$ is a finite \'etale cover for any field $k$ of characteristic zero.  \item \label{CZE6} There exists a field $k$ for which $k[h]$ is finite \'etale. \item \label{CZE7} $h$ is finite, flat, reduced (Definition~\ref{defn:saturatedideal}), and local. \item \label{CZE8} $h$ is finite, flat, and reduced.  \end{enumerate} \end{thm}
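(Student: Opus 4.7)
The plan is to organize the nine conditions into three blocks and establish the equivalences in a cycle. Block A (the ``purely structural'' conditions (1)--(4)) will be handled first by linear-algebraic manipulations of bases and pushouts, Block B (the monoid-theoretic finiteness/flatness/reducedness conditions (7), (8)) will be connected to Block A by exploiting the freeness implicit in (4), and Block C (the algebraic conditions (5), (5a), (6)) will be tied in using the results on reduced fibers from \S\ref{section:reducedfibers} together with standard characterizations of \'etale morphisms of rings.

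I would begin with Block A. For (1) $\Rightarrow$ (4): assuming $h^*$ injective with finite cokernel and the given square a pushout, choose a finite set $S \subseteq P^*$ of coset representatives for $P^*/h^*(Q^*)$ with $0 \in S$. Because $h^*$ is injective, the $Q^*$-cosets of $P^*$ are $\bigsqcup_{s \in S}(h^*(Q^*)+s)$, and then the pushout formula $P = Q \oplus_{Q^*} P^*$ (viewed as a pushout of $Q$-modules) gives $P = \bigsqcup_{s \in S}(h(Q)+s)$, so $S$ is a $Q$-basis. The implication (4) $\Rightarrow$ (3) reduces to showing that any $Q$-basis $S'$ sits inside $P^*$ and has cardinality $|P^*/h^*(Q^*)|$: reducing modulo $P^*$ shows the images of $S$ and $S'$ both give $\bar Q$-bases of $\bar P = \bar Q$ (so each image is a singleton), forcing $|S| = |S'| = |P^*/Q^*|$, and forcing $S' \subseteq P^*$ by the uniqueness of basis expansion applied to $-s' \in P$ for each $s' \in S'$. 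The equivalence (2) $\Leftrightarrow$ (1) is a direct pushout calculation: from (4) one reads off both $\bar h$ an isomorphism and $h$ injective (hence (2)), while conversely (2) gives a natural map $Q \oplus_{Q^*} P^* \to P$ that one checks is bijective by chasing the commutative square through $\bar h$.

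Next I would handle Block B and connect it back. Given (4), the basis $S \subseteq P^*$ makes $P$ free of finite rank over $Q$, so $h$ is finite and flat; locality follows because any $p \in P^*$ written as $h(q)+s$ with $s \in P^*$ forces $h(q) \in P^*$, hence $q \in Q^*$ by injectivity of $h^{\mathrm{gp}}$; reducedness of $h$ follows because $I(h) = h(Q\setminus Q^*)+P = \bigsqcup_{s\in S}(h(Q\setminus Q^*)+s)$, which is saturated because $n(h(q)+s) \in I(h)$ forces $nh(q) \in h(Q\setminus Q^*)$ (reading off the basis coordinate), whence $q \in Q\setminus Q^*$ by the face property. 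Clearly (7) $\Rightarrow$ (8). For (8) $\Rightarrow$ (4): using Lemma~\ref{lem:flat} the map $h$ is injective with $h^{\mathrm{gp}}$ injective, and finiteness plus flatness over the fine (hence noetherian in the monoid sense) $Q$ produces a finite $Q$-basis $S$ for $P$; the task is to upgrade this to a basis lying in $P^*$, and this is where reducedness enters decisively. Specifically, if some $s \in S$ were a non-unit, Theorem~\ref{thm:splittingmonoids} combined with Gaussian elimination (Lemma~\ref{lem:Gaussianelimination}) would produce a nilpotent element in the fiber algebra $\FF_p[P]/\FF_p[I(h)]$, contradicting the saturation characterization of reducedness in Theorem~\ref{thm:reducedfibers}. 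I expect \textbf{this implication (8) $\Rightarrow$ (4) to be the main obstacle}, because it requires genuinely using the structure theorem for splittings of fine monoids together with the Gaussian-elimination identification of saturated ideals, rather than merely formal properties of free modules.

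Finally, I would weave in Block C. The implication (4) $\Rightarrow$ (5a) is direct: if $S \subseteq P^*$ is a $Q$-basis, then $k[P]$ is free over $k[Q]$ with basis $\{[s] : s \in S\}$ consisting of units, and since $P^*/h^*(Q^*)$ has order invertible in any field of characteristic zero, Lemma~\ref{lem:reducedfibers} shows the fibers are reduced and geometrically a disjoint union of copies of $\Spec k[Q]$, hence \'etale. Combining with Theorem~\ref{thm:sharpening}/Corollary~\ref{cor:sharpening} applied to the isomorphism $\bar h$ gives that $\Spec h$ is a homeomorphism, yielding (5). The implications (5) $\Rightarrow$ (5a) $\Rightarrow$ (6) are trivial. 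For (6) $\Rightarrow$ (8): finiteness of $k[h]$ forces finiteness of $h$ by Proposition~\ref{prop:finite} (once we descend from $k[h]$ to $\ZZ[h]$ by the faithfully flat base change $\ZZ \to k$, or work directly with the monoid-algebra criterion); flatness of $k[h]$ combined with Theorem~\ref{thm:flatmaps} and a standard descent argument over $\ZZ$ yields flatness of $h$; and unramifiedness of $k[h]$ implies the fibers of $\Spec k[h]$ are reduced, which by Corollary~\ref{cor:reducedfibers} gives reducedness of $h$. This completes the cycle, since (8) $\Rightarrow$ (4) $\Rightarrow$ (7) also shows local, closing all the loops.
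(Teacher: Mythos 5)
Your overall architecture is sound and several of your routes genuinely differ from the paper's (you prove (1)$\Rightarrow$(4) directly via cosets and the pushout-as-extension-of-scalars, and you go (4)$\Rightarrow$(7) directly, where the paper instead travels (1)$\Rightarrow$(5)$\Rightarrow$(6)$\Rightarrow$(7) through the monoid algebra). But the implication you correctly single out as the crux, (8)$\Rightarrow$(4), is not actually proved, and the strategy you propose for it is not the right one. First, the existence of a finite $Q$-basis from ``finite $+$ flat'' is itself a nontrivial theorem (the paper invokes a ``Monoidal Quillen--Suslin Theorem'' from \cite{G2}); flatness here means ``filtered colimit of frees,'' and your parenthetical ``hence noetherian in the monoid sense'' does not yield freeness of a finitely generated flat module the way it would over a local ring. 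Second, and more seriously, your plan to show $S\subseteq P^*$ by using Theorem~\ref{thm:splittingmonoids} and Gaussian elimination to ``produce a nilpotent in $\FF_p[P]/\FF_p[I(h)]$'' is backwards: reducedness of $h$ is \emph{defined} as saturation of the ideal $I(h)$, so passing through the fiber algebra (whose reducedness is what Theorem~\ref{thm:reducedfibers} deduces \emph{from} saturation, via the splitting machinery) gains nothing and is never carried out. The missing idea is Gordan's Lemma: a finite injective map of fine monoids is dense (Theorem~\ref{thm:dense}), so if $s\in S$ were a non-unit then $ns\in h(Q)$ for some $n>0$, necessarily $ns\in h(Q\setminus Q^*)\subseteq I(h)$ (since $ns\in P^*$ would force $s\in P^*$, $P^*$ being a face); saturation then gives $s\in I(h)$, i.e.\ $s=h(q)+p$ with $q\notin Q^*$, and expanding $p=h(q')+s'$ in the basis yields $q+q'=0$, a contradiction. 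No nilpotents are needed.

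Two smaller points. In (4)$\Rightarrow$(3) your phrase ``uniqueness of basis expansion applied to $-s'$'' is circular: $-s'$ does not exist until you have shown $s'\in P^*$. The correct argument expands $s'=q+s$ in the known basis $S\subseteq P^*$ and then expands that $s$ back in terms of $S'$ to conclude $q\in Q^*$, whence $s'=q+s\in P^*$. In (4)$\Rightarrow$(5a), your description of the fibers as ``a disjoint union of copies of $\Spec k[Q]$'' is garbled; the clean route (the paper's) is to observe that $k[h]$ is a base change of $k[h^*]:k[Q^*]\to k[P^*]$, which in characteristic zero is obtained by adjoining roots of units and is therefore finite \'etale, with surjectivity supplied by Lemma~\ref{lem:affinesurjectivity}.
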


\begin{proof} \eqref{CZE1}$\implies$\eqref{CZE2}: One readily checks that that any pushout of an injective map of groups in injective.  To see that $\ov{h}$ is an isomorphism, just note that $\ov{h^*}$ is trivially an isomorphism since $\ov{Q^*} = \ov{P^*} = 0$, then use the fact that sharpening preserves pushouts because it is a left adjoint.  From the pushout diagram in \eqref{CZE1}, we find that $P/Q = P^*/Q^*$ is finite.

\eqref{CZE2}$\implies$\eqref{CZE1}:  Since $\ov{h}$ is an isomorphism, the diagram in \eqref{CZE1} is a pushout by \cite[Lemma~1.2.11]{GM1}.  Obviously $h^*$ is injective since $h$ is injective; $P^*/Q^*$ is finite because $P/Q$ is finite and the pushout diagram in \eqref{CZE1} yields $P/Q = P^*/Q^*$.

\eqref{CZE1}$\implies$\eqref{CZE3}: Pick any finite subset $S \subseteq P^*$ taken bijectively onto $P^*/Q^*$ by the quotient projection $P^* \to P^*/Q^*$.  Then $S$ is clearly a $Q^*$-basis for $P^*$.  The pushout diagram in \eqref{CZE1} then implies that $S \subseteq P^* \subseteq P$ is also a $Q$ basis for $P$.  Now suppose $S' \subseteq P$ is another $Q$-basis for $P$.  For $s' \in S'$ we can write $s' = q+s$ for some $q \in Q$, $s \in S$.  We can also write $s = q' +s''$ for some $q' \in Q$, $s'' \in S$.  Combining these we find $s' = q+q'+s''$, hence $s'=s''$ and $q+q'=0$ because $S'$ is a basis.  Since $s \in P^*$, the formula $s' = q+s$ now shows that $s' \in P^*$.  This proves that $S' \subseteq P^*$.  I claim that $S'$ is a $Q^*$ basis for $P^*$.  The only issue is to show that, for $u \in P^*$, when we write $u = q' + s'$ for $q' \in Q$ and $s' \in S'$ we must actually have $q' \in Q^*$.  Since $s' \in P^*$, we can write $s' = q + s$ for some $q \in Q^*$, $s \in S$.  We then find $u = q'+q+s$.  But then, since $S$ is a $Q^*$ basis for $P^*$ and a $Q$ basis for $P$, we must have $q'+q \in Q^*$, hence $q' \in Q^*$ as desired.  Since $S'$ is a $Q^*$ basis for $P^*$, it is clear that the quotient map $P^* \to P^*/Q^*$ takes $S'$ bijectively onto $P^*/Q^*$, hence $|S'|=|P^*/Q^*|=|S|$, as desired.

Obviously \eqref{CZE3}$\implies$\eqref{CZE4}.

\eqref{CZE4}$\implies$\eqref{CZE2}:  Since $P$ is a free $Q$-module, $h$ is certainly injective and the fact that the $Q$-basis $S$ for $P$ lies in $P^*$ easily implies that $\ov{h}$ is an isomorphism.  Since $P \to P/Q$ will take the basis $S$ bijectively onto $P/Q$, we find that $P/Q$ is finite.

\eqref{CZE1}$\implies$\eqref{CZE5}: $\Spec h$ is a homeomorphism because $\Spec \ov{P} \to \Spec P$ is a homeomorphism for any monoid $P$ (Corollary~\ref{cor:sharpening}) and $\ov{h}$ is an isomorphism (we already proved \eqref{CZE1}$\implies$\eqref{CZE2}).  Since finite \'etale maps are stable under base change and $\Spec k[ \slot ]$ takes pushout diagrams of monoids to cartesian diagrams of $k$-schemes, we just have to show that $\Spec k[h^*]$ is a finite \'etale cover when $h^* : Q^* \into P^*$ is an injective map of finitely generated abelian groups with finite cokernel.  Notice that the $k$-algebra $k[P^*]$ is obtained from the $k$-algebra $k[Q^*]$ by adjoining $n^{\rm th}$ roots (for varying $n$) of various elements of the form $[q] \in k[Q^*]$ for $q \in Q^*$.  Since these elements $[q]$ are units in $k[Q^*]$ and our base field $k$ has characteristic zero, this proves that $k[h^*]$ is \'etale.  Certainly $k[h^*]$ is finite because if $S$ is a $Q^*$ basis for $P^*$ then $S$ is also a $k[Q^*]$ basis for $k[P^*]$.  Certainly $\Spec h^*$ is surjective since $\Spec Q^*$ and $\Spec P^*$ are one-point spaces.  Since $h^*$ is also injective, the map $\Spec \ZZ[h^*]$ is surjective by Lemma~\ref{lem:surjectivity}, hence $\Spec k[h^*]$ is also surjective because surjective maps of schemes are stable under base change.

Obviously \eqref{CZE5}$\implies$\eqref{CZE5a}$\implies$\eqref{CZE6}.

\eqref{CZE6}$\implies$\eqref{CZE7}:  Since $k[h]$ is \'etale it is, in particular, flat, hence $h : Q \to P$ is also flat by \cite[4.1]{KK}.  Similarly, since $\Spec k[h]$ is finite, we see easily (cf.\ \cite[5.2.1]{G2}) that $h : Q \to P$ is finite.  Since $h$ is a flat map of integral monoids, it is injective by Lemma~\ref{lem:flat}.  Since $h$ is a finite, injective map of fine monoids, $\Spec h$ is surjective by Theorem~\ref{thm:finite}, but then $h$ must be local because otherwise $Q^* \in \Spec Q$ would not be in the image of $\Spec h$.  Since $h$ is local and $k[h]$ is \'etale (hence has reduced fibers), $h$ must be reduced by Corollary~\ref{cor:reducedfibers}.

Obviously \eqref{CZE7}$\implies$\eqref{CZE8}.

\eqref{CZE8}$\implies$\eqref{CZE4}:  Since $h$ is a finite, flat map of fine monoids, there is a finite basis $S \subseteq P$ for $P$ as a $Q$ module by the ``Monoidal Quillen-Suslin Theorem" \cite[Theorem~5.6.2]{G2}.  It remains to use the further assumption that $h$ is reduced (i.e.\ that $I := h(Q \setminus Q^*)+P$ is a saturated ideal of $P$) to prove that any such basis $S$ must in fact be contained in $P^*$.  Suppose, toward a contradiction, that some $s \in S$ is not in $P^*$.  A finite map $h$ of fine monoids is dense (Definition~\ref{defn:dense}) by Theorem~\ref{thm:dense}\eqref{dense3}, so $ns \in h(Q)$ for some $n \in \ZZ_{>0}$.  We cannot have $ns \in h(Q^*) \subseteq P^*$ because $s \notin P^*$, so we have $ns \in I$.  But $I$ is saturated, so $s \in I$ and hence we can write $s = h(q) + p$ for some $q \in Q \setminus Q^*$ and some $p \in P$.  Since $S$ is a $Q$-basis for $P$, we can write $p = h(q')+s'$ for some $q' \in Q$, $s' \in S$, hence $h(0)+s = h(q+q')+s'$, so, since $S$ is a basis we find that $q+q'=0$ (and that $s=s'$, which is not important), which is a contradiction because $q \notin Q^*$. \end{proof}

\begin{rem} \label{rem:CZEfiniteness} It is possible to consider various generalizations of CZE maps of fine monoids and fine fans.  For example, Condition \eqref{CZE1} in Theorem~\ref{thm:CZEmaps} serves as a good definition of CZE maps between \emph{general} monoids.  For a prime number $p$, one can also consider ``CpE" (``characteristic $p$ \'etale") maps of monoids by further demanding in Theorem~\ref{thm:CZEmaps}\eqref{CZE1} that the order of $\Cok (h^*)$ be prime to $p$.  Without finiteness assumptions, the ``best" way to define a CZE map of fans $f$ is to ask that $f$ be Zariski locally (on its domain and codomain) given by $\Spec$ of a CZE map of monoids---we shall see in Proposition~\ref{prop:CZEimpliesZariski} that for fine fans this manner of defining CZE maps corresponds with our Definition~\ref{defn:CZEmap} in terms of stalks.  For simplicity, we will stick to fine monoids, fine fans, and ``characteristic zero" in this section, but we invite the reader to think about these more general notions if desired. \end{rem}

\begin{defn} \label{defn:CZEmap}  A map of fine monoids $h :Q \to P$ is called \emph{CZE (characteristic zero \'etale)} iff it satisfies the equivalent conditions in Theorem~\ref{thm:CZEmaps}.  A map of fine fans $f : X \to Y$ is called \emph{CZE} iff $f_x : \mathcal{M}_{Y,f(x)} \to \mathcal{M}_{X,x}$ is a CZE map of monoids for every $x \in X$.  A \emph{CZE cover} is a surjective CZE map of fine fans.  The \emph{CZE topology} on the category of fine fans is the topology generated by the CZE covers (cf.\ \cite[IV.4.2]{SGA3}). \end{defn}

\begin{rem} \label{rem:CZEtopology}  Since a CZE map of monoids is flat, every CZE map of fine fans is flat (in the sense of Definition~\ref{defn:flatmapoffans}) and every CZE cover is, in particular, a flat cover.  In particular, by Theorem~\ref{thm:flatdescent}, every CZE cover is a universal effective descent morphism---i.e.\ the CZE topology on fine fans is subcanonical. \end{rem}

We now collect some basic facts about CZE maps of fine monoids:

\begin{lem} \label{lem:CZEmaps} CZE maps of fine monoids are stable under composition and base change.  Suppose $h : Q \into P$ is a CZE map of fine monoids and $F \leq P$ is a face of $P$.  Then $(F \cap Q) \into F$ and $(F \cap Q)^{-1}Q \to F^{-1}P$ are also CZE maps of fine monoids.  It follows that $\Spec h$ is a CZE map of fine fans which is a homeomorphism on topological spaces---in particular $\Spec h$ is a CZE cover. \end{lem}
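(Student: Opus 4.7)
The plan is to establish the composition and base-change stability using the finite-basis characterization \eqref{CZE4} and the pushout-square characterization \eqref{CZE1} of Theorem~\ref{thm:CZEmaps}, then to deduce the face statements, and finally to apply base-change stability together with characterization \eqref{CZE5} to conclude about $\Spec h$.

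For composition $h'h : Q \to P \to P'$ of CZE maps, I pick finite bases $S \subseteq P^*$ for $P$ over $Q$ and $T \subseteq (P')^*$ for $P'$ over $P$ via \eqref{CZE4}; then $\{h'(s) + t : s \in S,\ t \in T\}$ is a finite $Q$-basis for $P'$ lying in $(P')^*$ (a sum of units is a unit), so $h'h$ is CZE by \eqref{CZE4}. For base change along any $g : Q \to Q'$ of fine monoids, I form the pushout $P' := P \oplus_Q Q'$ in $\Mon$; using \eqref{CZE1}, this is the same as the pushout $Q' \oplus_{Q^*} P^*$. Fixing a basis $S \subseteq P^*$ from \eqref{CZE3}, every element of $P'$ has a unique representation $q' + s$ with $q' \in Q'$ and $s \in S$, and the addition law reads $(q'_1 + s_1) + (q'_2 + s_2) = (q'_1 + q'_2 + g(u_{12})) + s_{12}$, where $s_1 + s_2 = h^*(u_{12}) + s_{12}$ is the unique $Q^*$-basis decomposition of $s_1 + s_2 \in P^*$. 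The main obstacle is verifying that $P'$ is fine: finite generation is immediate (it is a quotient of the fine monoid $P \oplus Q'$), but integrality requires unambiguous cancellation in the above addition, which I will derive from uniqueness of the basis decomposition in $P^*$, injectivity of $h^* : Q^* \to P^*$, and integrality of $Q'$. Once $P'$ is known to be fine, the image of $S$ in $(P')^*$ is a finite $Q'$-basis for $P'$, and \eqref{CZE4} gives that $Q' \to P'$ is CZE.

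For the face statements, I set $G := F \cap Q = h^{-1}(F)$, a face of $Q$, so $G^* = Q^*$ and $F^* = P^*$. Using a basis $S \subseteq P^*$ for $h$, I verify that $S$ is also a basis for $F$ over $G$: given any $p \in F$ with its unique decomposition $p = h(q) + s$, the fact that $s \in P^* \subseteq F$ and that $F$ is a face forces $h(q) \in F$, so $q \in G$; conversely, $h(q) + s \in F$ whenever $q \in G$ and $s \in S$. Since $S \subseteq F^*$, characterization \eqref{CZE4} gives that $G \hookrightarrow F$ is CZE. For the localized map, the same surjectivity argument shows that every $p \in F$ has the form $h(q) + u$ with $q \in G$ and $u \in P^*$, so the smallest face of $P$ containing $h(G)$ is $F$ itself; Lemma~\ref{lem:facesandlocalization} then gives $h(G)^{-1} P = F^{-1} P$. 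This identifies $G^{-1} Q \to F^{-1} P$ with the base change of $h$ along the localization $Q \to G^{-1} Q$, so it is CZE by the stability just proved. Finally, injectivity of $h$ means the image of any face $F \leq P$ under $\Spec h$ is $G = F \cap Q$, and the stalk map of $\Spec h$ at $F$ is exactly the map $G^{-1} Q \to F^{-1} P$, which is CZE; so $\Spec h$ is a CZE map of fine fans. It is a homeomorphism by part \eqref{CZE5} of Theorem~\ref{thm:CZEmaps}, hence surjective, hence a CZE cover.
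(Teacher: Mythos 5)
Your proof is correct, and the face/localization portion coincides with the paper's own argument (the basis $S\subseteq P^*$ restricts to a $(F\cap Q)$-basis of $F$ because $F$ is a face; the localized map is exhibited as a pushout of $h$ because $(F\cap Q)^{-1}P=F^{-1}P$). Where you genuinely diverge is in the two stability statements. For composition, the paper uses characterization \eqref{CZE1}: a composite of pushout squares is a pushout, and a composite of injections of abelian groups with finite cokernel is again one; you instead concatenate bases via \eqref{CZE4}, taking $\{h'(s)+t\}$ as a $Q$-basis of $P'$ --- both are fine, and yours is the more hands-on of the two. For base change, the paper disposes of the matter in one line via \eqref{CZE6}: $\Spec k[\,\cdot\,]$ turns pushouts of monoids into pullbacks of schemes and finite \'etale maps are stable under pullback. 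Your explicit description of $P\oplus_Q Q'$ as $Q'\oplus_{Q^*}P^*$ with the twisted addition $(q_1'+s_1)+(q_2'+s_2)=(q_1'+q_2'+g(u_{12}))+s_{12}$ is more work but has a real advantage: it confronts directly the question of whether the pushout is \emph{fine} (in particular integral), which the paper's appeal to Theorem~\ref{thm:CZEmaps} quietly presupposes, since that theorem is only stated for maps of fine monoids. Do carry out the cancellation check you promise rather than leaving it as a sketch: if $(q_1',s_1)+(q',s)=(q_2',s_2)+(q',s)$, then equality of the $S$-components and integrality of $Q'$ give $q_1'+g(u_1)=q_2'+g(u_2)$ with $s_i+s=u_i+t$ in the group $P^*$; subtracting shows $s_1-s_2\in Q^*$, hence $s_1=s_2$ and $u_1=u_2$ since $S$ is a set of coset representatives for $P^*/Q^*$, and then $q_1'=q_2'$. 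With that written out, the argument is complete.
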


\begin{proof} To see that CZE maps of fine monoids are stable under composition, use Theorem~\ref{thm:CZEmaps}\eqref{CZE1} and note that a composition of pushout squares is a pushout and a composition of injective maps of abelian groups with finite cokernel is again injective with finite cokernel.  Stability of CZE maps under pushout follows easily from any one of several characterizations of such maps in Theorem~\ref{thm:CZEmaps}.  For example, using \eqref{CZE6}, one need only note that the functor $\Spec k[ \slot ]$ takes pushouts of monoids to pullbacks of schemes and finite \'etale maps of schemes are stable under pullback.  Since $h : Q \into P$ is CZE, we can find a finite $Q$-basis $S \subseteq P^*$ for $P$.  Since $F \leq P$ is a face, we have $P^* \subseteq F$, so, in particular, $S \subseteq F^*$ and we claim that $S$ is an $(F \cap Q)$-basis for $F$.  Indeed, given $f \in F$ we know that it is possible to write $f = q+s$ for a unique $q \in Q$, $s \in S$ because $S$ is a $Q$-basis for $P$, so the issue is to show that this $q$ must actually be in $F \cap Q$---but that is clear from the definition of a face since $q+s = f \in F$.  Since we already showed that CZE maps are stable under pushout, we can prove that $(F \cap Q)^{-1}Q \to F^{-1}P$ is CZE by showing that $$ \xym{Q \ar[r]^-h \ar[d] & P \ar[d] \\ (F \cap Q)^{-1}Q \ar[r] & F^{-1}P } $$ is a pushout.  By the universal property of localization, this amounts to showing that $(F \cap Q)^{-1}P = F^{-1}P$.  For this it is enough to note that any $f \in F$ can be written as $f=q+s$ with $q \in F \cap Q$, $s \in P^*$, as we saw above. \end{proof}

\begin{prop} \label{prop:CZEmaps} Suppose $h : Q \to P$ is a map of fine monoids for which the map of fine fans $\Spec h$ is CZE (i.e.\ $h^{-1}(F)^{-1}Q \to F^{-1}P$ is a CZE map of fine monoids for every face $F \leq P$).  Then $h$ is injective, quasi-finite, and flat, and if we let $G := h^{-1}(P^*)$, then the induced map $G^{-1} Q \to P$ is a CZE map of fine monoids.  The following are equivalent: \begin{enumerate} \item \label{CZEhlocal} $G=Q^*$ (i.e.\ $h$ is local). \item \label{CZEhdense} $h$ is dense (Definition~\ref{defn:dense}). \item \label{CZEhfinite} $h$ is finite ($P$ is finitely generated as a $Q$-module). \item \label{CZEhfiniteand} $h$ is finite and $\Spec h$ is a homeomorphism.  \item $\Spec h$ is a homeomorphism. \item \label{CZEhquasifiniteand} $\Spec h$ is surjective. \item \label{CZEmapofmonoids} $h$ is a CZE map of fine monoids \item \label{CZEcover} $\Spec h$ is a CZE cover. \end{enumerate} \end{prop}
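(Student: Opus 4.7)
The strategy is to first establish the three preliminary claims (injectivity, quasi-finiteness, flatness of $h$, and CZE-ness of $G^{-1}Q\to P$) directly from stalk-level information, then derive the listed equivalences by invoking Theorem~\ref{thm:finite} together with what we already know about CZE maps of monoids.

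First I would identify the relevant stalk. For any face $F\leq P$ the stalk of $\Spec h$ at $F$ is the map $h^{-1}(F)^{-1}Q\to F^{-1}P$ induced by $h$ (Proposition~\ref{prop:Spec}\eqref{stalkMP}), and by hypothesis this is CZE. Taking $F=P^*$ gives that $G^{-1}Q\to P$ is CZE, yielding the last preliminary assertion. Flatness of every such stalk (Theorem~\ref{thm:CZEmaps}\eqref{CZE7}) is exactly the definition of $h$ being a flat map of monoids, so $h$ is flat. To see $h$ is injective, factor it as $Q\to G^{-1}Q\to P$: the first map is injective because $Q$ is integral, and the second is CZE hence injective by Theorem~\ref{thm:CZEmaps}\eqref{CZE2}, so $h$ is injective. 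For quasi-finiteness it suffices, by Theorem~\ref{thm:quasifinite}, to check that $f_x^*=(h_x^*)\colon \M_{Y,f(x)}^*\to\M_{X,x}^*$ has finite cokernel at each point, and this is built into condition \eqref{CZE1} of Theorem~\ref{thm:CZEmaps} applied to each stalk.

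With those preliminaries in hand, I turn to the equivalence of \eqref{CZEhlocal}--\eqref{CZEhquasifiniteand}. Since $h$ is now known to be an injective, quasi-finite map of fine monoids, Theorem~\ref{thm:finite} gives at once that being dense, being finite, being finite with $\Spec h$ a homeomorphism, having $\Spec h$ a homeomorphism, having $\Spec h$ surjective, and being local are all equivalent; this handles the chain through \eqref{CZEhquasifiniteand}. To connect \eqref{CZEmapofmonoids} to this chain I would argue both directions separately: if $h$ is CZE then $\ov{h}\colon \ov{Q}\to\ov{P}$ is an isomorphism (Theorem~\ref{thm:CZEmaps}\eqref{CZE2}), so $h^{-1}(P^*)=Q^*$ and hence $h$ is local, giving \eqref{CZEhlocal}; conversely, if $G=Q^*$ then $G^{-1}Q=Q$, and since $G^{-1}Q\to P$ was already shown to be CZE in the preliminary step, $h$ itself is CZE. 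Finally, \eqref{CZEcover} is equivalent to the surjectivity of $\Spec h$ (given the standing hypothesis that $\Spec h$ is already CZE), which is \eqref{CZEhquasifiniteand}; and alternatively one can observe directly via Lemma~\ref{lem:CZEmaps} that \eqref{CZEmapofmonoids} implies $\Spec h$ is a CZE cover.

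There is no serious obstacle here: the proof is a piece of bookkeeping that threads together Theorem~\ref{thm:CZEmaps}, Theorem~\ref{thm:finite}, Theorem~\ref{thm:quasifinite}, and Lemma~\ref{lem:CZEmaps}. If anything is delicate it is the factorization argument for injectivity, where one must be careful that the stalk at the closed point really captures $G^{-1}Q\to P$ (rather than only a further localization of it), but this follows cleanly from the fact that $P^*$ is the unique closed point of $\Spec P$ and $\M_{P,P^*}=P$ (Remark~\ref{rem:SpecP}).
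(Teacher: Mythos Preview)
Your proposal is correct and follows essentially the same approach as the paper: establish the preliminary claims stalkwise using the characterizations in Theorem~\ref{thm:CZEmaps}, then invoke Theorem~\ref{thm:finite} for the equivalence of \eqref{CZEhlocal}--\eqref{CZEhquasifiniteand} and read off the remaining two from the definitions. The only cosmetic difference is that the paper obtains injectivity by taking $F=P$ (so the stalk is $h^{\rm gp}$, which is injective since CZE maps are injective), whereas you factor through $G^{-1}Q$; both work, and one small imprecision is that ``flatness of every stalk is exactly the definition of $h$ being flat'' really needs Lemma~\ref{lem:flat} to pass from stalkwise flatness of $\Spec h$ to flatness of $h$ as a map of monoids.
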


\begin{proof} Since $h^{-1}(F)^{-1}Q \to F^{-1}P$ is a CZE map of fine monoids for every $F \leq P$, $h$ is injective (take $F=P$ to see that $h^{\rm gp}$ is injective since a CZE map of fine monoids is injective), quasi-finite, and flat, since these conditions can be checked at each face of $P$ and a CZE map of fine monoids is finite (hence quasi-finite) and flat.  Obviously $G^{-1}Q \to P$ is CZE because it is just the map $h^{-1}(F)^{-1}Q \to F^{-1}P$ in the case $F=P^*$.  The conditions \eqref{CZEhlocal}-\eqref{CZEhquasifiniteand} are equivalent by Theorem~\ref{thm:finite}.  It is immediate from the definitions (i.e.\ from the characterization of CZE maps of fine monoids in Theorem~\ref{thm:CZEmaps}) that these conditions are equivalent to the other two. \end{proof}

\begin{prop} CZE maps (resp.\ CZE covers) of fine fans are closed under composition and base change. \end{prop}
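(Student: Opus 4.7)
The plan is to reduce everything to Proposition~\ref{prop:stableunderbasechange} together with the monoid-level result Lemma~\ref{lem:CZEmaps}. That proposition says: for a property ${\bf R}$ of maps of fans specified by a property ${\bf Q}$ on stalks (stable under pushouts of monoids) and a property ${\bf P}$ on the underlying topological map (stable under fibered products), ${\bf R}$ is stable under base change; likewise for composition when both ${\bf P}$ and ${\bf Q}$ are stable under composition. The underlying space of a base change of fans is the fibered product of spaces by Theorem~\ref{thm:fansinverselimits}, and the stalks of a base change are the corresponding pushouts of stalks, which is what makes the proposition apply.

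For CZE maps I would take ${\bf Q}$ to be ``CZE map of fine monoids'' and ${\bf P}$ trivial; Lemma~\ref{lem:CZEmaps} already supplies the requisite stability of ${\bf Q}$ under composition and under pushout in monoids, so Proposition~\ref{prop:stableunderbasechange} delivers stability of CZE under composition and base change. For CZE covers, I additionally take ${\bf P}$ to be surjectivity of the underlying continuous map, which is trivially stable under composition and is stable under base change (since for any $y' \in Y'$ the fiber of $X \times_Y Y' \to Y'$ over $y'$ is bijective with the fiber of $X \to Y$ over $g(y')$, which is non-empty by surjectivity). The same proposition then covers this strengthened case.

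The one subtle point which makes this appeal meaningful \emph{in the category of fine fans} is that a base change of fine fans along a CZE map must itself be a fine fan, otherwise ``CZE'' is not even defined on the base change. Locally this reduces to: given a CZE map $h \colon Q \to P$ of fine monoids and a map $Q \to Q'$ of fine monoids, the pushout $P \oplus_Q Q'$ is fine. Using the freeness characterization in Theorem~\ref{thm:CZEmaps}\eqref{CZE3} I would fix a finite $Q$-basis $S \subseteq P^*$ of $P$; then $P \oplus_Q Q'$ is free as a $Q'$-module on $S$, giving finite generation as a monoid over generators of $Q'$ together with $S$ (and using $S \subseteq P^*$, so $-S$ is also available via $(Q')^*$ after pushout). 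For integrality, groupification preserves pushouts, so $(P \oplus_Q Q')^{\rm gp} = P^{\rm gp} \oplus_{Q^{\rm gp}} (Q')^{\rm gp}$ is likewise a free $(Q')^{\rm gp}$-module on $S$; injectivity of $Q' \hookrightarrow (Q')^{\rm gp}$ then yields injectivity of $P \oplus_Q Q' \hookrightarrow (P \oplus_Q Q')^{\rm gp}$ by taking the direct sum over $S$.

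I expect this fineness verification to be the only ``real work,'' and it is quite mild given Theorem~\ref{thm:CZEmaps}. Everything else is formal bookkeeping: applying Proposition~\ref{prop:stableunderbasechange} with ${\bf P}, {\bf Q}$ chosen as above, and invoking Lemma~\ref{lem:CZEmaps} for the monoid side.
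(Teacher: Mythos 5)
Your proof is correct and follows essentially the same route as the paper, which simply cites Proposition~\ref{prop:stableunderbasechange} together with Lemma~\ref{lem:CZEmaps}. Your additional check that the pushout $P \oplus_Q Q'$ along a CZE map remains fine is a detail the paper leaves implicit, and your verification of it via the free basis $S \subseteq P^*$ is sound.
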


\begin{proof} CZE maps of fine monoids are stable under composition and base change by Lemma~\ref{lem:CZEmaps}, so this follows from Proposition~\ref{prop:stableunderbasechange}. \end{proof}

\begin{prop} \label{prop:CZEimpliesZariski}  Suppose $f : X \to Y$ is a CZE map of fine fans.  Then: \begin{enumerate} \item \label{CZElocalpicture} Zariski locally on $f$, $f$ is $\Spec$ of a CZE map of fine monoids.  \item \label{CZElocalhomeo} The map of topological spaces underlying $f$ is a local homeomorphism.  That is, every point $x \in X$ has an open neighborhood mapping homeomorphically under $f$ onto an open neighborhood of $f(x)$ in $Y$. \end{enumerate} In particular, any CZE cover of fine fans is a Zariski cover on the underlying spaces. \end{prop}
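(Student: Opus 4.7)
The plan is to reduce everything to the affine situation via the local picture already established in Proposition~\ref{prop:localpicture}, and then to extract both claims from the elementary facts about $\Spec$ of a CZE monoid homomorphism collected in Lemma~\ref{lem:CZEmaps} and Theorem~\ref{thm:CZEmaps}. The two parts will turn out to be nearly the same assertion, and I do not anticipate any real obstacle; the work is almost entirely bookkeeping, the only point of substance being to verify that the diagram produced by Proposition~\ref{prop:localpicture} actually exhibits $f$ locally as $\Spec$ of a monoid homomorphism.

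First I would fix a point $x \in X$. Since $X$ and $Y$ are fine, their underlying spaces are locally finite (Proposition~\ref{prop:locallyfinitetypefans}), so the smallest open neighborhood $U_x$ of $x$ in $X$ is affine and canonically identified with $\Spec \M_{X,x}$, and similarly $U_{f(x)} \cong \Spec \M_{Y,f(x)}$. Proposition~\ref{prop:localpicture} then furnishes a natural commutative square
\begin{equation*}
\xymatrix{ \Spec \M_{X,x} \ar[r]^-{\cong} \ar[d]_{\Spec f_x} & U_x \ar@{^{(}->}[r] \ar[d]^{f|U_x} & X \ar[d]^-f \\ \Spec \M_{Y,f(x)} \ar[r]^-{\cong} & U_{f(x)} \ar@{^{(}->}[r] & Y, }
\end{equation*}
where the horizontal arrows on the left are isomorphisms and those on the right are open embeddings of fans. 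In particular, $f(U_x) \subseteq U_{f(x)}$ and $f|U_x : U_x \to U_{f(x)}$ is (isomorphic to) $\Spec f_x$. By the very definition of a CZE map of fine fans (Definition~\ref{defn:CZEmap}), $f_x : \M_{Y,f(x)} \to \M_{X,x}$ is a CZE map of fine monoids, so this establishes assertion \eqref{CZElocalpicture}.

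For assertion \eqref{CZElocalhomeo}, I would simply invoke the final sentence of Lemma~\ref{lem:CZEmaps}, which states that $\Spec h$ is a homeomorphism on topological spaces for any CZE map $h$ of fine monoids. Applied with $h = f_x$, this shows that $\Spec f_x : \Spec \M_{X,x} \to \Spec \M_{Y,f(x)}$ is a homeomorphism, hence via the identifications above the restriction $f|U_x : U_x \to U_{f(x)}$ is a homeomorphism between the open neighborhoods $U_x$ of $x$ and $U_{f(x)}$ of $f(x)$, proving that the map underlying $f$ is a local homeomorphism.

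The final sentence is then immediate: if $f$ is a CZE cover (i.e.\ surjective) then the open embeddings $\{U_x \hookrightarrow Y : x \in X\}$ (obtained by composing $U_x \subseteq X$ with $f$, each of which is an open embedding by \eqref{CZElocalhomeo}) form an open cover of $Y$ because $f(X) = Y$, so $f$ refines to a Zariski cover on underlying spaces.
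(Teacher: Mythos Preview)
Your proof is correct and follows essentially the same route as the paper: invoke Proposition~\ref{prop:localpicture} together with local finiteness of fine fans (Proposition~\ref{prop:locallyfinitetypefans}) to identify $f$ near $x$ with $\Spec f_x$, then use that $\Spec$ of a CZE monoid map is a homeomorphism (Lemma~\ref{lem:CZEmaps}). The paper's proof is terser but makes exactly the same moves.
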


\begin{proof} \eqref{CZElocalpicture} follows from Proposition~\ref{prop:localpicture} because the topological space underlying a fine fan is locally finite (Proposition~\ref{prop:locallyfinitetypefans}).  Since $\Spec h$ is a homeomorphism for any CZE map $h$, \eqref{CZElocalhomeo} follows from \eqref{CZElocalpicture}. \end{proof}

\begin{prop} \label{prop:CZErealizationetale} Suppose $f : X \to Y$ is a CZE map (resp.\ CZE cover) of fine fans.  Then the scheme realization $\AA(f)$ of $f$ is a flat map (resp.\ an fppf cover) of locally finite type schemes which becomes \'etale (resp.\ an \'etale cover) after base change along $\ZZ \to \QQ$. \end{prop}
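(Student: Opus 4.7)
The plan is to reduce to the affine local situation and invoke the characterizations of CZE maps of fine monoids in Theorem~\ref{thm:CZEmaps}. By Proposition~\ref{prop:CZEimpliesZariski}\eqref{CZElocalpicture}, any CZE map $f : X \to Y$ of fine fans is, Zariski locally on both source and target, of the form $\Spec h$ for a CZE map $h : Q \to P$ of fine monoids. Since $\AA(\slot)$ sends open embeddings of fans to open embeddings of schemes and Zariski covers to Zariski covers (Proposition~\ref{prop:algebraicrealization2}), and since each of the properties in question---flatness, local finite type, \'etaleness, being an fppf or \'etale cover---is Zariski local on source and target, it suffices to verify the claims for $\AA(\Spec h) = \Spec \ZZ[h]$ where $h : Q \to P$ is a CZE map of fine monoids.

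First, by Theorem~\ref{thm:CZEmaps}\eqref{CZE4}, $P$ is a finitely generated free $Q$-module via $h$, so $\ZZ[P]$ is a finitely generated free $\ZZ[Q]$-module. This simultaneously yields that $\ZZ[h]$ is finite (in particular of finite type) and flat. Thus $\AA(f)$ is flat and locally of finite type. Since $\ZZ$ is Noetherian and $\AA(X), \AA(Y)$ are locally of finite type over $\ZZ$, locally of finite type means locally of finite presentation, which is what is needed for fppf. For the \emph{cover} version: if $f$ is in addition surjective, then $f$ is a flat cover in the sense of Definition~\ref{defn:flatmapoffans}, whence $\AA(f)$ is flat and surjective by Theorem~\ref{thm:flatmaps}, so $\AA(f)$ is an fppf cover.

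Finally, for the base change to $\QQ$: Theorem~\ref{thm:CZEmaps}\eqref{CZE5a} tells us directly that $\Spec \QQ[h]$ is a finite \'etale cover of $\Spec \QQ[Q]$ for any CZE map $h$ of fine monoids. Since \'etaleness is Zariski local on source and target, we conclude that $\AA(f) \otimes_{\ZZ} \QQ$ is \'etale; and since surjectivity is preserved by base change, $\AA(f) \otimes_{\ZZ} \QQ$ is an \'etale cover whenever $f$ is a CZE cover.

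There is no substantive obstacle here---the work consists of assembling the equivalent characterizations from Theorem~\ref{thm:CZEmaps} together with the Zariski-local structure result from Proposition~\ref{prop:CZEimpliesZariski} and the realization compatibility in Proposition~\ref{prop:algebraicrealization2} (together with Theorem~\ref{thm:flatmaps} for surjectivity). The only mildly subtle point is that one must invoke \eqref{CZE5a} rather than trying to prove $\ZZ[h]$ itself is \'etale: in general $\ZZ[h]$ will only be \'etale after inverting the order of $\Cok(h^*)_{\mathrm{tor}}$, which is precisely what happens upon base change to $\QQ$ (since this group is finite by Theorem~\ref{thm:CZEmaps}\eqref{CZE1}).
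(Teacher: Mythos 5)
Your proof is correct and follows essentially the same route as the paper: reduce to the affine local picture via Proposition~\ref{prop:CZEimpliesZariski}, get flatness (and surjectivity in the cover case) from the flat-cover property together with Theorem~\ref{thm:flatmaps}, and get \'etaleness over $\QQ$ from the characterization \eqref{CZE5a} in Theorem~\ref{thm:CZEmaps}. The only cosmetic difference is that you additionally derive flatness directly from the free-basis characterization \eqref{CZE4}, whereas the paper simply cites Remark~\ref{rem:CZEtopology} and Theorem~\ref{thm:flatmaps}; both are fine.
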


\begin{proof}  As noted in Remark~\ref{rem:CZEtopology}, $f$ is, in particular, a flat map (resp.\ flat cover) of fine fans, so the first part of the proposition follows from Theorem~\ref{thm:flatmaps} and it remains only to check that $\AA(f)$ becomes \'etale after base change along $\ZZ \to \QQ$.  This can be checked locally on $\AA(f)$, so it follows from Proposition~\ref{prop:CZEimpliesZariski}\eqref{CZElocalpicture} and the characterization of CZE maps of fine monids in Theorem~\ref{thm:CZEmaps}.  \end{proof}

Recall from \S\ref{section:groupobjects} that a (finitely generated abelian) group $A$ gives rise (contravariantly in $A$) to an abelian group object $\GG(A)$ in the category of fine fans, with underlying fan $\Spec A$.  

\begin{prop} \label{prop:CZEtorsors} For any exact sequence of finitely generated abelian groups \bne{FGAGSeq} & 0 \to A \to B \to C \to 0, \ene there exists an injective map of abelian groups $A \into A'$ with finite cokernel such that the exact sequence \bne{FGAGSeq2} & 0 \to A' \to B' \to C \to 0 \ene obtained by pushing out \eqref{FGAGSeq} along $A \to A'$ splits.  It follows that the sequence \bne{GGFGAGSeq} & 0 \to \GG(C) \to \GG(B) \to \GG(A) \to 0 \ene obtained by applying $\GG( \slot ) = \Spec( \slot )$ to \eqref{FGAGSeq} is a short exact sequence of (representable) sheaves of abelian groups on the category of fine fans in the CZE topology which can be split after pulling back along a CZE cover $\GG(A') \to \GG(A)$.  Equivalently, the induced action of $\GG(C)$ on $\GG(B)$ makes $\GG(B) \to \GG(A)$ a $\GG(C)$ torsor, locally trivial in the CZE topology.  \end{prop}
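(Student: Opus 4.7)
The first assertion is essentially a restatement of Lemma~\ref{lem:splittinggroups}, so I would begin by invoking that lemma to produce an injection $\iota \colon A \hookrightarrow A'$ with $\Cok \iota$ finite such that the pushout sequence $0 \to A' \to B' \to C \to 0$ with $B' = B \oplus_A A'$ splits. I would fix a splitting $B' \cong A' \oplus C$ once and for all.

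Next I would verify that $\GG(\iota) \colon \GG(A') \to \GG(A)$ is a CZE cover. Since $A$ and $A'$ are groups, $A = A^*$ and $A' = (A')^*$, while $\ov{A} = \ov{A'} = 0$; the criterion of Theorem~\ref{thm:CZEmaps}\eqref{CZE1} therefore collapses to the requirement that $\iota$ be injective with finite cokernel, which it is by construction. Surjectivity on underlying spaces is automatic, since both fans are singletons. The key geometric step is then to identify the pullback $\GG(B) \times_{\GG(A)} \GG(A')$: since $\Spec \colon \Mon^{\op} \to \Fans$ preserves inverse limits (Theorem~\ref{thm:Specinverselimits}), it converts the pushout square defining $B'$ into a pullback square of fans, giving a canonical isomorphism
\[ \GG(B) \times_{\GG(A)} \GG(A') \;\cong\; \GG(B'). \]
The chosen splitting $B' \cong A' \oplus C$ yields $\GG(B') \cong \GG(A') \times \GG(C)$, under which the projection onto $\GG(A')$ becomes the first-factor projection. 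A quick check of functoriality shows that the natural $\GG(C)$-action on $\GG(B')$ (induced by the inclusion $\GG(C) \hookrightarrow \GG(B')$ as a subgroup via the map $B' \to C$) transports to translation on the second factor, so the pulled-back torsor is the trivial $\GG(C)$-torsor over $\GG(A')$.

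This establishes the equivalent formulation: $\GG(B) \to \GG(A)$ carries a natural $\GG(C)$-action making it a $\GG(C)$-torsor that trivializes over the CZE cover $\GG(A') \to \GG(A)$. For the sheaf-theoretic statement, left exactness of $0 \to \GG(C) \to \GG(B) \to \GG(A)$ as a sequence of presheaves on $\Fans$ follows by applying $\Hom_{\Mon}(\slot,\M_T(T))$ to \eqref{FGAGSeq}: since $A,B,C$ are groups all such monoid homomorphisms land in units, so this becomes $\Hom_{\Ab}(\slot,\M_T(T)^*)$, which is left exact. Representable presheaves are already sheaves in the CZE topology (which is subcanonical, cf.\ Remark~\ref{rem:CZEtopology}), and local surjectivity of $\GG(B) \to \GG(A)$ in the CZE topology is exactly the local triviality just proved, since any $T$-point of $\GG(A)$ can be lifted to $\GG(B)$ after pullback along the CZE cover $T \times_{\GG(A)} \GG(A') \to T$ (base change of a CZE cover is again a CZE cover by Lemma~\ref{lem:CZEmaps}).

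The only subtlety I anticipate is a careful bookkeeping check that the $\GG(C)$-action transported through the isomorphism $\GG(B') \cong \GG(A') \times \GG(C)$ coming from the splitting of $B'$ agrees with translation on the $\GG(C)$-factor; but this is a direct unwinding of the definition of the torsor action together with the compatibility of $\Spec$ with coproducts of groups (which become products of group fans).
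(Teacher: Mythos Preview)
Your proof is correct and follows essentially the same approach as the paper: invoke Lemma~\ref{lem:splittinggroups} for the splitting, observe that $\GG(A') \to \GG(A)$ is a CZE cover (the paper cites Proposition~\ref{prop:CZEmaps} rather than Theorem~\ref{thm:CZEmaps}\eqref{CZE1} directly, but the content is the same), and use that $\Spec$ preserves inverse limits to identify the pullback with $\GG(B') \cong \GG(A') \times \GG(C)$. Your treatment is in fact more thorough than the paper's, which leaves the sheaf-exactness statement and the compatibility of the $\GG(C)$-action entirely implicit.
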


\begin{proof} The first statement was proved in Lemma~\ref{lem:splittinggroups}.  For the other statements, note that such a map $A \to A'$ is a CZE map of monoids, hence $\GG(A') \to \GG(A)$ is a CZE cover by Proposition~\ref{prop:CZEmaps} (or just note that the space underlying any $\GG(A)$ is a point).  Since $\Spec$ preserves inverse limits, the pullback of $\GG(B) \to \GG(A)$ (with the action of $\GG(C)$) along this cover ``is" the trivial $\GG(C)$ torsor $\GG(B') \cong \GG(A') \times \GG(C)$ over $\GG(A')$. \end{proof}

The following remarks about Zariski and CZE covers are often useful.

\begin{rem} \label{rem:Zariskicovers} Every affine fan is a ``point for the Zariski topology" in the sense that any ``Zariski cover" (surjective local isomorphism of fans) $f : X \to \Spec P$ has a section.  (That is, the identity map is cofinal in the family of all Zariski covers of $\Spec P$.)  To see this, pick $x \in X$ with $f(x) = P^* \in \Spec P$ and a neighborhood $U$ of $x$ in $X$ mapped by $f$ isomorphically onto a neighborhood $V$ of $P^*$ in $\Spec P$.  Since $\Spec P = U_{P^*}$ is the only open subset of $\Spec P$ containing $P^*$ (Proposition~\ref{prop:Spec}\eqref{uniqueclosedpoint}), we must have $V = \Spec P$, so $f$ takes $U$ isomorphically onto $\Spec P$.  In particular, the higher Zariski cohomology of any sheaf of abelian groups vanishes on any affine fan.  This may also be seen by noting that the global sections functor on $\Spec P$ is identified with the ``stalk at $P^*$" functor, hence is exact. \end{rem}

\begin{rem} \label{rem:GGnotexact} In the situation of Proposition~\ref{prop:CZEtorsors}, the sequence \eqref{GGFGAGSeq} is \emph{not} generally exact in the Zariski topology.  Indeed, if it were, then since no affine fan has higher Zariski cohomology (cf.\ Remark~\ref{rem:Zariskicovers}), the sequence \bne{GH} & 0 \to \GG(C)(\Spec P) \to \GG(B)(\Spec P) \to \GG(A)(\Spec P) \to 0 \ene would be exact for every fine monoid $P$.  But \eqref{GH} is just the sequence \bne{GH2} & 0 \to \Hom_{\Ab}(C,P^*) \to \Hom_{\Ab}(B,P^*) \to \Hom_{\Ab}(A,P^*) \to 0 \ene obtained by applying $\Hom_{\Ab}( \slot, P^*)$ to \eqref{FGAGSeq} and \eqref{GH2} will fail to be exact for some finitely generated abelian group $P=P^*$ unless \eqref{FGAGSeq} splits.  One motivation for considering the CZE topology is that it is ``fine enough" that \eqref{GGFGAGSeq} is exact, but ``coarse enough" that it is still possible to understand CZE covers as, for example, in the next remark. \end{rem}

\begin{rem} \label{rem:CZEcovers} If we fix a fine monoid $P$ and an injection of groups $h^* : P^* \to A$ with finite cokernel, then one checks easily that $(P \oplus_{P^*} A)^* = A$, so we obtain a CZE map of monoids $h : P \to Q := P \oplus_{P^*} A$ by pushing out $h^*$ along the inclusion $P^* \into P$, hence we obtain a CZE cover of fine fans $\Spec h : \Spec Q \to \Spec P$ by Proposition~\ref{prop:CZEmaps}.  We claim that the CZE covers of $\Spec P$ thus obtained are cofinal in the family of all CZE covers of $\Spec P$.  Indeed, suppose $ f : X \to \Spec P$ is a CZE cover.  Choose $x \in X$ with $f(x) = P^* \in \Spec P$.  The map $f_x : P=\M_{P,P^*} \to \M_{X,x}$ is a CZE map of monoids, so it is one of the maps $h$ discussed above (with $A=\M_{X,x}^*$) and we see from Proposition~\ref{prop:localpicture} that $\Spec f_x : \Spec \M_{X,x} \to \Spec P$ factors through $f$.  In particular, we see that if $P$ is a \emph{sharp} monoid, then $\Spec P$ is a point for the CZE topology and hence the higher CZE cohomology of any sheaf of abelian groups vanishes on $\Spec P$.  

More generally, if we were willing to work with non-finitely generated monoids as in Remark~\ref{rem:CZEfiniteness}, then we would see that $\Spec P$ is a point for the CZE topology whenever $P^*$ is a \emph{divisible} (i.e.\ injective) abelian group.  (If $h^*$ above has a retract, so does $h$, so the CZE cover $\Spec h$ is refined by the trivial cover.)  It is also true in some sense that the stalk of the CZE structure sheaf of $X$ at a point $x \in X$ (``the CZE strict henselization of $P := \M_{X,x}$") is given by $P \oplus_{P^*} D(P^*)$, where $P^* \to D(P^*)$ is the divisible hull of $P^*$. \end{rem}

\subsection{CZE cohomology} \label{section:CZEcohomology} In this section, we will try to get a feeling for the CZE topology (\S\ref{section:CZEmaps}) by ``computing" the CZE cohomology groups $\H^i_{\rm CZE}(X,\GG(A))$ for $X$ a fine fan, $A$ a finitely generated abelian group.  By ``compute" we mean that we will interpret them as certain $\Ext$ groups in the category $\Ab(X)$ of sheaves of abelian groups on the topological space underlying $X$ (equivalently, on the (small) \emph{Zariski} site of $X$).  

\begin{thm} \label{thm:CZEcohomology} For any fine fan $X$ and any finitely generated abelian group $A$ there are natural isomorphisms \be \H^i_{\rm CZE}(X,\GG(A)) & = & \Ext^i_{\Ab(X)}(\u{A},\M_X^*) \ee for all $i$.  Here $\u{A} \in \Ab(X)$ is the constant sheaf associated to $A$. \end{thm}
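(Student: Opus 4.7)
The plan is to interpret both sides as $\Ext$-groups on appropriate sites and compare them via the canonical morphism of sites $\epsilon \colon X_{\rm CZE} \to X_{\rm Zar}$ (every Zariski cover is a CZE cover). First I would identify the CZE sheaf $\GG(A)$ with the internal hom sheaf $\sHom_{\Ab}(\u A, \M_X^*)$: for any fine fan $Y$, the $\Spec$ adjunction of \S\ref{section:Spec} gives
$$\GG(A)(Y) = \Hom_{\Mon}(A, \M_Y(Y)) = \Hom_{\Ab}(A, \M_Y(Y)^*)$$
(since $A$ is a group), which for connected $Y$ equals $\Hom_{\Ab(Y)}(\u A, \M_Y^*)$. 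Both sides assemble into subcanonical sheaves on $X_{\rm CZE}$ agreeing on connected affines, so they coincide, yielding $\H^i_{\rm CZE}(X, \GG(A)) = \Ext^i_{\Ab(X_{\rm CZE})}(\u A, \M_X^*)$.

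Next I would apply the Grothendieck composition-of-derived-functors spectral sequence for $\epsilon$. Using $\epsilon^{-1}\u A = \u A$ and the adjunction $\epsilon^* \dashv \epsilon_*$, this takes the form
$$E_2^{p,q} = \Ext^p_{\Ab(X_{\rm Zar})}(\u A, R^q\epsilon_*\M_X^*) \Rightarrow \Ext^{p+q}_{\Ab(X_{\rm CZE})}(\u A, \M_X^*).$$
The theorem would follow once this spectral sequence is shown to collapse onto its $q=0$ line, i.e., once one establishes $\epsilon_*\M_X^* = \M_X^*$ (immediate from subcanonicity of the CZE topology recorded in Remark~\ref{rem:CZEtopology}) and the key vanishing $R^q\epsilon_*\M_X^* = 0$ for $q > 0$.

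The main obstacle is the latter vanishing, a ``full Hilbert~90'' for the CZE topology. Being Zariski-local, it reduces to proving $\H^q_{\rm CZE}(\Spec P, \M^*) = 0$ for every fine monoid $P$ and every $q > 0$. The strategy would use the cofinal family of CZE covers from Remark~\ref{rem:CZEcovers}: each injection $P^* \into A$ of finitely generated abelian groups with finite cokernel $C$ gives a CZE cover $\Spec Q_A \to \Spec P$ with $Q_A := P \oplus_{P^*} A$, which by Proposition~\ref{prop:CZEtorsors} is a $\GG(C)$-torsor. Via the torsor structure, the Čech complex of $\M^*$ for such a cover identifies with a standard complex computing group cohomology of the finite group $C$ with coefficients built from $A$. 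Taking the filtered colimit over a cofinal system with $\varinjlim A = D(P^*)$, the divisible (equivalently injective) envelope of $P^*$, corresponds conceptually to passing to the ``CZE strict henselization'' $\Spec(P \oplus_{P^*} D(P^*))$, which is a CZE point in the extended sense of Remark~\ref{rem:CZEcovers} and so has vanishing higher CZE cohomology. Carefully combining the Čech-to-derived-functor spectral sequence with this filtered colimit argument — so that the divisibility of the limit coefficients kills all positive-degree group cohomology of the finite groups $C$ — is the technical crux of the proof.
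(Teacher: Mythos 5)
Your architecture is viable and genuinely different from the paper's official argument. The paper proves the theorem by a composition-of-derived-functors identity in the variable $A$: since $\GG$ and the constant-sheaf functor are exact and $\pi_*\GG(A)=\sHom(\u{A},\M_X^*)$, one gets $\R\Gamma_{\rm CZE}\,\GG(\slot)=\R\Hom_{\Ab(X)}(\u{\slot},\M_X^*)$ directly, and the Hilbert-90-type vanishings are extracted only afterwards as corollaries. You instead route everything through the morphism of sites $\epsilon\colon X_{\rm CZE}\to X_{\rm Zar}$ and make the vanishing $\R^{>0}\epsilon_*\M_X^*=0$ the main input; the ingredients you need (the cofinal covers of Remark~\ref{rem:CZEcovers}, the \v{C}ech computation) all appear in the paper, but in the proof of Corollary~\ref{cor:CZEcohomology}, i.e.\ downstream of the theorem rather than upstream of it.

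Two steps, however, have real gaps. First, the isomorphism of sheaves $\GG(A)\cong\sHom(\u{A},\M_X^*)$ only gives $\H^i_{\rm CZE}(X,\GG(A))=\H^i_{\rm CZE}(X,\sHom(\u{A},\M_X^*))$; equating this with $\Ext^i_{\Ab(X_{\rm CZE})}(\u{A},\M_X^*)$ requires degenerating the local-to-global $\Ext$ spectral sequence, i.e.\ proving $\sExt^{>0}_{\rm CZE}(\u{A},\M_X^*)=0$. This is true --- resolve $A$ by lattices $0\to L_1\to L_0\to A\to 0$ and use that $\GG(L_0)\to\GG(L_1)$ is an epimorphism of CZE sheaves, which is exactly the exactness of $\GG$ from Proposition~\ref{prop:CZEtorsors} --- but it is not a formality and is invisible in ``the two sheaves agree on connected affines.'' Second, your plan for the crux $\H^{>0}_{\rm CZE}(\Spec P,\M^*)=0$ misidentifies where the work lies. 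The \v{C}ech complex of $\M^*$ along the cover $\Spec(P\oplus_{P^*}A)\to\Spec P$ is the Amitsur-type complex $A\to A\oplus_{P^*}A\to\cdots$, whose positive-degree cohomology already vanishes at each finite stage; no passage to $D(P^*)$ is needed, and indeed $\Spec(P\oplus_{P^*}D(P^*))$ is not a fine fan, so the ``CZE point'' is not available inside the site as defined (the paper hedges exactly this in Remark~\ref{rem:CZEcovers}). Conversely, \v{C}ech vanishing on a cofinal family does not by itself collapse the \v{C}ech-to-derived-functor spectral sequence, whose $E_2$-page involves the unknown cohomology presheaves; one needs Cartan's bootstrap (induction on degree via an embedding of $\M^*$ into an injective CZE sheaf), which is what the paper carries out. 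With those two repairs --- neither of which your colimit-and-divisibility heuristic supplies --- your proof closes.
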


\begin{proof}  This can be proved by surprisingly simple general nonsense, though later (in the proof of Corollary~\ref{cor:CZEcohomology}) we shall also outline a more ``computational" approach.  Consider the following functors (all of which are half exact maps of abelian categories):  \bne{functors} \GG : \Ab^{\rm op} & \to & \Ab(X_{\rm CZE}) \\ \nonumber \Gamma_{\rm CZE} = \Gamma_{\rm CZE}(X, \slot) : \Ab(X_{\rm CZE}) & \to & \Ab \\ \nonumber \Gamma = \Gamma(X,\slot) : \Ab(X) & \to & \Ab \\ \nonumber \pi_* : \Ab(X_{\rm CZE}) & \to & \Ab(X) \\ \nonumber \sHom( \slot , \M_X^*) : \Ab(X)^{\rm op} & \to & \Ab(X) \\ \nonumber \Hom( \slot, \M_X^*) : \Ab(X)^{\rm op} & \to & \Ab \\ \nonumber \u{ \slot }_X : \Ab & \to & \Ab(X). \ene The functor $\GG$ is exact by Proposition~\ref{prop:CZEtorsors}.  (To be precise, we view the domain category of $\GG$ as the opposite of the category of \emph{finitely generated} abelian groups.)  The functor $\pi_*$ is just the forgetful functor arising from the fact that any CZE sheaf is, in particular, a Zariski sheaf.  This functor is left exact but not generally exact.  The functor $\Gamma_{\rm CZE}$ is the ``CZE global sections functor" (whose right derived functors are the CZE cohomology).  The last functor is the constant sheaf functor---it is exact and left adjoint to $\Gamma$ since it is the inverse image functor for the projection from $X$ to a point, whose direct image functor is $\Gamma$.  

Everything will follow from the following ``equalities" (natural isomorphisms) between compositions of functors from our list: \bne{formulaA} \Gamma_{\rm CZE} & = & \Gamma \pi_* \\ \label{formulaB} \Gamma \sHom( \slot , \M_X^*) & = & \Hom( \slot, \M_X^*) \\ \label{formula1} \pi_* \GG( \slot ) & = & \sHom( \u{ \slot }_X, \M_X^*) \ene  The first two equalities here are a matter of definitions.  For the third, we compute \be \sHom( \u{A}_X , \M_X^*)(U) & = & \Hom_{\Ab(U)}( \u{A}_X|U, \M_X^*|U) \\ & = & \Hom_{\Ab(U)}( \u{A}_U, \M_U^*) \\ & = & \Hom_{\Ab}(A,\Gamma(U,\M_U^*)) \\ & = & \Hom_{\Fans}(U,\GG(A)) \\ & = & ( \pi_* \GG(A))(U) \ee (natural in $A \in \Ab^{\rm op}$, $U$ an open subspace of $X$) using the ``modular interpretation" of $\GG(A)$ and the adjointness relationship between constant sheaves and global sections mentioned above.

Since the functors $\GG$ and $\u{ \slot }_X$ are already exact, the equality between the right derived functors $\DAb^{\rm op} \to \DAb(X)$ of the functors in \eqref{formula1} takes the form: \bne{formula2} (\R \pi_*) \GG( \slot ) & = & \R \sHom( \u{ \slot }_X,\M_X^*) . \ene  The key point is that the right derived functor of a composition is the composition of the right derived functors.  If we now compose the equality \eqref{formula2} with the right derived functor $\R \Gamma : \DAb(X) \to \DAb$ and use that same fact together with \eqref{formulaA} and \eqref{formulaB}, we obtain an equality \bne{formula3} \R \Gamma_{\rm CZE} \GG( \slot ) & = &  \R \Hom_{\Ab(X)}( \u{ \slot }_X,\M_X^*), \ene from which the result follows by taking cohomology.  \end{proof}

\begin{rem} \label{rem:CZEcohomoloy} It is clear from the proof of Theorem~\ref{thm:CZEcohomology} that we in fact have \be \H^i_{\tau}(X,\GG(A)) & = & \Ext^i_{\Ab(X)}(\u{A},\M_X^*) \ee for all $i$ and any subcanonical topology $\tau$ \emph{containing} the CZE topology. \end{rem}

On the face of things it might seem as if Theorem~\ref{thm:CZEcohomology} just provides a relationship between two inscrutable constructions, but actually it will allow us to make some fairly down-to-earth statements about CZE cohomology.

\begin{lem} \label{lem:projectives}  Suppose $X$ is a topological space such that the global section functor $\Gamma(X,\slot) : \Ab(X) \to \Ab$ is exact.  (This holds for the space underlying $\Spec P$, $P$ a monoid, by Remark~\ref{rem:Zariskicovers}.)  Then for any $A \in \Ab$ and any $F \in \Ab(X)$ we have natural isomorphisms \be \Ext^i_{\Ab(X)}(\u{A},F) & = & \Ext^i_{\Ab}(A,\Gamma(X,F)) \ee for all $i$.  In particular, the locally constant sheaf $\u{L}$ on $X$ associated to a projective (i.e.\ free) abelian group $L$ is a projective object in $\Ab(X)$. \end{lem}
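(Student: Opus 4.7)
The plan is to exploit the adjunction
$$\Hom_{\Ab(X)}(\u{A},F) \;=\; \Hom_{\Ab}(A,\Gamma(X,F))$$
coming from the fact that the constant sheaf functor $\u{(\slot)}_X : \Ab \to \Ab(X)$ is left adjoint to $\Gamma(X,\slot)$. Both sides, viewed as functors of $F$ (with $A$ fixed), are left exact functors from $\Ab(X)$ to $\Ab$, and the claim is that their $i$-th right derived functors agree. The standard way to derive such an adjunction isomorphism is to show that $\Gamma(X,\slot)$ carries injectives in $\Ab(X)$ to injectives in $\Ab$, then compare both computations on a single injective resolution of $F$.

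First I would observe that $\u{(\slot)}_X$ is exact: the stalk of $\u{A}_X$ at any $x \in X$ is $A$, so a short exact sequence $0 \to A \to B \to C \to 0$ in $\Ab$ gives a sequence of constant sheaves which is exact on every stalk, and hence exact in $\Ab(X)$. Because $\u{(\slot)}_X$ is an exact left adjoint, its right adjoint $\Gamma(X,\slot)$ preserves injectives: for any injective $I \in \Ab(X)$, the functor $\Hom_{\Ab}(\slot,\Gamma(X,I)) = \Hom_{\Ab(X)}(\u{(\slot)}_X,I)$ is exact, so $\Gamma(X,I)$ is injective in $\Ab$. Now pick an injective resolution $F \to I^\bullet$ in $\Ab(X)$. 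The hypothesis that $\Gamma(X,\slot)$ is exact implies that $\Gamma(X,I^\bullet)$ is a resolution of $\Gamma(X,F)$, and by what was just shown it is an \emph{injective} resolution in $\Ab$. Applying $\Hom_{\Ab(X)}(\u{A}_X,\slot)$ to $I^\bullet$ and invoking the adjunction termwise yields the complex $\Hom_{\Ab}(A,\Gamma(X,I^\bullet))$, whose $i$-th cohomology is $\Ext^i_{\Ab(X)}(\u{A},F)$ on one reading and $\Ext^i_{\Ab}(A,\Gamma(X,F))$ on the other, proving the asserted isomorphism (naturally in $A$ and $F$).

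For the final assertion, if $L$ is a free abelian group then $L$ is projective in $\Ab$, so $\Ext^i_{\Ab}(L,M) = 0$ for every $M \in \Ab$ and every $i > 0$; specializing the isomorphism just proved with $A = L$ and $M = \Gamma(X,F)$ gives $\Ext^i_{\Ab(X)}(\u{L},F) = 0$ for all $F \in \Ab(X)$ and $i > 0$, which is equivalent to $\u{L}$ being projective in $\Ab(X)$. The only step that requires any real care is checking that $\Gamma(X,\slot)$ preserves injectives, and this is pinned down cleanly by the general ``exact left adjoint $\Rightarrow$ right adjoint preserves injectives'' principle, applied to the easy exactness of $\u{(\slot)}_X$.
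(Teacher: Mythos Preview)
Your proof is correct and takes essentially the same approach as the paper: both use the adjunction between the constant sheaf functor and global sections to write $\Hom_{\Ab(X)}(\u{A},\slot)$ as the composite $\Hom_{\Ab}(A,\slot)\circ\Gamma(X,\slot)$, then exploit exactness of $\Gamma(X,\slot)$. The paper phrases the conclusion via the degeneration of the Grothendieck spectral sequence (and explicitly notes one can ``argue directly''), whereas you carry out that direct argument by showing $\Gamma(X,\slot)$ preserves injectives and comparing both sides on a single injective resolution.
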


\begin{proof} Since $A \mapsto \u{A}$ is left adjoint to $\Gamma(X,\slot)$, the computation \be \Hom_{\Ab(X)}(\u{A},F) & = & \Hom_{\Ab}(A,\Gamma(X,F)) \ee shows that $\Hom_{\Ab(X)}(\u{A},\slot)$ is the composition of $\Gamma(X,\slot)$ and $\Hom_{\Ab}(A, \slot)$.  Since $\Gamma(X,\slot)$ is exact the Grothendieck spectral sequence from ``the derived functors of the composities" to ``the derived functors of the composition" degenerates to yield the result (or argue directly \dots). \end{proof}

\begin{cor} \label{cor:CZEcohomology} For any affine fine fan $X = \Spec P$ and any abelian group $A$ we have \be \H^i_{\rm CZE}(X,\GG(A)) & = & \Ext^i_{\Ab(X)}(\u{A},\M_X^*) \\ & = & \Ext^i_{\Ab}(A,P^*) \ee for all $i$.  In particular, these groups vanish for $i>1$ and for $i>0$ when $A$ is a lattice.  \end{cor}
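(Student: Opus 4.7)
The plan is to chain together three results already established in the excerpt: Theorem~\ref{thm:CZEcohomology} handles the first equality for free, Lemma~\ref{lem:projectives} handles the collapse to ordinary $\Ext$ in abelian groups, and classical homological algebra handles the vanishing. So the proof is essentially an assembly argument, and the only real content is verifying the hypotheses of Lemma~\ref{lem:projectives} and computing the relevant global sections.

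First I would invoke Theorem~\ref{thm:CZEcohomology} to get the equality
\[ \H^i_{\rm CZE}(X,\GG(A)) \;=\; \Ext^i_{\Ab(X)}(\u{A},\M_X^*).\]
To apply Lemma~\ref{lem:projectives} to the right side, I need the global section functor $\Gamma(X,\slot):\Ab(X)\to\Ab$ to be exact. This is pointed out in Remark~\ref{rem:Zariskicovers}: for $X=\Spec P$, the functor $\Gamma(X,\slot)$ is identified with the ``stalk at the closed point $P^*$'' functor (since $\Spec P$ itself is the only open subset containing $P^*$, by Proposition~\ref{prop:Spec}\eqref{uniqueclosedpoint}), which is exact. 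Lemma~\ref{lem:projectives} then yields
\[ \Ext^i_{\Ab(X)}(\u{A},\M_X^*) \;=\; \Ext^i_{\Ab}(A,\Gamma(X,\M_X^*)). \]

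Next I would identify $\Gamma(X,\M_X^*)$ with $P^*$. Using the ``stalk at the closed point'' description of global sections together with the stalk formula of Proposition~\ref{prop:Spec}\eqref{stalkMP}, we compute
\[ \Gamma(X,\M_X^*) \;=\; (\M_{P,P^*})^* \;=\; ((P^*)^{-1}P)^* \;=\; P^*, \]
which gives the second equality in the corollary.

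The vanishing statements are then immediate from standard facts about the category of abelian groups: $\Ab$ has global dimension $1$, so $\Ext^i_{\Ab}(A,P^*)=0$ for all $i>1$ and any $A$; and if $A$ is a lattice (i.e.\ a free abelian group), then $A$ is projective in $\Ab$, so $\Ext^i_{\Ab}(A,P^*)=0$ for all $i>0$. I do not anticipate any real obstacle here: the only point that requires care is the verification that the global sections of the sheaf of units $\M_X^*$ really do coincide with the group of units $P^*$ of the global sections of $\M_X$, which follows cleanly from the ``stalk at the unique closed point'' identification.
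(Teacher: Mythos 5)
Your proposal is correct and follows essentially the same route as the paper's primary proof, which simply combines Theorem~\ref{thm:CZEcohomology} with Lemma~\ref{lem:projectives} and notes that $\Gamma(X,\M_X^*)=P^*$. (The paper additionally sketches a second, more computational argument via \v{C}ech cohomology of the covers of Remark~\ref{rem:CZEcovers} that avoids Theorem~\ref{thm:CZEcohomology}, but that is presented only as an alternative.)
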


\begin{proof}  Combine Theorem~\ref{thm:CZEcohomology} and Lemma~\ref{lem:projectives}, noting that $\Gamma(X,\M_X^*) = P^*$.  

There is a also a rather nice alternative argument to establish the vanishing \bne{desiredCZEvanishing} \H^{>0}_{\rm CZE}(X,\GG(L)) & = & 0 \ene for a lattice $L$ without appealing to Theorem~\ref{thm:CZEcohomology}.  In fact, once this basic vanishing is known, the whole corollary can be easily deduced.  One can then give an alternative proof of Theorem~\ref{thm:CZEcohomology} by reducing to the affine case by considering a (Zariski) \v{C}ech hypercover $X_\bullet$ of an arbitrary fine fan $X$ with each $X_p$ a disjoint union of affines.  Our alternative method of establishing the vanishing \eqref{desiredCZEvanishing} is analogous to Grothendieck's proof of the vanishing of the higher cohomology of a quasi-coherent sheaf on an affine scheme in \cite[III.1.3.1]{EGA}.  

The first step is to establish the vanishing \bne{desiredCechCZEvanishing} \CechH^{>0}_{\rm CZE}(X,\GG(L)) & = & 0 \ene of the analogous \emph{\v{C}ech} CZE cohomology groups.  Consider a map $h^* : P^* \to A$ and the corresponding CZE cover $\Spec h : X_0 = \Spec Q \to X$ as in Remark~\ref{rem:CZEcovers}.  As mentioned in that remark, these CZE covers are cofinal, so \eqref{desiredCechCZEvanishing} will follow from the vanishing of the \v{C}ech cohomology \bne{desiredCechvanishing} \CechH^{>0}(X,\Spec h,\GG(L)) & = & 0 \ene for the map $\Spec h$.  (The \v{C}ech cohomology with respect to a map has nothing to do with any topology.)  Consider the cosimplicial abelian group \be G^\bullet & := & ( A \rightrightarrows A \oplus_{P^*} A \threerightarrows A \oplus_{P^*} A \oplus_{P^*} A \cdots ) \ee obtained as the ``$0$-(co?)skeleton" of $h^*$.  Since $\Spec$ takes direct limits of monoids to inverse limits of fans, the $0$-coskeleton \be X_\bullet & := & ( \cdots X_0 \times_X X_0 \times_X X_0 \threerightarrows X_0 \times_X X_0 \rightrightarrows X_0 )  \\ & =: & ( \cdots X_2 \threerightarrows X_1 \rightrightarrows X_0) \ee of our map $\Spec h$ is given by \be X_\bullet & = & \Spec (G^\bullet \oplus_{P^*} P ), \ee hence \be \CechH^{i}(X, \Spec h,\GG(L)) & = & \H^i( \Gamma( X_\bullet , \GG(L) )) \\ & = & \H^i( \Hom_{\Ab}( L, (G^\bullet \oplus_{P^*} P)^* )) \\ & = & \H^i( \Hom_{\Ab}(L, G^\bullet) ). \ee  It is well-known that the cohomology of $G^\bullet$ is given by $P^*$ in degree zero and vanishes in higher degrees.  Since the lattice $L$ is a projective abelian group, applying $\Hom_{\Ab}(L, \slot )$ commutes with the formation of the aforementioned cohomology, thus we obtain the desired vanishing \eqref{desiredCechvanishing}. 

To bootstrap up from the \v{C}ech vanishing to the ``usual" vanishing, it suffices to show that \emph{any} sheaf $F$ on the category of fine fans in the CZE topology satisfying \bne{Cechvanishing} \CechH_{\rm CZE}^i(X,F) & = & 0 \ene for all affine $X$ and all $i>0$ also satisfies \bne{desvan} \H^i_{\rm CZE}(X,F) & = & 0 \ene for all affine $X$ and all $i>0$.  This is a standard argument of H.~Cartan.  The analogous variant for topological spaces is formulated and proved in \cite[5.9.2]{God}, but the argument makes sense in the present context as well.  Godement's proof uses the spectral sequence from \v{C}ech cohomology with coefficients in ``the cohomology presheaves" abutting to usual cohomology, but I find the argument in \cite[Cohomology, Lemma~12.9]{SP} to be a little easier---let's run through it quickly for the reader's convenience.  Fix a sheaf $F$ as above.  Choose an exact sequence \bne{injres} & 0 \to F \to I \to B \to 0 \ene of sheaves of abelian groups on the category of fine fans (in the CZE topology) with $I$ injective.\footnote{One could just work with the category of fine affine fans to avoid any set-theoretic issues that may arise in verifying the existence of such a thing.}  Since $\CechH^1$ always agrees with $\H^1$ (for any sheaf on any site), we at least know that \bne{H1vanishing} \H^1_{\rm CZE}(X,F) & = & 0 \ene for every affine $X$.  Now consider a CZE cover $X_0 \to X$ of affine fans as in the previous paragraph.  Since each $X_p$ is also affine, the vanishing \eqref{H1vanishing} ensures that the sequence of cochain complexes of abelian groups \bne{sescomplexes} & 0 \to \Gamma(X_\bullet,F) \to \Gamma(X_\bullet,I) \to \Gamma(X_\bullet,B) \to 0 \ene obtained by applying $\Gamma(X_\bullet, \slot)$ to \eqref{injres} is exact.  The associated long exact sequence of cohomology groups is a long exact sequence relating the \emph{\v{C}ech} cohomology of the sheaves in \eqref{injres} with respect to the map $X_0 \to X$.  Since the map $X_0 \to X$ is a CZE cover and $I$ is an injective CZE sheaf, the higher \v{C}ech cohomology of $I$ with respect to $X_0 \to X$ vanishes.  Putting this together with the hypothesized vanishings \eqref{Cechvanishing}, we see from our long exact cohomology sequence that the higher \v{C}ech cohomology of $B$ with respect to $X_0 \to X$ also vanishes.  Since the maps of the form $X_0 \to X$ that we've been considering are cofinal in all CZE covers of $X$, we conclude that $B$ also satisfies the same vanishing property \eqref{Cechvanishing} enjoyed by $F$.  In particular, as we saw for $F$, we see also that $\H^1_{\rm CZE}(X,B)=0$ for affine $X$.  Now we look at the long exact sequence of CZE cohomology associated to \eqref{injres} on an affine $X$.  Since $\H^1_{\rm CZE}(X,B)=0$ and all higher CZE cohomology of the injective sheaf $I$ vanishes, we see that $\H^2_{\rm CZE}(X,F)=0$.  But these same arguments also apply to $B$ as well, so $\H^2_{\rm CZE}(X,B)=0$ for $X$ affine, then we again look back at the long exact sequence of CZE cohomology associated to \eqref{injres} to conclude $\H^3_{\rm CZE}(X,F)=0$.  Continuing in this manner we establish the desired vanishing \eqref{desvan}.   \end{proof}

\begin{cor} \label{cor:CZEcohomology2} For a fine fan $X$ and a lattice $L$, there are natural isomorphisms \be \H^i_{\rm CZE}(X,\GG(L)) & = & \H^i(X,\GG(L)) \ee between the CZE cohomology of $\GG(L)$ and the Zariski cohomology of $\GG(L)$. \end{cor}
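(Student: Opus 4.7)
The plan is to compare both cohomology theories to the single group $\Ext^i_{\Ab(X)}(\u{L},\M_X^*)$. The CZE side is already handled: Theorem~\ref{thm:CZEcohomology} gives $\H^i_{\rm CZE}(X,\GG(L))=\Ext^i_{\Ab(X)}(\u{L},\M_X^*)$ for any finitely generated abelian group, in particular for a lattice $L$. So the work is on the Zariski side: to identify $\H^i(X,\GG(L))$ with the same Ext group.

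First I would note that, viewed as a sheaf on the underlying topological space of $X$ (i.e.\ under the forgetful functor $\pi_*$ from CZE sheaves to Zariski sheaves), $\GG(L)$ is canonically identified with $\sHom_{\Ab(X)}(\u{L},\M_X^*)$: this is precisely the natural isomorphism \eqref{formula1} established in the proof of Theorem~\ref{thm:CZEcohomology}, which was proved directly from the modular interpretation of $\GG(L)$. Consequently
\[
\H^i(X,\GG(L)) \;=\; \H^i(X,\sHom_{\Ab(X)}(\u{L},\M_X^*)).
\]
Next I would invoke the local-to-global Ext spectral sequence
\[
E_2^{p,q}\;=\;\H^p\!\bigl(X,\sExt^q_{\Ab(X)}(\u{L},\M_X^*)\bigr)\;\Longrightarrow\;\Ext^{p+q}_{\Ab(X)}(\u{L},\M_X^*),
\]
which is the Grothendieck spectral sequence for the composition $\Hom_{\Ab(X)}(\u{L},-)=\Gamma(X,-)\circ\sHom_{\Ab(X)}(\u{L},-)$. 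The degeneration of this sequence on the $q=0$ line would give exactly the identification $\H^p(X,\sHom(\u{L},\M_X^*))=\Ext^p_{\Ab(X)}(\u{L},\M_X^*)$ that we want, combined with the CZE side this yields the corollary.

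So the entire game reduces to showing $\sExt^q_{\Ab(X)}(\u{L},\M_X^*)=0$ for $q>0$. Since $L$ is a lattice, fix an isomorphism $L\cong\ZZ^n$, giving $\u{L}\cong\u{\ZZ}^{\,n}$; by additivity of $\sExt^q$ in the first variable, it suffices to show $\sExt^q_{\Ab(X)}(\u{\ZZ},F)=0$ for $q>0$ and any $F\in\Ab(X)$. But the identity $\sHom_{\Ab(X)}(\u{\ZZ},F)=F$ is tautological (its sections over an open $U$ are $\Hom_{\Ab(U)}(\u{\ZZ}_U,F|U)=F(U)$), so if $F\to I^\bullet$ is an injective resolution then $\sHom_{\Ab(X)}(\u{\ZZ},I^\bullet)=I^\bullet$, whose higher cohomology sheaves vanish. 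This gives the required vanishing.

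The main obstacle is merely bookkeeping: one must be careful that the ``$\GG(L)$'' appearing in $\H^i(X,\GG(L))$ is being correctly interpreted as the Zariski sheaf $\pi_*\GG(L)$ (so that formula \eqref{formula1} applies), and that the Grothendieck/local-to-global Ext spectral sequence is being applied in $\Ab(X)$ (the category of sheaves on the underlying topological space) rather than in the CZE topos. No deep input is required beyond Theorem~\ref{thm:CZEcohomology}, the tautological nature of $\sHom(\u{\ZZ},-)$, and the fact that a lattice is a finite free $\ZZ$-module.
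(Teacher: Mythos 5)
Your proof is correct and follows essentially the same route as the paper's: combine Theorem~\ref{thm:CZEcohomology} with the degeneration of the local-to-global $\Ext$ spectral sequence, the latter forced by the vanishing of $\sExt^{>0}(\u{L},\M_X^*)$. The only (harmless) divergence is in how that vanishing is justified: the paper invokes local projectivity of $\u{L}$ via Lemma~\ref{lem:projectives}, whereas you observe directly that $\u{L}\cong\u{\ZZ}^{\,n}$ and that $\sHom(\u{\ZZ},-)$ is the identity functor --- a slightly more elementary argument that uses nothing about the space $X$.
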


\begin{rem} The special case of the above corollary where $L=\ZZ$ and $i=1$ says that every CZE locally trivial $\GG_m$ torsor over a fine fan is already Zariski locally trivial.  This corollary is similar to the form of Hilbert's Theorem 90 that says the first \'etale cohomology of a scheme with coefficients in $\GG_m$ is the same as the analogous \emph{Zariski} cohomology group. \end{rem}

\begin{proof} Using Lemma~\ref{lem:projectives}, we see that $\u{L}$ is ``locally projective," so that \be \sExt^i(\u{L},\M_X^*) & = & 0 \ee for $i>0$.  Consequently, the local-to-global spectral sequence for $\Ext^i( \u{L}, \M_X^*)$ degenerates to yield \be \Ext^i_{\Ab(X)}(\u{L},\M_X^*) & = & \H^i(X, \sHom( \u{L}, \M_X^*) ) \\ & = & \H^i(X,\GG(L)). \ee  (The second isomorphism here comes from the ``modular interpretation" of $\GG(L)$ like the isomorphism \eqref{formula1} in the proof of Theorem~\ref{thm:CZEcohomology}.)  The corollary now follows from Theorem~\ref{thm:CZEcohomology}.

It is also possible to give an alternative proof using the previous corollary instead of Theorem~\ref{thm:CZEcohomology}.  Since we have $\Gamma_{\rm CZE}  =  \Gamma \pi_*$ (see the proof of Theorem~\ref{thm:CZEcohomology}), the corollary can be obtained by showing that the higher derived functors $\R^{>0} \pi_*$ of the pushforward functor $\pi_* : \Ab(X_{\rm CZE}) \to \Ab(X)$ vanish on $\GG(L)$ ($L$ a lattice).  To see this, one first shows that for $F \in \Ab(X_{\rm CZE})$, the sheaf $\R^i \pi_* F \in \Ab(X)$ is obtained by sheafifying the presheaf \bne{presheaf} U & \mapsto & \H^i_{\rm CZE}(U,F|U_{\rm CZE}) \ene (same argument as the analogous result \cite[8.1]{H} in topology).  Since every point $x \in X$ has a smallest neighborhood $U=U_x$ and $U = \Spec \M_{X,x}$ is affine, we have \bne{pushforwardstalkformula} (\R^i \pi_* F)_x & = & \H^i_{\rm CZE}(U,F|U_{\rm CZE}). \ene  The desired vanishing is obtained by checking that the stalks of $\R^i \pi_* \GG(L)$ are zero when $i>0$ by using \eqref{pushforwardstalkformula} and Corollary~\ref{cor:CZEcohomology}. \end{proof}

\end{document}